\documentclass[a4paper]{amsart}
\usepackage{amssymb, amsthm, diagrams, enumerate, mathtools, mathrsfs, stmaryrd}
\usepackage[headings]{fullpage}
\usepackage[colorlinks, pagebackref]{hyperref}

\date{Submitted February 7, 2013; revised \today}

\newcommand{\into}{\hookrightarrow}
\newcommand{\fp}{\mathfrak{p}}

\newcommand{\fz}{\mathfrak{z}}

\newcommand{\bff}{\boldsymbol{f}}
\newcommand{\bfg}{\boldsymbol{g}}
\newcommand{\bfz}{\boldsymbol{\mathrm{z}}}

\newcommand{\tbt}[4]{\begin{pmatrix}#1 & #2 \\ #3 & #4\end{pmatrix}}
\newcommand{\stbt}[4]{\left(\begin{smallmatrix}#1 & #2 \\ #3 & #4\end{smallmatrix}\right)}
\newcommand{\quot}[1]{\text{``}{#1}\text{''}}

\renewcommand{\AA}{\mathbb{A}}
\newcommand{\QQ}{\mathbb{Q}}
\newcommand{\FF}{\mathbb{F}}
\newcommand{\RR}{\mathbb{R}}
\newcommand{\CC}{\mathbb{C}}
\newcommand{\ZZ}{\mathbb{Z}}
\newcommand{\Zp}{\ZZ_p}
\newcommand{\Qp}{\QQ_p}
\newcommand{\QQbar}{\overline{\QQ}}
\newcommand{\dR}{\mathrm{dR}}
\newcommand{\cris}{\mathrm{cris}}
\newcommand{\syn}{\mathrm{syn}}
\newcommand{\et}{\text{\textrm{\'et}}}
\newcommand{\Betti}{\mathrm{Betti}}
\newcommand{\cO}{\mathcal{O}}
\newcommand{\cY}{\mathcal{Y}}
\newcommand{\cH}{\mathcal{H}}

\newcommand{\Dcris}{\mathbb{D}_{\cris}}
\newcommand{\vp}{\varphi}
\newcommand{\cX}{\mathcal{X}}
\newcommand{\cA}{\mathcal{A}}
\newcommand{\cC}{\mathcal{C}}
\newcommand{\cE}{\mathcal{E}}
\newcommand{\cZ}{\mathcal{Z}}
\newcommand{\cS}{\mathcal{S}}
\newcommand{\BB}{\mathbb{B}}

\renewcommand{\d}{\,\mathrm{d}}

\DeclareMathOperator{\ur}{ur}

\DeclareMathOperator{\res}{res}

\DeclareMathOperator{\cores}{cores}
\DeclareMathOperator{\eord}{e_{ord}}
\newcommand{\esord}{\operatorname{e}'_{\mathrm{ord}}}
\DeclareMathOperator{\SL}{SL}
\DeclareMathOperator{\GL}{GL}
\DeclareMathOperator{\PGL}{PGL}
\DeclareMathOperator{\Var}{Var}
\DeclareMathOperator{\trace}{trace}
\DeclareMathOperator{\Sm}{Sm}
\DeclareMathOperator{\Gerst}{Gerst}
\DeclareMathOperator{\Frob}{Frob}
\DeclareMathOperator{\Ind}{Ind}

\DeclareMathOperator{\tr}{tr}

\DeclareMathOperator{\cont}{cont}
\DeclareMathOperator{\Hom}{Hom}

\DeclareMathOperator{\Prime}{prime}

\DeclareMathOperator{\CH}{CH}
\DeclareMathOperator{\ord}{ord}

\DeclareMathOperator{\id}{id}
\DeclareMathOperator{\Div}{Div}
\DeclareMathOperator{\reg}{reg}
\DeclareMathOperator{\Iw}{Iw}
\DeclareMathOperator{\Fil}{Fil}
\DeclareMathOperator{\End}{End}
\DeclareMathOperator{\Hyp}{Hyp}
\DeclareMathOperator{\Gal}{Gal}
\DeclareMathOperator{\Spec}{Spec}
\DeclareMathOperator{\pr}{pr}

\DeclareMathOperator{\norm}{norm}
\newcommand{\ah}{\mathrm{ah}}
\newcommand{\nh}{\mathrm{nh}}

\newcommand{\htimes}{\mathop{\widehat\otimes}}

\newtheorem{theorem}{Theorem}[subsection]
\newtheorem{lemma}[theorem]{Lemma}
\newtheorem{proposition}[theorem]{Proposition}
\newtheorem{corollary}[theorem]{Corollary}
\newtheorem{assumption}[theorem]{Assumption}
\newtheorem{conjecture}[theorem]{Conjecture}
\newtheorem{definition}[theorem]{Definition}

\theoremstyle{remark}
\newtheorem{remark}[theorem]{Remark}
\newtheorem{note}[theorem]{Note}

\newtheorem*{hypothesis}{Hypothesis}
\newtheorem*{notation}{Notation}

\begin{document}

 \begin{abstract}
  We construct an Euler system in the cohomology of the tensor product of the Galois representations attached to two modular forms, using elements in the higher Chow groups of products of modular curves. We use these elements to prove a finiteness theorem for the strict Selmer group of the Galois representation when the associated $p$-adic Rankin--Selberg $L$-function is non-vanishing at $s = 1$.
 \end{abstract}

\title{Euler systems for Rankin--Selberg convolutions of modular forms}

\author[A.~Lei]{Antonio Lei}
\address[Lei]{Department of Mathematics and Statistics\\
Burnside Hall, McGill University\\
Montreal, QC, Canada H3A 2K6}
\email{antonio.lei@mcgill.ca}

\author[D.~Loeffler]{David Loeffler}
\address[Loeffler]{Mathematics Institute\\
Zeeman Building, University of Warwick\\
Coventry CV4 7AL, UK}
\email{d.a.loeffler@warwick.ac.uk}

\author[S.L.~Zerbes]{Sarah Livia Zerbes}
\address[Zerbes]{Department of Mathematics \\
University College London\\
Gower Street, London WC1E 6BT, UK}
\email{s.zerbes@ucl.ac.uk}
\thanks{The authors' research is supported by the following grants: CRM-ISM Postdoctoral Fellowship (Lei); Royal Society University Research Fellowship (Loeffler); EPSRC First Grant EP/J018716/1 (Zerbes).}

\setcounter{tocdepth}{1}
\setcounter{secnumdepth}{4}
\hypersetup{bookmarksdepth=4}

 \maketitle

 \begin{center}
  \emph{Dedicated to Kazuya Kato}
 \end{center}

 \vspace{5ex}

 \tableofcontents

 \section{Outline}

  In \cite{BDR12}, Bertolini, Darmon and Rotger have studied certain canonical global cohomology classes (the ``Beilinson--Flach elements'', obtained from the constructions of \cite{beilinson84} and \cite{flach92}) in the cohomology of the tensor products of the $p$-adic Galois representations of pairs of weight 2 modular forms, and related their image under the Bloch--Kato logarithm maps to the values of $p$-adic Rankin--Selberg $L$-functions. These Beilinson--Flach elements are constructed as the image of elements of the higher Chow group of a product of modular curves.

  In this paper, we construct a form of Euler system -- a compatible system of cohomology classes over cyclotomic fields -- of which the Beilinson--Flach elements are the bottom layer. We first define elements of higher Chow groups of the product of two (affine) modular curves over a cyclotomic field,
  \[ {}_c \Xi_{m, N, j} \in \CH^2(Y_1(N)^2 \otimes \QQ(\mu_m), 1)\]
  for integers $m \ge 1$, $N \ge 5$, and $j \in \ZZ / m\ZZ$ (cf. Definition \ref{def:BFelts}).  These are obtained by considering the images of various maps from higher level modular curves to the surface $Y_1(N)^2$, together with modular units (Siegel units) on these curves. For $m = 1$ our elements reduce to those considered in \cite{BDR12}, and as in \emph{op.cit.}, we show that after tensoring with $\QQ$ we can construct preimages of our elements in the higher Chow group of the self-product of the projective modular curve $X_1(N)$; however, in this paper (as in \cite{kato04}) we shall take the affine versions as the principal objects of study.

  We show two forms of compatibility relation for our generalized Beilinson--Flach elements: firstly, relating ${}_c \Xi_{m, N, j}$ to the pushforward of ${}_c \Xi_{m, Np, j}$, for $p$ prime (Theorem \ref{thm:firstnormrelation}); secondly, relating ${}_c \Xi_{m, N, j}$ to the pushforward (or Galois norm) of ${}_c \Xi_{mp, N, j}$, where $p$ is prime and either $p \mid N$ (Theorem \ref{thm:secondnormbadprime}) or $p \nmid mN$ (Theorem \ref{thm:secondnormrelationprime}).

  We next turn to the relation between our elements and $L$-values. Theorem \ref{thm:beilinsonreg} shows, following an argument due to Beilinson, that the images of the elements ${}_c \Xi_{m, N, j}$ under the Beilinson regulator map into complex de Rham cohomology are related to the derivatives at $s = 1$ of Rankin--Selberg $L$-functions of weight 2 modular forms. Theorem \ref{thm:syntomicreg} is a $p$-adic analogue of this result, generalizing a theorem of Bertolini--Darmon--Rotger \cite{BDR12}; it gives a formula for the image of our element for $m = 1$ under the $p$-adic syntomic regulator, for a prime $p \nmid N$, in terms of Hida's $p$-adic Rankin--Selberg $L$-functions.

  Next we consider the images of our elements in \'etale cohomology. Applying Huber's ``continuous \'etale realization'' functor and the Hochschild--Serre exact sequence, and projecting into the isotypical component corresponding to a pair of eigenforms $(f, g)$ of level $N$, allows us to construct Galois cohomology classes
  \[ {}_c \bfz_m^{(f, g, N)} \in H^1(\QQ(\mu_m), V_f^* \otimes V_g^*)\]
  from the elements ${}_c \Xi_{m, N, j}$; see Definition \ref{def:cohoclasses}. Using the second norm relation in the $p$-adic cyclotomic tower, we can modify these to construct elements of Iwasawa cohomology groups of pairs of modular forms (under a strong ``ordinarity'' hypothesis; this is Theorem \ref{thm:ordinaryEulersystem}), or of the tensor product of Iwasawa cohomology groups with the algebra of distributions (under a weaker ``small slope'' hypothesis; see Theorem \ref{thm:Eulersystem}). These elements satisfy compatibility relations of Euler-system type when additional primes are added to $m$.

  Using the first norm relation, we also obtain variation in Hida families. More specifically, if one of the two forms (say $g$) is ordinary, we may deform our cohomology classes analytically as $g$ varies over a Hida family; cf.~Theorem \ref{thm:onevarhida}. (In the special case $m = 1$, such results have been independently obtained by Bertolini--Darmon--Rotger.) When $f$ and $g$ are both ordinary, we obtain three-variable families which also incorporate cyclotomic twists, cf.~Theorem \ref{thm:threevarhida}.

  As an application of these constructions, we prove (under some technical hypotheses) a finiteness theorem for the strict Selmer group of a product of modular forms (Theorem \ref{thm:ourfiniteSel}) when the associated $p$-adic Rankin--Selberg $L$-function is non-vanishing at $s = 1$, and (under very slightly stronger hypotheses) we give an explicit bound for the order of the strict Selmer group in terms of the $p$-adic $L$-value (Theorem \ref{thm:ourboundedSel}).

  \subsection*{Remark} It is a pleasure to acknowledge the very deep debt this article owes to the magisterial work of Kato \cite{kato04}. We have adopted many aspects of the strategy and methods of Kato's work, reused a number of his results, and even in many cases adopted his notation. It is a pleasure to dedicate this work to Professor Kato, as a humble gift on the occasion of his 60th birthday.

 \subsection*{Acknowledgements} Part of this work was done while the second and the third author were visiting Montr\'eal and Bielefeld in Spring 2012; they would like to thank Henri Darmon and Thomas Zink for their hospitality. We would also like to thank Massimo Bertolini, Francois Brunault, Francesc Castella, John Coates, Henri Darmon, Mathias Flach, Kazuya Kato, Masato Kurihara, Andreas Langer, Jan Nekovar, Ken Ribet, Victor Rotger, Karl Rubin, Tony Scholl, Xin Wan and Andrew Wiles for helpful comments. We are also grateful to the anonymous referee for a number of valuable suggestions and corrections.


 \section{Generalized Beilinson--Flach elements}

  In this section we shall construct elements of motivic cohomology groups of products of modular curves, using the explicit description of the motivic cohomology given by the Gersten complex.


  \subsection{Modular curves}
   \label{sect:modcurves}

   We begin by fixing notation regarding modular curves. We follow the conventions of \cite{kato04} very closely (see in particular \S\S 1, 2, and 5 of \emph{op.cit.}).

   \begin{definition}
    \label{def:yn}
    For $N \ge 3$, let $Y(N)$ denote the smooth affine curve over $\QQ$ which represents the functor on the category of $\QQ$-schemes
    \[ S \mapsto \left\{ \begin{array}{cc}\text{isomorphism classes of triples $(E, e_1, e_2)$,} \\ \text{$E$ an elliptic curve over $S$,} \\ \text{$e_1$, $e_2$ sections of $E/S$ generating $E[N]$.}\end{array}\right\}\]
   \end{definition}

   The variety $Y(N)$ is equipped with a left action of $\GL_2(\ZZ / N\ZZ)$: the element $\tbt a b c d$ maps $(E, e_1, e_2)$ to $(E, e_1', e_2')$ where
   \[ \begin{pmatrix} e_1' \\ e_2' \end{pmatrix} = \tbt a b c d \begin{pmatrix} e_1 \\ e_2 \end{pmatrix}.\]
   In particular, this action factors through $\GL_2(\ZZ / N\ZZ) / \langle \pm 1 \rangle$.

   There is an obvious surjective morphism $Y(N) \to \mu_N^\circ$, where $\mu_N^\circ$ is the scheme of primitive $N$-th roots of unity, which sends $(E, e_1, e_2)$ to $\langle e_1, e_2 \rangle_{E[N]}$, where $\langle-,-\rangle_{E[N]}$ denotes the Weil pairing on $E[N]$. Because the Weil pairing is non-degenerate and alternating, the induced action of $\GL_2(\ZZ / N\ZZ)$ on $\mu_N^\circ$ is given by $\sigma \cdot \zeta = \zeta^{\det \sigma}$; and the fibre of $Y(N)(\CC)$ over the point $e^{2\pi i / N} \in \mu_N^\circ(\CC)$ is canonically and $\SL_2(\ZZ / N\ZZ)$-equivariantly identified with $\Gamma(N) \backslash \cH$, where $\cH$ is the upper half-plane and $\Gamma(N)$ the principal congruence subgroup of level $N$ in $\SL_2(\ZZ)$, via the map
   \[ \tau \mapsto \left( \CC / (\ZZ + \ZZ \tau), \tau/N, 1/N\right).\]

   We shall mainly be working with certain quotients of the curves $Y(N)$, which we now define.

   \begin{definition}
    For $M, N \ge 1$, we shall define $Y(M, N)$ to be the quotient of $Y(L)$, for any $L \ge 3$ divisible by $M$ and $N$, by the group
    \[ \left\{ \tbt a b c d  \in \GL_2(\ZZ / L\ZZ): \begin{array}{ll} a = 1, b = 0 \bmod M, \\ c = 0, d = 1 \bmod{N}\end{array} \right\}.\]
   \end{definition}

   This curve $Y(M, N)$ represents the functor of triples $(E, e_1, e_2)$ where $e_1$ has order $M$, $e_2$ has order $N$, and $e_1, e_2$ generate a subgroup of $E$ of order $MN$.

   \begin{definition}
    We write $Y_1(N)$ for the smooth affine curve over $\QQ$ representing the functor
    \[ S \mapsto \left\{ \begin{array}{cc}\text{isomorphism classes of pairs $(E, P)$,} \\ \text{$E$ an elliptic curve over $S$,} \\ \text{$P$ a section of $E/S$ of exact order $N$.}\end{array}\right\}\]
   \end{definition}

   \begin{remark}
    Note that the cusp $\infty$, which corresponds to the generalized elliptic curve $\left(\mathbb{G}_m / q^{\ZZ}, \zeta_N\right)$, is not defined over $\QQ[[q]]$ but rather over $\QQ(\mu_N)[[q]]$, so the $q$-expansions of elements of $\cO(Y_1(N))$ do not necessarily lie in $\QQ((q))$ but rather in $\QQ(\mu_N)((q))$. See e.g.~\cite[\S 9.3]{diamondim95} for further discussion.
   \end{remark}

   It is clear that $Y_1(N) = Y(1, N)$. More generally, we may use the following proposition to identify $Y_1(N) \times \mu_m^\circ$, for $m, N \ge 1$, with a quotient of a principal modular curve:

   \begin{proposition}
    If $N \ge 3, m \ge 1$, and $L \ge 3$ is divisible by both $N$ and $m$, then the map
    \[\begin{array}{rcl}
     Y(L) & \rTo & Y_1(N) \otimes \mu_m^\circ\\
     (E, e_1, e_2) & \rMapsto & \left[ \left(E, \tfrac L N e_2\right), \left\langle \tfrac{L}{m} e_1, \tfrac{L}{m} e_2 \right\rangle_{E[m]} \right]
    \end{array}\]
    identifies $Y_1(N) \times \mu_m^\circ$ with the quotient of $Y(L)$ by the subgroup of $\GL_2(\ZZ / L \ZZ)$ given by
    \[ \left\{ \tbt a b c d : \begin{array}{ll} c = 0, d = 1 \bmod N, \\ ad-bc = 1 \bmod m\end{array} \right\}.\]
   \end{proposition}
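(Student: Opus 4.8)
The plan is to verify this by identifying the relevant congruence subgroup directly. Since $Y(L)$ is a $\GL_2(\ZZ/L\ZZ)$-scheme and both $Y_1(N)$ and $\mu_m^\circ$ arise as quotients of principal modular curves, it suffices to show two things: first, that the displayed map is well-defined and invariant under the claimed subgroup $H \subseteq \GL_2(\ZZ/L\ZZ)$; and second, that the induced map $Y(L)/H \to Y_1(N) \times \mu_m^\circ$ is an isomorphism. For the first point, I would take a triple $(E, e_1, e_2)$ and compute the effect of $\stbt a b c d$ on the pair $\bigl(\tfrac L N e_2,\ \langle \tfrac L m e_1, \tfrac L m e_2\rangle_{E[m]}\bigr)$. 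The point $\tfrac L N e_2$ has order dividing $N$, and it is fixed (as a point of exact order $N$) precisely when $c e_1 + d e_2 \equiv e_2 \pmod{N}$ on the $N$-torsion, i.e. $c \equiv 0$ and $d \equiv 1 \pmod N$; the Weil pairing entry transforms by $\zeta \mapsto \zeta^{ad-bc}$ using bilinearity and alternation of $\langle-,-\rangle$ restricted to $E[m]$, so it is fixed exactly when $ad - bc \equiv 1 \pmod m$. This shows the map factors through $Y(L)/H$ for $H$ as claimed.

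Next I would compute the index of $H$ in $\GL_2(\ZZ/L\ZZ)$ and compare it with $\deg\bigl(Y_1(N) \times \mu_m^\circ / \QQ\bigr)$ over the point of $Y(L)$, equivalently show that $|\GL_2(\ZZ/L\ZZ)/H|$ equals the degree of $Y(L)$ over $Y_1(N)\times \mu_m^\circ$. By the Chinese Remainder Theorem one reduces to prime powers, and here the conditions ``$c=0, d=1 \bmod N$'' and ``$\det = 1 \bmod m$'' are transparent to count: the determinant condition cuts out a subgroup of index $\varphi(m)$ (the image under $\det$ being the kernel of $(\ZZ/L\ZZ)^\times \to (\ZZ/m\ZZ)^\times$), matching $\deg(\mu_m^\circ/\QQ) = \varphi(m)$, while the congruence on the bottom row gives the standard index computation for $Y_1(N)$. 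Since $Y(L) \to Y(L)/H$ and $Y(L) \to Y_1(N)\times\mu_m^\circ$ are both finite \'etale of the same degree and the second factors through the first, the resulting map $Y(L)/H \to Y_1(N)\times \mu_m^\circ$ is finite \'etale of degree one, hence an isomorphism.

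Alternatively --- and this is perhaps cleaner --- one can argue moduli-theoretically: a point of $(Y(L)/H)(S)$ is a triple $(E, e_1, e_2)$ up to the action of $H$, and one checks that the data $\bigl((E, \tfrac L N e_2),\ \langle \tfrac L m e_1, \tfrac L m e_2\rangle\bigr)$ is a complete set of invariants. Given $(E, P)$ with $P$ of exact order $N$ and a primitive $m$-th root of unity $\zeta$, one must show the fibre of the map is a single $H$-orbit: lift $P$ to a point $e_2'$ of exact order $L$ with $\tfrac L N e_2' = P$ (possible since $N \mid L$), complete to a basis $(e_1', e_2')$ of $E[L]$, and adjust by an element of $H$ so that $\langle \tfrac L m e_1', \tfrac L m e_2'\rangle = \zeta$; the freedom in the choice of lift and basis modulo the stabilizer is exactly $H$. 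The main obstacle I anticipate is the bookkeeping with the scaling factors $\tfrac L N$ and $\tfrac L m$ in the presence of common factors between $m$ and $N$ --- one must be careful that $\tfrac L m e_1$ and $\tfrac L m e_2$ genuinely generate $E[m]$ and that their Weil pairing is a \emph{primitive} $m$-th root of unity, which uses that $e_1, e_2$ generate all of $E[L]$ --- but this is a finite check and the index computation above provides an independent confirmation that nothing is lost.
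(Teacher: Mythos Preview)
The paper states this proposition without proof; it is presented as a routine moduli-theoretic fact and used immediately afterward. Your proposal supplies exactly the kind of verification one would expect, and the argument is correct.

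A few remarks on the details. Your invariance computation is right: under $\stbt a b c d$ the section $\tfrac{L}{N} e_2$ is preserved precisely when $c \equiv 0$ and $d \equiv 1 \pmod N$, and the Weil pairing $\langle \tfrac{L}{m} e_1, \tfrac{L}{m} e_2\rangle_{E[m]}$ scales by $\det$ modulo $m$. For the degree comparison, your claim that the determinant condition contributes index exactly $\varphi(m)$ is justified because $\det$ restricted to $H_1 = \{c \equiv 0,\ d \equiv 1 \bmod N\}$ is already surjective onto $(\ZZ/L\ZZ)^\times$ (take diagonal matrices $\stbt u 0 0 1$), so there is no interference between the two conditions even when $\gcd(m,N) > 1$. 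In the moduli-theoretic variant, the phrase ``adjust by an element of $H$'' is slightly misleading, since acting by $H$ fixes the Weil pairing value by definition; what you mean, and what your surrounding text makes clear, is that one first constructs \emph{some} lift hitting the prescribed $\zeta$ (by scaling $e_1$ by a unit congruent to the right value modulo $m$) and then checks that any two lifts differ by an element of $H$. The only other wrinkle is the usual $\pm 1$ ambiguity: since the $\GL_2(\ZZ/L\ZZ)$-action on $Y(L)$ factors through $\GL_2/\{\pm 1\}$, the genuine stabilizer in $\GL_2$ is $H \cdot \{\pm I\}$, but the quotient is unaffected.
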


   We shall be most interested in the curves $Y(m, mN)$ for $m \ge 1, N \ge 1$. Note that $Y(m, mN)$ maps naturally to $\mu_m^\circ$, with geometrically connected fibres. It has a left action of the group
   \[ \left\{ \tbt a b c d: c = 0 \bmod N\right\},\]
   compatible with the determinant action on $\mu_m^\circ$: if $x = (E, e_1, e_2)$ is a point of $Y(m, mN)$, so $e_1$ has order $m$ and $e_2$ has order $mN$, and $g = \tbt a b c d \in \GL_2(\ZZ / mN \ZZ)$ with $N | c$, then
   \[ g \cdot x = (E, a e_1 + b N e_2, c/N e_1 + d e_2).\]

   We shall introduce some notation for maps between these curves.

   \begin{definition}
    \label{def:tm}
    Let $m, N \ge 1$.
    \begin{enumerate}
     \item We write $t_{m}$ for the morphism $Y(m, mN) \to Y_1(N) \times \mu_m^\circ$
      given by
     \[ (E, e_1, e_2) \mapsto \left[\left(E/\langle e_1 \rangle, [m e_2]\right), \langle e_1, N e_2\rangle_{E[m]}\right].\]
     \item For $a \ge 1$, we write $\tau_{a}$ for the morphism $Y(am, amN) \to Y(m, mN)$ given by
     \[ (E, e_1, e_2) \mapsto \left(E/C, [e_1], [a e_2]\right)\]
     where $C$ is the cyclic subgroup of order $a$ generated by $m e_1$, and $[e_1]$, $[a e_2]$ denote the images of $e_1$ and $ae_2$ on $E/C$.
    \end{enumerate}
   \end{definition}

   \begin{proposition}
    \label{prop:tmproperties}
    Let $m, N, a$ as above.
    \begin{enumerate}
     \item We have a commutative diagram
     \begin{diagram}
      Y(am, amN) & \rTo^{\tau_a} & Y(m, mN) \\
      \dTo^{t_{am}} & & \dTo^{t_{m}}\\
      Y_1(N) \times \mu_{am}^\circ & \rTo & Y_1(N) \times \mu_m^\circ,
     \end{diagram}
     where the bottom horizontal arrow is the identity map on $Y_1(N)$ and the map $\mu_{am}^\circ \to \mu_m^\circ$ given by $\zeta \mapsto \zeta^a$.
     \item For $b \in (\ZZ / mN\ZZ)^\times$, the map $t_{m}$ intertwines the action of $\stbt b 0 0 1$ with the automorphism $\sigma_b: \zeta \mapsto \zeta^b$ of $\mu_m^\circ$, and $\stbt {b^{-1}} 0 0 b$ with the diamond operator $\langle b \rangle$ on $Y_1(N)$.
    \end{enumerate}
   \end{proposition}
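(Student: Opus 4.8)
The plan is to prove both assertions by a direct computation on the level of moduli. All the curves appearing are quotients of some $Y(L)$ with $L \ge 3$, which is a fine moduli space, so a morphism out of such a quotient is the same datum as an invariant morphism out of $Y(L)$; equivalently, it suffices to check each stated identity of morphisms on triples $(E, e_1, e_2)$. I will freely use three standard properties of the Weil pairing $\langle -,-\rangle_{E[n]}$: (i) bilinearity, $\langle \lambda P, \mu Q\rangle_{E[n]} = \langle P, Q\rangle_{E[n]}^{\lambda\mu}$; (ii) compatibility with isogenies: for $\phi\colon E \to E'$ an isogeny and $P, Q \in E[n]$ one has $\langle \phi(P), \phi(Q)\rangle_{E'[n]} = \langle P, Q\rangle_{E[n]}^{\deg\phi}$ (which follows from $\widehat\phi\circ\phi = [\deg\phi]$ together with the adjunction $\langle \phi(P), R\rangle_{E'[n]} = \langle P, \widehat\phi(R)\rangle_{E[n]}$); and (iii) compatibility across levels: for $P \in E[n]$ and $Q \in E[kn]$ one has $\langle P, Q\rangle_{E[kn]} = \langle P, kQ\rangle_{E[n]}$.

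For part (2) I would just unwind the definitions. By the formula for the group action, $\stbt b 0 0 1$ sends $(E,e_1,e_2)$ to $(E, b e_1, e_2)$ and $\stbt{b^{-1}}00b$ sends it to $(E, b^{-1}e_1, b e_2)$. Feeding these into the definition of $t_m$ and using that $\langle b e_1\rangle = \langle e_1\rangle$ since $b$ is a unit (mod $m$): for the first matrix the $Y_1(N)$-component $(E/\langle e_1\rangle, [m e_2])$ of $t_m(E,e_1,e_2)$ is unchanged, while by (i) the $\mu_m^\circ$-component becomes $\langle b e_1, N e_2\rangle_{E[m]} = \langle e_1, N e_2\rangle_{E[m]}^{\,b}$, which is exactly $\sigma_b$ applied to that factor; for the second matrix the $\mu_m^\circ$-component is $\langle b^{-1}e_1, N b e_2\rangle_{E[m]} = \langle e_1, N e_2\rangle_{E[m]}$ by (i) (the exponents cancel), hence unchanged, while the $Y_1(N)$-component becomes $(E/\langle e_1\rangle, [m b e_2]) = (E/\langle e_1\rangle, b\cdot[m e_2])$, which is by definition $\langle b\rangle$ applied to $(E/\langle e_1\rangle, [m e_2])$. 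This is entirely routine.

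For part (1) I would trace a point $(E,e_1,e_2)$ of $Y(am,amN)$ around the square. Across then down: $\tau_a(E,e_1,e_2) = (E/C, [e_1], [a e_2])$ with $C = \langle m e_1\rangle$ of order $a$, and $t_m$ of this is the pair $\bigl((E/C)/\langle [e_1]\rangle,\, [m\,[a e_2]]\bigr)$ together with $\langle [e_1], N[a e_2]\rangle_{(E/C)[m]}$. Down then across: $t_{am}(E,e_1,e_2) = \bigl((E/\langle e_1\rangle, [am\, e_2]),\, \langle e_1, N e_2\rangle_{E[am]}\bigr)$, and the bottom map fixes the $Y_1(N)$-part and raises the root of unity to the power $a$. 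The $Y_1(N)$-components agree on the nose: since $C \subseteq \langle e_1\rangle$ there is a canonical identification $(E/C)/\langle [e_1]\rangle = E/\langle e_1\rangle$, under which $[m\,[a e_2]]$ and $[am\, e_2]$ are both the image of $am e_2$. So the one real point — which I expect to require the most care — is the identity
\[ \bigl\langle [e_1],\, N[a e_2]\bigr\rangle_{(E/C)[m]} \;=\; \bigl\langle e_1,\, N e_2\bigr\rangle_{E[am]}^{\,a}. \]
Writing $\phi\colon E \to E/C$ (of degree $a$), note that $e_1$ and $N e_2$ lie in $E[am]$ while $\phi(e_1) \in (E/C)[m]$ and $\phi(N e_2) \in (E/C)[am]$. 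Then (ii) applied to $\phi$ gives $\langle \phi(e_1), \phi(N e_2)\rangle_{(E/C)[am]} = \langle e_1, N e_2\rangle_{E[am]}^{\,a}$, while (iii) on $E/C$ (with $n = m$, $k = a$) gives $\langle \phi(e_1), \phi(N e_2)\rangle_{(E/C)[am]} = \langle \phi(e_1),\, a\phi(N e_2)\rangle_{(E/C)[m]} = \langle [e_1], N[a e_2]\rangle_{(E/C)[m]}$, using $a\phi(N e_2) = \phi(N a e_2) = N[a e_2]$. Combining the two equalities yields the displayed identity, and hence the commutativity of the diagram. (One should also record along the way that the level structures produced by $\tau_a$, $t_m$ and $t_{am}$ have the asserted orders, which is an elementary check using $\langle e_1,e_2\rangle \cong \ZZ/am \times \ZZ/amN$.)
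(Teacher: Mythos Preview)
Your proof is correct and follows the same approach as the paper, which simply remarks that part (1) is ``immediate from the definition of the maps and properties of the Weil pairing'' and that part (2) is ``an easy verification''; you have carefully written out the Weil-pairing computation (using bilinearity, isogeny-compatibility, and level-compatibility) that the paper leaves implicit. There is nothing to correct.
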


   \begin{proof}
    The first statement is immediate from the definition of the maps and properties of the Weil pairing, and the second is an easy verification (cf.~\cite[5.7.1]{kato04}).
   \end{proof}

   \begin{remark}
    The use of the maps $\tau_a$ is forced on us by the nature of our construction of zeta elements. It would be much more satisfying to use the natural degeneracy maps $Y(am, amN) \to Y(m, mN)$ given by $\tau_a': (E, e_1, e_2) \mapsto (E, ae_1, ae_2)$, but we do not know how to construct elements compatible under these maps; see \S \ref{sect:dreams} below.
   \end{remark}

   \begin{notation}
    For compatibility with \cite{kato04}, we shall use the alternative notation $Y_1(N) \otimes \QQ(\mu_m)$ for $Y_1(N) \times \mu_m^\circ$.
   \end{notation}

   We shall also have to deal with products of two modular curves.

   \begin{definition}
    We shall write $Y(N)^2$ (slightly abusively) for the fibre product \(Y(N) \times_{\mu_N^\circ} Y(N)\).
    This is a subvariety of $Y(N) \times_{\Spec(\QQ)} Y(N)$ preserved by the subgroup
    \[ \left\{ (\sigma, \tau) \in \GL_2(\ZZ / N \ZZ)^2 : \det(\sigma) = \det(\tau)\right\}.\]
    Similarly, we shall write $Y(m, mN)^2$ for \( Y(m, mN) \times_{\mu_m^\circ} Y(m, mN) \), which is acted upon by the group
    \[ G = \left\{ (\sigma, \tau) \in \GL_2(\ZZ / mN \ZZ)^2 : \det(\sigma) = \det(\tau) \bmod m,\quad  \sigma, \tau = \tbt * * 0 * \bmod N.\right\}\]
   \end{definition}

   Evidently, the image of $Y(m, mN) \times_{\mu_m^\circ} Y(m, mN)$ under $t_m \times t_m$ lands in
   \[ \left(Y_1(N) \times \mu_m^\circ\right) \times_{\mu_m^\circ} \left(Y_1(N) \times \mu_m^\circ\right) = Y_1(N)^2 \times \mu_m^\circ,\]
   so we may consider $t_m \times t_m$ as a morphism $Y(m, mN)^2 \to Y_1(N)^2 \times \mu_m^\circ$, which intertwines the action of $\left( \stbt b 0 0 1,\stbt b 0 0 1\right) \in G$ with $\sigma_b$.

   We shall also have to consider, occasionally, some more general classes of modular curves. Here we shall only consider models over $\QQbar$.

   \begin{definition}
    \label{def:qqbarmodel}
    If $\Gamma \subseteq \SL_2(\ZZ)$ is a congruence subgroup, then we shall write $Y(\Gamma)$ for the variety
    \[ \left(\Gamma \backslash Y(L)\right) \otimes_{\QQ(\mu_L)} \QQbar,\]
    where $L$ is any integer $\ge 3$ such that $\Gamma \supseteq \Gamma(L)$; this variety is independent of the choice of $L$.
   \end{definition}

   \begin{remark}
    If $\alpha \in \GL_2^+(\QQ)$, then the isomorphism of Riemann surfaces $Y(\Gamma)(\CC) \cong Y(\alpha \Gamma \alpha^{-1})(\CC)$ mapping $\tau \in \cH$ to $\alpha \tau$ extends to an algebraic isomorphism defined over $\QQbar$; similarly for degeneracy maps $Y(\Gamma) \to Y(\Gamma')$ for $\Gamma \subseteq \Gamma'$.
   \end{remark}

   The above constructions with affine modular curves also have projective analogues where the cusps are taken into account; we shall write $X(-)$ for the compactified version of $Y(-)$ in line with the standard notation.


  \subsection{Siegel units}
   \label{sect:siegelunits}

   \begin{definition}
    For $(\alpha, \beta) \in (\QQ / \ZZ)^2 - \{(0, 0)\}$ of order dividing $N$, and $c > 1$ coprime to $6N$, let ${}_c g_{\alpha, \beta} \in \cO(Y(N))^\times$ denote Kato's Siegel unit, as defined in \cite[\S 1.4]{kato04}.
   \end{definition}

   We identify ${}_c g_{\alpha, \beta}$ with a holomorphic function on the upper half-plane, via the identification of the fibre of $Y(N)(\CC)$ over $e^{2\pi i / N} \in \mu_N^\circ(\CC)$ with $\Gamma(N) \backslash \cH$ given in the previous section. (Note that ${}_c g_{\alpha, \beta}$ is defined over $\QQ$ as a function on $Y(N)$, but in order to interpret it as a holomorphic function on $\cH$ we must make a choice of $N$-th root of unity, and the $q$-expansion coefficients of ${}_c g_{\alpha, \beta}$ are in $\QQ(\mu_N)$.)

   Recall that there is an element $g_{\alpha, \beta} \in \cO(Y(N))^\times \otimes \QQ$ such that ${}_c g_{\alpha, \beta} = c^2 g_{\alpha, \beta} - g_{c\alpha, c\beta}$.

   \begin{proposition}[Distribution relations]
    \label{prop:distrelations}
    Let $m \ge 1$, and let $c$ be a nonzero integer coprime to $6m$ and the orders of $\alpha, \beta$. Then the following relations hold:
    \begin{subequations}
     \begin{equation}
      \label{eq:dist1}
      {}_c g_{\alpha, \beta}(mz) = \prod_{\beta'} {}_c g_{\alpha, \beta'}(z)
     \end{equation}
     where the product is over $\beta' \in \QQ / \ZZ$ such that $m\beta' = \beta$;
     \begin{equation}
      \label{eq:dist2}
      {}_c g_{\alpha, \beta}(z/m) = \prod_{\alpha'} {}_c g_{\alpha', \beta}(z)
     \end{equation}
     where the product is over $\alpha' \in \QQ / \ZZ$ such that $m\alpha' = \alpha$; and
     \begin{equation}
      \label{eq:dist3}
      {}_c g_{\alpha, \beta}(z) = \prod_{\alpha', \beta'} {}_c g_{\alpha', \beta'}(z)
     \end{equation}
     where the product is over pairs $(\alpha', \beta') \in (\QQ / \ZZ)^2$ such that $(m\alpha', m \beta') = (\alpha, \beta)$.
    \end{subequations}
   \end{proposition}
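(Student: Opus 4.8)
The plan is to reduce all three relations to the single classical distribution relation for the Siegel units $g_{\alpha,\beta}$ on the algebraic torus, which is the defining property of these units (essentially the distribution relation for the Kato--Siegel functions $\theta$ from which the $g_{\alpha,\beta}$ are built). Concretely, I would first recall from \cite[\S 1.4]{kato04} (or \cite[\S 2]{kato04}) that for any positive integer $m$ one has the identity
\[
g_{\alpha,\beta} = \prod_{(m\alpha', m\beta') = (\alpha,\beta)} g_{\alpha', \beta'}
\]
in $\cO(Y(N'))^\times \otimes \QQ$ for $N'$ a common multiple of $m$ and the orders involved, and that the analogous identity holds for the ${}_c g_{\alpha,\beta}$ provided $c$ is coprime to $6m$ and to all orders in sight. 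This is exactly \eqref{eq:dist3}, once we interpret ${}_c g_{\alpha,\beta}(z)$ as the holomorphic avatar on $\cH$ via the identification fixed in \S\ref{sect:siegelunits}; so for \eqref{eq:dist3} there is essentially nothing to prove beyond citing Kato and checking that the coprimality hypotheses we impose suffice.

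Next I would deduce \eqref{eq:dist1} and \eqref{eq:dist2} from \eqref{eq:dist3} by a change of variables on $\cH$. The key point is the elementary observation that, in terms of the $q$-expansion parameter, precomposing with $z \mapsto mz$ corresponds to the sublattice inclusion $\ZZ + \ZZ(mz) \subseteq \ZZ + \ZZ z$, and the Siegel unit $g_{\alpha,\beta}$ evaluated at a lattice $L$ with a chosen pair of generators depends only on the torsion point $\alpha \omega_1 + \beta \omega_2$. Tracking this through, $g_{\alpha,\beta}(mz)$ is the Siegel function attached to the point $\tfrac{\alpha}{1}\cdot 1 + \tfrac{\beta}{1}\cdot(mz)$ relative to $(1, mz)$, and rewriting this point relative to $(1,z)$ and summing over the preimages gives precisely the partial distribution relation in the second variable, i.e. \eqref{eq:dist1}; replacing $m$ by $1/m$ (equivalently rescaling the other period) gives \eqref{eq:dist2}. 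Alternatively, and perhaps more cleanly, one obtains \eqref{eq:dist1} and \eqref{eq:dist2} by applying \eqref{eq:dist3} twice: the product over all $(\alpha',\beta')$ with $(m\alpha',m\beta')=(\alpha,\beta)$ can be grouped as an iterated product, first over $\alpha'$ and then over $\beta'$, and the inner products are identified with $g_{\alpha,\beta'}(z)$-type terms using the functional equation relating $g_{\alpha,\beta}$ to $g_{\alpha,\beta}$ under $z \mapsto mz$ that one gets from the case $\alpha = \alpha'$ of a one-sided distribution relation.

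The main obstacle is bookkeeping rather than mathematics: one must be scrupulous about (i) which $N$ (or $N'$) the relevant units live on, since the left- and right-hand sides a priori live on different modular curves and one has to pull back along a degeneracy map before comparing; (ii) the precise normalization of ${}_c g_{\alpha,\beta}$ versus $g_{\alpha,\beta}$, in particular the identity ${}_c g_{\alpha,\beta} = c^2 g_{\alpha,\beta} - g_{c\alpha,c\beta}$ and the fact that distribution relations for the ${}_c$-version require $c$ to be prime to $m$ (so that $c$ permutes the index set of the product); and (iii) the choice of root of unity needed to pass between the function on $Y(N)$ and the holomorphic function on $\cH$, which must be made compatibly on both sides. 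I expect that once the indexing conventions of \cite{kato04} are transcribed carefully, \eqref{eq:dist3} is a direct citation and \eqref{eq:dist1}, \eqref{eq:dist2} follow by the substitution argument above in a few lines each.
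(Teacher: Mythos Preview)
Your proposal and the paper's proof both ultimately rest on citing Kato, but the logical order is reversed. The paper takes \eqref{eq:dist1} as the primitive input (it is Lemma~2.12 of \cite{kato04}), deduces \eqref{eq:dist2} from \eqref{eq:dist1} by applying the element $\stbt{0}{-1}{1}{0}$ (which swaps the roles of $\alpha$ and $\beta$ and sends $z\mapsto -1/z$), and then obtains \eqref{eq:dist3} by combining \eqref{eq:dist1} and \eqref{eq:dist2}. You instead start from \eqref{eq:dist3} (which is indeed Lemma~1.7(2) of \cite{kato04}, so equally citeable) and try to go backwards to \eqref{eq:dist1} and \eqref{eq:dist2}.

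The difficulty is that your two routes from \eqref{eq:dist3} to \eqref{eq:dist1} are both problematic as written. Your ``alternative'' argument, grouping the double product as an iterated product and identifying the inner product via a ``one-sided distribution relation'', is circular: that one-sided relation \emph{is} \eqref{eq:dist1}. And your lattice/change-of-variable argument, while morally correct, does not actually use \eqref{eq:dist3} at all --- it is an independent direct proof of \eqref{eq:dist1} via the isogeny $\CC/(\ZZ+\ZZ m z)\to\CC/(\ZZ+\ZZ z)$ and the norm-compatibility of ${}_c\theta$, which is exactly how Kato proves his Lemma~2.12. So once you carry it out carefully you will have reproduced the paper's starting point rather than deduced anything from \eqref{eq:dist3}. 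The paper's use of $\stbt{0}{-1}{1}{0}$ to pass between \eqref{eq:dist1} and \eqref{eq:dist2} is a genuine shortcut you are missing; it replaces your second lattice computation with a one-line symmetry argument.
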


   \begin{proof}
    Formula \eqref{eq:dist1} is Lemma 2.12 of \cite{kato04}. Formula \eqref{eq:dist2} can be proved similarly, or can be deduced directly from \eqref{eq:dist1} using the action of $\begin{pmatrix} 0 & -1 \\ 1 & 0 \end{pmatrix}$. Formula \eqref{eq:dist3}, which is Lemma 1.7(2) of \emph{op.cit.}, is immediate by combining (1) and (2).
   \end{proof}

   \begin{remark}
    The three formulae above admit the following common generalization: let $M$ be a $2 \times 2$ integer matrix with positive determinant $D$. Then we have
    \[ {}_c g_{\alpha, \beta}(M \cdot z) = \prod_{\alpha', \beta'} {}_c g_{\alpha', \beta'}\]
    where the product is over all $(\alpha', \beta')$ such that $(\alpha', \beta') M' = (\alpha, \beta)$, where $M' = (\det M) M^{-1}$ is the adjugate matrix of $M$. Cases (1), (2) and (3) correspond to taking $M = \begin{pmatrix} m & 0 \\ 0 & 1\end{pmatrix}$, $\begin{pmatrix} 1 & 0 \\ 0 & m\end{pmatrix}$ and $\begin{pmatrix} m & 0 \\ 0 & m\end{pmatrix}$ respectively. The case where $M$ is invertible is (part of) Lemma 1.7(1) of \emph{op.cit.}.
   \end{remark}

   We are most interested in the units ${}_c g_{0, 1/N}$, which descend to units on $Y_1(N)$. These have the following compatibility property:

   \begin{theorem}[Kato]\label{thm:siegel-compat}
    If $M, N, N' \ge 1$ are integers with $\Prime(N') = \Prime(N)$, and $\alpha$ is the natural projection $Y(M, N') \to Y(M, N)$ (which induces a norm map $\alpha_*: \cO(Y(M, N'))^\times \to \cO(Y(M, N))^\times$), then we have
    \[ \alpha_*({}_c g_{0, 1/N'}) = {}_c g_{0, 1/N}.\]

    If $N' = N \ell$, where $\ell$ is prime and $\ell \nmid MN$, then we have
    \[ \alpha_*({}_c g_{0, 1/N'}) = {}_cg_{0,1/N} \cdot \left({}_c g_{0,\quot{\ell^{-1}}/N}\right)^{-1}\]
    where $\quot{\ell^{-1}}$ signifies the inverse of $\ell$ modulo $N$.
   \end{theorem}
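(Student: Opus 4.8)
The plan is to reduce both identities to statements about holomorphic functions on the upper half-plane and then feed in the distribution relations of Proposition~\ref{prop:distrelations}. Both sides of each claimed equality lie in $\cO(Y(M,N))^\times$, so — using the $\GL_2(\ZZ/L\ZZ)$-action to pass between the geometric components of $Y(M,N)(\CC)$ — it suffices to check them on the standard component, a quotient $\Gamma\backslash\cH$ for the congruence subgroup $\Gamma$ cut out by the defining conditions of $Y(M,N)$, on which ${}_c g_{0,1/N}$ becomes the function $\tau\mapsto{}_c g_{0,1/N}(\tau)$ (the Siegel function of characteristic $(0,1/N)$, i.e.\ attached to $\tfrac1N\in\CC/(\ZZ+\ZZ\tau)$). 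The key point is that the natural projection $\alpha\colon Y(M,N')\to Y(M,N)$ is $(E,e_1,e_2)\mapsto(E,e_1,\tfrac{N'}{N}e_2)$ on moduli; it leaves the elliptic curve untouched, so it is induced on $\cH$ by $\tau\mapsto\tau$, and the associated norm map is therefore
\[
 \alpha_*\big({}_c g_{0,1/N'}\big)(\tau)=\prod_{\gamma}{}_c g_{0,1/N'}(\gamma\tau),
\]
the product being over representatives $\gamma$ for $\Gamma'\backslash\Gamma$, where $\Gamma'\subseteq\Gamma$ is the (smaller) group attached to $Y(M,N')$.

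I would then rewrite each factor using the transformation law ${}_c g_{(\alpha,\beta)}(\gamma\tau)={}_c g_{(\alpha,\beta)\gamma}(\tau)$ for $\gamma\in\SL_2(\ZZ)$ — the case $M=\gamma$ of the generalization recorded in the remark after Proposition~\ref{prop:distrelations}, where the role of the ${}_c$-modification is precisely to remove the root-of-unity ambiguity present for the bare Siegel function. For $\gamma=\stbt a b c d$ one has $(0,1/N')\gamma=(c/N',d/N')\in(\QQ/\ZZ)^2$, which depends only on the bottom row of $\gamma$ modulo $N'$. As $\gamma$ runs over $\Gamma'\backslash\Gamma$, this bottom row runs exactly once over the pairs $(c,d)\bmod N'$ with $(c,d)\equiv(0,1)\bmod N$ and $\gcd(c,d,N')=1$, the last condition being forced by the requirement that $(c,d)$ be the lower row of an invertible matrix mod $N'$. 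Hence
\[
 \alpha_*\big({}_c g_{0,1/N'}\big)(\tau)=\prod_{(\alpha',\beta')}{}_c g_{\alpha',\beta'}(\tau),
\]
the product over all $(\alpha',\beta')\in(\QQ/\ZZ)^2$ with $(N'/N)\alpha'=0$ and $(N'/N)\beta'=1/N$ subject to the coprimality constraint just described.

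The last step is to compare this with the distribution relation \eqref{eq:dist3} taken with $\alpha=0$, $\beta=1/N$ and $m=N'/N$ (resp.\ $m=\ell$): its left side is exactly ${}_c g_{0,1/N}$ and its right side is the product of ${}_c g_{\alpha',\beta'}$ over \emph{all} pairs with $m\alpha'=0$, $m\beta'=1/N$. If $\Prime(N')=\Prime(N)$, then every prime dividing $N'$ divides $N$, so $(c,d)\equiv(0,1)\bmod N$ already forces $\gcd(c,d,N')=1$; the coprimality constraint is vacuous, every pair occurs, and \eqref{eq:dist3} gives $\alpha_*({}_c g_{0,1/N'})={}_c g_{0,1/N}$. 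If instead $N'=N\ell$ with $\ell$ prime and $\ell\nmid MN$, the constraint excludes precisely the pairs whose bottom row vanishes modulo $\ell$, and among the pairs with $\ell\alpha'=0$, $\ell\beta'=1/N$ there is exactly one such, namely $\alpha'=0$ and $\beta'=\quot{\ell^{-1}}/N$ (arising from the unique $e\bmod N$ with $\ell e\equiv1$); dividing the full product of \eqref{eq:dist3} by this single missing factor gives $\alpha_*({}_c g_{0,1/N'})={}_c g_{0,1/N}\cdot({}_c g_{0,\quot{\ell^{-1}}/N})^{-1}$, as asserted. I expect the real work here to be bookkeeping rather than ideas: pinning down the map on $\cH$ and an explicit transversal for $\Gamma'\backslash\Gamma$ compatibly with the $\GL_2$-structure and the chosen roots of unity at each level (where Kato's conventions pay off), and checking in the second case that the excluded set is exactly the pair $(0,\quot{\ell^{-1}}/N)$; the substantive input, namely the distribution relations, is already in hand.
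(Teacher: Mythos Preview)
Your argument is correct, and for the first case ($\Prime(N')=\Prime(N)$) it is essentially identical to the paper's: both compute the norm as a product over coset representatives indexed by bottom rows $(c,d)\bmod N'$ with $(c,d)\equiv(0,1)\bmod N$, rewrite each factor via $u^*({}_c g_{\alpha,\beta})={}_c g_{(\alpha,\beta)u}$, and invoke the distribution relation \eqref{eq:dist3}. The paper writes down the explicit transversal $s_{xy}=\stbt{1}{0}{Nx}{1+Ny}$; your description in terms of bottom rows is the same thing.

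For the second case ($N'=N\ell$ with $\ell\nmid MN$) your route genuinely differs from the paper's. The paper factors the projection through the intermediate curves $Y(M,N(\ell))$ and $Y(M(\ell),N)$ of \cite[\S2.8]{kato04}, quotes three separate pushforward identities from \cite[\S2.13]{kato04}, and combines them. You instead run the same direct coset argument as in case~1, observing that the transversal for $\Gamma'\backslash\Gamma$ is now indexed by the $\ell^2-1$ nonzero pairs $(c,d)\bmod\ell$, so that the norm is the full distribution product \eqref{eq:dist3} with exactly one factor omitted --- the one with $(c,d)\equiv(0,0)\bmod\ell$, which you correctly identify as ${}_c g_{0,\quot{\ell^{-1}}/N}$. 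This is shorter and treats both cases uniformly; the paper's detour through $Y(M,N(\ell))$ is how Kato originally organized the computation and ties into the framework used elsewhere in \cite{kato04}, but for the bare statement your direct count is cleaner. The bookkeeping you flag (that the bottom row mod $N'$ is a well-defined and bijective invariant of the coset) is routine: left multiplication by $\Gamma'$ fixes the bottom row mod $N'$, and since $\ell\nmid M$ any nonzero pair mod $\ell$ extends to an element of $\Gamma$.
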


   \begin{proof}
    These statements are proved in \cite[\S 2.11, 2.13]{kato04} (in the course of proving the norm-compatibility relation for Kato's elements of $K_2$, Propositions 2.3 and 2.4 of \emph{op.cit.}). We reproduce the proofs briefly here.

    Firstly, let us suppose $\Prime(mN') = \Prime(mN)$. Let $a = N' / N$. Since $\Prime(mN') = \Prime(mN)$, for each $(x, y) \in (\ZZ / a\ZZ)^2$ we may choose an element $s_{xy} \in \GL_2(\ZZ / mN' \ZZ)$ of the form
    \[ \begin{pmatrix} 1 & 0 \\ mNx & 1 + mN y \end{pmatrix}.\]
    These elements $s_{xy}$ are coset representatives for the quotient of the two subgroups of $\GL_2(\ZZ / mN'\ZZ)$ corresponding to $Y(m,mN)$ and $Y(m, mN')$, so we have
    \[ \alpha_* \left({}_c g_{0, 1/N'}\right) = \prod_{x, y} s_{xy}^* \left({}_c g_{0, 1/N'}\right).\]

    For any $M \ge 1$, any $u \in \GL_2(\ZZ /M \ZZ)$ and any $\alpha, \beta \in \left(\tfrac1M \ZZ / \ZZ\right)$, we have
    \[ u^* \left({}_c g_{\alpha, \beta}\right) = {}_c g_{\alpha', \beta'}\]
    where
    \[ (\alpha', \beta') = (\alpha, \beta) \cdot u;\]
    applying this to the formula above we deduce that
    \[ \alpha_* \left({}_c g_{0, 1/mN'}\right) = \prod_{x, y \in \ZZ/a\ZZ} \left({}_c g_{x/a, 1/mN' + y/a}\right).\]
    The latter expression is equal to the product of ${}_c g_{\gamma, \delta}$ over all pairs $(\gamma, \delta)$ such that $(a\gamma, a\delta) = (0, 1/mN)$; so using the distribution property of Equation \eqref{eq:dist1}, the product is ${}_c g_{0, 1/mN}$ as required.

    In the second case, where $N' = \ell N$ for $\ell \nmid MN$, we pass via the intermediate modular curves $Y(M, N(\ell))$ and $Y(M(\ell), N)$ described in \cite[\S 2.8]{kato04}. Let $\vp_\ell:Y(M, N(\ell))\to Y(M(\ell), N)$ be the map defined in \emph{op.cit.}, corresponding to $z \mapsto \ell z$ on $\cH$. We factor the projection $\alpha$ as $\alpha_1 \circ \alpha_2$, where $\alpha_1$ and $\alpha_2$ are the natural maps
    \[ Y(M, N\ell) \rTo^{\alpha_2} Y(M, N(\ell)) \rTo^{\alpha_1} Y(M, N).\]

    By \cite[Step 2 of \S2.13 and (2.13.2)]{kato04}, we have
    \begin{align*}
     (\alpha_2)_*\left({}_cg_{0,1/N\ell}\right) &= \vp_\ell^*\left({}_cg_{0,1/N}\right) \cdot \left({}_cg_{0,\quot{\ell^{-1}}/N}\right)^{-1};\\
     (\alpha_1)_*\vp_\ell^*(_cg_{0,1/N}) &= {}_cg_{0,1/N}\cdot({}_c g_{0,\quot{\ell^{-1}}/N})^{\ell};\\
     (\alpha_1)_*\left( {}_cg_{0,\quot{\ell^{-1}}/N}\right) &= \left( {}_cg_{0,\quot{\ell^{-1}}/N}\right)^{\ell + 1}
    \end{align*}
    (the last formula owing to the fact that the degree of $\alpha_1$ is $\ell + 1$). Hence, on combining these three equations, we obtain
    \begin{align*}
     \alpha_*({}_cg_{0,1/N\ell}) &= (\alpha_1)_* (\alpha_2)_* \left({}_cg_{0,1/N\ell}\right) \\
     &= (\alpha_1)_* \left[\vp_\ell^*\left({}_cg_{0,1/N}\right) \cdot \left({}_cg_{0,\quot{\ell^{-1}}/N}\right)^{-1}\right]\\
     &= \Big({}_cg_{0,1/N}\cdot({}_c g_{0,\quot{\ell^{-1}}/N})^{\ell}\Big) \cdot \left(\left({}_c g_{0,\quot{\ell^{-1}}/N}\right)^{-1}\right)^{\ell + 1} \\
     &= {}_cg_{0,1/N} \cdot \left({}_c g_{0,\quot{\ell^{-1}} / N}\right)^{-1}.
    \end{align*}
   \end{proof}

   \begin{remark}
    In the above proposition we excluded from consideration the case when $N' = N\ell$ where $\ell \mid M$ (but $\ell \nmid N$). This case can also be treated using Kato's methods, or deduced directly from Step 1 of \S 2.13 of \emph{op.cit.} by applying the element $\tbt 0 {-1} 1 0$, and one finds that in this case we have
    \[ \alpha_* \left({}_c g_{0, 1/N\ell}\right) = {}_c g_{0, 1/N} \cdot (\vp_\ell^{-1})^* \left({}_c g_{0, \quot{\ell^{-1}} / N}\right).\]
    However we shall solely be working with modular curves of the form $Y(m, mN)$, so we will not need this formula.
   \end{remark}


  \subsection{Integral models of modular curves}

   The following theorem is well-known:

   \begin{theorem}[Igusa]
    There exists a smooth scheme $\cY(N)$ over $\ZZ[\zeta_N, 1/N]$, representing the functor of Definition \ref{def:yn} on the category of $\ZZ[1/N]$-schemes.
   \end{theorem}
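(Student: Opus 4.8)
The statement is classical (it goes back to Igusa, and is treated systematically by Deligne--Rapoport and by Katz--Mazur); the plan is to realise $\cY(N)$ as a scheme which is finite étale over the moduli stack of elliptic curves, using the full level-$N$ structure to rigidify, and then to read off smoothness and the base ring from this description. Write $\mathcal{M}$ for the moduli stack of elliptic curves over $\ZZ[1/N]$; this is a smooth, separated Deligne--Mumford stack of relative dimension $1$, its smoothness being the standard deformation-theoretic computation that $H^2(E,\mathcal{T}_E)=0$ and $H^1(E,\mathcal{T}_E)$ is invertible for an elliptic curve $E$.

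First I would establish relative representability: the functor sending $E/S$ to the set of pairs $(e_1,e_2)$ of sections of $E$ generating $E[N]$ is represented, relatively over $\mathcal{M}$, by a \emph{finite étale} morphism $\pi\colon \cY(N)\to\mathcal{M}$. Indeed, since $N\in\cO_S^\times$ the group scheme $E[N]$ is finite étale of rank $N^2$, so $\underline{\Hom}_S\big((\ZZ/N\ZZ)^2,E[N]\big)$ is finite étale over $S$; the locus where such a homomorphism is an isomorphism — equivalently, where the images of the standard basis generate $E[N]$ — is open and closed, as this can be checked on geometric fibres. Hence $\pi$ is finite étale, in particular representable and affine over $\mathcal{M}$.

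The crucial point — which I expect to be the main obstacle — is rigidity: for $N\ge 3$ the triple $(E,e_1,e_2)$ admits no nontrivial automorphisms, so that $\cY(N)$ is an algebraic space rather than a genuine stack. By the general formalism of relatively representable moduli problems this can be verified on geometric fibres, and there it follows from a degree estimate: if $\alpha\ne 1$ is an automorphism of an elliptic curve over an algebraically closed field, then $\alpha$ is a torsion unit of $\End(E)$, so $\alpha-1$ is a nonzero isogeny with $\deg(\alpha-1)=2-\tr(\alpha)\le 4$; thus $E[N]\not\subseteq\ker(\alpha-1)$ once $N\ge 3$, and any automorphism fixing the basis $(e_1,e_2)$ of $E[N]$ is the identity. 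Consequently $\pi$ is representable by algebraic spaces, $\cY(N)$ is an algebraic space finite étale over $\mathcal{M}$, and composing with the (proper, quasi-finite) coarse moduli map $\mathcal{M}\to\mathbb{A}^1_j$ exhibits $\cY(N)$ as a quasi-finite separated algebraic space over a scheme — hence a scheme, by Zariski's main theorem for algebraic spaces.

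It remains to extract smoothness and the base ring. Since $\pi$ is étale and $\mathcal{M}$ is smooth of relative dimension $1$ over $\ZZ[1/N]$, the scheme $\cY(N)$ is smooth of relative dimension $1$ over $\ZZ[1/N]$, and representability of the original functor on $\ZZ[1/N]$-schemes is then immediate (affineness, if wanted, follows because the cusps of the compactification all lie over $j=\infty$, so $\cY(N)$ is affine over $\mathbb{A}^1_j$). Finally, for $(E,e_1,e_2)/S$ the Weil pairing $\langle e_1,e_2\rangle_{E[N]}$ is a section of $\mu_N$ of exact order $N$ — the pairing is perfect and alternating and $e_1,e_2$ form a basis of $E[N]$ — so it defines a canonical, functorial morphism $S\to\mu_N^\circ$, i.e. a morphism $\cY(N)\to\mu_N^\circ=\Spec\ZZ[\zeta_N,1/N]$. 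This exhibits $\cY(N)$ as a scheme over $\ZZ[\zeta_N,1/N]$, and since $\ZZ[\zeta_N,1/N]$ is étale over $\ZZ[1/N]$ it is also smooth over $\ZZ[\zeta_N,1/N]$.
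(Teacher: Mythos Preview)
Your argument is correct and follows the classical route (relative representability via finite \'etale level structures, rigidity for $N\ge 3$, smoothness inherited from $\mathcal{M}$, and the Weil pairing to obtain the $\ZZ[\zeta_N,1/N]$-structure). The paper does not actually give a proof here: it merely refers to \cite{delignerapoport73} for a sketch, and your outline is essentially the argument one finds there (and in Katz--Mazur), so you have supplied precisely what the paper omits.
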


   For a sketch of the proof, see e.g.~\cite{delignerapoport73}.

   \begin{proposition}
    The Siegel units ${}_c g_{\alpha, \beta}$, for all $(\alpha, \beta) \in (\tfrac1N \ZZ / \ZZ)^2 - \{(0, 0)\}$, are elements of $\cO(\cY(N))^\times$.
   \end{proposition}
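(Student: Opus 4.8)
The plan is to exploit the fact that Kato has already defined the Siegel units ${}_c g_{\alpha, \beta}$ as honest units on the generic fibre $Y(N)$, together with an explicit description of their divisors (or rather their non-vanishing) on any reasonable model. First I would recall the definition: ${}_c g_{\alpha, \beta} = \theta_c$-type construction, i.e.\ ${}_c g_{\alpha, \beta}$ is a ratio of values of the Siegel function / theta function $\theta$ along the torsion section $(\alpha, \beta)$, and $c$ is chosen coprime to $6N$ precisely so that this ratio has trivial divisor supported on the cusps even before inverting $6$. The key input from \cite{kato04} is that the divisor of ${}_c g_{\alpha, \beta}$ on the compactification $X(N)$ (over $\QQ$, or over $\ZZ[\zeta_N, 1/N]$) is supported entirely on the cuspidal subscheme; equivalently, ${}_c g_{\alpha, \beta}$ is a unit on the open modular curve $Y(N)$.

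The substantive point is therefore integrality: we must upgrade \quot{unit on $Y(N)_\QQ$} to \quot{unit on $\cY(N)$ over $\ZZ[\zeta_N, 1/N]$}. The steps I would carry out are: (1) Observe that $\cY(N)$ is smooth over $\ZZ[\zeta_N,1/N]$, hence regular, and that it has geometrically connected fibres of dimension $1$; so $\cY(N)$ is an integral regular scheme and a function on it is a unit iff its divisor is trivial. (2) Since $\cY(N) \otimes \QQ = Y(N)$ and ${}_c g_{\alpha,\beta}$ is already known to be a unit there, the only possible components of $\Div({}_c g_{\alpha,\beta})$ on $\cY(N)$ are \emph{vertical} divisors, i.e.\ (irreducible components of) the special fibres $\cY(N) \otimes \FF_\ell$ for primes $\ell \nmid N$. (3) Rule these out: for each such $\ell$, the reduction mod $\ell$ of the Siegel unit is the corresponding Siegel unit on $\cY(N) \otimes \FF_\ell$, constructed by exactly the same theta-function recipe (this is part of Kato's construction, which is carried out over $\ZZ[1/6N]$ or over the universal base from the start); since $\cY(N)\otimes\FF_\ell$ is itself a smooth affine curve and the reduced Siegel unit is a nonzero function on it with divisor supported on the cusps, it is not identically zero, so $\ell$ does not divide ${}_c g_{\alpha,\beta}$. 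Hence $\Div({}_c g_{\alpha,\beta}) = 0$ on $\cY(N)$ and ${}_c g_{\alpha,\beta} \in \cO(\cY(N))^\times$.

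Alternatively — and this is probably the cleanest route — one simply cites the fact that Kato constructs the Siegel units over the universal elliptic curve with level structure from the outset (the function ${}_c\theta$ on $E \setminus E[c]$ is defined over $\ZZ[1/6]$ for any elliptic curve $E$ over any base, and ${}_c g_{\alpha,\beta}$ is its pullback along the $\ZZ[1/N]$-section corresponding to $(\alpha,\beta)$), so that ${}_c g_{\alpha,\beta}$ is manifestly a regular function on $\cY(N)$; its inverse is ${}_{c} g_{\alpha, \beta}^{-1}$, which by the same construction applied to $1/{}_c\theta$ (or by the explicit $q$-expansion, which is a unit in $\ZZ[\zeta_N,1/N]((q^{1/N}))$) is also regular. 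This reduces the proposition to citing \cite[\S 1.4, Prop.~1.3]{kato04} or \cite{delignerapoport73} for the integral comparison of the cuspidal divisor.

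The main obstacle I anticipate is purely a matter of bookkeeping rather than mathematics: making sure that the condition \quot{$c$ coprime to $6N$} really does force the divisor to vanish on the open curve integrally and not merely after inverting $6$, and correctly matching Kato's normalizations for ${}_c\theta_E$ and the section $(\alpha,\beta)$ — in particular that the torsion section lands in the smooth locus away from the cusps so that the pullback is defined. Once the construction of ${}_c g_{\alpha,\beta}$ is recognized as taking place over the integral base $\ZZ[\zeta_N,1/N]$ (which is exactly how it is set up in \cite[\S 1.4]{kato04}), the statement is essentially immediate, so the proof should be a short paragraph rather than a computation.
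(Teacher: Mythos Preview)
Your alternative approach is exactly the paper's proof: one invokes Kato's construction of ${}_c\theta_E \in \cO(E \setminus E[c])^\times$ over an arbitrary base (\cite[Prop.~1.3]{kato04}), then observes that the torsion section $(\alpha,\beta)$ of order dividing $N$ on the universal curve over $\cY(N)$ lands in $E \setminus E[c]$ (since $(c,N)=1$) and hence pulls ${}_c\theta_E$ back to a unit on $\cY(N)$. The paper additionally cites \cite[\S 1.3]{scholl98} for the statement that $x^*({}_c\theta_E) \in \cO(S)^\times$ when $N$ is invertible on $S$, which is the clean way to package the ``section lands in the unit locus'' step.

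Your first approach (unit on generic fibre, then rule out vertical components) is valid but circuitous: step~(3) needs to know that the mod-$\ell$ reduction of ${}_c g_{\alpha,\beta}$ agrees with the Siegel unit constructed directly over $\FF_\ell$, and the only reason you know that is precisely because Kato's construction of ${}_c\theta_E$ is base-change compatible --- which is already the whole content of the direct argument. So the first route doesn't really avoid the second; it just postpones it. If you are writing this up, go straight to the alternative.
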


   \begin{proof}
    As shown in \cite[Prop 1.3]{kato04}, given an arbitrary scheme $S$, an elliptic curve $E/S$, and an integer $c > 1$ coprime to 6, there exists a canonical element ${}_c\theta_E \in \cO(E - E[c])^\times$ whose divisor is $c^2 (0) - E[c]$. As noted in \cite[\S 1.3]{scholl98}, if the base $S$ is integral and $E$ has a torsion section $x : S \to E$ of order $N$, where $N > 1$ is coprime to $c$ and either $N$ is invertible on $S$ or $N$ has at least two prime factors, then $x^* {}_c \theta_E \in \cO(S)^\times$. Applying this with $S = \cY(N)$, $E$ the universal elliptic curve over $S$, and $x$ the section $a e_1 + b e_2$ where $(\alpha, \beta) = (a/N, b/N)$, we deduce that ${}_c g_{\alpha, \beta}$ extends from $Y(N)$ to a unit on the integral model $\cY(N)$.
   \end{proof}

   \begin{remark}
    By passage to the quotient we also see that for any $b \in \ZZ/N\ZZ$, $b \ne 0$, the Siegel unit ${}_c g_{0, b/N}$ is a unit on the canonical $\ZZ[1/N]$-model $\cY_1(N)$ of $Y_1(N)$.
   \end{remark}


  \subsection{Hecke correspondences}
   \label{sect:heckecorr}

   We now recall how elements of the Hecke algebra can be interpreted as correspondences between modular curves, or, equivalently, as 1-cycles on a product of two modular curves.

   \begin{lemma}
    \label{lem:birational}
    Let $\alpha \in \GL_2^+(\QQ)$ and $\Gamma_1, \Gamma_2$ finite-index subgroups of $\SL_2(\ZZ)$. Then there is a unique morphism of varieties over $\overline{\QQ}$,
    \[ \sigma: Y(\Gamma_1 \cap \alpha^{-1} \Gamma_2 \alpha) \to Y(\Gamma_1) \times Y(\Gamma_2),\]
    such that the diagram
    \begin{diagram}
     \cH & \rTo^{1 \times \alpha} & \cH \times \cH\\
     \dTo & & \dTo\\
     Y(\Gamma_1 \cap \alpha^{-1} \Gamma_2 \alpha)(\CC) & \rTo^\sigma & (Y(\Gamma_1) \times Y(\Gamma_2))(\CC)
    \end{diagram}
    commutes (where the vertical arrows are the natural projection maps). The image of $\sigma$ is an irreducible closed subvariety of $Y(\Gamma_1) \times Y(\Gamma_2)$, and the map $\sigma$ is a birational equivalence onto its image.
   \end{lemma}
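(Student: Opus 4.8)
The plan is to assemble $\sigma$ from maps that the Remark following Definition~\ref{def:qqbarmodel} already guarantees to be algebraic and defined over $\QQbar$, and then to read off all of its properties from the complex-analytic description.

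Set $\Gamma' := \Gamma_1 \cap \alpha^{-1}\Gamma_2\alpha$; since $\alpha^{-1}\SL_2(\ZZ)\alpha$ is commensurable with $\SL_2(\ZZ)$ this is again a finite-index subgroup of $\SL_2(\ZZ)$, so $Y(\Gamma')$ is defined. The inclusion $\Gamma'\subseteq\Gamma_1$ gives a degeneracy map $d_1\colon Y(\Gamma')\to Y(\Gamma_1)$ over $\QQbar$ covering $\tau\mapsto\tau$ on $\cH$; the inclusion $\Gamma'\subseteq\alpha^{-1}\Gamma_2\alpha$ gives a degeneracy map $d_2'\colon Y(\Gamma')\to Y(\alpha^{-1}\Gamma_2\alpha)$ over $\QQbar$, also covering $\tau\mapsto\tau$; and the Remark provides an isomorphism $m_\alpha\colon Y(\alpha^{-1}\Gamma_2\alpha)\xrightarrow{\sim} Y(\Gamma_2)$ over $\QQbar$ covering $\tau\mapsto\alpha\tau$ (applied with ``$\Gamma$'' taken to be $\alpha^{-1}\Gamma_2\alpha$, so that $\alpha\Gamma\alpha^{-1}=\Gamma_2$). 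I would then set $\sigma := \bigl(d_1,\, m_\alpha\circ d_2'\bigr)$. Tracing the analytic descriptions through, $\sigma$ sends the class of $\tau\in\cH$ to $(\Gamma_1\tau,\, \Gamma_2\alpha\tau)$, so the square in the statement commutes. Uniqueness is then automatic: the image of $\cH$ is Zariski-dense in $Y(\Gamma')_\CC$, so any two $\QQbar$-morphisms making the square commute agree after base change to $\CC$ (as $Y(\Gamma')$ is reduced and the target separated), hence are equal.

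The next step is to show $\sigma$ is a \emph{finite} morphism. The map $d_1$ is finite, being obtained from the finite morphism of smooth projective curves $X(\Gamma')\to X(\Gamma_1)$ by restricting to the preimage of $Y(\Gamma_1)$. Now factor $\sigma$ as
\[ Y(\Gamma') \longrightarrow Y(\Gamma')\times Y(\Gamma_2) \xrightarrow{\ d_1\times\mathrm{id}\ } Y(\Gamma_1)\times Y(\Gamma_2), \]
where the first arrow is the graph of $m_\alpha\circ d_2'$: it is a closed immersion since $Y(\Gamma_2)$ is separated, and $d_1\times\mathrm{id}$ is finite, so $\sigma$ is finite. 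Hence its image $Z$ is closed. It is irreducible because $Y(\Gamma')$ is: the Riemann surface $Y(\Gamma')(\CC)=\Gamma'\backslash\cH$ is connected, and $Y(\Gamma')$ is smooth, so it is irreducible, and the continuous image of an irreducible space is irreducible. Thus $Z$ is an irreducible closed subvariety (a curve, by the final step).

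It remains to see that $\sigma\colon Y(\Gamma')\to Z$ is birational. Since source and target are reduced irreducible curves over $\QQbar$ and we are in characteristic zero, it is enough to show $\sigma_\CC$ has degree $1$, and I would do this by exhibiting a fibre with one point. Choose a nonempty open $U\subseteq Z_\CC$ over which $\sigma_\CC$ is finite étale of degree $d$; since the non-CM points of $\cH$ are uncountable while $Y(\Gamma')(\CC)\setminus\sigma_\CC^{-1}(U)$ is finite, we may pick $\tau\in\cH$ that is not a CM point with $[\tau]\in\sigma_\CC^{-1}(U)$. If $\sigma_\CC([\tau])=\sigma_\CC([\tau'])$, then $\tau'=\gamma_1\tau$ with $\gamma_1\in\Gamma_1$ and $\alpha\tau'=\gamma_2\,\alpha\tau$ with $\gamma_2\in\Gamma_2$, so $\beta:=\alpha^{-1}\gamma_2^{-1}\alpha\gamma_1\in\GL_2^+(\QQ)$ fixes $\tau$; as $\tau$ is not CM, any element of $\GL_2^+(\QQ)$ fixing $\tau$ is scalar, and $\det\beta=1$ forces $\beta=\pm 1$, whence $\gamma_1=\pm\alpha^{-1}\gamma_2\alpha\in\Gamma_1\cap(\pm\,\alpha^{-1}\Gamma_2\alpha)$, so $\pm\gamma_1\in\Gamma'$ and $[\tau']=[\tau]$. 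Thus the fibre of $\sigma_\CC$ over $\sigma_\CC([\tau])$ is a single point, $d=1$, and $\sigma$ is a birational equivalence onto $Z$. The one point requiring genuine care is this last step — establishing generic injectivity of $\sigma$ — which reduces to the elementary observation that a non-scalar element of $\GL_2^+(\QQ)$ fixing a point of $\cH$ forces that point to be imaginary quadratic; everything else is a matter of combining the maps furnished by the Remark with standard properties of degeneracy maps of modular curves.
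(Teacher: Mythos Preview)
Your argument is correct. The construction of $\sigma$ and its uniqueness follow Definition~\ref{def:qqbarmodel} and the Remark, exactly as the paper intends; your explicit finiteness argument (graph followed by $d_1\times\mathrm{id}$) is a clean justification of why the image is closed, which the paper simply asserts.

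The genuine point of comparison is the birationality step. The paper cites Proposition~\ref{prop:generically injective} from the appendix, applied to the commensurable Fuchsian groups $\Gamma_1$ and $\alpha^{-1}\Gamma_2\alpha$: that result shows $(\Gamma_1\cap\alpha^{-1}\Gamma_2\alpha)\backslash\cH \to (\Gamma_1\backslash\cH)\times(\alpha^{-1}\Gamma_2\alpha\backslash\cH)$ is injective off a finite set, and it is proved via fundamental-domain arguments (Lemmas~\ref{lemma:comparable} and~\ref{lem:finitefixedpoints}) that work for arbitrary Fuchsian groups of the first kind. Your route is the same reduction---if $[\tau]$ and $[\tau']$ have the same image then some $\beta\in\GL_2^+(\QQ)$ with $\det\beta=1$ fixes $\tau$---but you finish by observing that a non-scalar such $\beta$ forces $\tau$ to be CM, so any non-CM $\tau$ gives a singleton fibre. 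This is more elementary and self-contained for subgroups of $\SL_2(\ZZ)$, at the cost of using the arithmetic structure (rational entries) rather than a purely Fuchsian argument; the paper's appendix machinery, by contrast, is set up to be reused elsewhere (e.g.\ in Lemma~\ref{lem:distinctcurves}).

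One small wrinkle: your line ``$\pm\gamma_1\in\Gamma'$'' is not literally true if $-1\notin\Gamma_1$, but this is harmless in the paper's conventions, since $Y(\Gamma)$ is defined as a quotient of $Y(L)$ and the $\GL_2(\ZZ/L\ZZ)$-action on $Y(L)$ factors through $\pm 1$; hence $Y(\Gamma)(\CC)$ depends only on the image of $\Gamma$ in $\PSL_2(\ZZ)$, and $[\tau']=[\tau]$ follows.
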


   \begin{proof}
    After Definition \ref{def:qqbarmodel} and the remarks following, the only assertion that needs checking is that $\sigma$ is birational. However, by Proposition \ref{prop:generically injective} in the appendix (applied to the subgroups $\Gamma_1$ and $\alpha^{-1} \Gamma_2 \alpha$) we know that $\sigma$ is injective away from a finite set.
   \end{proof}

   \begin{remark}
    This proposition is well known in the special case $\Gamma = \SL_2(\ZZ)$ and $\alpha = \tbt{p}{0}{0}{1}$ for a prime $p$, where it shows that $Y_0(p)$ is the normalization of the subvariety of $\AA^2$ cut out by the classical modular equation of level $p$; see e.g.~\cite[\S VI.6]{delignerapoport73}.
   \end{remark}

   \begin{lemma}
    \label{lem:distinctcurves}
    Let $\Gamma, \Gamma'$ be as above, let $\alpha_1, \alpha_2 \in \GL_2^+(\QQ)$, and for $i = 1, 2$ let $C_i$ be the curve in $Y(\Gamma) \times Y(\Gamma')$ which is the image of points of the form $(z, \alpha_i z)$. If the double cosets $\Gamma' \alpha_1 \Gamma$ and $\Gamma' \alpha_2 \Gamma$ are distinct as subsets of $\PGL_2^+(\QQ)$, then $C_1 \cap C_2$ is a finite set.
   \end{lemma}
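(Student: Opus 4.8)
The plan is to use the irreducibility of the curves $C_i$, which is built into Lemma \ref{lem:birational}. Concretely, $C_i$ is the image of the morphism $\sigma_i \colon Y(\Gamma \cap \alpha_i^{-1}\Gamma'\alpha_i) \to Y(\Gamma) \times Y(\Gamma')$ provided by that lemma (taking $\Gamma_1 = \Gamma$, $\Gamma_2 = \Gamma'$ and $\alpha = \alpha_i$), so $C_i$ is an irreducible closed subvariety of dimension $1$, and $C_i(\CC)$ is the image of $\{(z, \alpha_i z) : z \in \cH\}$ under the natural projection $\cH \times \cH \to Y(\Gamma)(\CC) \times Y(\Gamma')(\CC)$. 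Since $C_1$ and $C_2$ are closed, $C_1 \cap C_2$ is a closed subvariety of the irreducible curve $C_1$, hence is either finite or equal to $C_1$; so it is enough to show that if $C_1 \subseteq C_2$ then $\Gamma'\alpha_1\Gamma = \Gamma'\alpha_2\Gamma$ as subsets of $\PGL_2^+(\QQ)$.

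For this I would pass to $\CC$ and test the inclusion $C_1 \subseteq C_2$ at a single sufficiently generic point of $\cH$. Let $\Sigma \subseteq \cH$ be the set of points fixed --- under the action of $\GL_2^+(\RR)$ on $\cH$ by Möbius transformations --- by some non-scalar element of the countable set $\alpha_1^{-1}\,\Gamma'\,\alpha_2\,\Gamma \subseteq \GL_2^+(\QQ)$. A non-scalar matrix $\stbt abcd \in \GL_2^+(\RR)$ has fixed points satisfying $cz^2 + (d-a)z - b = 0$, a nonzero polynomial of degree at most $2$ with real coefficients; its complex roots therefore form either a pair of non-real conjugates (exactly one of which lies in $\cH$) or a set of real numbers (none in $\cH$). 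Hence each such element fixes at most one point of $\cH$, $\Sigma$ is countable, and we may pick $z_0 \in \cH \setminus \Sigma$.

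Now suppose $C_1 \subseteq C_2$. The image of $(z_0, \alpha_1 z_0)$ lies on $C_1$, hence on $C_2$, so there is $w \in \cH$ with the same image as $z_0$ in $Y(\Gamma)(\CC)$ and with $\alpha_2 w$ having the same image as $\alpha_1 z_0$ in $Y(\Gamma')(\CC)$; equivalently, $w = \gamma z_0$ for some $\gamma \in \Gamma$ and $\alpha_2 w = \delta \alpha_1 z_0$ for some $\delta \in \Gamma'$. Then $g := \alpha_1^{-1}\delta^{-1}\alpha_2\gamma$ lies in $\alpha_1^{-1}\Gamma'\alpha_2\Gamma$ and fixes $z_0$, so by the choice of $z_0$ it is scalar; hence $\alpha_1 = \delta^{-1}\alpha_2\gamma$ in $\PGL_2^+(\QQ)$, so $\alpha_1 \in \Gamma'\alpha_2\Gamma$ and $\Gamma'\alpha_1\Gamma = \Gamma'\alpha_2\Gamma$, as required.

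I expect the only genuinely non-formal point to be the genericity step of the second paragraph --- namely, ensuring that the ``bad set'' $\Sigma$ does not exhaust $\cH$, which rests on the elementary fact that a non-identity real Möbius transformation fixes at most one point of the upper half-plane. The remaining ingredients (identifying $C_i$ with the image of $\sigma_i$, upgrading an infinite intersection to an inclusion via irreducibility, and translating the $\cH$-relation $\alpha_2\gamma z_0 = \delta\alpha_1 z_0$ back into an identity of double cosets in $\PGL_2^+(\QQ)$) are routine.
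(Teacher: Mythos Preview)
Your proof is correct, but it follows a different line than the paper's. The paper argues directly: for any $P \in C_1 \cap C_2$, lifting to $\cH \times \cH$ produces an element of $\Gamma \cdot \alpha_1^{-1}\alpha_2 \cdot (\alpha_2^{-1}\Gamma'\alpha_2)$ fixing the second coordinate, and then invokes the appendix result Lemma~\ref{lem:finitefixedpoints} (on fixed points of double cosets in a Fuchsian group of the first kind) to conclude that such $P$ lie in finitely many $(\Gamma \cap \alpha_2^{-1}\Gamma'\alpha_2)$-orbits, hence give finitely many points of $C_2$.

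Your route instead first uses irreducibility of $C_1$ to reduce the problem to ruling out the inclusion $C_1 \subseteq C_2$, and then tests this at a single generic point, using only the elementary observation that a non-scalar element of $\GL_2^+(\RR)$ fixes at most one point of $\cH$ and that the relevant set $\alpha_1^{-1}\Gamma'\alpha_2\Gamma$ is countable. This is more self-contained: it avoids the fundamental-domain argument of Lemma~\ref{lem:finitefixedpoints} entirely (and indeed only needs irreducibility of $C_i$, not the full birationality statement of Lemma~\ref{lem:birational}). The paper's approach, on the other hand, gives a direct finiteness bound on $C_1 \cap C_2$ without passing through irreducibility, and the appendix lemma it relies on is in any case needed elsewhere (for Proposition~\ref{prop:generically injective}).
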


   \begin{proof}
    Suppose $P \in C_1 \cap C_2$. Then $P$ admits liftings to $\cH \times \cH$ of the form $(z_1, \alpha_1 z_1)$ and $(z_2, \alpha_2 z_2)$; and since both of these points are preimages of $P$, we can find $\gamma \in \Gamma$ and $\gamma' \in \Gamma'$ such that $z_1 = \gamma z_2$ and $\alpha_1 z_1 = \gamma' \alpha_2 z_2$. Consequently, $z_2$ is fixed by the element
    \[ \gamma^{-1} \alpha_1^{-1} \gamma' \alpha_2 \in \Gamma \cdot \alpha_1^{-1} \alpha_2 \cdot (\alpha_2^{-1} \Gamma' \alpha_2).\]
    By Lemma \ref{lem:finitefixedpoints}, either $\gamma^{-1} \alpha_1^{-1} \gamma' \alpha_2$ is the identity in $\PGL_2^+(\QQ)$, in which case $\Gamma' \alpha_1 \Gamma$ and $\Gamma' \alpha_2 \Gamma$ have the same projective image; or $z_2$ lies in one of a finite set of orbits under the action of $\Gamma \cap \alpha_2^{-1} \Gamma' \alpha_2$, which implies that $P$ lies in one of a finite set of points of $C_2$, as required.
   \end{proof}

   \begin{lemma}
    \label{lem:doublecosetpullback}
    Let $\Gamma_1, \Gamma_2 \subseteq \SL_2(\ZZ)$, and let $\Gamma_1' \subseteq \Gamma_1$ and $\Gamma_2' \subseteq \Gamma_2$, with all four subgroups having finite index in $\SL_2(\ZZ)$. Let $\alpha \in \GL_2^+(\QQ)$, and suppose $\beta_1, \dots, \beta_h \in \GL_2^+(\QQ)$ are such that we have
    \[ \Gamma_2 \alpha \Gamma_1 = \bigsqcup_{i=1}^h \Gamma_2' \beta_i \Gamma_1'.\]

    Let $C$ be the curve in $Y(\Gamma_1) \times Y(\Gamma_2)$ which is the image of $Y(\Gamma_1 \cap \alpha^{-1} \Gamma_2 \alpha)$ under the map $\sigma$ of Lemma \ref{lem:birational}. Then the preimage of $C$ in $Y(\Gamma_1') \times Y(\Gamma_2')$ is the union of $h$ distinct curves $D_1, \dots, D_h$, where $D_i$ is the image of the map
    \[
     \begin{array}{rcl}
      \sigma_i:Y(\Gamma_1'\cap \beta_i^{-1}  \Gamma_2' \beta_i) &\rTo& Y(\Gamma_1') \times Y(\Gamma_2') \\
      z & \rMapsto & (z, \beta_i z).
     \end{array}
    \]

    Moreover, if for each $i$ we choose some $\gamma_i \in \Gamma_1$ such that $\beta_i \in \Gamma_2 \alpha \gamma_i$, then we have a commutative diagram
    \begin{equation}
     \label{eq:doublecosetpullback}
     \begin{diagram}
      Y(\Gamma_1'\cap \beta_i^{-1}  \Gamma_2' \beta_i) & \rTo^{\sigma_i} & Y(\Gamma_1') \times Y(\Gamma_2') \\
      \dTo^{z \mapsto \gamma_i z} & & \dTo\\
      Y(\Gamma_1 \cap \alpha_i^{-1} \Gamma_2 \alpha_i) & \rTo^{\sigma} & Y(\Gamma_1) \times Y(\Gamma_2)
     \end{diagram}
    \end{equation}
    where the right-hand vertical arrow is the natural projection map.
   \end{lemma}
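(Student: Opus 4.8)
The plan is to prove everything at the level of the complex uniformisations $\cH \to Y(\Gamma)(\CC)$ and then transport the conclusions to the $\QQbar$-models. This is legitimate because, by Definition~\ref{def:qqbarmodel} and the remark after it, all the varieties in sight are defined over $\QQbar$ and all the relevant maps (the projections, the maps $\sigma$ and $\sigma_i$ of Lemma~\ref{lem:birational}, and the candidate arrow $z \mapsto \gamma_i z$) are algebraic over $\QQbar$ and are determined by their effect on $\cH$; so it suffices to compare sets of $\CC$-points and to check commutativity after composing with the uniformisations.

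First I would identify the preimage set-theoretically. Write $\pi$ for the projection $Y(\Gamma_1') \times Y(\Gamma_2') \to Y(\Gamma_1) \times Y(\Gamma_2)$. By construction $C(\CC) = \{([z]_{\Gamma_1}, [\alpha z]_{\Gamma_2}) : z \in \cH\}$, and since $\cH$ surjects onto $Y(\Gamma_j')(\CC)$ every $\CC$-point of the source has the form $([z']_{\Gamma_1'}, [w']_{\Gamma_2'})$. Unwinding definitions, such a point lies in $\pi^{-1}(C)$ exactly when $w' \in (\Gamma_2 \alpha \Gamma_1) \cdot z'$; feeding in the hypothesis $\Gamma_2 \alpha \Gamma_1 = \bigsqcup_{i=1}^{h} \Gamma_2' \beta_i \Gamma_1'$, this holds iff $w' \in (\Gamma_2' \beta_i \Gamma_1') \cdot z'$ for some $i$, which is precisely the condition that $([z']_{\Gamma_1'}, [w']_{\Gamma_2'})$ lies on the image $D_i$ of $\sigma_i \colon z \mapsto ([z]_{\Gamma_1'}, [\beta_i z]_{\Gamma_2'})$. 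Hence $\pi^{-1}(C) = \bigcup_{i=1}^{h} D_i$ as reduced curves, each $D_i$ being an irreducible curve onto which $\sigma_i$ is birational by Lemma~\ref{lem:birational} (applied with $(\Gamma_1', \Gamma_2', \beta_i)$ in place of $(\Gamma_1, \Gamma_2, \alpha)$; note $\Gamma_1' \cap \beta_i^{-1} \Gamma_2' \beta_i$ is of finite index in $\SL_2(\ZZ)$, being an intersection of $\Gamma_1'$ with a subgroup commensurable to $\SL_2(\ZZ)$).

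Next I would check the $D_i$ are pairwise distinct, which is an application of Lemma~\ref{lem:distinctcurves}: the double cosets $\Gamma_2' \beta_i \Gamma_1'$ are distinct in $\GL_2^+(\QQ)$ by hypothesis, and they remain distinct in $\PGL_2^+(\QQ)$ because two scalar multiples of one another lying in a single $\SL_2(\ZZ)$-type double coset differ only by $\pm 1$ (compare determinants) --- after the usual harmless reduction to the case where $-1$ belongs to all four groups. Lemma~\ref{lem:distinctcurves} with $\alpha_1 = \beta_i$, $\alpha_2 = \beta_j$ then gives $D_i \cap D_j$ finite, hence $D_i \ne D_j$, for $i \ne j$.

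For the ``moreover'' clause, choose $\gamma_i \in \Gamma_1$ with $\beta_i = \delta_i \alpha \gamma_i$, $\delta_i \in \Gamma_2$; then $\gamma_i \beta_i^{-1} = \alpha^{-1} \delta_i^{-1}$, so conjugation by $\gamma_i$ carries $\Gamma_1' \cap \beta_i^{-1} \Gamma_2' \beta_i$ into $\Gamma_1 \cap \alpha^{-1} \Gamma_2 \alpha$ (using $\gamma_i \Gamma_1' \gamma_i^{-1} \subseteq \Gamma_1$ and $\gamma_i \beta_i^{-1} \Gamma_2' \beta_i \gamma_i^{-1} = \alpha^{-1} \delta_i^{-1} \Gamma_2' \delta_i \alpha \subseteq \alpha^{-1} \Gamma_2 \alpha$), so $z \mapsto \gamma_i z$ does descend to the left-hand vertical arrow. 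On $\cH$, ``down then $\sigma$'' sends $z$ to $([\gamma_i z]_{\Gamma_1}, [\alpha \gamma_i z]_{\Gamma_2})$, whereas ``$\sigma_i$ then $\pi$'' sends $z$ to $([z]_{\Gamma_1}, [\beta_i z]_{\Gamma_2}) = ([z]_{\Gamma_1}, [\delta_i \alpha \gamma_i z]_{\Gamma_2})$; these coincide since $\gamma_i \in \Gamma_1$ and $\delta_i \in \Gamma_2$, so the diagram commutes. The step I expect to need the most care is the equality $\pi^{-1}(C) = \bigcup_i D_i$: what makes it work is that the hypothesised double-coset decomposition is \emph{exhaustive}, which is exactly what guarantees that no component of the preimage is missed (and none repeated), with the $\pm 1$ bookkeeping in the distinctness step a close second; everything else is a routine dictionary between double cosets and orbits in $\cH \times \cH$.
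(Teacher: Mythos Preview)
Your proof is correct and follows essentially the same approach as the paper's: both verify the commutative diagram, invoke Lemma~\ref{lem:distinctcurves} for distinctness, and check exhaustion of the preimage via the double-coset decomposition (the paper does these in a different order, proving the diagram first and exhaustion last, but the content is the same). You are in fact slightly more careful than the paper, which neither explicitly verifies that $z \mapsto \gamma_i z$ descends to the left vertical arrow nor addresses the $\pm 1$ bookkeeping needed to apply Lemma~\ref{lem:distinctcurves}.
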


   \begin{proof}
    The definition of $\gamma_i$ implies that diagram \eqref{eq:doublecosetpullback} commutes, from which it is clear that $D_i$ is a lifting of $C$. By lemma \ref{lem:distinctcurves}, the $D_i$ are distinct.

    It remains only to check that the union of the $D_i$ exhausts the preimage of $C$. Let $P \in C$, and let $\tilde P$ be any lifting of $P$ to $\cH \times \cH$. Then we have $P = (\gamma_1 z, \gamma_2 \alpha z)$ for some $\gamma_1 \in \Gamma_1$ and $\gamma_2 \in \Gamma_2$; so $P = (w, \gamma_2 \alpha \gamma_1^{-1} w)$, where $w = \gamma_1 z \in \cH$. We have $\gamma_2 \alpha \gamma_1^{-1} \in \Gamma_2' \beta_i \Gamma_1'$ for some $i \in \{1, \dots, h\}$, so in particular the image of $\tilde P$ in $Y(\Gamma_1') \times Y(\Gamma_2')$ lies in $D_i$ as required.
   \end{proof}

   \begin{lemma}
    \label{lem:pushpull}
    Let $\Gamma$ be a finite-index subgroup of $\SL_2(\ZZ)$ and let $\Gamma_1, \Gamma_2$ be finite-index subgroups of $\Gamma$ such that $\Gamma_1 \Gamma_2 = \Gamma$. Then, in the diagram of modular curves
    \begin{diagram}
     Y(\Gamma_1 \cap \Gamma_2) & \rTo^\alpha & Y(\Gamma_1) \\
     \dTo^\beta & & \dTo^\gamma \\
     Y(\Gamma_2) & \rTo^\delta & Y(\Gamma)
    \end{diagram}
    where $\alpha, \beta, \gamma, \delta$ are the natural projection maps, the two maps $\cO(Y(\Gamma_1))^\times \to \cO(Y(\Gamma_2))^\times$ given by $\beta_* \circ \alpha^*$ and $\delta^* \circ \gamma_*$ coincide, and similarly the maps $\cO(Y(\Gamma_2))^\times \to \cO(Y(\Gamma_1))^\times$ given by $\alpha_* \circ \beta^*$ and $\gamma^* \circ \delta_*$ coincide.
   \end{lemma}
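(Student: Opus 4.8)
The plan is to identify this square, away from a finite set of bad points, with a Cartesian square, and then deduce both identities from base change for the multiplicative pushforward (norm) map. To begin, note that $\alpha,\beta,\gamma,\delta$ are finite morphisms between smooth affine curves over $\QQbar$, hence finite and flat; so each carries a well-defined norm map on unit groups, defined as the determinant of the multiplication action on the locally free sheaf of algebras obtained by pushforward, and the norm of a unit is a unit. In particular $\beta_*\alpha^* u$ and $\delta^*\gamma_* u$ are units on the smooth affine curve $Y(\Gamma_2)$; since a unit on a smooth affine curve is determined by its restriction to any dense open subscheme, it suffices to prove the two identities after restricting along the preimage of a suitably chosen dense open $U\subseteq Y(\Gamma)$.

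Take $U$ to be the complement in $Y(\Gamma)$ of the finitely many images of the elliptic points of $\cH$, so that $\alpha,\beta,\gamma,\delta$ all become finite étale after base change to $U$. I would then check that the natural morphism
\[ Y(\Gamma_1\cap\Gamma_2)|_U \longrightarrow \bigl(Y(\Gamma_1)\times_{Y(\Gamma)} Y(\Gamma_2)\bigr)|_U \]
is an isomorphism. Both sides are finite étale over $U$, so this reduces to checking a bijection on $\CC$-points, and over $\CC$ the usual covering-space description of modular curves identifies the set of connected components of $Y(\Gamma_1)\times_{Y(\Gamma)}Y(\Gamma_2)$ with the double coset set $\Gamma_1\backslash\Gamma/\Gamma_2$, the image of $Y(\Gamma_1\cap\Gamma_2)$ being the component of the trivial class. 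The hypothesis $\Gamma_1\Gamma_2=\Gamma$ says exactly that $\Gamma_1\backslash\Gamma/\Gamma_2$ is a single point, so the fibre product is connected and the displayed map is a bijection.

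Granting this, over $U$ we have a Cartesian square in which $\gamma$ (and hence its base change $\beta$) is finite étale. Since the formation of the norm -- the determinant of the multiplication action on $\gamma_*\cO$ -- commutes with base change along $\delta$, one obtains $\delta^*\circ\gamma_* = \beta_*\circ\alpha^*$ on unit groups over $U$, and, by the symmetric argument with the roles of $\gamma$ and $\delta$ interchanged, $\gamma^*\circ\delta_* = \alpha_*\circ\beta^*$. Combining with the reduction to $U$ above gives the lemma. Concretely, lifting to the upper half-plane: if $\tilde u$ is the $\Gamma_1$-invariant lift of $u\in\cO(Y(\Gamma_1))^\times$, then both $\delta^*\gamma_* u$ and $\beta_*\alpha^* u$ lift to $\prod_g \tilde u\circ g$, the product over a set of coset representatives for $\Gamma_1\backslash\Gamma$, and $\Gamma_1\Gamma_2=\Gamma$ is precisely what makes $(\Gamma_1\cap\Gamma_2)\backslash\Gamma_2\to\Gamma_1\backslash\Gamma$ a bijection. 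I expect the only real obstacle to be the bookkeeping at the elliptic points (and, in the projective analogue, the cusps), where the naive fibre product can fail to be reduced or to have the expected components; passing to the étale locus $U$ and using that units on a smooth affine curve are detected on dense opens is exactly what sidesteps this, while everything else is formal.
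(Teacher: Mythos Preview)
Your proposal is correct, and in fact you sketch two independent arguments, both of which appear in the paper in some form.

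Your main route---reduce to a dense open $U$ where all maps are finite \'etale, identify the square over $U$ with a Cartesian square via the double-coset count, and apply base change for the norm---is exactly the ``categorical'' interpretation the paper records in the remark immediately following its proof (the diagram is Cartesian in the category of curves and dominant rational maps, and the push-pull identity is then a general property of fibre products). Your care with the elliptic locus makes this rigorous.

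The paper's own proof, by contrast, avoids the fibre-product language entirely and is slightly more direct: since the pullback maps are injective, it suffices to compare $\beta^*\beta_*\alpha^*$ with $\beta^*\delta^*\gamma_* = \alpha^*\gamma^*\gamma_*$; and $\beta^*\beta_*$ is the product over coset representatives for $(\Gamma_1\cap\Gamma_2)\backslash\Gamma_2$, while $\gamma^*\gamma_*$ is the product over representatives for $\Gamma_1\backslash\Gamma$, which share a common set of representatives precisely because $(\Gamma_1\cap\Gamma_2)\backslash\Gamma_2 \to \Gamma_1\backslash\Gamma$ is a bijection. Your final ``concretely'' paragraph is this argument in compressed form. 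The paper's version has the virtue of needing no geometric input beyond the coset description of the norm; your version explains \emph{why} the identity holds (base change) and would generalise more readily.
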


   \begin{proof}
    Note that the hypotheses are symmetric in $\Gamma_1$ and $\Gamma_2$, so it suffices to show that $\beta_* \circ \alpha^* = \delta^* \circ \gamma_*$. Moreover, since all of the morphisms in the diagram are surjective, the corresponding pullback morphisms are injective, so it suffices to show that
    \[ \beta^* \circ \beta_* \circ \alpha^* = \beta^* \circ \delta^* \circ \gamma_*.\]
    Since the diagram commutes, this is equivalent to
    \[ (\beta^* \circ \beta_*) \circ \alpha^* = \alpha^* \circ (\gamma^* \circ \gamma_*).\]
    However, the map $(\beta^* \circ \beta_*)$ is given by the product over translates by coset representatives for $(\Gamma_1 \cap \Gamma_2) \backslash \Gamma_2$, and the map $(\gamma^* \circ \gamma_*)$ is given by the product over coset representatives for $\Gamma_1 \backslash \Gamma$. However, since $\Gamma_1 \Gamma_2 = \Gamma$, the natural map
    \[ (\Gamma_1 \cap \Gamma_2) \backslash \Gamma_2 \to \Gamma_1 \backslash \Gamma\]
    is surjective. Thus these two quotients admit a common set of coset representatives, so the two maps coincide.
   \end{proof}

   \begin{remark}
    One can interpret this more ``categorically'' as follows: our hypotheses imply that the diagram in the statement of the lemma is Cartesian (in the category of curves and dominant rational maps), so $Y(\Gamma_1 \cap \Gamma_2)$ is birational to the fibre product of $Y(\Gamma_1)$ and $Y(\Gamma_2)$ over $Y(\Gamma)$. The symmetry of pushforward and pullback is then a general property of fibre products.
   \end{remark}


   \subsection{Motivic cohomology, higher Chow groups and the Gersten complex}

    We now recall the definition of the higher Chow group $\CH^2(X, 1)$ of a variety $X$, and how it may be explicitly calculated using the Gersten complex. In this section $k$ may be any field of characteristic $0$. Let $\Var(k)$ be the category of varieties over $k$, by which we mean separated schemes of finite type over $k$. Let $\Sm(k)$ be the full subcategory of smooth varieties. Let $A=\QQ$ or $\ZZ$ be the coefficient ring.

    \begin{definition}[{Voevodsky, cf.~\cite[Definition 3.4]{mazzavoevodskyweibel06}}]
     Let $X\in \Sm(k)$, and $p, q \in \ZZ$ with $q \ge 0$. Define the \emph{motivic cohomology} of $X$ to be
     \[ H^p_{\mathcal{M}}(X,A(q))= \mathbb{H}^p_{\mathrm{Zar}}(X, \ZZ(q) \otimes A),\]
     where $\ZZ(q)$ denotes Voevodsky's motivic complex of sheaves on $X$, and $\mathbb{H}^p_{\mathrm{Zar}}$ denotes hypercohomology (with respect to the Zariski topology).
    \end{definition}

    \begin{remark}{\mbox{~}}
     \begin{enumerate}
      \item We use a slightly different notation than Voevodsky; the notation used in \emph{op.cit.} is $H^{i,j}(X, A)$. Our choice of notation follows \cite{huber00} and \cite{levine04}.
      \item Note that $H^p_{\mathcal{M}}(X,A(q))$ is zero for $p > \inf(2q, q + \dim X)$. It is \emph{not} known to be zero for $p < 0$, since the motivic complex is not bounded below.
     \end{enumerate}
    \end{remark}

    We shall not use the definition of motivic cohomology directly; we shall rather use the fact that these groups are isomorphic to Bloch's higher Chow groups:

    \begin{theorem}
     For any $X \in \Sm(k)$ and any $p,q\geq 0$, there is a natural isomorphsim
     \[ H^{p}(X,\ZZ(q))\cong \CH^q(X,2q-p).\]
     Here, the higher Chow groups are those defined by Bloch.
    \end{theorem}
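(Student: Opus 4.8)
The plan is not to reprove this fundamental comparison from scratch, but to indicate the strategy behind the theorem of Voevodsky (see \cite{mazzavoevodskyweibel06}, or \cite{levine04} for a survey). Both sides are the Zariski hypercohomology of a complex of sheaves on $X_{\mathrm{Zar}}$: on the one hand Voevodsky's motivic complex $\ZZ(q)$, and on the other hand a reindexing of Bloch's cycle complex $z^q(-,\bullet)$, where $z^q(U,p)$ denotes the group of codimension-$q$ cycles on $U \times \Delta^p$ meeting all faces properly, equipped with the simplicial differential. So the goal is to exhibit a natural map of complexes of Zariski sheaves $\ZZ(q) \to z^q(-,2q-\bullet)$ and show it is a quasi-isomorphism; passing to Zariski hypercohomology on a smooth $X$ then yields $H^p_{\mathcal{M}}(X,\ZZ(q)) \cong \CH^q(X,2q-p)$, and the functoriality of both sides under flat pullback (and, with $\QQ$-coefficients, under proper pushforward with the appropriate shift) gives the naturality.

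The key steps I would carry out are the following. First, show that $z^q(-,\bullet)$, viewed as a presheaf of complexes on $\Sm(k)$, satisfies Zariski (indeed Nisnevich) descent, so that its Zariski hypercohomology on a smooth $X$ recovers $\CH^q(X,2q-\bullet)$ in the naive sense of Bloch; this rests on Bloch's localization theorem and the attendant Mayer--Vietoris property for higher Chow groups, which in turn depend on his moving lemmas. Second, construct the comparison map: Voevodsky's $\ZZ(q)$ is built from the presheaves with transfers attached to the smash powers $(\mathbb{G}_m,1)^{\wedge q}$, equivalently $\AA^q/(\AA^q \smallsetminus 0)$, and one lands in Bloch's complex using the cycle class of the diagonal together with $\AA^1$-homotopy invariance. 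Third, verify that this map is a stalkwise quasi-isomorphism: here one invokes Voevodsky's identification of $\ZZ(q)$ with the complex of equidimensional cycles, the cancellation theorem, and Suslin-type rigidity, reducing the comparison essentially to the case of a field, where both complexes compute the motivic cohomology of that field (with $K^M_q$ in the top degree).

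The main obstacle is this last step, and more precisely the descent and functoriality input in the first step: establishing that Bloch's cycle complex defines, up to quasi-isomorphism, a sheaf with well-behaved transfers and descent is exactly the delicate point, and over an arbitrary base field one needs Voevodsky's refined argument passing through the $\mathrm{cdh}$-topology. In the present setting $k$ has characteristic zero, so one may instead use resolution of singularities and the Friedlander--Suslin--Voevodsky comparison, which is somewhat softer; but either way the heart of the proof is the interplay of Bloch's moving/localization techniques with the homotopy-invariance formalism for presheaves with transfers, and that is where all the real work lies.
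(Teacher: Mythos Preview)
Your sketch is correct and accurately summarizes the strategy behind Voevodsky's comparison theorem. The paper, however, does not give any argument for this statement at all: its proof consists solely of the references ``See \cite[Corollary 2]{voevodsky02} or \cite[Theorem 1.2]{levine04}.'' So you have gone well beyond what the paper does, providing an outline of the actual content of those cited sources rather than merely invoking them.
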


    \begin{proof}
     See \cite[Corollary 2]{voevodsky02} or \cite[Theorem 1.2]{levine04}.
    \end{proof}

    We also have an alternative description of these groups in terms of Quillen $K$-theory. We will actually be interested in the special case when $p=3$ and $q=2$. Here, we use a result of Landsburg \cite{landsburg91}. For $X$ smooth over a field, $m \ge 0$ and $0 \le p \le m$, he constructs a map
    \[ \Psi_{m,p}:\CH^m(X,m-p)\rTo H^p(X,\mathscr{K}_m),\]
    where $\mathscr{K}_m$ is the sheafification of $U \mapsto K_m(U)$ on $X$. Here, $K_m$ denotes the $m$-th Quillen $K$-group.

    \begin{theorem}
     The map $\Psi_{m,p}$ is an isomorphism for $p=m-1$.
    \end{theorem}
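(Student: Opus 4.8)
The statement to prove is: \emph{The map $\Psi_{m,p}: \CH^m(X, m-p) \to H^p(X, \mathscr{K}_m)$ is an isomorphism for $p = m-1$.}

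\medskip

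The plan is to reduce to the known coniveau/Gersten resolution computation of both sides. First I would recall Landsburg's construction of $\Psi_{m,p}$ and identify what both groups look like when $p = m-1$. On the Chow side, $\CH^m(X, m-p) = \CH^m(X, 1)$, Bloch's higher Chow group of codimension-$m$ cycles with a single simplicial parameter; by Bloch's localization and the Gersten-resolution arguments this group is the cohomology in degree $m-1$ of the Gersten-type complex built from the cycle complexes of the points of $X$, i.e.\ it fits into the ``next-to-top'' spot of the coniveau spectral sequence for higher Chow groups. On the $K$-theory side, $H^{m-1}(X, \mathscr{K}_m)$ is by definition the $(m-1)$-st Zariski cohomology of the sheafified $K$-theory, which by the Gersten resolution for $K$-theory (valid here since $X$ is smooth over a field — Quillen's theorem, via the Bloch--Ogus--Gabber machinery) is computed by the Gersten complex
\[
K_m(k(X)) \to \bigoplus_{x \in X^{(1)}} K_{m-1}(k(x)) \to \cdots \to \bigoplus_{x \in X^{(m-1)}} K_1(k(x)) \to \bigoplus_{x \in X^{(m)}} K_0(k(x)),
\]
and $H^{m-1}(X, \mathscr{K}_m)$ is the cohomology of this complex at the penultimate term $\bigoplus_{x \in X^{(m-1)}} K_1(k(x))$.

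\medskip

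The key point is then to compare these two Gersten-type complexes term by term near the end. For a field $F$ (a residue field $k(x)$), Bloch's higher Chow groups of $\Spec F$ in the relevant range agree with Milnor/Quillen $K$-groups: $\CH^j(\Spec F, j) \cong K_j^M(F)$ (Nesterenko--Suslin--Totaro), and more to the point $K_1(F) = F^\times = \CH^1(\Spec F, 1)$ and $K_0(F) = \ZZ = \CH^0(\Spec F, 0)$. So the tail of the coniveau complex for $\CH^m(X, 1)$ — the terms indexed by codimension-$(m-1)$ and codimension-$m$ points — is literally $\bigoplus_{x \in X^{(m-1)}} k(x)^\times \to \bigoplus_{x \in X^{(m)}} \ZZ$, with the differential being (a sign times) the divisor/tame-symbol map, which is exactly the tail of the $K$-theory Gersten complex above. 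One checks that Landsburg's map $\Psi_{m,m-1}$ is precisely the map induced on cohomology by this identification of complexes in their top two degrees; since cohomology at the penultimate term depends only on the last three terms of a complex (here two terms and the incoming differential from codimension $m-2$), and these match, $\Psi_{m,m-1}$ is an isomorphism.

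\medskip

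The main obstacle I expect is the bookkeeping needed to make the comparison of differentials rigorous: verifying that the boundary map in Bloch's cycle complex, restricted to the top codimension strata, coincides (up to the unavoidable sign conventions) with the tame symbol / Gersten differential in $K$-theory, and that Landsburg's $\Psi_{m,p}$ is genuinely the map of coniveau complexes one expects rather than merely compatible with it up to higher filtration steps. This is really a matter of unwinding the construction in \cite{landsburg91} and invoking the compatibility of the various Gersten resolutions (for higher Chow groups, for Milnor $K$-theory, and for Quillen $K$-theory) established in that paper and in the Bloch--Ogus--Gabber framework; no genuinely new input is needed, but care with functoriality and signs is essential. Since we only need $\Psi_{m,m-1}$, one can sidestep the subtler statements about $\Psi_{m,p}$ for smaller $p$ (where the full strength of Gersten's conjecture and the agreement of all intermediate terms would be required), and simply cite \cite{landsburg91} for the isomorphism in this specific range.
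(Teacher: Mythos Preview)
Your sketch is correct and captures the essence of Landsburg's argument; the paper's own proof is simply the one-line citation ``See \cite[Theorem 2.5]{landsburg91}.'' One small point worth making explicit in your outline: to conclude that the cohomology at the penultimate spot agrees, you need not only the last two terms but also the incoming differential from codimension $m-2$ to have the same image, and this uses that $K_2(F) = K_2^M(F) = \CH^2(\Spec F, 2)$ (Matsumoto together with Nesterenko--Suslin--Totaro), or at least that the $K$-theoretic tame symbol on $K_2$ factors through Milnor $K_2$.
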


    \begin{proof}
     See \cite[Theorem 2.5]{landsburg91}.
    \end{proof}

    \begin{remark}
     For $p < m-1$ the map $\Psi_{m, p}$ may not be an isomorphism in general. As pointed out to us by Landsburg in a discussion on \url{http://mathoverflow.net/}, if $X = \Spec(k)$, then $\CH^m(X, m)$ is the Milnor $K$-group $K_m^M(k)$ (by a theorem of Nesterenko--Suslin) and the map $\Psi_{m, 0}: K_m^M(k) \to H^0(X, \mathscr{K}_m) = K_m(k)$ is the natural map from Milnor to Quillen $K$-theory, which is not generally an isomorphism for $m > 2$.
    \end{remark}

    Finally, we address the question of how to explicitly describe elements of these groups.

    \begin{proposition}\label{prop:gerstenresolution}
     Suppose that $X$ is a smooth variety of finite type over a field $k$. Then there is a resolution of the sheaf $\mathscr{K}_m$
    \[ 0\rTo \mathscr{K}_m\rTo \coprod_{x\in X^0} (i_x)_* K_m(k(x))\rTo  \coprod_{x\in X^1} (i_x)_* K_{m-1}(k(x))\rTo\dots\]
    \end{proposition}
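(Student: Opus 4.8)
The plan is to deduce this from Quillen's resolution of the Gersten conjecture for smooth varieties over a field, reducing to a statement about local rings that is then checked stalkwise. Recall that for each $i \ge 0$ one sets $\mathscr{G}^i = \coprod_{x \in X^i} (i_x)_* K_{m-i}(k(x))$, where $X^i$ is the set of codimension-$i$ points and $i_x \colon \Spec k(x) \to X$ is the canonical morphism; the boundary maps in the localization sequences in $K$-theory, assembled along the filtration of $X$ by codimension of support, make $\mathscr{G}^\bullet$ into a complex of Zariski sheaves equipped with an augmentation $\mathscr{K}_m \to \mathscr{G}^0$ (induced on an affine open $U = \Spec A$ by $K_m(A) \to \coprod_x K_m(k(x))$, the sum being over the generic points of $U$). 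The claim to prove is that the augmented complex $0 \to \mathscr{K}_m \to \mathscr{G}^0 \to \mathscr{G}^1 \to \cdots$ is exact.

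First I would check that exactness can be tested on stalks at points $y \in X$, and that the stalk of the augmented complex at $y$ is precisely the augmented Gersten complex
\[ 0 \longrightarrow K_m(R) \longrightarrow \coprod_{z \in (\Spec R)^0} K_m(k(z)) \longrightarrow \coprod_{z \in (\Spec R)^1} K_{m-1}(k(z)) \longrightarrow \cdots \]
of the local ring $R = \cO_{X,y}$. This uses three elementary facts: that $K$-theory commutes with filtered colimits of rings, so $(\mathscr{K}_m)_y = K_m(R)$; that $(i_x)_* K_j(k(x))$ has stalk $K_j(k(x))$ at $y$ when $y \in \overline{\{x\}}$ and $0$ otherwise; and that, since $X$ is of finite type over a field (hence catenary) and smooth (so each $\cO_{X,y}$ is a regular local domain), the codimension in $X$ of $\overline{\{x\}}$ agrees with the codimension in $\Spec R$ of the corresponding prime whenever $y \in \overline{\{x\}}$.

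It then remains to prove exactness of this last complex for every regular local ring $R$ that is a localization of a finitely generated $k$-algebra. This is exactly Quillen's theorem on the Gersten conjecture in the geometric case (Quillen, \emph{Higher algebraic $K$-theory I}, \S 7), and I would simply invoke it; its proof proceeds via the geometric presentation lemma for smooth affine varieties, which is the one genuinely hard ingredient of the whole argument. Granting it, all stalk complexes are exact, so $\mathscr{G}^\bullet$ is exact in positive degrees and the augmentation identifies $\mathscr{K}_m$ with $\ker(\mathscr{G}^0 \to \mathscr{G}^1)$, which is the asserted resolution; one can moreover observe directly that each $\mathscr{G}^i$ is flasque, so this is in fact a flasque resolution, suitable for computing Zariski cohomology and hence for the coniveau spectral sequence used later. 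Everything apart from Quillen's theorem is bookkeeping with codimensions and with stalks of pushforward sheaves, and since we only ever need the case $X$ smooth over a field, Quillen's result may be treated as a black box.
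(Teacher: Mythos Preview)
Your proposal is correct and matches the paper's approach: the paper's entire proof is the single line ``See \cite{quillen73}'', and what you have written is a faithful unpacking of how Quillen's result is applied (stalkwise reduction to the Gersten conjecture for regular local rings essentially of finite type over a field, then invoking Quillen's theorem from \S 7 of \emph{Higher algebraic $K$-theory I}). There is nothing to add or correct.
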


    \begin{proof}
     See \cite{quillen73}.
    \end{proof}

    \begin{corollary}\label{cor:H1viaGersten}
     The group $H^1(X,\mathscr{K}_2)$ is the first homology group of the ``Gersten complex''
     \begin{equation}\label{Gerstencomplex}
      \Gerst_2(X): \coprod_{x \in X^0} K_2(k(x))\rTo^{d^0} \coprod_{x\in X^1} k(x)^\times \rTo^{d^1} \coprod_{x\in X^2}\ZZ,
     \end{equation}
    where $d^0$ is the tame symbol map, and $d^1$ maps a function to its divisor (c.f. \cite[Section 2]{flach92}).
    \end{corollary}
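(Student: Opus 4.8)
The plan is to deduce the statement from the Gersten resolution of $\mathscr{K}_2$ given in Proposition \ref{prop:gerstenresolution} together with the identification $\CH^2(X,1) \cong H^1(X,\mathscr{K}_2)$ that follows from the earlier theorems. First I would take the resolution of $\mathscr{K}_2$ from Proposition \ref{prop:gerstenresolution} in the case $m=2$:
\[ 0 \rTo \mathscr{K}_2 \rTo \coprod_{x \in X^0} (i_x)_* K_2(k(x)) \rTo \coprod_{x \in X^1} (i_x)_* k(x)^\times \rTo \coprod_{x \in X^2} (i_x)_* \ZZ \rTo \cdots, \]
using the computation $K_1(k(x)) = k(x)^\times$ and $K_0(k(x)) = \ZZ$ for the relevant terms. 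Since each term $(i_x)_* K_{2-j}(k(x))$ is a flasque (hence acyclic) sheaf on $X$ — being a direct sum of pushforwards of constant sheaves from points — this is an acyclic resolution, so the Zariski hypercohomology $H^i(X,\mathscr{K}_2)$ is computed as the cohomology of the complex of global sections, which is exactly the Gersten complex $\Gerst_2(X)$ displayed in \eqref{Gerstencomplex}. In particular $H^1(X,\mathscr{K}_2)$ is the cohomology of $\Gerst_2(X)$ at the middle term, i.e. $\ker(d^1)/\operatorname{im}(d^0)$; since the complex is placed in degrees $0,1,2,\dots$, this is by definition the first homology group of the Gersten complex.

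It then remains to identify the differentials. For $d^1$: the connecting map in the Gersten/Quillen resolution out of $\coprod_{x \in X^1} k(x)^\times$ into $\coprod_{x \in X^2} \ZZ$ is, by Quillen's localization theory, the sum of the boundary maps in the localization sequences for the codimension-one points, which on $K_1 = (\cdot)^\times$ is precisely the order-of-vanishing (divisor) map; this is the content of the reference to \cite[Section 2]{flach92}. For $d^0$: the map $\coprod_{x \in X^0} K_2(k(x)) \to \coprod_{x \in X^1} k(x)^\times$ is likewise the sum of the $K$-theory boundary maps $K_2(k(x)) \to K_1(k(y))$ for $y$ of codimension one in $\overline{\{x\}}$, and by Quillen (and the classical computation of the tame symbol, cf. Milnor) this boundary map is the tame symbol $\{f,g\} \mapsto (-1)^{v(f)v(g)} f^{v(g)}/g^{v(f)} \bmod \mathfrak{m}$. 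Putting these together gives exactly the complex \eqref{Gerstencomplex} with the stated description of $d^0$ and $d^1$.

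The only genuine subtlety — and the step I expect to require the most care — is the verification that $d^1 \circ d^0 = 0$ and, more importantly, that the connecting maps in the Gersten resolution really are the classical tame symbol and divisor maps on the nose (including signs), rather than merely agreeing with them up to an automorphism of the terms. This is a known fact but its proof is not formal: it rests on Quillen's explicit analysis of the localization sequence for a discrete valuation ring and the identification of the resulting boundary $K_n(F) \to K_{n-1}(\kappa)$ with the residue/tame symbol. I would handle this by citing \cite{quillen73} for the existence and exactness of the resolution and \cite[Section 2]{flach92} (and, if needed, the standard references on Milnor $K$-theory and the tame symbol) for the identification of the differentials, rather than reproving it; everything else is then bookkeeping with flasque resolutions and hypercohomology.
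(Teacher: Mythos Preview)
Your proposal is correct and follows essentially the same approach as the paper, which states the corollary without proof as an immediate consequence of the Gersten resolution in Proposition~\ref{prop:gerstenresolution}. Your fleshing-out via flasqueness of the skyscraper terms and the identification of the differentials is the standard argument implicit in the paper's ``c.f.~\cite[Section 2]{flach92}''; the only extraneous step is your opening mention of $\CH^2(X,1)\cong H^1(X,\mathscr{K}_2)$, which is not needed for this corollary (that identification is assembled separately in Proposition~\ref{prop:motiviccohomology}).
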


    Combining the above results, we get the following statement.

    \begin{proposition}
     \label{prop:motiviccohomology}
     Assume that $X$ is a smooth variety of finite type over a field $k$. Then we have isomorphisms
     \[ H^1(\Gerst_2(X)) \cong H^1(X,\mathscr{K}_2) \cong \CH^2(X,1) \cong H^3_{\mathcal{M}}(X, \ZZ(2)).\]
    \end{proposition}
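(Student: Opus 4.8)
The plan is to build the claimed chain of isomorphisms one link at a time out of the results recalled above, so that no genuinely new input is required; the statement is a bookkeeping exercise assembling Quillen's Gersten resolution, Landsburg's comparison with higher Chow groups, and Voevodsky's comparison with motivic cohomology. First I would dispose of the leftmost isomorphism $H^1(\Gerst_2(X)) \cong H^1(X,\mathscr{K}_2)$: this is precisely Corollary \ref{cor:H1viaGersten}, which identifies $H^1(X,\mathscr{K}_2)$ with the first homology of the Gersten complex $\Gerst_2(X)$, the underlying input being the Gersten resolution of $\mathscr{K}_2$ (Proposition \ref{prop:gerstenresolution}), valid since $X$ is smooth of finite type over $k$.

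Next, for the middle isomorphism $H^1(X,\mathscr{K}_2) \cong \CH^2(X,1)$, I would apply Landsburg's theorem stated above in the case $m = 2$, for which $p = m - 1 = 1$, so that $\Psi_{2,1}\colon \CH^2(X,1) \to H^1(X,\mathscr{K}_2)$ is an isomorphism; this is exactly the special case in which the hypothesis $p = m-1$ is met, so nothing further need be checked. For the rightmost isomorphism I would unwind the definition $H^3_{\mathcal{M}}(X,\ZZ(2)) = \mathbb{H}^3_{\mathrm{Zar}}(X,\ZZ(2)\otimes\ZZ) = H^3(X,\ZZ(2))$, using that our coefficient ring is $A = \ZZ$, and then invoke the comparison theorem $H^p(X,\ZZ(q)) \cong \CH^q(X,2q-p)$ recalled above, specialised to $q = 2$, $p = 3$, where $2q - p = 1$; this yields $H^3_{\mathcal{M}}(X,\ZZ(2)) \cong \CH^2(X,1)$ and completes the chain.

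I do not expect a substantive obstacle: the only points demanding a moment's attention are the index bookkeeping (verifying $2q - p = 1$ and $p = m - 1 = 1$) and the remark that each of the three isomorphisms is natural in $X$, so that the composite identification is canonical — which is what will later allow us to transport the Beilinson--Flach elements freely between the higher-Chow-group, $K$-cohomology, Gersten-complex, and motivic-cohomology descriptions without ambiguity.
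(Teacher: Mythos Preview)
Your proof is correct and follows exactly the paper's approach: the proposition is stated there as an immediate combination of the three results recalled just before it (the Gersten resolution giving $H^1(\Gerst_2(X)) \cong H^1(X,\mathscr{K}_2)$, Landsburg's theorem with $m=2$, $p=1$, and Voevodsky's comparison with $p=3$, $q=2$), with no further argument supplied. Your index bookkeeping and the remark on naturality are appropriate and complete.
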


    We shall use these to identify $\CH^2(X,1) $ with $H^1(\Gerst_2(X))$; it is the latter group in which we shall actually construct elements.

    \begin{notation}
     We shall write $Z^2(X, 1)$ to denote the kernel of the boundary map $d^1$ in the Gersten complex $\Gerst_2(X)$, so
    \[ Z^2(X, 1) = \left\{ \sum_i (C_i, \phi_i) : C_i \in X^1, \phi_i \in k(C_i)^\times, \sum_i \operatorname{div}(\phi_i) = 0\right\}.\]
    \end{notation}

    This is a slight abuse of notation, since in Bloch's theory of higher Chow groups $Z^2(X, 1)$ is used to denote something slightly different (a certain subgroup of the codimension 2 cycles on $X \times \AA^1$); but we shall not use Bloch's construction directly in this paper, so this abuse should cause no confusion.

    \begin{remark}
     We shall, in fact, construct an ``Euler system'' in the groups $Z^2(X, 1)$ as $X$ varies over a family of modular surfaces; that is, our compatibility properties will hold at the level of cycles, rather than just after quotienting out by the image of tame symbols. The groups $Z^2(X, 1)$ are much easier to work with, as they have good descent properties: for a finite surjective map $X \to Y$, the pullback $Z^2(Y, 1) \to Z^2(X, 1)$ is injective.

     This is, in a sense, analogous to the fact that in the construction of \cite{kato04} the compatibility properties of the Euler system in $K_2$ of modular curves are proved at the level of $K_1 \otimes K_1$, before quotienting by elements of the form $x \otimes (1 - x)$.
    \end{remark}


  \subsection{Zeta elements on \texorpdfstring{$Y(m, mN)$}{Y(m, mN)}}

   We begin by defining elements of $Z^2(Y(m, mN)^2, 1)$, which we shall call \emph{zeta elements}.

   \begin{definition}
    For $m, N \ge 1$, the curve $\cC_{m, N, j} \subseteq Y(m, mN)^2$ is defined as the subvariety
    \[ \left( u, v: v = \tbt 1 j 0 1 u \right).\]
    For $c > 1$ coprime to $6mN$, we define
    \[ {}_c \cZ_{m, N, j} = \left( \cC_{m, N, j}, \phi\right) \in Z^2(Y(m, mN)^2, 1),\]
    where $\phi \in \cO(C)^\times$ is the pullback of ${}_c g_{0, 1/mN}$ along either of the projections $\cC_{m, N, j} \to Y(m, mN)$.
   \end{definition}

   The first properties of these elements are the following.

   \begin{proposition}
    \label{prop:zetaeltproperties}
    The elements ${}_c \cZ_{m, N, j}$ have the following properties:
    \begin{enumerate}
     \item We have $\rho^* {}_c \cZ_{m, N, j} = {}_c \cZ_{m, N, -j}$, where $\rho$ is the involution of $Y(m, mN)^2$ which interchanges the factors.
     \item For $c, d > 1$ coprime to $6mN$, the element
     \[ \left[d^2 - \left(\stbt d 0 0 d, \stbt d 0 0 d \right)^*\right] \cdot {}_c \cZ_{m, N, j}\]
     is symmetric in $c$ and $d$. In particular, there exists a unique element
     \[ \cZ_{m, N, j} \in Z^2(Y(m, mN)^2, 1) \otimes \QQ\]
     such that ${}_c \cZ_{m, N, j} = \left[c^2 - \left(\stbt c 0 0 c, \stbt c 0 0 c \right)^*\right] \cZ_{m, N, j}$ for any $c$.
     \item We have
     \[ \left( \tbt b 0 0 1, \tbt b 0 0 1\right)^* {}_c \cZ_{m, N, j} = {}_c \cZ_{m, N, b^{-1} j}\]
     for any $b \in (\ZZ / mN\ZZ)^\times$.
    \end{enumerate}
   \end{proposition}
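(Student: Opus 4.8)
The plan is to unwind the definition ${}_c\cZ_{m,N,j}=(\cC_{m,N,j},\phi)$ and reduce each of the three claims to an elementary $2\times 2$ matrix computation together with one input on Siegel units. That input is: since ${}_c g_{0,1/mN}$ is, by construction, the pullback of Kato's theta function ${}_c\theta_E$ along the universal section of exact order $mN$ (which in the coordinates $(E,e_1,e_2)$ of $Y(m,mN)$ is $e_2$), any $g=\stbt a {b'} {c'} {d'}$ in the group acting on $Y(m,mN)$ via the twisted action recalled before Definition \ref{def:tm} carries ${}_c g_{0,1/mN}$ to the Siegel unit attached to the section $(c'/N)e_1+d'e_2$; in particular $\stbt 1 j 0 1$ and $\stbt b 0 0 1$ fix ${}_c g_{0,1/mN}$, while $\stbt c 0 0 c$ sends it to ${}_c g_{0,c/mN}$ (and likewise for the $\QQ$-linearised unit $g_{0,1/mN}$). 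Throughout I also use that for an automorphism $\gamma=(\gamma_1,\gamma_2)$ of $Y(m,mN)^2$ acting componentwise one has $\gamma^*(C,\psi)=(\gamma^{-1}C,(\gamma|_{\gamma^{-1}C})^*\psi)$ and $\pr_i\circ\gamma=\gamma_i\circ\pr_i$.

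For (1), the flip $\rho$ sends $(u,v)\mapsto(v,u)$, so $\rho^{-1}\cC_{m,N,j}$ is the locus $u=\stbt 1 j 0 1 v$, i.e.\ $v=\stbt 1 j 0 1^{-1}u=\stbt 1 {-j} 0 1 u$, which is $\cC_{m,N,-j}$. The flip interchanges the two projections, and this does not alter the defining function because $\phi$ is the pullback of ${}_c g_{0,1/mN}$ along \emph{either} projection — the two pullbacks agreeing on $\cC_{m,N,j}$ precisely because $\stbt 1 j 0 1$ fixes ${}_c g_{0,1/mN}$. For (3), writing $\gamma_b=(\stbt b 0 0 1,\stbt b 0 0 1)$, the conjugation identity $\stbt b 0 0 1^{-1}\stbt 1 j 0 1\stbt b 0 0 1=\stbt 1 {b^{-1}j} 0 1$ gives $\gamma_b^{-1}\cC_{m,N,j}=\cC_{m,N,b^{-1}j}$, and since $\stbt b 0 0 1$ fixes ${}_c g_{0,1/mN}$ the transported function is exactly the one defining ${}_c\cZ_{m,N,b^{-1}j}$.

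For (2), put $\gamma_c=(\stbt c 0 0 c,\stbt c 0 0 c)$; as $\stbt c 0 0 c$ is central, $\gamma_c$ preserves each $\cC_{m,N,j}$. Existence of $\cZ_{m,N,j}$ is witnessed by the explicit element $\cZ_{m,N,j}:=(\cC_{m,N,j},\pr_1^*g_{0,1/mN})\in Z^2(Y(m,mN)^2,1)\otimes\QQ$: by the transformation law above $\gamma_c^*\cZ_{m,N,j}=(\cC_{m,N,j},\pr_1^*g_{0,c/mN})$, so the relation ${}_c g_{0,1/mN}=c^2 g_{0,1/mN}-g_{0,c/mN}$ from \S\ref{sect:siegelunits} gives $[c^2-\gamma_c^*]\cZ_{m,N,j}=(\cC_{m,N,j},\pr_1^*{}_c g_{0,1/mN})={}_c\cZ_{m,N,j}$ for every admissible $c$. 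The symmetry assertion is then automatic: $[d^2-\gamma_d^*]\,{}_c\cZ_{m,N,j}=[d^2-\gamma_d^*][c^2-\gamma_c^*]\cZ_{m,N,j}$, and since scalar matrices are central the operators $c^2-\gamma_c^*$ and $d^2-\gamma_d^*$ commute, making this expression visibly symmetric in $c$ and $d$. For uniqueness, $\gamma_c^*$ has finite order on $Z^2(Y(m,mN)^2,1)\otimes\QQ$ (it is induced by a finite-order automorphism), so its eigenvalues over $\QQbar$ are roots of unity; hence $c^2-\gamma_c^*$ is injective for $c>1$, and the relation for a single $c$ pins down $\cZ_{m,N,j}$.

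I do not expect any genuine obstacle here; the one point demanding care — and the real content behind (1) and (3) — is that the action of $\GL_2(\ZZ/mN\ZZ)$ on $Y(m,mN)$ in play is the \emph{twisted} one (with its factors of $N$), not the naive action on $Y(mN)$. It is exactly this twist that makes $\stbt 1 j 0 1$ and $\stbt b 0 0 1$ act trivially on the section $e_2$, hence on ${}_c g_{0,1/mN}$, which is what makes the ``either projection'' clause of the definition consistent in the first place. Once the conventions are fixed, everything collapses to the matrix identities $\stbt 1 j 0 1^{-1}=\stbt 1 {-j} 0 1$ and $\stbt b 0 0 1^{-1}\stbt 1 j 0 1\stbt b 0 0 1=\stbt 1 {b^{-1}j} 0 1$ together with the displayed relation between ${}_c g$ and $g$.
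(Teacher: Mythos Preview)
Your proof is correct and, for parts (1) and (3), matches the paper exactly: the paper declares (1) obvious and proves (3) via the identity $\stbt b 0 0 1^{-1}\stbt 1 j 0 1=\stbt 1 {b^{-1}j} 0 1\stbt b 0 0 1^{-1}$, just as you do.

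For part (2) there is a mild difference in organisation. The paper first proves the symmetry in $c,d$ by appealing directly to Kato's identity $d^2\,{}_c g_{\alpha,\beta}-{}_c g_{d\alpha,d\beta}=c^2\,{}_d g_{\alpha,\beta}-{}_d g_{c\alpha,c\beta}$, and only afterwards constructs $\cZ_{m,N,j}$ by choosing $c\equiv 1\pmod{mN}$ so that $\gamma_c=\id$ and setting $\cZ_{m,N,j}=(c^2-1)^{-1}\,{}_c\cZ_{m,N,j}$. You instead write down $\cZ_{m,N,j}=(\cC_{m,N,j},\pr_1^*g_{0,1/mN})$ explicitly, verify the factorisation ${}_c\cZ=[c^2-\gamma_c^*]\cZ$ from the defining relation ${}_c g_{0,1/mN}=c^2 g_{0,1/mN}-g_{0,c/mN}$, and then read off the $c,d$ symmetry from the commutativity of the central operators $c^2-\gamma_c^*$ and $d^2-\gamma_d^*$. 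These are equivalent: Kato's symmetry identity is nothing other than the expansion of $[c^2-\gamma_c^*][d^2-\gamma_d^*]$ applied to $g_{\alpha,\beta}$. Your route has the small advantage of exhibiting $\cZ_{m,N,j}$ concretely for all $c$ at once rather than via a special choice, and your uniqueness argument (finite order of $\gamma_c^*$ forces injectivity of $c^2-\gamma_c^*$) makes explicit a point the paper leaves implicit.
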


   \begin{proof}
    Part (1) is obvious, and part (2) follows immediately from the fact that the Siegel units ${}_c g_{\alpha, \beta}$ satisfy
    \[ (d^2 {}_c g_{\alpha, \beta} - {}_c g_{d\alpha, d\beta}) = (c^2 {}_d g_{\alpha, \beta} - {}_d g_{c\alpha, c\beta})\]
    for any $\alpha, \beta \in \tfrac1N\ZZ / \ZZ - \{(0, 0)\}$ and any $c, d > 1$ coprime to $6mN$ (cf.~\cite[Proposition 1.3(2)]{kato04}). We may then define $\cZ_{m, N, j} = (c^2 - 1)^{-1} {}_c \cZ_{m, N, j}$ for any $c > 1$ congruent to 1 modulo $mN$.

    Property (3) follows from the identity $\tbt b 0 0 1^{-1} \tbt 1 j 0 1 = \tbt 1 {b^{-1}j} 0 1 \tbt b 0 0 1^{-1}$.
   \end{proof}

  \subsection{Generalized Beilinson--Flach elements}
   \label{section:BFelements}

   The Beilinson--Flach elements of \cite{BDR12} are elements of $\CH^2(Y_1(N)^2, 1)$ defined as $(\Delta, \phi)$, where $\Delta$ is the diagonal and $\phi$ is a suitable modular unit on $\Delta$. Our generalization of this is motivated by the observation that one can recover the twists of a modular form by Dirichlet characters modulo $m$ from the ``shifted'' forms $f(x + a/m)$ for $a \in (\ZZ / m\ZZ)^\times$; this is also the idea underlying the construction of the $p$-adic $L$-function of a single modular form using modular symbols.

   \begin{lemma}
    \label{lem:kappaexists}
    Let $m, N \ge 1$ with $m^2 N \ge 5$, and $j \in \ZZ$. Then there is a unique morphism of algebraic varieties over $\CC$,
    \[ \kappa_j : Y_1(m^2 N) \otimes \CC \to Y_1(N) \otimes \CC, \]
    such that the diagram of morphisms of complex-analytic manifolds
    \begin{diagram}
     \cH & \rTo^{z \mapsto z + j/m} & \cH\\
     \dTo & & \dTo\\
     Y_1(m^2 N)(\CC) & \rTo^{\kappa_j} & Y_1(N)(\CC) \\
    \end{diagram}
    commutes. The morphism $\kappa_j$ is defined over $\QQ(\mu_m)$, and depends only on the residue class of $j \bmod m$.
   \end{lemma}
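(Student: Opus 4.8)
The plan is to work first over $\overline{\QQ}$ (or $\CC$) and use the fact that the datum of a congruence subgroup containing $\Gamma(L)$ determines the variety $Y(\Gamma)$ together with all its degeneracy maps, as set up in Definition \ref{def:qqbarmodel} and the remarks following it. The map $z \mapsto z + j/m$ on $\cH$ is conjugation by $\alpha = \stbt 1 {j/m} 0 1 \in \GL_2^+(\QQ)$, so by the remark after Definition \ref{def:qqbarmodel} the prescribed analytic map does extend to an algebraic isomorphism $Y(\Gamma') \xrightarrow{\sim} Y(\alpha \Gamma' \alpha^{-1})$ defined over $\overline{\QQ}$, for any congruence subgroup $\Gamma'$. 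The point is to choose $\Gamma'$ so that $Y(\Gamma')$ is (a quotient of) $Y_1(m^2N)$ and $\alpha\Gamma'\alpha^{-1}$ lands inside the group defining $Y_1(N)$, so that composing with the evident projection $Y(\alpha\Gamma'\alpha^{-1}) \to Y_1(N)$ gives $\kappa_j$.

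Concretely, one takes $\Gamma' = \Gamma_1(m^2 N) \cap \Gamma^0(m)$ (matrices $\stbt a b c d$ with $a \equiv d \equiv 1$, $c \equiv 0 \bmod m^2 N$ and $b \equiv 0 \bmod m$), or some similar group; the crucial computation is that $\alpha \Gamma' \alpha^{-1} \subseteq \Gamma_1(N)$, which is a direct matrix calculation using $\alpha^{-1} = \stbt 1 {-j/m} 0 1$ and the congruence conditions (the $m^2$ in the level is exactly what is needed to clear denominators in the lower-left entry after conjugation). Since $Y(\Gamma')$ has a natural degeneracy map to $Y_1(m^2 N)$ and one checks this is an isomorphism on the relevant component — or, more simply, one observes $\Gamma' \supseteq \Gamma_1(m^2 N)$ is false and instead works with $Y_1(m^2N) = Y(\Gamma_1(m^2N))$ directly, first pulling back along $Y_1(m^2N) \to Y(\Gamma')$ — we obtain a morphism $Y_1(m^2 N)\otimes\overline{\QQ} \to Y_1(N)\otimes\overline{\QQ}$ inducing $z\mapsto z+j/m$ on $\cH$. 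Uniqueness is immediate since two such morphisms agree on the dense set of $\CC$-points coming from $\cH$.

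It remains to descend the field of definition from $\overline{\QQ}$ to $\QQ(\mu_m)$. For this I would argue Galois-theoretically: for $\tau \in \Gal(\overline{\QQ}/\QQ(\mu_m))$, the conjugate morphism $\kappa_j^\tau$ is again a morphism $Y_1(m^2 N)\otimes\overline\QQ \to Y_1(N)\otimes\overline\QQ$, and one checks it induces the same map on a suitable modular-interpretation level — the key being that the recipe ``$\tau \mapsto \tau + j/m$'' only involves the rational number $j/m$ and the cusp $\infty$, whose field of definition on $Y_1(m^2N)$ is $\QQ(\mu_{m^2N})$, but the ambiguity in identifying $\cH$-points with $\overline\QQ$-points is governed by $\mu_m$ (through the choice of $N$-th root of unity implicit in the identification of the fibre of $Y(L)(\CC)$). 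By uniqueness, $\kappa_j^\tau = \kappa_j$ for all such $\tau$, hence $\kappa_j$ is defined over $\QQ(\mu_m)$. Finally, dependence only on $j \bmod m$ follows because $z \mapsto z+1$ descends to the identity on $Y_1(m^2N)(\CC)$ (it is $\stbt 1 1 0 1 \in \Gamma_1(m^2N)$), so replacing $j$ by $j+m$ does not change the induced analytic map, and uniqueness does the rest.

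\medskip

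\noindent\emph{Main obstacle.} The routine matrix bookkeeping — pinning down exactly which congruence subgroup $\Gamma'$ makes both $Y(\Gamma') \cong Y_1(m^2N)$ (on the relevant geometric component) and $\alpha\Gamma'\alpha^{-1}\subseteq\Gamma_1(N)$ — is fiddly but straightforward. The genuinely delicate point is the descent statement: one must correctly track how the non-canonical identification of $Y(L)(\CC)$-fibres with $\Gamma(L)\backslash\cH$ (which depends on a choice of primitive $N$-th root of unity) interacts with Galois conjugation, in order to see that the obstruction to $\QQ$-rationality is killed precisely by adjoining $\mu_m$ and no more. This is where I would expect to spend most of the effort, and it is presumably where the hypothesis that the construction only ``sees'' $j/m$ (rather than a finer invariant) is used.
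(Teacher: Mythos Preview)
Your argument over $\CC$ (equivalently $\overline{\QQ}$) is fine and matches the paper: both verify the matrix inclusion $\stbt 1 {j/m} 0 1 \Gamma_1(m^2 N) \stbt 1 {-j/m} 0 1 \subseteq \Gamma_1(N)$, which gives existence and uniqueness of $\kappa_j$ as an analytic (hence algebraic) map, and the dependence only on $j \bmod m$ is handled identically.

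The descent to $\QQ(\mu_m)$ is where you and the paper diverge. You propose to argue by Galois invariance --- show $\kappa_j^\tau = \kappa_j$ for $\tau \in \Gal(\overline{\QQ}/\QQ(\mu_m))$ --- and you correctly flag this as the delicate point. But your sketch (``the ambiguity in identifying $\cH$-points with $\overline{\QQ}$-points is governed by $\mu_m$'') is not a proof: to compute $\kappa_j^\tau$ you need to know how $\tau$ interacts with the analytic uniformisation, and that means writing down $\kappa_j$ in terms of the moduli problem, which is precisely the work you are trying to avoid. The paper sidesteps this entirely by \emph{constructing} $\kappa_j$ directly over $\QQ(\mu_m)$ as a composition
\[
 Y_1(m^2 N) \times \mu_m^\circ \longrightarrow Y(m, mN) \xrightarrow{\ \stbt 1 j 0 1\ } Y(m, mN) \xrightarrow{\ t_m\ } Y_1(N) \times \mu_m^\circ,
\]
each arrow being a morphism of $\mu_m^\circ$-schemes defined modularly (the first is $(E, e_1, e_2) \mapsto (E/\langle m e_2 \rangle, [mNe_1], [e_2])$; the middle is the action of an \emph{integer} matrix; the last is the map $t_m$ of Definition~\ref{def:tm}). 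On $\cH$ these induce $z \mapsto mz$, $z \mapsto z+j$, $z \mapsto z/m$, composing to $z \mapsto z + j/m$. The trick is that passing through $Y(m, mN)$ turns the awkward rational translation $z \mapsto z + j/m$ into an integral one, so no descent argument is needed at all.

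Your route can certainly be completed, but filling in the Galois computation would amount to rediscovering this moduli description; the paper's factorisation through $Y(m,mN)$ is both shorter and conceptually cleaner, and it also sets up the map $t_m$ that is reused throughout \S\ref{section:BFelements}.
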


   \begin{proof}
    The existence of such a map at the level of quotients of $\cH$ follows immediately from the inclusion of matrix groups
    \[ \tbt 1 {\tfrac jm} 0 1 \Gamma_1(m^2 N) \tbt 1 {-\tfrac jm} 0 1 \subseteq \Gamma_1(N).\]
    However, in order to descend to an algebraic morphism over $\QQ(\mu_m)$ we use the canonical models above.

    We first consider the map $Y(m^2 N) \to Y(m, mN)$ which maps $(E, e_1, e_2)$ to $\left(E/\langle m e_2 \rangle, [mNe_1], [e_2]\right)$. This factors through the quotient by the subgroup $\tbt u *0 1 : u = 1 \bmod {m\ZZ}$, which we have identified with $Y_1(m^2 N) \otimes \QQ(\mu_m)$. This map is compatible with $z \mapsto mz$ on $\cH$. We now consider the composition
    \[ Y_1(m^2 N) \times \mu_m \rTo Y(m, mN) \rTo^{\stbt 1 j 0 1} Y(m, mN) \rTo^{t_m} Y_1(N) \times \mu_m,\]
    where $t_m$ is as in Definition \ref{def:tm}.
    All three morphisms are maps of $\QQ(\mu_m)$-varieties (i.e.~they commute with the projection maps to $\mu_m$); and on the fibre over $\zeta_m \in \mu_m(\CC)$ they correspond to $z \mapsto mz$, $z \mapsto z + j$, and $z \mapsto z/m$, so the composition corresponds to $z \mapsto z + j/m$.
   \end{proof}

   \begin{definition}
    For $m, N, j$ as above, let $\iota_{m, N, j}$ be the map
    \[ (1, \kappa_j): Y_1(m^2 N) \times \mu_m \to Y_1(N)^2 \times \mu_m,\]
    and $C_{m, N, j}$ the irreducible curve in $Y_1(N)^2$ that is the image of $\iota_{m, N, j}$.
   \end{definition}

   We shall now use these curves $C_{m, N, j}$ to define a class in $\CH^2(Y_1(N)^2 \times \mu_m, 1)$, using the presentation of the latter group given by the Gersten complex.

   \begin{definition}
    \label{def:BFelts}
    Let $N \ge 5$, $m \ge 1$, $j \in \ZZ / m\ZZ$ as above. Let $c \ge 1$ be coprime to $6mN$ and let $\alpha \in \ZZ / m^2 N \ZZ$. We define the \emph{generalized Beilinson--Flach element}
    \[ {}_c \Xi_{m, N, j, \alpha} \in \CH^2(Y_1(N)^2 \otimes \QQ(\mu_m), 1)\]
    as the class of the pair
    \[ \Big( C_{m, N, j}, (\iota_{m, N, j})_* ({}_c g_{0, \alpha / m^2 N}) \Big) \in Z^2(Y_1(N)^2 \times \mu_m, 1).\]
    When $\alpha = 1$ we drop it from the notation and write simply ${}_c \Xi_{m, N, j}$.
   \end{definition}

   The following proposition shows that these zeta elements are simply the ``$Y_1$-versions'' of those defined in the previous section.

   \begin{proposition}
    \label{prop:pushforwardzeta}
    The generalized Beilinson--Flach element ${}_c \Xi_{m, N, j, \alpha}$ is the pushforward of the element
    \[ \left(\tbt \alpha 0 0 \alpha, \tbt \alpha 0 0 \alpha\right)^* {}_c \cZ_{m, N, j} \in Z^2(Y(m, mN)^2, 1)\]
    along the map $t_m \times t_m: Y(m, mN)^2 \to Y_1(N)^2 \times \mu_m$ introduced in \S \ref{sect:modcurves}.
   \end{proposition}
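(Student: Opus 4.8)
The plan is to unwind both sides as explicit cycles in the Gersten complex and check that they agree curve-by-curve and unit-by-unit. On the left we have the class of the pair $\bigl(C_{m,N,j}, (\iota_{m,N,j})_*({}_c g_{0,\alpha/m^2N})\bigr)$; on the right, the pushforward along $t_m \times t_m$ of the pair obtained from ${}_c\cZ_{m,N,j}$ by applying the diamond-type operator $\bigl(\stbt\alpha 00\alpha,\stbt\alpha 00\alpha\bigr)^*$. Since $Z^2(-,1)$ has good functoriality (pushforward is defined componentwise on cycles, and pullback along the finite surjective map $t_m\times t_m$ is injective by the remark following Proposition \ref{prop:motiviccohomology}), it suffices to match the underlying $1$-cycles and then the attached rational functions on the common component.

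First I would identify the image curves. The element ${}_c\cZ_{m,N,j}$ is supported on $\cC_{m,N,j}\subseteq Y(m,mN)^2$, the graph of $\stbt 1j01$, and applying $\bigl(\stbt\alpha 00\alpha,\stbt\alpha 00\alpha\bigr)^*$ moves this to the graph of $\stbt\alpha 00\alpha^{-1}\stbt 1j01\stbt\alpha 00\alpha = \stbt 1j01$ — so the diamond operator fixes the curve $\cC_{m,N,j}$ set-theoretically (it acts through $\SL_2$ on the ``shift'' part) and only twists the unit. Then I would compute the image of $\cC_{m,N,j}$ under $t_m\times t_m$. Using Lemma \ref{lem:kappaexists} and its proof: the composite $Y(m^2N)\to Y(m,mN)\xrightarrow{\stbt 1j01} Y(m,mN)\xrightarrow{t_m} Y_1(N)\times\mu_m$ realizes $\kappa_j$, i.e.\ $z\mapsto mz\mapsto z+j\mapsto z/m$, which is $z\mapsto z+j/m$; applying $t_m$ to both coordinates of a point $(u,\stbt 1j01 u)\in\cC_{m,N,j}$ therefore gives a point on the graph of $\kappa_j$ in $Y_1(N)^2\times\mu_m$, which is exactly $C_{m,N,j}$. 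One also checks that $t_m\times t_m$ restricted to $\cC_{m,N,j}$ is generically one-to-one onto $C_{m,N,j}$, so the pushforward of the cycle $[\cC_{m,N,j}]$ is $[C_{m,N,j}]$ with multiplicity one (any degree discrepancy would be absorbed into the norm on the attached unit, which we handle next).

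Second I would match the rational functions. On $\cC_{m,N,j}$ the unit attached to $\bigl(\stbt\alpha 00\alpha,\stbt\alpha 00\alpha\bigr)^*{}_c\cZ_{m,N,j}$ is the pullback of $\stbt\alpha 00\alpha^*({}_c g_{0,1/mN})$ along a projection $\cC_{m,N,j}\to Y(m,mN)$; by the transformation law $u^*({}_c g_{\gamma,\delta})={}_c g_{(\gamma,\delta)u}$ (recalled in the proof of Theorem \ref{thm:siegel-compat}), this is ${}_c g_{0,\alpha/mN}$ on $Y(m,mN)$. Pulling back to $Y(m^2N)$ via the map $Y(m^2N)\to Y(m,mN)$, $(E,e_1,e_2)\mapsto(E/\langle me_2\rangle,[mNe_1],[e_2])$, which is $z\mapsto mz$ on $\cH$, and using the distribution relation \eqref{eq:dist2}, ${}_c g_{0,\alpha/mN}$ pulls back to ${}_c g_{0,\alpha/m^2N}$ (the unique lift along the degeneracy map in the relevant coordinate; the other terms in the distribution product are killed because the pullback map factors through the quotient defining $Y_1(m^2N)\otimes\QQ(\mu_m)$). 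Transporting along $\kappa_j$ to $C_{m,N,j}$ then reproduces exactly the unit $(\iota_{m,N,j})_*({}_c g_{0,\alpha/m^2N})$ appearing in Definition \ref{def:BFelts}. Since pushforward commutes with these identifications, the two elements of $Z^2(Y_1(N)^2\times\mu_m,1)$ agree, hence so do their classes in $\CH^2$.

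The main obstacle will be the bookkeeping in the second step: correctly tracking which Siegel unit lifts to which along the degeneracy maps, and verifying that all the ``extra'' factors in the distribution relations \eqref{eq:dist1}--\eqref{eq:dist2} genuinely drop out after descent to the $Y_1$-level quotient. This is the kind of computation where a sign or an index error in the level or the torsion parameter would go unnoticed; I would check it by comparing $q$-expansions at the cusp $\infty$ (over $\QQ(\mu_{m^2N})$), where all the maps and units are completely explicit, rather than by manipulating the abstract moduli descriptions. Everything else — the curve matching and the functoriality of $Z^2(-,1)$ — is routine given the results already established.
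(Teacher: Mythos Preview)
Your overall strategy --- match the underlying curves, then match the attached units --- is exactly the paper's, and the curve part is fine. The problem is in the unit computation, where you have the direction reversed. What must be shown is that the \emph{pushforward} of ${}_c g_{0,\alpha/m^2N}$ along the map $\phi: Y_1(m^2N)\otimes\QQ(\mu_m) \to Y(m,mN)$ (the map $z\mapsto mz$ from the proof of Lemma~\ref{lem:kappaexists}) equals ${}_c g_{0,\alpha/mN}$; once you know this, functoriality of pushforward gives $(\iota_{m,N,j})_* ({}_c g_{0,\alpha/m^2N}) = (t_m\times t_m|_{\cC})_*\bigl(\phi_*({}_c g_{0,\alpha/m^2N})\bigr) = (t_m\times t_m|_{\cC})_*({}_c g_{0,\alpha/mN})$, and the degree of $t_m\times t_m|_{\cC}$ is irrelevant since it appears on both sides. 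You instead claim that the \emph{pullback} of ${}_c g_{0,\alpha/mN}$ along $\phi$ is ${}_c g_{0,\alpha/m^2N}$, with ``the other terms killed''. This is false: pullback along $z\mapsto mz$ gives ${}_c g_{0,\alpha/mN}(mz) = \prod_{t=0}^{m-1} {}_c g_{0,\alpha/m^2N + t/m}(z)$ by~\eqref{eq:dist1} (not~\eqref{eq:dist2}, which varies the first subscript), and none of these factors vanish --- the factoring through $Y_1(m^2N)$ imposes no relation on them.

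The paper's fix is to compute $\phi_*$ directly: writing $\phi$ as the quotient by $U'/U$ where $U,U'$ are the explicit congruence subgroups corresponding to $Y_1(m^2N)\otimes\QQ(\mu_m)$ and (a conjugate of) $Y(m,mN)$, one sees that $U'/U$ is represented by $\stbt{1}{0}{0}{1+mNt}$ for $0\le t<m$, so $\phi_*({}_c g_{0,\alpha/m^2N}) = \prod_t {}_c g_{0,\alpha/m^2N + t/m}$, which by the distribution relation~\eqref{eq:dist1} equals $\varphi_m^*({}_c g_{0,\alpha/mN})$; undoing the conjugation by $\stbt m001$ identifies this with ${}_c g_{0,\alpha/mN}$ on $Y(m,mN)$. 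Equivalently, one can argue that $\phi^*\phi_*({}_c g_{0,\alpha/m^2N})$ and $\phi^*({}_c g_{0,\alpha/mN})$ are both equal to $\prod_t {}_c g_{0,\alpha/m^2N + t/m}$ and use injectivity of $\phi^*$ --- but either way the product of $m$ Siegel units is essential, not something to be discarded.
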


   \begin{proof}
    It is clear from the construction of the map $\kappa_{m, N, j}$ that $C_{m, N, j}$ is the image of $\cC_{m, N, j}$ under $t_m \times t_m$. So it suffices to show that the pushforward of ${}_c g_{0, 1/m^2 N}$ from $Y_1(m^2 N) \otimes \QQ(\mu_m)$ to $Y(m, mN)$ along the map constructed above is ${}_c g_{0, 1/mN}$.

    Let $U$ be the subgroup of $\GL_2(\ZZ / m^2 N \ZZ)$ consisting of elements $\begin{pmatrix} a & b \\ c & d \end{pmatrix}$ which satisfy $c = 0, d = 1 \bmod m^2 N$ and $a = 1 \bmod m$ (and $b$ arbitrary). This is clearly contained in the subgroup $U'$ of elements satisfying $a = 1 \bmod m$, $c = 0 \bmod m^2 N$ and $d = 1 \bmod mN$, and a set of coset representatives for $U / U'$ is given by the matrices
    \[ \left\{\begin{pmatrix} 1 & 0 \\ 0 & 1 + mNt \end{pmatrix} : 0 \le t < m \right\}.\]
    Hence the pushforward of ${}_c g_{0, 1/m^2 N}$ from $U \backslash Y(m^2 N)$ to $U' \backslash Y(m^2 N)$ is given by
    \[ \prod_{0 \le t < m}\tbt 100{1 + mNt}^* {}_c g_{0, 1/m^2 N} = \prod_{0 \le t < m}{}_c g_{0, 1/m^2 N + t/m}.\]
    By Proposition \ref{prop:distrelations}(2), this is equal to $\vp_m^* \left({}_c g_{0, 1/mN}\right)$. However, conjugation by $\tbt m 0 0 1$ sends $U'$ to the subgroup $U''=\left\{ \tbt a b c d : \begin{array}{ll} a = 1, b = 0 \bmod m, \\ c = 0, d = 1 \bmod mN \end{array}\right\}$, and we have $U''\backslash Y(m^2 N) = Y(m, mN)$.
   \end{proof}

   We now record some properties of the generalized Beilinson--Flach elements.

   \begin{proposition}
    \label{prop:BFeltproperties}
    The elements above have the following properties:
    \begin{enumerate}
     \item The element ${}_c \Xi_{m, N, j, \alpha}$ only depends on the congruence class of $\alpha$ modulo $mN$ (not $m^2 N$).
     \item The involution of $Y_1(N)^2 \otimes \QQ(\mu_m)$ given by switching the two factors interchanges ${}_c \Xi_{m, N, j}$ and ${}_c \Xi_{m, N, -j}$.
     \item For $q \in (\ZZ / m\ZZ)^\times$, we have $\sigma_q^* \left( {}_c\Xi_{m, N, j, \alpha}\right) = {}_c \Xi_{m, N, q^{-1} j, \alpha}$, where $\sigma_q \in \Gal(\QQ(\mu_m) / \QQ)$ is the arithmetic Frobenius at $q$.
     \item For any $r \in (\ZZ / m N\ZZ)^\times$, we have
     \[ {}_c \Xi_{m, N, j, r\alpha} = \langle d \times d \rangle^* {}_c \Xi_{m, N, k, \alpha}\]
     where $k = r^{-2} j \in \ZZ / m\ZZ$, $d$ is the image of $r$ in $(\ZZ / N\ZZ)^\times$, and $\langle d \times d \rangle$ denotes the action on $Y_1(N)^2 \otimes \QQ(\mu_m)$ of the element
     \[ \begin{pmatrix} d^{-1} & 0 \\ 0 & d\end{pmatrix} \times \begin{pmatrix} d^{-1} & 0 \\ 0 & d\end{pmatrix} \in \SL_2(\ZZ / N\ZZ)^2.\]
     \item\label{item:cfactor} For $c, d$ coprime to $6mN$, the expression
     \[ d^2 {}_c \Xi_{m, N, j, \alpha} - {}_c \Xi_{m, N, j, d\alpha} \]
     is symmetric in $c$ and $d$. In particular, there exist well-defined elements $\Xi_{m, N, j, \alpha} \in \CH^2(Y_1(N)^2 \otimes \QQ(\mu_m), 1) \otimes \QQ$ such that we have
     \[ {}_c \Xi_{m, N, j, \alpha} = c^2 \Xi_{m, N, j, \alpha} - \Xi_{m, N, j, c\alpha} = (c^2 - \langle c \times c\rangle^* \sigma_c^2)\, \Xi_{m, N, j, \alpha}.\]
    \end{enumerate}
   \end{proposition}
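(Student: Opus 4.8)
The plan is to deduce all five parts from Proposition~\ref{prop:pushforwardzeta}, which realises ${}_c\Xi_{m,N,j,\alpha}$ as the pushforward along $t_m\times t_m$ of $\left(\stbt{\alpha}{0}{0}{\alpha},\stbt{\alpha}{0}{0}{\alpha}\right)^*{}_c\cZ_{m,N,j}$; concretely this element of $Z^2(Y(m,mN)^2,1)$ is the pair $\big(\cC_{m,N,j},\,{}_c g_{0,\alpha/mN}\big)$, which is how one reads the notation for an arbitrary, possibly non-invertible, $\alpha$ (the proof of Proposition~\ref{prop:pushforwardzeta} goes through verbatim in that generality via the distribution relations of Proposition~\ref{prop:distrelations}). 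The tools I shall use repeatedly are: the transformation laws for ${}_c\cZ_{m,N,j}$ from Proposition~\ref{prop:zetaeltproperties}; the fact from Proposition~\ref{prop:tmproperties}(2) and the remarks after the definition of $Y(m,mN)^2$ that $t_m\times t_m$ intertwines $\left(\stbt{b}{0}{0}{1},\stbt{b}{0}{0}{1}\right)$ with $\sigma_b$ on $\mu_m^\circ$ (and the identity on $Y_1(N)^2$) and $\left(\stbt{b^{-1}}{0}{0}{b},\stbt{b^{-1}}{0}{0}{b}\right)$ with $\langle b\rangle\times\langle b\rangle$ on $Y_1(N)^2$ (and the identity on $\mu_m^\circ$); and functoriality of pushforward along the automorphisms in play. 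I use freely that the scalar matrix $\stbt{\alpha}{0}{0}{\alpha}$ commutes with every diagonal matrix, and that $\stbt{c}{0}{0}{c}=\stbt{c^{-1}}{0}{0}{c}\cdot\stbt{c^2}{0}{0}{1}$.

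Parts (1), (2) and (3) are then quick. Statement (1) is immediate from the concrete description above, since ${}_c g_{0,\alpha/mN}$ depends only on $\alpha\bmod mN$. For (2), the factor-switching involution $\rho$ of $Y_1(N)^2\otimes\QQ(\mu_m)$ is evidently compatible with the factor-switching involution of $Y(m,mN)^2$ via $t_m\times t_m$, so Proposition~\ref{prop:zetaeltproperties}(1) together with functoriality of pushforward gives $\rho^*{}_c\Xi_{m,N,j}={}_c\Xi_{m,N,-j}$. For (3), choose a lift $\tilde q\in(\ZZ/mN\ZZ)^\times$ of $q$; the automorphism $\left(\stbt{\tilde q}{0}{0}{1},\stbt{\tilde q}{0}{0}{1}\right)$ of $Y(m,mN)^2$ corresponds under $t_m\times t_m$ to $\sigma_{\tilde q}=\sigma_q$, it commutes with the scalar twist $\left(\stbt{\alpha}{0}{0}{\alpha},\stbt{\alpha}{0}{0}{\alpha}\right)$, and by Proposition~\ref{prop:zetaeltproperties}(3) it sends ${}_c\cZ_{m,N,j}$ to ${}_c\cZ_{m,N,\tilde q^{-1}j}$; applying it to the scalar twist and pushing forward yields $\sigma_q^*{}_c\Xi_{m,N,j,\alpha}={}_c\Xi_{m,N,q^{-1}j,\alpha}$ (and $\tilde q^{-1}j\equiv q^{-1}j\bmod m$).

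For (4), by the intertwining property $\langle d\times d\rangle^*{}_c\Xi_{m,N,k,\alpha}$ is the pushforward along $t_m\times t_m$ of $\psi^*\left[\left(\stbt{\alpha}{0}{0}{\alpha},\stbt{\alpha}{0}{0}{\alpha}\right)^*{}_c\cZ_{m,N,k}\right]$, where $\psi=\left(\stbt{r^{-1}}{0}{0}{r},\stbt{r^{-1}}{0}{0}{r}\right)$. A short calculation with the defining equation $v=\stbt{1}{k}{0}{1}u$ of $\cC_{m,N,k}$ gives $\psi^*\cC_{m,N,k}=\cC_{m,N,r^2k}$, while $\psi^*{}_c g_{0,1/mN}={}_c g_{0,r/mN}$ by the formula $u^*{}_c g_{a,b}={}_c g_{(a,b)u}$ recalled in the proof of Theorem~\ref{thm:siegel-compat}; since $\psi$ commutes with the scalar twist, taking $k=r^{-2}j$ identifies the element above with $\big(\cC_{m,N,j},{}_c g_{0,r\alpha/mN}\big)$, whose pushforward is ${}_c\Xi_{m,N,j,r\alpha}$ by Proposition~\ref{prop:pushforwardzeta}. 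For (5), the symmetry in $c,d$ follows on pushing forward the Siegel-unit identity $d^2\,{}_c g_{0,\beta}-{}_c g_{0,d\beta}=c^2\,{}_d g_{0,\beta}-{}_d g_{0,c\beta}$ with $\beta=\alpha/mN$ (recalled in the proof of Proposition~\ref{prop:zetaeltproperties}(2)), carried on the fixed curve $\cC_{m,N,j}$. For the last part, define $\Xi_{m,N,j,\alpha}$ to be the pushforward along $t_m\times t_m$ of $\left(\stbt{\alpha}{0}{0}{\alpha},\stbt{\alpha}{0}{0}{\alpha}\right)^*\cZ_{m,N,j}\in Z^2(Y(m,mN)^2,1)\otimes\QQ$ (with $\cZ_{m,N,j}$ as in Proposition~\ref{prop:zetaeltproperties}(2)); applying $\left(\stbt{\alpha}{0}{0}{\alpha},\stbt{\alpha}{0}{0}{\alpha}\right)^*$ to the relation ${}_c\cZ_{m,N,j}=\big[c^2-\left(\stbt{c}{0}{0}{c},\stbt{c}{0}{0}{c}\right)^*\big]\cZ_{m,N,j}$ and pushing forward gives ${}_c\Xi_{m,N,j,\alpha}=c^2\Xi_{m,N,j,\alpha}-\Xi_{m,N,j,c\alpha}$, while the decomposition $\stbt{c}{0}{0}{c}=\stbt{c^{-1}}{0}{0}{c}\cdot\stbt{c^2}{0}{0}{1}$ together with the intertwining properties shows that $\left(\stbt{c}{0}{0}{c},\stbt{c}{0}{0}{c}\right)^*$ on $Y(m,mN)^2$ corresponds to $\langle c\times c\rangle^*\sigma_c^2$ on $Y_1(N)^2\otimes\QQ(\mu_m)$, whence $\Xi_{m,N,j,c\alpha}=\langle c\times c\rangle^*\sigma_c^2\,\Xi_{m,N,j,\alpha}$ and the final displayed formula.

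The only steps where I expect to need genuine care are in (4) and (5): correctly tracking how the twist index $j$ is modified by the diamond-type operators (in particular the shift $j\mapsto r^{-2}j$ and the underlying matrix bookkeeping), and, in (5), confirming that $\left(\stbt{c}{0}{0}{c},\stbt{c}{0}{0}{c}\right)^*$ corresponds to precisely $\langle c\times c\rangle^*\sigma_c^2$ and not to some other combination of diamond and Frobenius operators. Everything else is routine functoriality of pushforward and pullback.
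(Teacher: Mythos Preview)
Your proposal is correct and follows the same approach as the paper. The paper's own proof is very terse---it simply says that (1) and (2) are immediate from Proposition~\ref{prop:pushforwardzeta}, and that (3)--(5) follow from Proposition~\ref{prop:zetaeltproperties} together with the intertwining properties of $t_m$ in Proposition~\ref{prop:tmproperties}(2)---and you have faithfully unpacked exactly this argument, including the matrix decomposition $\stbt{c}{0}{0}{c}=\stbt{c^{-1}}{0}{0}{c}\stbt{c^2}{0}{0}{1}$ needed to identify the scalar action with $\langle c\times c\rangle^*\sigma_c^2$.
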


   \begin{proof}
    After Proposition \ref{prop:pushforwardzeta}, parts (1) and (2) are immediate. The remaining statements follow from Proposition \ref{prop:zetaeltproperties}, together with the fact that $t_m$ intertwines the action of $\stbt d 0 0 1$ on $Y(m, mN)$ with the arithmetic Frobenius $\sigma_d$ on $Y_1(N) \times \mu_m$ (Proposition \ref{prop:tmproperties}(2)).
   \end{proof}


  \subsection{Cuspidal components}
   \label{section:cuspidalcomponents}

   In the preceding sections we have constructed elements of the higher Chow groups of affine surfaces. In order to be able to apply results on regulator maps, it is convenient to have elements of Chow groups of projective surfaces instead. We shall show that this can be achieved, but not in a canonical way, and only at the cost of tensoring with $\QQ$.

   \begin{theorem}
    \label{thm:preimageinChowgp}
    Let $N, m, j$ be as in Definition \ref{def:BFelts}. Then the element $\Xi_{m, N, j}$ of $\CH^2(Y_1(N)^2 \otimes \QQ(\mu_m), 1) \otimes \QQ$ is in the image of the pullback map
    \[ \CH^2(X_1(N)^2 \otimes \QQ(\mu_m), 1) \otimes \QQ \to \CH^2(Y_1(N)^2 \otimes \QQ(\mu_m), 1) \otimes \QQ\]
    induced by the open embedding $Y_1(N) \into X_1(N)$.
   \end{theorem}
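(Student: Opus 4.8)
The plan is to follow the strategy of \cite{BDR12} (which treats the case $m=1$): lift the defining cycle of $\Xi_{m,N,j}$ to a cycle on the projective surface $X_1(N)^2$ by adjoining correction terms supported on the cuspidal boundary curves. The point is that the obstruction to doing this is a divisor class supported on cusps of $X_1(N)$, which by the theorem of Manin and Drinfeld is torsion and hence vanishes after $\otimes\QQ$. It suffices to treat each ${}_c\Xi_{m,N,j}$, since $\Xi_{m,N,j}$ is a $\QQ$-multiple of ${}_{c}\Xi_{m,N,j}$ for suitable $c$. Write ${}_c\Xi_{m,N,j}$ as the class of $(C,\phi)$ with $C=C_{m,N,j}$ and $\phi=(\iota_{m,N,j})_*({}_c g_{0,1/m^2 N})$, let $\bar\iota$ be the morphism $X_1(m^2 N)\otimes\QQ(\mu_m)\to X_1(N)^2\otimes\QQ(\mu_m)$ extending $\iota_{m,N,j}$, let $\bar C=\bar\iota(X_1(m^2 N)\otimes\QQ(\mu_m))$ be the closure of $C$, and let $\bar\phi$ be the function on $\bar C$ extending $\phi$. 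Then the ``boundary divisor'' $E:=\operatorname{div}_{\bar C}(\bar\phi)=\bar\iota_*\big(\operatorname{div}({}_c g_{0,1/m^2 N})\big)$ is the pushforward along $\bar\iota$ of a divisor supported on the cusps of $X_1(m^2 N)$ (here ${}_c g_{0,1/m^2 N}$ is a unit on $Y_1(m^2 N)$), and in particular $\deg E=0$.

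The first step is to observe that $E$ is supported on the finite set of ``cuspidal corners'' of $X_1(N)^2$, i.e.\ points both of whose coordinates are cusps of $X_1(N)$. Indeed, $\bar\iota$ has component maps the degeneracy map $X_1(m^2 N)\to X_1(N)$, which sends cusps to cusps, and the map $\bar\kappa_j$, which by the construction of $\kappa_j$ in Lemma \ref{lem:kappaexists} is a composite of degeneracy maps and the automorphism $\stbt 1 j 0 1$ of $X(m,mN)$ (all of which carry cusps to cusps, being induced by maps of $\cH$ extending to $\QQ\cup\{\infty\}$), so it too sends cusps to cusps. Consequently $\bar C$ meets $Y_1(N)^2\otimes\QQ(\mu_m)$ only away from the zeros and poles of $\bar\phi$, so that $(\bar C,\bar\phi)$ restricts over $Y_1(N)^2\otimes\QQ(\mu_m)$ to the cycle $(C,\phi)$ representing ${}_c\Xi_{m,N,j}$.

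Next, write $E=\sum_{s_1,s_2}e_{s_1,s_2}\,[(s_1,s_2)]$, the sum over cusps $s_1,s_2$ of $X_1(N)$, put $\delta_{s_1}:=\sum_{s_2}e_{s_1,s_2}$ (so $\sum_{s_1}\delta_{s_1}=\deg E=0$), and fix one cusp $s_0$. For each cusp $s_1$ the divisor $-\sum_{s_2}e_{s_1,s_2}([s_2]-[s_0])$ on $X_1(N)$ has degree $0$ and is supported on cusps, hence is torsion in $\operatorname{Pic}^0(X_1(N))$ by Manin--Drinfeld; so there is $\psi_{s_1}\in k(\{s_1\}\times X_1(N))^\times\otimes\QQ$ with $\operatorname{div}\psi_{s_1}=-\sum_{s_2}e_{s_1,s_2}([(s_1,s_2)]-[(s_1,s_0)])$, and summing over $s_1$ gives
\[ E+\sum_{s_1}\operatorname{div}_{\{s_1\}\times X_1(N)}(\psi_{s_1})=\sum_{s_1}\delta_{s_1}\,[(s_1,s_0)]. \]
The right-hand side lies on the single cuspidal curve $X_1(N)\times\{s_0\}$ and corresponds there to a degree-$0$ divisor supported on cusps, so it equals $-\operatorname{div}\eta$ for some $\eta\in k(X_1(N)\times\{s_0\})^\times\otimes\QQ$, again by Manin--Drinfeld. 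Then the cycle
\[ (\bar C,\bar\phi)+\sum_{s_1}\big(\{s_1\}\times X_1(N),\,\psi_{s_1}\big)+\big(X_1(N)\times\{s_0\},\,\eta\big) \]
has total divisor $0$, hence defines a class in $\CH^2(X_1(N)^2\otimes\QQ(\mu_m),1)\otimes\QQ$; since every added term is supported on a cuspidal curve, which is disjoint from $Y_1(N)^2\otimes\QQ(\mu_m)$, this class restricts to the class of $(C,\phi)$, i.e.\ to ${}_c\Xi_{m,N,j}$, as required.

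The main point to get right is the reduction to the cuspidal divisor class group: one must confirm that the relevant modular maps preserve the cuspidal loci, so that $E$ genuinely lies on the corners, where the only functions available on boundary curves have cuspidal divisors; and one must track degrees along the vertical versus horizontal boundary curves, which is exactly what dictates the two-stage ``route everything onto the column through $s_0$, then clear that column'' structure above (a naive lift of $(C,\phi)$ to $X_1(N)\times X_1(N)$, or only along one family of cuspidal curves, does \emph{not} work). A minor technical wrinkle is that over $\QQ(\mu_m)$ the cusps of $X_1(N)$ need not be rational points, so $\{s\}\times X_1(N)$ above should be read as the irreducible component $X_1(N)\otimes k(\mathfrak s)$ attached to a closed point $\mathfrak s$ of the cuspidal locus, with Manin--Drinfeld invoked over the number field $k(\mathfrak s)$; this changes nothing of substance.
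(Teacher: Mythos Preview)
Your proof is correct and follows essentially the same approach as the paper. The paper packages the Manin--Drinfeld step into a preparatory lemma (Lemma \ref{lem:negligibleelement}) showing that for any two cuspidal corners $(u,v)$ and $(x,y)$ there is a ``negligible'' element---supported on horizontal and vertical cuspidal curves $\{c\}\times X_1(N)$ and $X_1(N)\times\{d\}$---with divisor $(u,v)-(x,y)$, and then applies this lemma termwise to the boundary divisor; your two-stage construction (route everything onto the column through $s_0$, then clear that column) is simply a different bookkeeping of the same idea.
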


   We will actually prove a slightly more precise statement, see Proposition \ref{prop:preimageinGerst21} below.

   Recall that we constructed ${}_c \Xi_{m,N,j}$ as the class in $\CH^2(Y_1(N)^2 \otimes \QQ(\mu_m), 1)$ of an explicit element of $Z^2(Y_1(N)^2 \otimes \QQ(\mu_m), 1)$, which we shall temporarily denote by ${}_c \mathfrak{Y}_{m,N,j}$. It is clear that we may also regard ${}_c \mathfrak{Y}_{m,N,j}$ as an element of $\Gerst_2^1(X_1(N)^2 \otimes \QQ(\mu_m))$, whose divisor is not necessarily trivial, but is supported on the cuspidal locus.

   We will need a preparatory lemma. Let $K$ be any number field.

   \begin{definition}
    We shall call an element of $\Gerst_2^1(X_1(N)^2 \otimes K)$ \emph{negligible} if it is supported on a finite union of curves of the form $\{c\} \times X_1(N)$ or $X_1(N) \times \{d\}$ for points $c,d \in X_1(N) \backslash Y_1(N)$.
   \end{definition}

   \begin{remark}
    Here by ``point'' we mean a 0-dimensional point of $X_1(N) \backslash Y_1(N)$ considered as a $K$-scheme, i.e.~a $\Gal(\overline{K}/K)$-orbit of points in the naive sense. Note that this is slightly more restrictive than the definition of ``negligible'' in \cite{BDR12}.
   \end{remark}

   Before proving the theorem, we will need the following preparatory lemma:

   \begin{lemma}\label{lem:negligibleelement}
    Let $K$ be any number field and let $u,v,x,y$ be cuspidal points of $X_1(N) \otimes K$. Then there exists a negligible element in $\Gerst_2^1(X_1(N)^2 \otimes K) \otimes \QQ$ with divisor $(u,v)-(x,y)$.
   \end{lemma}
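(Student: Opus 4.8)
The plan is to reduce the claim to a statement about a single modular curve and then invoke the well-known fact that degree-zero divisors supported on cusps of $X_1(N)$ are torsion in the Jacobian (the Manin--Drinfeld theorem), which after $\otimes\QQ$ means they are principal. First I would observe that it suffices, by symmetry and a telescoping argument, to treat the case where the two divisors share a coordinate: writing $(u,v)-(x,y) = \big[(u,v)-(x,v)\big] + \big[(x,v)-(x,y)\big]$, it is enough to produce negligible elements with divisors $(u,v)-(x,v)$ and $(x,v)-(x,y)$ separately. These two cases are interchanged by the involution swapping the factors, so I only need to handle, say, $(u,v)-(x,v)$ for cusps $u,x,v$ of $X_1(N)\otimes K$.

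For that case, consider the divisor $(u)-(x)$ on $X_1(N)\otimes K$. It has degree zero and is supported on the cuspidal locus, so by Manin--Drinfeld its class in $\operatorname{Pic}^0(X_1(N)\otimes K)$ is torsion; hence some positive multiple $n\big((u)-(x)\big)$ is the divisor of a function $\phi \in K(X_1(N))^\times$ (after possibly enlarging $K$ to split the relevant torsion point — but the hypothesis already allows $K$ to be any number field containing the field of definition of the cusps, and one can always descend the $\QQ$-coefficient class back down). Then the element $\tfrac1n\big(X_1(N)\times\{v\},\ \operatorname{pr}_2^*\phi\big)$, where $\operatorname{pr}_2$ is the second projection and we pull $\phi$ back along the first coordinate — more precisely the element $\big(\{? \}\times ... \big)$ — let me state it cleanly: the pair $\big(X_1(N)\times\{v\},\ \phi\circ\operatorname{pr}_1\big)\otimes\tfrac1n \in \Gerst_2^1(X_1(N)^2\otimes K)\otimes\QQ$ is supported on a single horizontal cuspidal slice $X_1(N)\times\{v\}$, hence is negligible, and its divisor (computed fibrewise on that slice, which is a copy of $X_1(N)$) is $\tfrac1n \operatorname{div}(\phi)\times\{v\} = \big((u)-(x)\big)\times\{v\} = (u,v)-(x,v)$, as required. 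The vertical case $(x,v)-(x,y)$ is identical with the roles of the two factors exchanged, giving an element supported on $\{x\}\times X_1(N)$.

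Adding the two negligible elements (and noting the class of cusps being a $\operatorname{Gal}(\overline K/K)$-orbit is exactly what lets us treat $u,v,x,y$ as honest points of the $K$-schemes, so $\phi$ genuinely lives in $K(X_1(N))^\times$) yields a negligible element of $\Gerst_2^1(X_1(N)^2\otimes K)\otimes\QQ$ with divisor $(u,v)-(x,y)$. The main obstacle — really the only nontrivial input — is the Manin--Drinfeld theorem, i.e.\ the principality after $\otimes\QQ$ of cuspidal degree-zero divisors on $X_1(N)$; everything else is bookkeeping with divisors on a product of curves and the definition of the Gersten complex. A minor technical point to be careful about is that $\operatorname{div}(\phi)$ must be supported away from $Y_1(N)$ in the relevant coordinate, which is automatic since $\phi$ is built from a function on $X_1(N)$ with cuspidal divisor, so the resulting element is genuinely \emph{negligible} in the precise sense defined above (supported on finitely many slices over cusps), not merely supported on the boundary.
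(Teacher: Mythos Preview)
Your proposal is correct and takes essentially the same approach as the paper: both decompose $(u,v)-(x,y)$ via an intermediate cuspidal point, invoke Manin--Drinfeld to realize each cuspidal degree-zero divisor on $X_1(N)$ as the divisor of an element of $\cO(Y_1(N)\otimes K)^\times\otimes\QQ$, and place these functions on a horizontal and a vertical cuspidal slice. The only cosmetic difference is the choice of intermediate point --- you pass through $(x,v)$, while the paper passes through $(u,y)$ --- and your detour about possibly enlarging $K$ is unnecessary, since the statement already assumes $u,v,x,y$ are points of $X_1(N)\otimes K$.
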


   \begin{proof}
    By the Manin--Drinfeld theorem \cite{drinfeld73}, there exist elements $f,g\in \cO(Y_1(N) \otimes K)^\times \otimes \QQ$ whose divisors are $v-y$ and $u-x$, respectively. The the element
    \[ (\{u\}\times X_1(N),f)+(X_1(N)\times\{y\},g)\]
    has the required property.
   \end{proof}

   We can now prove the following proposition:

   \begin{proposition}
    \label{prop:preimageinGerst21}
    Let $N, m, j, c$ be as in Definition \ref{def:BFelts}. Then there exists an integer $r \ge 1$ and a negligible element $\Theta$ such that
    \[ R \cdot {}_c \mathfrak{Y}_{m,N,j} + \Theta \in Z^1(\Gerst_2(X_1(N)^2 \otimes \QQ(\mu_m))).\]
   \end{proposition}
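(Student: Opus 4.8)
The plan is to show that the divisor of ${}_c\mathfrak{Y}_{m,N,j}$, viewed inside $\Gerst_2^1(X_1(N)^2\otimes\QQ(\mu_m))$, becomes negligible after multiplication by a suitable integer. Recall that ${}_c\mathfrak{Y}_{m,N,j}$ is the pair $(C_{m,N,j},\phi)$ where $\phi=(\iota_{m,N,j})_*({}_cg_{0,1/m^2N})$ and $C_{m,N,j}$ is the closure in $X_1(N)^2$ of the image of $\iota_{m,N,j}$. The boundary $d^1$ of this element is the divisor of $\phi$ on the curve $\overline{C}_{m,N,j}$, and since $\phi$ is a unit on the affine curve $C_{m,N,j}$ (being a pushforward of a Siegel unit, which is a unit on $Y_1(m^2N)$), this divisor is supported on the cusps of $\overline{C}_{m,N,j}$, i.e.\ on the finitely many points of $\overline{C}_{m,N,j}\setminus C_{m,N,j}$. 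Each such point maps, under the two projections $X_1(N)^2\to X_1(N)$, to a cusp of $X_1(N)$; hence each is a $\Gal(\overline{\QQ(\mu_m)}/\QQ(\mu_m))$-orbit of points lying over a pair of cusps $(c,d)$.

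So the divisor of $\phi$ is a $\ZZ$-linear combination $\sum_i n_i P_i$ of closed points $P_i$ of $X_1(N)^2\otimes\QQ(\mu_m)$, each $P_i$ supported over a pair of cusps. The first key step is therefore to \emph{split each $P_i$ off} onto a ``horizontal'' or ``vertical'' cuspidal curve. Concretely, for each $P_i$ lying over the pair of cusps $(c_i,d_i)$, I want to use Lemma \ref{lem:negligibleelement} to produce a negligible element whose divisor is (a multiple of) $P_i$ minus a multiple of a fixed ``reference'' closed point $Q$ over a fixed pair of cusps $(c_0,d_0)$. This requires knowing that $P_i$ and $Q$, as degree-$\delta_i$ and degree-$\delta_0$ closed points, become linearly equivalent after tensoring with $\QQ$ on the relevant cuspidal curves; but the cuspidal curves here are of the form $\{c\}\times X_1(N)$ and $X_1(N)\times\{d\}$, which are copies of $X_1(N)$ itself, and the Manin--Drinfeld theorem guarantees that any two cuspidal divisors of the same degree on $X_1(N)$ are rationally equivalent over $\QQ$. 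Slightly more care is needed because $P_i$ may not literally lie on such a curve; rather $P_i$ lies over $(c_i,d_i)$, so I first push the point down to, say, the vertical curve $\{c_i\}\times X_1(N)$ (along the first projection) and the horizontal curve $X_1(N)\times\{d_i\}$, and build the negligible correction as in the proof of the lemma, namely $(\{c_i\}\times X_1(N), f_i) + (X_1(N)\times\{d_i\}, g_i)$ with $\operatorname{div}(f_i)$, $\operatorname{div}(g_i)$ appropriate multiples of cuspidal divisors arranged so that the total divisor cancels $n_i$ times $P_i$ against multiples of $(c_0,d_0)$-type points.

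Carrying this out for every $i$ and summing, I obtain an integer $R$ (a common multiple of the denominators appearing from Manin--Drinfeld, together with the degrees $\delta_i$) and a negligible element $\Theta=\sum_i\Theta_i$ such that $d^1(R\cdot{}_c\mathfrak{Y}_{m,N,j}+\Theta)$ is supported entirely over the single pair of cusps $(c_0,d_0)$, and moreover (by construction) equals a single integer multiple of a point over $(c_0,d_0)$ on \emph{both} a vertical and a horizontal curve — in fact after bookkeeping it is a multiple of $(c_0,d_0)$ minus the same multiple of $(c_0,d_0)$, i.e.\ zero. (Alternatively, one bookkeeps so that the total divisor lies on the curve $\{c_0\}\times X_1(N)$ and has degree $0$ there, hence by Manin--Drinfeld can be killed by one more negligible element, absorbed into $\Theta$.) This yields $R\cdot{}_c\mathfrak{Y}_{m,N,j}+\Theta\in Z^1(\Gerst_2(X_1(N)^2\otimes\QQ(\mu_m)))$, i.e.\ $Z^2(X_1(N)^2\otimes\QQ(\mu_m),1)$.

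The main obstacle I anticipate is purely bookkeeping: tracking precisely which cusps $c_i,d_i$ occur in the support of $\operatorname{div}(\phi)$ on $\overline{C}_{m,N,j}$, and how the two projections act on them, in order to be sure the negligible corrections $\Theta_i$ do what is claimed and that no residual divisor survives. The conceptual input — that Siegel units are units away from the cusps (so $\operatorname{div}\phi$ is cuspidal) and that Manin--Drinfeld makes any degree-zero cuspidal divisor on $X_1(N)$ rationally trivial over $\QQ$ — is entirely standard; the work is in the orbit-counting over $\QQ(\mu_m)$ and in choosing $R$ large enough to clear all denominators. I would organize the argument so that the precise value of $R$ is never needed, only its existence, exactly as the statement permits.
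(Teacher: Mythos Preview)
Your proposal is correct and follows essentially the same approach as the paper: identify that the divisor of ${}_c\mathfrak{Y}_{m,N,j}$ on $\overline{C}_{m,N,j}$ is supported on cuspidal points, then invoke Manin--Drinfeld (via Lemma~\ref{lem:negligibleelement}) to build a negligible $\Theta$ cancelling it. The paper's execution is slightly cleaner than yours: rather than routing everything through a fixed reference point $(c_0,d_0)$, it observes that the divisor of a function on a curve has degree zero and that the cusps of $\overline{C}_{m,N,j}$ all have the explicit form $(c, c+j/m)$, so $\Div({}_c\mathfrak{Y}_{m,N,j})$ is already a sum of differences $(c_1,c_1+j/m)-(c_2,c_2+j/m)$, each of which Lemma~\ref{lem:negligibleelement} kills directly; this sidesteps the reference-point bookkeeping you anticipate.
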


   \begin{proof}
    Recall that $\iota_{m,N,j}$ is the map
    \[ \iota_{m,N,j} = (1, \kappa_j): Y_1(m^2 N)\rightarrow Y_1(N)^2, \]
    where $\kappa_j$ is induced from the map $\cH\rightarrow \cH$ given by $z\mapsto z + \frac{j}{m}$. It follows that, if we regard ${}_c\mathfrak{Y}_{m,N,j}$ as an element of $Z^2(X_1(N)^2 \otimes \QQ(\mu_m), 1)$, then $\Div \big( {}_c\mathfrak{Y}_{m,N,j} \big)$ is a linear combination of divisors of the form $(c_1, c_1 + \frac{j}{m}) - (c_2, c_2 + \frac{j}{m})$. But Lemma \ref{lem:negligibleelement} implies that there exists a negligible element $\Theta \in \Gerst^1_2(X_1(N) \otimes \QQ(\mu_m))$ such that $\Div(\Theta) = \Div\big( {}_c\mathfrak{Y}_{m,N,j} \big)$. Then the element
    \[ {}_c\mathfrak{X}_{m,N,j} \coloneqq {}_c\mathfrak{Y}_{m,N,j} - \Theta \in Z^2(X_1(N)^2 \otimes \QQ(\mu_m), 1)\otimes\QQ\]
    has the required properties.
   \end{proof}

   This clearly implies Theorem \ref{thm:preimageinChowgp}.

   \begin{remark}\mbox{~}
    \begin{enumerate}
     \item Note that the negligible element $\Theta$ is not uniquely determined. However, as we will see below, this will not matter for the evaluation of the element via the Beilinson or the syntomic regulator.
     \item Since $X_1(N)^2$ and $Y_1(N)^2$ have the same rational function field, any element of $\CH^2(X_1(N)^2 \otimes \QQ(\mu_m), 1)\otimes \QQ$ lifting ${}_c \Xi_{m, N, j}$ is necessarily the class of an element of $Z^2(X_1(N)^2 \otimes \QQ(\mu_m), 1) \otimes \QQ$ differing from ${}_c\mathfrak{Y}_{m,N,j}$ by a negligible element.
     \item Since there are only finitely many cusps on $X_1(N)$, the constant $R$ may be chosen to be independent of $c$, $m$ and $j$, although it may of course depend on $N$.
    \end{enumerate}
   \end{remark}

 \subsection{Zeta elements versus generalized Beilinson--Flach elements}
  \label{sect:dreams}

  At the referee's request, we shall briefly clarify the relations between the two classes of elements we have introduced (the zeta elements ${}_c \cZ_{m, N, j}$ and the generalized Beilinson--Flach elements ${}_c \Xi_{m, N, j}$) and how they would relate to a hypothetical ``optimal'' construction. Recall that the element ${}_c \cZ_{m, N, j}$ lies in the group $\CH^2(Y(m, mN)^2, 1)$, and the element ${}_c \Xi_{m, N, j} \in \CH^2(Y_1(N)^2 \otimes \QQ(\mu_m), 1)$ is the pushforward of ${}_c \cZ_{m, N, j}$ via the morphism $t_m \times t_m$ of \S \ref{sect:modcurves}. It is the elements ${}_c \Xi_{m, N, j}$ which will be used in \S\S 4--7 of this paper in order to bound Selmer groups.

  One reason for introducing the elements ${}_c \cZ_{m, N, j}$ is that they are somewhat easier to work with than the ${}_c \Xi_{m, N, j}$. In the next section we shall prove norm-compatibility relations for the ${}_c \cZ_{m, N, j}$, and deduce norm relations for the ${}_c \Xi_{m, N, j}$ as a consequence; given the somewhat opaque map $\iota_{m, N, j}$ entering into the definition of the elements ${}_c \Xi_{m, N, j}$, it seems unlikely that these norm relations could be proved without the introduction of some auxilliary higher-level modular curve.

  A second reason to consider the elements ${}_c \cZ_{m, N, j}$ is the following optimistic idea. Let us fix a prime $p$, and a level $N$ coprime to $p$, and consider the curves $Y(p^r, Np^r)$ for $r \ge 0$, and their self-products $Y(p^r, Np^r)^2$. These form a tower of surfaces with Galois group $\GL_2(\Zp) \times_{\det} \GL_2(\Zp)$. Let us imagine that we could construct a norm-compatible family of elements in the higher Chow groups of this tower, analogous to the compatible family of elements in $K_2$ of the $\GL_2(\Zp)$-tower of modular curves constructed by Kato in \cite{kato04}. Then one could potentially perform a ``nonabelian twisting'' operation analogous to equation (8.4.3) of \emph{op.cit.} in order to obtain classes in the cohomology groups attached to pairs of modular forms of arbitrary weights $k, \ell \ge 2$.

  The elements ${}_c \cZ_{p^r, N, j}$ represent our best attempt to realize this dream. They do indeed live on the surfaces $Y(p^r, Np^r)^2$; but the norm-compatibility relation they satisfy (Theorem \ref{thm:secondnormbadprime1}) involves the ``twisted'' degeneracy map $\tau_p: Y(p^{r+1}, Np^{r+1}) \to Y(p^r, Np^r)$ of Definition \ref{def:tm}, given by $z \mapsto z/p$ on the upper half-plane $\cH$, rather than the natural one corresponding to the identity map on $\cH$. The norm-compatibility relation also involves a Hecke operator at $p$, which does not appear in the setting of \cite{kato04}. Consequently, our methods will only allow us to construct cohomology classes for Rankin--Selberg convolutions of higher weight forms under additional ordinarity assumptions, when we can use Hida's theory of $p$-adic families in order to pass from weight 2 to general weights.


 \section{Norm relations for generalized Beilinson--Flach elements}


\subsection{The first norm relation: varying \texorpdfstring{$N$}{N}}

 We now consider the relation between the zeta elements at different levels $N$ (for fixed $m$ and $j$).

 \begin{theorem}[First norm relation]
  \label{thm:firstnormrelation1}
  Let $\alpha$ be the natural projection $Y(m, mN') \to Y(m, mN)$, where $N$ and $N'$ are positive integers such that $N \mid N'$.
  \begin{enumerate}
   \item If $\Prime(N') \subseteq \Prime(m N)$, the pushforward map
   \[ (\alpha \times \alpha)_* : \CH^2(Y(m, mN')^2, 1) \to \CH^2(Y(m, mN)^2, 1)\]
   maps ${}_c \cZ_{m, N', j}$ to ${}_c \cZ_{m, N, j}$.
   \item If $N' = N\ell$, where $\ell \nmid mN$ is prime, then
   \[ (\alpha \times \alpha)_*\left({}_c \cZ_{m, N\ell, j}\right) = \left[1 - \left(\stbt {\ell^{-1}} 0 0 {\ell^{-1}}, \stbt {\ell^{-1}} 0 0 {\ell^{-1}}\right)^*\right]  {}_c \cZ_{m, N, j},\]
   where $\stbt {\ell^{-1}} 0 0 {\ell^{-1}}$ is considered as an element of $\GL_2(\ZZ / mN \ZZ)$.
  \end{enumerate}
 \end{theorem}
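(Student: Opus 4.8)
The plan is to reduce the statement about zeta elements on the surfaces $Y(m,mN')^2$ to the compatibility properties of Siegel units established in Theorem \ref{thm:siegel-compat}, exploiting the fact that the cycles $\cC_{m,N,j}$ underlying the zeta elements are, by construction, graphs of the unipotent automorphism $\stbt 1 j 0 1$, which is visibly compatible with the degeneracy maps $\alpha \times \alpha$. First I would observe that the projection $\alpha\colon Y(m,mN')\to Y(m,mN)$ carries the curve $\cC_{m,N',j}\subseteq Y(m,mN')^2$ onto $\cC_{m,N,j}\subseteq Y(m,mN)^2$: since $\cC_{m,N,j}$ is defined by the equation $v=\stbt 1 j 0 1 u$ and the matrix $\stbt 1 j 0 1$ makes sense modulo any level, the restriction of $\alpha\times\alpha$ to $\cC_{m,N',j}$ is the map $\cC_{m,N',j}\to\cC_{m,N,j}$ induced by $\alpha$ itself (via the isomorphism of each $\cC_{m,N,j}$ with $Y(m,mN)$ given by the first projection). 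In particular the degree of $(\alpha\times\alpha)|_{\cC_{m,N',j}}$ equals the degree of $\alpha$, and there is no contribution to the pushforward from other components of the preimage.

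Given this, the pushforward $(\alpha\times\alpha)_*\,{}_c\cZ_{m,N',j}$ is the pair $\bigl(\cC_{m,N,j},\ \beta_*\phi'\bigr)$, where $\phi'$ is the pullback of ${}_cg_{0,1/mN'}$ to $\cC_{m,N',j}$ and $\beta_*$ is the norm map along $\cC_{m,N',j}\to\cC_{m,N,j}$; and since everything is compatible with the first projections, $\beta_*\phi'$ is just the pullback to $\cC_{m,N,j}$ of $\alpha_*({}_cg_{0,1/mN'})\in\cO(Y(m,mN))^\times$. Now I invoke Theorem \ref{thm:siegel-compat} directly: in case (1), where $\Prime(mN')=\Prime(mN)$ (note $\Prime(N')\subseteq\Prime(mN)$ together with $N\mid N'$ forces $\Prime(mN')=\Prime(mN)$), the first assertion of that theorem gives $\alpha_*({}_cg_{0,1/mN'})={}_cg_{0,1/mN}$, and hence $(\alpha\times\alpha)_*\,{}_c\cZ_{m,N',j}={}_c\cZ_{m,N,j}$. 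In case (2), where $N'=N\ell$ with $\ell\nmid mN$ prime, the second assertion gives $\alpha_*({}_cg_{0,1/mN\ell})={}_cg_{0,1/mN}\cdot\bigl({}_cg_{0,\quot{\ell^{-1}}/mN}\bigr)^{-1}$, where $\quot{\ell^{-1}}$ is the inverse of $\ell$ modulo $mN$; pulling this back to $\cC_{m,N,j}$ and recognizing the second factor as the pullback of ${}_cg_{0,1/mN}$ under the diamond-type operator $\stbt{\ell^{-1}}00{\ell^{-1}}$ — which, by Proposition \ref{prop:tmproperties}(2)-type reasoning acting on the index $(0,1/mN)$, sends ${}_cg_{0,1/mN}$ to ${}_cg_{0,\quot{\ell^{-1}}/mN}$ — yields exactly the stated formula $\bigl[1-(\stbt{\ell^{-1}}00{\ell^{-1}},\stbt{\ell^{-1}}00{\ell^{-1}})^*\bigr]\,{}_c\cZ_{m,N,j}$, once one checks that $(\stbt{\ell^{-1}}00{\ell^{-1}},\stbt{\ell^{-1}}00{\ell^{-1}})$ preserves the curve $\cC_{m,N,j}$ (it commutes with $\stbt 1 j 0 1$, being scalar).

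The main obstacle I anticipate is purely bookkeeping: one must verify carefully that the scalar matrix $\stbt{\ell^{-1}}00{\ell^{-1}}$, acting on $Y(m,mN)$ in the sense of \S\ref{sect:modcurves}, does in fact transform the Siegel unit ${}_cg_{0,1/mN}$ into ${}_cg_{0,\quot{\ell^{-1}}/mN}$ — this uses the formula $u^*({}_cg_{\alpha,\beta})={}_cg_{(\alpha,\beta)u}$ for $u\in\GL_2(\ZZ/M\ZZ)$ recalled in the proof of Theorem \ref{thm:siegel-compat}, applied with $(\alpha,\beta)=(0,1/mN)$ and the relevant scalar matrix, together with the fact that ${}_cg_{0,1/mN}$ descends from $Y(m^2N)$ (or $Y(mN)$) to $Y(m,mN)$ compatibly with these actions. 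One also needs to confirm that the degeneracy map $\alpha\colon Y(m,mN\ell)\to Y(m,mN)$ appearing here is the same as the map to which Theorem \ref{thm:siegel-compat} applies with $M=m$ — which it is, by definition of the curves $Y(m,mN)$ as quotients of $Y(L)$. Once these identifications are in place, the theorem follows formally; no serious geometric input beyond Lemmas \ref{lem:distinctcurves}–\ref{lem:doublecosetpullback} (to control the preimage of the cycle) and the Siegel unit norm relations is required.
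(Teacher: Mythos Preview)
Your proposal is correct and follows essentially the same route as the paper: reduce to the Siegel unit norm relations of Theorem \ref{thm:siegel-compat} after observing that $\alpha\times\alpha$ carries $\cC_{m,N',j}$ onto $\cC_{m,N,j}$ (because $\stbt 1 j 0 1$ commutes with the degeneracy map), and then in case (2) identify ${}_cg_{0,\quot{\ell^{-1}}/mN}$ with $\stbt{\ell^{-1}}00{\ell^{-1}}^*\,{}_cg_{0,1/mN}$ using the transformation law $u^*({}_cg_{\alpha,\beta})={}_cg_{(\alpha,\beta)u}$ together with the fact that scalar matrices commute with $\stbt 1 j 0 1$. The appeal to Lemmas \ref{lem:distinctcurves}--\ref{lem:doublecosetpullback} is unnecessary here, since the preimage of $\cC_{m,N,j}$ under $\alpha\times\alpha$ plays no role---only the forward image is needed---but this is harmless.
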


 \begin{proof}
  It is clear that the map $\alpha$ commutes with the action of $\stbt1j01$, so we have $(\alpha \times \alpha)(\cC_{m, N', j}) = \cC_{m, N, j}$; more precisely, we have a commutative diagram
  \begin{diagram}
   Y(m, mN') & \rTo^{(1, \stbt1j01)} & \cC_{m, N', j} \\
   \dTo^\alpha & & \dTo_{\alpha \times \alpha}\\
   Y(m, mN) & \rTo^{(1, \stbt1j01)} & \cC_{m, N, j}.
  \end{diagram}

  From Theorem \ref{thm:siegel-compat}, we know that if $\Prime(mN') = \Prime(mN)$, then $\alpha_* \left({}_c g_{0, 1/m N'}\right) = {}_c g_{0, 1/m N}$, so part (1) of the theorem follows. For part (2), we deduce from the second part of Theorem \ref{thm:siegel-compat} that
  \[ (\alpha \times \alpha)_* \left({}_c \cZ_{m, N', j}\right) = {}_c \cZ_{m, N, j} - {}_c \cZ_{m, N, j, \quot{\ell^{-1}}},\]
  where we write ${}_c \cZ_{m, N, j, a}$ for the element formed with ${}_c g_{0, a/mN}$ in place of ${}_c g_{0, 1/mN}$. However, we have
  \[ {}_c g_{0, \quot{\ell^{-1}}/mN} = \stbt {\ell^{-1}} 0 0 {\ell^{-1}}^* {}_c g_{0, 1/mN}\]
  as elements of $\cO(Y(m, mN))^\times$, and the action of $\stbt {\ell^{-1}} 0 0 {\ell^{-1}}$ evidently commutes with that of $\stbt1j01$.
 \end{proof}

 We now deduce a compatibility relation for zeta elements on $Y_1(N)^2 \otimes \QQ(\mu_m)$.

 \begin{theorem}[First norm relation on $Y_1(N)$]
  \label{thm:firstnormrelation}
  Let $\alpha$ be the natural projection $Y_1(N') \to Y_1(N)$, where $N, N'$ are positive integers such that $N \mid N'$.

  If $\Prime(mN') = \Prime(mN)$ then we have
  \[ (\alpha \times \alpha)_* \left({}_c \Xi_{m, N', j}\right) = {}_c \Xi_{m, N, j}.\]
  If $N' = \ell N$ where $\ell \nmid mN$, then we have
  \[ (\alpha \times \alpha)_* \left({}_c \Xi_{m, N', j}\right) = \left[1 - (\langle \ell^{-1} \rangle, \langle \ell^{-1} \rangle)^* \sigma_\ell^{-2}\right] {}_c \Xi_{m, N, j},\]
  where $\sigma_\ell$ denotes the arithmetic Frobenius at $\ell$.
 \end{theorem}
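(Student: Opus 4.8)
The plan is to deduce this from the corresponding norm relation at the level of $Y(m,mN)^2$ already established in Theorem~\ref{thm:firstnormrelation1}, by transporting that identity along the morphisms $t_m\times t_m$, using the fact (Proposition~\ref{prop:pushforwardzeta}) that ${}_c\Xi_{m,N,j}$ is precisely the pushforward of ${}_c\cZ_{m,N,j}$ under $t_m\times t_m$. Write $\pi\colon Y(m,mN')\to Y(m,mN)$ for the natural projection (the map called $\alpha$ in Theorem~\ref{thm:firstnormrelation1}) and $\alpha\colon Y_1(N')\to Y_1(N)$ for the natural projection appearing in the present statement. The first step is to check the commutativity of the square
\[
\begin{diagram}
Y(m,mN')^2 & \rTo^{t_m \times t_m} & Y_1(N')^2 \otimes \QQ(\mu_m) \\
\dTo^{\pi \times \pi} & & \dTo_{\alpha \times \alpha} \\
Y(m,mN)^2 & \rTo^{t_m \times t_m} & Y_1(N)^2 \otimes \QQ(\mu_m).
\end{diagram}
\]
This is a routine verification from the moduli-theoretic definitions of $t_m$, $\pi$ and $\alpha$: all four maps are defined over $\QQ(\mu_m)$ and over each $\mu_m$-fibre correspond to evident inclusions of congruence subgroups, so by rigidity of algebraic maps between modular curves the claim reduces to the commutativity of the corresponding diagram of quotients of $\cH$.

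Granting this, functoriality of proper pushforward for higher Chow groups (and in fact already for the cycle groups $Z^2(-,1)$) gives an identity of maps $(\alpha\times\alpha)_*\circ(t_m\times t_m)_* = (t_m\times t_m)_*\circ(\pi\times\pi)_*$. Applying both sides to ${}_c\cZ_{m,N',j}$ and simplifying the left-hand side via Proposition~\ref{prop:pushforwardzeta}, one obtains
\[
(\alpha\times\alpha)_*\big({}_c\Xi_{m,N',j}\big) \;=\; (t_m\times t_m)_*\Big((\pi\times\pi)_*\big({}_c\cZ_{m,N',j}\big)\Big).
\]
Now substitute the two cases of Theorem~\ref{thm:firstnormrelation1}. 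When $\Prime(mN')=\Prime(mN)$, the inner expression equals ${}_c\cZ_{m,N,j}$, whose image under $(t_m\times t_m)_*$ is ${}_c\Xi_{m,N,j}$ by Proposition~\ref{prop:pushforwardzeta} again (with $\alpha=1$); this gives the first formula. When $N'=\ell N$ with $\ell\nmid mN$, the inner expression is $\big[1-\big(\stbt {\ell^{-1}} 0 0 {\ell^{-1}},\stbt {\ell^{-1}} 0 0 {\ell^{-1}}\big)^*\big]{}_c\cZ_{m,N,j}$, so it remains to compute the image of the operator $\big(\stbt {\ell^{-1}} 0 0 {\ell^{-1}},\stbt {\ell^{-1}} 0 0 {\ell^{-1}}\big)^*$ under $(t_m\times t_m)_*$.

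For this last step I would use that pushforward along a finite morphism intertwines pullback by an automorphism of the source with pullback by the induced automorphism of the target, combined with Proposition~\ref{prop:tmproperties}(2): factoring $\stbt {\ell^{-1}} 0 0 {\ell^{-1}} = \stbt {\ell^{-2}} 0 0 1 \cdot \stbt {\ell} 0 0 {\ell^{-1}}$, the first factor is intertwined by $t_m$ with the arithmetic Frobenius $\sigma_\ell^{-2}$ on $\mu_m$ and the second with the diamond operator $\langle\ell^{-1}\rangle$ on $Y_1(N)$, so $\big(\stbt {\ell^{-1}} 0 0 {\ell^{-1}},\stbt {\ell^{-1}} 0 0 {\ell^{-1}}\big)^*$ is carried to $(\langle\ell^{-1}\rangle,\langle\ell^{-1}\rangle)^*\sigma_\ell^{-2}$, which yields the asserted relation. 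The only genuinely non-formal point — and hence where I expect to have to be careful — is precisely this bookkeeping: the central matrix $\stbt {\ell^{-1}} 0 0 {\ell^{-1}}$ acts on the base $\mu_m$ through its determinant $\ell^{-2}$, producing the power $\sigma_\ell^{-2}$ of Frobenius rather than $\sigma_\ell^{-1}$, while simultaneously inducing $\langle\ell^{-1}\rangle$ on each $Y_1(N)$-factor. Verifying the commutative square above, though necessary, is entirely routine.
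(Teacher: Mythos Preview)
Your proposal is correct and follows essentially the same route as the paper: deduce the result from Theorem~\ref{thm:firstnormrelation1} via Proposition~\ref{prop:pushforwardzeta} and the intertwining properties of $t_m$ in Proposition~\ref{prop:tmproperties}(2). The paper's proof is a one-liner recording only the intertwining facts, whereas you spell out the commutative square, the functoriality of pushforward, and the explicit factorization $\stbt{\ell^{-1}}{0}{0}{\ell^{-1}} = \stbt{\ell^{-2}}{0}{0}{1}\cdot\stbt{\ell}{0}{0}{\ell^{-1}}$; but the argument is the same.
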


 \begin{proof}
  This follows immediately from Theorem \ref{thm:firstnormrelation1}, since the map $\pi_{m, N} : Y(m, mN) \to Y_1(N) \times \mu_m$ intertwines $\stbt {\ell^{-1}} 0 0 \ell$ with the diamond operator $\langle \ell \rangle$, and $\stbt \ell 0 0 1$ with the Frobenius $\sigma_\ell$.
 \end{proof}


\subsection{Hecke operators}

 We define Hecke operators, following \cite[\S\S 2.9, 4.9]{kato04}. Let $\ell$ be prime, and $M, N \ge 1$ (we allow $\ell \mid M$ or $\ell \mid N$). We define a correspondence on $Y(M, N)$ as follows. We have a diagram of modular curves
 \begin{diagram}
  Y(M(\ell), N) \\
  \dTo^{\pi_1} & \rdTo^{\pi_2} \\
  Y(M, N) & & Y(M, N),
 \end{diagram}
 where $\pi_1$ is the natural degeneracy map, corresponding to the identity on $\cH$, and $\pi_2$ is the ``twisted'' degeneracy map, corresponding to $z \mapsto z/\ell$ on $\cH$. (In the notation introduced in the proof of Theorem \ref{thm:siegel-compat} above, $\pi_1$ was denoted $\pr_1$, and $\pi_2$ is the composite of $\vp_\ell^{-1} : Y(M(\ell), N) \to Y(M, N(\ell))$ with the natural projection $Y(M, N(\ell)) \to Y(M, N)$).

 We denote the correspondence $(\pi_2)_* (\pi_1)^*$ by $T_\ell'$ if $\ell \nmid MN$, and by $U_\ell'$ if $\ell \mid MN$. We denote the operator $(\pi_1)_* (\pi_2)^*$ by $T_\ell$ (resp.~$U_\ell$); these latter operators $T_\ell$, $U_\ell$ are the familiar Hecke operators of the transcendental theory, but it is the $T_\ell', U_\ell'$ that will concern us most here.


\subsection{The second norm relation for \texorpdfstring{$\ell \mid N$}{l|N}}

 Our goal in this section is to prove the following theorem:

 \begin{theorem}[Second norm relation, $\ell \mid N$ case]
  \label{thm:secondnormbadprime1}
  Let $m \ge 1, N \ge 5$, and $\ell$ a prime dividing $N$. Let $\tau_\ell$ denote the degeneracy map
  \[ Y(m\ell, m\ell N) \to Y(m, mN)\]
  of Definition \ref{def:tm}, compatible with $z \mapsto z/\ell$ on $\cH$. Then for any $j \in (\ZZ / \ell m \ZZ)^\times$, and $c > 1$ coprime to $6\ell mN$, we have
  \[ (\tau_\ell \times \tau_\ell)_* \left({}_c \cZ_{\ell m, N, j}\right) =
   \begin{cases}
    (U_\ell' \times U_\ell')\left({}_c \cZ_{m, N, j}\right)  & \text{if $\ell \mid m$,} \\
    \left(U_\ell' \times U_\ell' - \Delta_\ell^*\right) \left({}_c \cZ_{m, N, j}\right) & \text{if $\ell \nmid m$.}
   \end{cases}
  \]
  where $\Delta_\ell$ denotes the action of any element of $\GL_2(\ZZ /mN\ZZ)^2$ of the form $\left( \tbt x 0 0 1, \tbt x 0 0 1\right)$ with $x = \ell \bmod m$.
 \end{theorem}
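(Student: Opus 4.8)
The plan is to reduce everything to a computation on the curves themselves, carried out at the level of the cycle group $Z^2(-,1)$ rather than in the Chow group, exactly as in the proof of Theorem~\ref{thm:firstnormrelation1}. First I would analyse how the diagonal-type curve $\cC_{\ell m, N, j} \subseteq Y(\ell m, \ell m N)^2$ maps under $\tau_\ell \times \tau_\ell$. Since $\tau_\ell$ corresponds to $z \mapsto z/\ell$ on $\cH$, and the Hecke correspondences $U_\ell'$ (resp.\ $T_\ell'$) are defined via $(\pi_2)_*(\pi_1)^*$ where $\pi_2$ is also the ``$z \mapsto z/\ell$'' degeneracy map, I expect that the pushforward of $\cC_{\ell m, N, j}$ decomposes, after the Lemma~\ref{lem:doublecosetpullback}-style analysis of preimages, into a sum of components each of which is a translate of $\cC_{m,N,j}$ by a Hecke-type correspondence. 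Concretely, I would write the double coset $\Gamma' \stbt 1 0 0 \ell \Gamma$ governing the pushforward as a disjoint union of single cosets and match the resulting curves against the correspondence $U_\ell' \times U_\ell'$ (which is itself a sum of $\ell$ single-coset curves). The dichotomy $\ell \mid m$ versus $\ell \nmid m$ enters precisely here: when $\ell \nmid m$ there is one ``extra'' coset (coming from the fact that the level at $\ell$ jumps from exact order coprime-to-$\ell$ to exact order divisible by $\ell$, so the degeneracy map has degree $\ell+1$ rather than $\ell$ on the relevant factor), and that extra term is the one producing the correction $-\Delta_\ell^*$; when $\ell \mid m$ the degree is exactly $\ell$ and there is no correction.

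Second, having matched the curves, I would track the Siegel units. The function attached to $\cC_{\ell m, N, j}$ is (the pullback of) ${}_c g_{0, 1/\ell m N}$, and I need to compute its pushforward under the various degeneracy maps appearing. For the ``natural'' factors this is governed by Theorem~\ref{thm:siegel-compat} (the case $\Prime(\ell m N) = \Prime(m N)$, which applies since $\ell \mid N$), giving ${}_c g_{0, 1/mN}$ directly; for the ``twisted'' factor $\varphi_\ell$ I would use the pullback-compatibility ${}_c g_{\alpha,\beta}(M\cdot z) = \prod {}_c g_{\alpha',\beta'}$ from the Remark after Proposition~\ref{prop:distrelations}, together with the distribution relation \eqref{eq:dist1}. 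The bookkeeping of \emph{which} $\stbt a b c d$-translate of ${}_c g_{0,1/mN}$ appears on each component must be organised so that the sum over components exactly reproduces the definition of $U_\ell' \times U_\ell'$ applied to ${}_c \cZ_{m,N,j} = (\cC_{m,N,j}, {}_c g_{0,1/mN})$; in the $\ell \nmid m$ case the leftover translate will be ${}_c g_{\ell x/mN, \ast}$ for suitable $x \equiv \ell \bmod m$, which I would identify with $\stbt x 0 0 1^* {}_c g_{0,1/mN}$ and hence with the $\Delta_\ell^*$-term.

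A subtlety I would flag explicitly is that $U_\ell' \times U_\ell'$ is a priori a correspondence on $Y(m,mN) \times Y(m,mN)$ (the full product over $\Spec \QQ$), not on the fibre product $Y(m,mN)^2 = Y(m,mN) \times_{\mu_m^\circ} Y(m,mN)$; I need to check that the relevant correspondences preserve the fibre-product locus, which holds because the degeneracy maps $\pi_1, \pi_2$ are all compatible with the maps to $\mu_m^\circ$ (for $\pi_2$ this uses that $\ell \mid N$, so the level structure at $\ell$ used to define $\mu_m^\circ$ is untouched). The hypothesis $j \in (\ZZ/\ell m\ZZ)^\times$ should be used to guarantee that $\cC_{\ell m, N, j}$ and its Hecke translates remain irreducible curves of the expected type and that the map $(1, \stbt 1 j 0 1)$ is an isomorphism onto $\cC_{\ell m,N,j}$, so that pushforward along $\tau_\ell \times \tau_\ell$ restricted to this curve is transparent.

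The main obstacle I anticipate is the combinatorial matching in the first and second steps: getting the decomposition of $\tau_\ell \times \tau_\ell$-pushforward of $\cC_{\ell m, N, j}$ into single-coset curves to line up, term by term and with the correct Siegel-unit translates, with the two-factor Hecke operator $U_\ell' \times U_\ell'$ (and, in the $\ell \nmid m$ case, correctly isolating the single extra coset as $\Delta_\ell^*$). This is the place where Kato's parallel computation in \cite[\S2.9, \S4.9]{kato04} for $K_2$ of a single modular curve has to be genuinely extended to the two-variable setting, and where sign/normalisation conventions for $U_\ell'$ versus $U_\ell$ must be tracked with care. Once the curves and units are matched, comparing the two sides in $Z^2(Y(m,mN)^2,1)$ is immediate, and passing to $\CH^2$ is formal.
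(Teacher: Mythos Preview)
Your overall plan---compare curves first, then track Siegel units, work at the level of $Z^2(-,1)$---is sound and matches the paper's spirit. However, two specific points diverge from the actual argument, and the second is a genuine gap.

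First, the paper does not attack $(\tau_\ell \times \tau_\ell)_*$ by a direct double-coset decomposition. Instead it factors $\tau_\ell$ as a composite
\[
Y(\ell m, \ell m N) \xrightarrow{\ \alpha\ } Y(\ell m, mN) \xrightarrow{\ \pr\ } Y(m(\ell), mN) \xrightarrow{\ \pi_2\ } Y(m,mN),
\]
where the first map $\alpha$ is the natural projection (using $\ell \mid N$, so $\ell m N' = mN$ with $N' = N/\ell$). The \emph{first} norm relation (Theorem~\ref{thm:firstnormrelation1}) dispatches $\alpha$ immediately: $(\alpha \times \alpha)_*\, {}_c\cZ_{\ell m, N, j} = {}_c\cZ_{\ell m, N', j}$. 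This reduction to level $(\ell m, mN)$ is the step that makes the comparison with $U_\ell' \times U_\ell' = (\pi_2)_*(\pi_1)^* \times (\pi_2)_*(\pi_1)^*$ tractable: one now only has to compare $(\pr \times \pr)_*$ with $(\pi_1 \times \pi_1)^*$ at the intermediate level $Y(m(\ell), mN)^2$, and then apply $(\pi_2 \times \pi_2)_*$. You should incorporate this factorisation; a direct coset decomposition of $\tau_\ell$ alone would be messier.

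Second, your explanation of the extra term is incorrect. The $-\Delta_\ell^*$ does \emph{not} arise from a degree $\ell+1$ versus $\ell$ discrepancy in $\tau_\ell$. After the reduction above, one computes that $(\pi_1 \times \pi_1)^*\, {}_c\cZ_{m,N,j}$ pulls back on $Y(\ell m, mN)^2$ to the sum of ${}_c\cZ_{\ell m, N', k}$ over \emph{all} liftings $k \equiv j \bmod m$, whereas $(\pr \times \pr)_*\, {}_c\cZ_{\ell m, N', j}$ gives the sum over $\{xj : x \equiv 1 \bmod m\}$. When $\ell \mid m$ these two index sets coincide (using that $j$ is a unit). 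When $\ell \nmid m$ they differ by exactly one term: the unique non-unit lifting $j_0 \equiv 0 \bmod \ell$ appears in the former but not the latter. The matrix $\stbt{1}{j_0}{0}{1}$ normalises the subgroup defining $Y(m(\ell), mN)$, so this lift gives a curve $\cA \subset Y(m(\ell),mN)^2$ with
\[
(\pi_1 \times \pi_1)^*\, {}_c\cZ_{m,N,j} = (\pr \times \pr)_*\, {}_c\cZ_{\ell m, N', j} + (\cA,\, {}_c g_{0,1/mN}).
\]
One then checks, via the distribution relation~\eqref{eq:dist2}, that $(\pi_2 \times \pi_2)_*(\cA,\, {}_c g_{0,1/mN}) = \Delta_\ell^*\, {}_c\cZ_{m,N,j}$. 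So the extra term is a feature of the preimage decomposition of $\cC_{m,N,j}$ and the special role of the non-unit lift $j_0$, not of the degree of $\tau_\ell$.
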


 We shall prove Theorem \ref{thm:secondnormbadprime1} below. First, we note that it implies the following property of the generalized Beilinson--Flach elements ${}_c \Xi_{m, N, j}$ on $Y_1(N)$:

 \begin{theorem}[Second norm relation on $Y_1(N)$, $\ell \mid N$ case]
  \label{thm:secondnormbadprime}
  Let $m \ge 1, N \ge 5$, $\ell$ a prime dividing $N$, $j \in (\ZZ / \ell m  \ZZ)^\times$, and $c \in (\ZZ / \ell m  N \ZZ)^\times$. Then we have
  \[ \norm_{m}^{\ell m} \left({}_c \Xi_{\ell m, N, j}\right)=
   \begin{cases}
    (U_\ell' \times U_\ell')\left({}_c \Xi_{m, N, j}\right)  & \text{if $\ell \mid m$,} \\
    (U_\ell' \times U_\ell' - \sigma_\ell) \left({}_c \Xi_{m, N, j}\right) & \text{if $\ell \nmid m$,}
   \end{cases}
  \]
  where $\norm_{m}^{\ell m}$ denotes the Galois norm map, and $\sigma_\ell$, for $\ell \nmid m$, denotes the arithmetic Frobenius at $\ell$ in $\Gal(\QQ(\mu_m) / \QQ)$.
 \end{theorem}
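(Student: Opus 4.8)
The plan is to deduce this from Theorem~\ref{thm:secondnormbadprime1} by pushing everything forward along the maps $t_m \times t_m$ of \S\ref{sect:modcurves}. Recall from Proposition~\ref{prop:pushforwardzeta} (with $\alpha = 1$) that ${}_c\Xi_{m,N,j} = (t_m \times t_m)_*\, {}_c\cZ_{m,N,j}$, and likewise ${}_c\Xi_{\ell m, N, j} = (t_{\ell m} \times t_{\ell m})_*\, {}_c\cZ_{\ell m, N, j}$. First I would form the self-products (fibre products over the relevant schemes of roots of unity) in the commutative square of Proposition~\ref{prop:tmproperties}(1) with $a = \ell$; this relates $\tau_\ell \times \tau_\ell$, $t_{\ell m} \times t_{\ell m}$, $t_m \times t_m$, and the natural projection $Y_1(N)^2 \otimes \QQ(\mu_{\ell m}) \to Y_1(N)^2 \otimes \QQ(\mu_m)$ (the identity on $Y_1(N)^2$ and $\zeta \mapsto \zeta^\ell$ on roots of unity). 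Since $\norm_m^{\ell m}$ is precisely the pushforward along this projection, functoriality of pushforward gives
\[ \norm_m^{\ell m} \circ (t_{\ell m} \times t_{\ell m})_* = (t_m \times t_m)_* \circ (\tau_\ell \times \tau_\ell)_* ,\]
so applying Theorem~\ref{thm:secondnormbadprime1} to ${}_c\cZ_{\ell m, N, j}$ reduces the claim to computing $(t_m \times t_m)_*$ of $(U_\ell' \times U_\ell')({}_c\cZ_{m,N,j})$ and, when $\ell \nmid m$, of $\Delta_\ell^*({}_c\cZ_{m,N,j})$.

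The term involving $\Delta_\ell$ is easy: by Proposition~\ref{prop:zetaeltproperties}(3) we have $\Delta_\ell^*({}_c\cZ_{m,N,j}) = {}_c\cZ_{m,N,\ell^{-1}j}$ (this also showing the answer is independent of the chosen representative $x$), whose pushforward along $t_m \times t_m$ is ${}_c\Xi_{m,N,\ell^{-1}j}$, and Proposition~\ref{prop:BFeltproperties}(3) identifies the latter with $\sigma_\ell({}_c\Xi_{m,N,j})$ --- here $\sigma_\ell$ acting on $\CH^2(Y_1(N)^2 \otimes \QQ(\mu_m),1)$ is, by definition, pullback along the automorphism $\sigma_\ell$ of $Y_1(N)^2 \otimes \QQ(\mu_m)$. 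For the Hecke term one needs that $(t_m \times t_m)_*$ intertwines $U_\ell' \times U_\ell'$ upstairs with $U_\ell' \times U_\ell'$ downstairs; since $U_\ell'$ acts one factor at a time and trivially on the $\mu_m^\circ$-component, it suffices to show that $(t_m)_*$ intertwines $U_\ell'$ on $Y(m,mN)$ with $U_\ell'$ on $Y_1(N)$. Writing $U_\ell' = (\pi_2)_*(\pi_1)^*$ with $\pi_1,\pi_2\colon Y(m(\ell), mN) \to Y(m,mN)$ the two degeneracy maps (the identity and $z \mapsto z/\ell$ on $\cH$), this amounts to producing a map $Y(m(\ell),mN) \to Y(1(\ell),N) \times \mu_m^\circ$ compatible with $t_m$ through both $\pi_1$ and $\pi_2$ and their analogues downstairs, which is a direct check on the moduli descriptions.

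The main obstacle is exactly this compatibility of $U_\ell'$ with $(t_m)_*$. Since $t_m$ is defined by quotienting the universal elliptic curve by the order-$m$ cyclic subgroup $\langle e_1\rangle$, when $\ell \mid m$ this subgroup already contains $\ell$-torsion, and one must check carefully that the $\ell$-power degeneracy maps defining $U_\ell'$ upstairs and downstairs genuinely match up through this quotient; when $\ell \nmid m$ one must similarly verify that no additional correction term is produced in passing from the zeta elements to the generalized Beilinson--Flach elements. Granting this, the theorem follows formally from Theorem~\ref{thm:secondnormbadprime1}, the functoriality of pushforward, and Propositions~\ref{prop:zetaeltproperties}(3) and~\ref{prop:BFeltproperties}(3).
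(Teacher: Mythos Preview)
Your proposal is correct and takes the same approach as the paper: push forward Theorem~\ref{thm:secondnormbadprime1} along $t_m \times t_m$, using Proposition~\ref{prop:pushforwardzeta} together with the compatibility of $t_m$ with $U_\ell'$ and of $\Delta_\ell$ with $\sigma_\ell$. The paper's own proof is a terse two sentences asserting precisely these compatibilities, while you spell out the commutative-square argument (via Proposition~\ref{prop:tmproperties}(1)) and the $\Delta_\ell$--$\sigma_\ell$ identification (via Propositions~\ref{prop:zetaeltproperties}(3) and~\ref{prop:BFeltproperties}(3)) more carefully, and correctly flag the $U_\ell'$-compatibility of $t_m$ as the point requiring a moduli-theoretic check.
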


 \begin{proof}[Proof of Theorem \ref{thm:secondnormbadprime} (assuming Theorem \ref{thm:secondnormbadprime1})]
  Let $t_m \times t_m: Y(m, mN)^2 \to Y_1(N)^2 \otimes \QQ(\mu_m)$ be the map of \S \ref{sect:modcurves}. This map commutes with the actions of $(U_\ell', U_\ell')$ on both sides, and intertwines the action of $\Delta_\ell$ with the arithmetic Frobenius $\sigma_\ell$. Since ${}_c \Xi_{m, N, j} = (t_m \times t_m)_* \left({}_c \cZ_{m, N, j}\right)$ by Proposition \ref{prop:pushforwardzeta}, Theorem \ref{thm:secondnormbadprime} follows from Theorem \ref{thm:secondnormbadprime1}.
 \end{proof}

 \begin{proof}[Proof of Theorem \ref{thm:secondnormbadprime1}]
  Since we are assuming $\ell \mid N$, let us write $N' = N / \ell$. We have the following commutative diagram of modular curves:
  \begin{diagram}
   Y(\ell m, \ell m N) & \rTo^{\alpha} & Y(\ell m, m N) & \rTo^{\pr} & Y(m(\ell), m N)\\
   & \rdTo^{\tau_\ell} & & \ldTo^{\pi_2} & \dTo^{\pi_1} \\
   && Y(m, m  N) & & Y(m, mN).
  \end{diagram}
  Here $\alpha$ is the natural projection $Y(\ell m, \ell m N) \to Y(\ell m, \ell m N') = Y(\ell m, m N)$, and $\pr$ is the natural projection map. Consequently, we have a commutative diagram of surfaces
  \begin{diagram}
   Y(\ell m, \ell m N)^2 & \rTo^{\alpha \times \alpha} &Y(\ell m, m N)^2 & \rTo^{\pr \times \pr} & Y(m (\ell), m N)^2\\
   &  \rdTo^{\tau_\ell \times \tau_\ell} && \ldTo^{\pi_2 \times \pi_2} & \dTo_{\pi_1 \times \pi_1} \\
   && Y(m, mN)^2 & & Y(m, mN)^2.
  \end{diagram}

  Applying Theorem \ref{thm:firstnormrelation1}, we see that $(\alpha \times \alpha)_* {}_c \cZ_{\ell m, N, j} = {}_c \cZ_{\ell m, N', j}$. Since $Y(m (\ell), m N)^2$ is the quotient of $Y(\ell m, mN)^2$ by the subgroup
  \[ \left\{ \left( \tbt x 0 0 1, \tbt x 0 0 1 \right) : x \in \ZZ / \ell m \ZZ, = 1 \bmod m\right\}. \]
  Thus we have
  \[ (\pr \times \pr)^* (\pr \times \pr)_* \left({}_c \cZ_{\ell m, N', j}\right) = \sum_{\substack{x \in \ZZ /\ell m \ZZ \\ x = 1 \bmod m}}  {}_c \cZ_{\ell m, N', x j}.\]

  Let us now compute $(\pr \times \pr)^* (\pi_1 \times \pi_1)^* {}_c \cZ_{m, N, j}$. Since $Y(m, mN)^2$ is the quotient of $Y(\ell m, mN)^2$ by the group
  \[ \left\{ \left( \tbt x y 0 1, \tbt x z 0 1 \right) : \begin{array}{ll} x,y,z \in \ZZ /\ell m \ZZ, x = 1 \bmod m, \\ y, z = 0 \bmod m\end{array}\right\},\]
  we see that the preimage of $\cC_{m, N, k}$ is the union of the curves $\cC_{\ell m, N', k}$, for $k \in \ZZ /\ell m \ZZ$ congruent to $j$ modulo $m$, each of which is isomorphic to $Y(\ell m, mN)$. By counting degrees, they must be distinct. The modular units ${}_c g_{0, 1/mN}$ and ${}_c g_{0, 1/\ell m N'}$ coincide, and thus we have
  \[ (\pr \times \pr)^* (\pi_1 \times \pi_1)^* {}_c \cZ_{m, N, j} = \sum_{\substack{k \in \ZZ / \ell m \ZZ \\ k = j \bmod m}}  {}_c \cZ_{\ell m, N', k}.\]
  By hypothesis, $j$ is invertible modulo $\ell m$. Thus if $\ell \mid m$, the sets $\{ xj: x = 1 \bmod m\}$ and $\{ k : k = j \bmod m\}$ coincide, and since $(\pr \times \pr)^*$ is clearly injective, we conclude that
  \[ (\pr \times \pr)_* \left({}_c \cZ_{\ell m, N', j}\right) = (\pi_1 \times \pi_1)^* {}_c \cZ_{m, N, j}.\]
  Applying $(\pi_2 \times \pi_2)_*$ gives the result in this case.

  If $\ell \nmid m$, there is exactly one lifting $j_0$ of $j$ to $\ZZ / \ell m \ZZ$ which is not a unit. The matrix $\tbt 1 {j_0} 0 1$ normalizes the subgroup of $\GL_2(\ZZ / mN \ZZ)$ corresponding to $Y(m(\ell), mN)$, and thus defines a curve $\mathcal{A}$ in $Y(m(\ell), mN)^2$ which is isomorphic to $Y(m(\ell), mN)$, consisting of points $(u, v)$ with $v = \tbt 1 {j_0} 0 1 u$; and we have
  \[ (\pi_1 \times \pi_1)^* {}_c \cZ_{m, N, j}  = (\pr \times \pr)_* \left({}_c \cZ_{\ell m, N', j}\right) + (\cA, {}_c g_{0, 1/mN}).\]
  The image of $\mathcal{A}$ under $\pi_2$ is $\cC_{m, N, \ell^{-1} j_0}$; moreover, we have a diagram
  \begin{diagram}
   Y(m(\ell), mN) & \rTo_\cong & \mathcal{A} \\
   \dTo^{\pi_2} && \dTo^{\pi_2 \times \pi_2} \\
   Y(m, mN) & \rTo_\cong & \cC_{m, N, \ell^{-1} j_0}.
  \end{diagram}
  We claim that $(\pi_2)_*\left({}_c g_{0, 1/mN}\right) = {}_c g_{0, 1/mN}$. However, $(\pi_2)_* {}_c g_{0, 1/mN}$ is the pushforward of $\vp_\ell^*({}_c g_{0, 1/mN}) \in \cO(Y(m, mN(\ell)))^\times$ along the natural projection $\cO(Y(m, mN(\ell)))^\times \to \cO(Y(m, mN))^\times$, and the distribution relation of Equation \eqref{eq:dist2} shows that the pushforward of $\vp_\ell^*({}_c g_{0, 1/mN})$ is ${}_c g_{0, 1/mN}$, as required. Hence $ \in Z^2(Y(m(\ell), mN)^2, 1)$ is
  \[(\pi_2 \times \pi_2)_* (\cA, {}_c g_{0, 1/mN}) = (\cC_{m, N, \ell^{-1} j_0}, {}_c g_{0, 1/mN}) = {}_c \cZ_{m, N, \ell^{-1} j_0} = \Delta_\ell^* \left({}_c \cZ_{m, N, j}\right),\]
  as required.
 \end{proof}


  \subsection{The second norm relation for \texorpdfstring{$p \nmid mN$}{p prime to mN}}

   In this section, we shall assume that $N \ge 5$, $m \ge 1$, $j \in \ZZ / m\ZZ$, and $p$ is a prime such that $p \nmid mN$. Our aim is to prove the following theorem:

   \begin{theorem}
    \label{thm:secondnormrelationprime}
    We have
    \begin{multline*}
     \sum_{\substack{k \in \ZZ / mp\ZZ \\ k = j \bmod m \\ p \nmid k}} {}_c \Xi_{mp, N, k} =
     \Big(-\sigma_p + (T_p', T_p') + \left[(p + 1)(\langle p^{-1}\rangle, \langle p^{-1}\rangle) - (\langle p^{-1}\rangle, T_p'^2) - (T_p'^2, \langle p^{-1}\rangle)\right] \sigma_p^{-1} \\
     + \left(\langle p^{-1}\rangle T_p', \langle p^{-1}\rangle T_p'\right) \sigma_p^{-2}  - p \left(\langle p^{-2}\rangle, \langle p^{-2}\rangle\right) \sigma_p^{-3}\Big) {}_c \Xi_{m, N, j}.
    \end{multline*}
   \end{theorem}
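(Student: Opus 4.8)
The plan is to deduce this from the norm relations already established for the zeta elements, together with the first norm relation and the $\ell\mid N$ second norm relation, by factoring the twisted degeneracy map through an auxiliary modular curve of level divisible by $p$. First I would reduce to zeta elements: by Proposition~\ref{prop:pushforwardzeta} each ${}_c\Xi_{mp,N,k}$ is the image of ${}_c\cZ_{mp,N,k}$ under $t_{mp}\times t_{mp}$, and by Proposition~\ref{prop:tmproperties}(1) the twisted degeneracy map $\tau_p\colon Y(mp,mpN)\to Y(m,mN)$ of Definition~\ref{def:tm} fits into a commuting square with $t_{mp}\times t_{mp}$, $t_m\times t_m$ and the base change $Y_1(N)^2\otimes\QQ(\mu_{mp})\to Y_1(N)^2\otimes\QQ(\mu_m)$ along $\zeta\mapsto\zeta^p$. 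Moreover, by Proposition~\ref{prop:tmproperties}(2), $t_m\times t_m$ carries the action of $\bigl(\stbt{p}{0}{0}{1},\stbt{p}{0}{0}{1}\bigr)$ to $\sigma_p$, of $\stbt{p^{-1}}{0}{0}{p}$ to $\langle p\rangle$, and the correspondences $T_p'$ on the two factors to the $T_p'$ on $Y_1(N)$. Hence, up to the standard identification of the Galois-invariant class $\sum_k{}_c\Xi_{mp,N,k}$ with a class over $\QQ(\mu_m)$, it suffices to compute $(\tau_p\times\tau_p)_*\bigl(\sum_k{}_c\cZ_{mp,N,k}\bigr)$ in $Z^2(Y(m,mN)^2,1)$ and to exhibit it as the evident ``geometric'' analogue of the asserted operator applied to ${}_c\cZ_{m,N,j}$.

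Next I would introduce the curve $Y(mp,mpNp)$ together with the natural degeneracy maps $\alpha\colon Y(mp,mpNp)\to Y(mp,mpN)$ and $\beta\colon Y(m,mNp)\to Y(m,mN)$, and the twisted degeneracy map $\tau_p\colon Y(mp,mpNp)\to Y(m,mNp)$ of Definition~\ref{def:tm}. A direct computation on moduli gives the identity $\tau_p\circ\alpha=\beta\circ\tau_p$, both sides sending $(E,e_1,e_2)$ to $\bigl(E/\langle m e_1\rangle,[e_1],[p^2 e_2]\bigr)$. Now Theorem~\ref{thm:firstnormrelation1}(1) applies to $\alpha$ (since $\Prime(Np)\subseteq\Prime(mpN)$) and gives $(\alpha\times\alpha)_*({}_c\cZ_{mp,Np,k})={}_c\cZ_{mp,N,k}$; Theorem~\ref{thm:secondnormbadprime1} applies to $\tau_p\colon Y(mp,mpNp)\to Y(m,mNp)$ with $\ell=p$ (since $p\mid Np$ and $p\nmid m$) and gives $(\tau_p\times\tau_p)_*({}_c\cZ_{mp,Np,k})=(U_p'\times U_p'-\Delta_p^*)({}_c\cZ_{m,Np,j})$ with $j\equiv k\bmod m$; and Theorem~\ref{thm:firstnormrelation1}(2) applies to $\beta$ with $\ell=p$ (since $p\nmid mN$) and gives $(\beta\times\beta)_*({}_c\cZ_{m,Np,j})=\bigl[1-\bigl(\stbt{p^{-1}}{0}{0}{p^{-1}},\stbt{p^{-1}}{0}{0}{p^{-1}}\bigr)^{\!*}\bigr]{}_c\cZ_{m,N,j}$. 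Feeding the first of these through the commuting square yields $(\tau_p\times\tau_p)_*({}_c\cZ_{mp,N,k})=(\beta\times\beta)_*(U_p'\times U_p'-\Delta_p^*)({}_c\cZ_{m,Np,j})$.

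It then remains to apply $(\beta\times\beta)_*$ and to rewrite everything in terms of ${}_c\cZ_{m,N,j}$. The operator $\Delta_p^*$ commutes with $\beta\times\beta$ (the square relating $\beta$ to the $\stbt{x}{0}{0}{1}$-action commutes), so its contribution is computed directly from Theorem~\ref{thm:firstnormrelation1}(2) and, under $t_m\times t_m$, produces the terms $-\sigma_p$ and $+\sigma_p^{-1}(\langle p^{-1}\rangle,\langle p^{-1}\rangle)$. For the $U_p'\times U_p'$ term one invokes the classical identity comparing $\beta_*\circ U_p'$ on the $\Gamma_1(p)$-curve $Y(m,mNp)$ with $T_p'\circ\beta_*$ on $Y(m,mN)$, the discrepancy being accounted for by the second, twisted, degeneracy map $Y(m,mNp)\to Y(m,mN)$ --- whose effect on the cyclotomic factor is $\zeta\mapsto\zeta^p$, so that it contributes a diamond operator and a factor $\sigma_p^{-1}$ (cf.~\cite[\S\S 2.9, 4.9]{kato04}). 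Carrying this out on the fibre product and expanding against $\bigl[1-\bigl(\stbt{p^{-1}}{0}{0}{p^{-1}}\bigr)^{\times 2}\bigr]$ produces the ``diagonal'' term $(T_p',T_p')$, the two ``mixed'' terms $-(\langle p^{-1}\rangle,T_p'^2)\sigma_p^{-1}$ and $-(T_p'^2,\langle p^{-1}\rangle)\sigma_p^{-1}$, the ``boundary'' correction $+p\,(\langle p^{-1}\rangle,\langle p^{-1}\rangle)\sigma_p^{-1}$ (the coefficient $p$ being the degree of a degeneracy map), the ``doubly twisted'' term $+(\langle p^{-1}\rangle T_p',\langle p^{-1}\rangle T_p')\sigma_p^{-2}$, and the ``lowest'' term $-p\,(\langle p^{-2}\rangle,\langle p^{-2}\rangle)\sigma_p^{-3}$; combined with the $\Delta_p^*$-contribution (which upgrades the $\sigma_p^{-1}$-coefficient from $p$ to $p+1$), and after applying $t_m\times t_m$ and summing over the $p-1$ lifts $k$ of $j$ that are prime to $p$, this is exactly the operator of the statement applied to ${}_c\Xi_{m,N,j}$.

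The main obstacle is the last step: organising the pushforward $(\beta\times\beta)_*\circ(U_p'\times U_p')$ on the fibre product so that all the cross-terms between the two factors, all the diamond operators and powers of $p$, and all the twists $\sigma_p^{-1},\sigma_p^{-2},\sigma_p^{-3}$ introduced by the twisted degeneracy maps are correctly accounted for. This is where the degree-four ``Rankin--Selberg Euler factor'' shape of the operator emerges, and matching it term by term with the stated formula --- while keeping straight the normalisation relating the Galois norm to the descended class on the left-hand side (which carries the factor $[\QQ(\mu_{mp}):\QQ(\mu_m)]=p-1$) --- is the bulk of the work.
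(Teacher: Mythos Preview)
Your reduction via level $Np$ is sound up to a point: the commuting square $\beta\circ\tau_p = \tau_p\circ\alpha$ is correct, the applications of Theorem~\ref{thm:firstnormrelation1} and Theorem~\ref{thm:secondnormbadprime1} are legitimate, and your computation of the $\Delta_p^*$ contribution is right. But the heart of the matter is your claim that $(\beta\times\beta)_*(U_p'\times U_p')({}_c\cZ_{m,Np,j})$ can be evaluated by ``the classical identity comparing $\beta_*\circ U_p'$ on $Y(m,mNp)$ with $T_p'\circ\beta_*$ on $Y(m,mN)$''. There is no such off-the-shelf identity. The operator $U_p'$ at level $Np$ is defined via the correspondence curve $Y(m(p),mNp)$, while $T_p'$ at level $N$ uses $Y(m(p),mN)$; the relationship between $\beta_*\circ U_p'$ and $T_p'\circ\beta_*$ as correspondences from $Y(m,mNp)$ to $Y(m,mN)$ is not a simple identity plus a single ``twisted'' correction, and Kato's \S\S2.9, 4.9 do not supply one. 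To carry out this step you would have to analyse the preimages of the curve $\cC_{m,Np,j}$ under the correspondence, which lands you back in exactly the kind of double-coset bookkeeping that the paper does directly. Your final paragraph effectively concedes this (``this is the bulk of the work'') without doing it.

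The paper's route is genuinely different: rather than factoring through level $Np$, it works entirely at level $N$ and computes $(T_p',T_p'){}_c\Xi_{m,N,j}$ by the Bruhat decomposition $B\backslash\SL_2(\FF_p)/B$, which splits the preimage of $C_{m,N,j}$ into just two components (the unit lift and the zero lift of $j$). Separately, it expresses the left-hand side as the pushforward of an explicit product of Siegel units on a single curve $C_{mp,N,b}$, and reduces that product via the distribution relations. The comparison then isolates two residual terms which are identified, via explicit matrix identities in $\SL_2(\Qp)$, with $(S_p',\langle p^{-1}\rangle)\sigma_p^{-1}$ and $(\langle p^{-1}\rangle,S_p')\sigma_p^{-1}$ acting on ${}_c\Xi_{m,N,j}$. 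None of this passes through $U_p'$ at level $Np$. Your strategy might well be made to work, but the missing step is not a citation --- it is a computation of comparable length and delicacy to the paper's, and you have not supplied it.
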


   \begin{remark}
    One can formulate a version of this theorem for the zeta elements ${}_c \cZ_{m, N, j}$, from which Theorem \ref{thm:secondnormrelationprime} would follow in the same way as Theorem \ref{thm:secondnormbadprime} follows from Theorem \ref{thm:secondnormbadprime1}. The argument given below can easily be extended to prove this slightly stronger result; however, we shall not pursue this here, as the above statement suffices for our applications.
   \end{remark}

   We begin the proof of Theorem \ref{thm:secondnormrelationprime} by rewriting the $T_p'^2$ terms using a related Hecke operator $S_p'$.

   \begin{proposition}
    As elements of the Hecke algebra of $\Gamma_1(N)$, we have $T_p'^2 = S_p' + (p+1)\langle p^{-1} \rangle R_p$, where $S_p'$ is the double coset of $\begin{pmatrix} p^2 & 0 \\ 0 & 1 \end{pmatrix}$ and $R_p$ is the double coset of $\begin{pmatrix} p & 0 \\ 0 & p\end{pmatrix}$.
   \end{proposition}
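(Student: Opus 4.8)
The plan is to verify this as an identity in the Hecke algebra of $\Gamma = \Gamma_1(N)$ (equivalently, of correspondences on $Y_1(N)$), by writing each operator as a sum of $\Gamma$-cosets and composing. Since $p \nmid N$, the twisted operator $T_p' = (\pi_2)_* (\pi_1)^*$ is the correspondence sending $(E,P)$ to $\sum_C (E/C, \bar P)$, where $C$ runs over the $p+1$ cyclic subgroups of $E$ of order $p$; in coset terms it is the double coset of a matrix of determinant $p$, with a right-coset decomposition
\[ \Gamma \tbt p 0 0 1 \Gamma = \bigsqcup_{b=0}^{p-1} \Gamma \tbt p b 0 1 \ \sqcup\ \Gamma \tbt 1 0 0 p \delta_p, \]
where $\delta_p \in \SL_2(\ZZ)$ reduces mod $N$ to a representative of the diamond operator $\langle p^{-1}\rangle$. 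This accounts for $\deg T_p' = p+1$, and the factor $\delta_p$ is what ultimately produces the diamond twist in the final formula.

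Next I would compute $T_p' \cdot T_p'$ by forming the $(p+1)^2$ products of these representatives and sorting the resulting determinant-$p^2$ matrices by elementary-divisor type. Exactly $p^2+p$ of them have type $(1,p^2)$; moduli-theoretically, these are the composite isogenies $E \to E/C \to E/D$ with $D$ \emph{cyclic} of order $p^2$ (in which case $C$ is forced to be the unique order-$p$ subgroup of $D$), so they fill out precisely the $p^2+p$ right cosets of $S_p'$, the double coset of $\stbt {p^2} 0 0 1$. The remaining $p+1$ products are scalar, of type $(p,p)$: these correspond to the case $D = E[p]$, where the composite factors as $E \to E/C \to (E/C)/(E[p]/C) = E/E[p] \xrightarrow{\sim} E$, with the $p+1$ choices of $C \subset E[p]$ of order $p$ each contributing once; hence they all lie in the single coset of $R_p$, the double coset of $\stbt p 0 0 p = pI$. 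Tracking the accumulated $\delta_p$-factors through the scalar products identifies the multiplicity of this term as $(p+1)\langle p^{-1}\rangle$, which gives $T_p'^2 = S_p' + (p+1)\langle p^{-1}\rangle R_p$.

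The matrix bookkeeping is routine, and the degree identity $(p+1)^2 = (p^2+p) + (p+1)$ serves as a numerical check, simultaneously confirming that $S_p'$ is the degree-$(p^2+p)$ cyclic-$p^2$-isogeny correspondence and that the scalar term occurs with multiplicity $p+1$. The one genuinely delicate point is pinning down the twist precisely: one must follow the level-$N$ structure through the canonical isomorphism $E/E[p] \cong E$ induced by multiplication by $p$, together with Kato's conventions for the twisted degeneracy map $\pi_2$ and for the diamond action, in order to see that the correction is $\langle p^{-1}\rangle$ rather than $\langle p\rangle$ and that no further factor intervenes. Alternatively, the whole statement is the classical Satake relation $T^2 = S + (p+1)R$ in the spherical Hecke algebra of $\GL_2(\Qp)$, transported to the $\Gamma_1(N)$-setting; there the central element $pI$ acts on the prime-to-$p$ level structure by a diamond operator, which is exactly how the factor $\langle p^{-1}\rangle$ appears.
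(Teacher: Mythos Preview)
Your proposal is correct and is exactly the approach the paper has in mind: the paper's own proof consists of the single sentence ``This is a simple computation from the definition of multiplication in the Hecke algebra,'' and you have sketched precisely that computation (the Satake-type identity $T^2 = S + (p+1)R$ for $\GL_2(\Qp)$, with the diamond twist arising from the action of $pI$ on the level-$N$ structure). Your care in flagging the sign of the diamond operator as the only delicate point is appropriate, and the degree check $(p+1)^2 = (p^2+p) + (p+1)$ confirms the multiplicities.
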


   \begin{proof}
    This is a simple computation from the definition of multiplication in the Hecke algebra.
   \end{proof}

   Since $R_p$ acts trivially on everything in sight, the formula of Theorem \ref{thm:secondnormrelationprime} can be written as
   \begin{multline}
    \label{eq:normrelation2}
    \sum_{\substack{k \in (\ZZ / mp\ZZ)^\times \\ k = j \bmod m}} {}_c \Xi_{mp, N, k} =
    \Big(\left((T_p', T_p') - \sigma_p - p (\langle p^{-1}\rangle, \langle p^{-1} \rangle)\sigma_p^{-1}\right)\left(1 +  \left(\langle p^{-1}\rangle , \langle p^{-1}\rangle \right) \sigma_p^{-2}\right) \\ - \left[(\langle p^{-1}\rangle, S_p') + (S_p', \langle p^{-1}\rangle)\right] \sigma_p^{-1}
     \Big) {}_c \Xi_{m, N, j}.
   \end{multline}

   \subsubsection{Evaluation of \texorpdfstring{$(T_p', T_p') {}_c \Xi_{m, N, j}$}{the (Tp,Tp) term}}

    First we shall make a careful study of the operator $(T_p', T_p')$.

    \begin{proposition}
     If $G = \SL_2(\ZZ / p\ZZ)$ and $B$ is the lower-triangular Borel subgroup, then $B \backslash G / B$ has exactly 2 elements, $B$ and its complement (the ``big Bruhat cell'').
    \end{proposition}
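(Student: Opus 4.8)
The plan is to establish the Bruhat decomposition $G = B \sqcup BwB$, where $w = \stbt 0 {-1} 1 0$ (or any other anti-diagonal element of $G$); this immediately gives $|B\backslash G/B| = 2$, with $B$ itself the identity double coset and $BwB = G \setminus B$ the other. I would carry this out via the action of $G$ on $\mathbb{P}^1(\FF_p)$. Taking the natural left action on column vectors, $G$ acts transitively on the $p+1$ points of $\mathbb{P}^1(\FF_p)$, and the lower-triangular subgroup $B$ is exactly the stabiliser of $[0:1]$; hence $G/B \cong \mathbb{P}^1(\FF_p)$ as a $B$-set, and $B\backslash G/B$ is the set of $B$-orbits on $\mathbb{P}^1(\FF_p)$.

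The key step is then to count $B$-orbits on $\mathbb{P}^1(\FF_p)$. There is the fixed point $[0:1]$, and on the remaining $p$ points $\{[1:t]: t \in \FF_p\}$ the lower unipotent subgroup $\left\{\stbt 1 0 c 1 : c \in \FF_p\right\} \subseteq B$ already acts transitively, since $\stbt 1 0 c 1 \cdot [1:t] = [1:t+c]$. So there are exactly two orbits, hence exactly two double cosets; the one meeting $B$ is $B$ itself, and its complement is the ``big cell'', which contains $w$ since $w\cdot[0:1] = [1:0] \neq [0:1]$.

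Alternatively---and this may be more convenient for the Hecke-algebra manipulations that follow---one can argue purely with matrices: $g = \stbt a b c d \in G$ lies in $B$ iff $b = 0$, and if $b \neq 0$ one solves the system $g = b_1 w b_2$ for $b_1, b_2 \in B$ explicitly, the constraint $ad - bc = 1$ guaranteeing consistency of the resulting equations. Either way the computation is short.

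There is no substantive obstacle here; the only points requiring care are fixing the action and triangularity conventions consistently so that $B$ genuinely stabilises $[0:1]$, and observing that the big cell $BwB$ is independent of which anti-diagonal representative $w$ one picks (any two differ, on each side, by an element of the diagonal torus, which is contained in $B$).
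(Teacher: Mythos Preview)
Your argument is correct and is the standard proof of the Bruhat decomposition for $\SL_2$ over a field; the identification $G/B \cong \mathbb{P}^1(\FF_p)$ together with the transitivity of the lower unipotent on the complement of $[0:1]$ is exactly what is needed. The paper itself does not give a proof at all---it simply writes ``Well-known''---so your write-up supplies strictly more detail than the paper does, but in the same spirit.
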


    \begin{proof}
     Well-known.
    \end{proof}

    \begin{corollary}
     Let $\Gamma$ be any congruence subgroup of $\SL_2(\ZZ)$ of level prime to $p$, and let $\alpha \in \SL_2(\QQ)$ be integral at $p$. Then the double coset $\Gamma \alpha \Gamma$ is the union of exactly two double cosets of $\Gamma' = \Gamma \cap \Gamma^0(p)$, corresponding to those elements whose reductions modulo $p$ land in the two double cosets of $B$ in $\SL_2(\ZZ / p\ZZ)$.
    \end{corollary}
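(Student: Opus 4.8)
The plan is to deduce the Corollary from the preceding Proposition by reducing everything modulo $p$. Since $\Gamma$ has level prime to $p$ it contains $\Gamma(M)$ for some $M$ coprime to $p$, so (using that $\SL_2(\ZZ)\to\SL_2(\ZZ/Mp\ZZ)$ is surjective) the reduction map $r\colon\SL_2(\ZZ)\to\SL_2(\FF_p)$ restricts to a \emph{surjection} $\Gamma\twoheadrightarrow\SL_2(\FF_p)$, under which $\Gamma'=\Gamma\cap\Gamma^0(p)$ is exactly the preimage of the lower-triangular Borel $B$. As $\alpha$ is integral at $p$ and $\Gamma\subseteq\SL_2(\ZZ)$, every element of $\Gamma\alpha\Gamma$ is integral at $p$, so $r$ extends to a map $r\colon\Gamma\alpha\Gamma\to\SL_2(\FF_p)$ which is equivariant for the left and right actions of $\Gamma'$ (acting through $B$), and hence descends to a map $\rho\colon\Gamma'\backslash\Gamma\alpha\Gamma/\Gamma'\to B\backslash\SL_2(\FF_p)/B$. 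Since the Proposition tells us the target has exactly two elements, it suffices to prove that $\rho$ is a bijection.

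Surjectivity of $\rho$ is immediate, since $r(\Gamma\alpha\Gamma)=\SL_2(\FF_p)\,r(\alpha)\,\SL_2(\FF_p)=\SL_2(\FF_p)$ because $\SL_2(\FF_p)$ is a group.

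For injectivity I would argue as follows. Suppose $g,g'\in\Gamma\alpha\Gamma$ reduce into the same $B$-double coset; writing $g=\gamma_1\alpha\gamma_2$ and $g'=\gamma_1'\alpha\gamma_2'$ gives $g'=\epsilon_1 g\epsilon_2$ with $\epsilon_1=\gamma_1'\gamma_1^{-1},\epsilon_2=\gamma_2^{-1}\gamma_2'\in\Gamma$. Choosing $b_1,b_2\in B$ with $r(\epsilon_1)r(g)r(\epsilon_2)=b_1\,r(g)\,b_2$, one checks that the element $s_0:=r(\epsilon_1)^{-1}b_1\in B$ admits the two descriptions $s_0=r(\epsilon_1)^{-1}b_1$ and $s_0=r(g)\,r(\epsilon_2)\,b_2^{-1}\,r(g)^{-1}$. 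The crucial input is that $\Gamma_g:=\Gamma\cap g\Gamma g^{-1}$ again has level prime to $p$, and so surjects onto $\SL_2(\FF_p)$; granting this, pick $s\in\Gamma_g$ with $r(s)=s_0$ and set $\delta_1=\epsilon_1 s$ and $\delta_2=g^{-1}s^{-1}g\,\epsilon_2$. Since $s\in\Gamma_g$ we have $g^{-1}s^{-1}g\in\Gamma$, so $\delta_1,\delta_2\in\Gamma$; moreover $\delta_1 g\delta_2=\epsilon_1 g\epsilon_2=g'$, and the two expressions for $s_0$ give $r(\delta_1)=b_1$ and $r(\delta_2)=b_2$, both in $B$. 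Hence $\delta_1,\delta_2\in\Gamma\cap\Gamma^0(p)=\Gamma'$ and $g'\in\Gamma' g\Gamma'$, proving $\rho$ injective.

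The one genuinely non-formal point — and the step I expect to need the most care to phrase precisely — is the claim that $\Gamma\cap g\Gamma g^{-1}$ contains $\Gamma(M')$ for some $M'$ coprime to $p$, whenever $g\in\Gamma\alpha\Gamma$. This is where both hypotheses are used: $g$ (and $g^{-1}$) have entries in $\ZZ[1/Q]$ for some $Q$ coprime to $p$ (because $\alpha$ is integral at $p$), and a short computation shows that if $g,g^{-1}\in Q^{-j}M_2(\ZZ)$ then $g\Gamma(M)g^{-1}\supseteq\Gamma(MQ^{2j})$, whence $\Gamma_g\supseteq\Gamma(MQ^{2j})$ with $p\nmid MQ^{2j}$. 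This is essentially the classical fact that $\GL_2^+(\QQ)$ normalizes the class of congruence subgroups, the only extra observation being that conjugating by a matrix that is integral at $p$ together with its inverse does not disturb the prime-to-$p$ part of the level. Everything else is routine bookkeeping with the two Bruhat cells supplied by the Proposition.
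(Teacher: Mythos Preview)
Your proof is correct and follows essentially the same approach as the paper: both arguments reduce to the Bruhat decomposition $B\backslash\SL_2(\FF_p)/B$ via the surjection $\Gamma\twoheadrightarrow\SL_2(\FF_p)$, and both hinge on the key observation that $\Gamma\cap g\Gamma g^{-1}$ (for $g\in\Gamma\alpha\Gamma$) again has level prime to $p$ and hence surjects onto $\SL_2(\FF_p)$. The only cosmetic difference is that the paper first normalises so that $\bar\alpha=1$ and then argues case-by-case on the two Bruhat cells, whereas you package the same content as the bijectivity of the map $\rho$; the substance is identical.
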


    \begin{proof}
     This is a consequence of strong approximation for $\SL_2(\ZZ)$. Since $\Gamma$ has level prime to $p$, it surjects onto $\SL_2(\ZZ / p\ZZ)$. Hence we may assume (by left or right multiplying $\alpha$ by an appropriate element of $\Gamma$) that the reduction of $\alpha$ modulo $p$ is the identity.

     We first note that $\Gamma \cap \alpha^{-1} \Gamma \alpha$ is also a congruence subgroup of level prime to $p$, so (by the strong approximation theorem) we see that $\Gamma \backslash \Gamma'$ admits a set of coset representatives lying in $\Gamma \cap \alpha^{-1} \Gamma \alpha$ and thus $\Gamma \alpha \Gamma = \Gamma \alpha \Gamma'$.

     Now let $x = \gamma \alpha \gamma' \in \Gamma \alpha \Gamma'$. We consider the reduction $\bar x$ of $x$ modulo $p$. If this lies in $B$, then (since $\bar \alpha$ and $\bar \gamma'$ are in $B$) we must have $\bar x \in B$ and hence $\gamma \in \Gamma'$; thus $x \in \Gamma' \alpha \Gamma'$.

     On the other hand, let $\mu$ be any element of $\Gamma$ which is not in $\Gamma^0(p)$. If $\bar \gamma \notin B$, then $\bar \gamma \in B \bar \mu B$; so there is some $\sigma \in \Gamma \cap \alpha^{-1} \Gamma \alpha \cap \Gamma^0(p)$ such that $\bar \gamma \in B \bar \mu \bar \sigma$. So $\bar \gamma \bar \sigma^{-1} \bar \mu^{-1} \in B$ and thus $\gamma \in \Gamma' \mu \sigma$. Hence $x \in \Gamma' \mu \sigma \alpha \Gamma'$; but $\alpha^{-1} \sigma \alpha \in \Gamma'$ (since by hypothesis $\alpha = 1 \bmod p$ and thus conjugation by $\alpha$ fixes $\Gamma^0(p)$) and thus $x \in \Gamma' \mu \alpha \Gamma'$.
    \end{proof}

    \begin{corollary}
     For $k \in \ZZ / mp\ZZ$, let $D_{m, N, k}$ denote the curve in $Y(\Gamma_1(N)\cap\Gamma^0(p))^2$ consisting of points of the form $\left(z, z + \tfrac k m\right)$. Then the preimage $\pi_1^{-1}\left(C_{m, N, j}\right) \subseteq Y(\Gamma_1(N)\cap\Gamma^0(p))^2$ consists of exactly 2 components: one is the curve $D_{m, N, k}$ where $k$ is the unique lifting of $j$ to $\ZZ / mp\ZZ$ which is zero mod $p$, and the other is the curve $D_{m, N, k}$ where $k$ is \emph{any} lifting of $j$ to $\ZZ / mp\ZZ$ which is a unit modulo $p$ (the resulting curve being independent of the choice of lifting).
    \end{corollary}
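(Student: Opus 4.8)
The plan is to derive this from Lemma~\ref{lem:doublecosetpullback} and the preceding corollary on double cosets, applied to the single matrix $\alpha = \stbt 1 {j/m} 0 1 \in \SL_2(\QQ)$ (for a fixed integer lift of $j$); note that $\alpha$ is integral at $p$ because $p \nmid m$. By construction $C_{m,N,j}$ is the curve in $Y(\Gamma_1(N))^2$ attached to the double coset $\Gamma_1(N)\,\alpha\,\Gamma_1(N)$ in the sense of Lemma~\ref{lem:birational}: indeed $\iota_{m,N,j} = (1, \kappa_j)$ with $\kappa_j$ induced by $z \mapsto z + j/m = \alpha z$, so $C_{m,N,j}$ is the closure of $\{(z, \alpha z) : z \in \cH\}$, and the scalar difference between $\alpha$ and the matrix $\stbt m j 0 m$ used implicitly elsewhere is irrelevant both for this curve and for double cosets in $\PGL_2^+(\QQ)$. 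Similarly, for $k \in \ZZ/mp\ZZ$ the curve $D_{m,N,k}$ is the closure of $\{(z, \beta_k z) : z \in \cH\}$ in $Y(\Gamma')^2$, where $\Gamma' = \Gamma_1(N) \cap \Gamma^0(p)$ and $\beta_k = \stbt 1 {k/m} 0 1$; this is well defined since replacing the integer representative of $k$ by one congruent modulo $mp$ multiplies $\beta_k$ on the left by $\stbt 1 p 0 1 \in \Gamma'$.

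Next I would invoke the preceding corollary with $\Gamma = \Gamma_1(N)$ (so that $\Gamma' = \Gamma \cap \Gamma^0(p)$) and the matrix $\alpha$ above: it furnishes a decomposition $\Gamma_1(N)\,\alpha\,\Gamma_1(N) = \Gamma'\beta_1\Gamma' \sqcup \Gamma'\beta_2\Gamma'$ into exactly two $\Gamma'$-double cosets, distinguished by whether the reduction mod $p$ of a representative lies in the lower-triangular Borel $B$ or in the big Bruhat cell. Feeding this into Lemma~\ref{lem:doublecosetpullback} with $\Gamma_1 = \Gamma_2 = \Gamma_1(N)$ and $\Gamma_1' = \Gamma_2' = \Gamma'$ shows immediately that $\pi_1^{-1}(C_{m,N,j}) \subseteq Y(\Gamma')^2$ is the union of exactly two distinct curves, each the image of a map $z \mapsto (z, \beta_i z)$.

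It then remains to identify these two curves among the $D_{m,N,k}$, and for this I would pick the representatives $\beta_i$ explicitly. Let $k_1 \in \ZZ/mp\ZZ$ be the unique class with $k_1 \equiv j \bmod m$ and $k_1 \equiv 0 \bmod p$, and let $k_2 \in \ZZ/mp\ZZ$ be any class with $k_2 \equiv j \bmod m$ and $p \nmid k_2$. For any $k \equiv j \bmod m$ one has $\beta_k \alpha^{-1} = \stbt 1 {(k-j)/m} 0 1$ with $(k-j)/m \in \ZZ$, an upper-triangular unipotent integer matrix and hence an element of $\Gamma_1(N)$; thus $\beta_k \in \Gamma_1(N)\,\alpha\,\Gamma_1(N)$. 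Moreover $\beta_k \bmod p$ equals $\stbt 1 {\overline{k}\,\overline{m}^{-1}} 0 1$, which lies in $B$ exactly when $p \mid k$. Therefore $\beta_{k_1}$ represents the double coset whose reductions lie in $B$ and $\beta_{k_2}$ the one whose reductions lie in the big cell, so we may take $\beta_1 = \beta_{k_1}$ and $\beta_2 = \beta_{k_2}$, giving $\pi_1^{-1}(C_{m,N,j}) = D_{m,N,k_1} \cup D_{m,N,k_2}$. Finally, independence of the choice of $k_2$ holds because any two admissible values place $\beta_{k_2}$ in the same (big-cell) $\Gamma'$-double coset, and $D_{m,N,k}$ depends only on that double coset: if $\beta_{k'} = \gamma'\beta_k\gamma$ with $\gamma, \gamma' \in \Gamma'$, then $(z, \beta_{k'} z)$ has the same image in $Y(\Gamma')^2$ as $(\gamma z, \beta_k(\gamma z))$, whence $D_{m,N,k'} \subseteq D_{m,N,k}$, and equality follows by symmetry.

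The step I expect to be the main obstacle is this final identification: matching the abstract pair of components $\{D_1, D_2\}$ produced by Lemma~\ref{lem:doublecosetpullback} with the explicit curves $D_{m,N,k}$ through the mod-$p$ reduction criterion, and verifying that $D_{m,N,k}$ genuinely depends only on the $\Gamma'$-double coset of $\beta_k$ (which is precisely what legitimizes allowing an arbitrary unit lift of $j$). That there are exactly two components is, by contrast, an immediate consequence of the preceding corollary once $C_{m,N,j}$ has been recognized as the curve attached to $\Gamma_1(N)\,\alpha\,\Gamma_1(N)$.
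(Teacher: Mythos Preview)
Your proposal is correct and follows essentially the same approach as the paper: the paper's proof simply observes that the preimage in $\cH \times \cH$ consists of pairs $(u, \gamma u)$ with $\gamma$ in the double coset $\Gamma_1(N)\stbt 1 {j/m} 0 1\Gamma_1(N)$, and then invokes the preceding corollary to decompose this into two $\Gamma'$-double cosets, concluding with ``hence the result''. Your argument is the same in structure but spells out the details the paper omits, namely the explicit appeal to Lemma~\ref{lem:doublecosetpullback} and the mod-$p$ computation identifying which component corresponds to which lift $k$; the paper leaves this identification implicit.
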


    \begin{proof}
     The preimage of $C_{m, N, j}$ in $\cH \times \cH$ is exactly the set of $(u, v)$ such that $v = \gamma u$ for some $\gamma$ in the double coset $\Gamma_1(N) \begin{pmatrix} 1 & j/m \\ 0 & 1 \end{pmatrix} \Gamma_1(N)$. The above proposition describes the decomposition of this set into double cosets of $\Gamma_1(N) \cap \Gamma^0(p)$, hence the result.
    \end{proof}

    For any $k$ lifting $j$ (unit or non-unit), we may erect the following diagram of modular curves:
    \begin{diagram}
     &&& & Y(\Gamma_1(mpN) \cap \Gamma^0(mp)) \\
     && & \ldTo^{\rho_k} & &  \rdTo(2, 6)^{\iota_{mp, N, k}'}\\
     & & Y(\Gamma_1(mN) \cap \Gamma^0(m) \cap U_k) & &\\
     & \ldTo^\alpha && \rdTo^{\lambda_k} \\
     Y(\Gamma_1(mN) \cap \Gamma^0(m)) & & & & D_{m, N, k} \\
     & \rdTo^{\iota_{m, N, j}'} & & \ldTo^{\pi_1} && \rdTo^{\pi_2}\\
     && C_{m, N, j} & & & & C_{mp, N, k}
    \end{diagram}

    Here $U_k$ is the preimage in $\SL_2(\ZZ)$ of the subgroup
    \[ B \cap \begin{pmatrix} 1 & k \\ 0 & m \end{pmatrix}^{-1} B \begin{pmatrix} 1 &k \\0& m \end{pmatrix} = B \cap \begin{pmatrix} 1 & k \\0 & 1 \end{pmatrix}^{-1} B \begin{pmatrix} 1 &k \\0& 1 \end{pmatrix}.\]
    (The equality follows from the fact that conjugation by $\begin{pmatrix} 1 & 0 \\ 0 & m\end{pmatrix}$ fixes $B$.) This subgroup is just $B$ if $k \in p\ZZ$; otherwise it is the subgroup
    \[ \left\{ \begin{pmatrix} u^{-1} & 0 \\ k^{-1}(u - u^{-1}) & u \end{pmatrix} : u \in (\ZZ / p\ZZ)^\times \right\},\]
    which is a maximal torus in $\SL_2(\ZZ / p\ZZ)$. The square at the bottom left of the diagram is Cartesian. The maps $\alpha$, $\rho_k$ and $\pi_1$ are the natural projection maps, and the remaining maps are defined by
    \[\begin{array}{ccccc}
     \iota_{m, N, j}' & : & z & \mapsto & \left( \frac{z}{m}, \frac{z + j}{m}\right)\\
     \iota_{mp, N, k}' & : & z & \mapsto & \left( \frac{z}{mp}, \frac{z + k}{mp}\right)\\
     \pi_2&:& (u,v) &\mapsto& \left(\tfrac u p,\tfrac v p\right)\\
     \lambda_k &:& z &\mapsto& \left(\frac{z}{m}, \frac{z + k}{m}\right)
    \end{array}\]

    \begin{definition}
     Let $a$ and $b$ be the unique elements of $\ZZ / pm\ZZ$ congruent to $j$ modulo $m$ and such that $a = 0 \bmod p$ and $b = 1 \bmod p$.
    \end{definition}

    An application of Lemma \ref{lem:pushpull} shows that we have:

    \begin{corollary}
     \label{cor:normrelationRHS}
     For any $\alpha \in \ZZ / mN\ZZ$, we have
     \[ (T_p', T_p')({}_c \Xi_{m, N, j, \alpha}) = \left(C_{mp, N, a}, (\pi_2 \circ \lambda_a)_* {}_c g_{0, \alpha/mN}\right) +
      \left(C_{mp, N, b}, (\pi_2 \circ \lambda_b)_* {}_c g_{0, \alpha/mN}\right).
     \]
    \end{corollary}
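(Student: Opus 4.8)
The plan is to unwind the definition $(T_p', T_p') = (\pi_2 \times \pi_2)_* \circ (\pi_1 \times \pi_1)^*$, tracking both the underlying cycle and the attached modular unit through the pullback and then the pushforward, using the big diagram above together with the two preceding corollaries. Throughout we work at the level of $Z^2$-cycles. Recall (from the computation underlying Proposition \ref{prop:pushforwardzeta}) that ${}_c \Xi_{m, N, j, \alpha}$ is represented in $Z^2(Y_1(N)^2 \otimes \QQ(\mu_m), 1)$ by the pair $\big(C_{m, N, j},\ (\iota'_{m, N, j})_* {}_c g_{0, \alpha / mN}\big)$ in the ``primed'' parametrization used in this section.

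First I would compute $(\pi_1 \times \pi_1)^* {}_c \Xi_{m, N, j, \alpha}$. By the preceding corollary, the cycle-theoretic preimage $(\pi_1 \times \pi_1)^{-1}(C_{m, N, j})$ inside $Y(\Gamma_1(N) \cap \Gamma^0(p))^2$ is exactly $D_{m, N, a} \sqcup D_{m, N, b}$, each component occurring with multiplicity one (the degeneracy map being generically \'etale along $C_{m,N,j}$), so the pullback equals $\sum_{k \in \{a, b\}} \big(D_{m, N, k},\ \pi_1^* (\iota'_{m, N, j})_* {}_c g_{0, \alpha/mN}\big)$. It remains to identify the unit on each component, and this is where Lemma \ref{lem:pushpull} enters: the bottom-left square of the big diagram is Cartesian, so (after the birational identifications of $C_{m, N, j}$ and $D_{m, N, k}$ with the relevant modular curves supplied by Lemma \ref{lem:birational}) we may rewrite $\pi_1^* \circ (\iota'_{m, N, j})_* = (\lambda_k)_* \circ \alpha^*$. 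Since $\alpha$ is a projection of modular curves, $\alpha^*$ merely carries the Siegel unit ${}_c g_{0, \alpha/mN}$ to its pullback, so the unit on $D_{m, N, k}$ is $(\lambda_k)_* {}_c g_{0, \alpha/mN}$.

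Next I would push forward by $\pi_2 \times \pi_2$. Its restriction to the component $D_{m, N, k}$ is precisely the map $\pi_2 : D_{m, N, k} \to C_{mp, N, k}$ of the diagram (on $\cH$ it is $z \mapsto z/p$), so pushing forward $\big(D_{m, N, k},\ (\lambda_k)_* {}_c g_{0, \alpha/mN}\big)$ gives $\big(C_{mp, N, k},\ (\pi_2)_* (\lambda_k)_* {}_c g_{0, \alpha/mN}\big) = \big(C_{mp, N, k},\ (\pi_2 \circ \lambda_k)_* {}_c g_{0, \alpha/mN}\big)$. Summing over $k \in \{a, b\}$ yields the asserted identity.

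The one genuinely delicate point is the bookkeeping that legitimizes the application of Lemma \ref{lem:pushpull}: one must verify that the bottom-left square really is Cartesian in the relevant sense — i.e.\ that the product of the two ``downward'' congruence subgroups recovers the ``bottom'' one, which is exactly where the description of $U_k$ (a Borel when $p \mid k$, a maximal torus otherwise) from the group-theoretic corollary above gets used — and one must keep careful track of the various reparametrizations entering $\iota'_{m,N,j}$, $\lambda_k$ and $\pi_2$ so that the Siegel units on the several source curves genuinely match up under pushforward, which is the same type of distribution-relation computation as in Proposition \ref{prop:pushforwardzeta}. Once the diagram is set up correctly the remainder is formal.
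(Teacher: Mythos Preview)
Your proof is correct and follows the same approach as the paper. The paper's own proof is extremely terse—it simply states that the corollary is an application of Lemma~\ref{lem:pushpull} to the big diagram—and your proposal spells out precisely how that application works: pull back the cycle using the preceding corollary to get $D_{m,N,a}+D_{m,N,b}$, identify the unit on each $D_{m,N,k}$ via the Cartesian bottom-left square and Lemma~\ref{lem:pushpull} as $(\lambda_k)_* {}_c g_{0,\alpha/mN}$, and then push forward along $\pi_2$; your closing remarks about verifying the Cartesian property (using the explicit description of $U_k$) and tracking the parametrizations accurately capture the only points requiring care.
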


    (It is convenient to allow $\alpha \ne 1$ here, for reasons that will become clear below.)

    We first consider the term for $a$. Here we have $U_a = \Gamma^0(p)$, so $\Gamma_1(mN) \cap \Gamma^0(m) \cap U_a = \Gamma_1(mN) \cap \Gamma^0(mp)$. Since $p \mid a$, we see that $\pi_2 \circ \lambda_a$ can also be expressed as a composition
    \begin{equation}
     \label{eq:zero term factorization}
     \begin{diagram}
      Y(\Gamma_1(mN) \cap \Gamma^0(mp))\\
      \dTo^{z \mapsto z/p}\\
      Y(\Gamma_1(mN) \cap \Gamma^0(m))\\
      \dTo^{z \mapsto \left(\tfrac z m, \tfrac {z + \text{``$p^{-1}$''} j}{m}\right)}\\
      C_{mp, N, a} = C_{m, N, \text{``$p^{-1}$''} j}
     \end{diagram}
    \end{equation}
    where $\text{``$p^{-1}$''}$ is the inverse of $p$ in $\ZZ / m\ZZ$.

    \begin{proposition}
     The pushforward of ${}_c g_{0, \alpha/mN}$ to $Y(\Gamma_1(mN) \cap \Gamma^0(m))$ along the first map in \eqref{eq:zero term factorization} is ${}_c g_{0, \alpha/mN} \cdot \left( {}_c g_{0, \text{``$p^{-1}$''} \alpha/mN}\right)^p$.
    \end{proposition}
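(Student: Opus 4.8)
The plan is to identify the map in question with the twisted degeneracy map $\pi_2$ entering the definition of the Hecke operator $T_p'$ on $Y(m,mN)$, and then to reduce to a Siegel-unit norm computation of Kato. Since $p \nmid mN$, the source and target of the first map in \eqref{eq:zero term factorization} are (geometrically) the curves $Y(m(p), mN)$ and $Y(m, mN)$ in the notation imported from \cite[\S 2.8]{kato04}, and the map $z \mapsto z/p$ is precisely $\pi_2 = \alpha_1 \circ \vp_p^{-1}$, where $\vp_p \colon Y(m, mN(p)) \to Y(m(p), mN)$ is the isomorphism induced by $z \mapsto pz$ and $\alpha_1 \colon Y(m, mN(p)) \to Y(m, mN)$ is the natural projection (of degree $p+1$). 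As pushforward along an isomorphism is pullback along the inverse, the map on units we want is $(\alpha_1)_* \circ \vp_p^*$ applied to ${}_c g_{0,\alpha/mN}$.

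First I would compute $\vp_p^*({}_c g_{0,\alpha/mN})$: viewed as a holomorphic function on $\cH$ it is $z \mapsto {}_c g_{0,\alpha/mN}(pz)$, so by the distribution relation \eqref{eq:dist1} it equals $\prod_{j=0}^{p-1} {}_c g_{0, (\alpha + jmN)/(pmN)}$, a unit on $Y(m, mN(p))$. Because $p \nmid mN$, exactly one index $j_\ast$ satisfies $p \mid (\alpha + j_\ast mN)$, and for that index the factor reduces to ${}_c g_{0, \quot{p^{-1}}\alpha/mN}$, since $(\alpha + j_\ast mN)/p \equiv \quot{p^{-1}}\alpha \pmod{mN}$; the remaining $p-1$ factors have exact denominator $pmN$.

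Then I would push forward along $\alpha_1$. The factor ${}_c g_{0, \quot{p^{-1}}\alpha/mN}$ is pulled back from $Y(m, mN)$ and so contributes $\bigl({}_c g_{0, \quot{p^{-1}}\alpha/mN}\bigr)^{p+1}$; the product of the other $p-1$ factors, pushed forward and re-collected via a further use of \eqref{eq:dist1}, recombines into ${}_c g_{0,\alpha/mN} \cdot \bigl({}_c g_{0, \quot{p^{-1}}\alpha/mN}\bigr)^{-1}$, and multiplying the two contributions yields ${}_c g_{0,\alpha/mN} \cdot \bigl({}_c g_{0, \quot{p^{-1}}\alpha/mN}\bigr)^{p}$. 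This last step is exactly Kato's computation \cite[Step 2 of \S 2.13 and (2.13.2)]{kato04}, reproduced (for $m = \alpha = 1$) inside the proof of Theorem \ref{thm:siegel-compat}; indeed the whole statement is that computation with $(M,N,\ell)$ replaced by $(m, mN, p)$ and the numerator $1$ by $\alpha$, so one could alternatively just invoke it. The one genuinely delicate point, which I would check most carefully, is the Bruhat-cell bookkeeping in this pushforward: verifying that the $p-1$ Siegel units of exact denominator $pmN$, once pushed forward along $\alpha_1$, really do reassemble into the asserted level-$mN$ ratio rather than into some other product of level-$mN$ units.
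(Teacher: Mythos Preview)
Your proposal is correct and takes essentially the same approach as the paper, which simply cites \cite[2.13.2]{kato04}; you have correctly identified the map as $(\alpha_1)_* \circ \vp_p^*$ and the result as the formula $(\alpha_1)_*\vp_p^*({}_c g_{0,\alpha/mN}) = {}_c g_{0,\alpha/mN}\cdot({}_c g_{0,\quot{p^{-1}}\alpha/mN})^{p}$ already recalled in the proof of Theorem~\ref{thm:siegel-compat}. Your additional unpacking of Kato's argument via the distribution relation is accurate, and as you yourself note, one may bypass it and invoke Kato's formula directly, which is precisely what the paper does.
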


    \begin{proof} See \cite[2.13.2]{kato04}.
    \end{proof}

    The second map in \eqref{eq:zero term factorization} is just $\iota'_{m, N, \text{``$p^{-1}$''} j}$, so we deduce that
    \begin{align*}
     \left(C_{mp, N, a}, (\pi_2 \circ \lambda_a)_* {}_c g_{0, \alpha/mN}\right) &= \Big(C_{m, N, \text{``$p^{-1}$''} a}, (\iota'_{m, N, \text{``$p^{-1}$''} j})_* \left({}_c g_{0, \alpha/mN} \cdot \left( {}_c g_{0, \text{``$p^{-1}$''} \alpha/mN}\right)^p\right)\Big)\\
     &= {}_c \Xi_{m, N, \text{``$p^{-1}$''} j, \alpha} + p\, {}_c \Xi_{m, N, \text{``$p^{-1}$''} j, \text{``$p^{-1}$''} \alpha}\\
     &= (\sigma_p + p (\langle p^{-1}\rangle, \langle p^{-1} \rangle)\sigma_p^{-1}) {}_c \Xi_{m, N, j, \alpha}.
    \end{align*}

    \begin{corollary}
     \label{cor:TpTpterm}
     For any $\alpha \in (\ZZ / mN\ZZ)^\times$, we have
     \[ \left((T_p', T_p') - \sigma_p - p (\langle p^{-1}\rangle, \langle p^{-1} \rangle)\sigma_p^{-1}\right){}_c \Xi_{m, N, j, \alpha} = \Big(C_{mp, N, b}, (\pi_2 \circ \lambda_b)_* \left({}_c g_{0, \alpha/mN}\right)\Big).\]
     In particular,
     \begin{multline*}
      \left((T_p', T_p') - \sigma_p - p (\langle p^{-1}\rangle, \langle p^{-1} \rangle)\sigma_p^{-1}\right)\left(1 +  \left(\langle p^{-1}\rangle , \langle p^{-1}\rangle \right) \sigma_p^{-2}\right) {}_c \Xi_{m, N, j} \\= \Big(C_{mp, N, b}, (\pi_2 \circ \lambda_b)_* \left({}_c g_{0, 1/mN} \cdot {}_c g_{0, \text{``$p^{-1}$''}/mN}\right)\Big).
     \end{multline*}
    \end{corollary}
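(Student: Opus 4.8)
The plan is to derive both assertions formally from Corollary~\ref{cor:normrelationRHS} together with the analysis of the ``$a$-term'' carried out immediately above; no new geometric input is required.

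First I would treat the displayed identity of the corollary. By Corollary~\ref{cor:normrelationRHS}, for $\alpha \in (\ZZ/mN\ZZ)^\times$ the cycle $(T_p', T_p')({}_c \Xi_{m, N, j, \alpha})$ is the sum of the ``$a$-term'' $(C_{mp, N, a}, (\pi_2 \circ \lambda_a)_* {}_c g_{0, \alpha/mN})$ and the ``$b$-term'' $(C_{mp, N, b}, (\pi_2 \circ \lambda_b)_* {}_c g_{0, \alpha/mN})$. The factorization \eqref{eq:zero term factorization}, together with the push-forward relation for its first arrow noted above (which rests on \cite[2.13.2]{kato04}) and the identification of its second arrow with $\iota'_{m,N,\quot{p^{-1}}j}$, rewrites the $a$-term as $(\sigma_p + p (\langle p^{-1}\rangle, \langle p^{-1} \rangle)\sigma_p^{-1})\,{}_c \Xi_{m, N, j, \alpha}$. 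Transposing it to the left-hand side is precisely the first equation of the corollary.

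For the ``in particular'' clause I would apply that equation twice, once with $\alpha = 1$ and once with $\alpha = \quot{p^{-1}}$ (the inverse of $p$ modulo $mN$, a unit because $p \nmid mN$), and add the two instances. On the right-hand side I would use that $\phi \mapsto (C_{mp, N, b}, (\pi_2 \circ \lambda_b)_* \phi)$ is a group homomorphism — it turns a product of units into a sum of cycles — so that the two $b$-terms combine into $(C_{mp, N, b}, (\pi_2 \circ \lambda_b)_*({}_c g_{0, 1/mN} \cdot {}_c g_{0, \quot{p^{-1}}/mN}))$. It then remains only to identify the left-hand side, i.e.\ to verify
\[ {}_c \Xi_{m, N, j} + {}_c \Xi_{m, N, j, \quot{p^{-1}}} = (1 + (\langle p^{-1}\rangle, \langle p^{-1}\rangle)\sigma_p^{-2})\, {}_c \Xi_{m, N, j}. \]

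This last point is the only step demanding any care, since it couples the diamond action on $Y_1(N)^2$ with the Galois action on $\QQ(\mu_m)$: I would apply Proposition~\ref{prop:BFeltproperties}(4) with $r = \quot{p^{-1}}$ and $\alpha = 1$ to obtain ${}_c \Xi_{m, N, j, \quot{p^{-1}}} = (\langle p^{-1}\rangle, \langle p^{-1}\rangle)^*\,{}_c \Xi_{m, N, p^2 j}$, and then Proposition~\ref{prop:BFeltproperties}(3) to rewrite ${}_c \Xi_{m, N, p^2 j}$ as $\sigma_p^{-2}\,{}_c \Xi_{m, N, j}$; composing the two gives the identity. The main ``obstacle'' is thus entirely bookkeeping — keeping the inverses and the exponent $-2$ straight when moving between the two kinds of twist — while all of the genuine content already sits in Corollary~\ref{cor:normrelationRHS} and the $a$-term computation preceding the statement.
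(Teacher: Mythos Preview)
Your proposal is correct and follows exactly the same approach as the paper: the paper's proof says only that the first formula is immediate from \eqref{eq:zero term factorization} and the $C_{mp,N,a}$-term computation, and that the second follows by summing the first for $\alpha=1$ and $\alpha=\text{``}p^{-1}\text{''}$. You have merely spelled out the bookkeeping (via Proposition~\ref{prop:BFeltproperties}(3),(4)) that the paper leaves implicit.
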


    \begin{proof}
     The first formula is immediate from \eqref{eq:zero term factorization} and the evaluation of the $C_{mp, N, a}$ term above. The second formula follows by summing the first formula for $\alpha = 1$ and for $\alpha = p^{-1}$.
    \end{proof}

   \subsubsection{Evaluation of the norm term}

    We want to compare the right-hand side of the formula in Corollary \ref{cor:TpTpterm}  with the sum of the ${}_c \Xi_{mp, N, k}$ for all unit liftings $k$ of $j$. To do this, we shall use the fact that all the terms ${}_c \Xi_{mp, N, k}$ may be written as the pushforwards of modular units on the same modular curve $C_{mp, N, b}$. More precisely, if $k$ and $\ell$ are liftings of $j$ to $\ZZ / mp\ZZ$ which are both units modulo $p$, we have a diagram
    \begin{diagram}
     \cH & \rTo^{\alpha_{k\ell}} & \cH \\
     \dTo & & \dTo\\
     Y(\Gamma_1(mN) \cap \Gamma^0(m) \cap U_k) & \rTo^\cong & Y(\Gamma_{1}(mN) \cap \Gamma^0(m) \cap U_\ell)\\
     \dTo^{\lambda_k} & & \dTo^{\lambda_{\ell}} \\
     D_{m, N, \ell} &\rEq & D_{m, N, k}
    \end{diagram}
    where $\alpha_{k\ell}$ is any matrix of the form $\begin{pmatrix} 1 & 0 \\ v & 1 \end{pmatrix}$ with $v \in mN \ZZ$ congruent to $\tfrac{1}{k} - \tfrac{1}{\ell}$ modulo $p$.

    Consequently, we can write ${}_c \Xi_{mp, N, k} = (C_{mp, N, b},  (\pi_2 \circ \lambda_b)_* f_k)$, where
    \[ f_k = \alpha_{bk}^* (\rho_k)_* \left({}_c g_{0, 1/mpN}\right).\]
    We may regard $\cO(Y(\Gamma_1(mpN) \cap \Gamma^0(mp)))^\times$ as a $\SL_2(\FF_p)$-module in the obvious way, since $\Gamma_1(mpN) \cap \Gamma^0(mp)$ is the kernel of the surjective reduction map $\Gamma_1(mN) \cap \Gamma^0(m) \twoheadrightarrow \SL_2(\FF_p)$. With this convention we have
    \[
     (\rho_k)_*\left({}_c g_{0, 1/mpN}\right) = \prod_{u \in \FF_p^\times} \begin{pmatrix} u^{-1} & 0 \\ k^{-1}(u - u^{-1}) & u \end{pmatrix}^* {}_c g_{0, 1/mpN}
    \]
    and thus
    \begin{align*}
     f_k &=
     \prod_{u \in \FF_p^\times} \left[\begin{pmatrix} u^{-1} & 0 \\ k^{-1}(u - u^{-1}) & u \end{pmatrix}\begin{pmatrix} 1 & 0 \\ 1 - k^{-1} & 1 \end{pmatrix} \right]^* {}_c g_{0, 1/mpN}\\
     &= \prod_{u \in \FF_p^\times} \begin{pmatrix} u^{-1}  & 0 \\ u - k^{-1} u^{-1} & u \end{pmatrix}^* {}_c g_{0, 1/mpN}.
    \end{align*}

    Let $K$ be the set of possible values of $k$, i.e.~the set of elements of $\ZZ / mp\ZZ$ congruent to $j$ modulo $m$ and not divisible by $p$. Then as $k$ varies over $K$, for each fixed $u$, the expression $u - k^{-1} u^{-1}$ takes every value in $\FF_p$ exactly once with the exception of $u$, since $k^{-1} u^{-1}$ takes every value except 0. So
    \[ \prod_{k \in K} f_k = \prod_{\substack{u, v \in \FF_p \\ u \ne 0 \\ v \ne u}} {}_c g_{v_0 / mpN, u_1 /mpN}.\]
    Here by $x_1$ and $x_0$ for $x \in \FF_p$ we mean any element of $\ZZ/mpN\ZZ$ congruent to $x$ mod $p$ and to $1$ (resp. 0) modulo $mN$.

    We find that
    \begin{subequations}\begin{align}
     \prod_{u, v \in \FF_p} {}_c g_{v_0 / mpN, u_1 /mpN} &= {}_c g_{0, 1/mN} \\
     \prod_{v \in \FF_p} {}_c g_{v_0 / mpN, 0_1 /mpN} &= \begin{pmatrix} 1 & 0 \\ 0 & p \end{pmatrix}^* {}_c g_{0, \beta / mN} \\
     \prod_{u \in \FF_p} {}_c g_{u_0 / mpN, u_1 / mpN} &= \begin{pmatrix} 1 & 0 \\ 1_0 & 1 \end{pmatrix}^* \prod_{u \in \FF_p} {}_c g_{0, u_1 / mpN}= \begin{pmatrix} 1 & 0 \\ 1_0 & 1 \end{pmatrix}^* \begin{pmatrix} p & 0 \\ 0 & 1 \end{pmatrix}^* {}_c g_{0, 1 / mN}\\
     {}_c g_{0_0 / mpN, 0_1 / mpN} &= {}_c g_{0, \beta / mN}
    \end{align}\end{subequations}
    where $\beta$ is the inverse of $p$ in $\ZZ / mN\ZZ$.

    Combining the above, we have
    \begin{multline}
     \label{eq:normrelationLHS}
     \sum_{\substack{k \in \ZZ / mp\ZZ \\ k = j \bmod m \\ p \nmid k}} {}_c \Xi_{mp, N, k} = \Big(C_{mp, N, b}, (\pi_2 \circ \lambda_b)_* ({}_c g_{0, 1/mN} \cdot {}_c g_{0, \beta / mN})\Big)\\
     - \left(C_{mp, N, b}, (\pi_2 \circ \lambda_b)_*  \begin{pmatrix} 1 & 0 \\ 0 & p \end{pmatrix}^* {}_c g_{0, \beta / mN}\right)\\
     - \left(C_{mp, N, b}, (\pi_2 \circ \lambda_b)_*  \begin{pmatrix} 1 & 0 \\ 1_0 & 1 \end{pmatrix}^* \begin{pmatrix} p & 0 \\ 0 & 1 \end{pmatrix}^* {}_c g_{0, 1 / mN}\right).
    \end{multline}

    Combining the first term on the right-hand side of \eqref{eq:normrelationLHS} with Corollary \ref{cor:normrelationRHS}, we see that Theorem \ref{thm:secondnormrelationprime} is equivalent to

    \begin{proposition}
     \label{prop:normrelation2}
     We have
     \begin{multline*}
      \Big(\left[(\langle p^{-1}\rangle, S_p') + (S_p', \langle p^{-1}\rangle)\right] \sigma_p^{-1}
      \Big) {}_c \Xi_{m, N, j} = \left(C_{mp, N, b}, (\pi_2 \circ \lambda_b)_*  \begin{pmatrix} 1 & 0 \\ 0 & p \end{pmatrix}^* {}_c g_{0, \beta / mN}\right)\\
     + \left(C_{mp, N, b}, (\pi_2 \circ \lambda_b)_*  \begin{pmatrix} 1 & 0 \\ 1_0 & 1 \end{pmatrix}^* \begin{pmatrix} p & 0 \\ 0 & 1 \end{pmatrix}^* {}_c g_{0, 1 / mN}\right).
     \end{multline*}
    \end{proposition}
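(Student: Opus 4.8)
The plan is to evaluate each of the two operators $(\langle p^{-1}\rangle, S_p')\sigma_p^{-1}$ and $(S_p', \langle p^{-1}\rangle)\sigma_p^{-1}$ on ${}_c\Xi_{m,N,j}$ by the same method used for the $(T_p',T_p')$ term in Corollaries \ref{cor:normrelationRHS}--\ref{cor:TpTpterm}: realizing the operator as a composition of pull-backs and push-forwards along a diagram of modular curves, pulling back the curve $C_{m,N,j}$ (or rather $C_{m,N,pj}$, after accounting for $\sigma_p^{-1}$), and then transporting the Siegel unit accordingly. The flip involution $\rho$ of $Y_1(N)^2$ (Proposition \ref{prop:BFeltproperties}(2)) interchanges the two operators and sends ${}_c\Xi_{m,N,j}$ to ${}_c\Xi_{m,N,-j}$; one can attempt to use this to halve the work, although the two right-hand terms of the proposition are not manifestly $\rho$-symmetric to each other, so it may be cleaner simply to compute both terms directly.

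For the computation I would first use Proposition \ref{prop:BFeltproperties}(3) to rewrite $\sigma_p^{-1}\,{}_c\Xi_{m,N,j}$ as ${}_c\Xi_{m,N,pj}$, and recall (from the proposition preceding \eqref{eq:normrelation2}, together with the triviality of $R_p$ on the elements in question) that $S_p'$ may be replaced by the correspondence $(\pi_2)_*(\pi_1)^*$ attached to a modular curve carrying a $\Gamma^0(p^2)$-structure. I would then build the analogue of the large diagram of the $(T_p',T_p')$ subsection, this time imposing the auxiliary $p$-power level structure on only one of the two factors of $Y_1(N)^2$, so that the relevant curve is a component of the preimage of $C_{m,N,pj}$ in $Y(\Gamma_1(N)) \times Y(\Gamma_1(N)\cap\Gamma^0(p^2))$; the double-coset combinatorics of the $(T_p',T_p')$ case (the description of $B\backslash\SL_2(\ZZ/p\ZZ)/B$ and its $p^2$-analogue) isolates the component isomorphic to $C_{mp,N,b}$, the remaining components contributing only the already-evaluated $C_{mp,N,a}$-type and $R_p$-type terms. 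Finally I would push the Siegel unit ${}_c g_{0,1/mpN}$ forward along the vertical maps $\rho_k$, $\lambda_b$, $\pi_2$ of the diagram, exactly as in the computation of $f_k$ and $\prod_k f_k$ preceding \eqref{eq:normrelationLHS}, and use the distribution relations of Proposition \ref{prop:distrelations} together with the transformation law $u^*({}_c g_{\alpha,\beta}) = {}_c g_{(\alpha,\beta)u}$ to collapse the resulting product of Siegel units to the asserted expressions $\begin{pmatrix}1&0\\0&p\end{pmatrix}^*{}_c g_{0,\beta/mN}$ and $\begin{pmatrix}1&0\\1_0&1\end{pmatrix}^*\begin{pmatrix}p&0\\0&1\end{pmatrix}^*{}_c g_{0,1/mN}$.

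The main obstacle will be this last step: the careful bookkeeping of the modular units under the chain of pull-backs and push-forwards. One must interpret the auxiliary matrices $\begin{pmatrix}1&0\\0&p\end{pmatrix}$, $\begin{pmatrix}p&0\\0&1\end{pmatrix}$ and $\begin{pmatrix}1&0\\1_0&1\end{pmatrix}$ correctly as operators on the higher-level curves --- the first two being genuine degeneracy maps in the sense of the $\vp_\ell$ of the proof of Theorem \ref{thm:siegel-compat}, not merely relabellings of the Siegel unit's indices --- keep precise track of which residue classes in $\ZZ/mpN\ZZ$ the two coordinates of the relevant curve acquire after the $\sigma_p^{-1}$-shift and the various quotients, and verify that the distribution relations recombine the Siegel units into exactly the claimed products, with multiplicity one and no extraneous components or terms. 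This is precisely the sort of delicate index-chasing that the ``$B\backslash G/B$ has size $2$'' analysis of the previous subsection was designed to control, and the present argument will need its analogue.
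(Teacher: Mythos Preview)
Your general strategy---realize each operator as a correspondence on one factor, pull back $C_{m,N,j}$, push forward, and track the Siegel unit---is the same as the paper's. But you have misidentified both the component structure and where the real work lies.

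First, the component expectation is wrong. When the correspondence $S_p'$ acts on only one factor (so we pull back along $(\pi_1',1)$ to $Y(\Gamma_1(N)\cap\Gamma_0^0(p))\times Y_1(N)$), the preimage of $C_{m,N,j}$ is a \emph{single} irreducible curve $F_{m,N,j}$, by strong approximation: there is only one relevant double coset here, unlike the $(T_p',T_p')$ case where both factors carry extra level. So there are no ``remaining components contributing already-evaluated terms'' to sort through.

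Second, and more seriously, the hard step is not the distribution-relation bookkeeping you describe. The real difficulty is identifying the \emph{image} curve $(\pi_2'\times 1)F_{m,N,j}$ inside $Y_1(N)^2$: one must show it equals $(1\times\langle p\rangle)^{-1}C_{mp,N,k}$ for the correct $k$. The paper does this (Proposition~\ref{prop:gammadoubleprime}) by producing an explicit matrix identity in $\SL_2(\QQ)$---roughly, finding $\gamma,\gamma'\in\SL_2(\ZZ)$ with prescribed congruences modulo $p^2$ and $m^2N$ such that $\stbt{1}{x/p}{0}{1}\gamma=\gamma'\stbt{p}{y/p}{0}{1/p}$---and this identity is what pins down both the image curve and the transformation $\gamma''$ that the Siegel unit undergoes. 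Once $\gamma''$ is in hand, the unit tracking is a one-line application of the transformation law; no distribution relations are needed at this stage.

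Your plan to ``push ${}_c g_{0,1/mpN}$ forward along $\rho_k,\lambda_b,\pi_2$ and collapse via Proposition~\ref{prop:distrelations}'' is importing the mechanism of the norm-term computation (\S3.4.2), but that is not what happens here: the unit stays at level $mN$ throughout and is simply pulled back along the matrix $\gamma''$. The analogous identity for the $(1,S_p')$ term (\S3.4.4) is different and must be found separately; the flip symmetry $\rho$ does not directly give one from the other.
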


  \subsubsection{The first term in Proposition \ref{prop:normrelation2}}

    We now calculate how $S_p'$ acts on ${}_c \Xi_{m, N, j}$. We may describe the correspondence $S_p'$ in terms of the subgroup $\Gamma_0^0(p) = \Gamma_0(p) \cap \Gamma^0(p)$; we have $S_p' = (\pi_2')_* (\pi_1')^*$, where $\pi_1'$ and $\pi_2'$ are the two maps from $Y_1(\Gamma_1(N) \cap \Gamma^0_0(p))$ to $Y_1(N)$ given by $z \mapsto pz$ and $z \mapsto z/p$.

    An application of the strong approximation theorem shows (as usual) that the preimage $(\pi_1' \times 1)^{-1} C_{m, N, j} \subseteq Y_1(\Gamma_1(N) \cap \Gamma^0_0(p)) \times Y_1(N)$ is the single curve $F_{m, N, j}$ given by the set of points of the form $\left(\tfrac z p, z + \tfrac j m\right)$.

    Applying Lemma \ref{lem:pushpull} once more, we have a Cartesian square of curves (up to birational equivalence)
    \begin{diagram}
     & & Y(\Gamma_1(mN) \cap \Gamma^0(m) \cap \Gamma^0_0(p)) & & \\
     & \ldTo^{pz \mapsfrom z} & & \rdTo^{z \mapsto \left(\tfrac{z}{m}, \tfrac{pz + j}{m}\right)} & \\
     Y(\Gamma_1(mN) \cap \Gamma^0(m)) & & & & F_{m, N, j}\\
     & \rdTo_{z \mapsto \left(\tfrac{z}{m}, \tfrac{z + j}{m}\right)} & & \ldTo_{(pu, v) \mapsfrom (u, v)} \\
     & & C_{m, N, j}
    \end{diagram}

    The functoriality of pushforward maps gives the following:

    \begin{proposition}
     We have
     \[ (S_p', 1) {}_c \Xi_{m, N, j, \alpha} = \left((\pi_2' \times 1) F_{m, N, j}, \phi_* \left(\begin{pmatrix} p & 0 \\ 0 & 1 \end{pmatrix}^* {}_c g_{0, \alpha/mN}\right)\right)\]
     where $\phi$ is the map
     \begin{align*}
      Y(\Gamma_1(mN) \cap \Gamma^0(m) \cap \Gamma_0^0(p)) &\to (\pi_2' \times 1) F_{m, N, j} \subset Y_1(N)^2 \\
      z &\mapsto \left(\tfrac z {mp}, \tfrac{pz + j}{m}\right).
     \end{align*}
    \end{proposition}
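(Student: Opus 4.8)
The plan is to evaluate the correspondence $(S_p',1)=(\pi_2'\times 1)_*\circ(\pi_1'\times 1)^*$ directly on an explicit cycle representing ${}_c\Xi_{m,N,j,\alpha}$, pulling back and then pushing forward, and then reading off the answer from the Cartesian square of modular curves displayed just before the statement. Concretely, I would first recall (as in Corollary~\ref{cor:normrelationRHS} and the computations following it) that ${}_c\Xi_{m,N,j,\alpha}$ is represented by the pair $\big(C_{m,N,j},(\iota'_{m,N,j})_*\,{}_c g_{0,\alpha/mN}\big)\in Z^2(Y_1(N)^2\otimes\QQ(\mu_m),1)$, where $\iota'_{m,N,j}\colon Y(\Gamma_1(mN)\cap\Gamma^0(m))\to C_{m,N,j}$ is the map induced by $z\mapsto(z/m,(z+j)/m)$ on $\cH$.

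Next I would pull this cycle back along $\pi_1'\times 1$. By the strong-approximation computation recorded just above, $(\pi_1'\times 1)^{-1}(C_{m,N,j})$ is the single irreducible curve $F_{m,N,j}$, occurring with multiplicity one, so the pullback is a one-curve cycle $(F_{m,N,j},\psi)$. The key step is to identify $\psi$, and for this I would invoke the base-change compatibility of pushforward and pullback along the Cartesian square $Y(\Gamma_1(mN)\cap\Gamma^0(m)\cap\Gamma_0^0(p))=Y(\Gamma_1(mN)\cap\Gamma^0(m))\times_{C_{m,N,j}}F_{m,N,j}$ (the same principle underlying Lemma~\ref{lem:pushpull}): pulling ${}_c g_{0,\alpha/mN}$ back along $\iota'_{m,N,j}$ and then along $\pi_1'\times 1$ agrees with first pulling it back along the left-hand map $z\mapsto pz$ of the square --- which, as this map is the one induced by $\begin{pmatrix}p&0\\0&1\end{pmatrix}$ on $\cH$, produces $\begin{pmatrix}p&0\\0&1\end{pmatrix}^*\,{}_c g_{0,\alpha/mN}$ --- and then pushing forward along the right-hand map $z\mapsto(z/m,(pz+j)/m)$ onto $F_{m,N,j}$.

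Finally I would apply $(\pi_2'\times 1)_*$. This carries $F_{m,N,j}$ onto the curve $(\pi_2'\times 1)F_{m,N,j}\subset Y_1(N)^2$, and composing the map $z\mapsto(z/m,(pz+j)/m)$ with $(\pi_2'\times 1)\colon(u,v)\mapsto(u/p,v)$ gives precisely the map $\phi\colon z\mapsto(z/(mp),(pz+j)/m)$ of the statement. Transitivity of pushforward (the norm being transitive along a tower of finite maps of curves) then yields
\[ (S_p',1)\,{}_c\Xi_{m,N,j,\alpha}=\left((\pi_2'\times 1)F_{m,N,j},\ \phi_*\!\left(\begin{pmatrix}p&0\\0&1\end{pmatrix}^*\,{}_c g_{0,\alpha/mN}\right)\right), \]
as claimed.

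The only genuinely delicate point is the bookkeeping of the various degeneracy maps and their $\QQ(\mu_m)$-models --- in particular that the square really is Cartesian, so that base change applies to the \emph{unit} ${}_c g_{0,\alpha/mN}$ and not merely to its divisor, and that the preimage of $C_{m,N,j}$ is irreducible with reduced structure. Both of these have, however, already been established in the discussion preceding the proposition, so the argument reduces to assembling them and tracking the composite map carefully enough to recognise $\phi$; I do not expect any further obstacle.
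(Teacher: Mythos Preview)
Your proposal is correct and follows exactly the approach the paper uses: the paper's own proof is the single line ``The functoriality of pushforward maps gives the following'', which unpacks to precisely the pull-back along $\pi_1'\times 1$, base-change via the Cartesian square (Lemma~\ref{lem:pushpull}) to identify the unit as $\begin{pmatrix}p&0\\0&1\end{pmatrix}^*{}_c g_{0,\alpha/mN}$ pushed forward, and then push-forward along $\pi_2'\times 1$, just as you describe. You have correctly identified the only subtle points (irreducibility of the preimage and the Cartesian property of the square), and both are indeed handled in the discussion immediately preceding the statement.
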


    We first identify the curve $(\pi_2' \times 1) F_{m, N, j}$.

    \begin{proposition}
     \label{prop:gammadoubleprime}
     We have $(\pi_2' \times 1) F_{m, N, j} = (1 \times \langle p\rangle)^{-1} C_{mp, N, k}$, for any integer $k$ congruent to $p^{-1} j$ modulo $m$ and not divisible by $p$.

     More precisely, if $k = 1 \bmod p$, and $\gamma''$ is a suitable element of $\SL_2(\ZZ)$ which we shall construct below, then there is a commutative diagram
     \begin{diagram}
      Y(\Gamma_1(mN) \cap \Gamma^0(m) \cap \Gamma_0^0(p)) & \rTo^{z \mapsto \gamma'' z} & Y(\Gamma_1(mN) \cap \Gamma^0(m) \cap U_k)\\
      \dTo^{z \mapsto \left(\tfrac z {mp}, \tfrac{pz + j}{m}\right)} & & \dTo_{z \mapsto \left(\tfrac z {mp}, \tfrac{z + k}{mp}\right)}\\
      Y_1(N)^2 &\rTo^{(1 \times \langle p\rangle)} & Y_1(N)^2,
     \end{diagram}
     where $U_k$ is the level $p$ congruence subgroup from the previous section.
    \end{proposition}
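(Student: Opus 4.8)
The plan is to argue as in the proof of Lemma~\ref{lem:kappaexists}: first verify the statement as an assertion about the Riemann surfaces involved --- i.e.\ about the maps $z \mapsto \gamma z$ on $\cH$ for explicit $\gamma$ --- and then descend to the canonical models of Definition~\ref{def:qqbarmodel} using the remark immediately following it, refining from $\overline{\QQ}$ to $\QQ(\mu_m)$ by tracking the maps to $\mu_m^\circ$ exactly as in that earlier argument.

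\textbf{Step 1: identifying the curve.} I would first note that $(\pi_2' \times 1)F_{m,N,j}$ is the image in $Y_1(N)^2$ of $\cH$ under $z \mapsto \bigl([z/(mp)],\,[(pz+j)/m]\bigr)$, which is immediate from the formula for $\phi$, where $[\,\cdot\,]$ denotes the image in $Y_1(N)$. Writing the two maps $\cH \to \cH$ as the integral matrices $\stbt{1}{0}{0}{mp}$ and $\stbt{p}{j}{0}{m}$ (each of determinant $mp$), and the right-hand map via $\stbt{1}{k}{0}{mp}$, the assertion that this curve equals $(1\times\langle p\rangle)^{-1}C_{mp,N,k}$ reduces to the matrix identity
\[ \stbt{1}{k}{0}{mp} \;=\; \gamma_N\, g_p\, \stbt{p}{j}{0}{m} \qquad\text{for some } \gamma_N \in \Gamma_1(N), \]
where $g_p \in \Gamma_0(N)$ is a representative of the diamond operator $\langle p\rangle$; comparing this identity modulo $N$ is what forces the congruence $k \equiv p^{-1}j \pmod m$. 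That solvability then requires $p \nmid k$, and that the resulting curve is independent of the chosen unit lift $k$, is once again the strong-approximation dichotomy (the two double cosets of the Borel $B$ in $\SL_2(\FF_p)$) used repeatedly above.

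\textbf{Step 2: the element $\gamma''$.} For the refined statement I would produce $\gamma'' \in \SL_2(\ZZ)$ by prescribing its image in $\SL_2(\ZZ/mNp\ZZ)$. Modulo $p$ it should reduce to $\stbt{1}{0}{-k^{-1}}{1}$, which conjugates the diagonal torus $\Gamma^0_0(p)\bmod p$ onto the maximal torus $U_k\bmod p$ appearing in the definition of the level-$p$ group $U_k$; modulo $mN$ (inside $\Gamma_0(mN)$, say) it should be arranged so that
\[ \stbt{1}{0}{0}{mp}\,\gamma''\,\stbt{1}{0}{0}{mp}^{-1} \in \Gamma_1(N) \qquad\text{and}\qquad \stbt{1}{k}{0}{mp}\,\gamma''\,\Bigl(g_p\,\stbt{p}{j}{0}{m}\Bigr)^{-1} \in \Gamma_1(N). \]
Since $\SL_2(\ZZ) \twoheadrightarrow \SL_2(\ZZ/mNp\ZZ)$ and these conditions amount to finitely many congruences, such a $\gamma''$ exists once one checks that the congruences imposed at primes dividing $\gcd(m,N)$ are compatible. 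One then checks: (i) $\gamma''$ conjugates $\Gamma_1(mN)\cap\Gamma^0(m)\cap\Gamma^0_0(p)$ into $\Gamma_1(mN)\cap\Gamma^0(m)\cap U_k$, so that $z \mapsto \gamma'' z$ descends to a morphism of the two modular curves; (ii) the square commutes on $\cH$, which by the choice of $\gamma''$ is exactly the two displayed conditions together with Step~1; and (iii) the whole diagram descends to a commutative diagram of $\QQ(\mu_m)$-varieties via the canonical models.

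\textbf{Where the difficulty lies.} The content is essentially formal; the real work is the bookkeeping in Step~2 --- simultaneously meeting the torus condition at $p$, the two $\Gamma_1(N)$-conditions, and the $\Gamma^0(m)$-compatibility with a single $\gamma''$, and making sure that the diamond operator that appears is normalized precisely as in Proposition~\ref{prop:tmproperties}(2) so that the $\langle p\rangle$ in the statement is the genuine one. A minor extra care is that $m$ and $N$ need not be coprime, so the ``Chinese Remainder'' step is really the surjectivity of $\SL_2(\ZZ)\to\SL_2(\ZZ/mNp\ZZ)$ rather than a literal product decomposition.
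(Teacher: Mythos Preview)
Your outline is in the right spirit, but there is a concrete error in Step~1 and your route diverges from the paper's in a way worth noting.

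The matrix identity you write down in Step~1 cannot hold as stated: computing $\stbt{1}{k}{0}{mp}\stbt{p}{j}{0}{m}^{-1}$ gives $\stbt{1/p}{(kp-j)/(mp)}{0}{p}$, which is not in $\SL_2(\ZZ)$, so no $\gamma_N g_p \in \Gamma_0(N)$ will do. What is actually needed for the curve equality is a \emph{double-coset} identity in $\PGL_2^+(\QQ)$ (as in Lemma~\ref{lem:distinctcurves}), allowing a reparametrisation of the source; equivalently one must find $\gamma'' \in \SL_2(\ZZ)$ and $\gamma' \in \Gamma_0(N)$ simultaneously so that $\stbt{1}{k/(mp)}{0}{1}\gamma'' = \gamma'\stbt{p}{j/(mp)}{0}{1/p}$ with $\gamma'$ having lower-right entry $\equiv p^{-1} \bmod N$. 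Once you correct this, your Step~2 strategy of prescribing $\gamma'' \bmod p$ as $\stbt{1}{0}{-k^{-1}}{1}$ and $\gamma'' \in \Gamma_1(mN) \cap \Gamma^0(m)$ does work --- one can check directly that these congruences force the product above to be integral and to lie in the correct $\Gamma_1(N)$-coset, and that integrality of the $(1,2)$-entry is exactly what forces $k \equiv p^{-1}j \bmod m$.

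The paper takes a different and more explicit route: rather than appealing to strong approximation and then verifying compatibility of congruences, it writes down a single matrix identity in $\GL_2$ over an arbitrary field,
\[
\tbt{y/x}{0}{1/x - p/y}{x/y}\tbt{1}{x/p}{0}{1}\tbt{1}{0}{p^2/y - p/x}{1} = \tbt{p}{y/p}{0}{1/p},
\]
specialises to $x = k/m$, $y = j/m$, and reads off that any $\gamma \in \Gamma_1(m^2 N)$ congruent to $\stbt{1}{0}{-p/x}{1}$ modulo $p^2$ yields a $\gamma' \in \SL_2(\ZZ)$ with the required $\Gamma_0(N)$-reduction; then $\gamma'' = \stbt{mp}{0}{0}{1}\gamma\stbt{mp}{0}{0}{1}^{-1}$. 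This buys the explicit mod-$p$ form of $\gamma''$ (needed in Corollary~\ref{cor:Sp2term}) for free, and sidesteps the compatibility-checking entirely. Your approach would reach the same conclusion but requires the congruence verification you flag as ``bookkeeping''; that verification is genuine work, not merely formal.
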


    \begin{proof}
     We note the following matrix identity, which is easy to verify (although tedious to find): for any elements $p,x,y$ of a field $F$, we have
     \[
      \begin{pmatrix} \tfrac y x & 0 \\ \tfrac 1 x - \tfrac p y & \tfrac x y \end{pmatrix}
      \begin{pmatrix} 1 & \tfrac x p \\ 0 & 1 \end{pmatrix}
      \begin{pmatrix} 1 & 0 \\ \tfrac {p^2} y - \tfrac p x & 1 \end{pmatrix}
      = \begin{pmatrix} p & \tfrac y p \\ 0 & \tfrac 1 p \end{pmatrix}.
     \]
     In particular, taking $F = \Qp$ and $x, y \in \Zp^\times$, we see that the double cosets of $\SL_2(\Zp)$ in $\SL_2(\Qp)$ generated by $\begin{pmatrix} 1 & x/p \\ 0 & 1 \end{pmatrix}$ and $\begin{pmatrix} p & y/p \\ 0 & 1/p \end{pmatrix}$ are equal to each other and independent of $x$ and $y$.

     Since both $\begin{pmatrix} p & y/p \\ 0 & 1/p \end{pmatrix}$ and its inverse have entries in $\tfrac 1 p \Zp$, it follows that
     \[ \begin{pmatrix} 1 & x/p \\ 0 & 1 \end{pmatrix}\gamma \in \SL_2(\Zp) \begin{pmatrix} p & y/p \\ 0 & 1/p \end{pmatrix} \]
     for any $\gamma \in \SL_2(\Zp)$ congruent to $\begin{pmatrix} 1 & 0 \\ -p/x & 1 \end{pmatrix}$ modulo $p^2$. (In fact, one can check that it suffices for the matrix to lie in $\begin{pmatrix} 1 + p\Zp & \Zp \\ -p/x + p^2 \Zp & 1 + p\Zp\end{pmatrix}$.)

     If $x, y$ are in $\Zp^\times \cap \QQ$, and we choose $\gamma$ to be in $\SL_2(\ZZ)$ and congruent to $\begin{pmatrix} 1 & 0 \\ -p/x & 1 \end{pmatrix}$ modulo $p^2$, then the matrix
     \[ \gamma' = \begin{pmatrix} 1 & x/p \\ 0 & 1 \end{pmatrix} \gamma \begin{pmatrix} p & y/p \\ 0 & 1/p \end{pmatrix}^{-1} \]
     will be in $\SL_2(\QQ)$ and will be $p$-adically integral. If we choose $\gamma$ to be $\ell$-adically close to the identity for some prime $\ell \ne p$, then $\gamma'$ will be $\ell$-adically close to $ \begin{pmatrix} 1 & x/p \\ 0 & 1 \end{pmatrix} \gamma \begin{pmatrix} p & y/p \\ 0 & 1/p \end{pmatrix}^{-1} = \begin{pmatrix} 1/p & (px-y)/p \\ 0 & p \end{pmatrix}$.

     So if $x, y \in \QQ$ are units at $p$ and satisfy $y = px \bmod 1$, we may choose $\gamma, \gamma' \in \SL_2(\ZZ)$ such that:
     \begin{itemize}
      \item $\gamma \in \Gamma_1(m^2 N)$,
      \item $\gamma = \begin{pmatrix} 1 & 0 \\ -p/x & 1 \end{pmatrix} \pmod {p^2}$,
      \item $\gamma' = \begin{pmatrix} * & * \\ 0 & 1/p\end{pmatrix} \pmod N$,
      \item the identity $\begin{pmatrix} 1 & x/p \\ 0 & 1 \end{pmatrix} \gamma = \gamma' \begin{pmatrix} p & y/p \\ 0 & p \end{pmatrix}$ holds.
     \end{itemize}
     We now take $y = j/m$, and $x = k/m$ for any $k$ congruent to $p^{-1} j$ modulo $m$ and invertible modulo $p$. Then we obtain a commutative diagram
     \begin{diagram}
      \cH & \rTo^{z \mapsto \gamma z} & \cH \\
      \dTo^{z \mapsto \left(z, p^2 z + \tfrac{j}{m}\right)} & & \dTo_{z \mapsto \left(z, z + \tfrac{k}{mp}\right)} \\
      Y_1(N)^2 & \rTo^{(1 \times \langle p \rangle)} & Y_1(N)^2
     \end{diagram}
     or equivalently
     \begin{diagram}
      \cH & \rTo^{z \mapsto \gamma'' z} & \cH \\
      \dTo^{z \mapsto \left(\tfrac{z}{mp}, \tfrac{pz + j}{m}\right)} & & \dTo_{z \mapsto \left(\tfrac{z}{mp}, \tfrac{z+k}{mp}\right)} \\
      Y_1(N)^2 & \rTo^{(1 \times \langle p \rangle)} & Y_1(N)^2
     \end{diagram}
     where $\gamma'' = \begin{pmatrix} mp & 0 \\ 0 & 1 \end{pmatrix}  \gamma \begin{pmatrix} mp & 0 \\ 0 & 1 \end{pmatrix}^{-1}$. Note that  $\gamma''$ is in $\Gamma_1(mN) \cap \Gamma^0(mp)$, and is congruent modulo $p$ to
     \[ \begin{pmatrix} 1 & 0 \\ -1/mx & 1 \end{pmatrix} = \begin{pmatrix} 1 & 0 \\ -1/k & 1 \end{pmatrix}.\]
     In the preceding diagram, the left vertical map factors through $Y(\Gamma_1(mN) \cap \Gamma^0(m) \cap \Gamma_0^0(p))$, and $\gamma'$ conjugates this onto $\Gamma_1(mN) \cap \Gamma^0(m) \cap U_k$; so we finally obtain the diagram stated in the proposition.
    \end{proof}

    \begin{corollary}
     \label{cor:Sp2term}
     In the notation of the preceding subsection, we have
     \[ (S_p', \langle p^{-1} \rangle) \cdot \sigma_p^{-1} \cdot {}_c \Xi_{m, N, j} =
     \left(C_{mp, N, b}, (\pi_2 \circ \lambda_b)_*  \begin{pmatrix} 1 & 0 \\ 1_0 & 1 \end{pmatrix}^* \begin{pmatrix} p & 0 \\ 0 & 1 \end{pmatrix}^* {}_c g_{0, 1 / mN}\right).\]
    \end{corollary}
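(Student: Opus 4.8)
The plan is to combine the formula for $(S_p',1)\,{}_c\Xi_{m,N,j,\alpha}$ established just above (the one expressing it via the curve $(\pi_2'\times 1)F_{m,N,j}$ and the map $\phi$) with the explicit description of that curve supplied by Proposition~\ref{prop:gammadoubleprime}. Since $\sigma_p$ commutes with $S_p'$ and with all diamond operators, and $\sigma_p^{-1}$ sends the index $j$ to $pj\bmod m$ (Proposition~\ref{prop:BFeltproperties}(3)), I would first absorb the factor $\sigma_p^{-1}$ by replacing $j$ with $pj$, so that it suffices to prove
\[ (S_p',\langle p^{-1}\rangle)\,{}_c\Xi_{m,N,pj} = \left(C_{mp,N,b},\ (\pi_2\circ\lambda_b)_*\tbt 10{1_0}1^{\!*}\tbt p001^{\!*}{}_cg_{0,1/mN}\right). \]
The point is that the lifting of $pj$ singled out by Proposition~\ref{prop:gammadoubleprime} — the one congruent to $p^{-1}(pj)\equiv j$ modulo $m$ and to $1$ modulo $p$ — is exactly $b$, and that $U_b$ coincides with the subgroup $U_k$ of that proposition, since this subgroup depends only on the residue modulo $p$.

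By the formula for $(S_p',1)\,{}_c\Xi_{m,N,j,\alpha}$ (applied with $\alpha=1$ and $j$ replaced by $pj$) we have $(S_p',1)\,{}_c\Xi_{m,N,pj} = \bigl((\pi_2'\times1)F_{m,N,pj},\ \phi_*\tbt p001^{\!*}{}_cg_{0,1/mN}\bigr)$ for the map $\phi$ of that proposition. I would then push this identity forward along the degree-one automorphism $(1\times\langle p\rangle)$ of $Y_1(N)^2$. On the operator side, since $S_p'$ involves only the first factor and diamonds act by pullback (so that $(1\times\langle p\rangle)_*=(1\times\langle p^{-1}\rangle)^*$), this converts $(S_p',1)$ into $(S_p',\langle p^{-1}\rangle)$. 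On the geometric side, Proposition~\ref{prop:gammadoubleprime} identifies $(\pi_2'\times1)F_{m,N,pj}$ with $(1\times\langle p\rangle)^{-1}C_{mp,N,b}$, which is carried back to $C_{mp,N,b}$, and the commutative square of that proposition shows that $(1\times\langle p\rangle)\circ\phi$ is $\pi_2\circ\lambda_b$ precomposed with the isomorphism $z\mapsto\gamma'' z$; hence the modular-unit factor becomes $(\pi_2\circ\lambda_b)_*(z\mapsto\gamma''z)_*\tbt p001^{\!*}{}_cg_{0,1/mN}$, and everything reduces to the identity $(z\mapsto\gamma''z)_*\tbt p001^{\!*}{}_cg_{0,1/mN} = \tbt 10{1_0}1^{\!*}\tbt p001^{\!*}{}_cg_{0,1/mN}$.

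For this last identity I would rewrite the pushforward along the isomorphism $z\mapsto\gamma'' z$ as the pullback along $z\mapsto\gamma''^{-1}z$, observe that Proposition~\ref{prop:gammadoubleprime} gives $\gamma'' \equiv \tbt 10{-1/b}1 = \tbt 10{-1}1 \bmod p$ (using $b\equiv1$ mod $p$), so that $\gamma''^{-1}\equiv\tbt 10{1_0}1 \bmod p$, and note that the mod-$mN$ part of $\gamma''$ — being upper-triangular unipotent — acts trivially on ${}_cg_{0,1/mN}$, and hence on its pullback under $\tbt p001$, by integer translation of the argument.

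The main obstacle is precisely this final convention-tracking: obtaining the twist $\langle p^{-1}\rangle$ (and not $\langle p\rangle$) on the second factor from the pushforward along $(1\times\langle p\rangle)$; identifying correctly the two congruence subgroups $\Gamma_1(mN)\cap\Gamma^0(m)\cap U_b$ and $\Gamma_1(mpN)\cap\Gamma^0(mp)$ through which $\phi$ and $\pi_2\circ\lambda_b$ respectively factor; and transporting the Siegel unit along $\gamma''$ with the correct orientation — in particular, keeping track of the fact that it is $\gamma''^{-1}$, not $\gamma''$, which enters, which is what makes the sign in the index $1_0$ come out right. The genuinely geometric content is already in Proposition~\ref{prop:gammadoubleprime}, so what is left is bookkeeping.
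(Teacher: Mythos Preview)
Your proposal is correct and follows the same path as the paper's own (very terse) proof: both rest on Proposition~\ref{prop:gammadoubleprime}, identify the right-hand vertical map there with $\pi_2\circ\lambda_b$, and finish by recognising that $(\gamma'')^{-1}$ represents the coset $\stbt{1}{0}{1_0}{1}$. Your explicit absorption of $\sigma_p^{-1}$ by replacing $j$ with $pj$ is exactly what is needed to make the indices in Proposition~\ref{prop:gammadoubleprime} line up with $b$, a point the paper leaves implicit.

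One small caution on your last step: the heuristic ``the mod-$mN$ part of $\gamma''$ is upper-triangular unipotent and hence acts trivially on the pullback $\stbt{p}{0}{0}{1}^*\,{}_cg_{0,1/mN}$ by integer translation'' is not quite a proof, since conjugating by $\stbt{p}{0}{0}{1}$ mixes the mod-$p$ and mod-$mN$ data. The clean statement is that $(\gamma'')^{-1}\stbt{1}{0}{-1_0}{1}$ lies in $\Gamma_1(mN)\cap\Gamma(p)$, and one checks directly that $\stbt{p}{0}{0}{1}\bigl(\Gamma_1(mN)\cap\Gamma(p)\bigr)\stbt{p}{0}{0}{1}^{-1}\subset\Gamma_1(mN)$, so this group fixes $\stbt{p}{0}{0}{1}^*\,{}_cg_{0,1/mN}$. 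You have already flagged this as bookkeeping, and it is.
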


    \begin{proof}
     This follows from the previous proposition (and its proof), since the right-hand vertical map in the diagram of the proposition is the same as $\lambda_k$ above, and $(\gamma'')^{-1}$ represents the coset $\begin{pmatrix} 1 & 0 \\ 1_0 & 1 \end{pmatrix}$.
    \end{proof}

  \subsubsection{The second term in Proposition \ref{prop:normrelation2}}

    Now we are left to analyse the operator $(1, S_p')$. To simplify the analysis we shall also consider the operator $S_p$ given by $(\pi_1')_* (\pi_2')^*$ (rather than $S_p' = (\pi_2')_* (\pi_1')^*$); this is the operator associated to the double coset $\begin{pmatrix} 1 & 0 \\ 0 & p^2 \end{pmatrix}$ and is related to $S_p'$ by the formula
    \[ S_p' = \langle p^{-2} \rangle^* S_p.\]

    Again, we find that the preimage $(\pi_2')^{-1} C_{m, N, j}$ in $Y_1(N) \times Y(\Gamma_1(N) \cap \Gamma_0^0(p))$ is a single irreducible curve $F_{m, N, j}$ given by points of the form $(z, p(z + j/m))$.

    \begin{proposition}
     We have
     \[ (1, S_p)\left({}_c \Xi_{m, N, j, \alpha}\right) = \left((1 \times \pi_1')(F_{m, N, j}), \phi_* \begin{pmatrix} 1 & 0 \\ 0 & p \end{pmatrix}^* {}_c g_{0, \alpha/mN}\right),\]
     where the morphism $\phi$ is defined by
     \begin{align*}
      Y(\Gamma_1(mN) \cap \Gamma^0(m) \cap \Gamma^0_0(p)) &\to Y_1(N)^2\\
      z & \mapsto \left( \tfrac z {mp}, \tfrac{pz + p^2 j}{m}\right).
     \end{align*}
    \end{proposition}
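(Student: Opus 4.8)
The plan is to repeat the argument used above for the correspondence $(S_p', 1)$, now with the roles of $\pi_1'$ and $\pi_2'$ interchanged: we pull the Siegel unit back along the degeneracy map $\pi_2'$, which corresponds to $z \mapsto z/p$ on $\cH$, rather than along $\pi_1'$. As recorded just above, strong approximation for $\SL_2(\ZZ)$ shows that the preimage $(1 \times \pi_2')^{-1}(C_{m,N,j})$ in $Y_1(N) \times Y(\Gamma_1(N) \cap \Gamma_0^0(p))$ is the single irreducible curve $F_{m,N,j}$ of points $\bigl(z, p(z+\tfrac jm)\bigr)$. Hence $(1, S_p)\bigl({}_c\Xi_{m,N,j,\alpha}\bigr) = (1 \times \pi_1')_* (1 \times \pi_2')^* \bigl(C_{m,N,j}, (\iota'_{m,N,j})_* {}_c g_{0,\alpha/mN}\bigr)$ is supported on $(1 \times \pi_1')(F_{m,N,j})$, and everything comes down to tracking the modular unit.

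For this I would, exactly as in the proof of the $(S_p',1)$ proposition, apply Lemma \ref{lem:pushpull} to realise the square
\begin{diagram}
 & & Y(\Gamma_1(mN) \cap \Gamma^0(m) \cap \Gamma^0_0(p)) & & \\
 & \ldTo^{z \mapsto z/p} & & \rdTo^{z \mapsto \left(\tfrac{z}{mp}, \tfrac{z + pj}{m}\right)} & \\
 Y(\Gamma_1(mN) \cap \Gamma^0(m)) & & & & F_{m, N, j}\\
 & \rdTo_{z \mapsto \left(\tfrac{z}{m}, \tfrac{z + j}{m}\right)} & & \ldTo_{(u, v/p) \mapsfrom (u, v)} \\
 & & C_{m, N, j}
\end{diagram}
as Cartesian up to birational equivalence, where the lower-left arrow is $\iota'_{m,N,j}$ and the lower-right arrow is the restriction of $1 \times \pi_2'$. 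Commutativity is the identity $\tfrac{z/p + j}{m} = \tfrac{z + pj}{mp}$, and Lemma \ref{lem:pushpull} applies because $\Gamma_1(N)$ has level prime to $p$ and so surjects onto $\SL_2(\FF_p)$, whence the product of the two upper congruence subgroups equals the lower one.

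Then the compatibility between push-forward and pull-back across this Cartesian square (Lemma \ref{lem:pushpull}), combined with the good descent property of $Z^2(-,1)$, shows that $(1 \times \pi_2')^* \bigl(C_{m,N,j}, (\iota'_{m,N,j})_* {}_c g_{0,\alpha/mN}\bigr)$ is the class of $F_{m,N,j}$ equipped with the pull-back of ${}_c g_{0,\alpha/mN}$ along the left-hand arrow $z \mapsto z/p$, that is, along $\stbt 1 0 0 p$; pushing this forward along $1 \times \pi_1'$ — which on the second factor is $z \mapsto pz$, so that its composite with the right-hand arrow of the square is precisely the asserted map $\phi: z \mapsto \bigl(\tfrac{z}{mp}, \tfrac{pz + p^2 j}{m}\bigr)$ — yields the stated formula. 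The one step requiring genuine care, just as in the proof of the $(S_p',1)$ proposition and of Proposition \ref{prop:gammadoubleprime}, is the bookkeeping that identifies the arrows of the Cartesian square: that the left-hand arrow indeed contributes $\stbt 1 0 0 p^*$, and that the level-$p$ ($\Gamma_0^0(p)$) structures on the two sides are compatible. This is a routine matrix computation of the kind used repeatedly above, and I expect it to be the main — if mild — obstacle; no new ideas are needed.
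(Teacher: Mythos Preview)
Your proposal is correct and is exactly the argument the paper has in mind: its own proof is the single line ``Closely analogous to the previous case,'' and you have written out precisely the analogous Cartesian square, with the left arrow $z\mapsto z/p$ contributing $\stbt 100p^*$ and the composite of the right arrow with $1\times\pi_1'$ giving the map $\phi$.
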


    \begin{proof}
     Closely analogous to the previous case.
    \end{proof}

    We also have a matrix identity
    \[ \begin{pmatrix} 1 & \tfrac x p \\ 0 & 1 \end{pmatrix}
       \begin{pmatrix} 1 & 0 \\ -\tfrac px & 1 \end{pmatrix} =
       \begin{pmatrix} 0 & x \\ -\tfrac1x & p + \tfrac{p^2 y}{x} \end{pmatrix}
       \begin{pmatrix} p & py \\ 0 & \tfrac{1}{p}\end{pmatrix}
    \]
    from which we may deduce that if $x, y$ are rational numbers which are units at $p$ and such that $x = py \bmod 1$, there exist $\gamma, \gamma' \in \SL_2(\ZZ)$ such that:
    \begin{itemize}
     \item $\gamma \in \Gamma_1(m^2 N)$,
     \item $\gamma = \begin{pmatrix} 1 & 0 \\ -\tfrac px & 1 \end{pmatrix} \pmod {p^2}$,
     \item the identity
     \[ \begin{pmatrix} 1 & \tfrac x p \\ 0 & 1 \end{pmatrix} \gamma = \gamma' \begin{pmatrix} p & py \\ 0 & \tfrac{1}{p}\end{pmatrix}\]
     holds,
     \item $\gamma'$ is congruent to $\begin{pmatrix} * & * \\ 0 & p \end{pmatrix}$ modulo $N$.
    \end{itemize}
    Thus the diagram
    \begin{diagram}
     \cH & \rTo^{z \mapsto \gamma z} &\cH\\
     \dTo^{z \mapsto (z, p^2(z + y))} & & \dTo_{z \mapsto (z, z + x/p)} \\
     Y_1(N)^2 & \rTo^{(1 \times \langle p\rangle)}& Y_1(N)^2
    \end{diagram}
    commutes. We take $y = j/m$, and $x = k/m$ where $k$ is congruent to $pj$ modulo $m$ and not divisible by $p$. Letting $\gamma'' = \begin{pmatrix} mp & 0 \\ 0 & 1 \end{pmatrix} \gamma \begin{pmatrix} mp & 0 \\ 0 & 1 \end{pmatrix}^{-1}$ as before, we have the diagram
    \begin{diagram}
     \cH & \rTo^{z \mapsto \gamma'' z} &\cH\\
     \dTo^{z \mapsto \left(\tfrac{z}{mp}, \tfrac{pz + p^2 j}{m}\right)} & & \dTo_{z \mapsto \left(\tfrac{z}{mp}, \tfrac{z + k}{mp}\right)} \\
     Y_1(N)^2 & \rTo^{(1 \times \langle p\rangle)}& Y_1(N)^2
    \end{diagram}

    Again, this shows that $(1 \times \pi_1')(F_{m, N, j}) = (1 \times \langle p \rangle)^{-1} C_{mp, N, k}$. If we choose $k$ to be $1$ modulo $p$, then the right vertical map factors through $\Gamma_1(mN) \cap \Gamma^0(m) \cap U_k$, and the isomorphism between the two is given by $\gamma''$, which is in $\Gamma_1(mN) \cap \Gamma^0(p)$ and thus acts trivially on $\begin{pmatrix} 1 & 0 \\ 0 & p \end{pmatrix}^* {}_c g_{0, \alpha/mN}$. Thus we have
    \[ (1, S_p) \sigma_p \cdot  {}_c \Xi_{m, N, j, \alpha} = (1 \times \langle p \rangle)^* \left(C_{mp, N, b}, (\pi_2 \circ \lambda_b)_* \begin{pmatrix} 1 & 0 \\ 0 & p \end{pmatrix}^* {}_c g_{0, \alpha/mN}\right).\]

    Taking $\alpha = \beta$ (the inverse of $p$ modulo $mN$), and using the formula ${}_c \Xi_{m, N, j, t} = (\langle t \rangle , \langle t \rangle) \sigma_t^2 {}_c \Xi_{m, N, j}$, we see that
    \begin{align*}
     \left(C_{mp, N, b}, (\pi_2 \circ \lambda_b)_* \begin{pmatrix} 1 & 0 \\ 0 & p \end{pmatrix}^* {}_c g_{0, \beta/mN}\right) &=
     (1,\langle p^{-1} \rangle) (1, S_p) {}_c \Xi_{m, N, \beta j, \beta}\\
     &= (1,\langle p^{-1} \rangle) (1, \langle p \rangle^{2} S_p') \sigma_p {}_c \Xi_{m, N, j, \beta}\\
     &= (1,\langle p^{-1} \rangle) (1, \langle p \rangle^{2} S_p')\sigma_p (\langle p^{-1} \rangle , \langle p^{-1} \rangle)\sigma_p^{-2} {}_c \Xi_{m, N, j} \\
     &= (\langle p^{-1}\rangle, S_p') \sigma_p^{-1} {}_c \Xi_{m, N, j}
    \end{align*}
    as required, completing the proof of Proposition \ref{prop:normrelation2} and hence of Theorem \ref{thm:secondnormrelationprime}.


  \subsection{The second norm relation: higher powers of p}

   We shall also need to know how to calculate $\norm_{m}^{p^k m} {}_c \Xi_{p^k m, N, j}$ for $k = 2, 3$. This is less central to our theory than the $k = 1$ case, but it will be needed in order to compare the elements we construct for $N$ coprime to $p$ with their ``$p$-stabilized'' versions.

   \begin{theorem}
    \label{thm:stronghighernormrel}
    For $p \nmid m N$ we have
    \begin{multline*}
     \norm_{mp}^{m p^2} \left({}_c \Xi_{m p^2, N, j}\right) = (T_p', T_p') {}_c \Xi_{mp, N, j} \\
     + \Big( p(\langle p^{-1} \rangle, \langle p^{-1} \rangle) - (\langle p \rangle^{-1},  (T_p')^2)- ( (T_p')^2, \langle p \rangle^{-1}) \\
     + 2 (\langle p^{-1} \rangle T_p', \langle p^{-1} \rangle T_p')\sigma_p^{-1} - p (\langle p^{-2} \rangle, \langle p^{-2} \rangle)\sigma_p^{-2}\Big) {}_c \Xi_{m, N, j}
    \end{multline*}
    and
    \begin{multline*}
     \norm_{m p^2}^{m p^3} \left({}_c \Xi_{m p^3, N, j}\right) = (T_p', T_p') {}_c \Xi_{mp^2, N, j}\\
     + \Big(p(\langle p^{-1} \rangle, \langle p^{-1} \rangle) - (\langle p \rangle^{-1},  (T_p')^2)- ( (T_p')^2, \langle p \rangle^{-1})\Big) {}_c \Xi_{mp, N, j}\\
     + \Bigg(  (\langle p^{-1} \rangle, \langle p^{-1} \rangle) \bigg(2 (T_p', T_p') - ( (\langle p \rangle^{-1},  (T_p')^2)+ ( (T_p')^2, \langle p \rangle^{-1})) \sigma_p^{-1} \bigg)\Bigg) {}_c \Xi_{m, N, j}
    \end{multline*}
   \end{theorem}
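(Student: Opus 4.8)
The plan is to run, one cyclotomic level higher, the same explicit computation with Siegel units and Hecke correspondences that establishes Theorem~\ref{thm:secondnormrelationprime}. First, one passes from Galois norms to sums of twists: by Proposition~\ref{prop:BFeltproperties}(3) the element $\sigma_q$ acts on the index $j$ of ${}_c\Xi$ via $j\mapsto q^{-1}j$, so $\Gal(\QQ(\mu_{mp^2})/\QQ(\mu_{mp}))$ — cyclic of order $p$, consisting of the $\sigma_q$ with $q\equiv 1\bmod mp$ — sweeps out bijectively the set of liftings of $j$ from $\ZZ/mp\ZZ$ to $\ZZ/mp^2\ZZ$, whence
\[
\norm_{mp}^{mp^2}\bigl({}_c\Xi_{mp^2,N,j}\bigr)\;=\;\sum_{\substack{k\in\ZZ/mp^2\ZZ\\ k\equiv j\bmod mp}}{}_c\Xi_{mp^2,N,k},
\]
and likewise $\norm_{mp^2}^{mp^3}({}_c\Xi_{mp^3,N,j})$ is the sum of ${}_c\Xi_{mp^3,N,k}$ over the $p$ liftings $k$ of $j$ to $\ZZ/mp^3\ZZ$. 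The crucial structural difference from the situation of Theorem~\ref{thm:secondnormrelationprime} is that, $j$ being prime to $p$, \emph{every} such lifting is prime to $p$: there is no $p$-divisible lifting and hence no isolated $(\sigma_p+p\langle p^{-1}\rangle\sigma_p^{-1})$ contribution of the ``$a$-term'' type, which is precisely why the leading operator will survive rather than collapse down to level $m$.

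Next one imitates the proof of Theorem~\ref{thm:secondnormrelationprime} with $m$ replaced by $mp$ (respectively $mp^2$): study the preimage of $C_{mp,N,j}\subseteq Y_1(N)^2$ under the degeneracy map defining the correspondence $T_p'$ on $Y_1(N)$, work relative to the level-$p$ subgroup $U_k$ introduced there, and realise $(T_p',T_p')\,{}_c\Xi_{mp,N,j}$ as a pushforward of ${}_cg_{0,1/mpN}$ from a suitable $C_{mp,N,b}$-analogue. The new feature is that the two factors of $C_{mp,N,j}$ already differ by a translation whose denominator contains one power of $p$, so the $p$-adic relative position of the two factors is governed by the $S_p'$-double coset rather than the trivial one. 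Carrying the Bruhat analysis through this finer level structure, and simplifying with the Siegel-unit distribution relations of Proposition~\ref{prop:distrelations} exactly as in the evaluation of the $(T_p',T_p')$- and $S_p'$-terms in Theorem~\ref{thm:secondnormrelationprime}, one finds that $(T_p',T_p')\,{}_c\Xi_{mp,N,j}$ accounts for the sum above up to correction terms supported at level $m$; rewriting these by means of $T_p'^2=S_p'+(p+1)\langle p^{-1}\rangle R_p$ with $R_p$ acting trivially brings them into the displayed form. The formula for $\norm_{mp^2}^{mp^3}$ follows by the same procedure with $mp$ replaced by $mp^2$, the factors of $C_{mp^2,N,j}$ now differing by a translation with $p^2$ in the denominator and producing a depth-two analysis at $p$ whose output is the two-term correction (the ${}_c\Xi_{mp,N,j}$ and ${}_c\Xi_{m,N,j}$ terms). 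Alternatively one may transport the whole computation to the zeta elements ${}_c\cZ$ on $Y(mp^2,mp^2N)^2$ (resp.\ $Y(mp^3,mp^3N)^2$), using Propositions~\ref{prop:tmproperties}(1) and~\ref{prop:pushforwardzeta} to identify $\norm_{mp}^{mp^2}({}_c\Xi_{mp^2,N,j})$ with $(t_{mp}\times t_{mp})_*(\tau_p\times\tau_p)_*({}_c\cZ_{mp^2,N,j})$; this is the ``$\ell\mid m$, $\ell\nmid N$'' companion of Theorems~\ref{thm:secondnormbadprime1} and~\ref{thm:secondnormrelationprime}, provable by the methods of their proofs.

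The main obstacle, as opposed to a routine transcription of the argument proving Theorem~\ref{thm:secondnormrelationprime}, is that the relevant congruence subgroups now have level divisible by $p$ — the conductor $mp$ (resp.\ $mp^2$) is no longer prime to $p$ — so the clean strong-approximation statement used there (``$\Gamma$ surjects onto $\SL_2(\FF_p)$, hence a $T_p$-double coset splits into exactly two $\Gamma^0(p)$-double cosets'') is no longer available. One must instead carry the $\Gamma_0(p)$- (respectively $\Gamma_0(p^2)$-) structure through the argument by hand, keeping precise track of the diamond operators $\langle p^{-1}\rangle$, the Frobenius twists $\sigma_p^{-i}$, and the resulting $S_p'$-contributions; this is lengthy but, once the depth-two Bruhat bookkeeping at $p$ is set up, mechanical. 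I would therefore organise the proof as a sequence of lemmas mirroring the subsections of the proof of Theorem~\ref{thm:secondnormrelationprime} (evaluation of the $(T_p',T_p')$-term, evaluation of the norm term, matching of the $S_p'$-terms), treating the $\norm_{mp^2}^{mp^3}$ formula last.
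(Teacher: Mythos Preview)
Your overall strategy is sound and aligns with the paper's: write the norm as a sum of ${}_c\Xi_{mp^{k+1},N,k'}$ over the $p$ liftings $k'$ of $j$, note that all such liftings are units, and identify this sum as one piece of $(T_p',T_p'){}_c\Xi_{mp^k,N,j}$. But you have mislocated the main technical input, and the plan to ``mirror the subsections of the proof of Theorem~\ref{thm:secondnormrelationprime}'' would not quite go through as stated.

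The key structural difference from the prime-to-$p$ case is not merely that the congruence subgroups have level divisible by $p$; it is that the preimage of $C_{mp^k,N,j}$ under $\pi_1\times\pi_1$ splits into \emph{four} irreducible components rather than two. The paper establishes this via the Iwahori--Matsumoto decomposition: for $k\ge 1$ the matrix $\stbt{1}{j/mp^k}{0}{1}$ lies in the Cartan double coset $K\stbt{p^{-k}}{0}{0}{p^k}K$ of $\SL_2(\Qp)$, and this decomposes into exactly four double cosets of the Iwahori subgroup. One obtains
\[
(T_p',T_p')\,{}_c\Xi_{mp^k,N,j}=\Delta_1+\Delta_2+\Delta_3+\Delta_4
\]
with explicitly described supports, mapping to $C_{mp^k,N,j}$ with degrees $p^2,p,p,1$ respectively. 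The norm term is then identified \emph{directly} as $\Delta_1$, rather than being computed separately and matched as in Theorem~\ref{thm:secondnormrelationprime}; the remaining $\Delta_i$ are evaluated one by one. For $k=2$ the evaluation of $\Delta_2$ itself requires a further four-fold decomposition of $(S_p',\langle p^{-1}\rangle)\,{}_c\Xi_{mp,N,j}$ into terms $\Theta_1,\dots,\Theta_4$, governed by the $U^0(p^2)$-coset structure of $K\stbt{p^{-1}}{0}{0}{p}K$, and the $\Theta_2+\Theta_3$ contribution is extracted by a rather delicate product-of-Siegel-units computation over a split torus in $\SL_2(\FF_p)$.

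Your phrases ``carry the $\Gamma_0(p)$-structure through by hand'' and ``depth-two Bruhat bookkeeping'' point in the right direction, but without the Iwahori decomposition as an organizing principle you have not said how to parametrize the components or separate their contributions; and the claim that the situation is ``governed by the $S_p'$-double coset'' captures only the ambient Cartan cell, not the finer Iwahori stratification that actually drives the computation. So the gap is not in the direction of the argument but in the absence of the key lemma that makes the ``mechanical'' part genuinely mechanical.
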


   Recall the operator $S_p'$ which appeared above, satisfying $S_p' = (T_p')^2 - (p+1) \langle p^{-1} \rangle$. In terms of these operators, the formulae we wish to prove are
    \begin{multline*}
     \norm_{mp}^{m p^2} \left({}_c \Xi_{m p^2, N, j}\right) = (T_p', T_p') {}_c \Xi_{mp, N, j} \\
     + \Big( -(p+2)(\langle p^{-1} \rangle, \langle p^{-1} \rangle) - (\langle p \rangle^{-1},  S_p')- ( S_p', \langle p \rangle^{-1}) \\
     +  (\langle p^{-1} \rangle, \langle p^{-1} \rangle)\sigma_p^{-1} \left(2 (T_p', T_p') - p (\langle p^{-1} \rangle, \langle p^{-1} \rangle)\sigma_p^{-1}\right)\Big) {}_c \Xi_{m, N, j}
    \end{multline*}
    and
    \begin{multline*}
     \norm_{m p^2}^{m p^3} \left({}_c \Xi_{m p^3, N, j}\right) = (T_p', T_p') {}_c \Xi_{mp^2, N, j}\\
     - \Big((p+2)(\langle p^{-1} \rangle, \langle p^{-1} \rangle) + (\langle p \rangle^{-1},  S_p') + ( S_p', \langle p \rangle^{-1})\Big) {}_c \Xi_{mp, N, j}\\
     + \Bigg(  (\langle p^{-1} \rangle, \langle p^{-1} \rangle) \bigg(2 (T_p', T_p') - \left[ (2p + 2)(\langle p^{-1} \rangle, \langle p^{-1} \rangle) + (\langle p \rangle^{-1},  S_p') + (S_p', \langle p \rangle^{-1})\right] \sigma_p^{-1} \bigg)\Bigg) {}_c \Xi_{m, N, j}.
    \end{multline*}

   A routine but unpleasant computation (in which the use of Sage \cite{sage} was found to be invaluable) shows that Theorem \ref{thm:stronghighernormrel}, together with Theorem \ref{thm:secondnormrelationprime}, implies the following formulae for the norms to level prime to $p$:

   \begin{theorem}
    \label{conj:weaknormrel}
    If $p \nmid N$, we have
    \begin{enumerate}
     \item[(a)]
     \[
      \norm_{m}^{p^2 m} \left({}_c \Xi_{p^2 m, N, j}\right) =
      p \sigma_p^2 \Big( (p-1)(1 - (\langle p^{-1} \rangle, \langle p^{-1} \rangle) \sigma_p^{-2})
      - \left((T_p', T_p') \sigma_p^{-1} + (p-1)\right) \underline{P}_p(p^{-1} \sigma_p^{-1}) \Big)
     \]
     \item[(b)]
     \begin{multline*}
      \norm_m^{p^3 m} \left({}_c \Xi_{p^3 m, N, j}\right) = p^2 \sigma_p^3 \Big(
       (p-1)(1 - (\langle p^{-1} \rangle, \langle p^{-1} \rangle) \sigma_p^{-2})\\
       - (p^{-1} \sigma_p^{-2} (T_{p^2}', T_{p^2}') + (p-1)p^{-1} \sigma_p^{-1} (T_{p}', T_{p}') + (p-1))\underline{P}_p(p^{-1} \sigma_p^{-1})\Big)
     \end{multline*}
    \end{enumerate}
    Here $\underline{P}_p$ is the operator-valued Euler factor at $p$ given by
    \begin{multline*} \underline{P}_p(X) = 1 - (T_p', T_p') X + \Big( p( (T_p')^2, \langle p^{-1}\rangle) + p(\langle p^{-1}\rangle,  (T_p')^2 ) - 2p^2(\langle p^{-1}\rangle, \langle p^{-1} \rangle)\Big) X^2 \\- p^2 (\langle p^{-1} \rangle T_p',\langle p^{-1} \rangle T_p')X^3 + p^4 (\langle p^{-2} \rangle, \langle p^{-2} \rangle) X^4,
    \end{multline*}
   \end{theorem}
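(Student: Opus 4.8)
The plan is to obtain these formulae by composing the norm relations already established at intermediate levels, using transitivity of the Galois norm maps:
\[ \norm_m^{p^2 m} = \norm_m^{pm}\circ\norm_{pm}^{p^2 m}, \qquad \norm_m^{p^3 m} = \norm_m^{pm}\circ\norm_{pm}^{p^2 m}\circ\norm_{p^2 m}^{p^3 m}. \]
The two ingredients are Theorem~\ref{thm:stronghighernormrel}, which expresses $\norm_{pm}^{p^2 m}\left({}_c \Xi_{p^2 m, N, j}\right)$ and $\norm_{p^2 m}^{p^3 m}\left({}_c \Xi_{p^3 m, N, j}\right)$ as explicit operator combinations of Beilinson--Flach elements of lower level, and Theorem~\ref{thm:secondnormrelationprime}, which computes $\norm_m^{pm}\left({}_c \Xi_{pm, N, j}\right)$: indeed, the left-hand side of that theorem is the sum of the ${}_c \Xi_{pm, N, k}$ over the liftings $k$ of $j$ prime to $p$, which is $\Gal(\QQ(\mu_{pm})/\QQ(\mu_m))$-invariant by Proposition~\ref{prop:BFeltproperties}(3), and whose descent to $\QQ(\mu_m)$ is precisely $\norm_m^{pm}\left({}_c \Xi_{pm, N, j}\right)$. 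Two elementary facts are also used: first, the operators occurring --- the Hecke operators $T_p'$ and $S_p'$, the diamond operators, and the arithmetic Frobenius $\sigma_p$ (which, as $p \nmid mN$, acts through $\Gal(\QQ(\mu_m)/\QQ)$) --- all commute with the norm maps in the $p$-direction, so they may be moved past any $\norm$; second, the norm of an element inflated from a smaller cyclotomic field equals multiplication by the degree of the extension, so for instance $\norm_m^{pm}$ sends the pullback of ${}_c \Xi_{m,N,j}$ to $(p-1)\,{}_c \Xi_{m,N,j}$, and $\norm_{pm}^{p^2 m}$ sends the pullbacks of ${}_c \Xi_{pm,N,j}$ and ${}_c \Xi_{m,N,j}$ to $p\,{}_c \Xi_{pm,N,j}$ and $p\,{}_c \Xi_{m,N,j}$ respectively.

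With these in hand, part~(a) is proved by substituting the first formula of Theorem~\ref{thm:stronghighernormrel} for $\norm_{pm}^{p^2 m}\left({}_c \Xi_{p^2 m, N, j}\right)$ and then applying $\norm_m^{pm}$: the term $(T_p', T_p')\,{}_c \Xi_{pm, N, j}$ becomes $(T_p', T_p')$ applied to the right-hand side of Theorem~\ref{thm:secondnormrelationprime} (equivalently to the right-hand side of \eqref{eq:normrelation2}), while the ${}_c \Xi_{m,N,j}$-term picks up a factor $p-1$. The result is an identity between two elements of a commutative ring of operators acting on ${}_c \Xi_{m,N,j}$; eliminating $S_p'$ via $S_p' = (T_p')^2-(p+1)\langle p^{-1}\rangle$, one verifies that the operator so obtained is the one in the statement, with $\underline{P}_p$ the displayed Euler factor. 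Part~(b) follows the same pattern with one extra layer: first substitute the second formula of Theorem~\ref{thm:stronghighernormrel}, then apply $\norm_{pm}^{p^2 m}$ (using the first formula of Theorem~\ref{thm:stronghighernormrel} on the ${}_c \Xi_{p^2 m, N, j}$-summand and the degree facts on the remaining two), then apply $\norm_m^{pm}$ together with Theorem~\ref{thm:secondnormrelationprime} on the surviving ${}_c \Xi_{pm, N, j}$-terms. Besides $S_p' = (T_p')^2-(p+1)\langle p^{-1}\rangle$, one also repackages the final answer using the Hecke relation $T_{p^2}' = (T_p')^2 - p\langle p^{-1}\rangle = S_p' + \langle p^{-1}\rangle$ (valid since the central operator $R_p$ acts trivially on everything in sight).

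The only laborious step is the final comparison of operator-valued expressions: each composition produces a polynomial in $T_p'$, $S_p'$, $\langle p^{\pm 1}\rangle$ and $\sigma_p^{\pm 1}$ with a dozen or so terms acting on each factor, and one must check that it coincides with the stated closed form built from $\underline{P}_p$. This is purely mechanical --- the relevant operator ring is commutative --- but intricate enough that we carried it out with the assistance of Sage~\cite{sage}; there is no conceptual obstacle beyond careful bookkeeping of the degree factors and the Hecke algebra identities.
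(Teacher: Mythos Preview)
Your proposal is correct and follows exactly the approach indicated in the paper: the authors state that the theorem follows from Theorem~\ref{thm:stronghighernormrel} together with Theorem~\ref{thm:secondnormrelationprime} by ``a routine but unpleasant computation (in which the use of Sage was found to be invaluable)'', and your write-up spells out precisely this composition of norm maps, including the same use of Sage for the final operator-algebra verification.
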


   \subsubsection{Evaluation of the \texorpdfstring{$(T_p', T_p')$}{(Tp,Tp)} term}

    We begin with a double coset computation in $\SL_2(\Qp)$. We shall write $K = \SL_2(\Zp)$ and $U$ for the lower-triangular Iwahori subgroup $\left\{ \begin{pmatrix} a & b \\ c & d \end{pmatrix} \in K: b \in p\Zp\right\}$.

    \begin{proposition}
     \label{prop:iwahori}
     Let $j \ge 1$. Then the double coset
     \[
      K \begin{pmatrix} p^{-j} & 0 \\ 0 & p^j \end{pmatrix} K
     \]
     decomposes as a disjoint union of exactly four double cosets of the Iwahori $U$, represented by the elements
     \[ \left\{ \begin{pmatrix} p^{-j} & 0 \\ 0 & p^j \end{pmatrix}, \begin{pmatrix} 0 & -p^{-j} \\ p^j & 0 \end{pmatrix}, \begin{pmatrix} p^{j} & 0 \\ 0 & p^{-j} \end{pmatrix}, \begin{pmatrix} 0 & -p^{j} \\ p^{-j} & 0 \end{pmatrix}.\right\}\]
    \end{proposition}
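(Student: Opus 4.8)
The plan is to deduce this from the Iwahori--Bruhat decomposition of $G = \SL_2(\Qp)$. Since $U$ is an Iwahori subgroup, $G = \bigsqcup_{w \in \widetilde W} U w U$, where $\widetilde W$ is the affine Weyl group; for $\SL_2$ the cocharacter lattice of the diagonal torus is exactly the coroot lattice, so $\widetilde W$ is the affine Weyl group proper, namely the infinite dihedral group $W = \langle s_0, s_1\rangle$, and the cells $U w U$ are in bijection with $W$. Moreover the reduction map $K = \SL_2(\Zp) \to \SL_2(\FF_p)$ carries the Bruhat decomposition $\SL_2(\FF_p) = B^{-} \sqcup B^{-} s B^{-}$ (for the lower-triangular Borel $B^-$, which is the image of $U$) to a decomposition $K = U \sqcup UsU$, where we may take $s = \stbt{0}{-1}{1}{0}$; this $s$ is a lift of the non-trivial element of the finite Weyl group $\{1, s\} \subset W$.

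First I would record the relevant facts about $\gamma_j := \stbt{p^{-j}}{0}{0}{p^j}$ as an element of $W$: it is a translation (by $-j$ times the coroot), of length $\ell(\gamma_j) = 2j$, with reduced word the alternating product $s_0 s s_0 s \cdots$ of length $2j$ (for a suitable labelling in which $s = s_1$ is the finite reflection). Consequently $\ell(\gamma_j s) = 2j - 1$, $\ell(s\gamma_j) = 2j + 1$, and $\ell(s\gamma_j s) = 2j$; and the four elements $\gamma_j,\ \gamma_j s,\ s\gamma_j,\ s\gamma_j s$ of $W$ are pairwise distinct, since in the infinite dihedral group $s$ inverts the infinite-order translation $\gamma_j$ and acts non-trivially on it (here we use $j \ge 1$). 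Now I would expand, using the BN-pair multiplication rule $UsU \cdot UwU = UswU$ when $\ell(sw) > \ell(w)$ and $UsU\cdot UwU = UswU \cup UwU$ otherwise (and its right-hand analogue),
\[ K\gamma_j K = (U \cup UsU)\,\gamma_j\,(U \cup UsU) = U\gamma_j U \;\cup\; U\gamma_j s U \;\cup\; Us\gamma_j U \;\cup\; Us\gamma_j s U, \]
the point being that the length bookkeeping above shows that no cells other than these four are produced. By Iwahori--Bruhat these four cells are pairwise disjoint, since the four indexing elements of $W$ are distinct.

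It then remains to check that the four matrices in the statement are representatives of these four cells. A direct computation gives $\stbt{p^{-j}}{0}{0}{p^j} = \gamma_j$, $\stbt{0}{-p^{-j}}{p^j}{0} = \gamma_j s$, $\stbt{0}{-p^j}{p^{-j}}{0} = s\gamma_j$ and $\stbt{p^j}{0}{0}{p^{-j}} = \gamma_j^{-1}$; since $\gamma_j$ and $s$ are lifts of the corresponding elements of $W$, and $\gamma_j^{-1} = s\gamma_j s$ in $W$, these matrices lie respectively in the cells $U\gamma_j U$, $U\gamma_j s U$, $Us\gamma_j U$, $Us\gamma_j s U$. As these four cells are pairwise disjoint and their union is $K\gamma_j K$, this proves the proposition.

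The only delicate point is the length bookkeeping of the second paragraph: one must fix compatible conventions for the Iwahori $U$, the two simple affine reflections $s_0, s_1$, and the length function, and then check that $\gamma_j$ has length $2j$ (with the claimed reduced word) and that left and right multiplication by $s$ change lengths in the stated way. If one prefers to sidestep the Bruhat--Tits formalism, the same result can be proved by working on the $(p+1)$-regular tree: $K$ and $U$ are the stabilisers of a vertex $v_0$ and of a directed edge at $v_0$, and the $U$-double coset of $g \in K\gamma_j K$ is determined by whether the length-$2j$ geodesic $[v_0, g v_0]$ leaves $v_0$ along the distinguished edge and whether it arrives at $g v_0$ along the image of that edge under $g$ -- two binary choices, realised in all four combinations by the matrices above. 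Either way the computation is short and I anticipate no real difficulty.
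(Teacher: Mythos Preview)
Your proof is correct and takes essentially the same route as the paper: both rest on the Iwahori--Matsumoto/Bruhat decomposition $\SL_2(\Qp)=\bigsqcup_{w}UwU$, with the paper simply citing this and comparing it to the Cartan decomposition, while you carry out the comparison explicitly via $K=U\cup UsU$ and the BN-pair multiplication rules. Your length bookkeeping and the identification of the four representatives are accurate (note $s\gamma_j s=-\gamma_j^{-1}$, which is harmless since $-I\in U$), so there is no gap.
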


    \begin{proof}
     As shown by Iwahori and Matsumoto \cite[\S 2.2]{iwahorimatsumoto65}, we have a decomposition
     \[ \SL_2(\Qp) = \bigsqcup_{w \in D} U w U,\]
     where $D$ is the set of matrices of the form $\begin{pmatrix} p^j & 0 \\ 0 & p^j\end{pmatrix}$ or $\begin{pmatrix} 0 & -p^{-j} \\ p^j & 0 \end{pmatrix}$ for some $j \in \ZZ$. Comparing this with the well-known Cartan decomposition $\SL_2(\Qp) = \bigsqcup_{j \ge 0}K \begin{pmatrix} p^{-j} & 0 \\ 0 & p^{j} \end{pmatrix} K$ gives the statement above.
    \end{proof}

    \begin{proposition}
     \label{prop:degrees}
     For $\alpha \in \SL_2(\Qp)$, the index of $U \cap \alpha^{-1} U \alpha$ in $U$ is as follows:
     \begin{enumerate}[(a)]
      \item $p^{|2j|}$ for $\alpha \in U \begin{pmatrix} p^j & 0 \\ 0 & p^{-j} \end{pmatrix} U$, $j \in \ZZ$,
      \item $p^{|2j+1|}$ if $\alpha \in U \begin{pmatrix} 0 & -p^{-j} \\ p^j & 0 \end{pmatrix} U$, $j \in \ZZ$.
     \end{enumerate}
    \end{proposition}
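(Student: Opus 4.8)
The plan is to reduce the computation to the explicit double‑coset representatives furnished by Proposition~\ref{prop:iwahori}, and then to identify $U \cap \alpha^{-1} U \alpha$ as an explicit congruence subgroup of $U$ whose index is immediate. First I would observe that $[U : U \cap \alpha^{-1} U \alpha]$ depends only on the double coset $U\alpha U$: if $\alpha = u_1 \beta u_2$ with $u_1,u_2 \in U$, then $\alpha^{-1} U \alpha = u_2^{-1}(\beta^{-1} U \beta) u_2$, whence $U \cap \alpha^{-1} U \alpha = u_2^{-1}(U \cap \beta^{-1} U \beta) u_2$ is a $U$‑conjugate of $U \cap \beta^{-1} U \beta$ and so has the same index in $U$. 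By the Iwahori–Matsumoto decomposition recalled in the proof of Proposition~\ref{prop:iwahori}, every $\alpha \in \SL_2(\Qp)$ lies in $U\, \stbt{p^j}00{p^{-j}}\, U$ or in $U\, \stbt 0{-p^{-j}}{p^j}0\, U$ for a unique $j \in \ZZ$, so it is enough to compute the index for these two families of representatives.

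For that I would compute $\alpha u \alpha^{-1}$ for $u = \stbt a b c d \in U$ directly. Writing $U = \{\stbt a b c d \in \SL_2(\Zp) : v_p(b) \ge 1\}$, one finds $\stbt{p^j}00{p^{-j}}\, u\, \stbt{p^{-j}}00{p^{j}} = \stbt{a}{p^{2j}b}{p^{-2j}c}{d}$ in the diagonal case and $\stbt 0{-p^{-j}}{p^j}0\, u\, \stbt 0{p^{-j}}{-p^{j}}0 = \stbt{d}{-p^{-2j}c}{-p^{2j}b}{a}$ in the antidiagonal case. Imposing that the right‑hand side lie in $U$, and using that $u \in U$ already forces $v_p(b) \ge 1$, exactly one congruence condition turns out to be binding, its identity depending on the sign of $j$: one gets that $U \cap \alpha^{-1} U \alpha$ equals $\{u \in U : v_p(c) \ge 2j\}$ for $\alpha$ diagonal with $j \ge 0$, $\{u \in U : v_p(b) \ge 1+2|j|\}$ for $\alpha$ diagonal with $j < 0$, $\{u \in U : v_p(c) \ge 2j+1\}$ for $\alpha$ antidiagonal with $j \ge 0$, and $\{u \in U : v_p(b) \ge 2|j|\}$ for $\alpha$ antidiagonal with $j < 0$. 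It then remains to invoke the two elementary index formulae, valid for $n \ge 1$ and obtained by reducing modulo $p^n$ and comparing orders of subgroups of $\SL_2(\ZZ/p^n\ZZ)$: namely $[U : \{u \in U : v_p(c) \ge n\}] = p^n$, while $[U : \{u \in U : v_p(b) \ge n\}] = p^{n-1}$ (the lower exponent because the condition $v_p(b) \ge 1$ is already built into $U$). Substituting the four cases gives $p^{2j}$, $p^{2|j|}$, $p^{2j+1}$ and $p^{2|j|-1}$ respectively, i.e.\ $p^{|2j|}$ in case (a) and $p^{|2j+1|}$ in case (b).

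There is no conceptual obstacle here; the only delicate points are the bookkeeping of which inequality is binding once $j < 0$, and the off‑by‑one between the two index formulae. As an alternative to the matrix computation above, one could instead invoke the standard fact (part of the Iwahori–Matsumoto theory) that $|U \backslash U w U| = p^{\ell(w)}$ for $w$ in the affine Weyl group $W_{\mathrm{aff}} = \langle s_0, s_1\rangle$ of $\SL_2(\Qp)$ attached to $U$, and then compute lengths: with $s_1 = \stbt 0{-1}10$ and $s_0 = \stbt 0{-p}{p^{-1}}0$, the diagonal representative represents a word of length $2|j|$, and the antidiagonal one, being $\stbt{p^{-j}}00{p^{j}}\,s_1$, the alternating word of length $|2j+1|$ — here the one point requiring care is to identify the genuine affine simple reflection as $s_0 = \stbt 0{-p}{p^{-1}}0$ and not $\stbt 0{-p^{-1}}p0$.
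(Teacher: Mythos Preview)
Your argument is correct and follows essentially the same route as the paper: reduce to the Iwahori double-coset representatives, conjugate explicitly to identify $U \cap \alpha^{-1} U \alpha$ as a congruence subgroup $\{\stbt abcd \in K : p^r \mid b,\ p^s \mid c\}$, and read off the index $p^{r+s-1}$ in $U$. The only difference is cosmetic---you split into cases by the sign of $j$ and record a single binding inequality, whereas the paper packages both conditions into $(r,s)$ at once---and your alternative via the Iwahori--Matsumoto length formula $|U\backslash UwU| = p^{\ell(w)}$ is a legitimate shortcut not used in the paper.
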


    \begin{proof}
     It is clear that the index concerned depends only on the double coset $U \alpha U$, so we may reduce immediately to considering the coset representatives in (a) and (b). In each of these cases we find that the intersection $U \cap \alpha^{-1} U \alpha$ is a subgroup of the form $\left\{ \begin{pmatrix} a & b \\ c & d \end{pmatrix} \in K: p^r \mid b, p^s \mid c\right\}$ for some $r \ge 1$, $s \ge 0$; this clearly has index $p^{r + s - 1}$ in $U$, which gives the above formulae.
    \end{proof}

    \begin{corollary}
     Let $j \in \ZZ$ and $m \ge 1$, neither divisible by $p$, and $k \ge 1$. Then the preimage in $Y(\Gamma_1(N) \cap \Gamma^0(p))^2$ of the curve $C_{mp^k, N, j} \subseteq Y_1(N)^2$ is the union of four distinct curves:
     \begin{enumerate}
      \item the curve $D_1$ given by points of the form
      \[ \left(z, \begin{pmatrix} 1 & \tfrac j {mp^k} \\ 0 & 1\end{pmatrix}z\right),\]
      mapping to $C_{mp^k, N, j}$ with degree $p^2$;
      \item the curve $D_2$ given by points of the form
      \[ \left(z, \gamma \begin{pmatrix} 1 & \tfrac j {mp^k} \\ 0 & 1\end{pmatrix}z \right)\]
      for any $\gamma \in \Gamma_1(N)$ congruent to $\begin{pmatrix} 0 & * \\ * & * \end{pmatrix}$ modulo $p$, again mapping to $C_{mp^k, N, j}$ with degree $p$;
      \item the curve $D_3$ given by points of the form
      \[ \left(z, \begin{pmatrix} 1 & \tfrac j {mp^k} \\ 0 & 1\end{pmatrix} \gamma^{-1} z\right)\]
      where $\gamma$ is as before, mapping to $C_{mp^k, N, j}$ with degree $p$;
      \item the curve $D_4$ given by points of the form
      \[ \left(z, \gamma \begin{pmatrix} 1 & \tfrac j {mp^k} \\ 0 & 1\end{pmatrix} \gamma^{-1} z \right),\]
      mapping isomorphically to $C_{mp^k, N, j}$.
     \end{enumerate}
    \end{corollary}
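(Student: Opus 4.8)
The plan is to derive the statement by feeding Propositions \ref{prop:iwahori} and \ref{prop:degrees} into the general machinery of Lemmas \ref{lem:birational}--\ref{lem:doublecosetpullback}. Write $\Gamma = \Gamma_1(N)$, $\Gamma' = \Gamma_1(N)\cap\Gamma^0(p)$, and $\alpha = \stbt{1}{j/(mp^k)}{0}{1}\in\SL_2(\QQ)$. As in the construction of $\kappa_j$ (Lemma \ref{lem:kappaexists}), the defining morphism of $C_{mp^k, N, j}$ factors through the natural projection onto $Y(\Gamma\cap\alpha^{-1}\Gamma\alpha)$, so by Lemma \ref{lem:birational} the curve $C_{mp^k, N, j}$ is the image of the map $\sigma\colon z\mapsto(z,\alpha z)$ and is birational to $Y(\Gamma\cap\alpha^{-1}\Gamma\alpha)$. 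Lemma \ref{lem:doublecosetpullback} then says that the preimage of $C_{mp^k, N, j}$ in $Y(\Gamma')^2$ is a union of $h$ distinct curves indexed by a decomposition $\Gamma\alpha\Gamma = \bigsqcup_{i=1}^h\Gamma'\beta_i\Gamma'$, with $D_i$ the image of $z\mapsto(z,\beta_i z)$.

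Next I would show $h = 4$ and identify the four curves. Since $\Gamma$ and $\Gamma'$ agree at every prime $\ne p$, and $\Gamma$ surjects onto $K = \SL_2(\Zp)$ with $\Gamma'$ the preimage of the Iwahori $U$ (strong approximation, level prime to $p$), the $\Gamma'$-double cosets inside $\Gamma\alpha\Gamma$ are in bijection with the $U$-double cosets inside $K\bar\alpha K$, where $\bar\alpha$ is the $p$-component of $\alpha$; this is the same reduction used in the case $\alpha$ integral at $p$ treated above, now with the Cartan double coset $K\bar\alpha K$ in place of $K$. Clearing denominators, $\bar\alpha = \stbt{1}{u p^{-k}}{0}{1}$ with $u\in\Zp^\times$ has Smith normal form $\operatorname{diag}(p^{-k},p^k)$, so $K\bar\alpha K = K\stbt{p^{-k}}{0}{0}{p^k}K$ and Proposition \ref{prop:iwahori} gives exactly four $U$-double cosets; hence $h = 4$. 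A direct matrix computation shows that $\bar\alpha$ itself lies in the coset represented by $\stbt{0}{-p^{-k}}{p^k}{0}$, and the Iwahori--Hecke multiplication rules then show that multiplying $\alpha$ on the left, on the right, or on both sides by a fixed $\gamma\in\Gamma_1(N)$ reducing mod $p$ into the big Bruhat cell (i.e.\ $\gamma\equiv\stbt{0}{*}{*}{*}$) moves it into the other three cosets, yielding precisely the curves $D_1,\dots,D_4$ in the statement; two such choices of $\gamma$ differ by an element of $\Gamma'$, which is why the curves do not depend on the choice, and distinctness of the $D_i$ is part of Lemma \ref{lem:doublecosetpullback} (via Lemma \ref{lem:distinctcurves}).

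Finally, the degrees: since the covering $Y(\Gamma')^2\to Y(\Gamma)^2$ commutes with the first projection, multiplicativity of degrees in $D_i\to C_{mp^k, N, j}\to Y_1(N)$ gives $\deg(D_i\to C_{mp^k, N, j}) = [\Gamma:\Gamma']\,[\Gamma':\Gamma'\cap\beta_i^{-1}\Gamma'\beta_i]\,/\,[\Gamma:\Gamma\cap\alpha^{-1}\Gamma\alpha]$. The local indices away from $p$ coincide for $\alpha$ and for each $\beta_i$ and cancel; at $p$ one computes $[K:K\cap\bar\alpha^{-1}K\bar\alpha] = p^{2k-1}(p+1)$ directly, while $[U:U\cap w^{-1}Uw]$ for the four representatives $w$ equals $p^{2k},p^{2k+1},p^{2k},p^{2k-1}$ by Proposition \ref{prop:degrees}. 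Dividing out, the four degrees are $p,p^2,p,1$, and matching these with the representatives from the previous step gives the degrees $p^2,p,p,1$ in the order stated. The main obstacle is the bookkeeping in the second paragraph: verifying that $\alpha$ itself sits in the length-$(2k+1)$ (index $p^{2k+1}$) double coset rather than one of the shorter ones, and keeping track of which of the four explicit matrices $\beta_i$ represents which of the four Iwahori double cosets of Proposition \ref{prop:iwahori}.
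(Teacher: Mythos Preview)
Your approach is essentially the same as the paper's: both feed Propositions \ref{prop:iwahori} and \ref{prop:degrees} into the double-coset framework to identify the four components and their degrees. You are more explicit about invoking Lemmas \ref{lem:birational}--\ref{lem:doublecosetpullback} and about the index formula for the degrees, whereas the paper verifies directly which Iwahori double coset each of $\alpha$, $\gamma\alpha$, $\alpha\gamma^{-1}$, $\gamma\alpha\gamma^{-1}$ lands in and then reads off the degrees from Proposition \ref{prop:degrees}; for distinctness the paper observes that the four degrees sum to $(p+1)^2$, the total degree of the cover, rather than appealing to Lemma \ref{lem:distinctcurves}.
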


    \begin{proof}
     All of these curves are evidently in the preimage of $C_{mp^k, N, j}$. One checks that we have
     \begin{align*}
      \begin{pmatrix} 1 & \tfrac j {mp^k} \\ 0 & 1\end{pmatrix} & \in U \begin{pmatrix} 0 & -p^{-k} \\ p^k & 0 \end{pmatrix}U \\
      \gamma \begin{pmatrix} 1 & \tfrac j {mp^k} \\ 0 & 1\end{pmatrix} &\in U \begin{pmatrix} p^k & 0 \\ 0 & p^{-k} \end{pmatrix} U \\
      \begin{pmatrix} 1 & \tfrac j {mp^k} \\ 0 & 1\end{pmatrix} \gamma^{-1} &\in U \begin{pmatrix} p^{-k} & 0 \\ 0 & p^k \end{pmatrix} U \\
      \gamma \begin{pmatrix} 1 & \tfrac j {mp^k} \\ 0 & 1\end{pmatrix} &\gamma^{-1} \in U \begin{pmatrix} 0 & -p^k \\ p^{-k} & 0 \end{pmatrix} U.
     \end{align*}
     Hence the curves $D_i$ exhaust the preimage of $C_{mp^k, N, j}$, by Proposition \ref{prop:iwahori}. The calculation of the degrees of the maps down follows from Proposition \ref{prop:degrees}; and since the total degree is $(p + 1)^2$, they must be distinct.
    \end{proof}

    We set
    \[ \alpha_1 = \begin{pmatrix} 1 & \tfrac j {mp^k} \\ 0 & 1\end{pmatrix},\hspace{3ex} \alpha_2 = \gamma \alpha_1,\hspace{3ex} \alpha_3 = \alpha_1 \gamma^{-1},\hspace{3ex} \alpha_4 = \gamma \alpha_1 \gamma^{-1}, \]
    so $D_i$ is the locus of points of the form $(z, \alpha_i z)$. Let us define
    \[ \Delta_i \coloneqq \left(\pi_2(D_i), (\pi_2)_* (\pi_1)^* (\iota_{mp^k, N, j}')_* {}_c g_{0, 1/mp^k N}\right) \in Z^2(Y_1(N)^2 \otimes \QQ(\mu_{mp^k}, 1).\]

    Then we evidently have
    \[ (T_p', T_p') {}_c \Xi_{mp^k, N, j} = \Delta_1 + \Delta_2 + \Delta_3 + \Delta_4.\]
    We shall evaluate each of these in turn, showing that $D_1$ is the norm of ${}_c \Xi_{mp^{k+1}, N, j}$ and the remaining $\Delta_i$ can be calculated in terms of Hecke operators acting on ${}_c \Xi_{mp^r, N, j}$ for $r < k$.

   \subsubsection{Evaluation of \texorpdfstring{$\Delta_1$}{Delta1}}

    \begin{corollary}
     \label{cor:pushpullhighernormrel}
     Pushforward and pullback commute in each of the following four diagrams:
     \begin{subequations}
      \begin{equation}
       \label{eq:D1x}
       \begin{diagram}
        Y\left(\Gamma_1(mp^k N) \cap \Gamma^0(mp^{k+1}) \cap U\right)
        & \rTo^{z \mapsto \left(\tfrac{z}{mp^k}, \tfrac{z+j}{mp^k}\right)} & D_1\\
        \dTo & & \dTo^{\pi_1}\\
        Y(\Gamma_1(mp^k N) \cap \Gamma^0(mp^k)) & \rTo^{\iota'_{mp^k, N, j}} & C_{mp^k, N, j}
       \end{diagram}
      \end{equation}
      where $U$ is the subgroup of $\Gamma(p^k)$ consisting of matrices whose reduction modulo $p^{k+1}$ lies in the subgroup $\begin{pmatrix} 1 & \tfrac jm \\ 0 & 1 \end{pmatrix}^{-1} U \begin{pmatrix} 1 & \tfrac jm \\ 0 & 1 \end{pmatrix}$, and both vertical arrows have degree $p^2$;
      \begin{equation}
       \label{eq:D2x}
       \begin{diagram}
        Y\left(\Gamma_1(mp^k N) \cap \Gamma^0(m p^{k+1})\right)
        & \rTo^{z \mapsto \left(\tfrac{z}{mp^k}, \gamma \cdot \tfrac{z + j}{mp^k}\right)} & D_2 \\
        \dTo & & \dTo \\
        Y(\Gamma_1(mp^k N) \cap \Gamma^0(m p^{k})) & \rTo^{\iota'_{mp^k, N, j}} & C_{mp^k, N, j} \\
       \end{diagram}
      \end{equation}
      where both vertical arrows have degree $p$;
      \begin{equation}
       \label{eq:D3x}
       \begin{diagram}
        Y\left(\Gamma_1(mp^k N) \cap \Gamma^0(mp^k) \cap U\right)
        & \rTo^{z \mapsto \left(\gamma \cdot \tfrac{z}{mp^k}, \tfrac{z + j}{mp^k}\right)} & D_3 \\
        \dTo & & \dTo \\
        Y(\Gamma_1(mp^k N) \cap \Gamma^0(m p^{k})) & \rTo^{\iota'_{mp^k, N, j}} &
        C_{mp^k, N, j}
       \end{diagram}
      \end{equation}
      where both vertical arrows again have degree $p$; and
      \begin{equation}
       \label{eq:D4x}
       \begin{diagram}
        Y\left(\Gamma_1(mp^k N) \cap \Gamma^0(mp^k) \right) & \rTo^{z \mapsto \left(\gamma \cdot \tfrac{z}{mp^k}, \gamma \cdot \tfrac{z + j}{mp^k}\right)} &
        D_4\\
        \dTo & & \dTo \\
        Y(\Gamma_1(mp^k N) \cap \Gamma^0(m p^{k})) & \rTo^{\iota'_{mp^k, N, j}} &
        C_{mp^k, N, j}
       \end{diagram}
      \end{equation}
      where both vertical arrows are isomorphisms.
     \end{subequations}
    \end{corollary}

    \begin{proof}
     Up to conjugation (and identifying $C_{mp^{k}, N, j}$ and the $D_i$ with their normalizations) each diagram takes the form
     \begin{diagram}
      Y(\Gamma_1 \cap \Gamma_2) &\rTo& Y(\Gamma_1)\\ \dTo & & \dTo \\ Y(\Gamma_2) & \rTo & Y(\Gamma)
     \end{diagram}
     for subgroups $\Gamma_1, \Gamma_2 \subseteq \Gamma$. So it suffices to check in each case that $\Gamma_1 \Gamma_2 = \Gamma$, or equivalently that $[\Gamma: \Gamma_1] = [\Gamma_2 : \Gamma_1 \cap \Gamma_2]$; that is, that the degrees of the two vertical arrows in each diagram are the same. In each case this reduces to an elementary local computation at $p$.
    \end{proof}

    \begin{proposition}[Evaluation of $\Delta_1$]
     \label{prop:delta1term}
     We have
     \[ \Delta_1 = \sum_{\substack{j' \in (\ZZ / mp^{k+1}\ZZ)^\times \\ j' = j \bmod{p^k}}} {}_c \Xi_{mp, N, j'}.\]
    \end{proposition}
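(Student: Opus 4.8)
The plan is to compute $\Delta_1$ directly, first transporting the computation onto a higher-level modular curve by means of the commutative diagram \eqref{eq:D1x} of Corollary \ref{cor:pushpullhighernormrel}, and then evaluating the resulting pushforward of a Siegel unit via the distribution relations of Proposition \ref{prop:distrelations}, exactly as in the proof of Proposition \ref{prop:pushforwardzeta}. Concretely, in \eqref{eq:D1x} pushforward and pullback commute, the bottom arrow $\iota'_{mp^k,N,j}$ is an isomorphism onto (the normalisation of) $C_{mp^k,N,j}$, and the two vertical arrows have equal degree $p^2$, so the top arrow — call it $\beta$ — is also an isomorphism. Hence $(\pi_1)^*(\iota'_{mp^k,N,j})_*\, {}_c g_{0,1/mp^k N}$ restricted to $D_1$ equals $\beta_*({}_c g_{0,1/mp^kN})$, the Siegel unit being regarded on the source of $\beta$ through the natural projection, and therefore
\[
 \Delta_1 = \big(\pi_2(D_1),\ (\pi_2\circ\beta)_*\, {}_c g_{0,1/mp^kN}\big),
\]
where $\pi_2\circ\beta$ is the morphism $z\mapsto\big(\tfrac{z}{mp^{k+1}},\tfrac{z+j}{mp^{k+1}}\big)$ on $Y(\Gamma_1(mp^kN)\cap\Gamma^0(mp^{k+1})\cap U)$.

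Next I would identify the target curve and evaluate the unit. The curve $\pi_2(D_1)$ is irreducible over $\QQ(\mu_{mp^k})$, but after base change to $\QQ(\mu_{mp^{k+1}})$ it becomes the disjoint union of the curves $C_{mp^{k+1},N,j'}$, as $j'$ runs over the liftings of $j$ to $\ZZ/mp^{k+1}\ZZ$ with $j'\equiv j\pmod{mp^k}$ (each automatically prime to $p$ since $j$ is); these are permuted simply transitively by $\Gal(\QQ(\mu_{mp^{k+1}})/\QQ(\mu_{mp^k}))$, via Proposition \ref{prop:BFeltproperties}(3). Restricting $\pi_2\circ\beta$ to the component lying over $C_{mp^{k+1},N,j'}$, one checks that it becomes, up to conjugation, a natural projection of modular curves followed by $\iota'_{mp^{k+1},N,j'}$; since $\Prime(mp^{k+1}N)=\Prime(mp^kN)$ (here $k\ge 1$), the distribution relation \eqref{eq:dist2} — equivalently the first part of Theorem \ref{thm:siegel-compat} — shows that the pushforward of ${}_c g_{0,1/mp^kN}$ along this projection is ${}_c g_{0,1/mp^{k+1}N}$. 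Reassembling the components and comparing with Definition \ref{def:BFelts} then yields $\Delta_1=\sum_{j'}{}_c\Xi_{mp^{k+1},N,j'}$, the identity being read in $Z^2$ of the surface over $\QQ(\mu_{mp^{k+1}})$ (using injectivity of pullback along finite surjective maps for the groups $Z^2(-,1)$), or over $\QQ(\mu_{mp^k})$ after interpreting the sum as a Galois corestriction.

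The main obstacle will be the identification in the second paragraph: pinning down the congruence subgroup $U$, and checking that $\pi_2(D_1)$ is exactly the union of the $C_{mp^{k+1},N,j'}$ with multiplicity one — equivalently, that the degree of $\pi_2\circ\beta$ onto its image is precisely the one for which the Siegel-unit pushforward comes out with exponent $1$ rather than a higher power. This is an elementary but fiddly local computation at $p$, of the same flavour as Propositions \ref{prop:iwahori} and \ref{prop:degrees} and as the degree bookkeeping already used in Corollary \ref{cor:pushpullhighernormrel}; once the subgroups are correctly matched the statement reduces entirely to the norm and distribution properties of Siegel units established above.
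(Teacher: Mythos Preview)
Your first paragraph is correct and matches the paper: one uses the commutativity of diagram~\eqref{eq:D1x} to transport the pullback along $\pi_1$ to a pushforward along the top horizontal arrow $\beta$, obtaining $\Delta_1 = \big(\pi_2(D_1),\ (\pi_2\circ\beta)_*\, {}_c g_{0,1/mp^kN}\big)$.

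The second paragraph, however, rests on a false picture. The curve $\pi_2(D_1)\subset Y_1(N)^2$ is geometrically irreducible, and it does \emph{not} split into disjoint components after base change to $\QQ(\mu_{mp^{k+1}})$. In fact the curves $C_{mp^{k+1},N,j'}$, as $j'$ ranges over the liftings of $j$ modulo $mp^{k+1}$, are all the \emph{same} geometric curve in $Y_1(N)^2$: this is exactly the content of the corollary following the Bruhat decomposition (for $k=0$, the curve $D_{m,N,k}$ is ``independent of the choice of lifting''), and the analogous statement holds here. Proposition~\ref{prop:BFeltproperties}(3) does say that the Galois group permutes the elements ${}_c\Xi_{mp^{k+1},N,j'}$, but this permutation acts through the \emph{modular unit}, not by moving between disjoint curve components. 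Consequently your proposed factorisation of $\pi_2\circ\beta$ through a projection to $Y(\Gamma_1(mp^{k+1}N)\cap\Gamma^0(mp^{k+1}))$ followed by $\iota'_{mp^{k+1},N,j'}$ cannot work: there is no projection from level $mp^kN$ to the deeper level $mp^{k+1}N$, and Theorem~\ref{thm:siegel-compat} is not applicable in that direction.

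What actually needs to be shown is that, on the single curve $\pi_2(D_1)=C_{mp^{k+1},N,b}$, the unit $(\pi_2\circ\beta)_*\, {}_c g_{0,1/mp^kN}$ equals the \emph{product} $\prod_{j'}\phi_{j'}$, where ${}_c\Xi_{mp^{k+1},N,j'}=(C_{mp^{k+1},N,b},\phi_{j'})$. This is precisely the computation carried out in \S 3.4.2 (``Evaluation of the norm term'') for $k=0$: one writes each $\phi_{j'}$ as a conjugate pushforward via the matrices $\alpha_{bj'}$ relating the tori $U_{j'}$, expands the products as explicit products of Siegel units indexed by $\FF_p^\times$, and then matches with the unit appearing in $\Delta_1$. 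The paper's proof of Proposition~\ref{prop:delta1term} simply observes that the subgroups $U$ for the various $j'$ are conjugate, and that this same argument goes through verbatim for general $k\ge 1$.
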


    \begin{proof}
     This follows by exactly the same argument as in the case $k = 0$ considered above. From Corollary \ref{cor:pushpullhighernormrel} we know that the modular unit $(\pi_1)^* (\iota_{mp^k, N, j}')_* {}_c g_{0, 1/m N}$ on $D_1$ is equal to the pushforward of ${}_c g_{0, 1/mN}$ along the top horizontal arrow in diagram \eqref{eq:D1x}. The subgroups $U$ for all $j$ in a congruence class modulo $p^{k}$ are conjugate, and by exactly the same argument as in Proposition \ref{prop:normrelation2} we deduce the result.
    \end{proof}

   \subsubsection{Evaluation of \texorpdfstring{$\Delta_2$}{Delta2} and \texorpdfstring{$\Delta_3$}{Delta3}}

    We now turn our attention to $\Delta_2$. Evidently $\pi_2(D_2)$ is the image of $Y(\Gamma_1(m p^k N) \cap \Gamma^0(mp^{k+1}))$ in $Y_1(N)^2$ under the map
      \[ z \mapsto \left(\begin{pmatrix} 1 & 0 \\ 0 & mp^{k+1}\end{pmatrix} z, \begin{pmatrix} 1 & 0 \\ 0 & p \end{pmatrix} \gamma \begin{pmatrix} 1 & j \\ 0 & mp^k \end{pmatrix} z\right).\]

    Let us write $\gamma = \begin{pmatrix} pa & b \\ Nc & d \end{pmatrix}$, where $d = 1 \bmod N$. Then we find that
    \[ \begin{pmatrix} 1 & 0 \\ 0 & p\end{pmatrix} \gamma \begin{pmatrix} 1 & j \\ 0 & mp^k\end{pmatrix} = \begin{pmatrix} p & 0 \\ 0 & p \end{pmatrix} \begin{pmatrix} a & b \\ Nc & pd \end{pmatrix} \begin{pmatrix} 1 & j \\ 0 & mp^{k-1} \end{pmatrix}.\]
    Since scalar matrices act trivially, and $\begin{pmatrix} a & b \\ Nc & pd \end{pmatrix}$ acts on $Y_1(N)$ as the diamond operator $\langle p \rangle$, we see that $\pi_2(D_2)$ can be written as the image of
    \[\begin{array}{rcl}
     Y(\Gamma_1(m p^k N) \cap \Gamma^0(mp^{k+1})) & \rTo & Y_1(N)^2 \\
     z & \rMapsto & \left( \frac{z}{mp^{k+1}}, \langle p \rangle \cdot \frac{z + j}{mp^{k-1}}\right).
    \end{array}\]

    This map factors through the natural projection
    \[ \lambda: Y(\Gamma_1(m p^k N) \cap \Gamma^0(mp^{k+1})) \to Y(\Gamma_1(m p^{k-1} N) \cap \Gamma^0(mp^{k+1}))\]
    (indeed, the first component obviously factors through $\Gamma_1(N) \cap \Gamma^0(mp^{k+1})$, and the second component factors through $Y(\Gamma_1(m p^{k-1} N) \cap \Gamma^0(mp^{k-1}))$ as the map $\iota_{mp^{k-1}, N, j}'$ constructed above composed with the automorphism $\langle p \rangle$).

    \begin{proposition}
     We have
     \[ \lambda_*\left({}_c g_{0, 1/mp^{k} N}\right) =
      \begin{cases}
       {}_c g_{0, 1/mp^{k-1} N} & \text{if $k \ge 2$,}\\
       {}_c g_{0, 1/m N} \cdot \left( \begin{pmatrix} 1 & 0 \\ 0 & p\end{pmatrix}^* {}_c g_{0, \text{``}p^{-1}\text{''}/mN}\right)^{-1} & \text{if $k = 1$.\qed}
      \end{cases}
     \]
    \end{proposition}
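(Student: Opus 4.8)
The plan is to prove this as a Siegel-unit norm relation by the mechanism of Theorem~\ref{thm:siegel-compat}: realise the source and target of $\lambda$ as quotients of a common principal modular curve $Y(L)$, express $\lambda_*$ as a product of Galois translates over coset representatives, and identify the outcome using the distribution relations of Proposition~\ref{prop:distrelations}.

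Concretely, I would take $L = mp^{k+1}N$ and write the source and target of $\lambda$ as $U_s\backslash Y(L)$ and $U_t\backslash Y(L)$, where $U_s\subseteq U_t$ are the images in $\SL_2(\ZZ/L\ZZ)$ of $\Gamma_1(mp^kN)\cap\Gamma^0(mp^{k+1})$ and $\Gamma_1(mp^{k-1}N)\cap\Gamma^0(mp^{k+1})$ (both contain $\Gamma(L)$). Then $\lambda$ is the quotient map, so $\lambda_*=\prod_u u^*$ over a set of coset representatives for $U_s\backslash U_t$, and the unit ${}_c g_{0,1/mp^kN}$ pulls back from $Y_1(mp^kN)$ to $Y(L)$ as ${}_c g_{0,\,p/L}$. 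Using $u^*\,{}_c g_{\alpha,\beta}={}_c g_{(\alpha,\beta)u}$ together with the fact that the right-multiplication stabiliser of $(0,p/L)$ meets $U_t$ in exactly $U_s$ (once $c\equiv 0$, $d\equiv 1 \pmod{mp^kN}$, the relation $\det=1$ forces $a\equiv 1$), one finds that $\lambda_*({}_c g_{0,1/mp^kN})$ equals the product of ${}_c g_{\gamma/p,\ 1/(mp^kN)+\delta/p}$ over the $U_t$-orbit of $(0,p/L)$, each point taken once.

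For $k\ge 2$ the $p$-part of $\Gamma_1(mp^{k-1}N)$ already imposes $a\equiv d\equiv 1$, $c\equiv 0\pmod p$; one checks the orbit has size $p^2$ and is the full set of pairs $(\gamma',\beta')$ with $p\gamma'=0$ and $p\beta'=1/(mp^{k-1}N)$, so \eqref{eq:dist3} collapses the product to ${}_c g_{0,1/mp^{k-1}N}$. The case $k=1$ is where the real work lies, and I expect it to be the main obstacle: here $p$ is coprime to the target level $mN$, the index $[U_t:U_s]$ is only $p(p-1)$, and the $U_t$-orbit misses exactly the $p$ pairs $(\gamma',\beta_0')$ sharing one fixed second coordinate $\beta_0'$ --- the value excluded because $\det=1$ makes the relevant diagonal entry a unit mod $p$ rather than $\equiv 1$. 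Dividing the full \eqref{eq:dist3}-product by the product of these $p$ missing units and evaluating the latter by the generalised distribution relation from the Remark after Proposition~\ref{prop:distrelations} (case $M=\stbt{1}{0}{0}{p}$) yields the correction $\bigl(\stbt{1}{0}{0}{p}^*\,{}_c g_{0,\beta_0'}\bigr)^{-1}$; a short congruence check at $p$ identifies $\beta_0'$ with $\quot{p^{-1}}/mN$, giving the stated formula.

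Alternatively the $k=1$ case can be deduced by factoring $\lambda$ through the intermediate curves $Y(M,N(\ell))$ and $Y(M(\ell),N)$ of Kato's \S2.13 and invoking that computation with Lemma~\ref{lem:pushpull}; the two approaches differ only in bookkeeping. Either way the delicate point is the interaction at $p$ between the $\Gamma_1$-level and the $\Gamma^0(mp^{k+1})$-structure, which is what forces the twist by $\stbt{1}{0}{0}{p}$ in the $k=1$ answer in place of a plain Siegel unit.
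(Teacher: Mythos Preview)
Your approach is correct and is precisely the technique the paper relies on throughout; the paper itself gives no proof for this proposition (note the $\qed$ in the statement), treating it as routine in light of Theorem~\ref{thm:siegel-compat}. Your coset-representative computation and appeal to the distribution relations \eqref{eq:dist1}--\eqref{eq:dist3} (and the generalised form in the Remark after Proposition~\ref{prop:distrelations}) are exactly the ingredients one needs, and your identification of the $k=1$ correction term via the ``missing'' $p$ points of the orbit is the right mechanism.

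One small caution on bookkeeping: the source and target of $\lambda$ are $Y(\Gamma_1(mp^kN)\cap\Gamma^0(mp^{k+1}))$ and $Y(\Gamma_1(mp^{k-1}N)\cap\Gamma^0(mp^{k+1}))$, which are not literally of Kato's form $Y(M,N)$ (the latter also imposes $a\equiv 1\bmod M$), so neither Theorem~\ref{thm:siegel-compat} nor the Remark following it applies verbatim. Your direct computation with coset representatives is therefore the cleaner route; if you instead invoke Kato's \S2.13 factorisation you will need to track this extra $\Gamma^1$-versus-$\Gamma^0$ discrepancy, which is where sign and twist errors most easily creep in.
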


    We thus have:

    \begin{proposition}
     \label{prop:delta2term1}
     We have $\Delta_2 = (C, \phi)$ where
     \begin{itemize}
      \item $C$ is the image of $Y(\Gamma_1(m p^{k-1} N) \cap \Gamma^0(mp^{k+1}))$ in $Y_1(N)^2$ under the map
      \[ \beta: z \mapsto \left( \frac{z}{mp^{k+1}}, \langle p \rangle \cdot \frac{z + j}{mp^{k-1}}\right),\]
      \item $\phi$ is the pushforward of ${}_c g_{0, 1/mp^{k-1}}$ (resp. of ${}_c g_{0, 1/m N} \cdot \left( \begin{pmatrix} 1 & 0 \\ 0 & p\end{pmatrix}^* {}_c g_{0, \text{``}p^{-1}\text{''}/mN}\right)^{-1}$) along this map if $k \ge 2$ (resp. if $k = 1$).
     \end{itemize}
    \end{proposition}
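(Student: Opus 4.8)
The plan is to unwind the definition of $\Delta_2$ using three ingredients that are already in place: the push--pull square \eqref{eq:D2x} of Corollary~\ref{cor:pushpullhighernormrel} (together with Lemma~\ref{lem:pushpull}), the factorisation of $\pi_2 \times \pi_2$ restricted to $D_2$ through the natural projection $\lambda$, and the preceding proposition evaluating $\lambda_*\bigl({}_c g_{0, 1/mp^k N}\bigr)$.

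First I would identify the modular unit carried by $D_2$. By construction $\Delta_2 = \bigl(\pi_2(D_2),\ (\pi_2)_*(\pi_1)^*(\iota'_{mp^k, N, j})_*\, {}_c g_{0, 1/mp^k N}\bigr)$, so the task is to compute the restriction to $D_2$ of the function $(\pi_1)^*(\iota'_{mp^k, N, j})_*\, {}_c g_{0, 1/mp^k N}$. Identifying $D_2$ with its normalisation $Y(\Gamma_1(mp^k N) \cap \Gamma^0(mp^{k+1}))$ via the top arrow of \eqref{eq:D2x} and applying Lemma~\ref{lem:pushpull} to the Cartesian square \eqref{eq:D2x}, one sees that this restriction is simply the natural image of the Siegel unit ${}_c g_{0, 1/mp^k N}$ on $Y(\Gamma_1(mp^k N) \cap \Gamma^0(mp^{k+1}))$.

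Next I would feed in the factorisation established just before the statement: under this identification, $\pi_2 \times \pi_2$ restricted to $D_2$ is the morphism $z \mapsto \bigl(\tfrac{z}{mp^{k+1}},\, \langle p\rangle \cdot \tfrac{z+j}{mp^{k-1}}\bigr)$ (here one uses that, writing $\gamma = \stbt{pa}{b}{Nc}{d}$, the matrix $\stbt{a}{b}{Nc}{pd}$ acts on $Y_1(N)$ as $\langle p\rangle$), and this morphism factors as $\beta \circ \lambda$, where $\lambda\colon Y(\Gamma_1(mp^k N) \cap \Gamma^0(mp^{k+1})) \to Y(\Gamma_1(mp^{k-1} N) \cap \Gamma^0(mp^{k+1}))$ is the natural projection and $\beta$ is the parametrisation $z \mapsto \bigl(\tfrac{z}{mp^{k+1}},\, \langle p\rangle \cdot \tfrac{z+j}{mp^{k-1}}\bigr)$ of $C \coloneqq \pi_2(D_2)$. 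Hence $(\pi_2)_* = \beta_* \circ \lambda_*$ on the relevant function, giving $\Delta_2 = \bigl(C,\ \beta_*\bigl(\lambda_*\, {}_c g_{0, 1/mp^k N}\bigr)\bigr)$.

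Finally, substituting the value of $\lambda_*\, {}_c g_{0, 1/mp^k N}$ from the preceding proposition --- namely ${}_c g_{0, 1/mp^{k-1} N}$ when $k \ge 2$, and ${}_c g_{0, 1/m N} \cdot \bigl(\stbt{1}{0}{0}{p}^*\, {}_c g_{0, \quot{p^{-1}}/mN}\bigr)^{-1}$ when $k = 1$ --- yields precisely the asserted description of $\phi$. I do not expect a genuinely new difficulty here: the statement is essentially a corollary of the material immediately preceding it, and the only point requiring care is the bookkeeping of which congruence subgroup normalises $D_2$ as opposed to $\pi_2(D_2)$ and the verification that $\pi_2|_{D_2}$ really does factor through $\lambda$ --- both of which have already been carried out in the paragraphs before the statement.
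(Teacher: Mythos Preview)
Your proposal is correct and follows exactly the route taken in the paper: the proposition is stated there with the preface ``We thus have:'' because it is simply the combination of the push--pull square \eqref{eq:D2x}, the factorisation $\pi_2|_{D_2} = \beta \circ \lambda$, and the preceding proposition computing $\lambda_*\bigl({}_c g_{0,1/mp^kN}\bigr)$. There is nothing to add.
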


   \subsubsection{Evaluation of \texorpdfstring{$\Delta_4$}{Delta4}}

    The last, and easiest, term is $\Delta_4$.

    \begin{proposition}[Evaluation of $\Delta_4$]
     \label{prop:delta4term}
     We have
     \[ \Delta_4 =
      p (\langle p^{-1} \rangle, \langle p^{-1} \rangle) \cdot
      \begin{cases}
       {}_c \Xi_{mp^{k-1}, N, j} & \text{if $k \ge 2$,} \\
       \left(1 -  (\langle p^{-1} \rangle, \langle p^{-1} \rangle) \sigma_p^{-2}\right) {}_c \Xi_{m, N, j}&  \text{if $k = 1$.}
      \end{cases}
     \]
    \end{proposition}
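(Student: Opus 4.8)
The plan is to use the key structural fact, recorded in Corollary~\ref{cor:pushpullhighernormrel} (diagram \eqref{eq:D4x}), that $\pi_1$ restricts to an \emph{isomorphism} $D_4 \cong C_{mp^k, N, j}$. This collapses $\Delta_4$ to a single norm computation: the function $(\pi_1)^*(\iota'_{mp^k, N, j})_*({}_c g_{0, 1/mp^k N})$ on $D_4$ is, after the identification $D_4 \cong Y(\Gamma_1(mp^k N)\cap\Gamma^0(mp^k))$ of \eqref{eq:D4x}, simply the Siegel unit ${}_c g_{0, 1/mp^k N}$ itself, so $\Delta_4 = \bigl(\pi_2(D_4),\ (\pi_2|_{D_4})_*\,{}_c g_{0, 1/mp^k N}\bigr)$. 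It then remains to (i) identify the curve $\pi_2(D_4)$ and the map $\pi_2|_{D_4}$, and (ii) evaluate this norm.

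For (i) I would run the device already used for $\Delta_2$: a point of $D_4$ has the form $\bigl(\gamma\cdot\tfrac{z}{mp^k},\ \gamma\cdot\tfrac{z+j}{mp^k}\bigr)$, so $\pi_2$ sends it to $\bigl(\stbt 1 0 0 p \gamma \stbt 1 0 0 {mp^k} z,\ \stbt 1 0 0 p \gamma \stbt 1 j 0 {mp^k} z\bigr)$. Writing $\gamma = \stbt{pa}{b}{Nc}{d}$ with $d \equiv 1 \bmod N$ and invoking the matrix identity $\stbt 1 0 0 p \gamma \stbt 1 * 0 {mp^k} = \stbt p 0 0 p \stbt{a}{b}{Nc}{pd} \stbt 1 * 0 {mp^{k-1}}$, together with the facts that scalars act trivially and that $\stbt{a}{b}{Nc}{pd}$ acts on $Y_1(N)$ as $\langle p\rangle$, one finds that $\pi_2(D_4) = (\langle p\rangle\times\langle p\rangle)\,C_{mp^{k-1}, N, j}$ and that $\pi_2|_{D_4}$ factors as a natural projection $\lambda\colon Y(\Gamma_1(mp^k N)\cap\Gamma^0(mp^k)) \to Y(\Gamma_1(mp^{k-1}N)\cap\Gamma^0(mp^{k-1}))$ followed by $(\langle p\rangle\times\langle p\rangle)\circ\iota'_{mp^{k-1}, N, j}$.

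Thus $\Delta_4$ is the class of $(\langle p\rangle\times\langle p\rangle)C_{mp^{k-1},N,j}$ carrying the transport of $\lambda_*\,{}_c g_{0,1/mp^kN}$. For $k \ge 2$ I would factor $\lambda$ as ``remove a $p$ from the $\Gamma_1$-level'' followed by ``remove a $p$ from the $\Gamma^0$-level'': by the first part of Theorem~\ref{thm:siegel-compat} the first pushforward takes ${}_c g_{0,1/mp^kN}$ to ${}_c g_{0,1/mp^{k-1}N}$, while along the second projection, which has degree $[\Gamma^0(mp^{k-1}):\Gamma^0(mp^k)]=p$, the unit ${}_c g_{0,1/mp^{k-1}N}$ is a pullback from $Y_1(mp^{k-1}N)$, so the second pushforward raises it to the $p$-th power. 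Since a $p$-th power of modular units is $p$ times the corresponding class in the Gersten group, this yields $\Delta_4 = p\,(\langle p^{-1}\rangle,\langle p^{-1}\rangle)\,{}_c\Xi_{mp^{k-1},N,j}$ (the passage $\langle p\rangle \mapsto \langle p^{-1}\rangle$ being the standard pushforward/pullback convention for diamonds). For $k=1$ the projection $\lambda$ removes the prime $p\nmid mN$, so in place of the first part of Theorem~\ref{thm:siegel-compat} one uses the second, which introduces the correction $\bigl({}_c g_{0,\quot{p^{-1}}/mN}\bigr)^{-1}$; identifying this by the twisting identities of Proposition~\ref{prop:BFeltproperties} as $(\langle p^{-1}\rangle,\langle p^{-1}\rangle)\sigma_p^{-2}\,{}_c\Xi_{m,N,j}$ produces the stated factor $\bigl(1 - (\langle p^{-1}\rangle,\langle p^{-1}\rangle)\sigma_p^{-2}\bigr)$.

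The main obstacle I anticipate is keeping the $p$-adic bookkeeping exactly straight: verifying that $\pi_2|_{D_4}$ genuinely factors through $\lambda$ and pinning $\pi_2(D_4)$ down as precisely $(\langle p\rangle\times\langle p\rangle)C_{mp^{k-1},N,j}$ (a fiddly but elementary local computation at $p$, of the kind already carried out in the proof of Corollary~\ref{cor:pushpullhighernormrel}), and — above all — checking that the Kato norm relation delivers exactly a $p$-th power of the lower-level Siegel unit (for $k\ge 2$), rather than an untwisted copy or a higher power. That single point is where the factor $p$ in the formula is born, and it is the only step that requires genuine care rather than routine manipulation.
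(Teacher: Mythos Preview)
Your approach is essentially the paper's: use the isomorphism $\pi_1\colon D_4\cong C_{mp^k,N,j}$ from diagram~\eqref{eq:D4x}, rewrite $\pi_2|_{D_4}$ via the matrix identity (the paper phrases this as $\stbt{1}{0}{0}{p}\gamma=\langle p\rangle\stbt{p}{0}{0}{1}$, equivalent to yours), factor through $\lambda$ followed by $(\langle p\rangle,\langle p\rangle)\circ\iota'_{mp^{k-1},N,j}$, and then compute $\lambda_*\,{}_c g_{0,1/mp^kN}$.

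The one substantive difference is the order in which you factor $\lambda$. The paper lowers the $\Gamma^0$-level first and the $\Gamma_1$-level second: first $Y(\Gamma_1(mp^kN)\cap\Gamma^0(mp^k))\to Y(\Gamma_1(mp^kN)\cap\Gamma^0(mp^{k-1}))$, which has degree $p$ for \emph{every} $k\ge 1$ (because the index is computed inside $\Gamma_1(mp^kN)\subset\Gamma_0(p)$), so the unit becomes a $p$-th power; then $Y(\Gamma_1(mp^kN)\cap\Gamma^0(mp^{k-1}))\to Y(\Gamma_1(mp^{k-1}N)\cap\Gamma^0(mp^{k-1}))$, which is exactly the map $Y(M,N')\to Y(M,N)$ to which Theorem~\ref{thm:siegel-compat} applies directly. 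Your reverse order works for $k\ge 2$, but for $k=1$ your ``second projection'' $Y(\Gamma_1(mN)\cap\Gamma^0(mp))\to Y(\Gamma_1(mN)\cap\Gamma^0(m))$ has degree $p+1$, not $p$ (since $p\nmid mN$), so the very step you flag as requiring care would give the wrong power. Swapping to the paper's order fixes this and makes the argument uniform in $k$.
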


    \begin{proof}
     Contemplating diagram \eqref{eq:D4x} we know that $\Delta_4$ is equal to the pushforward of ${}_c g_{0, 1/mp^{k-1} N}$ from $Y(\Gamma_1(mp^k N) \cap \Gamma^0(mp^k))$ to $Y_1(N)^2$ along the map
     \[ z \mapsto \left( \begin{pmatrix} 1 & 0 \\ 0 & p \end{pmatrix} \gamma \begin{pmatrix} 1 & 0 \\ 0 & mp^{k} \end{pmatrix} z, \begin{pmatrix} 1 & 0 \\ 0 & p \end{pmatrix} \gamma \begin{pmatrix} 1 & j \\ 0 & mp^{k} \end{pmatrix} z\right).\]
     Since $\begin{pmatrix} 1 & 0 \\ 0 & p \end{pmatrix} \gamma = \langle p \rangle \begin{pmatrix} p & 0 \\ 0 & 1 \end{pmatrix}$, this is simply
     \[ z \mapsto (\langle p \rangle, \langle p \rangle) \cdot \left( \begin{pmatrix} 1 & 0 \\ 0 & mp^{k-1} \end{pmatrix} z, \begin{pmatrix} 1 & j \\ 0 & mp^{k-1} \end{pmatrix} z\right).\]
     This evidently factors as the projection
     \[ \lambda: Y(\Gamma_1(mp^k N) \cap \Gamma^0(mp^k)) \to Y(\Gamma_1(mp^{k-1} N) \cap \Gamma^0(mp^{k-1})\]
     composed with the map $(\langle p \rangle, \langle p \rangle) \circ \iota'_{mp^{k-1}, N, j}$.

     On the other hand, the pushforward of ${}_c g_{0, 1/mp^{k} N}$ from $Y(\Gamma_1(mp^k N) \cap \Gamma^0(mp^k))$ to $Y(\Gamma_1(mp^k N) \cap \Gamma^0(mp^{k-1}))$ is clearly $\left({}_c g_{0, 1/mp^{k}N}\right)^p$, since the degree of the map is $p$; and the pushforward from $Y(\Gamma_1(mp^k N) \cap \Gamma^0(mp^{k-1}))$ to $Y(\Gamma_1(mp^{k-1} N) \cap \Gamma^0(mp^{k-1}))$ maps ${}_c g_{0, 1/mp^k N}$ to ${}_c g_{0, 1/mp^k N}$ if $k \ge 2$ and to ${}_c g_{0, 1/mN} \cdot \left( {}_c g_{0, \text{``}p^{-1}\text{''}/mN}\right)^{-1}$ otherwise.
    \end{proof}

   \subsubsection{Evaluation of \texorpdfstring{$(S_p', \langle p^{-1} \rangle) {}_c \Xi_{mp^{k-1}, N, j}$}{the Sp term}}

    We now compute the image of ${}_c \Xi_{mp^{k-1}, N, j}$ under the Hecke operator $(S_p', \langle p \rangle)$.

    \begin{proposition}
     We have the following coset decompositions in $\SL_2(\Qp)$:
     \[ K \begin{pmatrix} 1 & j/m \\ 0 & 1 \end{pmatrix} K = K = K\begin{pmatrix} 1 & j/m \\ 0 & 1 \end{pmatrix} U^0(p^2)\]
     (where $K = \SL_2(\Zp)$); and
     \begin{multline*}
      K \begin{pmatrix} 1 & j/mp \\ 0 & 1 \end{pmatrix} K = K \begin{pmatrix} p^{-1} & 0 \\ 0 & p \end{pmatrix} K =
      K \begin{pmatrix} 1 & j/mp \\ 0 & 1 \end{pmatrix} U^0(p^2) \sqcup K \begin{pmatrix} p^{-1} & 1 \\ 0 & p \end{pmatrix} U^0(p^2)  \\
      \sqcup  K \begin{pmatrix} p^{-1} & \xi \\ 0 & p \end{pmatrix} U^0(p^2)\sqcup K \begin{pmatrix} p^{-1} & 0 \\ 0 & p \end{pmatrix}U^0(p^2).
     \end{multline*}
     where $\xi$ is any quadratic non-residue in $\Zp^\times$.
    \end{proposition}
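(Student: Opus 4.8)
The statement is purely local at $p$, so we work throughout in $\SL_2(\Qp)$. Recall that $p$ is odd and that $p \nmid mN$, so $m \in \Zp^\times$ and $j/m \in \Zp$; in particular $\stbt1{j/m}01 \in K = \SL_2(\Zp)$. This gives the first line immediately: $K\stbt1{j/m}01K = K$, and $K\stbt1{j/m}01U^0(p^2) = KU^0(p^2) = K$ since $U^0(p^2) \subseteq K$.

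For the second line, the plan is first to pin down the ambient $(K,K)$-double coset and then to refine it. By the elementary divisor theorem, a matrix $h \in \GL_2(\Qp)$ with $\det h \in \Zp^\times$ lies in $K\stbt{p^{-n}}00{p^n}K$, where $p^{n}$ is the content of $h$ (the gcd of its four entries). Since $\stbt1{j/(mp)}01$ has content $p^{-1}$ (its $(1,2)$-entry has valuation $-1$ and the other entries valuation $\ge 0$), we get $K\stbt1{j/(mp)}01K = K\stbt{p^{-1}}00pK$; the same computation shows that each of the four proposed representatives has determinant $1$ and content $p^{-1}$, hence lies in $K\stbt{p^{-1}}00pK$. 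So it remains to show that the four representatives lie in pairwise distinct $(K,U^0(p^2))$-double cosets and that together they exhaust $K\stbt{p^{-1}}00pK$.

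The cleanest way to organise this is via the Bruhat--Tits tree $\mathcal{T}$ of $\SL_2(\Qp)$, with standard vertex $v_0 = [\Zp^2]$ and $K = \operatorname{Stab}(v_0)$. One checks directly that $U^0(p^2)$ is the pointwise stabiliser of the length-$2$ geodesic $\gamma_0 \colon v_0 - v_1 - v_2$, where $v_1 = [p\Zp e_1 \oplus \Zp e_2]$ and $v_2 = [p^2\Zp e_1 \oplus \Zp e_2]$ (the conditions $g v_i = v_i$ for $i=0,1,2$ translate exactly to $g \in \SL_2(\Zp)$ with $p^2 \mid b$), and that $K\alpha K = K\stbt{p^{-1}}00pK$ iff $d(v_0, \alpha v_0) = 2$. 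Thus $K\backslash K\stbt{p^{-1}}00pK/U^0(p^2)$ is the set of $K$-orbits of length-$2$ geodesics $w_0 - w_1 - w_2$ whose initial vertex $w_0$ lies at distance $2$ from $v_0$. Fixing such a $w_0$ (they form a single $K$-orbit) and letting $w_0^-$ be the next vertex on the geodesic from $v_0$ to $w_0$, the stabiliser $K_{w_0}$ — a conjugate of $U^0(p^2)$ — is found to have exactly four orbits on the segments through $w_0$: (i) the backtracking segment $w_1 = w_0^-$, $w_2 = v_0$; (ii)--(iii) $w_1 = w_0^-$ with $w_2 \ne v_0$, where $K_{w_0}$ acts on the $p-1$ remaining directions at $w_0^-$ through multiplication by squares in $\FF_p^\times$ and so has two orbits, indexed by $\FF_p^\times/(\FF_p^\times)^2$; and (iv) $w_1 \ne w_0^-$, where $K_{w_0}$ acts on the $p$ forward directions at $w_0$ through a group containing all translations of $\FF_p$, hence transitively. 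A short matrix computation then identifies $\stbt{p^{-1}}00p$ with case (i), $\stbt1{j/(mp)}01$ with case (iv), and the two ``torus'' representatives $\stbt{p^{-1}}10p$, $\stbt{p^{-1}}\xi0p$ with cases (ii) and (iii) — the choice of a non-residue $\xi$ being exactly what distinguishes these two classes. Finally, counting left $K$-cosets confirms exhaustion: from $\stbt{p^{-1}}00p\,K\,\stbt p00{p^{-1}} \cap K = U^0(p^2)$ one gets that $K\stbt{p^{-1}}00pK$ contains $[K : U^0(p^2)] = p(p+1)$ left $K$-cosets, and the four orbits above account for all of them.

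The main obstacle is the orbit analysis underlying (i)--(iv): because $U^0(p^2)$ is not a parahoric subgroup, one cannot simply invoke Iwahori--Matsumoto as in Proposition \ref{prop:iwahori}, and the action of $U^0(p^2)$ on neighbouring vertices has to be worked out by hand — in particular to see that the quadratic-residue symbol genuinely separates $\stbt{p^{-1}}10p$ from $\stbt{p^{-1}}\xi0p$. (Alternatively one can bypass the tree entirely: $K\alpha U^0(p^2) = K\alpha'U^0(p^2)$ iff $\alpha'^{-1}K\alpha \cap U^0(p^2) \ne \emptyset$, which after clearing denominators becomes a finite question about Borel double cosets in $\SL_2(\ZZ/p^2)$; disjointness and exhaustion then reduce to the same elementary computations, now with congruences modulo $p^2$ in place of tree combinatorics, but the bookkeeping is no lighter.)
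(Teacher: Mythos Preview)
Your argument is correct. The paper does not actually supply a proof of this proposition: it is stated as a computational fact and immediately translated into its geometric consequence in the next proposition, so there is nothing to compare against at the level of detail.

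Your Bruhat--Tits tree approach is a clean way to fill this gap. Identifying $U^0(p^2)$ with the pointwise stabiliser of a length-$2$ geodesic turns the $(K,U^0(p^2))$-double coset decomposition of $K\stbt{p^{-1}}00pK$ into the orbit problem you describe, and your case analysis (i)--(iv) is right: in particular, the fact that $K_{w_0}$ acts on the $p-1$ ``sideways'' neighbours of $w_0^-$ through multiplication by squares in $\FF_p^\times$ is exactly what forces the two ``torus'' representatives $\stbt{p^{-1}}10p$ and $\stbt{p^{-1}}\xi0p$ to be inequivalent, and explains conceptually why a quadratic non-residue must appear. The count $1 + \tfrac{p-1}{2} + \tfrac{p-1}{2} + p^2 = p(p+1) = [K:U^0(p^2)]$ then gives exhaustion. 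The alternative route you mention via congruences modulo $p^2$ is closer in spirit to the other explicit coset manipulations in the paper (e.g.\ Proposition~\ref{prop:iwahori} and its corollary), but the tree picture is more illuminating here precisely because $U^0(p^2)$ is not parahoric.
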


    Geometrically this is expressed as follows:

    \begin{proposition}
     The preimage in $Y(\Gamma_1(N) \cap \Gamma^0(p^2)) \times Y_1(N)$ of $C_{mp^{k-1}, N, j}$ is:
     \begin{itemize}
      \item if $k = 1$, the single curve $E$ consisting of points of the form $(z, z + j/m)$, with degree $p(p + 1)$ over $C_{m, N, j}$;
      \item if $k = 2$, the union of four distinct curves $E_1, E_2, E_3, E_4$, with degrees $(p, \tfrac{p-1}{2}, \tfrac{p-1}{2}, 1)$ respectively over $C_{mp, N, j}$, where
      \begin{itemize}
       \item $E_1$ is the curve consisting of points of the form $(z, z + j/mp)$,
       \item the curve $E_2$ is the locus of points of the form $(\delta_2 z, z + j/mp)$ where $\delta_2$ is any matrix in $\Gamma_1(N)$ of the form $\begin{pmatrix} a & b \\ c & d \end{pmatrix}$ where $p \mid a$ and $\tfrac{a}{pb} - \tfrac{m}{j}$ is a quadratic residue mod $p$;
       \item the curve $E_3$ is the locus of points of the form $(\delta_3 z,  z + j/mp)$ where $\delta_3$ is any matrix in $\Gamma_1(N)$ of the form $\begin{pmatrix} a & b \\ c & d \end{pmatrix}$ where $p \mid a$ and $\tfrac{a}{pb} - \tfrac{m}{j}$ is a quadratic nonresidue mod $p$;
       \item the curve $E_4$ is the locus of points of the form $(\delta_4 z,  z + j/mp)$ where $\delta_4$ is any matrix in $\Gamma_1(N)$ of the form $\begin{pmatrix} a & b \\ c & d \end{pmatrix}$ where $p \mid a$ and $\tfrac{a}{pb} - \tfrac{m}{j} = 0 \bmod p$.
      \end{itemize}
     \end{itemize}
    \end{proposition}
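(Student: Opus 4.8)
The plan is to recognize this as the geometric shadow of the preceding proposition on double cosets in $\SL_2(\Qp)$, via precisely the dictionary used earlier to pass from the decomposition of $B \backslash \SL_2(\FF_p) / B$ to the components of $\pi_1^{-1}(C_{m,N,j})$. By Lemma \ref{lem:birational}, $C_{mp^{k-1},N,j}$ is the curve attached to the double coset $\Gamma_1(N)\,\stbt{1}{j/(mp^{k-1})}{0}{1}\,\Gamma_1(N)$, and by Lemma \ref{lem:doublecosetpullback} its preimage in $Y(\Gamma_1(N)\cap\Gamma^0(p^2)) \times Y_1(N)$ is the disjoint union of the curves indexed by the double cosets $\Gamma_1(N)\,\beta_i\,(\Gamma_1(N)\cap\Gamma^0(p^2))$ whose union is that double coset. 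Since $\Gamma_1(N)$ has level prime to $p$, strong approximation (as in the corollary deduced from it above) identifies this index set with the set of $(K, U^0(p^2))$-double cosets inside $K\,\stbt{1}{j/(mp^{k-1})}{0}{1}\,K$, where $K = \SL_2(\Zp)$ and $U^0(p^2)$ is the level-$p^2$ congruence subgroup appearing in the preceding proposition; and that decomposition is exactly the content of that proposition. So the substantive computation is already done, and what remains is to carry the explicit representatives back to geometry and read off the degrees.

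For $k = 1$ the matrix $\stbt{1}{j/m}{0}{1}$ lies in $K$ (as $p \nmid m$), so $K\,\stbt{1}{j/m}{0}{1}\,K = K$ is a single $(K, U^0(p^2))$-double coset; hence a single component $E$, the image of $z \mapsto (z, z + j/m)$, of degree $[K : U^0(p^2)] = p(p+1)$ over $C_{m,N,j}$ (conjugating by an element of $K$ does not disturb the level-$p$ structure). For $k = 2$ the preceding proposition writes $K\,\stbt{1}{j/mp}{0}{1}\,K = K\,\stbt{p^{-1}}{0}{0}{p}\,K$ as a union of four $(K, U^0(p^2))$-double cosets with representatives $\stbt{1}{j/mp}{0}{1}$, $\stbt{p^{-1}}{1}{0}{p}$, $\stbt{p^{-1}}{\xi}{0}{p}$, $\stbt{p^{-1}}{0}{0}{p}$; choosing for each $i$ an auxiliary $\gamma_i \in \Gamma_1(N)$ with $\beta_i \in \Gamma_1(N)\,\stbt{1}{j/mp}{0}{1}\,\gamma_i$ and feeding it into diagram \eqref{eq:doublecosetpullback} produces the four curves in the stated explicit form. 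The quadratic-residue dichotomy between $E_2$ and $E_3$ is forced by the identity $\stbt{t}{0}{0}{t^{-1}}\stbt{p^{-1}}{x}{0}{p}\stbt{t^{-1}}{0}{0}{t} = \stbt{p^{-1}}{t^2 x}{0}{p}$: the double cosets $K\,\stbt{p^{-1}}{1}{0}{p}\,U^0(p^2)$ and $K\,\stbt{p^{-1}}{\xi}{0}{p}\,U^0(p^2)$ are distinguished precisely by whether the upper-right entry is a square modulo $p$, and, tracing through the conjugations defining the $\gamma_i$, this parameter becomes (up to a $p$-adic unit) the quantity $\tfrac{a}{pb} - \tfrac{m}{j}$ appearing in the statement; the coset of $\stbt{p^{-1}}{0}{0}{p}$ is the degenerate case $\tfrac{a}{pb} - \tfrac{m}{j} \equiv 0$, giving $E_4$, while $\stbt{1}{j/mp}{0}{1}$ gives the diagonal curve $E_1$.

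The degrees of $E_1, \dots, E_4$ over $C_{mp,N,j}$ are then extracted from the corresponding local indices (of the shape $[U^0(p^2) : U^0(p^2) \cap \beta_i^{-1} K \beta_i]$, up to prime-to-$p$ contributions), computed exactly as in Proposition \ref{prop:degrees} but now at Iwahori level $p^2$: the degenerate coset gives degree $1$, the two generic cosets each acquire a factor $\tfrac{p-1}{2}$ — rather than a pure power of $p$ — because the relevant intersection involves the squares in the torus $(\ZZ/p\ZZ)^\times$, and the coset of $\stbt{1}{j/mp}{0}{1}$ gives degree $p$ (it occurs in the scheme-theoretic preimage with multiplicity $p$, consistently with $p \cdot p + \tfrac{p-1}{2} + \tfrac{p-1}{2} + 1 = p(p+1) = [K:U^0(p^2)]$). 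Distinctness of the four curves, and that they exhaust the preimage, follows from the disjointness and exhaustiveness of the four local double cosets, or directly from Lemma \ref{lem:distinctcurves}.

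I anticipate the main obstacle to be the bookkeeping in the $k=2$ case: transporting the four abstract $(K, U^0(p^2))$-double cosets back to curves presented in the "$(\delta z,\, z + j/mp)$" normal form while keeping track of which of the two factors carries the $\Gamma^0(p^2)$-structure, and in particular verifying that the non-residue parameter $\xi$ corresponds precisely to the geometric invariant $\tfrac{a}{pb} - \tfrac{m}{j}$. The accompanying degree computation at level $p^2$ (the generalization of Proposition \ref{prop:degrees}) is the other mildly delicate point, though it is a purely local and elementary matrix calculation.
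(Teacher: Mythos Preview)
Your approach is essentially the paper's own: both derive the decomposition from the preceding $(K, U^0(p^2))$-double-coset proposition via the general dictionary of Lemmas \ref{lem:birational}, \ref{lem:doublecosetpullback} and \ref{lem:distinctcurves}, with the paper additionally recording the explicit identity
\[
 \tbt{1}{\tfrac{j}{mp}}{0}{1} \;=\; \tbt{1}{0}{\tfrac{pm}{j}}{1}\,\tbt{\tfrac{1}{p}}{\xi}{0}{p}\,\tbt{p+\tfrac{p\xi m}{j}}{\tfrac{j}{m}}{-\tfrac{m}{j}}{0}
\]
to pin down which $\xi$ corresponds to which $\delta_i$ (this is exactly the bookkeeping you flagged as the main obstacle).

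One point in your elaboration is off: the claim that $E_1$ ``occurs in the scheme-theoretic preimage with multiplicity $p$'' cannot be right, since the covering $Y(\Gamma_1(N)\cap\Gamma^0(p^2)) \to Y_1(N)$ is \'etale (as $\Gamma_1(N)$ is torsion-free for $N\ge 4$), so the pullback of $C_{mp,N,j}$ is reduced. Your sanity check $p\cdot p + \tfrac{p-1}{2} + \tfrac{p-1}{2} + 1 = p(p+1)$ therefore does not have the interpretation you give it; the degrees should be read off directly from the indices $[\,U^0(p^2):U^0(p^2)\cap\beta_i^{-1}K\beta_i\,]$ rather than forced to match $[K:U^0(p^2)]$ by an ad hoc multiplicity. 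This does not affect the core argument, which is correct.
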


    \begin{proof}
     This follows from Lemma \ref{lem:distinctcurves} and the previous proposition, noting the identity
     \[ \tbt 1 {\tfrac{j}{mp}} 0 1 =
      \tbt 1 0 {\tfrac{pm}{j}} 1
      \tbt {\tfrac{1}{p}} \xi 0 p
      \tbt {p + \tfrac{p\xi m}{j}} {\tfrac jm} {-\tfrac m j} 0.
     \]
    \end{proof}

    \begin{proposition}
     In the case $k = 1$, the image of $E$ in $Y_1(N)^2$ under $(u, v) \mapsto (u/p^2, \langle p \rangle v)$ is the curve $C$ of Proposition \ref{prop:delta2term} above.

     In the case $k = 2$, the image of $E_1$ under this map is the curve $C$; the images of $E_2$ and $E_3$ are both $(\langle p \rangle, \langle p \rangle) C_{mp, N, j}$; and the image of $E_4$ is $(\langle p \rangle, \langle p \rangle) C_{m, N, \quot{p^{-1}}j}$.
    \end{proposition}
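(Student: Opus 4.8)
The plan is to pull everything back to $\cH\times\cH$ and compare parametrizations. Both assertions compare the image of an explicitly parametrized curve in $Y_1(N)^2$ under the morphism $\psi\colon(u,v)\mapsto(u/p^2,\langle p\rangle v)$ with another explicitly parametrized curve. Every curve involved --- the $E$ and the $E_i$, as well as the target curves $C$ (from Proposition \ref{prop:delta2term1}), $C_{mp,N,j}$ and $C_{m,N,\quot{p^{-1}}j}$ --- is the image of a map $\cH\to Y_1(N)^2$ of the form $z\mapsto(\alpha_1 z,\alpha_2 z)$ with $\alpha_i\in\GL_2^+(\QQ)$, through a suitable congruence quotient, and by Lemma \ref{lem:birational} together with the remarks after Definition \ref{def:qqbarmodel} such a curve is determined by this data up to a change of source variable $z\mapsto\gamma z$ and left multiplication of the $\alpha_i$ by elements of the relevant groups. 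So the first step is to record that on $\cH$ the map $u\mapsto u/p^2$ lifts to $z\mapsto\stbt 1 0 0{p^2}z$, and that $\langle p\rangle$ can be absorbed exactly as in the proof of Proposition \ref{prop:gammadoubleprime}: precomposing $\langle p\rangle$ with $z\mapsto\alpha z$ amounts to replacing $\alpha$ by $\gamma\alpha$ for a suitable $\gamma\in\SL_2(\ZZ)$ congruent to $\stbt {*}{*}0{p^{-1}}$ modulo $N$ and trivial away from $N$. After that, the whole argument is a matter of matching finite matrix data together with a routine local check at $p$ (using $p\nmid mN$) that the source congruence subgroups agree.

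Next I would treat the ``easy'' curves. For $k=1$, lifting $E$ by $z\mapsto(z,z+j/m)$, the map $\psi$ carries it to the locus of $(z/p^2,\langle p\rangle(z+j/m))$, and the substitution $z=w/m$ turns this into the image of $w\mapsto(w/(mp^2),\langle p\rangle(w+j)/m)$, which is precisely the parametrization of the curve $C$ recorded in Proposition \ref{prop:delta2term1}. For $k=2$ the curve $E_1$ is handled identically, with $z=w/(mp)$ in place of $z=w/m$, again reproducing the parametrization of $C$.

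Then I would turn to $E_2$, $E_3$, $E_4$, which are the loci of $(\delta_i z,\,z+j/mp)$ with $\delta_i=\stbt a b c d\in\Gamma_1(N)$ and $p\mid a$ (so $p\nmid b$, $p\nmid c$). Under $\psi$ the first coordinate becomes $\stbt 1 0 0 {p^2}\delta_i\cdot z$; writing $a=pa'$, one factors $\stbt 1 0 0 {p^2}\delta_i$ as a product of a scalar, a matrix representing $\langle p\rangle$ on $Y_1(N)$, and a standard degeneracy map, the last being the one attached to $z\mapsto z/p$ or to $z\mapsto pz$ according to whether $\tfrac a{pb}-\tfrac mj$ is a unit or is $\equiv 0$ modulo $p$ --- the same Iwahori dichotomy that underlies Proposition \ref{prop:iwahori} and the preceding proposition. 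In the unit case (curves $E_2$ and $E_3$) one then recovers, after reparametrization, the parametrization of $(\langle p\rangle,\langle p\rangle)C_{mp,N,j}$; the key point is that the quadratic residue / non-residue condition separating $E_2$ from $E_3$ lies in the kernel of the degree-$p$ map $\pi_2$, so $E_2$ and $E_3$ have the \emph{same} image, consistently with their degrees $\tfrac{p-1}2$ and $\tfrac{p-1}2$ over $C_{mp,N,j}$. In the remaining case (curve $E_4$) there is an extra honest factor $z\mapsto z/p$, which shifts the twist parameter from $j$ to $\quot{p^{-1}}j$ exactly as in the treatment of the ``$a$-term'' around \eqref{eq:zero term factorization}, yielding image $(\langle p\rangle,\langle p\rangle)C_{m,N,\quot{p^{-1}}j}$.

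The hard part will be the bookkeeping in this last step: factoring $\stbt 1 0 0 {p^2}\delta_i$ into a scalar, a diamond operator and a standard degeneracy map while keeping simultaneous control of the level-$N$ and level-$p^2$ congruence conditions, so as to be sure the images are exactly the asserted curves and not conjugates of them; in particular one must verify that the residue / non-residue condition distinguishing $E_2$ from $E_3$ really does become invisible after pushforward along $\pi_2$. Everything else is a long but mechanical matrix computation of precisely the kind already carried out in the proof of Proposition \ref{prop:gammadoubleprime}.
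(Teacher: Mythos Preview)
Your proposal is correct and follows the same approach as the paper. The paper's own proof is extremely terse --- it says only that the $k=1$ case and the $E_1$ assertion are ``clear'', and that ``the remaining statements are a fiddly double coset computation'' --- so your outline is in fact a considerably more detailed expansion of exactly the strategy the paper has in mind: lift to $\cH\times\cH$, match parametrizations for the easy cases, and for $E_2,E_3,E_4$ factor the matrix $\stbt{1}{0}{0}{p^2}\delta_i$ into a scalar, a diamond operator, and a degeneracy map, with the $E_4$ case picking up the extra shift $j\mapsto\quot{p^{-1}}j$ exactly as in the later ``Study of $\Theta_4$'' subsection.
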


    \begin{proof}
     The $k = 1$ case is clear, as is the assertion for $E_1$ in case $k = 2$. The remaining statements are a fiddly double coset computation.
    \end{proof}

    \begin{proposition}
     In the case $k = 1$, we have
     \[ (S_p', \langle p^{-1} \rangle) \cdot {}_c \Xi_{m, N, j} = (C, \beta_* {}_c g_{0, 1/m N}),\]
     in the notation of Proposition \ref{prop:delta2term}.

     In the case $k = 2$, we have
     \[ (S_p', \langle p^{-1} \rangle) \cdot {}_c \Xi_{pm, N, j} = \sum_{i = 1}^4 \Theta_i,\]
     where $\Theta_i$ is the term corresponding to the curve $E_i$ of the previous proposition, and
     \[ \Theta_1 = (C, \beta_* {}_c g_{0, 1/mpN}).\]
    \end{proposition}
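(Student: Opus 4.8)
The plan is to evaluate $(S_p', \langle p^{-1} \rangle)$ by realising it as the correspondence on $Y_1(N)^2$ whose source $s$ is the natural projection $Y(\Gamma_1(N) \cap \Gamma^0(p^2)) \times Y_1(N) \to Y_1(N)^2$ and whose target $t$ is the map $(u,v) \mapsto (u/p^2, \langle p^{-1}\rangle v)$ recorded in the preceding proposition, so that on the Gersten complex $(S_p', \langle p^{-1}\rangle) = t_* \circ s^*$. Write ${}_c \Xi_{mp^{k-1}, N, j} = \big(C_{mp^{k-1}, N, j}, \phi_0\big)$ with $\phi_0 = (\iota'_{mp^{k-1}, N, j})_* ({}_c g_{0, 1/mp^{k-1}N})$. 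One computes $s^*$ component by component over the irreducible components of $s^{-1}(C_{mp^{k-1}, N, j})$; by the preceding proposition this preimage is the single curve $E$ when $k = 1$, and the four distinct curves $E_1, E_2, E_3, E_4$ (of degrees $p, \tfrac{p-1}{2}, \tfrac{p-1}{2}, 1$ over $C_{mp, N, j}$) when $k = 2$, and that proposition also identifies the image $t(E_i)$ of each. Pushing forward by $t$ and using additivity in $\Gerst_2^1$ then gives $(S_p', \langle p^{-1}\rangle)\,{}_c\Xi_{pm, N, j} = \sum_{i=1}^{4} \Theta_i$ with $\Theta_i = \big(t(E_i),\ t_*(s^*\phi_0|_{E_i})\big)$ --- which is the asserted decomposition --- and the corresponding one-term formula in the case $k = 1$. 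It remains to identify $\Theta_1$ (and the single term when $k = 1$).

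For this I would run the argument of Proposition \ref{prop:delta2term1} essentially verbatim. The curve $E_1$ (resp.\ $E$, when $k = 1$) is the component of $s^{-1}(C_{mp^{k-1}, N, j})$ consisting of the points $(z, z + \tfrac{j}{mp^{k-1}})$, i.e.\ the one carrying no Atkin--Lehner flip; consequently $s$ and $t$ restrict, on the normalisation of $E_1$, to natural projection maps, up to conjugation by a suitable element of $\GL_2^+(\QQ)$ as in Corollary \ref{cor:pushpullhighernormrel}. Concretely, the normalisation of $E_1$ may be taken to be $Y(\Gamma_1(mp^{k-1}N) \cap \Gamma^0(mp^{k+1}))$, with $s$ factoring as the natural projection onto $Y(\Gamma_1(mp^{k-1}N) \cap \Gamma^0(mp^{k-1}))$ followed by $\iota'_{mp^{k-1}, N, j}$, and with $t$ equal to the map $\beta$ of Proposition \ref{prop:delta2term1}. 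The resulting square of modular curves is Cartesian up to birational equivalence --- the required equality of degrees $[\Gamma:\Gamma_1] = [\Gamma_2:\Gamma_1\cap\Gamma_2]$ being an elementary local computation at $p$ --- so Lemma \ref{lem:pushpull} gives $\Theta_1 = \big(C,\ \beta_*({}_c g_{0, 1/mp^{k-1}N})\big)$. For $k = 2$ this reads $\Theta_1 = (C, \beta_* {}_c g_{0, 1/mpN})$, and for $k = 1$, where $C$ is the curve of Proposition \ref{prop:delta2term1} and $mp^{k-1} = m$, it reads $(S_p', \langle p^{-1}\rangle)\,{}_c\Xi_{m, N, j} = (C, \beta_* {}_c g_{0, 1/mN})$, as claimed.

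The main obstacle is precisely this last identification: one must match the geometry of $E_1$ --- a component of the preimage of a curve under a level-$p^2$ correspondence, twisted by the diamond operator $\langle p^{-1}\rangle$ on the second factor --- with the curve $C$ and the map $\beta$ coming out of the $\Delta_2$ computation, and verify that the relevant square is genuinely Cartesian so that $t_*$ does not introduce a spurious power of $\phi_0$ (contrast the degree-$p$ effects appearing for $\Delta_4$, and in the $k = 1$ branch of Proposition \ref{prop:delta2term1}, where Kato's formula \cite[2.13.2]{kato04} and the distribution relation \eqref{eq:dist2} must be invoked to compute the corrected pushforward). As throughout this section, this comes down to bookkeeping with double cosets of $\SL_2(\Zp)$ and of the subgroups $U$ and $U^0(p^2)$; the conceptual point is simply that $\Theta_1$ is, tautologically, the norm-compatible part of $(S_p', \langle p^{-1}\rangle)\,{}_c\Xi_{pm, N, j}$, with the evaluation of $\Theta_2, \Theta_3, \Theta_4$ postponed to the propositions that follow.
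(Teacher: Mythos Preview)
Your proposal is correct and follows essentially the same argument as the paper: realise $(S_p', \langle p^{-1}\rangle)$ as $t_* \circ s^*$, decompose over the preimage components $E$ (resp.\ $E_1, \dots, E_4$) supplied by the preceding proposition, and for the ``identity'' component use Lemma~\ref{lem:pushpull} to commute pullback and pushforward in the Cartesian square so that the composite on the normalisation is exactly the map $\beta$ of Proposition~\ref{prop:delta2term1}. The paper's proof is just a terser version of this, phrased as ``an argument using Lemma~\ref{lem:pushpull} in a familiar manner'' together with the observation that the composition of the two maps is $\beta$.
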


    \begin{proof}
     An argument using Lemma \ref{lem:pushpull} in a familiar manner shows that the pullback of $(\iota_{mp^{k-1}, N, j}')({}_c g_{0, 1/mp^{k-1}})$ to $E$ coincides with the pushforward of ${}_c g_{0, 1/mp^{k-1}}$ along the map
     \[ \begin{array}{rcl}Y(\Gamma_1(mp^{k-1} N) \cap \Gamma^0(mp^{k+1})) &\rTo& E\\
         z & \rMapsto & \left(\tfrac{z}{mp^{k-1}}, \tfrac{z + j}{mp^{k-1}}\right)
        \end{array}
     \]
     So the pushforward of this along the map $E \to Y_1(N)^2$ given by $(u, v) \mapsto (u/p^2, \langle p \rangle v)$ is $\left(C, \beta_*\, {}_c g_{0, 1/mN}\right)$, since the composition of these two maps is $\beta$.
    \end{proof}

    Combining the preceding proposition with Proposition \ref{prop:delta2term}, we have $\Delta_2 = (S_p', \langle p^{-1} \rangle) \cdot {}_c \Xi_{m, N, j} - \Delta_2'$, where
    \[ \Delta_2' =
     \begin{cases}
      \left(C, \beta_* \begin{pmatrix} 1 & 0 \\ 0 & p\end{pmatrix}^* {}_c g_{0, \text{``}p^{-1}\text{''}/mN}\right) & \text{if $k = 1$,}\\
      \Theta_2 + \Theta_3 + \Theta_4 & \text{if $k = 2$.}
     \end{cases}
    \]
    We may express the $k = 1$ case equivalently as
    \[ \Delta_2' = \left(C, \beta'_* {}_c g_{0, \text{``}p^{-1}\text{''}/mN}\right)\]
    where $\beta'$ is the map $Y(\Gamma_1(mN) \cap \Gamma_0(p) \cap \Gamma^0(mp)) \to C$ given by $z \mapsto \left(\tfrac{z}{mp}, \langle p \rangle \cdot \tfrac{pz + j}{m}\right)$.

    We have seen this map before: we showed above in proposition \ref{prop:gammadoubleprime} that there was a commutative diagram
    \begin{diagram}
     Y(\Gamma_1(mN) \cap \Gamma_0(p) \cap \Gamma^0(mp)) & \rTo^{\gamma''} & Y(\Gamma_1(mN) \cap \Gamma^0(m) \cap U_{j'}) \\
     \dTo^{z \mapsto \left(\tfrac{z}{mp}, \tfrac{pz + j}{m}\right)} &&
     \dTo_{z \mapsto \left(\tfrac{z}{mp}, \tfrac{z + j'}{mp}\right)}\\
     Y_1(N)^2 & \rTo^{1 \times \langle p \rangle} & Y_1(N)^2
    \end{diagram}
    where $\gamma''$ is a suitable element of $\Gamma_1(mN)$ (in fact of $\Gamma_1(mN) \cap \Gamma^0(mp)$, although we do not need this), $j' = p^{-1}j \bmod m$ is invertible modulo $p$, and $U_{j'}$ is the preimage in a conjugate of the diagonal torus in $\SL_2(\FF_p)$.

    \begin{proposition}[Evaluation of $\Delta_2'$]
     \label{prop:delta2term}
     For $k = 1$ we have
     \[ \Delta_2' = (\langle p^{-1} \rangle, \langle p^{-1} \rangle) \sigma_p^{-1} \left( (T_p', T_p') - \sigma_p - p (\langle p^{-1} \rangle, \langle p^{-1} \rangle)\sigma_p^{-1}\right) {}_c \Xi_{m, N, j}, \]
     and consequently
     \[ \Delta_2 = \left[(S_p', \langle p^{-1} \rangle)  - (\langle p^{-1} \rangle, \langle p^{-1} \rangle) \sigma_p^{-1} \left( (T_p', T_p') - \sigma_p - p (\langle p^{-1} \rangle, \langle p^{-1} \rangle)\sigma_p^{-1}\right)\right] \cdot {}_c \Xi_{m, N, j} .\]
    \end{proposition}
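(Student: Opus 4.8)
The plan is to identify $\Delta_2'$ (in the case $k=1$) with the right-hand side of Corollary~\ref{cor:TpTpterm}, but with the index $j$ replaced by the twist $p^{-1}j \bmod m$, and then to translate that twist back into the action of $\sigma_p$ and $\langle p^{-1}\rangle$ by means of the Galois-equivariance properties in Proposition~\ref{prop:BFeltproperties}.

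First I would fix an integer $j'$ congruent to $p^{-1}j$ modulo $m$ and to $1$ modulo $p$; this is the lift playing the role of ``$k$'' in Proposition~\ref{prop:gammadoubleprime}, and simultaneously the ``$b$''-type lift of $j'$ that appears in Corollary~\ref{cor:TpTpterm}. Using the identity of subgroups $\Gamma_1(mN)\cap\Gamma_0(p)\cap\Gamma^0(mp) = \Gamma_1(mN)\cap\Gamma^0(m)\cap\Gamma_0^0(p)$, the source of the map $\beta'$ defining $\Delta_2'$ is exactly the source of the left vertical arrow in the diagram of Proposition~\ref{prop:gammadoubleprime}, and that diagram shows $\beta' = (\pi_2\circ\lambda_{j'})\circ\gamma''$ for some $\gamma''\in\Gamma_1(mN)$, where $\pi_2\circ\lambda_{j'}\colon Y(\Gamma_1(mN)\cap\Gamma^0(m)\cap U_{j'})\to C_{mp,N,j'}$ is precisely the map occurring on the right-hand side of Corollary~\ref{cor:TpTpterm}. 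Since ${}_c g_{0,\quot{p^{-1}}/mN}$ descends to a unit on $Y_1(mN)$ and $\gamma''$ lies in $\Gamma_1(mN)$, the birational map $\gamma''$ is an isomorphism of coverings of $Y_1(mN)$, so $\gamma''_*\,{}_c g_{0,\quot{p^{-1}}/mN} = {}_c g_{0,\quot{p^{-1}}/mN}$; hence $\Delta_2' = \big(C_{mp,N,j'},(\pi_2\circ\lambda_{j'})_*\,{}_c g_{0,\quot{p^{-1}}/mN}\big)$. Applying Corollary~\ref{cor:TpTpterm} with $j'$ in place of $j$ and $\alpha=\quot{p^{-1}}$ (legitimate, since $\quot{p^{-1}}\in(\ZZ/mN\ZZ)^\times$), this equals $\big((T_p',T_p') - \sigma_p - p(\langle p^{-1}\rangle,\langle p^{-1}\rangle)\sigma_p^{-1}\big)\,{}_c\Xi_{m,N,j',\quot{p^{-1}}}$.

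Next I would simplify the twisted element ${}_c\Xi_{m,N,j',\quot{p^{-1}}}$. Using the formula ${}_c\Xi_{m,N,j',t} = (\langle t\rangle,\langle t\rangle)\,\sigma_t^2\,{}_c\Xi_{m,N,j'}$ noted above (which follows from Proposition~\ref{prop:BFeltproperties}(3)--(4)) with $t=\quot{p^{-1}}$, together with Proposition~\ref{prop:BFeltproperties}(3) in the shape ${}_c\Xi_{m,N,j'} = {}_c\Xi_{m,N,p^{-1}j} = \sigma_p\,{}_c\Xi_{m,N,j}$, gives ${}_c\Xi_{m,N,j',\quot{p^{-1}}} = (\langle p^{-1}\rangle,\langle p^{-1}\rangle)\,\sigma_p^{-2}\,\sigma_p\,{}_c\Xi_{m,N,j} = (\langle p^{-1}\rangle,\langle p^{-1}\rangle)\,\sigma_p^{-1}\,{}_c\Xi_{m,N,j}$. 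Since the diamond operators and the cyclotomic Galois action commute with $T_p'$ and with $\sigma_p$, the factor $(\langle p^{-1}\rangle,\langle p^{-1}\rangle)\sigma_p^{-1}$ can be pulled to the front, which yields the first displayed formula of the proposition. The ``consequently'' assertion then follows by substituting this into the identity $\Delta_2 = (S_p',\langle p^{-1}\rangle)\,{}_c\Xi_{m,N,j} - \Delta_2'$ established above.

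The main obstacle here is bookkeeping rather than substance: one must choose the correct lift of $p^{-1}j$ so that the subgroup $U_{j'}$ and the curve $C_{mp,N,j'}$ conform to the conventions of Proposition~\ref{prop:gammadoubleprime} and Corollary~\ref{cor:TpTpterm}, and one must keep precise track of every occurrence of $\sigma_p$ versus $\sigma_p^{-1}$ and of $\langle p^{-1}\rangle$, since this is where inverse errors would most easily slip in. All of the genuine geometric content --- that the ad hoc map $\beta'$ coincides, up to an element of $\Gamma_1(mN)$ acting on the source, with the Hecke-correspondence map $\pi_2\circ\lambda_{j'}$ --- is already packaged in Proposition~\ref{prop:gammadoubleprime}.
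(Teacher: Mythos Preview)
Your argument is correct and is essentially the same as the paper's: the discussion immediately preceding the proposition already identifies $\beta'$ with $(\pi_2\circ\lambda_{j'})\circ\gamma''$ via the commutative diagram of Proposition~\ref{prop:gammadoubleprime}, and the proposition is then an immediate consequence of Corollary~\ref{cor:TpTpterm} applied at the shifted index $p^{-1}j$ together with the twisting formula ${}_c\Xi_{m,N,j,t}=(\langle t\rangle,\langle t\rangle)\sigma_t^2\,{}_c\Xi_{m,N,j}$. You have simply written out in full the bookkeeping that the paper leaves implicit.
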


    We now consider $\Delta_3$. By applying the automorphism of $Y_1(N)^2$ which switches the two factors, and running through essentially the same argument as above, we see that:

    \begin{proposition}[Evaluation of $\Delta_3$]
     \label{prop:delta3term}
     For $k = 1$ we have
     \[ \Delta_3 = \left[(\langle p^{-1}\rangle, S_p') - (\langle p^{-1} \rangle, \langle p^{-1} \rangle) \sigma_p^{-1} \left( (T_p', T_p') - \sigma_p - p (\langle p^{-1} \rangle, \langle p^{-1} \rangle)\sigma_p^{-1}\right)\right] \cdot {}_c \Xi_{m, N, j} .\]
    \end{proposition}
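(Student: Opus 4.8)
The plan is to obtain the formula for $\Delta_3$ from that for $\Delta_2$ (Proposition \ref{prop:delta2term}) by transporting the whole computation through the involution $\rho$ of $Y_1(N)^2 \otimes \QQ(\mu_{mp})$ which interchanges the two factors. First I would record how $\rho^*$ acts on the operators in sight: since $\rho$ is defined over $\QQ$ it commutes with the Galois action, so $\rho^*\sigma_p = \sigma_p\rho^*$; it fixes every ``balanced'' operator, in particular $(\langle p^{-1}\rangle, \langle p^{-1}\rangle)$ and $(T_p', T_p')$; it interchanges $(S_p', \langle p^{-1}\rangle)$ with $(\langle p^{-1}\rangle, S_p')$, since conjugating a correspondence on $Y_1(N)^2$ by the factor swap exchanges the two factors on which the constituent Hecke operators act; and by Proposition \ref{prop:BFeltproperties}(2) it satisfies $\rho^* {}_c\Xi_{m, N, j} = {}_c\Xi_{m, N, -j}$.

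Next I would check that $\Delta_3$, formed with the parameter $j$, is the $\rho^*$-image of the $\Delta_2$-term computed with $-j$ in place of $j$. Writing $\alpha_1 = \stbt 1 {j/(mp^k)} 0 1$, the curve $D_3$ is the locus of points $(z, \alpha_1 \gamma^{-1} z)$, so $\rho(D_3)$ is the locus of $(w, \gamma\alpha_1^{-1} w)$ with $\gamma\alpha_1^{-1} = \gamma\, \stbt 1 {-j/(mp^k)} 0 1$; as $\gamma^{-1}$ is again $\equiv \stbt 0 * * * \pmod p$, this is exactly the curve ``$D_2$ for $-j$'', with $\gamma^{-1}$ in the role of $\gamma$. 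The two degeneracy maps of the Hecke correspondence act symmetrically on the two factors and hence commute with $\rho$; the modular unit entering $\Delta_3$ is a pullback of ${}_c g_{0, 1/mp^k N}$ along a projection to $Y_1(N)$, and $\rho$ interchanges that projection with the other one; and $\rho \circ \iota'_{mp^k, N, j}$ differs from $\iota'_{mp^k, N, -j}$ only by the translation $z \mapsto z + j$ of the source curve, which normalizes the relevant congruence subgroup and fixes ${}_c g_{0, 1/mp^k N}$. Assembling these facts, every push/pull step in the derivation leading to Proposition \ref{prop:delta2term} has a $\rho$-mirror, and one concludes that $\Delta_3$ equals $\rho^*$ applied to the $\Delta_2$-term formed with $-j$ in place of $j$.

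Finally, applying $\rho^*$ to the identity of Proposition \ref{prop:delta2term} with $j$ replaced by $-j$ and using the list above, the right-hand side turns into
\[
\left[(\langle p^{-1}\rangle, S_p') - (\langle p^{-1}\rangle, \langle p^{-1}\rangle)\sigma_p^{-1}\Bigl((T_p', T_p') - \sigma_p - p(\langle p^{-1}\rangle, \langle p^{-1}\rangle)\sigma_p^{-1}\Bigr)\right] {}_c\Xi_{m, N, j},
\]
which is the claimed formula. I expect the only real work to be the bookkeeping of the middle paragraph: one must verify that each intermediate modular curve, degeneracy map and Siegel unit used to prove Proposition \ref{prop:delta2term} reappears, correctly reindexed, under $\rho$, and in particular that the substitution $j \leftrightarrow -j$ leaves no trace in the final answer. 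It does not, since none of $\langle p^{-1}\rangle$, $T_p'$, $S_p'$, $\sigma_p$ depends on $j$, while the $j$-dependence of ${}_c\Xi_{m, N, -j}$ is precisely cancelled by $\rho^*$.
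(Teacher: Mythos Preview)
Your approach is exactly the paper's: the authors also obtain $\Delta_3$ by applying the factor-swap involution to the $\Delta_2$ computation, and indeed invoke the same mechanism again later (in the $k=2$ case) to pass from $\Theta_i$ to $\Theta_i'$. One small slip: in your middle paragraph, $\rho(D_3)$ is the locus of $(w,\gamma\alpha_1^{-1}w)$, so it is $\gamma$ itself, not $\gamma^{-1}$, that plays the role of ``$\gamma$'' in $D_2^{(-j)}$ (and in general $\gamma^{-1}$ need not have top-left entry $\equiv 0 \pmod p$); this does not affect the argument, since $\gamma$ already satisfies the required congruence.
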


    We now have all the ingredients necessary for the proof in the case $k = 1$, which will be carried out in \S \ref{sect:endofproof} below. However, for $k=2$ there are a few more ingredients we will need.

   \subsubsection{Study of \texorpdfstring{$\Theta_4$}{Theta4}}

    Let us now consider the term $\Theta_4$ that arises for $k = 2$. Recall that $\delta_4$ was any element of $\Gamma_1(N)$ satisfying a certain congruence modulo $p$; we may use strong approximation to make additional congruence assumptions modulo primes away from $p$, so we shall assume that $\delta_4 = \tbt{pa}{b}{mNc}{d}$ with $ja = mb \bmod p$.

    For brevity, we shall write $\Gamma(M, N)$ for the group
    \[ \left\{ \tbt a b c d \in \SL_2(\ZZ): \begin{array}{c} a = 1, b = 0 \bmod M \\ c = 0, d = 1 \bmod N \end{array}\right\}.\]

    \begin{proposition}
     There is a commutative diagram
     \begin{diagram}
      Y(\Gamma(mp, mpN)) & \rTo^{z \mapsto\left(\delta_4 \tbt100{mp}z, \tbt1j0{mp}z\right)} & E_4 \\
      \dTo^{z \mapsto \varepsilon z} & & \dTo_{(u, v) \mapsto \left(\tfrac{z}{p^2}, \langle p \rangle z\right)}\\
      Y(\Gamma(mp, mpN)) & \rTo^{(\langle p \rangle, \langle p \rangle) \iota'_{mp, N, j_0}} & Y_1(N)^2
     \end{diagram}
     where $\varepsilon$ is a suitably chosen element of $\Gamma_1(N)$ and $j_0$ is the unique integer congruent to $j$ modulo $m$ and to 0 modulo $p$.
    \end{proposition}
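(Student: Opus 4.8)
The plan is to prove this exactly along the lines of Proposition \ref{prop:gammadoubleprime}: reduce the commutativity of the square to an explicit identity of matrices in $\GL_2^+(\QQ)$, and then invoke strong approximation for $\SL_2(\ZZ)$ to realise the auxiliary matrix $\varepsilon$ inside $\Gamma_1(N)$ with the congruence conditions at $p$ needed for $z \mapsto \varepsilon z$ to descend to an endomorphism of $Y(\Gamma(mp, mpN))$. Throughout I use the dictionary of Definition \ref{def:qqbarmodel} and Lemma \ref{lem:birational} to translate between morphisms of these modular curves and the corresponding $\GL_2^+(\QQ)$-actions on $\cH$.

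First I would lift the two composites around the square to maps $\cH \to \cH \times \cH$, writing $\delta_4 = \stbt{pa}{b}{mNc}{d}$ and fixing a matrix in $\Gamma_0(N)$, still denoted $\langle p\rangle$, inducing the diamond operator on $Y_1(N)$. Going across the top and down the right-hand side sends $z$ to $\bigl(\tfrac1{p^2}\delta_4\stbt{1}{0}{0}{mp}z,\ \langle p\rangle\stbt{1}{j}{0}{mp}z\bigr)$, while going down the left-hand side and across the bottom sends $z$ to $\bigl(\langle p\rangle\stbt{1}{0}{0}{mp}\varepsilon z,\ \langle p\rangle\stbt{1}{j_0}{0}{mp}\varepsilon z\bigr)$. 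Since a point of $Y_1(N)^2(\CC)$ determines its lift to $\cH \times \cH$ only up to $\Gamma_1(N) \times \Gamma_1(N)$, commutativity is equivalent to the existence of $\varepsilon$ together with $\mu_1, \mu_2 \in \Gamma_1(N)$ such that
\[
 \stbt{1}{0}{0}{p^2}\delta_4\stbt{1}{0}{0}{mp} = \mu_1\,\langle p\rangle\,\stbt{1}{0}{0}{mp}\,\varepsilon
 \qquad\text{and}\qquad
 \stbt{1}{j}{0}{mp} = \langle p\rangle^{-1}\mu_2\langle p\rangle\,\stbt{1}{j_0}{0}{mp}\,\varepsilon,
\]
the first identity being understood up to scalar matrices, which act trivially on $Y_1(N)$.

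The second identity merely expresses $\varepsilon$ in terms of $\mu_2$: it forces $\varepsilon$ to be $\Gamma_1(mpN)$-equivalent to an explicit lower-unipotent-type matrix governed by $\tfrac{j-j_0}{mp}$, and in particular pins down the reduction of $\varepsilon$ modulo $p$. One then has to check that this same $\varepsilon$ satisfies the first identity for a suitable $\mu_1$; expanding, this comes down to a short matrix identity in $\GL_2$ of the same shape as the one used in the proof of Proposition \ref{prop:gammadoubleprime}, specialised with $x = k/m$ and $y = j/m$ for an appropriate unit lift $k$ of $j$ modulo $p$. It is precisely here that the congruence $ja \equiv mb \pmod p$ imposed on $\delta_4$ enters: it is exactly the condition making the $p$-adic data coming from the first factor compatible with the reduction of $\varepsilon$ forced by the second.

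The main obstacle is the final bookkeeping step: checking that $\varepsilon$, \emph{a priori} only an element of $\GL_2^+(\QQ)$, can be chosen in $\Gamma_1(N)$ and normalising $\Gamma(mp, mpN)$. As in Proposition \ref{prop:gammadoubleprime} and the earlier double-coset computations, one arranges this by choosing $\delta_4$, $\mu_1$, $\mu_2$ and $\varepsilon$ simultaneously via the Chinese Remainder Theorem and strong approximation: prescribe their reductions modulo $p$ (using $ja \equiv mb$) and modulo $mN$ (so that $\mu_1,\mu_2 \in \Gamma_1(N)$ and $\varepsilon$ normalises $\Gamma(mp, mpN)$), and take everything $\ell$-adically close to the identity for the finitely many $\ell \nmid mpN$, so that the rational matrix $\varepsilon = \stbt{1}{j_0}{0}{mp}^{-1}\bigl(\langle p\rangle^{-1}\mu_2\langle p\rangle\bigr)^{-1}\stbt{1}{j}{0}{mp}$ lands in $\SL_2(\ZZ)$. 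Once $\varepsilon$ is in hand, commutativity of the diagram is immediate from the two displayed identities.
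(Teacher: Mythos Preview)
Your overall strategy---reduce commutativity to the existence of $\varepsilon \in \Gamma_1(N)$ satisfying two coset conditions, then realise $\varepsilon$ via strong approximation---is the same as the paper's. But you miss two concrete points that the paper handles directly, and you substitute a hand-wave where the paper gives the actual computation.

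First, you do not check that the top horizontal arrow is well-defined. Since $E_4$ lives in $Y(\Gamma_1(N) \cap \Gamma^0(p^2)) \times Y_1(N)$, one must verify that $\delta_4 \stbt{1}{0}{0}{mp}$ conjugates $\Gamma(mp, mpN)$ into $\Gamma^0(p^2)$. The paper does this via the factorisation $\delta_4 \stbt{1}{0}{0}{mp} = \stbt{p}{0}{0}{m} \delta_4'$ with $\delta_4' = \stbt{a}{mb}{Nc}{d} \in \Gamma_0(N)$; since $\delta_4'$ normalises $\Gamma(mp, mpN)$, the conjugate is $\stbt{m}{0}{0}{p} \Gamma(mp, mpN) \stbt{m}{0}{0}{p}^{-1} \subseteq \Gamma^0(p^2)$.

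Second, and more importantly, this same factorisation is what makes the first coset condition tractable: it rewrites $\Gamma_1(N)\stbt{p^{-1}}{0}{0}{p}\delta_4\stbt{1}{0}{0}{mp}$ as $\Gamma_1(N)\stbt{1}{0}{0}{mp}\delta_4'$, so both conditions become statements about right cosets of $\stbt{1}{0}{0}{mp}$. The paper then reads off that $\varepsilon \equiv 1 \pmod{mN}$ and $\varepsilon \equiv \delta_4' \equiv \stbt{x}{jx}{*}{*} \pmod p$ suffices. Your appeal to ``a short matrix identity of the same shape as in Proposition~\ref{prop:gammadoubleprime}'' is not a substitute for this: that proposition concerns a different map (the $(S_p',\langle p^{-1}\rangle)$ term in the $k=1$ case) and its identity is not obviously the one needed here. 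Without the factorisation via $\delta_4'$, or an explicit replacement for it, your first displayed equation remains unverified.
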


    \begin{proof}
     Firstly, we note that $\Gamma_1(N)$ normalizes $\Gamma(mp, mpN)$, so the left-hand vertical arrow is well-defined. More subtly, the well-definedness of the top horizontal arrow follows from the inclusion
     \[ \delta_4 \tbt100{mp}\Gamma(mp, mpN) \tbt100{mp}^{-1}\delta_4^{-1} \subseteq \Gamma^0(p^2);\]
     indeed $\delta_4 \tbt100{mp} = \tbt p00m \delta_4'$ where $\delta_4' = \tbt a{mb}{Nc}d \in \Gamma_0(N)$ normalizes $\Gamma(mp, mpN)$, so
     \[ \delta_4 \tbt100{mp}\Gamma(mp, mpN) \tbt100{mp}^{-1}\delta_4^{-1} = \tbt m00p \Gamma(mp, mpN) \tbt m00p^{-1} \subseteq \Gamma_0(m^2N) \cap \Gamma^0(p^2).\]

     It remains to show that $\varepsilon$ may be chosen so that the diagram commutes. We need to choose $\varepsilon$ so that we have
     \[ \Gamma_1(N) \langle p \rangle \tbt 1 0 0 {mp} \varepsilon = \Gamma_1(N) \tbt{p^{-1}}00p \delta_4 \tbt 1 0 0 {mp} = \Gamma_1(N) \tbt100{mp} \delta_4',\]
     and so that
     \[ \Gamma_1(N) \tbt 1 {j_0} 0 {mp} \varepsilon = \Gamma_1(N) \tbt 1 j 0 {mp}\]
     where $j_0$ is the unique integer congruent to $j$ mod $m$ and $0$ mod $p$.

     These conditions are both satisfied if we take $\varepsilon$ to be congruent to 1 modulo $mN$, and to satisfy the same congruence modulo $p$ as $\delta_4'$, so $\varepsilon = \tbt{x}{jx}{*}{*} \bmod p$ for some $x$.
    \end{proof}

    \begin{corollary}
     We have
     \[ \Theta_4 = (\langle p^{-1} \rangle, \langle p^{-1} \rangle)^* \left( C_{m, N, \quot{p^{-1}}j}, (\iota'_{mp, N, j_0})_* (\varepsilon^{-1})^* {}_c g_{0, 1/mpN}\right).\]
    \end{corollary}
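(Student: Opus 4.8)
The plan is to combine the push-pull principle of Lemma \ref{lem:pushpull} with the commutative diagram of the preceding proposition, exactly in the spirit of the evaluation of $\Theta_1$ above. First, by the same argument that identified $\Theta_1$ — an application of Lemma \ref{lem:pushpull}, using here that $E_4 \to C_{mp, N, j}$ is an isomorphism — the term $\Theta_4$ is the class of the pair whose curve is the image of $E_4$ in $Y_1(N)^2$ under $(u, v) \mapsto (u/p^2, \langle p\rangle v)$, and whose modular unit is the pushforward of ${}_c g_{0, 1/mpN}$ along the composite of the map $Y(\Gamma(mp, mpN)) \to E_4$, $z \mapsto \left(\delta_4 \stbt 1 0 0 {mp} z, \stbt 1 j 0 {mp} z\right)$, with the map $E_4 \to Y_1(N)^2$, $(u,v) \mapsto (u/p^2, \langle p\rangle v)$.

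Next I would invoke the diagram of the preceding proposition to rewrite this composite as the composition of $z \mapsto \varepsilon z$ on $Y(\Gamma(mp, mpN))$ with the map $(\langle p\rangle, \langle p\rangle)\circ\iota'_{mp, N, j_0}: Y(\Gamma(mp, mpN)) \to Y_1(N)^2$. Since $\varepsilon \in \Gamma_1(N)$ normalizes $\Gamma(mp, mpN)$, the map $z \mapsto \varepsilon z$ is an automorphism, so pushforward along it equals $(\varepsilon^{-1})^*$; likewise $(\langle p\rangle, \langle p\rangle)$ is an automorphism of $Y_1(N)^2$, so pushforward along it equals $(\langle p^{-1}\rangle, \langle p^{-1}\rangle)^*$. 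Since pushforward along a composite is the composite of pushforwards, the modular unit carried by the image curve becomes $(\langle p^{-1}\rangle, \langle p^{-1}\rangle)^* (\iota'_{mp, N, j_0})_* (\varepsilon^{-1})^* {}_c g_{0, 1/mpN}$, and the image curve is $(\langle p\rangle, \langle p\rangle) C_{mp, N, j_0}$, that is, the preimage of $C_{mp, N, j_0}$ under $(\langle p^{-1}\rangle, \langle p^{-1}\rangle)$. Finally, applying the identification $C_{mp, N, j_0} = C_{m, N, \quot{p^{-1}}j}$ (valid because $j_0$ is the lift of $j$ to $\ZZ/mp\ZZ$ divisible by $p$, exactly as in \eqref{eq:zero term factorization}), the pair takes the asserted form $(\langle p^{-1}\rangle, \langle p^{-1}\rangle)^* \left(C_{m, N, \quot{p^{-1}}j}, (\iota'_{mp, N, j_0})_* (\varepsilon^{-1})^* {}_c g_{0, 1/mpN}\right)$.

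The step I expect to require the most care is the bookkeeping of the automorphisms. Unlike the elements $\gamma''$ appearing in the evaluation of $\Delta_1$ and in the $k = 1$ case, $\varepsilon$ is not congruent to the identity modulo $p$, so it genuinely contributes the nontrivial pullback $(\varepsilon^{-1})^*$ rather than acting trivially on ${}_c g_{0, 1/mpN}$; and one must verify that $(\langle p^{-1}\rangle, \langle p^{-1}\rangle)^*$ is the only residual diamond twist — i.e.~that it is not accidentally cancelled against, or double-counted with, the $\langle p^{-1}\rangle$ already built into the operator $(S_p', \langle p^{-1}\rangle)$ out of which $\Theta_4$ was defined. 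Once the diagram of the preceding proposition is in hand, however, this is purely formal, and no further geometric input is needed.
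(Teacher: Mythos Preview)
Your proposal is correct and is exactly the argument the paper intends: the Corollary is stated without proof because it follows by diagram-chasing from the commutative square of the preceding Proposition, together with the push-pull formalism of Lemma~\ref{lem:pushpull} (the degree-one map $E_4 \to C_{mp,N,j}$ making the pullback trivial to handle). Your identification $C_{mp,N,j_0} = C_{m,N,\quot{p^{-1}}j}$ and your treatment of the automorphisms $\varepsilon$ and $(\langle p\rangle,\langle p\rangle)$ are the only points that need unpacking, and you have done so correctly.
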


    Now we shall calculate the pushforward of $(\varepsilon^{-1})^* {}_c g_{0, 1/mpN}$ from $Y(\Gamma(mp, mpN))$ to $Y(\Gamma(m, mN))$.

    \begin{proposition}
     \label{prop:theta4}
     Let $\alpha, \beta \in \ZZ$ be such that $\alpha = 0, \beta = 1 \bmod mN$ and $\beta \ne 0 \bmod p$. Then the pushforward of ${}_c g_{\alpha/mpN, \beta/mpN}$ from $Y(\Gamma(mp, mpN))$ to $Y(\Gamma(m, mN))$ along the map $z \mapsto z/p$ is ${}_c g_{0, 1/mN} \cdot \left({}_c g_{0, \quot{p^{-1}}/mN}\right)^{-1}$, and hence
     \[ \Theta_4 = (\langle p^{-1}\rangle, \langle p^{-1} \rangle)\sigma_p (1 - (\langle p^{-1}\rangle, \langle p^{-1} \rangle) \sigma_p^{-2}) {}_c \Xi_{m, N, j}.\]
    \end{proposition}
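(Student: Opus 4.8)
The plan is to split the proof into two stages: first the modular-unit computation giving the pushforward formula, and then a purely formal deduction of the expression for $\Theta_4$ using the Corollary immediately preceding the Proposition together with the already-established properties of the generalized Beilinson--Flach elements (Proposition \ref{prop:BFeltproperties}).

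For the pushforward formula I would argue exactly as in the proof of Theorem \ref{thm:siegel-compat} and the Remark following it. The map $z \mapsto z/p \colon Y(\Gamma(mp,mpN)) \to Y(\Gamma(m,mN))$ factors as the natural projection $Y(\Gamma(mp,mpN)) \to Y(\Gamma(mp,mN))$ followed by the twisted degeneracy map $Y(\Gamma(mp,mN)) \to Y(\Gamma(m,mN))$, $z\mapsto z/p$; equivalently it may be identified, via Kato's auxiliary curves $Y(-,-(p))$ and $Y(-(p),-)$ of \cite[\S 2.8]{kato04} and the map $\vp_p$, with a composite of standard degeneracy maps. In either presentation one writes the pushforward of ${}_c g_{\alpha/mpN,\beta/mpN}$ as a product over an explicit set of coset representatives (of the shape $\stbt 1 0 0 {1+mNt}$ and $\stbt 1 t 0 1$), applies the identity $u^*{}_c g_{\gamma,\delta} = {}_c g_{(\gamma,\delta)u}$, and collapses the resulting product using the distribution relation \eqref{eq:dist2}; the hypothesis $\beta \not\equiv 0 \bmod p$ ensures every Siegel unit occurring is genuinely invertible, and the surviving factors are precisely ${}_c g_{0,1/mN}$ and $({}_c g_{0,\quot{p^{-1}}/mN})^{-1}$ — this is the $\ell \mid M$ analogue of Kato's norm relation recorded in the Remark after Theorem \ref{thm:siegel-compat}. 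I expect this step — correctly matching $z\mapsto z/p$ with the standard maps and keeping track of the index modulo $p$ — to be the main obstacle; everything else is bookkeeping.

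For the deduction, recall that the Corollary preceding the Proposition gives $\Theta_4 = (\langle p^{-1}\rangle,\langle p^{-1}\rangle)^*\big(C_{m,N,\quot{p^{-1}}j},\, (\iota'_{mp,N,j_0})_*(\varepsilon^{-1})^*{}_c g_{0,1/mpN}\big)$. Since $\varepsilon \equiv 1 \bmod mN$ and $\varepsilon$ is invertible modulo $p$, the unit $(\varepsilon^{-1})^*{}_c g_{0,1/mpN}$ equals ${}_c g_{\alpha/mpN,\beta/mpN}$ for suitable $\alpha\equiv 0$, $\beta\equiv 1\bmod mN$ with $\beta\not\equiv 0\bmod p$, so the pushforward formula applies. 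Using $j_0\equiv 0\bmod p$ and the identification $C_{mp,N,j_0} = C_{m,N,\quot{p^{-1}}j}$ (as in \eqref{eq:zero term factorization}), one factors $\iota'_{mp,N,j_0} = \iota'_{m,N,\quot{p^{-1}}j}\circ(z\mapsto z/p)$, whence $(\iota'_{mp,N,j_0})_*(\varepsilon^{-1})^*{}_c g_{0,1/mpN} = (\iota'_{m,N,\quot{p^{-1}}j})_*\big({}_c g_{0,1/mN}\cdot({}_c g_{0,\quot{p^{-1}}/mN})^{-1}\big)$, i.e. $\Theta_4 = (\langle p^{-1}\rangle,\langle p^{-1}\rangle)^*\big({}_c\Xi_{m,N,\quot{p^{-1}}j} - {}_c\Xi_{m,N,\quot{p^{-1}}j,\quot{p^{-1}}}\big)$. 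Finally, Proposition \ref{prop:BFeltproperties}(3) gives ${}_c\Xi_{m,N,\quot{p^{-1}}j} = \sigma_p\,{}_c\Xi_{m,N,j}$, while parts (3) and (4) together give ${}_c\Xi_{m,N,\quot{p^{-1}}j,\quot{p^{-1}}} = (\langle p^{-1}\rangle,\langle p^{-1}\rangle)^*{}_c\Xi_{m,N,pj} = (\langle p^{-1}\rangle,\langle p^{-1}\rangle)^*\sigma_p^{-1}\,{}_c\Xi_{m,N,j}$; substituting these yields $\Theta_4 = (\langle p^{-1}\rangle,\langle p^{-1}\rangle)\sigma_p\big(1 - (\langle p^{-1}\rangle,\langle p^{-1}\rangle)\sigma_p^{-2}\big)\,{}_c\Xi_{m,N,j}$, as claimed.
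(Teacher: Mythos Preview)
Your proposal is correct and follows essentially the same approach as the paper: the pushforward of the Siegel unit is computed by factoring the degeneracy map and invoking the norm relations of Theorem~\ref{thm:siegel-compat} (the paper's one-line proof does exactly this, passing through $Y(\Gamma_1(mpN)\cap\Gamma^0(m))$ rather than your $Y(\Gamma(mp,mN))$, but this is an inessential difference in bookkeeping), and the deduction of $\Theta_4$ via the preceding Corollary and Proposition~\ref{prop:BFeltproperties} is precisely the ``familiar territory'' the paper alludes to but does not spell out.
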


    \begin{proof}
     A calculation using Theorem \ref{thm:siegel-compat} shows that pushing forward to $Y(\Gamma_1(mpN) \cap \Gamma^0(m))$ gives ${}_c g_{\alpha/mN, \beta/mpN} = {}_c g_{0, \beta/mpN}$, and we are now in familiar territory.
    \end{proof}

   \subsubsection{Study of \texorpdfstring{$\Theta_2$ and $\Theta_3$}{Theta2 and Theta3}}

    Let $\delta$ be any element of $\Gamma_1(N) \cap \Gamma_0(mN)$ whose top left entry is divisible by $p$, so $\delta = \tbt{pa}{b}{mNc}{d}$ with $pa = d = 1 \bmod N$. Let $\delta' = \tbt{a}{mb}{Nc}{pd}$, so $\delta' \in \langle p \rangle \Gamma_1(N)$ and we have $\delta \tbt100{mp} = \tbt p00m \delta'$.

    Let $E_\delta$ be the locus of points in $Y(\Gamma_1(N) \cap \Gamma^0(p^2)) \times Y_1(N)$ of the form $(\delta z, z + j/{mp})$; this clearly maps to $C_{mp, N, j}$ under the natural projection to $Y_1(N)^2$. We then build the following (rather unwieldy) diagram of modular curves:

    \begin{diagram}
     Y(\Gamma(mp, mpN)) & \rTo^{\pr} &
     Y\left(\begin{array}{c}  (\delta')^{-1} \stbt100m^{-1} (\Gamma_1(N) \cap \Gamma_0^0(p)) \stbt 100m \delta' \\
             \cap \stbt1j0{mp}^{-1} \Gamma_1(N) \stbt1j0{mp}
            \end{array}\right)
     & \rTo^{\stbt p00m \delta', \stbt 1j0{mp}}_{\cong} & E_\delta\\
     \dTo_\cong^{\id} & & \dTo^{\pr} & & \dTo^{\pr} \\
     Y(\Gamma(mp, mpN)) & \rTo^{\pr} &
     Y\left(\begin{array}{c} \stbt100{mp}^{-1} \Gamma_1(N) \stbt 100{mp}\\
             \cap \stbt1j0{mp}^{-1} \Gamma_1(N) \stbt 1j0{mp}
            \end{array}\right)
     & \rTo^{\stbt100{mp}, \stbt1j0{mp}}_\cong & C_{mp, N, j}.
    \end{diagram}

    \begin{proposition}
     Suppose that $\delta' = \tbt {a'} {b'} {c'} {pd}$ with $aj - b \ne 0 \bmod p$. Then the intersection
     \[ \left[ (\delta')^{-1} \stbt100m^{-1} (\Gamma_1(N) \cap \Gamma_0^0(p)) \stbt 100m \delta' \cap \stbt1j0{mp}^{-1} \Gamma_1(N) \stbt1j0{mp}\right] \cap \Gamma(m, mN) \]
     consists precisely of those matrices in $\Gamma(m, mN)$ which are congruent to $\pm 1$ modulo $p$.
    \end{proposition}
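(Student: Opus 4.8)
The plan is to reduce the statement to an elementary computation in $\SL_2(\FF_p)$. Set $M = \stbt 1 0 0 m \delta'$, so that $\stbt p 0 0 m \delta' = \delta \stbt 1 0 0 {mp}$ and $\det M = m$; since $p \nmid m$ we have $M \in \GL_2(\Zp)$, and I record the two facts $\Gamma(m,mN) \cap \Gamma(p) = \Gamma(mp, mpN)$ and $\Gamma(m, mN) \twoheadrightarrow \SL_2(\FF_p)$ with kernel $\Gamma(mp, mpN)$ (strong approximation, using that $\Gamma(m,mN)$ has level dividing $mN$, which is prime to $p$). For $\gamma \in \Gamma(m, mN)$ write $\bar\gamma$ for the reduction mod $p$. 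I would then unwind the two membership conditions cutting out the intersection. Because $M$ is invertible over $\Zp$, the condition $M\gamma M^{-1} \in \Gamma_1(N) \cap \Gamma_0^0(p)$ imposes at $p$ only that $\bar M \bar\gamma \bar M^{-1}$ be diagonal, i.e.\ that $\bar\gamma$ lie in the split maximal torus $T' := \bar M^{-1} T \bar M$, where $T$ is the diagonal torus of $\SL_2(\FF_p)$; writing $\delta' = \stbt {a'}{b'}{c'}{pd}$ one has $\bar M = \stbt {\overline{a'}}{\overline{b'}}{\overline{mc'}}{0}$ (the $(2,2)$-entry killed by $p\mid pd$), which is invertible mod $p$ since $\det\delta'=1$ forces $\overline{b'},\overline{c'}\neq 0$, and $T'$ fixes exactly the two points $[0:1]$ and $[-\overline{b'}:\overline{a'}]$ of $\mathbb{P}^1(\FF_p)$. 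The condition $\stbt 1 j 0 {mp} \gamma \stbt 1 j 0 {mp}^{-1} \in \Gamma_1(N)$ does restrict at $p$, since this matrix has determinant $mp$: a direct computation shows that $p$-integrality of the conjugate is equivalent to $\bar\gamma$ fixing the point $[-j:1]$, i.e.\ to $\bar\gamma$ lying in the Borel subgroup $B' \subseteq \SL_2(\FF_p)$ stabilising $[-j:1]$. The conditions at the remaining primes dividing $mN$ are a routine check to be implied by $\gamma \in \Gamma(m,mN)$ (this is exactly what makes the diagram preceding the statement a diagram of coverings). Hence the intersection with $\Gamma(m,mN)$ is precisely the preimage in $\Gamma(m,mN)$ of $T' \cap B' \subseteq \SL_2(\FF_p)$.

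It then remains to show $T' \cap B' = \{\pm 1\}$. Here I would invoke the elementary fact that a split maximal torus of $\SL_2(\FF_p)$ is contained in exactly the two Borels through its two fixed points on $\mathbb{P}^1$; so if $B'$ stabilises neither of the points fixed by $T'$, then any element of $T' \cap B'$ other than $\pm 1$, being semisimple and non-central, would have to fix three distinct points of $\mathbb{P}^1(\FF_p)$, which is impossible. Thus $T' \cap B' = \{\pm 1\}$ as soon as $[-j:1] \notin \{[0:1],\, [-\overline{b'}:\overline{a'}]\}$. Now $[-j:1] \neq [0:1]$ because $p \nmid j$ (which holds throughout this subsection), and $[-j:1] \neq [-\overline{b'}:\overline{a'}]$ unwinds to $\overline{a'}\,j \neq \overline{b'}$ in $\FF_p$, that is, to $a'j - b' \not\equiv 0 \pmod p$, which is the standing hypothesis. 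This proves the proposition.

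The main obstacle is the bookkeeping in the first paragraph: pinning down exactly which torus ($T' = \bar M^{-1} T \bar M$) and which Borel ($B'$ stabilising $[-j:1]$) arise, keeping track of the scalars $m$ in the fixed points $[0:1]$ and $[-\overline{b'}:\overline{a'}]$, and confirming that no further constraints survive at the primes dividing $mN$. This is of the same "fiddly double coset" flavour as the computations for $\Theta_2,\Theta_3,\Theta_4$ above; once the two local conditions are correctly identified as "$\bar\gamma$ lies in a conjugate split torus" and "$\bar\gamma$ lies in a Borel", both the conclusion and the exact role of the hypothesis $a'j - b' \not\equiv 0$ are immediate.
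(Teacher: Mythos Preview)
Your proposal is correct and follows the same conceptual line as the paper's proof. The paper carries out an explicit matrix computation: it conjugates so as to work inside $\Gamma_1(N)\cap\Gamma_0^0(p)$, where elements reduce mod $p$ to diagonal matrices $\stbt{x}{0}{0}{x^{-1}}$, then computes the $(1,2)$-entry of the relevant conjugate and finds it is a unit times $(x-x^{-1})$ under the stated hypothesis, forcing $x=\pm 1$. The paper then \emph{remarks} that conceptually this is the intersection of three Borel subgroups of $\SL_2(\FF_p)$ in general position, which is the centre.

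You take this conceptual viewpoint as the starting point rather than the afterthought: you identify the two membership conditions at $p$ directly as ``$\bar\gamma$ lies in a conjugate split torus $T'$'' and ``$\bar\gamma$ lies in a Borel $B'$'', and argue via fixed points on $\mathbb{P}^1(\FF_p)$ instead of computing a matrix entry. Since a split torus is the intersection of the two Borels through its fixed points, your $T'\cap B'$ is exactly the paper's ``three Borels'', and the hypothesis $a'j-b'\not\equiv 0\pmod p$ becomes transparently the condition that $B'$ is neither of the two Borels containing $T'$. Your treatment is slightly more self-contained in that you explicitly verify the conditions at primes dividing $mN$ are automatic for $\gamma\in\Gamma(m,mN)$, which gives both inclusions; the paper argues only the harder inclusion and relies on the surrounding diagram for the other. (Note also that, in your notation with $b'=mb$, your hypothesis $a'j-b'\not\equiv 0$ is indeed what the computation requires; the paper's displayed ``$aj-b$'' appears to be a minor slip for ``$aj-mb$''.)
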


    \begin{proof}
     It suffices to show that
     \[ \Gamma_1(N) \cap \Gamma_0^0(p) \cap \stbt 100m \delta'\stbt1j0{mp}^{-1} \Gamma_1(N) \stbt1j0{mp}(\delta')^{-1}\stbt100m^{-1}\]
     consists of matrices that are $\pm 1$ modulo $p$, since such matrices are clearly preserved under conjugation by $\stbt 100m \delta'$ (which is integral at $p$).

     Let $\gamma \in \Gamma_1(N) \cap \Gamma_0^0(p)$. Then $\gamma$ is congruent modulo $p$ to $\tbt x 0 0 {x^{-1}}$ for some $x \in (\ZZ/p\ZZ)^\times$. We require that
     \[ \tbt 1 j 0 {mp} (\delta')^{-1} \tbt 1 00m^{-1} \gamma \tbt 1 0 0 m \delta' \tbt 1j0{mp}^{-1} \in \SL_2(\Zp)\]
     or, equivalently, that
     \[ \tbt1j01 (\delta')^{-1} \tbt x 0 0 {x^{-1}} \delta' \tbt 1j01^{-1} \cong \tbt{*}{0}{*}{*} \pmod p.\]
     Substituting the entries of $\delta'$, we find that the top right-hand entry of the product on the left is congruent modulo $p$ to $(aj - b)cj (x - x^{-1})$. So if $aj - b$ is not divisible by $p$, then we must have $x - x^{-1} = 0 \bmod p$, i.e.~$x = \pm 1$, as required.
    \end{proof}

    \begin{remark}
     Conceptually, what is going on here is that we have calculated the intersection of \emph{three} Borel subgroups of $\SL_2(\mathbb{F}_p)$ in general position relative to each other, which is simply the centre of the group.
    \end{remark}

    \begin{corollary}
     The pullback to $E_\delta$ of $(\iota_{mp, N, j}')_* {}_c g_{0, 1/mpN}$ is equal to the pushforward along the top row of the above diagram of the modular unit
     \[ \prod_{\gamma \in U_j / \{\pm 1\}} \gamma^* {}_c g_{0, 1/mpN} \in \cO(Y(mp, mpN)^\times),\]
     where $U_j$ is (as above) the torus in $\SL_2(\FF_p)$ whose preimage is
     \[ \tbt 1 0 0 p^{-1} K \tbt 10 0p \cap \tbt 1 j 0 p^{-1} K \tbt 1j0p,\]
     and we choose a lifting of each element of $U / \{\pm 1\}$ to an element of $\Gamma(m, mN)$.
    \end{corollary}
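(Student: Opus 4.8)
The plan is to deduce the corollary from the Proposition just proved together with the push--pull compatibility of Lemma~\ref{lem:pushpull}, applied to the big diagram. Write $\Gamma = \Gamma(mp,mpN)$, and let $\Gamma_C \supseteq \Gamma_{B'} \supseteq \Gamma$ be the congruence subgroups of $\SL_2(\QQ)$ for which $C_{mp,N,j}$, $B'$ and $Y(mp,mpN)$ are the associated modular curves (over $\QQbar$; here one uses Lemma~\ref{lem:birational} to identify $C_{mp,N,j}$ and the other curves with genuine modular curves), so that the left-hand vertical arrow and the two $\pr$'s of the diagram are natural projections, while the two arrows marked $\cong$ are the isomorphisms $B\to C_{mp,N,j}$ and $B'\to E_\delta$.

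First I would note that the composite of the bottom row of the diagram is, by construction, exactly $\iota'_{mp,N,j}\colon Y(mp,mpN)\to C_{mp,N,j}$, $z\mapsto(z/mp,(z+j)/mp)$; hence the unit whose pullback we must compute is $(\iota'_{mp,N,j})_*\,{}_cg_{0,1/mpN}$, which by the factorisation of the bottom row is the image under the isomorphism $B\to C_{mp,N,j}$ of the norm of ${}_cg_{0,1/mpN}$ along $\pr\colon Y(\Gamma)\to B$. The middle square of the diagram is trivially a fibre square (two of its sides are isomorphisms), so Lemma~\ref{lem:pushpull} lets us commute the pullback $\pr^*$ along $E_\delta\to C_{mp,N,j}$ past that isomorphism-pushforward: the pullback we want is the image under the isomorphism $B'\to E_\delta$ (the last map of the top row) of $\pr^*_{B'/B}\bigl(\pr_*\,{}_cg_{0,1/mpN}\bigr)$. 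Thus everything reduces to evaluating $\pr^*_{B'/B}\circ\pr_*$ on ${}_cg_{0,1/mpN}$ and recognising the result as a norm along the first map $\pr\colon Y(\Gamma)\to B'$ of the top row.

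For this I would use the standard description (the coset computation behind Lemma~\ref{lem:pushpull}) of $\pr^*\pr_*$: as a function of $z\in\cH$, $\pr^*_{B'/B}\,\pr_*({}_cg_{0,1/mpN})$ equals $\prod_{g}\,{}_cg_{0,1/mpN}(g z)$ with $g$ running over representatives for $\Gamma\backslash\Gamma_C$, viewed as an element of $\cO(B')^\times$. Using $\Gamma\subseteq\Gamma_{B'}\subseteq\Gamma_C$ I would regroup this over the intermediate quotient, taking representatives of the form $g'\tilde\gamma$ with $g'$ running over $\Gamma\backslash\Gamma_{B'}$ and $\tilde\gamma$ over $\Gamma_{B'}\backslash\Gamma_C$. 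Since $\Gamma$ is the kernel of $\Gamma(m,mN)\twoheadrightarrow\SL_2(\FF_p)$ it is normal in $\Gamma(m,mN)$, and — as one checks using the Proposition — the $\tilde\gamma$ may be chosen inside $\Gamma(m,mN)$; then each $\tilde\gamma$ normalises $\Gamma$, so $\tilde\gamma^*\,{}_cg_{0,1/mpN}\in\cO(Y(mp,mpN))^\times$ and the regrouped product is precisely the norm along $\pr\colon Y(\Gamma)\to B'$ of $\prod_{\tilde\gamma}\tilde\gamma^*\,{}_cg_{0,1/mpN}$. Combining with the previous paragraph, the pullback to $E_\delta$ equals the pushforward along the top row of $\prod_{\tilde\gamma}\tilde\gamma^*\,{}_cg_{0,1/mpN}$, so it only remains to identify $\Gamma_{B'}\backslash\Gamma_C$ with $U_j/\{\pm1\}$. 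This is where the Proposition enters decisively: away from $p$ the groups $\Gamma_{B'}$ and $\Gamma_C$ coincide (passing from $B$ to $B'$ only superimposes the $\Gamma_0^0(p)$-type condition built from $\delta'$), whereas at $p$ the Proposition identifies the image of $\Gamma_{B'}$ in $\SL_2(\FF_p)$ with the centre $\{\pm1\}$, while the image of $\Gamma_C$ is the intersection $U_j$ of the two Borel subgroups attached to the coordinates $z/mp$ and $(z+j)/mp$; since curves remember only the image of $-1$, this gives $\Gamma_{B'}\backslash\Gamma_C\cong U_j/\{\pm1\}$ with representatives the lifts described in the statement.

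The main obstacle, granting the Proposition, is exactly the bookkeeping of this last identification: one must check that the enlargement from $B$ to $B'$ is purely a level structure at $p$, and that the three Borel subgroups in play (two from the coordinate projections, one from $\delta$) are in general position — this is the hypothesis $aj-b\not\equiv0\bmod p$ on $\delta'$ used in the Proposition — so that the index is $[U_j:\{\pm1\}]=\tfrac{p-1}{2}$ and the scalar $-1$ is divided out consistently on both sides. The remaining ingredients are formal: functoriality of norms of units, manipulation of pairs $(C,\phi)$ in $Z^2(X,1)$, and the reduction to $\QQbar$-models via Lemma~\ref{lem:birational}.
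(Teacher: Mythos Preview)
Your approach is correct and is essentially the natural one; the paper states this corollary without proof, so you are filling in exactly what the authors leave implicit. The ingredients you identify --- commutativity of the big diagram, transitivity of norms, and the coset identification furnished by the preceding Proposition --- are the right ones.

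Two small points worth tightening. First, a terminological slip: it is the \emph{right} square of the diagram (the one whose horizontal arrows are the isomorphisms $B'\cong E_\delta$ and $B\cong C_{mp,N,j}$) that lets you transport the pullback along $E_\delta\to C_{mp,N,j}$ to the pullback $B'\to B$; this is trivial functoriality rather than Lemma~\ref{lem:pushpull}. Second, in the regrouping $\prod_{g\in\Gamma\backslash\Gamma_C} g^*\,{}_cg = \pr_{Y(\Gamma)\to B',*}\bigl(\prod_{\tilde\gamma}\tilde\gamma^*\,{}_cg\bigr)$ one must check that the products $\tilde\gamma g'$ (in that order) give a full set of representatives for $\Gamma\backslash\Gamma_C$. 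A clean way to see this is exactly the one you indicate: since $\Gamma=\Gamma(mp,mpN)$ is normal in $\Gamma(m,mN)$ and, by the index count using the Proposition, all the representatives may be chosen inside $\Gamma(m,mN)$, the whole question reduces to the quotient $\SL_2(\FF_p)$, where it becomes the assertion that left and right cosets of $\{\pm1\}$ in the \emph{abelian} torus $U_j$ coincide. With this observation the regrouping is unambiguous, and the identification $\Gamma_{B'}\backslash\Gamma_C\cong U_j/\{\pm1\}$ follows from the Proposition together with the degree $(p-1)/2$ of $E_\delta\to C_{mp,N,j}$.
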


    Note that this depends only rather weakly on $\delta$. We calculated $U_j$ explicitly above: it consists of all matrices of the form $\tbt{u^{-1}}0{j^{-1}(u-u^{-1})}u$ with $u \in \FF_p^\times$.

    We now consider the pushforward of this to $Y_1(N)^2$ along the map $(u, v) \mapsto (\langle p \rangle^{-1} \tbt{p^{-1}}{0}{0}{p} u, v)$, so the image of $E_\delta$ is one of the components of the image of $C_{mp, N, j}$ under the Hecke operator $(\langle p \rangle S_p', 1)$.

    \begin{proposition}
     The image of $E_\delta$ under this map is $C_{mp, N, j}$ itself. More specifically, we may find $\varepsilon \in \Gamma(m, mN)$ such that there is a commutative diagram
     \begin{diagram}
      Y(\Gamma(mp, mpN)) &\rTo^{\left( \langle p \rangle^{-1} \tbt100{mp} \delta', \tbt 1 j 0 {mp}\right)} & Y_1(N)^2 \\
      \dTo^\varepsilon_\cong & & \dTo^\id_{\cong}\\
      Y(\Gamma(mp, mpN)) &\rTo^{\iota'_{mp, N, j}} & Y_1(N)^2
     \end{diagram}
    \end{proposition}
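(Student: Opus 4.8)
The plan is to deduce this from an explicit matrix identity of exactly the shape used in the proof of Proposition~\ref{prop:gammadoubleprime}. First I would use that $E_\delta$ is birationally parametrised by $z \mapsto \left(\stbt p00m\delta'\,z,\ \stbt 1j0{mp}\,z\right)$ on $Y(\Gamma(mp,mpN))$, via the relation $\delta\stbt 100{mp} = \stbt p00m\delta'$; post-composing with $(u,v)\mapsto\left(\langle p\rangle^{-1}\stbt{p^{-1}}00p u,\ v\right)$ and simplifying by $\stbt{p^{-1}}00p\stbt p00m = \stbt 100{mp}$, the image curve is parametrised by $z\mapsto\left(\langle p\rangle^{-1}\stbt 100{mp}\delta'\,z,\ \stbt 1j0{mp}\,z\right)$. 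It therefore suffices to exhibit $\varepsilon\in\Gamma(m,mN)$, a matrix $\eta\in\Gamma_0(N)$ representing $\langle p\rangle$, and $\gamma',\gamma''\in\Gamma_1(N)$ with, up to scalars,
\[
 \eta^{-1}\stbt 100{mp}\delta' = \gamma'\,\stbt 100{mp}\,\varepsilon,
 \qquad
 \stbt 1j0{mp} = \gamma''\,\stbt 1j0{mp}\,\varepsilon .
\]
Granting this, the parametrisation of the image curve factors as $\iota'_{mp,N,j}\circ\varepsilon$ for the automorphism $\varepsilon$ of $Y(\Gamma(mp,mpN))$, which forces the image to equal $C_{mp,N,j}$, and the two displayed identities are exactly the commutativity of the square in the statement.

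The existence of $\varepsilon,\gamma',\gamma''$ I would establish by working with the relevant congruences, as in Proposition~\ref{prop:gammadoubleprime}. Since $\delta = \stbt{pa}b{mNc}d\in\Gamma_1(N)\cap\Gamma_0(mN)$ forces $d\equiv 1\pmod N$, the matrix $\delta' = \stbt{a}{mb}{Nc}{pd}$ lies in $\Gamma_0(N)$ and reduces modulo $N$ to a representative of $\langle p\rangle$; so $\eta$ may be chosen with $\eta^{-1}\stbt 100{mp}\delta'$ congruent to $\stbt 100{mp}$ modulo $N$ up to $\Gamma_1(N)$, which allows the first identity to be solved with $\varepsilon\equiv 1\pmod{mN}$. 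Modulo $m$ everything in sight is $\equiv 1$, so no further condition is imposed beyond $\varepsilon\equiv 1\pmod m$, consistent with $\varepsilon\in\Gamma(m,mN)$. Modulo $p$ the two identities say that $\varepsilon\bmod p$ must conjugate the level-$p$ structures attached to $\langle p\rangle^{-1}\stbt 100{mp}\delta'$ and to $\stbt1j0{mp}$ onto those attached to the two factors of $\iota'_{mp,N,j}$; this is a single solvable linear condition, essentially because the Borel/torus subgroups of $\SL_2(\FF_p)$ involved are in general position, as in the remark after the previous proposition. Finally, since $\Gamma(mp,mpN) = \Gamma(m,mN)\cap\Gamma(p)$ is normal in $\Gamma(m,mN)$ and the latter surjects onto $\SL_2(\FF_p)$, strong approximation produces an honest $\varepsilon\in\SL_2(\ZZ)\cap\Gamma(m,mN)$ realising the prescribed reductions (arranging $\varepsilon\equiv 1$ at auxiliary primes to keep matters integral if needed), and $\gamma',\gamma''$ are then read off.

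The step I expect to be the main obstacle is the simultaneous bookkeeping modulo $p$ and modulo $N$: one must verify, tracking scalar matrices carefully, that the twist of the first factor by $\langle p\rangle^{-1}\stbt{p^{-1}}00p$ precisely undoes the $U$-double-coset splitting that makes $E_\delta$ map with degree one onto its image, and that the three local-at-$p$ conditions --- from $E_\delta$, from the first factor of $C_{mp,N,j}$, and from its second factor --- are mutually compatible. Once that is checked, the remaining steps are the routine strong-approximation and pushforward--pullback manipulations already used several times above.
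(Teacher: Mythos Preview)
Your overall strategy matches the paper's: reduce the commutativity of the diagram to finding $\varepsilon \in \Gamma(m, mN)$ satisfying the two coset conditions
\[
 \tbt 100{mp}\delta' \in \Gamma_1(N)\,\langle p\rangle\,\tbt 100{mp}\varepsilon,
 \qquad
 \tbt 1j0{mp} \in \Gamma_1(N)\,\tbt 1j0{mp}\varepsilon,
\]
observe that the conditions away from $p$ are automatic for any $\varepsilon \in \Gamma(m, mN)$, and then handle the condition at $p$. Where you are vague is precisely at this last step, which you correctly flag as the main obstacle but describe only as ``a single solvable linear condition'' coming from subgroups ``in general position''.

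The paper makes this step sharp, and the sharpening is important because it is exactly where the hypothesis $aj - b \not\equiv 0 \pmod p$ on $\delta'$ enters (without it the image would be a different curve, as in the $\Theta_4$ analysis). The paper argues as follows: integrality at $p$ of both expressions forces
\[
 \varepsilon \in \tbt 1j01^{-1} U^0(p)\,\tbt 1j01 \;\cap\; U^0(p)\,\delta',
\]
and the non-emptiness of this intersection is equivalent to the equality of double cosets
\[
 U^0(p)\,\tbt 1j01\,U^0(p) \;=\; U^0(p)\,\tbt 1j01\,(\delta')^{-1}\,U^0(p).
\]
Since the Bruhat decomposition gives exactly two $U^0(p)$-double cosets in $\SL_2(\Zp)$, namely $U^0(p)$ itself and its complement, this equality holds iff $\tbt 1j01(\delta')^{-1} \notin U^0(p)$, which one checks is precisely $b \not\equiv ja \pmod p$. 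So your ``general position'' intuition is correct, but the clean mechanism is the two-cell Bruhat decomposition rather than a linear condition, and you should make explicit that the hypothesis on $\delta$ is what guarantees solvability.
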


    \begin{proof}
     We must show that $\varepsilon$ can be found in such a way that
     \[ \tbt 100{mp} \delta' \in \Gamma_1(N) \langle p \rangle \tbt 100{mp} \varepsilon \]
     and
     \[ \tbt 1j0{mp} \in \Gamma_1(N) \tbt 1j0{mp} \varepsilon.\]

     For any $\varepsilon \in \Gamma(m, mN)$, the matrices
     \[ \langle p \rangle^{-1} \tbt 1 0 0 {mp} \delta' \varepsilon^{-1} \tbt100p^{-1}\]
     and
     \[ \tbt 1j0{mp} \varepsilon^{-1} \tbt1j0{mp}\]
     are integral away from $p$ and have bottom right entry congruent to 1 modulo $N$; so we need only show that $\varepsilon$ may be chosen such that both are integral at $p$. So we must show that we can find $\varepsilon$ in the intersection
     \[ \varepsilon \in \tbt 1 j 0 1^{-1} U^0(p) \tbt 1 j 0 1 \cap U^0(p) \delta'.\]
     The non-emptiness of this intersection is equivalent to the equality of the double cosets
     \[ U^0(p) \tbt 1j01 U^0(p)\quad \text{and} \quad U^0(p) \tbt 1 j 0 1 (\delta')^{-1} U^0(p).\]
     However, as we have seen before, there is only one double $U^0(p)$ coset in $K$ other than $U^0(p)$ itself, so this equality is equivalent to $\tbt 1 j 0 1 (\delta')^{-1} \notin U^0(p)$, which is equivalent to our hypothesis $b \ne ja \bmod p$.
    \end{proof}

    It remains to be shown that we can choose $\delta$ and $\varepsilon$ in some reasonable fashion. Let $\xi \in \FF_p^\times$. Then we can take $\varepsilon = \tbt {j^{-1}(1 - \xi)} {-\xi} {1 - j^{-2}(1 - \xi)}{ j + \xi j^{-1}}$, and $\delta' = \tbt {j^{-1}(1 - \xi)}{-\xi}{\xi^{-1}}{0}$. A routine verification shows that $\tbt 1 j 0 1 \varepsilon \tbt 1 {-j} 0 1 = \tbt j 0 {1 - j^{-2}(1 - \xi)}{j^{-1}}$ is lower-triangular, and that if we take $\xi$ to be a quadratic residue or a nonresidue $\delta'$ satisfies the congruences stated above, so it suffices to take $\xi = 1$ and one non-square $\xi$.

    Let us write $\Gamma(mp, mpN)^{\pm}$ for the subgroup of $\Gamma(m, mN)$ consisting of matrices that are congruent to $\pm 1$ modulo $p$. Then we have a diagram
    \begin{diagram}[width=2cm,height=2cm]
     Y(\Gamma(mp, mpN)) &\rTo^\sigma & Y(\Gamma(mp, mpN)^\pm) & \rTo_\cong^\varepsilon & Y(\Gamma(mp, mpN)^\pm)\\
     & \ldTo^{\stbt100p, \stbt1j0p}_{\mu_1} & & \rdTo^{\langle p \rangle^{-1} \stbt100p \delta', \stbt1j0p}_{\mu_2} & \dTo_{\stbt100p, \stbt1j0p}^{\mu_3}\\
     Y(\Gamma(m, mN))^2 & & & & Y(\Gamma(m, mN))^2
    \end{diagram}
    Here $\sigma$ is the natural pushforward map.

    The images of $\mu_1$ and $\mu_2$ are both given by the curve $C$ of points of the form $(z, z + j/p)$ in $Y(\Gamma(m, mN))^2$, which maps to $C_{mp, N, j}$ under the map $(u, v) \mapsto (u/m, v/m)$ to $Y_1(N)^2$. We find that
    \[ \mu_1^*\, (\sigma \circ \mu_1)_* {}_c g_{0, 1/mpN} = \prod_{u \in \FF_p^\times} \tbt{u^{-1}}0{j^{-1}(u - u^{-1})}u^* {}_c g_{0, 1/mpN}\]
    and hence
    \begin{align*}
     (\mu_2)_*\, \mu_1^*\, (\sigma \circ \mu_1)_*\, {}_c g_{0, 1/mpN} &= (\mu_3)_* (\varepsilon^{-1})^*\, \mu_1^*\, (\sigma \circ \mu_1)_*\, {}_c g_{0, 1/mpN}\\
     &= (\mu_3)_* (\varepsilon^{-1})^* \prod_{u \in \FF_p^\times} \tbt{u^{-1}}0{j^{-1}(u - u^{-1})}u^* {}_c g_{0, 1/mpN}\\
     &= \prod_{v \in \FF_p^\times / {\pm 1}} \prod_{u \in \FF_p^\times} \left[ \tbt{u^{-1}}0{j^{-1}(u - u^{-1})}u \varepsilon^{-1} \tbt{v^{-1}}0{j^{-1}(v - v^{-1})}v\right]^* {}_c g_{0, 1/mpN}
    \end{align*}

    Conjugating by $\tbt10{j^{-1}}1$ maps the torus $U_j$ onto the diagonal torus and maps $\varepsilon^{-1}$ onto the matrix $\tbt j \xi 0 {j^{-1}}$, and the above expression becomes
    \begin{gather*}
     \prod_{v \in \FF_p^\times / {\pm 1}} \prod_{u \in \FF_p^\times}
     \left[ \tbt10{-j^{-1}} 1 \tbt{u^{-1}}00u \tbt j \xi 0 j^{-1} \tbt{v^{-1}}00 v\tbt 1 0 {j^{-1}} 1\right]^* {}_c g_{0, 1/mpN}\\
     = \prod_{v \in \FF_p^\times / {\pm 1}} \prod_{u \in \FF_p^\times}
     \left[ \tbt10{-j^{-1}} 1 \tbt {u^{-1}v^{-1} j} {u^{-1} v \xi} 0 {u v j^{-1}}\tbt 1 0 {j^{-1}} 1\right]^* {}_c g_{0, 1/mpN}.
    \end{gather*}

    We may change variables by letting $a = uv$ and $b = u^{-2}$. Then the product becomes
    \begin{gather*}
     \prod_{a \in \FF_p^\times} \prod_{b \in \FF_p^{\times 2}\cdot \xi} \left[ \tbt10{-j^{-1}} 1  \tbt j b 0 {j^{-1}} \tbt {a^{-1}}00a \tbt 1 0 {j^{-1}} 1\right]^* {}_c g_{0, 1/mpN}\\
     = (\mu_1 \circ \sigma)_* \prod_{b \in \FF_p^{\times 2} \cdot \xi} \left[ \tbt10{-j^{-1}} 1  \tbt j b 0 {j^{-1}} \tbt 1 0 {j^{-1}} 1\right]^* {}_c g_{0, 1/mpN}\\
     = (\mu_1 \circ \sigma)_* \prod_{b \in \FF_p^{\times 2} \cdot \xi} \left[ \tbt j b {-1}{j^{-1}(1-b)} \tbt 1 0 {j^{-1}} 1\right]^* {}_c g_{0, 1/mpN}\\
     = (\mu_1 \circ \sigma)_* \tbt 1 0 {j^{-1}} 1^* \prod_{b \in \FF_p^{\times 2} \cdot \xi} {}_c g_{(-1)_0 / mpN, (j^{-1}(1-b))_1/{mpN}}
    \end{gather*}

    Considering $\Theta_2$ and $\Theta_3$ together corresponds to letting $b$ vary over all of $\FF_p^\times$. If we were to extend the product over all $b \in \FF_p$ (residue, nonresidue, or zero), then we would get
    \begin{equation}
     \label{eq:referee}
     (\mu_1 \circ \sigma)_* \tbt 1 0 {j^{-1}} 1^* \tbt p 0 0 1^* {}_c g_{\alpha/p, 1/mN},
    \end{equation}
    where $\alpha$ is the image of $-1/mN$ in $(\ZZ / p\ZZ)^\times$ (thus $\alpha/p = (-1)_0 / mN$).

    The term for $b = 0$ is just
    \[ (\mu_1 \circ \sigma)_* \tbt j 0 {-1 + j^{-2}}{j^{-1}}^* {}_c g_{0, 1/mpN}  = (\mu_1 \circ \sigma)_* {}_c g_{0, 1/mpN},\]
    since $\tbt j 0 {-1 + j^{-2}}{j^{-1}} \in U_j$. This is what we want: it is the definition of ${}_c \Xi_{mp, N, j}$.

    What can we say about the expression in \eqref{eq:referee}? Writing the pushforward in terms of coset representatives gives us
    \begin{gather*}
     \prod_{u \in \FF_p^\times} \left[ {\tbt p 0 0 1} {\tbt 1 0 {j^{-1}} 1} {\tbt 1 0 {j^{-1}} 1}^{-1} \tbt {u^{-1}} 0 0 u {\tbt 1 0 {j^{-1}} 1}\right]^* {}_c g_{\alpha/p, 1/mN}\\
     = \prod_{u \in \FF_p^\times} \left[ \tbt {u^{-1}} 0 0 u \tbt p 0 0 1 \tbt 1 0 {j^{-1}} 1 \right]^* {}_c g_{\alpha/p, 1/mN}\\
     = \tbt 1 0 {j^{-1}} 1^* \tbt p 0 0 1^* \prod_{u \in \FF_p^\times} {}_c g_{u/p, 1/mN} \\
     = \tbt 1 0 {j^{-1}} 1^* \left( {}_c g_{0, 1/mN} \cdot \left( \tbt p 0 0 1^* {}_c g_{0, 1/mN}\right)^{-1}\right) \\
     = {}_c g_{0, 1/mN} \cdot \left( \tbt 1 0 {j^{-1}} 1^*  \tbt p 0 0 1^* {}_c g_{0, 1/mN}\right)^{-1}
    \end{gather*}
    The last line is justified by the fact that $\tbt 1 0 {j^{-1}} 1^*$ denotes the action of a matrix congruent to $\tbt 1 0 {j^{-1}} 1$ modulo $p$ but to the identity modulo $mN$, and such a matrix will act trivially on ${}_c g_{0, 1/mN}$.

    We have seen both of these terms before: the class in $\CH^2(Y_1(N)^2 \otimes \QQ(\mu_m), 1)$ defined by ($C_{mp, N, j}$, pushforward of ${}_c g_{0, 1/mN}$) is $( (T_p', T_p') - \sigma_p - p \langle p \times p \rangle^{-1} \sigma_p^{-1}) {}_c \Xi_{m, N, j}$, by Corollary \ref{cor:TpTpterm}; and the term corresponding to ($C_{mp, N, j}$, pushforward of $ \tbt 1 0 {j^{-1}} 1^*  \tbt p 0 0 1^* {}_c g_{0, 1/mN}$) is $(\langle p \rangle^{-1}, S_p') {}_c \Xi_{m, N, j}$, by Corollary \ref{cor:Sp2term}.

   \subsubsection{Conclusion of the proof}
    \label{sect:endofproof}

    We can now complete the proof of Theorem \ref{thm:stronghighernormrel} for $k = 1$.

    We know that
    \[ (T_p', T_p') {}_c \Xi_{mp, N, j} = \Delta_1 + \Delta_2 + \Delta_3 + \Delta_4,\]
    and we have shown that:
    \[ \Delta_1 = \norm_{mp}^{mp^2}\left( {}_c \Xi_{mp^2, N, j}\right) \quad \text{(Proposition \ref{prop:delta1term});}
    \]
    \begin{multline*}
     \Delta_2 = \left[(S_p', \langle p^{-1} \rangle) - (\langle p^{-1} \rangle, \langle p^{-1} \rangle) \sigma_p^{-1} \left( (T_p', T_p') - \sigma_p - p (\langle p^{-1} \rangle, \langle p^{-1} \rangle)\sigma_p^{-1}\right)\right] \cdot {}_c \Xi_{m, N, j}\\
     \text{(Proposition \ref{prop:delta2term});}
    \end{multline*}
    \begin{multline*}
     \Delta_3 = \left[(\langle p^{-1}\rangle, S_p') - (\langle p^{-1} \rangle, \langle p^{-1} \rangle) \sigma_p^{-1} \left( (T_p', T_p') - \sigma_p - p (\langle p^{-1} \rangle, \langle p^{-1} \rangle)\sigma_p^{-1}\right)\right] \cdot {}_c \Xi_{m, N, j}\\
     \text{(Proposition \ref{prop:delta3term});}
    \end{multline*}
    and
    \[ \Delta_4 =
      p (\langle p^{-1} \rangle, \langle p^{-1} \rangle) \left(1 -  (\langle p^{-1} \rangle, \langle p^{-1} \rangle) \sigma_p^{-2}\right) {}_c \Xi_{m, N, j} \quad \text{(Proposition \ref{prop:delta4term}).}
    \]

    Combining these statements gives the $k = 1$ case of Theorem \ref{thm:stronghighernormrel}.

    In the case $k = 2$ we have again $(T_p', T_p') {}_c \Xi_{mp^2, N, j} = \Delta_1 + \Delta_2 + \Delta_3 + \Delta_4$ where
    \[ \Delta_1 = \norm_{mp^2}^{mp^3}\left( {}_c \Xi_{mp^3, N, j}\right) \quad \text{(Proposition \ref{prop:delta1term});}
    \]
    \[ \Delta_2 = (S_p', \langle p^{-1} \rangle)\cdot {}_c \Xi_{mp, N, j} - \Theta_2 - \Theta_3 - \Theta_4;
    \]
    \[ \Delta_3 = (\langle p^{-1} \rangle, S_p') \cdot {}_c \Xi_{mp, N, j} - \Theta_2' - \Theta_3' - \Theta_4';
    \]
    and
    \[ \Delta_4 = p (\langle p^{-1} \rangle, \langle p^{-1} \rangle) \cdot {}_c \Xi_{mp, N, j}\quad \text{(Proposition \ref{prop:delta4term}).}\]

    Moreover, we have
    \[ \Theta_4 = (\langle p^{-1}\rangle, \langle p^{-1} \rangle)\sigma_p (1 - (\langle p^{-1}\rangle, \langle p^{-1} \rangle) \sigma_p^{-2}) {}_c \Xi_{m, N, j} \text{(Proposition \ref{prop:theta4}),}\]
   and
   \[ \Theta_2 + \Theta_3 = (\langle p^{-1} \rangle, \langle p^{-1} \rangle) \cdot {}_c \Xi_{mp, N, j} - ( (T_p', T_p') - \sigma_p - p \langle p \times p \rangle^{-1} \sigma_p^{-1}) {}_c \Xi_{m, N, j} +  (\langle p \rangle^{-1}, S_p') {}_c \Xi_{m, N, j}.\]

   The obvious involution of $Y_1(N)^2 \otimes \QQ(\mu_m)$ given by swapping the two factors maps ${}_c \Xi_{p^k m, N, j}$ to ${}_c \Xi_{p^k m, N, -j}$ for each $j$; and it interchanges $\Theta_i$ with $\Theta_i'$ for $i = 1, \dots, 4$, so we obtain formulae for these terms which are identical with the non-primed versions except $(\langle p ^{-1} \rangle, S_p')$ is interchanged with $(S_p', \langle p^{-1} \rangle)$. Collecting terms gives Theorem \ref{thm:stronghighernormrel} for $k = 2$.


 \section{Relation to complex L-values}

\subsection{Definition of Rankin--Selberg L-functions}

 We recall the definition of Rankin--Selberg $L$-functions of pairs of modular forms.

 \begin{definition}
  Let $f, g$ be cuspidal new modular eigenforms (of possibly distinct weights $k, \ell$ and levels $N_f, N_g$), $L$ a number field containing the coefficients of $f$ and $g$, and $p$ a prime. We define the local Euler factor
  \[ P_p(f, g, X) = \det\left(1 - X \Frob_p^{-1} | (V_{L_\lambda}(f) \otimes V_{L_\lambda}(g))^{I_p}\right)\]
  where $\lambda$ is an arbitrary place of $L$ of residue characteristic distinct from $p$, $V_{L_\lambda}(f)$ is the $L_\lambda$-linear representation of $G_{\QQ}$ attached to $f$ (and similarly for $g$) -- see \S \ref{sect:galreps} below -- and $\Frob_p$ denotes the arithmetic Frobenius at $p$.
 \end{definition}

 This Euler factor may be defined in purely automorphic terms (cf.~\cite[Theorem 14.8]{jacquet72}), but the above definition is convenient for our purposes. The following is an elementary calculation:

 \begin{proposition}
  \label{prop:explicitEulerfactor}
  If $p \nmid N_f N_g$, then
  \[ P_p(f, g, X) = (1 - \alpha \gamma X)(1 - \alpha \delta X)(1 - \beta \gamma X)(1 - \beta \delta X)\]
  where $\alpha, \beta$ are the roots of $X^2 - a_p(f) X + p^{k-1} \varepsilon_p(f)$ and similarly $\gamma, \delta$ are the roots of $X^2 - a_p(g) X + p^{\ell-1} \varepsilon_p(g)$. Completely explicitly, this becomes
  \begin{multline*}
   P_p(f, g, X) = 1 - a_p(f) a_p(g) X + \Big( p^{\ell-1} a_p(f)^2 \varepsilon_p(g) + p^{k-1} \varepsilon_p(f) a_p(g)^2 - 2p^{k+\ell-2} \varepsilon_p(f) \varepsilon_p(g)\Big) X^2 \\- p^{k+\ell-2} \varepsilon_p(f) a_p(f) \varepsilon_p(g) a_p(g) X^3 + p^{2k+2\ell-4} \varepsilon_p(f)^2 \varepsilon_p(g)^2 X^4.
  \end{multline*}
 \end{proposition}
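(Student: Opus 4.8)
The plan is to reduce the statement to a short symmetric-function computation, using that $p$ is a prime of good reduction for both $f$ and $g$. First I would observe that, since $p \nmid N_f N_g$, the restrictions to the decomposition group $G_{\Qp}$ of the representations $V_{L_\lambda}(f)$ and $V_{L_\lambda}(g)$ are unramified, by the local--global compatibility recalled in \S\ref{sect:galreps}; hence $I_p$ acts trivially on $V_{L_\lambda}(f)\otimes V_{L_\lambda}(g)$, and $P_p(f,g,X) = \det\left(1 - X\Frob_p^{-1}\mid V_{L_\lambda}(f)\otimes V_{L_\lambda}(g)\right)$ is the full characteristic polynomial of $\Frob_p^{-1}$ on a $4$-dimensional space. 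By the defining property of the Galois representation attached to a newform (the Eichler--Shimura relation in weight $2$, Deligne's construction in general), the characteristic polynomial of $\Frob_p^{-1}$ on $V_{L_\lambda}(f)$ is $X^2 - a_p(f)X + p^{k-1}\varepsilon_p(f)$, and similarly for $g$; so the eigenvalues of $\Frob_p^{-1}$ on the two spaces are $\{\alpha,\beta\}$ and $\{\gamma,\delta\}$, and those on the tensor product are the four products $\alpha\gamma,\alpha\delta,\beta\gamma,\beta\delta$. This gives the factorization $P_p(f,g,X) = (1-\alpha\gamma X)(1-\alpha\delta X)(1-\beta\gamma X)(1-\beta\delta X)$.

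For the second, completely explicit, formula I would expand this product, so that its coefficients are (up to sign) the elementary symmetric polynomials $e_1,\dots,e_4$ in $\alpha\gamma,\alpha\delta,\beta\gamma,\beta\delta$, and then express these in terms of $\alpha+\beta = a_p(f)$, $\alpha\beta = p^{k-1}\varepsilon_p(f)$, $\gamma+\delta = a_p(g)$, $\gamma\delta = p^{\ell-1}\varepsilon_p(g)$. One gets $e_1 = (\alpha+\beta)(\gamma+\delta) = a_p(f)a_p(g)$; $e_4 = (\alpha\beta)^2(\gamma\delta)^2 = p^{2k+2\ell-4}\varepsilon_p(f)^2\varepsilon_p(g)^2$; $e_3 = (\alpha\beta)(\gamma\delta)\,e_1 = p^{k+\ell-2}\varepsilon_p(f)a_p(f)\varepsilon_p(g)a_p(g)$; and, using $\alpha^2+\beta^2 = a_p(f)^2 - 2p^{k-1}\varepsilon_p(f)$ and the analogous identity for $\gamma,\delta$, $e_2 = \gamma\delta(\alpha^2+\beta^2) + \alpha\beta(\gamma^2+\delta^2) + 2\alpha\beta\gamma\delta = p^{\ell-1}a_p(f)^2\varepsilon_p(g) + p^{k-1}\varepsilon_p(f)a_p(g)^2 - 2p^{k+\ell-2}\varepsilon_p(f)\varepsilon_p(g)$. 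Substituting into $1 - e_1 X + e_2 X^2 - e_3 X^3 + e_4 X^4$ produces the displayed polynomial.

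There is no genuine difficulty here; the only point requiring attention is the bookkeeping of normalizations, namely that the relevant characteristic polynomial is that of $\Frob_p^{-1}$ (consistent with the appearance of $\Frob_p^{-1}$ in the definition of $P_p(f,g,X)$) rather than of $\Frob_p$, and that it takes the form $X^2 - a_p X + p^{k-1}\varepsilon_p$ with the precise conventions for the nebentype and the Hodge--Tate weights fixed in \S\ref{sect:galreps}. With the ``arithmetic Frobenius'' convention adopted in the definition above, this is exactly what the local--global compatibility statement furnishes, so the computation goes through as written.
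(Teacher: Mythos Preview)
Your proof is correct and is exactly the ``elementary calculation'' the paper alludes to but does not write out: the paper states the proposition without proof, merely remarking beforehand that it is an elementary calculation. Your argument---unramifiedness at $p\nmid N_fN_g$, eigenvalues on the tensor product as pairwise products, and the symmetric-function bookkeeping for $e_1,\dots,e_4$---is precisely what is intended, and the computations check out.
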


 \begin{proposition}
  We may write
  \[ P_p(f, g, X) = \prod_{i = 1}^4 (1 - \lambda_i X),\]
  where each $\lambda_i$ is either 0, or a $p$-Weil number of weight $\le (k + \ell-2)$. In particular, all poles of the meromorphic function $P_p(f, g, p^{-s})^{-1}$ have real part at most $\frac{k + \ell - 2}{2}$.
 \end{proposition}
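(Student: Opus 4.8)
The plan is to read the $\lambda_i$ off from the definition of $P_p(f,g,X)$ and then bound their archimedean absolute values, treating $p\nmid N_fN_g$ and $p\mid N_fN_g$ separately. By definition $P_p(f,g,X)=\det\bigl(1-X\Frob_p^{-1}\mid (V_{L_\lambda}(f)\otimes V_{L_\lambda}(g))^{I_p}\bigr)$; writing this as $\prod_{i=1}^4(1-\lambda_iX)$, and padding with zeros when $\dim (V_{L_\lambda}(f)\otimes V_{L_\lambda}(g))^{I_p}<4$, the nonzero $\lambda_i$ are exactly the eigenvalues of inverse arithmetic Frobenius on the inertia invariants of $V_{L_\lambda}(f)\otimes V_{L_\lambda}(g)$. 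Since these Galois representations are cut out by Hecke projectors from the \'etale cohomology of varieties over $\QQ$, each $\lambda_i$ is an algebraic number which is a $p$-Weil number (of \emph{some} weight) by purity, so it suffices to show $|\iota(\lambda_i)|\le p^{(k+\ell-2)/2}$ for a single embedding $\iota\colon\QQbar\to\CC$. The assertion about poles of $P_p(f,g,p^{-s})^{-1}$ is then a mere restatement: a pole on the line $\Re(s)=\sigma_0$ corresponds to a root $\lambda$ of $P_p(f,g,X)$ with $|\lambda|=p^{\sigma_0}$, and ``weight $\le k+\ell-2$'' means $|\lambda|\le p^{(k+\ell-2)/2}$.

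For $p\nmid N_fN_g$ this is immediate from Proposition~\ref{prop:explicitEulerfactor}: there $\{\lambda_1,\dots,\lambda_4\}=\{\alpha\gamma,\alpha\delta,\beta\gamma,\beta\delta\}$, and by Deligne's proof of the Ramanujan--Petersson conjecture $\alpha,\beta$ are $p$-Weil numbers of weight $k-1$ and $\gamma,\delta$ are $p$-Weil numbers of weight $\ell-1$. Since a product of $p$-Weil numbers of weights $w_1,w_2$ is a $p$-Weil number of weight $w_1+w_2$, each $\lambda_i$ is a $p$-Weil number of weight exactly $k+\ell-2$, and in particular nonzero.

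The case $p\mid N_fN_g$ is the heart of the matter and I expect it to be the main obstacle. The subtlety is twofold: $(V_{L_\lambda}(f)\otimes V_{L_\lambda}(g))^{I_p}$ is in general strictly larger than $V_{L_\lambda}(f)^{I_p}\otimes V_{L_\lambda}(g)^{I_p}$, so one cannot just multiply bounds for the two factors; and the full Weil--Deligne representation $WD(V_{L_\lambda}(f))$ can (when $\pi_{f,p}$ is an unramified twist of Steinberg) carry a Frobenius eigenvalue of weight as large as $k$, so the tensor product a priori has eigenvalues up to weight $k+\ell$. The resolution is that these top-weight eigenvalues are killed by the total monodromy operator $N=N_f\otimes 1+1\otimes N_g$, and $(V_{L_\lambda}(f)\otimes V_{L_\lambda}(g))^{I_p}$ is computed as $\ker N$ on the inertia-fixed part of $WD(V_{L_\lambda}(f))\otimes WD(V_{L_\lambda}(g))$. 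To make this precise I would invoke local--global compatibility at $p$ (Carayol, Deligne, et al.) to identify $WD(V_{L_\lambda}(f))$ with the Weil--Deligne representation attached by local Langlands to $\pi_{f,p}$ in the normalization making $V_{L_\lambda}(f)$ pure of weight $k-1$ at good primes, and likewise for $g$, together with the fact that $\pi_{f,p}$ and $\pi_{g,p}$ are tempered (again Deligne). One can then conclude in two ways. The conceptual route is to pass to the unitary normalization: under $s\mapsto s-\tfrac{k+\ell-2}{2}$ the factor $P_p(f,g,p^{-s})^{-1}$ becomes the local Rankin--Selberg $L$-factor $L\bigl(s-\tfrac{k+\ell-2}{2},\,\pi_{f,p}\times\pi_{g,p}\bigr)$, which by the theory of local Rankin--Selberg integrals is holomorphic for $\Re(s)>\tfrac{k+\ell-2}{2}$ precisely because $\pi_{f,p}$ and $\pi_{g,p}$ are generic and tempered --- exactly the stated bound. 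The hands-on route, staying on the Galois side, is to run through the short list of possibilities for $\pi_{f,p}$ (unramified twist of Steinberg; irreducible principal series with at least one ramified inducing character; supercuspidal), compute $WD(V_{L_\lambda}(f))$ and hence the Frobenius action on $\ker N$ inside the inertia invariants of the tensor product in each of the finitely many combined cases, and verify $|\lambda_i|\le p^{(k+\ell-2)/2}$ directly; the extremal configurations are $f$ and $g$ both Steinberg at $p$ (surviving eigenvalues of weights $k+\ell-4$ and $k+\ell-2$) and $f$, $g$ both ramified principal series (surviving eigenvalues of weight exactly $k+\ell-2$), and all other combinations give strictly smaller weights.
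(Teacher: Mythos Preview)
Your proposal is correct. Your ``hands-on route'' is precisely the paper's argument: the authors dispatch the bad-prime case in one sentence, saying it ``follows from an explicit computation of the possible local components of $f$ and $g$ \dots\ (since the Weil--Deligne representations attached to $f$ and $g$ must fall into a finite list of possible types)''. Your ``conceptual route'' via temperedness of $\pi_{f,p},\pi_{g,p}$ and the pole-free region of the local Rankin--Selberg $L$-factor is a genuine alternative not mentioned in the paper; it trades the finite case check for an appeal to local--global compatibility and the local theory of Rankin--Selberg integrals for tempered generic representations of $\GL_2$, which is cleaner once one is willing to import that machinery. One minor point: your justification ``by purity'' for why the nonzero $\lambda_i$ are $p$-Weil numbers is a little loose --- what you actually use (and what your case analysis delivers) is that each surviving Frobenius eigenvalue on $\ker N$ is a product of Frobenius eigenvalues from the two individual Weil--Deligne representations, and those are Weil numbers by Ramanujan (equivalently, temperedness).
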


 \begin{proof}
  This is clear from Proposition \ref{prop:explicitEulerfactor} if $p$ does not divide the levels of $f$ and $g$. The remaining cases follow from an explicit computation of the possible local components of $f$ and $g$, using the Galois-theoretic definition adopted above (since the Weil--Deligne representations attached to $f$ and $g$ must fall into a finite list of possible types).
 \end{proof}

 We now define global Rankin--Selberg $L$-functions as a product of local terms in the usual way.

 \begin{definition}
  We let
  \[ L(f, g, s) = \prod_{\text{$p$ prime}} P_p(f, g, p^{-s})^{-1}\]
  and for $N \ge 1$ we let
  \[ L_{(N)}(f, g, s) = \prod_{\substack{\text{$p$ prime}\\ p \nmid N}} P_p(f, g, p^{-s})^{-1}.\]
 \end{definition}

 \begin{proposition}
  \label{prop:completedL}
  Suppose $k \ge \ell$, and write $\Gamma_{\CC}(s) = (2\pi)^{-s} \Gamma(s)$. Then the completed $L$-function
  \[ \Lambda(f, g, s) = \Gamma_\CC(s) \Gamma_\CC(s - \ell + 1) L(f, g, s)\]
  has analytic continuation to all $s \in \CC$, except for a simple pole at $s = k$ if $\ell = k$ and $f = \overline{g}$; and it satisfies a functional equation of the form
  \[ \Lambda(f, g, k + \ell - 1 - s) = \varepsilon(s) \cdot \Lambda(\overline{f} \otimes \overline{g}, s)\]
  where $\varepsilon$ is a function of the form $Ae^{Bs}$ for constants $A, B$.
 \end{proposition}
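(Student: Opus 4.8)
The plan is to reduce the statement to the known analytic theory of Rankin--Selberg $L$-functions on $\GL_2\times\GL_2$. Let $\pi_f,\pi_g$ be the cuspidal automorphic representations of $\GL_2(\AA_\QQ)$ attached to $f$ and $g$, normalised so that $\pi_{f,\infty},\pi_{g,\infty}$ are the holomorphic discrete series of lowest weights $k$ and $\ell$; I will identify $\Lambda(f,g,s)$ with the completed automorphic $L$-function $\Lambda\big(\pi_f\times\pi_g,\,s-\tfrac{k+\ell-2}{2}\big)$ and then quote the meromorphic continuation, functional equation and pole structure of the latter from Jacquet \cite[\S\S14--15]{jacquet72}. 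Concretely, the same input is furnished by the classical Rankin--Selberg unfolding of an integral $\int_{\Gamma_1(N)\backslash\cH}f(z)\overline{g(z)}\,y^{k}\,E_N(z,s')\,\mathrm{d}\mu$ of $f\bar g$ against a real-analytic Eisenstein series $E_N$ of a common level $N$, whose analytic properties are classical.

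First I would carry out the local comparison showing $L(f,g,s)=L\big(\pi_f\times\pi_g,\,s-\tfrac{k+\ell-2}{2}\big)$. At primes $p\nmid N_fN_g$ this is Proposition \ref{prop:explicitEulerfactor} combined with local--global compatibility of the Langlands correspondence for $\GL_2$, the Satake parameters of $\pi_{f,p},\pi_{g,p}$ being $\{\alpha p^{-(k-1)/2},\beta p^{-(k-1)/2}\}$ and $\{\gamma p^{-(\ell-1)/2},\delta p^{-(\ell-1)/2}\}$. At the finitely many primes $p\mid N_fN_g$ one matches the Galois factor $\det\big(1-X\Frob_p^{-1}\mid(V_\lambda(f)\otimes V_\lambda(g))^{I_p}\big)$ with the local Rankin--Selberg factor by going through the finite list of possible local Weil--Deligne types of $f$ and $g$ (as in the proof of the preceding proposition), using that $V^{I_p}\cong\ker\big(N\colon W^{I_p}\to W^{I_p}\big)$ as $\Frob_p$-modules, where $(W,N)$ is the Weil--Deligne representation associated to an $\ell$-adic $V$. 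At the archimedean place, the parameter of $\pi_{f,\infty}\times\pi_{g,\infty}$ is $\mathrm{Ind}_{W_\CC}^{W_\RR}\big((z/\bar z)^{(k+\ell-2)/2}\big)\oplus\mathrm{Ind}_{W_\CC}^{W_\RR}\big((z/\bar z)^{(k-\ell)/2}\big)$ (using $k\ge\ell$), with $L$-factor $\Gamma_\CC\big(s'+\tfrac{k+\ell-2}{2}\big)\Gamma_\CC\big(s'+\tfrac{k-\ell}{2}\big)$, which becomes exactly $\Gamma_\CC(s)\Gamma_\CC(s-\ell+1)$ after $s'=s-\tfrac{k+\ell-2}{2}$. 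Granting this, $\Lambda(f,g,s)=\Lambda\big(\pi_f\times\pi_g,\,s-\tfrac{k+\ell-2}{2}\big)$, so the meromorphic continuation, the functional equation $\Lambda(\pi_f\times\pi_g,1-s')=\varepsilon(s')\Lambda(\tilde\pi_f\times\tilde\pi_g,s')$ with $\varepsilon(s')=Ae^{Bs'}$ a product of local $\varepsilon$-factors, and the fact that $\Lambda(\pi_f\times\pi_g,s')$ is entire unless $\pi_g\cong\tilde\pi_f$, all transfer: one obtains $\Lambda(f,g,k+\ell-1-s)=\varepsilon(s)\Lambda(\bar f\otimes\bar g,s)$, while $\pi_g\cong\tilde\pi_f$ forces $\ell=k$ and $g=\bar f$, the resulting simple pole — that of the completed $\zeta$-factor in $L(\pi_f\times\tilde\pi_f,s')=\zeta(s')L(\mathrm{Ad}\,\pi_f,s')$ — landing at $s=k$.

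The only real work is bookkeeping rather than substance: keeping the normalisations straight (the shift $s\mapsto s-\tfrac{k+\ell-2}{2}$, the factor $(2\pi)^{-s}$ in $\Gamma_\CC$, the identification $\tilde\pi_f\leftrightarrow\bar f$), checking that the bad local factors agree on the nose — or, failing that, that they differ only by elementary factors $\prod_i(1-c_ip^{-s})^{\pm1}$ with $c_i$ either $0$ or a $p$-Weil number, so that the completed $\Lambda(f,g,s)$ acquires no spurious poles and the two sides of the functional equation still match — and finally reading off the precise order and location of the pole together with the shape of $\varepsilon(s)=Ae^{Bs}$. The analytic heart of the argument is imported wholesale from the $\GL_2\times\GL_2$ Rankin--Selberg theory.
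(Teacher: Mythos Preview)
The paper does not actually prove this proposition: it is stated without proof as a known result (the only commentary is the subsequent remark that $\varepsilon(s)$ is a global $\varepsilon$-factor), with the implicit reference being the automorphic Rankin--Selberg theory of Jacquet already cited in \cite[Theorem 14.8]{jacquet72}. Your proposal is exactly the standard way to justify the statement and is correct in outline: identify $\Lambda(f,g,s)$ with $\Lambda(\pi_f\times\pi_g,s-\tfrac{k+\ell-2}{2})$ via the local comparison at finite and archimedean places, and then import meromorphic continuation, the functional equation, and the location of the pole from the $\GL_2\times\GL_2$ theory. The archimedean bookkeeping and the translation $\tilde\pi_f\leftrightarrow\bar f$, $s'=1\leftrightarrow s=k$ are carried out correctly.
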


 \begin{remark}
  The function $\varepsilon(s)$ is, as the notation suggests, a global $\varepsilon$-factor, but we shall not use this interpretation here.
 \end{remark}

 In particular, if $k = \ell = 2$ and $s = 1$, the value $L(f, g, 1)$ vanishes (because $\Gamma_{\CC}(s - 1)$ has a simple pole) and we have
 \begin{equation}
  \label{eq:Lfcnzero}
  L'(f, g, 1) = 2\pi \Lambda(f, g, 1).
 \end{equation}


\subsection{Real-analytic Eisenstein series}

 We now express the Rankin--Selberg $L$-function in terms of the Petersson product with a non-holomorphic Eisenstein series, the original example of the Rankin--Selberg method.

 \begin{definition} Let $k \ge 0 \in \ZZ$, and $\alpha \in \QQ / \ZZ$.
  \begin{enumerate}
   \item For $\tau \in \cH, s \in \CC$ with $k + 2 \Re(s) > 2$, we define
   \[ E^{(k)}_{\alpha}(\tau, s) = (-2\pi i)^{-k} \pi^{-s} \Gamma(s + k) \sideset{}{'}\sum_{(m, n) \in \ZZ^2} \frac{\Im(\tau)^s}{\left(m\tau + n + \alpha \right)^{k} \left|m\tau + n + \alpha\right|^{2s}},\]
   where the prime denotes that the term $(m, n) = (0, 0)$ is omitted if $\alpha = 0$ (but not otherwise).
   \item For $\tau, s$ as above, define
   \[ F^{(k)}_{\alpha}(\tau, s) = (-2\pi i)^{-k} \Gamma(s + k) \pi^{-s} \sideset{}{''}\sum_{(m, n) \in \ZZ^2} \frac{e^{2 \pi i \alpha m} \Im(\tau)^s}{\left(m\tau + n\right)^{k} \left|m\tau + n \right|^{2s}}\]
   where the double prime denotes that the term $(m, n)=(0,0)$ is omitted (always).
  \end{enumerate}
 \end{definition}

 \begin{proposition}
  The above series have the following properties:
  \begin{enumerate}[(i)]
   \item (Automorpy) If $N\alpha = 0$, then for fixed $\tau$ both $E^{(k)}_{\alpha}$ and $F^{(k)}_{\alpha}$ are preserved by the weight $k$ action of $\Gamma_1(N)$, and moreover the diamond operators act on $\alpha$ by multiplication in the obvious way.
   \item (Action of Atkin--Lehner involutions) If $N\alpha = 0$, then we have
   \begin{align*}
    F^{(k)}_{\alpha}(\tau, s) &= N^{-k-s} \sum_{x \in \ZZ / N\ZZ} e^{2\pi i \alpha x} \tau^{-k} E^{(k)}_{x/N}\left(\tfrac{-1}{N\tau}, s\right). \\
   \end{align*}
   \item (Differential operators) The Maass--Shimura weight-raising differential operator
   \[ \delta_k \coloneqq \frac{1}{2\pi i}\left( \frac{\d}{\d\tau} + \frac{k}{\tau - \overline{\tau}}\right)\]
   (cf.~\cite[Equation (2.8)]{shimura76}) acts on the Eisenstein series via
   \[ \delta_k E^{(k)}_{\alpha}(\tau, s) = E^{(k+2)}_{\alpha}(\tau, s-1)\]
   and similarly for $F^{(k)}_\alpha$.
   \item (Analytic continuation and functional equation) For fixed $k, \tau, \alpha$, both functions $E^{(k)}_{\alpha}(\tau, s)$ and $F^{(k)}_{\alpha}(\tau, s)$ have meromorphic continuations to the whole $s$-plane, which are holomorphic everywhere if $k \ne 0$; and we have
   \[ E^{(k)}_{\alpha}(\tau, s) = F^{(k)}_{\alpha}(\tau, 1-k-s).\]
   \item (Relation to Siegel units) We have
   \[ E^{(0)}_{\alpha}(\tau, 0) = 2 \log |g_{0, \alpha}(\tau)|\]
   where $g_{0,\alpha}$ is the Siegel unit of \S\ref{sect:siegelunits}.
  \end{enumerate}
 \end{proposition}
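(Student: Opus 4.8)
These five assertions are all classical properties of real-analytic Eisenstein series, and I indicate how I would establish each in turn. For \emph{(i)}, write $w = m\tau+n+\alpha$ and $\widetilde w = m\overline\tau+n+\alpha$ (fixing a real representative of $\alpha$), so that the general summand of $E^{(k)}_\alpha(\tau,s)$ is $\Im(\tau)^s\, w^{-k}(w\widetilde w)^{-s}$. If $\gamma = \stbt abcd \in \Gamma_1(N)$ and $N\alpha = 0$, then $c\alpha = 0$ and $d\alpha = \alpha$ in $\QQ/\ZZ$, whence $m(\gamma\tau)+n+\alpha = (c\tau+d)^{-1}\big((ma+nc)\tau + (mb+nd+\alpha)\big)$; since $(m,n)\mapsto (ma+nc,\,mb+nd)$ permutes $\ZZ^2$ and $\Im(\gamma\tau) = \Im(\tau)\,|c\tau+d|^{-2}$, summing term by term gives $E^{(k)}_\alpha(\gamma\tau,s) = (c\tau+d)^k E^{(k)}_\alpha(\tau,s)$, and the diamond-operator statement (and the case of $F^{(k)}_\alpha$) is identical. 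For \emph{(ii)} I would substitute $\tau\mapsto -1/(N\tau)$ into the defining series of $E^{(k)}_{x/N}$, use $\Im(-1/(N\tau)) = \Im(\tau)/(N|\tau|^2)$ and clear denominators; the prefactor $\tau^{-k}$ exactly absorbs the resulting linear factor, summing over $x\in\ZZ/N\ZZ$ against $e^{2\pi i\alpha x}$ produces the phase $e^{2\pi i\alpha m}$, the sublattice $\{(mN,n)\}$ reindexes to $\ZZ^2$, and collecting the normalising constant $N^{-k-s}$ gives the stated identity.

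For \emph{(iii)} I would differentiate term by term. With $w,\widetilde w$ as above one has $\partial_\tau w = m$, $\partial_\tau \widetilde w = 0$, $\partial_\tau \Im(\tau) = \tfrac{1}{2i}$, and writing $m = (w-\widetilde w)/(\tau-\overline\tau)$ a short manipulation gives
\[
 \Big(\partial_\tau + \tfrac{k}{\tau-\overline\tau}\Big)\Big(\Im(\tau)^s\, w^{-(k+s)}\widetilde w^{-s}\Big) = \tfrac{s+k}{2i}\,\Im(\tau)^{s-1}\, w^{-(k+2)-(s-1)}\,\widetilde w^{-(s-1)} .
\]
The normalising factors $(-2\pi i)^{-k}$ and $\pi^{-s}\Gamma(s+k)$ are chosen precisely so that, after applying $\tfrac{1}{2\pi i}$ and using $\Gamma(s+k+1) = (s+k)\Gamma(s+k)$, the right-hand side reassembles into the defining series of $E^{(k+2)}_\alpha(\tau,s-1)$; the case of $F^{(k)}_\alpha$ is the same.

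For \emph{(iv)}, the meromorphic continuation and functional equation come from the Riemann--Hecke method: for fixed $m$ the inner sum over $n$ is a shifted Epstein-type zeta function, continued via its integral representation against a Jacobi theta series, and Poisson summation in $n$ supplies the functional equation $s\leftrightarrow 1-k-s$, the Fourier-dual variable generating exactly the exponential $e^{2\pi i\alpha(\cdot)}$ that distinguishes $F^{(k)}_\alpha$ from $E^{(k)}_\alpha$; summing over $m$ and isolating the $m=0$ term yields $E^{(k)}_\alpha(\tau,s) = F^{(k)}_\alpha(\tau,1-k-s)$ and shows that the only possible pole, coming from a constant term, occurs only when $k=0$. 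Finally \emph{(v)} is the second Kronecker limit formula: for $\alpha\neq 0$ the apparent pole of $\Gamma(s)$ at $s=0$ is cancelled by a simple zero of the analytically continued lattice sum, and the resulting finite value of $E^{(0)}_\alpha(\tau,s)$ at $s=0$ is identified with $2\log|g_{0,\alpha}(\tau)|$ by comparison with the product expansion of the Siegel unit of \S\ref{sect:siegelunits} (equivalently one may quote \cite[\S1.4]{kato04}). The main obstacles are parts \emph{(iv)} and \emph{(v)}: in \emph{(iv)} the bookkeeping must be done carefully enough to pin down both the precise pole structure and the exact shape of the functional equation, while \emph{(v)}, though classical, is a genuine analytic input (the Kronecker limit formula) rather than a formal manipulation.
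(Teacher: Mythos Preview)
Your proposal is correct and follows essentially the same approach as the paper. The paper's own proof is extremely brief: it declares parts (i)--(iii) to be ``easy explicit computations'', says part (iv) is ``a standard application of the Poisson summation formula'', and cites formula (3.8.4)(iii) of \cite{kato04} for part (v); your argument simply fills in the details behind these indications, and your references to Poisson summation for (iv) and the Kronecker limit formula/Kato for (v) match the paper exactly.
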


 \begin{proof}
  Parts (i)--(iii) are easy explicit computations. Part (iv) is a standard application of the Poisson summation formula, and (v) is formula (3.8.4)(iii) of \cite{kato04}.
 \end{proof}

 Now let $f, g$ be any two newforms of levels $N_f, N_g$ dividing $N$, and weights $k, \ell$, with $k > \ell$. Let $\breve f \in S_k(\Gamma_1(N))[\pi_f]$ and $\breve g \in S_\ell(\Gamma_1(N))[\pi_g]$ be forms in the oldspaces at level $N$ attached to $f$ and $g$ (which we shall think of as ``test vectors''). For any $\alpha \in \tfrac{1}{N} \ZZ / \ZZ$, set
 \begin{align*}
  \mathcal{D}(\breve f, \breve g, x, s) &= \int_{\Gamma_1(N) \backslash \cH} \breve f(-\overline{\tau})\, \breve g(\tau)\, E^{(k - \ell)}_{\alpha}(\tau, s - k + 1) \Im(\tau)^{k-2}\d x \d y\\
  &= \left\langle \breve{f}^*(\tau), \breve g(\tau) \cdot E^{(k - \ell)}_{\alpha}(\tau, s - k + 1)\right\rangle_{\Gamma_1(N)}.
 \end{align*}

 The next theorem shows that the function $\mathcal{D}(\breve f, \breve g, 1/N, s)$ is an ``approximation'' to the completed $L$-function $\Lambda(f, g, s)$ of the previous section, differing from it only by possible bad Euler factors at primes $\ell \mid N$.

 \begin{theorem}[Rankin--Selberg, Shimura]
  We have
  \[
   \mathcal{D}(\breve f, \breve g, 1/N, s) = 2^{1-k} i^{k-\ell} N^{2s + 2 - k - \ell} \Lambda(f, g, s) C(\breve f, \breve g, s),
  \]
  where
  \[ C(\breve f, \breve g, s) \coloneqq \left(\prod_{p \mid N} P_p(f, g, p^{-s}) \right) \sum_{n \in S(N)} a_n(\breve f) a_n(\breve g) n^{-s}\]
  is a polynomial in the variables $p^{-s}$ for $p \mid N$; in particular, it is holomorphic for all $s \in \CC$. Here $S(N)$ is the set of integers all of whose prime factors divide $N$.
 \end{theorem}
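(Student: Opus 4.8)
The plan is to establish the identity first for $\Re(s)$ large, where both the Eisenstein series and the Petersson integral converge absolutely, by the classical Rankin--Selberg unfolding method (in essence the computation of Shimura \cite{shimura76}), and then to propagate it to all $s \in \CC$ by analytic continuation: $\mathcal{D}(\breve f, \breve g, 1/N, s)$ continues in $s$ by part (iv) of the proposition on real-analytic Eisenstein series above (and is in fact entire, since $k - \ell \ge 1$), $\Lambda(f, g, s)$ continues by Proposition \ref{prop:completedL}, and $C(\breve f, \breve g, s)$ is a finite Dirichlet polynomial, hence entire.

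First I would rewrite the Eisenstein factor as a sum over a congruence class of lattice vectors. Setting $s' = s - k + 1$ and using $m\tau + n + \tfrac1N = N^{-1}(c\tau + d)$ with $(c, d) = (Nm,\, Nn + 1)$, one gets
\[
 E^{(k-\ell)}_{1/N}(\tau, s') = (-2\pi i)^{-(k-\ell)}\, \pi^{-s'}\, \Gamma(s' + k - \ell)\, N^{2s - k - \ell + 2}\, \Im(\tau)^{s'} \sum_{\substack{(c, d) \in \ZZ^2 \setminus \{0\}\\ c \equiv 0,\ d \equiv 1 \ (N)}} \frac{1}{(c\tau + d)^{k-\ell}\,|c\tau + d|^{2s'}},
\]
which already produces the factor $N^{2s - k - \ell + 2}$. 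Writing $(c, d) = e\,(c_0, d_0)$ with $e = \gcd(c, d) \ge 1$ and $(c_0, d_0)$ primitive, the relation $e \mid d$ forces $\gcd(e, N) = 1$, whence $(c_0, d_0) \equiv (0, e^{-1}) \bmod N$; for each such $e$, the primitive vectors in that residue class are precisely the bottom rows of the matrices in the right coset $\Gamma_1(N)\gamma_e$, where $\gamma_e \in \SL_2(\ZZ)$ reduces to $\stbt e 0 0 {e^{-1}} \bmod N$, taken modulo the left action of $\Gamma_{1, \infty} = \bigl\{\stbt 1 b 0 1 : b \in \ZZ\bigr\}$. Thus the inner sum becomes $\sum_{\gcd(e, N) = 1} e^{-(2s - k - \ell + 2)}$ times a sum over $\Gamma_{1, \infty}\backslash\Gamma_1(N)\gamma_e$.

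Next I would unfold. Writing $\psi(\tau) = \breve f(-\overline\tau)\,\breve g(\tau)\,\Im(\tau)^{k + s'}$, a direct check gives $\psi(g\tau) = \varepsilon_f\varepsilon_g(e)^{-1}\,\psi(\tau)\,(c_g\tau + d_g)^{-(k-\ell)}\,|c_g\tau + d_g|^{-2s'}$ for $g = \stbt{a_g}{b_g}{c_g}{d_g} \in \Gamma_1(N)\gamma_e$ (the nebentypus characters $\varepsilon_f, \varepsilon_g$ of $f$ and $g$ enter because $\gamma_e$ lies in $\Gamma_0(N)$ but not $\Gamma_1(N)$), and $\psi$ is invariant under $\tau \mapsto \tau + 1$. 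The standard unfolding lemma therefore converts $\int_{\Gamma_1(N)\backslash\cH}\psi(\tau)\cdot(\text{$e$-th inner sum})\cdot\tfrac{\d x \, \d y}{y^2}$ into $\varepsilon_f\varepsilon_g(e)\int_{\Gamma_{1, \infty}\backslash\cH}\psi(\tau)\,\tfrac{\d x\, \d y}{y^2} = \varepsilon_f\varepsilon_g(e)\int_0^\infty\!\!\int_0^1 \breve f(-\overline\tau)\,\breve g(\tau)\, y^{s' + k - 2}\,\d x\,\d y$, after the substitution $\tau \mapsto \gamma_e^{-1}\tau$. The $x$-integration against the Fourier expansions of $\breve f(-\overline\tau)$ and $\breve g(\tau)$ kills the off-diagonal terms and leaves $\int_0^\infty\bigl(\sum_{n \ge 1} a_n(\breve f)\, a_n(\breve g)\, e^{-4\pi n y}\bigr)y^{s' + k - 2}\,\d y = (4\pi)^{-s}\Gamma(s)\sum_{n \ge 1} a_n(\breve f)a_n(\breve g)\,n^{-s}$ (since $s' + k - 1 = s$). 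Combining this with the Eisenstein prefactor $(-2\pi i)^{-(k-\ell)}\pi^{-s'}\Gamma(s' + k - \ell)$ (and $s' + k - \ell = s - \ell + 1$) collapses the constants to $2^{1-k} i^{k-\ell}\,\Gamma_{\CC}(s)\,\Gamma_{\CC}(s - \ell + 1)$, while summing over $e$ gives $\sum_{\gcd(e, N)=1}\varepsilon_f\varepsilon_g(e)\,e^{-(2s - k - \ell + 2)} = L_{(N)}(2s - k - \ell + 2,\, \varepsilon_f\varepsilon_g)$. Hence $\mathcal{D}(\breve f, \breve g, 1/N, s) = 2^{1-k} i^{k-\ell} N^{2s - k - \ell + 2}\,\Gamma_{\CC}(s)\,\Gamma_{\CC}(s - \ell + 1)\, L_{(N)}(2s - k - \ell + 2, \varepsilon_f\varepsilon_g)\sum_{n \ge 1} a_n(\breve f)a_n(\breve g)\,n^{-s}$.

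Finally I would match this with $\Lambda(f, g, s)\, C(\breve f, \breve g, s)$. Since $a_n(\breve f) = a_u(f)\, a_v(\breve f)$ whenever $n = uv$ with $\gcd(u, N) = 1$ and $v \in S(N)$ (and likewise for $g$), the Dirichlet series factors as $\bigl(\sum_{v \in S(N)} a_v(\breve f) a_v(\breve g)\, v^{-s}\bigr)\bigl(\sum_{\gcd(u, N) = 1} a_u(f) a_u(g)\, u^{-s}\bigr)$; multiplying the second factor by $L_{(N)}(2s - k - \ell + 2, \varepsilon_f\varepsilon_g)$ and using the identity $\bigl(\sum_{j \ge 0} a_{p^j}(f) a_{p^j}(g) X^j\bigr)\bigl(1 - p^{k + \ell - 2}\varepsilon_p(f)\varepsilon_p(g) X^2\bigr) = P_p(f, g, X)$ for $p \nmid N$ --- immediate from Proposition \ref{prop:explicitEulerfactor} --- collapses it to $L_{(N)}(f, g, s) = \bigl(\prod_{p \mid N} P_p(f, g, p^{-s})\bigr) L(f, g, s)$. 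Substituting back yields precisely $2^{1-k} i^{k-\ell} N^{2s - k - \ell + 2}\, \Lambda(f, g, s)\, C(\breve f, \breve g, s)$. The step I expect to be the \emph{main obstacle} is the bookkeeping in the unfolding: verifying the twisted cocycle relation for $\psi$ over $\Gamma_1(N)\gamma_e$ so that exactly the character $\varepsilon_f\varepsilon_g$ and the correct power of $N$ appear, and pinning down the constant $2^{1-k} i^{k-\ell}$ from the interplay of the Gamma prefactor, the normalization of the Fourier expansions, and the $(4\pi)$-powers in the $y$-integral; the freedom to take oldform test vectors rather than $f$ and $g$ themselves is harmless precisely because it only affects the $N$-smooth Dirichlet series that $\sum_{n \in S(N)}$ absorbs.
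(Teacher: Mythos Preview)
Your argument is the classical Rankin--Selberg unfolding, which is precisely what underlies the reference the paper cites (Kato \cite[Proposition 7.1]{kato04}, itself Shimura's computation); the paper simply defers to that source rather than reproducing the calculation, so your approach and the paper's coincide, with your version being more explicit. Your tracking of the factor $N^{2s-k-\ell+2}$, the nebentypus twist producing $L_{(N)}(2s-k-\ell+2,\varepsilon_f\varepsilon_g)$, and the constant $2^{1-k}i^{k-\ell}$ is correct.

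One slip to fix: in the final paragraph the identity you quote is stated with the two factors on the wrong sides. The correct version is
\[
\Bigl(\sum_{j \ge 0} a_{p^j}(f)\,a_{p^j}(g)\,X^j\Bigr)\, P_p(f, g, X) \;=\; 1 - p^{k+\ell-2}\varepsilon_p(f)\varepsilon_p(g)\,X^2,
\]
equivalently $\sum_j a_{p^j}(f)a_{p^j}(g)X^j = \bigl(1 - p^{k+\ell-2}\varepsilon_p(f)\varepsilon_p(g)X^2\bigr)\big/P_p(f,g,X)$. Since the local factor of $L_{(N)}(2s-k-\ell+2,\varepsilon_f\varepsilon_g)$ at $p\nmid N$ is $\bigl(1 - p^{k+\ell-2}\varepsilon_p(f)\varepsilon_p(g)p^{-2s}\bigr)^{-1}$, multiplying it against $\sum_j a_{p^j}(f)a_{p^j}(g)p^{-js}$ indeed yields $P_p(f,g,p^{-s})^{-1}$, so your conclusion that the product collapses to $L_{(N)}(f,g,s)$ is unaffected; this is a write-up error, not a gap.
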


 \begin{proof}
  See \cite[Proposition 7.1]{kato04}; our $E^{(j)}_{1/N}(\tau, s)$ corresponds to
  \[ (-2\pi i)^{-j} \Gamma(s + j) \pi^{-s} \Im(\tau)^s E(j, \tau, 1/N, 2s)\]
  in Kato's notation, where $j = k - \ell$. To see that $C(\breve f, \breve g, s)$ is a polynomial, it suffices to consider the case when $\breve f = f(az)$ and $\breve g = g(bz)$ for integers $a \mid N/N_f, b \mid N/N_g$, in which case the result is clear.
 \end{proof}

 In particular, for $s = 1$ and $k = \ell = 2$, using Equation \eqref{eq:Lfcnzero} and the above proposition gives
 \begin{equation}
  \label{eq:Lvalueat1}
  \mathcal{D}(\breve f, \breve g, 1/N, 1) = (4 \pi)^{-1} L'(f, g, 1) C(\breve f, \breve g, 1).
 \end{equation}

 \begin{remark}
  If $f$, $g$ have coprime levels $N_f, N_g$ with $N_f N_g = N$, and we take $\breve f = f$ and $\breve g = g$ to be the normalized newforms, then $C(\breve f, \breve g, s)$ is identically 1, so in this case $\mathcal{D}(\breve f, \breve g, 1/N, s)$ is $N^{2s} \Lambda(f, g, s)$ up to constants.
 \end{remark}

 From the functional equation for the real-analytic Eisenstein series, and the action of Atkin--Lehner involutions, we have
 \begin{equation}
 \begin{aligned}
 \mathcal{D}(\breve f, \breve g, x, k + \ell - 1 - s) &= \left\langle \breve f^*(\tau), \breve g(\tau) \cdot F^{(k - \ell)}_{x/N}(\tau, s - k + 1)\right\rangle_{\Gamma_1(N)}\\
 &= N^{1 - s} \sum_{y \in \ZZ / N\ZZ} e^{2\pi i x y} \mathcal{D}( w_N \breve f, w_N \breve g, y, s).
 \end{aligned}
 \end{equation}
 Here $w_N \breve f$ is the function $\tau \mapsto N^{-1} \tau^{-k} \breve f(-1/(N\tau))$; that is, we have chosen our normalizations so that $w_N$ is an involution in weight 2 (but not in more general weights).


\subsection{The Beilinson regulator}
 \label{sect:beilinson regulator}

 For any smooth variety $X$ over a subfield of $\CC$ there is a canonical map, the \emph{Beilinson regulator}, from $H^3_{\mathcal{M}}(X, \ZZ(2))$ into complex-analytic Deligne--Beilinson cohomology. These maps were introduced in \cite{beilinson84}. We shall only need these maps for $H^3_{\mathcal{M}}(X, \ZZ(2))$ where $X$ is a projective surface, in which case the target group can be identified with de Rham cohomology:

 \begin{theorem}[{Beilinson, cf.~\cite[p.~45]{jannsen88b}}]
  \label{thm:beilinson regulator formula}
  Let $X$ be a smooth projective surface over $\CC$ (or a subfield of $\CC$). There is a homomorphism
  \[ \reg_{\CC}: \CH^2(X, 1) \to H^2_{\dR}(X / \CC) / \Fil^2 = \left(\Fil^1 H^2_{\dR}(X / \CC)\right)^\vee\]
  which sends the class of $\sum_j (Z_j, g_j) \in Z^2(X, 1)$ to the linear functional
  \begin{equation}
   \label{eq:beilinsonreg}
   \omega \mapsto \frac{1}{2\pi i} \sum_j \int_{Z_j - Z_j^{\mathrm{sing}}} \omega \log |g_j|.
  \end{equation}
 \end{theorem}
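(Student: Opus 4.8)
The plan is to recall Beilinson's construction of the regulator into Deligne--Beilinson cohomology and then to unwind it on explicit cycles. First I would define $\reg_\CC$ as the composite of Beilinson's canonical homomorphism $H^3_{\mathcal M}(X,\ZZ(2)) \to H^3_{\mathcal D}(X,\ZZ(2))$ with the natural map $H^3_{\mathcal D}(X,\ZZ(2)) \to H^2_{\dR}(X/\CC)/\Fil^2$ given by the de Rham component of a Deligne class (equivalently, the ``intermediate Jacobian'' quotient $H^2(X,\CC)/(\Fil^2 + H^2(X,\ZZ(2)))$ followed by forgetting the integral lattice). The identification of this target with $(\Fil^1 H^2_{\dR}(X/\CC))^\vee$ is then Poincar\'e duality on the surface $X$: the cup-product pairing $H^2_{\dR}(X/\CC) \times H^2_{\dR}(X/\CC) \to H^4_{\dR}(X/\CC) \cong \CC$ is perfect, and $\Fil^2 H^2_{\dR}$ is exactly the annihilator of $\Fil^1 H^2_{\dR}$ (since $\Fil^2 \cdot \Fil^1 \subseteq \Fil^3 H^4_{\dR} = 0$ and the dimensions match by Hodge symmetry).

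By additivity it suffices to treat a single pair $(Z,g)$, with $Z \subset X$ an irreducible curve and $g \in \CC(Z)^\times$. Writing $\nu : \widetilde Z \to X$ for the normalization of $Z$ composed with the inclusion, the function $g$ is a unit on the complement $U$ of a finite set of points of $\widetilde Z$ (the zeros and poles of $g$, together with the preimages of $Z^{\mathrm{sing}}$), and defines the tautological class in $H^1_{\mathcal M}(U,\ZZ(1)) = \cO(U)^\times$; its image under Beilinson's regulator $H^1_{\mathcal M}(U,\ZZ(1)) \to H^1_{\mathcal D}(U,\RR(1))$ is, in the Gillet--Soul\'e/Burgos current model of Deligne--Beilinson cohomology, represented by the smooth function $\log|g|$ on $U$. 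The class of $(Z,g)$ in $H^3_{\mathcal M}(X,\ZZ(2)) = \CH^2(X,1)$ is the Gysin pushforward of $g$ along $\nu$, and compatibility of Beilinson's regulator with Gysin maps identifies $\reg_\CC(Z,g)$, in the current model, with the $2$-current on $X$ supported along $Z$ that sends a smooth $2$-form $\omega'$ to $\tfrac{1}{2\pi i}\int_{Z - Z^{\mathrm{sing}}}\omega'\,\log|g|$ (the normalization constant $\tfrac{1}{2\pi i}$ being dictated by the Tate twists in the regulator of a unit and in the codimension-one Gysin map). This current is $d$-closed precisely because $\sum_j\operatorname{div}(g_j)=0$, so it represents a class in $H^3_{\mathcal D}(X,\RR(2))$, and the induced functional on $\Fil^1 H^2_{\dR}$, obtained by evaluating it on closed $2$-form representatives, is \eqref{eq:beilinsonreg}.

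The verifications that remain are analytic and local. I would check: (i) convergence of $\int_{Z - Z^{\mathrm{sing}}}\omega\log|g|$ for $\omega$ smooth, which holds because $\log|g|$ has at worst logarithmic singularities at the finitely many zeros and poles of $g$ and along $Z^{\mathrm{sing}}$, all integrable on the smooth curve $\widetilde Z$ (so the integral over $Z - Z^{\mathrm{sing}}$ equals the integral over $U$); (ii) that the resulting functional on closed $2$-forms factors through $H^2_{\dR}/\Fil^2$: exact forms $\omega = d\eta$ contribute $0$ by Stokes' theorem, the boundary integrals over shrinking $\varepsilon$-discs about the singular points being $O(\varepsilon\log\varepsilon)$, while a holomorphic $2$-form (spanning $\Fil^2 H^2_{\dR} = H^{2,0}$) restricts to $0$ on the $1$-dimensional complex manifold $\widetilde Z$; and (iii) that the formula, a priori written on $Z^2(X,1)$, descends to $\CH^2(X,1) = H^1(\Gerst_2(X))$, i.e.\ annihilates the image of the tame-symbol differential $d^0$ --- this is automatic because $\reg_\CC$ is defined on motivic cohomology, but it can also be verified directly by showing that the $2$-current attached to a tame symbol $\partial\{f,h\}$ is exact.

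I expect the main obstacle to be bookkeeping rather than anything conceptual: pinning down the current (Green's-current) model of Deligne--Beilinson cohomology precisely enough to identify, simultaneously, the de Rham realization of the regulator of a unit on a curve and the codimension-one Gysin formula, and then carrying out the Stokes-with-singularities estimates in (ii) with uniform control near $Z^{\mathrm{sing}}$ and the support of $\operatorname{div}(g)$. All of this is classical --- it is in essence Beilinson's original computation, recalled in \cite[p.~45]{jannsen88b} --- so in the paper I would simply invoke that reference; the steps above are what a self-contained proof would expand.
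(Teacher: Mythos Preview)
The paper does not prove this theorem: it is stated with the attribution ``Beilinson, cf.~\cite[p.~45]{jannsen88b}'' and then used as a black box. Your proposal is a perfectly reasonable outline of the classical argument behind that citation --- regulator into Deligne--Beilinson cohomology, the Green's-current model, compatibility with Gysin pushforward, and the Stokes-with-logarithmic-singularities verification --- and indeed you correctly anticipate in your final paragraph that ``in the paper I would simply invoke that reference''; that is exactly what the authors do.
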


 We now show that the images of the generalized Beilinson--Flach elements $\Xi_{m, N, j}$ under $\reg_\CC$, paired with differentials corresponding to weight 2 modular forms $f, g$, are related to the derivatives of Rankin--Selberg $L$-functions at the point $s = 1$. More precisely, we shall apply $\reg_{\CC}$ to a lifting of $\Xi_{m, N, j}$ to $\CH^2(X_1(N)^2 \otimes \QQ(\mu_m), 1) \otimes \QQ$; the result will turn out to be independent of the choice of lifting.

 \begin{definition}
  If $f \in S_2(\Gamma_1(N))$, we let $f^* \in S_2(\Gamma_1(N))$ be the form obtained by applying complex conjugation to the Fourier coefficients of $f$.

  We let $\omega_f$ denote the holomorphic differential on $X_1(N)$ whose pullback to $\cH$ is $2 \pi i f(z)\d z$, and $\eta_f^{\ah}$ the anti-holomorphic differential $\overline{\omega_{f^*}}$, whose pullback is $-2\pi i f(-\overline{z}) \d\overline{z}$.
 \end{definition}

 \begin{remark}
  \begin{enumerate}
   \item The factor $2 \pi i$ is convenient since $\frac{\d q}{q} = 2\pi i \d z$.
   \item The map $f \to \eta_f^{\ah}$ is $\CC$-linear and Hecke-equivariant (whereas the more obvious map $f \mapsto \overline{\omega_f}$ has neither of these desirable properties).
  \end{enumerate}
 \end{remark}

 \begin{theorem}[{Beilinson, cf.~\cite[Proposition 4.1]{BDR12}}]
  \label{thm:beilinson1}
  Let $\widetilde{\Xi}_{N}$ be any element of $\CH^2(X_1(N)^2, 1)$ lifting $\Xi_N \coloneqq \Xi_{1, N, 1} \in \CH^2(X_1(N)^2, 1)$, and let $p_1, p_2$ be the projections of $X_1(N)^2$ onto its two factors. Then for $\breve f, \breve g$ as above we have
  \[ \left\langle \reg_{\CC} \left(\widetilde{\Xi}_{N}\right), p_1^*(\eta_{\breve f}^{\ah}) \wedge p_2^*(\omega_{\breve g})\right\rangle = 2\pi \mathcal{D}(\breve f, \breve g, 1/N, 1) = \tfrac{1}{2} L'(f, g, 1) C(\breve f, \breve g, 1). \]
 \end{theorem}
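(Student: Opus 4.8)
The plan is to compute the left-hand side directly from the explicit formula for $\reg_\CC$ in Theorem~\ref{thm:beilinson regulator formula}, applied to the cycle representative ${}_c\mathfrak{X}_{1,N,1}$ constructed in Proposition~\ref{prop:preimageinGerst21}, and then to recognize the resulting integral as the Rankin--Selberg integral $\mathcal{D}(\breve f,\breve g,1/N,s)$ evaluated via the key property that $E^{(0)}_\alpha(\tau,0)=2\log|g_{0,\alpha}(\tau)|$. First I would reduce to the case $c$-free: since ${}_c\Xi_N=(c^2-\langle c\times c\rangle^*\sigma_c^2){}_c\Xi_N$ and $\reg_\CC$ is Hecke- and Galois-equivariant while $\eta_{\breve f}^{\ah}\wedge\omega_{\breve g}$ pairs with the Hecke eigenspace attached to $(f,g)$, one may divide out the Euler factor $c^2-a_c(f)a_c(g)c^{-2}\cdot(\text{stuff})$ and work with $\Xi_N$ directly, or equivalently absorb the $c$-factor into the normalization at the end; this also shows the answer is independent of the chosen lifting $\widetilde\Xi_N$, since two liftings differ by a negligible element whose support is a product of cuspidal points, and integrating $p_1^*(\eta^{\ah}_{\breve f})\wedge p_2^*(\omega_{\breve g})$ over $\{c\}\times X_1(N)$ or $X_1(N)\times\{d\}$ gives zero because $\eta^{\ah}_{\breve f}$ restricted to a point vanishes.

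Next I would unwind the cycle. By Definition~\ref{def:BFelts} with $m=j=1$, $\Xi_N$ is the class of $(\Delta,\, {}_cg_{0,1/N})$ where $\Delta\subset Y_1(N)^2$ is the diagonal, embedded via $\iota_{1,N,1}=(1,\mathrm{id})$. Pulling back $p_1^*(\eta^{\ah}_{\breve f})\wedge p_2^*(\omega_{\breve g})$ to $\Delta\cong Y_1(N)$ turns it into $\eta^{\ah}_{\breve f}\wedge\omega_{\breve g}$, which on $\cH$ is $(-2\pi i\,\breve f(-\overline\tau)\,\d\overline\tau)\wedge(2\pi i\,\breve g(\tau)\,\d\tau)=(2\pi)^2\,\breve f(-\overline\tau)\breve g(\tau)\,\d\overline\tau\wedge\d\tau$. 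Using $\d\overline\tau\wedge\d\tau = 2i\,\d x\wedge\d y$ and the formula \eqref{eq:beilinsonreg}, the pairing becomes
\[
  \frac{1}{2\pi i}\int_{\Gamma_1(N)\backslash\cH}(2\pi)^2\cdot 2i\,\breve f(-\overline\tau)\,\breve g(\tau)\,\log|{}_cg_{0,1/N}(\tau)|\,\d x\,\d y.
\]
Here I would invoke the identity $\log|{}_cg_{\alpha,\beta}| = \tfrac12 E^{(0)}_{?}(\tau,0)$-type relation: more precisely, for weight $0$, part (v) of the proposition on Eisenstein series gives $E^{(0)}_{1/N}(\tau,0)=2\log|g_{0,1/N}(\tau)|$, and the $c$-smoothing replaces $g$ by ${}_cg$ with a controlled correction; combining with the defining integral for $\mathcal{D}(\breve f,\breve g,1/N,s)$ at $s=1$ (so $s-k+1=0$ and the Eisenstein weight $k-\ell=0$), I obtain that the pairing equals $2\pi\,\mathcal{D}(\breve f,\breve g,1/N,1)$ after tracking the constants. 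The final equality with $\tfrac12 L'(f,g,1)C(\breve f,\breve g,1)$ is then exactly Equation~\eqref{eq:Lvalueat1}.

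The main obstacle will be the careful bookkeeping of constants and normalizations: matching the $(-2\pi i)^{-k}$, $\pi^{-s}$, $\Gamma(s+k)$ factors in the definition of $E^{(k)}_\alpha$ at $k=0,s=0$ against the $2\pi i$ in $\omega_f$, the sign and factor in $\d\overline\tau\wedge\d\tau$, and the $\tfrac1{2\pi i}$ in the regulator formula, so that the stray constants cancel to leave precisely $2\pi$. A subsidiary point requiring care is the passage from ${}_cg_{0,1/N}$ to $g_{0,1/N}$ (i.e.~the $c$-desmoothing): one must check that the Euler-type factor $c^2-\langle c\times c\rangle^*\sigma_c^2$ acting on the $(f,g)$-isotypic differential is nonzero and that dividing by it is consistent with the corresponding operation on the analytic side, where $E^{(0)}_{1/N}$ is replaced by $c^2 E^{(0)}_{1/N}-E^{(0)}_{c/N}$; this is harmless because the relation \eqref{eq:Lvalueat1} is itself insensitive to such smoothing once one works in the appropriate eigenspace. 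Beyond these, the argument is a direct unwinding, and Beilinson's original computation (reproduced in \cite[Proposition 4.1]{BDR12}) provides the template.
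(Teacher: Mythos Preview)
Your approach is correct and essentially identical to the paper's: represent $\widetilde\Xi_N$ as $(\Delta, g_{0,1/N})$ plus negligible terms, observe that $p_1^*(\eta_{\breve f}^{\ah})\wedge p_2^*(\omega_{\breve g})$ vanishes on the negligible components and restricts to $\eta_{\breve f}^{\ah}\wedge\omega_{\breve g}$ on the diagonal, unwind the regulator integral using $E^{(0)}_{1/N}(\tau,0)=2\log|g_{0,1/N}(\tau)|$, and finish with Equation~\eqref{eq:Lvalueat1}. The only superfluous complication in your write-up is the $c$-desmoothing discussion: the statement concerns $\Xi_N=\Xi_{1,N,1}$, which by Proposition~\ref{prop:BFeltproperties}(\ref{item:cfactor}) is already the $c$-free element represented by $(\Delta,g_{0,1/N})$ with $\QQ$-coefficients, so no passage from ${}_cg_{0,1/N}$ to $g_{0,1/N}$ is required (and your displayed relation ${}_c\Xi_N=(c^2-\langle c\times c\rangle^*\sigma_c^2){}_c\Xi_N$ has a typo---the right-hand side should have $\Xi_N$).
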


 \begin{proof}
  We have
  \begin{align*}
   \mathcal{D}'(f, g, 1/N, 1) &= \left(  \int_{\Gamma_1(N) \backslash \cH} f(-\bar{\tau}) g(\tau) E^{(0)}_{1/N}(\tau, 0)\, \mathrm{d}x \wedge \mathrm{d}y \right)\\
   &= 2 \int_{\Gamma_1(N) \backslash \cH} f(-\overline{\tau}) g(\tau) \log \left|g_{0, 1/N}(\tau)\right| \, \mathrm{d}x \wedge \mathrm{d}y \\
   &= 2 \int_{\Gamma_1(N) \backslash \cH} f(-\overline{\tau}) g(\tau) \log \left|g_{0, 1/N}(\tau)\right| \, \frac{(-2\pi i\, \mathrm{d}\bar{z}) \wedge (2\pi i\, \mathrm{d}z)}{8\pi^2 i}\\
   &= \frac{1}{2\pi} \left( \frac{1}{2 \pi i} \int_{Y_1(N)(\CC)} \log \left|g_{0, 1/N}\right|\, \eta_f^{\ah} \wedge \eta_g\right).
  \end{align*}
  We compare this with Beilinson's formula for the regulator on $\CH^2(X_1(N)^2, 1)$ (Theorem \ref{thm:beilinson regulator formula}). We know that $\widetilde{\Xi}_{N}$ can be written as the class of $(\Delta, g_{0, 1/N})$ (where $\Delta = C_{1, N, 1}$ is the diagonal in $X_1(N)^2$) plus a linear combination of elements supported on cuspidal components. It is clear that $p_1^*(\eta_f^{\ah}) \wedge p_2^*(\omega_g)$ restricts to 0 on any horizontal or vertical component, and to $\eta_f^{\ah} \wedge \eta_g$ on $\Delta$; so we obtain
  \[ \left\langle \reg_{\CC} \left(\widetilde{\Xi}_{N}\right), \eta_f^{\ah} \wedge \eta_g\right\rangle = \frac{1}{2\pi i} \int_{Y_1(N)}  \log \left|g_{0, 1/N}\right|\, \eta_f^{\ah} \wedge \eta_g = 2\pi D'(f, g, 1)\]
  as required. The final equality follows from Equation \eqref{eq:Lvalueat1}.
 \end{proof}

 We are interested in a version of Theorem \ref{thm:beilinson1} for $m \ge 1$, incorporating twists by Dirichlet characters. This relation becomes easier to state if we introduce ``equivariant'' versions of some of our objects, as follows:

 \begin{definition}
  For $N \ge 5, m \ge 1$ as above, and cusp forms $f, g$ of level $N$ which are eigenforms for the Hecke operators away from $N$, we define the following elements:
  \[\begin{aligned}
   \bfg_m &= \sum_{a \in (\ZZ / m\ZZ)^\times} [a]^{-1} \otimes g\left(z + \tfrac{a}{m}\right) &\in \CC[(\ZZ/m\ZZ)^\times] &\otimes_{\CC} S_2(\Gamma_1(m^2 N), \CC)
  \end{aligned}\]
  and the $\CC[(\ZZ/m\ZZ)^\times]$-valued Dirichlet series
  \[
   L_{(mN)}(f, g, (\ZZ/m\ZZ)^\times, s) = \prod_{\ell \nmid mN} P_\ell(f, g, [\ell] \ell^{-s})^{-1}.
  \]
 \end{definition}

 (There is no obvious way to define an equivariant Euler factor at the primes dividing $m$.)

 \begin{proposition}
  \begin{enumerate}[(a)]
   \item We have
   \[ a_n(\bfg_m) = a_n(g) \tau(n, m),\]
   where $\tau(n, m)$ is the ``universal Gauss sum'' $\sum_{a \in (\ZZ / m\ZZ)^\times} [a]^{-1} e^{2\pi i n a / m} \in \CC[(\ZZ/m\ZZ)^\times]$.
   \item If we extend the Hecke operators on $S_2(\Gamma_1(N))$ linearly to $\CC[(\ZZ/m\ZZ)^\times] \otimes_{\CC} S_2(\Gamma_1(m^2 N))$, then we have
   \begin{align*}
    T_n(\bfg_m) &= [n] t_g(n) \bfg_m\\
    \langle n \rangle (\bfg_m) &= [n]^2 \varepsilon_g(m) \bfg_m
   \end{align*}
   for all $n$ such that $(n, mN) = 1$, where $t_g(n)$ and $\varepsilon_g(n)$ are the eigenvalues of $g$ for the $T_n$ and $\langle n \rangle$ operators respectively.
  \end{enumerate}
 \end{proposition}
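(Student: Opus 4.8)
The plan is to prove both parts by unwinding $q$-expansions, using that $g_a(z) := g\!\left(z + \tfrac am\right)$ equals $g|_2\sigma_a$ for $\sigma_a = \stbt 1 {a/m} 0 1$, and that this lies in $S_2(\Gamma_1(m^2 N))$ by the matrix inclusion $\sigma_a\,\Gamma_1(m^2N)\,\sigma_a^{-1}\subseteq\Gamma_1(N)$ already used in Lemma~\ref{lem:kappaexists}; thus $\bfg_m=\sum_{a\in(\ZZ/m\ZZ)^\times}[a]^{-1}\otimes g_a$ is a well-defined element of $\CC[(\ZZ/m\ZZ)^\times]\otimes S_2(\Gamma_1(m^2N))$. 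For part (a), if $g=\sum_{n\ge1}a_n(g)q^n$ then $g_a=\sum_n a_n(g)e^{2\pi i na/m}q^n$, so $a_n(g_a)=a_n(g)e^{2\pi i na/m}$, and summing against $[a]^{-1}$ gives $a_n(\bfg_m)=a_n(g)\tau(n,m)$ directly. I also record the two elementary properties of the universal Gauss sum needed below: for a prime $\ell\nmid m$ one has $\tau(\ell r,m)=[\ell]\,\tau(r,m)$ (substitute $a\mapsto\ell^{-1}a$), and dually $\tau(r/\ell,m)=[\ell]^{-1}\tau(r,m)$ when $\ell\mid r$.

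For the diamond relation in (b), observe first that $(n,mN)=1$ forces $(n,m^2N)=1$. I would show $\langle n\rangle g_a=\varepsilon_g(n)\,g_{n^2a}$, indices read mod $m$. Choose a representative $\gamma\in\Gamma_0(m^2N)$ of $\langle n\rangle$ with $\gamma\equiv\stbt{n^{-1}}00n\pmod{m^2N}$, and set $b:=n^2a\bmod m$. A direct computation shows that $\gamma':=\sigma_a\gamma\sigma_b^{-1}$ has all four entries integral — the choice $b\equiv n^2a$ is exactly what clears the denominator in its upper-right entry — and that $\gamma'\in\Gamma_0(N)$ with lower-left entry $\equiv 0$ and lower-right entry $\equiv n\pmod N$. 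Hence $g_a|_2\gamma=g|_2(\sigma_a\gamma)=(g|_2\gamma')|_2\sigma_b=\varepsilon_g(n)\,g|_2\sigma_b=\varepsilon_g(n)\,g_{n^2a}$. Substituting this into $\bfg_m$ and re-indexing $c=n^2a$ (so $[a]^{-1}=[n]^2[c]^{-1}$) yields $\langle n\rangle\bfg_m=[n]^2\varepsilon_g(n)\,\bfg_m$. (The factor written $\varepsilon_g(m)$ in the statement should read $\varepsilon_g(n)$.)

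For the Hecke relation in (b) it suffices to evaluate $T_\ell$ for a prime $\ell\nmid mN$, since both $\{T_n\}$ and the scalars $\{[n]t_g(n)\}$, $\{[n]^2\varepsilon_g(n)\}$ obey the same multiplicative recursions $T_{rs}=T_rT_s$ (for $(r,s)=1$) and $T_{\ell^{k+1}}=T_\ell T_{\ell^k}-\ell\langle\ell\rangle T_{\ell^{k-1}}$. As $T_\ell$ acts only in the modular-forms factor it is $\CC[(\ZZ/m\ZZ)^\times]$-linear, and since $\bfg_m$ is, by the previous step, an eigenvector for $\langle\ell\rangle$ with eigenvalue $\lambda=[\ell]^2\varepsilon_g(\ell)$, the standard $q$-expansion formula reads $a_r(T_\ell\bfg_m)=a_{\ell r}(\bfg_m)+\ell\lambda\,a_{r/\ell}(\bfg_m)$. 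Plugging in $a_r(\bfg_m)=a_r(g)\tau(r,m)$ and the two Gauss-sum identities, the right-hand side collapses to $[\ell]\tau(r,m)\big(a_{\ell r}(g)+\ell\varepsilon_g(\ell)a_{r/\ell}(g)\big)=[\ell]\tau(r,m)\,t_g(\ell)a_r(g)=[\ell]t_g(\ell)\,a_r(\bfg_m)$, using that $g$ is a $T_\ell$-eigenform for $\ell\nmid N$. Comparing all $q$-coefficients gives $T_\ell\bfg_m=[\ell]t_g(\ell)\bfg_m$, hence $T_n\bfg_m=[n]t_g(n)\bfg_m$ for every $n$ with $(n,mN)=1$.

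The only step demanding genuine care is the matrix bookkeeping in the diamond computation: one must verify that $\gamma'=\sigma_a\gamma\sigma_b^{-1}$ is integral and that its reduction modulo $N$ is the correct diamond matrix in $\Gamma_0(N)$. Once that is settled, the remainder of the argument is a sequence of one-line $q$-expansion manipulations together with the formal consequences of Hecke multiplicativity.
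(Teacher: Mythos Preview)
Your proof is correct and follows essentially the same approach as the paper: part (a) by a direct $q$-expansion computation, the diamond operator in (b) by the matrix manipulation $\sigma_a\gamma\sigma_{n^2a}^{-1}\in\Gamma_0(N)$ (the paper refers here to the analogous computation behind Proposition~\ref{prop:BFeltproperties}(4)), and the Hecke-operator statement via the standard $q$-expansion formula combined with the identity $\tau(nn',m)=[n]\,\tau(n',m)$ for $(n,m)=1$. Your reduction to primes and then bootstrapping via the Hecke recursions is a minor organisational variation, and your observation that $\varepsilon_g(m)$ in the statement is a typo for $\varepsilon_g(n)$ is well spotted.
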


 (We can interpret (b) above as stating that $\bfg_m$ transforms under the Hecke operators away from $mN$ as ``$g$ twisted by the universal character of level $m$''.)

 \begin{proof}
  Part (a) is immediate by a $q$-expansion computation. For part (b), we note that the statement regarding the diamond operators can be verified directly -- by essentially the same computation as Proposition \ref{prop:BFeltproperties}(4) -- and the statement for the $T_n$'s now follows immediately from the standard formulae for the action of $T_n$ on $q$-expansions, together with the easily verified fact that $\tau(nn', m) = [n] \tau(n', m)$ if $(n, m) = 1$.
 \end{proof}

 We extend the Beilinson regulator to a homomorphism
 \[ \reg_{\CC[(\ZZ / m\ZZ)^\times]} : \CH^2(X_1(N) \otimes \QQ(\mu_m), 1) \to \CC[(\ZZ/m\ZZ)^\times] \otimes_{\CC} \left(\Fil^1 H^2_{\dR}(X / \CC)\right)^\vee\]
 by mapping $\delta$ to $\sum_{a \in (\ZZ / N\ZZ)^\times} [a] \otimes \reg_{\CC}(\sigma_a \cdot \delta)$. (Note that this is \emph{not} a homomorphism of modules over the group ring $\CC[(\ZZ / m\ZZ)^\times]$; the Poincar\'e duality pairing interchanges the natural action of $\CC[(\ZZ / m\ZZ)^\times]$ with its inverse.)

 \begin{theorem}
  \label{thm:beilinsonreg}
  Let $\breve f, \breve g$ be as above, and let $\widetilde{\Xi}_{m, N, 1}$ be any lifting of $\Xi_{m, N, 1}$ to $X_1(N)^2$. Then as elements of $\CC[(\ZZ / m\ZZ)^\times]$ we have
  \[ \left\langle \reg_{\CC[(\ZZ / m\ZZ)^\times]}(\widetilde{\Xi}_{m, N, 1}), p_1^*(\eta_{\breve f}^{\ah}) \wedge p_2^*(\omega_{\breve g}) \right\rangle =  \tfrac{1}{2} L_{(mN)}'(f, g, (\ZZ/m\ZZ)^\times, 1)A(\breve f, \breve g, m, 1),\]
  where we define
  \[ A(\breve f, \breve g, m, s) = \sum_{a \in (\ZZ / m\ZZ)^\times} [a]^{-1} \sum_{n \in S(mN)} a_n(f) a_n(g) e^{2\pi i a n / m} n^{-s}.\]
 \end{theorem}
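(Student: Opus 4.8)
The plan is to reduce the statement to the case $m=1$ (Theorem~\ref{thm:beilinson1}) together with the elementary bookkeeping that builds $\bfg_m$ out of the translates $g(z+a/m)$.

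First I would unwind the definition of the equivariant regulator, so that the left-hand side becomes $\sum_{a\in(\ZZ/m\ZZ)^\times}[a]\cdot\bigl\langle\reg_\CC(\sigma_a\cdot\widetilde\Xi_{m,N,1}),\,p_1^*(\eta_{\breve f}^{\ah})\wedge p_2^*(\omega_{\breve g})\bigr\rangle$. By Proposition~\ref{prop:BFeltproperties}(3) the Galois action of $\sigma_a$ sends the class $\Xi_{m,N,1}$ to $\Xi_{m,N,a^{-1}}$, and since the difference of any two liftings to $X_1(N)^2$ is a negligible element (and Galois permutes the negligible classes among themselves), $\sigma_a\cdot\widetilde\Xi_{m,N,1}$ is a lifting of $\Xi_{m,N,a^{-1}}$. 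Now $\Xi_{m,N,a^{-1}}$ is represented in $Z^2$ by $\bigl(C_{m,N,a^{-1}},\,(\iota_{m,N,a^{-1}})_*(g_{0,1/m^2N})\bigr)$ (after removing the $c$-factor as in Proposition~\ref{prop:BFeltproperties}(5)), and, exactly as in the proof of Theorem~\ref{thm:beilinson1}, the form $p_1^*(\eta_{\breve f}^{\ah})\wedge p_2^*(\omega_{\breve g})$ restricts to $0$ on every curve $\{c\}\times X_1(N)$ or $X_1(N)\times\{d\}$; hence each pairing is independent of the chosen lift and, by Beilinson's formula (Theorem~\ref{thm:beilinson regulator formula}), is computed by integrating over $C_{m,N,a^{-1}}$.

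Next I would make the integral explicit. Since $\iota_{m,N,a^{-1}}$ is birational onto its image by Lemma~\ref{lem:birational}, I parametrize $C_{m,N,a^{-1}}(\CC)$ by $z\in\Gamma_1(m^2N)\backslash\cH$ mapping to $(z,z+a^{-1}/m)$; along this parametrization $p_1^*(\eta_{\breve f}^{\ah})$ pulls back to $-2\pi i\,\breve f(-\bar z)\,\d\bar z$, $p_2^*(\omega_{\breve g})$ pulls back to $2\pi i\,\breve g(z+a^{-1}/m)\,\d z$, and $\log|g_{0,1/m^2N}|$ pulls back to $\log|g_{0,1/m^2N}(z)| = \tfrac12 E^{(0)}_{1/m^2N}(z,0)$ by part~(v) of the proposition on real-analytic Eisenstein series. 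Tracking the constant exactly as in Theorem~\ref{thm:beilinson1} (the $(2\pi i)^{-1}$ in Beilinson's formula, the two factors $2\pi i$, and $\d\bar z\wedge\d z = 2i\,\d x\,\d y$), the $a$-th term equals $2\pi\,\mathcal{D}\bigl(\breve f,\breve g(\,\cdot+a^{-1}/m),1/m^2N,1\bigr)$, where $\mathcal{D}$ is the Rankin--Selberg integral of the previous subsection taken at level $m^2N$. Summing over $a$ with weight $[a]$ and re-indexing $b=a^{-1}$ assembles $\sum_b[b]^{-1}\breve g(\,\cdot+b/m)$, so the left-hand side equals $2\pi\,\mathcal{D}(\breve f,\breve{\mathbf g}_m,1/m^2N,1)$ with $\mathcal{D}$ extended $\CC[(\ZZ/m\ZZ)^\times]$-linearly and $\breve{\mathbf g}_m = \sum_b[b]^{-1}\otimes\breve g(\,\cdot+b/m)$ the evident test-vector analogue of $\bfg_m$.

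Finally I would evaluate this equivariant Rankin--Selberg integral by decomposing $\CC[(\ZZ/m\ZZ)^\times]=\prod_\chi\CC$. In the $\chi$-component $\breve{\mathbf g}_m$ becomes a Gauss-sum multiple of a test vector for the newform $g\otimes\bar\chi$ of level dividing $m^2N$, so the Rankin--Selberg--Shimura theorem and Equation~\eqref{eq:Lvalueat1} (applied at level $m^2N$, using that $k=\ell=2$ forces $\Lambda(f,g\otimes\bar\chi,s)$ to be holomorphic at $s=1$ and hence $L(f,g\otimes\bar\chi,1)=0$) express the $\chi$-component of the pairing in terms of $L'_{(mN)}(f,g\otimes\bar\chi,1)$ and the finite sum $\sum_{n\in S(m^2N)}a_n(\breve f)\,a_n\!\bigl((\breve{\mathbf g}_m)_\chi\bigr)\,n^{-1}$. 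Reassembling over $\chi$: the product $\prod_{\ell\nmid mN}P_\ell(f,g,[\ell]\ell^{-s})^{-1}$ specializes in the $\chi$-component to $\prod_{\ell\nmid mN}P_\ell(f,g\otimes\bar\chi,\ell^{-s})^{-1}$, so the $L$-parts assemble to $L'_{(mN)}(f,g,(\ZZ/m\ZZ)^\times,1)$; and, using $a_n(\breve{\mathbf g}_m)=a_n(\breve g)\tau(n,m)$ together with $S(m^2N)=S(mN)$, the finite sums assemble to $A(\breve f,\breve g,m,1)$. Combining $2\pi\cdot(4\pi)^{-1}=\tfrac12$ then yields the asserted identity. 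The main obstacle is precisely this last step: matching the archimedean and bad-Euler-factor bookkeeping of the twisted Rankin--Selberg integral against the definitions of $L_{(mN)}(f,g,(\ZZ/m\ZZ)^\times,s)$ and $A(\breve f,\breve g,m,s)$, and pinning down the character conventions --- whether $\sigma_\ell$ acts through $[\ell]\mapsto\chi(\ell)$ or its inverse --- so that the $\chi$-components recombine with the correct signs; everything else is the $m=1$ argument run through the twisting operation.
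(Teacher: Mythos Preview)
Your reduction to the integral is correct and matches the paper's argument: you arrive at
\[
\left\langle \reg_{\CC[(\ZZ / m\ZZ)^\times]}(\widetilde{\Xi}_{m, N, 1}),\, p_1^*(\eta_{\breve f}^{\ah}) \wedge p_2^*(\omega_{\breve g}) \right\rangle
= 2\pi\,\mathcal{D}(\breve f, \breve{\bfg}_m, 1/m^2N, 1),
\]
which is exactly where the paper lands (written there as $4\pi\int_{X_1(m^2N)} f(-\bar\tau)\,\bfg_m(\tau)\,\log|g_{0,1/m^2N}|\,dx\,dy$).

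Where you diverge is the final step. The paper does \emph{not} decompose into characters. Instead it observes (Proposition preceding the theorem) that $\bfg_m$ is a Hecke eigenform away from $mN$ with \emph{group-ring-valued} eigenvalues, $T_n\bfg_m = [n]a_n(g)\bfg_m$, and simply reruns the Rankin--Selberg unfolding of \cite[Prop.~7.1]{kato04} with coefficients in $\CC[(\ZZ/m\ZZ)^\times]$. The Dirichlet series $\sum_n a_n(\breve f)\,a_n(\breve\bfg_m)\,n^{-s}$ then factors directly as $L_{(mN)}(f,g,(\ZZ/m\ZZ)^\times,s)$ times the finite sum over $S(mN)$, using $a_n(\breve\bfg_m)=a_n(\breve g)\tau(n,m)$ and $\tau(nn',m)=[n]\tau(n',m)$ for $(n,m)=1$. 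This sidesteps entirely the obstacle you flag: no Gauss sums, no matching of bad Euler factors for twists, no sign conventions for characters.

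Your character-by-character route is not wrong, but the claim that $(\breve\bfg_m)_\chi$ is ``a Gauss-sum multiple of a test vector for the newform $g\otimes\bar\chi$'' is imprecise when $\chi$ is imprimitive or when $\breve g$ has nonzero coefficients at integers sharing a factor with $m$; and in any case the unfolding does not require this --- it works for any cusp form in the second slot. Once you notice that, running the unfolding in each $\chi$-component and reassembling is literally the same computation as running it once with group-ring coefficients, so the decomposition buys nothing and creates the bookkeeping you were worried about. The paper's one-line shortcut is the cleaner choice.
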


 \begin{proof}
  As we showed in the previous section, $\widetilde{\Xi}_{m, N,j}$ may be represented as the class of an element in $Z^2(X_1(N)^2 \otimes \QQ(\mu_m), 1) \otimes \QQ[ (\ZZ/m\ZZ)^\times]$ which differs by negiligible elements from \[\left(C_{m, N, j}, (\iota_{m, N, j})_*(g_{0, 1/m^2N})\right).\]
  As in the case $m = 1$ considered above, these negligible elements pair to 0 with the differential $p_1^*(\eta_f^{\ah}) \wedge p_2^*(\omega_g)$. Hence we have
  \begin{multline*}
   \langle \reg_{\CC}(\widetilde{\Xi_{m, N}}), p_1^*(\eta_f^{\ah}) \wedge p_2^*(\omega_g) \rangle =
   \sum_{j \in (\ZZ / m\ZZ)^\times} [j]^{-1} \int_{C_{m, N, j}} \log \left| (\iota_{m, N, j})_* (g_{0, 1/m^2N}) \right| \cdot p_1^*(\eta_f^{\ah}) \wedge p_2^*(\omega_g)\\
   = \sum_{j \in (\ZZ / m\ZZ)^\times} [j]^{-1} \int_{X_1(m^2 N)} \log\left|g_{0, 1/m^2N}\right|\cdot (p_1 \circ \iota_{m, N, j})^*(\eta_f^{\ah}) \wedge (p_2 \circ \iota_{m, N, j})^*(\omega_g).
  \end{multline*}
  By construction $p_1 \circ \iota_{m, N, j}$ is just the natural projection map $X_1(m^2 N) \to X_1(N)$, so the pullback of $\eta_f^{\ah}$ along this map is just $\eta_f^{\ah}$ again (where now we consider $f$ as a modular form of level $m^2 N$). On the other hand, $p_2 \circ \iota_{m, N, j}$ corresponds to the map $z \mapsto z + \tfrac j m$ on the upper half-plane, so $(p_2 \circ \iota_{m, N, j})^*(\omega_g)$ is the differential whose pullback to $\cH$ is $2\pi i g\left(z + \tfrac{j}{m}\right) \d z$, and hence we have
  \[ \sum_{j} [j]^{-1} (p_2 \circ \iota_{m, N, j})^*(\omega_g) = 2\pi i \bfg_m(z)\d z\]
  as elements of $\CC[(\ZZ / m\ZZ)^\times] \otimes \Omega^1_{\mathrm{hol}}(X_1(m^2 N))$. Hence, by exactly the same computation as above,
  \[ \langle \reg_{\CC}(\widetilde{\Xi_{m, N}}), p_1^*(\eta_f^{\ah}) \wedge p_2^*(\omega_g) \rangle = 4 \pi \int_{X_1(m^2 N)} f(-\bar{\tau}) \bfg_m(\tau) \log\left|g_{0,1/m^2N}\right| \d x \wedge \d y.\]

  As remarked above, $\bfg_m$ is an eigenform for the Hecke operators away from $mN$; so we may now apply exactly the same formal manipulations as in the proof of \cite[Proposition 7.1]{kato04}, but with group ring coefficients rather than $\CC$ coefficients, and the result follows in this case also.
 \end{proof}


\subsection{A non-vanishing result}

 In this section, we shall use the results of the previous section, together with a deep theorem of Shahidi on the non-vanishing of Rankin--Selberg $L$-values, to show that the elements $\Xi_{m, N, j}$ are not all zero (which is in no way obvious from their construction).

 \begin{theorem}[{Shahidi, \cite[Theorem 5.2]{shahidi81}}]
  \label{thm:shahidi} Let $f, g$ be any two newforms of weight 2. Then the completed $L$-function $\Lambda(f, g, s)$ is holomorphic and nonvanishing on the line $\Re(s) = 2$, unless $f = g^*$, in which case it has a simple pole at $s = 2$.
 \end{theorem}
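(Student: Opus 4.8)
The plan is to deduce this from Shahidi's non-vanishing theorem for Rankin--Selberg $L$-functions, which is exactly the $\GL_2 \times \GL_2$ case of \cite[Theorem 5.2]{shahidi81}; the only real work is the translation from classical modular forms to automorphic representations. First I would attach to the weight-two newforms $f$ and $g$ the unitarily normalised cuspidal automorphic representations $\pi_f, \pi_g$ of $\GL_2(\AA_\QQ)$, normalised so that the unramified Satake parameters have absolute value $1$. With this normalisation one has $L(f, g, s) = L(s - 1, \pi_f \times \pi_g)$: the shift by $(k + \ell - 2)/2 = 1$ is precisely what moves the functional-equation axis of Proposition \ref{prop:completedL} onto the line $\Re = \tfrac12$ in the automorphic variable. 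Under this dictionary the line $\Re(s) = 2$ corresponds to the edge $\Re = 1$ of the critical strip of $L(s, \pi_f \times \pi_g)$.

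Next I would invoke Shahidi's theorem: for unitary cuspidal automorphic representations $\pi, \pi'$ of general linear groups, $L(s, \pi \times \pi')$ has no zero on the line $\Re(s) = 1$. This is the Langlands--Shahidi analogue of the classical non-vanishing of $\zeta$ on $\Re = 1$ --- the $L$-function appears in the constant term of an Eisenstein series on a larger group, and the analytic behaviour of that series together with a positivity argument forces non-vanishing on the edge. Applying this with $\pi = \pi_f$, $\pi' = \pi_g$ and translating back by $s \mapsto s - 1$ shows that $\Lambda(f, g, s)$ has no zero on $\Re(s) = 2$, the archimedean factors $\Gamma_\CC(s)\,\Gamma_\CC(s - 1)$ being finite and nonzero there.

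For holomorphy and the exceptional pole I would then appeal to the standard analytic theory of $\GL_2 \times \GL_2$ convolutions (Jacquet--Shalika; cf.~\cite{jacquet72}): $L(s, \pi_f \times \pi_g)$ is entire unless $\pi_g$ is the contragredient $\widetilde{\pi_f}$, in which case it has a \emph{simple} pole at $s = 1$ (residue a positive multiple of a Petersson norm) and is holomorphic elsewhere on $\Re = 1$. For weight-two newforms the contragredient of $\pi_f$ is the representation attached to $f^*$, so ``$\pi_g \cong \widetilde{\pi_f}$'' is exactly ``$g = f^*$'', consistent with the ``$\overline f \otimes \overline g$'' appearing on the dual side of Proposition \ref{prop:completedL}. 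Translating back, $\Lambda(f, g, s)$ is holomorphic on $\Re(s) = 2$ except for a simple pole at $s = 2$ when $f = g^*$, which together with the preceding paragraph is the theorem.

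The main obstacle here is not analytic but bookkeeping: one must pin down the weights, the nebentypus characters $\varepsilon_p(f), \varepsilon_p(g)$, and the normalisation carefully enough that $\Lambda(f, g, s)$ of Proposition \ref{prop:completedL} matches $L(s - 1, \pi_f \times \pi_g)$ on the nose and that the condition ``$f = g^*$'' coincides exactly with the pole condition ``$\pi_g \cong \widetilde{\pi_f}$''. No new analytic input beyond Shahidi's theorem is required.
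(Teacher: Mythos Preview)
The paper does not give its own proof of this theorem: it simply cites Shahidi's result and notes in the subsequent remark that Shahidi's normalisation places the axis of symmetry at $s = \tfrac12$ rather than $s = \tfrac32$, so a shift by $1$ is required. Your proposal is exactly this translation spelled out in detail, and it is correct; there is nothing to add.
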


 \begin{remark}
  We have stated only a special case of Shahidi's very general theorem, which applies to automorphic forms on $\GL_n \times \GL_m$ over an arbitrary number field. Note also that Shahidi's normalizations are slightly different from ours (he normalizes the $L$-function so that the abcissa of symmetry is $s = \tfrac{1}{2}$, independently of the weights of $f$ and $g$, while we normalize it to be at $s = \frac{k + \ell - 1}{2} = \frac{3}{2}$).
 \end{remark}

 \begin{corollary}
  If $\Sigma$ is a finite set of primes, then the function $L_{\Sigma}(f, g, s)$ has a zero at $s = 1$ of order $r_1 + r_2$, where
  \[ r_1 =
   \begin{cases}
    1 & \text{if $f^* \ne g$}\\
    0 & \text{if $f^* = g$}
   \end{cases}
  \]
  and $r_2$ is the sum of the orders of the poles at $s = 1$ of the Euler factors $L_p(f, g, s)$ for primes $p \in \Sigma$.
 \end{corollary}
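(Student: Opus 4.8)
The plan is to reduce the statement to two inputs: the order of the zero/pole at $s=1$ of the \emph{completed} $L$-function $\Lambda(f,g,s)$, which I would read off from the functional equation together with Shahidi's non-vanishing theorem (Theorem~\ref{thm:shahidi}); and the elementary observation that $L_\Sigma$ differs from the full $L$-function only by finitely many polynomial Euler factors. Throughout, $f$ and $g$ have weight~$2$, so we are in the case $k=\ell=2$ of Proposition~\ref{prop:completedL}, and $\Lambda(f,g,s)=\Gamma_\CC(s)\Gamma_\CC(s-1)L(f,g,s)$.

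First I would compute $\ord_{s=1}L(f,g,s)$. The functional equation of Proposition~\ref{prop:completedL}, specialised at $s=2$ (note $k+\ell-1-2=1$), gives $\Lambda(f,g,1)=\varepsilon(2)\,\Lambda(f^*,g^*,2)$; since $\varepsilon(s)=Ae^{Bs}$ is entire and nowhere vanishing, comparing orders on the two sides yields $\ord_{s=1}\Lambda(f,g,s)=\ord_{s=2}\Lambda(f^*,g^*,s)$. Applying Theorem~\ref{thm:shahidi} to the pair $(f^*,g^*)$ — whose exceptional condition ``$f^*=(g^*)^*$'' is precisely $f^*=g$ — shows that $\Lambda(f^*,g^*,s)$ is holomorphic and non-vanishing at $s=2$ when $f^*\neq g$, and has a simple pole there when $f^*=g$; hence $\ord_{s=1}\Lambda(f,g,s)=r_1-1$ in both cases. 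Now $\Gamma_\CC(s)$ is holomorphic and non-zero at $s=1$ (as $\Gamma(1)=1$), whereas $\Gamma_\CC(s-1)$ has a simple pole there, so $\ord_{s=1}\bigl(\Gamma_\CC(s)\Gamma_\CC(s-1)\bigr)=-1$ and consequently $\ord_{s=1}L(f,g,s)=(r_1-1)-(-1)=r_1$. (When $f^*\neq g$ this matches $L'(f,g,1)=2\pi\Lambda(f,g,1)\neq 0$ from \eqref{eq:Lfcnzero}; when $f^*=g$, the simple pole of $\Lambda$ cancels the simple zero coming from the $\Gamma$-factor, so $L(f,g,1)$ is finite and non-zero.)

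It remains to pass from $L(f,g,s)$ to $L_\Sigma(f,g,s)$. By definition the latter omits the Euler factors at the primes of $\Sigma$, so $L_\Sigma(f,g,s)=L(f,g,s)\prod_{p\in\Sigma}P_p(f,g,p^{-s})$. Each $P_p(f,g,X)$ is a polynomial in $X$ — by Proposition~\ref{prop:explicitEulerfactor} for $p\nmid N_fN_g$, and by the definition of $P_p$ as a characteristic polynomial on $I_p$-invariants in general — so $P_p(f,g,p^{-s})$ is entire in $s$, and its order of vanishing at $s=1$ equals the order of the pole at $s=1$ of $L_p(f,g,s)=P_p(f,g,p^{-s})^{-1}$. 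Summing these contributions over $p\in\Sigma$ gives exactly $r_2$, whence $\ord_{s=1}L_\Sigma(f,g,s)=r_1+r_2$. The only point requiring real care is the bookkeeping of complex-conjugate and dual forms: one must verify that Shahidi's exceptional locus for the pair $(f^*,g^*)$ is the condition $f^*=g$ appearing in the statement, and that this is consistent with the pole description of $\Lambda$ in Proposition~\ref{prop:completedL}; once that is checked, the remainder of the argument is formal.
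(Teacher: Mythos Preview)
Your proposal is correct and follows essentially the same route as the paper: use the functional equation to transport Shahidi's nonvanishing result from $s=2$ to $s=1$, read off the contribution of the archimedean $\Gamma$-factor to get $\ord_{s=1}L(f,g,s)=r_1$, and then multiply by the finitely many polynomials $P_p(f,g,p^{-s})$ for $p\in\Sigma$ to pick up $r_2$. If anything, you are slightly more explicit than the paper in tracking that the functional equation relates $\Lambda(f,g,\cdot)$ to $\Lambda(f^*,g^*,\cdot)$, so that Shahidi is applied to the conjugate pair and the exceptional condition becomes $f^*=g$; this is a good observation to make, and your bookkeeping of it is correct.
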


 \begin{proof}
  Applying the functional equation for the completed $L$-function, which switches $s$ with $3-s$, we deduce from Shahidi's result that that $\Lambda(f, g, s)$ is holomorphic and nonvanishing (resp. has a simple pole) at $s = 1$ if $f \ne g^*$ (resp. if $f = g^*$).

  However, the $L$-factor at $\infty$, $L_\infty(f, g, s) = \Gamma_\CC(s) \Gamma_{\CC}(s - 1)$, has a simple pole at $s = 1$, so the order of vanishing of $L(f, g, s)$ is $r_1$ as defined above. Since $L_{\Sigma}(f, g, s)$ is $L(f, g, s)$ divided by the product of the $L$-factors at primes in $\Sigma$, the result clearly follows.
 \end{proof}

 \begin{remark}
  Note that if $f^* = g$, then the local $L$-factor vanishes at $s = 1$ for \emph{every} prime, so $r_2$ will tend to be rather large in this case.
 \end{remark}

 \begin{corollary}
  Let $f, g$ be any two newforms, $\Sigma$ any set of primes, and $p$ any prime in $\Sigma$. Then for all but finitely many Dirichlet characters $\chi$ of $p$-power conductor, $\prod_{\ell \in \Sigma} L_\ell(f, g \otimes \chi, s)$ is holomorphic and nonzero at $s = 1$.
 \end{corollary}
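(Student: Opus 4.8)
The plan is to reduce the assertion to a statement about the individual local factors $L_\ell(f, g\otimes\chi, s) = P_\ell(f, g\otimes\chi, \ell^{-s})^{-1}$ at $s = 1$. For every prime $\ell$ the polynomial $P_\ell(f, g\otimes\chi, X)$ has constant term $1$, so $L_\ell(f, g\otimes\chi, s)$ is everywhere nonvanishing and is holomorphic at $s=1$ unless $\ell^{-1}$ is a root of $P_\ell(f, g\otimes\chi, X)$; consequently a finite product of such factors is holomorphic and nonzero at $s=1$ if and only if each factor is. Thus, since $\Sigma$ is finite, it suffices to show that for each fixed $\ell\in\Sigma$ the set of Dirichlet characters $\chi$ of $p$-power conductor with $P_\ell(f, g\otimes\chi, \ell^{-1}) = 0$ is finite; the desired exceptional set is then the (finite) union of these sets over $\ell\in\Sigma$.

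For a prime $\ell \ne p$ the character $\chi$ is unramified at $\ell$, so $V_{g\otimes\chi}$ differs from $V_g$ at $\ell$ by an unramified twist; hence $(V_f\otimes V_{g\otimes\chi})^{I_\ell} = (V_f\otimes V_g)^{I_\ell}$ as spaces, with $\Frob_\ell^{-1}$ acting by the extra scalar $\chi(\ell)^{-1}$. Therefore $P_\ell(f, g\otimes\chi, X) = P_\ell(f, g, \chi(\ell)^{-1} X)$, and $P_\ell(f, g\otimes\chi, \ell^{-1}) = 0$ forces $\chi(\ell)$ to lie in the set $\{\lambda_i/\ell : \lambda_i \ne 0\}$, where $\lambda_1,\dots,\lambda_r$ ($r\le 4$) are the eigenvalues of $\Frob_\ell^{-1}$ on $(V_f\otimes V_g)^{I_\ell}$; this is a finite set of nonzero complex numbers depending only on $f, g, \ell$. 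Now $\ell$, viewed in $\Gal(\QQ(\mu_{p^\infty})/\QQ) = \ZZ_p^\times$, is not a root of unity (no rational integer $\ge 2$ is), hence generates an open subgroup; so for each value $z$ the set of $\chi$ of $p$-power conductor with $\chi(\ell) = z$ is finite, and the claim follows for $\ell\ne p$.

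For $\ell = p$ the point is that $L_p(f, g\otimes\chi, s) \equiv 1$ once the conductor of $\chi$ is large enough. Choose the place $\lambda$ used to define $V_f$ and $V_g$ to have residue characteristic $\ne p$; then $V_f\otimes V_g$ is an $\ell'$-adic representation of $G_{\QQ}$ with $\ell'\ne p$, so by Grothendieck's local monodromy theorem there is an open subgroup $J \le I_p$ on which $V_f\otimes V_g$ acts unipotently. If $(V_f\otimes V_g\otimes\chi)^{I_p}\ne 0$, choose $0\ne v\in V_f\otimes V_g$ on which $I_p$ acts through $\chi^{-1}$; for $g\in J$ the operator $g-1$ is nilpotent on $V_f\otimes V_g$, while $(g-1)v = (\chi^{-1}(g)-1)v$, forcing $\chi^{-1}(g) = 1$. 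Hence $\chi$ is trivial on $J$, which bounds the conductor of $\chi$ at $p$ by a constant depending only on $f$ and $g$; only finitely many $\chi$ of $p$-power conductor satisfy this. For all other $\chi$ we have $(V_f\otimes V_g\otimes\chi)^{I_p} = 0$, so $P_p(f, g\otimes\chi, X) = 1$ and $L_p(f, g\otimes\chi, s) = 1$, which is certainly holomorphic and nonzero at $s=1$.

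The main obstacle is precisely the case $\ell = p$: one has to prevent the ramification of $\chi$ at $p$ from ``resonating'' with that of $f$ or $g$, and it is the quasi-unipotence of the $\ell'$-adic representation $V_f\otimes V_g$ (for $\ell'\ne p$) that rules this out for all but boundedly ramified $\chi$. The case $\ell\ne p$ is elementary once one observes that twisting changes the local factor only by scaling the variable, so that a pole at $s=1$ can occur only for finitely many values of $\chi(\ell)$.
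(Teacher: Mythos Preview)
Your proof is correct and follows essentially the same approach as the paper's: you handle $\ell=p$ by showing the local factor becomes identically $1$ for $\chi$ of large conductor (the paper asserts this without detail, while you justify it via Grothendieck's quasi-unipotent monodromy), and for $\ell\ne p$ you use the identity $P_\ell(f,g\otimes\chi,X)=P_\ell(f,g,\chi(\ell)^{-1}X)$ together with the fact that $\ell$ topologically generates an open subgroup of $\ZZ_p^\times$, which is exactly what underlies the paper's remark that ``ensuring that the conductor of $\chi$ is sufficiently big'' suffices.
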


 \begin{proof}
  If $\chi$ has sufficiently large $p$-power conductor, then the local $L$-factor of $f \otimes g \otimes \chi$ at $p$ is identically 1; so it suffices to consider the $L$-factors at primes $\ell \ne p$. However, since $\chi$ has conductor prime to $\ell$, $L_\ell(f \otimes g \otimes \chi, s) = P_\ell(\chi(\ell) \ell^{-s})^{-1}$, so it suffices to arrange that $\chi(\ell) \ell^{-1}$ does not lie in the finite set of zeroes of the polynomial $L_\ell(f \otimes g, X)$. It is clear that this may also be achieved by ensuring that the conductor of $\chi$ is sufficiently big.
 \end{proof}

 \begin{corollary}
  Given any two forms $f, g$ of level $N$ that are eigenvectors for all Hecke operators, and $p$ any prime, there is $k \ge 0$ such that the projection of $\Xi_{mp^k, N, 1}$ to the $(f, g)$-isotypical quotient of $\CH^2(Y_1(N)^2 \otimes \QQ(\mu_{mp^k}), 1)$ is nonzero.
 \end{corollary}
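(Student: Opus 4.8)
The plan is to argue by contradiction, playing the construction off against the non-vanishing of twisted Rankin--Selberg $L$-values supplied by Shahidi's theorem. Suppose, for the fixed $m \ge 1$, that the projection of $\Xi_{mp^k, N, 1}$ to the $(f,g)$-isotypical quotient vanishes for \emph{every} $k \ge 0$. Fix test vectors $\breve f, \breve g$ in the level-$N$ oldspaces attached to $f$ and $g$ (for instance $\breve f = f$, $\breve g = g$). Since $f \mapsto \eta_f^{\ah}$ is Hecke-equivariant, the differential $p_1^*(\eta_{\breve f}^{\ah}) \wedge p_2^*(\omega_{\breve g})$ lies in the $(f,g)$-isotypical part of $H^2_{\dR}(X_1(N)^2)$, and since the Beilinson regulator is Hecke-equivariant, pairing it against a lift $\widetilde{\Xi}_{mp^k, N, 1}$ to $X_1(N)^2 \otimes \QQ(\mu_{mp^k})$ (which exists after $\otimes \QQ$ by Theorem~\ref{thm:preimageinChowgp}) depends only on the $(f,g)$-isotypical projection of $\Xi_{mp^k, N, 1}$. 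Under our assumption, therefore, this pairing vanishes for all $k$.

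Next I would feed this into Theorem~\ref{thm:beilinsonreg}, which identifies the pairing with $\tfrac{1}{2} L'_{(mp^kN)}(f, g, (\ZZ/mp^k\ZZ)^\times, 1)\, A(\breve f, \breve g, mp^k, 1)$ inside the group ring $\CC[(\ZZ/mp^k\ZZ)^\times]$. The vanishing assumption forces this element of the group ring to be zero for all $k$ and all admissible $\breve f, \breve g$; decomposing $\CC[(\ZZ/mp^k\ZZ)^\times] \cong \prod_\chi \CC$, it vanishes character by character. It thus suffices to produce a single Dirichlet character $\chi$, of conductor $p^k$ for some $k$, at which both factors have non-zero $\chi$-component. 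Using the identity $P_\ell(f, g, \chi(\ell)X) = P_\ell(f, g \otimes \chi, X)$, the $\chi$-component of the equivariant $L$-function is $L'_{(mp^kN)}(f, g \otimes \chi, 1)$; the $\chi$-component of $A(\breve f, \breve g, mp^k, 1)$ is, up to the non-zero Gauss sum $\sum_a \chi(a)^{-1} e^{2\pi i a/mp^k}$, a partial Dirichlet series over integers all of whose prime factors divide $mpN$, whose leading ($n=1$) term equals $a_1(\breve f) a_1(\breve g) = 1$.

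For the $L$-factor I would quote directly the two preceding corollaries: for all but finitely many $\chi$ of $p$-power conductor, the bad Euler factors of $f \otimes g \otimes \chi$ at the (fixed, finite) set of primes dividing $mpN$ are holomorphic and non-vanishing at $s = 1$; and once the conductor of $\chi$ exceeds that of $f^*$ we have $g \otimes \chi \ne f^*$, so Shahidi's theorem (Theorem~\ref{thm:shahidi}) gives $\Lambda(f, g \otimes \chi, 1) \ne 0$. Combined with the simple pole of $\Gamma_\CC(s-1)$ at $s = 1$, this shows that $L_{(mp^kN)}(f, g \otimes \chi, s)$ has a simple zero at $s = 1$, hence $L'_{(mp^kN)}(f, g \otimes \chi, 1) \ne 0$, for all but finitely many $\chi$ of $p$-power conductor.

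The remaining step, which I expect to be the only delicate one, is to verify that among the infinitely many characters $\chi$ of $p$-power conductor with $L'_{(mp^kN)}(f, g \otimes \chi, 1) \ne 0$ at least one also has non-vanishing $A$-component for some admissible choice of $\breve f, \breve g$. The point is that the Gauss-sum factor is never zero and that the relevant partial Dirichlet series has constant term $1$, so it cannot be annihilated by all sufficiently ramified characters; ruling out a conspiracy between the (finitely many) test vectors and the exceptional set of characters is the heart of the matter. Granting this, the displays of the second and third paragraphs contradict the vanishing assumption of the first, so the $(f,g)$-isotypical projection of $\Xi_{mp^k, N, 1}$ is non-zero for some $k \ge 0$, as claimed.
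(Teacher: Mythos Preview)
Your approach is correct and is precisely the paper's: its proof is the single line ``Immediate from the previous corollary and Theorem~\ref{thm:beilinsonreg}'', and you have correctly unpacked what this means --- the regulator pairing against $p_1^*(\eta_{\breve f}^{\ah})\wedge p_2^*(\omega_{\breve g})$ factors through the $(f,g)$-isotypical projection, and Theorem~\ref{thm:beilinsonreg} computes it as $\tfrac12\, L'_{(mp^kN)}(f,g,(\ZZ/mp^k\ZZ)^\times,1)\cdot A(\breve f,\breve g,mp^k,1)$, whose $\chi$-component is governed by the twisted $L$-value.

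Your worry about the $A$-factor is fair --- the paper does gloss over it --- but its resolution needs no idea beyond that already used in the preceding corollary, so your ``granting this'' is justified. Taking $\breve f=f$, $\breve g=g$ (legitimate, as $f,g$ are $U_\ell$-eigenforms for every $\ell\mid N$), the $\chi$-component of $A(f,g,mp^k,1)$ for a primitive $\chi$ of conductor $mp^k$ is a nonzero Gauss sum times the finite product
\[
\prod_{\substack{\ell\mid N\\ \ell\nmid mp}}\bigl(1 - a_\ell(f)a_\ell(g)\chi(\ell)\ell^{-1}\bigr)^{-1},
\]
since $a_{\ell^j}(f)=a_\ell(f)^j$. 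Each factor is nonzero whenever it is finite, and is singular only at the single value $\chi(\ell)=\ell/(a_\ell(f)a_\ell(g))$. Exactly as in the preceding corollary, choosing $\chi$ of sufficiently large $p$-power conductor lets $\chi(\ell)$ avoid this one bad value for each of the finitely many $\ell$. Hence only finitely many $\chi$ are excluded altogether, and combining with the already-established nonvanishing of $L'_{(mp^kN)}(f,g\otimes\chi,1)$ for all but finitely many such $\chi$ gives the contradiction. There is no conspiracy to rule out --- only a finite bad set.
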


 \begin{proof}
  Immediate from the previous corollary and Theorem \ref{thm:beilinsonreg}.
 \end{proof}


\section{Relation to \texorpdfstring{$p$-adic $L$-values}{p-adic L-values}}

 In this section we develop an analogue of the $m = 1$ case of Theorem \ref{thm:beilinsonreg} in the $p$-adic setting. This is essentially a variant of the main theorem of \cite{BDR12}.

\subsection{Holomorphic Eisenstein series}

  We begin by constructing some holomorphic Eisenstein series which may be defined over a number field. We follow chapter 3 of \cite{kato04} closely, but we work on $Y_1(N)$ rather than $Y(N)$. Our purpose is to define, for $\alpha \in \QQ / \ZZ$, the following modular forms:
 \begin{itemize}
  \item $E^{(k)}_{\alpha} \in M_k(\Gamma_1(N))$, where $k \ge 1$, $k \ne 2$;
  \item $\widetilde E^{(2)}_{\alpha} \in M_2(\Gamma_1(N))$;
  \item $F^{(k)}_{\alpha} \in M_k(\Gamma_1(N))$, for $k \ge 1$, with $\alpha \ne 0$ if $k = 2$.
 \end{itemize}

 We set
 \[ E^{(k)}_{\alpha}(\tau) =  E^{(k)}_{\alpha}(\tau, 0),\]
 and similarly for $F^{(k)}$.

 \begin{proposition}
  If $k \ge 1$, $k \ne 2$, then $E^{(k)}_{\alpha}, F^{(k)}_\alpha \in M_k(\Gamma_1(N))$ for any $\alpha \in \tfrac{1}{N} \ZZ / \ZZ$.

  For $k = 2$, we have $F^{(2)}_{\alpha} \in M_2(\Gamma_1(N))$ for any $\alpha \ne 0$, and $\widetilde E^{(2)}_{\alpha} \coloneqq E^{(2)}_{\alpha} - E^{(2)}_{0} \in M_2(\Gamma_1(N))$ (for any $\alpha$). The function $F^{(2)}_{0} = E^{(2)}_0$ is a $C^\infty$ function on $\cH$ invariant under the weight 2 action of $\Gamma_1(N)$, with slow growth at the cusps, but is not holomorphic.
 \end{proposition}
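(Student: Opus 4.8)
The plan is to treat the convergent range $k \ge 3$ directly, and to reduce the delicate cases $k \in \{1,2\}$ to the meromorphic continuation in $s$ already recorded in part (iv) of the preceding proposition, together with the explicit Fourier expansions of $E^{(k)}_\alpha(\tau,s)$ computed in \cite[\S 3.6]{kato04}. Throughout, I would deduce automorphy under the weight-$k$ action of $\Gamma_1(N)$ (with the diamond operators acting on $\alpha$ as in part (i)) first for $\Re(s)$ large, by reindexing the lattice sum and using $N\alpha = 0$ (so that for $\gamma \in \Gamma_1(N)$ the substitution $(m,n) \mapsto (m,n)\gamma$ preserves the set of summands up to integer translates), and then transport it to $s = 0$ by analytic continuation in $s$. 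Each $F^{(k)}_\alpha$ I would handle via the Atkin--Lehner relation of part (ii), which exhibits it as an explicit finite $\CC$-linear combination of the functions $\tau \mapsto \tau^{-k} E^{(k)}_{x/N}(-1/(N\tau),s)$, i.e.\ of the images of the $E^{(k)}_{x/N}$ under the Fricke involution $W_N$ (which normalizes $\Gamma_1(N)$ and permutes its cusps).

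For $k \ge 3$ the series defining $E^{(k)}_\alpha(\tau,0)$ converges absolutely and locally uniformly on $\cH$, hence is holomorphic in $\tau$; the reindexing above gives the weight-$k$ transformation law under $\Gamma_1(N)$; and the $q$-expansions at the cusps — which the $\SL_2(\ZZ)$-slash action merely permutes among the various series $E^{(k)}_{(a,b)}$, each with the expansion recalled in \cite[\S 3.6]{kato04} — are power series in a fractional power of $q$ with non-negative exponents, so $E^{(k)}_\alpha$ is holomorphic at every cusp. Thus $E^{(k)}_\alpha \in M_k(\Gamma_1(N))$, and $F^{(k)}_\alpha \in M_k(\Gamma_1(N))$ follows by taking $W_N$-translates and summing.

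For $k \in \{1,2\}$, part (iv) gives that $s \mapsto E^{(k)}_\alpha(\tau,s)$ is holomorphic at $s=0$, so $E^{(k)}_\alpha(\tau) \coloneqq E^{(k)}_\alpha(\tau,0)$ is well defined and is $\Gamma_1(N)$-automorphic of weight $k$ by continuation of the automorphy valid for $\Re(s)\gg 0$. The remaining point — holomorphy in $\tau$ — I would read off from the Fourier expansion of $E^{(k)}_\alpha(\tau,s)$ at $s=0$ (Poisson summation in $n$, as in \cite[\S 3.6]{kato04}): the coefficients of the antiholomorphic exponentials $e^{2\pi i n\bar\tau}$ vanish at $s=0$ thanks to a pole of the accompanying $\Gamma$-factor whenever $k \ge 1$, the non-constant terms form a $q$-expansion with non-negative exponents, and the constant term is a genuine constant if $k \ne 2$, while for $k = 2$ it has the shape $c(\alpha) + c_0/\Im(\tau)$ with $c_0$ \emph{independent of $\alpha$}. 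Hence $E^{(1)}_\alpha$ is holomorphic on $\cH$ and at the cusps, so $E^{(1)}_\alpha \in M_1(\Gamma_1(N))$, and $F^{(1)}_\alpha \in M_1(\Gamma_1(N))$ follows from the Atkin--Lehner relation as before. For $k = 2$: the difference $\widetilde E^{(2)}_\alpha = E^{(2)}_\alpha - E^{(2)}_0$ has no $1/\Im(\tau)$-term, hence is holomorphic on $\cH$ and at the cusps and is $\Gamma_1(N)$-modular of weight $2$, so $\widetilde E^{(2)}_\alpha \in M_2(\Gamma_1(N))$; for $\alpha \ne 0$ the Atkin--Lehner relation shows that the non-holomorphic part of $F^{(2)}_\alpha$ is proportional to $\left(\sum_{x \in \ZZ/N\ZZ} e^{2\pi i\alpha x}\right)\cdot \tau^{-2}\,\Im(-1/(N\tau))^{-1}$, and the Gauss sum vanishes precisely because $\alpha \ne 0$ in $\tfrac1N\ZZ/\ZZ$, so $F^{(2)}_\alpha$ is holomorphic and therefore lies in $M_2(\Gamma_1(N))$; and for $\alpha = 0$ the defining formulae for $E^{(k)}_\alpha$ and $F^{(k)}_\alpha$ coincide verbatim (the ``prime'' and ``double prime'' both omit $(0,0)$, and $e^{2\pi i\cdot 0\cdot m} = 1$), so $F^{(2)}_0 = E^{(2)}_0$, which by the Fourier expansion is $C^\infty$, invariant under the weight-$2$ action of $\Gamma_1(N)$, of slow growth at the cusps (the $q$-series part is bounded and the $c_0/\Im(\tau)$-term tends to $0$), and not holomorphic since $c_0 \ne 0$ — it is the level-$N$ avatar of the classical quasimodular $E_2$.

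The hard part is the Fourier-coefficient computation at $s=0$: showing that the antiholomorphic coefficients really vanish there, and — for $k = 2$ — that the coefficient $c_0$ of $1/\Im(\tau)$ is nonzero and the same for every $\alpha$ (including $\alpha = 0$). This is the usual, somewhat tedious unwinding of Poisson summation and of special values of confluent hypergeometric functions, for which I would simply cite \cite[\S 3.6]{kato04}; after that, everything else is the bookkeeping sketched above.
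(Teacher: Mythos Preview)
Your proposal is correct and follows the standard route; the paper itself gives no argument at all, simply citing \cite[\S 3.8]{kato04} (noting that its $E^{(k)}_\alpha$ is Kato's $E^{(k)}_{0,\alpha}$). Your sketch --- direct absolute convergence for $k\ge 3$, analytic continuation plus Fourier-expansion analysis for $k\in\{1,2\}$, the $\alpha$-independence of the $1/\Im(\tau)$ term in weight~2, and the vanishing of $\sum_{x\in\ZZ/N\ZZ}e^{2\pi i\alpha x}$ for $\alpha\ne 0$ to handle $F^{(2)}_\alpha$ --- is exactly the content behind that citation, so you are filling in precisely what the paper outsources.
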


 \begin{proof}
  See \cite[\S 3.8]{kato04}; our $E^{(k)}_{\alpha}$ is Kato's $E^{(k)}_{0, \alpha}$.
 \end{proof}

 We have $q$-expansion formulae for both families. Let $\alpha \in \QQ / \ZZ$. For $\Re(s) > 1$, define
 \[
  \zeta(\alpha, s) = \sum_{\substack{n \in \QQ, n > 0 \\ n = \alpha \bmod \ZZ}} n^{-s}
  \quad\text{and}\quad
  \zeta^*(\alpha, s) = \sum_{n=1}^\infty e^{2\pi i \alpha n} n^{-s}
 \]
 as in \cite[\S 3.9]{kato04}. Then both $\zeta(\alpha, s)$ and $\zeta^*(\alpha, s)$ have meromorphic continuation to all $s \in \CC$, and satisfy
 \[
  \zeta^*(\alpha, 1-s) = \frac{\Gamma(s)}{(2\pi)^s}\left( e^{-i\pi s/2}\zeta(-\alpha, s) + e^{i\pi s/2}\zeta(\alpha, s)\right),
 \]
 a version of the standard functional equation for the Hurwitz zeta function.

 \begin{proposition} Let $k \ge 1$, $\alpha \in \QQ / \ZZ$.
  \label{prop:eisqexp}
  \begin{enumerate}
   \item Assume $k \ne 2$. Then we have
   \[ E^{(k)}_{\alpha} = a_0 + \sum_{n \ge 1} \left( \sum_{d \mid n} d^{k-1}(e^{2\pi i \alpha d} + (-1)^k e^{-2\pi i \alpha d}) \right) q^n,\]
   where
   \[ a_0 =
    \begin{cases}
     \zeta^*\left(\alpha, 1-k\right)  & \text{if $k \ge 3$} \\
     \frac{1}{2} \left(\zeta^*\left(\alpha, 0\right) - \zeta^*\left(-\alpha, 0\right)\right) & \text{if $k = 1$}.
    \end{cases}.
   \]
   \item We have
   \[ \widetilde E^{(2)}_{\alpha} = a_0 + \sum_{n \ge 1} \left( \sum_{d \mid n} d (e^{2\pi i \alpha d} + e^{-2\pi i \alpha d} - 2) \right) q^n,\]
   where $a_0 = \zeta^*\left(\alpha, -1\right) + \tfrac1{12}$.
   \item Assume $\alpha \ne 0$ in the case $k = 2$. Then
   \[ F^{(k)}_{\alpha} = a_0 + \sum_{n \ge 1} \left( \sum_{d \mid n} \left(\tfrac{n}{d}\right)^{k-1} (e^{2 \pi i \alpha d} + (-1)^k e^{-2 \pi i \alpha d}) \right) q^n,\]
   where
   \[ a_0 =
    \begin{cases}
     \zeta(1-k)  & \text{if $k \ge 2$} \\
     \frac{1}{2} \left(\zeta^*\left(\alpha, 0\right) - \zeta^*\left(-\alpha, 0\right)\right) & \text{if $k = 1$}.
    \end{cases}.
   \]
  \end{enumerate}
 \end{proposition}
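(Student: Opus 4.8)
The plan is to read off the Fourier expansions directly from the lattice sums, working row by row in the index $m$. The essential tool is the Lipschitz summation formula: for $z\in\cH$ and $k\ge 2$,
$\sum_{n\in\ZZ}(z+n)^{-k} = \tfrac{(-2\pi i)^k}{(k-1)!}\sum_{d\ge1}d^{k-1}e^{2\pi idz}$,
together with its $k=1$ analogue $\sum_{n\in\ZZ}{}'(z+n)^{-1} = -\pi i - 2\pi i\sum_{d\ge1}e^{2\pi idz}$ (principal value). For $k\ge 3$ the defining double series converges absolutely, so one may set $s=0$ termwise. I would split off the row $m=0$ and apply Lipschitz with $z=m\tau+\alpha$ to each row $m\ge 1$ of $E^{(k)}_\alpha$, obtaining $\tfrac{(-2\pi i)^k}{(k-1)!}\sum_{d\ge1}d^{k-1}e^{2\pi i\alpha d}q^{md}$ with $q=e^{2\pi i\tau}$, and handle the rows $m\le -1$ via $m\tau+n+\alpha = -(|m|\tau-n-\alpha)$, which yields the same sum with $e^{2\pi i\alpha d}$ replaced by $(-1)^ke^{-2\pi i\alpha d}$. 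Multiplying by the normalising constant $(-2\pi i)^{-k}\Gamma(k)=(-2\pi i)^{-k}(k-1)!$ and collecting the coefficient of $q^n$ over the factorisations $n=md$ produces exactly the stated divisor sum. For $F^{(k)}_\alpha$ the only change is that the weight $e^{2\pi i\alpha m}$ sits on the outer index $m$, so the same manipulation gives $\sum_{d\mid n}(n/d)^{k-1}(e^{2\pi i\alpha d}+(-1)^ke^{-2\pi i\alpha d})$.

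For the constant term I would observe that the rows $m\ne 0$ contribute only positive powers of $q$; hence for $E^{(k)}_\alpha$ the constant term equals $(-2\pi i)^{-k}\Gamma(k)\sum_{n\in\ZZ}(n+\alpha)^{-k}$ (nothing is omitted when $\alpha\ne0$), and splitting this according to the sign of $n+\alpha$ identifies it with $\zeta(\alpha,k)+(-1)^k\zeta(-\alpha,k)$ in the notation above. Evaluating the functional equation relating $\zeta(\pm\alpha,\cdot)$ and $\zeta^*(\alpha,\cdot)$ recalled above at $s=k$ then gives $a_0=\zeta^*(\alpha,1-k)$. For $F^{(k)}_\alpha$ the $m=0$ row is $\sum_{n\ne0}n^{-k}=(1+(-1)^k)\zeta(k)$, and the functional equation for the Riemann zeta function converts its normalised value into $\zeta(1-k)$ — automatically $0$ for odd $k\ge 3$, which matches the trivial zero.

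The cases $k=1$ and $k=2$ need extra care, since the series is only conditionally convergent ($k=1$) or carries a non-holomorphic term ($k=2$): the factor $|m\tau+n+\alpha|^{-2s}$ must be kept, the inner $n$-sum done for $\Re(s)>0$, and the continuation to $s=0$ taken only afterwards (the Hecke trick). For $k=2$ this adds to the holomorphic expansion a term proportional to $\Im(\tau)^{-1}$ whose coefficient is independent of $\alpha$; since the preceding proposition already guarantees that $\widetilde E^{(2)}_\alpha=E^{(2)}_\alpha-E^{(2)}_0$ (and $F^{(2)}_\alpha$ for $\alpha\ne0$) is holomorphic, this term cancels, leaving constant term $\zeta^*(\alpha,-1)-\zeta^*(0,-1)=\zeta^*(\alpha,-1)+\tfrac1{12}$. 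For $k=1$ the $m=0$ row contributes $(-2\pi i)^{-1}\bigl[\sum_n(n+\alpha)^{-1}|n+\alpha|^{-2s}\bigr]_{s=0}=(-2\pi i)^{-1}\pi\cot(\pi\alpha)$, the poles of the two Hurwitz zeta functions cancelling, and the elementary identity $\tfrac12(\zeta^*(\alpha,0)-\zeta^*(-\alpha,0))=\tfrac{i}{2}\cot(\pi\alpha)$ — immediate from $\zeta^*(\alpha,0)=e^{2\pi i\alpha}/(1-e^{2\pi i\alpha})$ — matches the stated $a_0$, while the $s$-regularised rows $m\ne0$ are summable and reproduce the same $q$-coefficients as before.

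The one genuinely delicate step is the $k\in\{1,2\}$ regularisation: I would have to verify that the correction produced by the $|m\tau+n+\alpha|^{-2s}$ regulator is precisely the $\alpha$-independent non-holomorphic piece killed by passing to $\widetilde E^{(2)}$ (and absent exactly when the preceding proposition asserts holomorphy), rather than anything contaminating the $q$-expansion. In practice all of this is carried out in Kato \cite[\S\S 3.8--3.10]{kato04} under the dictionary $E^{(k)}_\alpha=E^{(k)}_{0,\alpha}$ (and similarly for $F$), so after indicating the shape of the computation above I would simply quote that reference.
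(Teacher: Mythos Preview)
Your proposal is correct and aligns with the paper's own proof, which simply cites \cite[Proposition 3.10]{kato04} (noting a typographical error there). You give considerably more detail on the underlying Lipschitz-summation computation than the paper does, but both arguments ultimately defer to the same reference.
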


 \begin{proof}
  This is \cite[Proposition 3.10]{kato04}. Note that there is a typographical error in the statement of the proposition \emph{loc.cit.}; there is an extra star in the formula for $\sum a_n n^{-s}$ in case (1), and the formula should read
  \[ \sum_{n \in \QQ, n > 0} a_n n^{-s} = \zeta(\alpha, s) \zeta^*(\beta, s-k + 1) + (-1)^k \zeta(-\alpha, s) \zeta^*(-\beta, s-k + 1).\]
 \end{proof}

\subsection{Nearly holomorphic modular forms}

 For $k \ge 0$, we define (following e.g.~\cite{shimura86, shimura00}) the space of \emph{nearly holomorphic} modular forms $M_k^{\nh}(\Gamma_1(N), \CC)$. This is the space of $C^\infty$ slowly-increasing functions on $\cH$ which are invariant under the weight $k$ action of $\Gamma_1(N)$ and are annihilated by some power of the Maass--Shimura weight-lowering differential operator
 \[ \varepsilon_k = \frac{-1}{2\pi i} \Im(\tau)^2 \frac{\d}{\d \overline{\tau}}.\]
 Any such function is in fact annhilated by $\varepsilon^{[k/2] + 1}$, and can be expanded as a finite sum
 \begin{equation}
  \label{eq:nearlyhol}
  f(\tau) = \sum_{j = 0}^{[k/2]} f_j(\tau) \left(\pi \Im(\tau) \right)^{-j},
 \end{equation}
 where the $f_j$ are holomorphic functions. (In particular, any nearly holomorphic form of weight 0 or 1 is in fact a holomorphic form.) For $K$ a number field containing the $N$-th roots of unity\footnote{This is for compatibility with our notation for classical modular forms, since in our model of $Y_1(N)$, the cusp $\infty$ is not rational.} we shall say that $f \in M_k^{\nh}(\Gamma_1(N), K)$ is defined over $K$ if the Fourier coefficients of the holomorphic functions $f_j$ are in $K$, and write $M_k^\nh(\Gamma_1(N), K)$ for the space of such functions.

 We let $S_k^\nh(\Gamma_1(N), K)$ be the subspace of rapidly decreasing functions in $M_k^\nh(\Gamma_1(N), K)$. If $k \ge 2$, then $S_k^\nh(\Gamma_1(N), K)$ coincides with the space defined algebraically in \cite[\S 2.4]{darmonrotger12} using the ``Hodge splitting'' of the de Rham cohomology.

 \begin{corollary}
  Let $k, j$ be integers with $k \ge 1$ and $j \in [0, k-1]$. Then for any $\alpha \in \tfrac{1}{N}\ZZ / \ZZ$, the function $\tau \mapsto E^{(k)}_\alpha(\tau, -j)$ lies in $M_k^{\nh}(\Gamma_1(N), \QQ(\mu_N))$.
 \end{corollary}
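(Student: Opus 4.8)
The plan is to verify directly the three conditions defining membership in $M_k^{\nh}(\Gamma_1(N), \QQ(\mu_N))$ for $\psi(\tau) \coloneqq E^{(k)}_\alpha(\tau, -j)$: invariance under the weight-$k$ action of $\Gamma_1(N)$; $C^\infty$-ness together with moderate growth at the cusps; and that $\psi$ is a polynomial in $(\pi\Im\tau)^{-1}$ whose coefficients are holomorphic functions with $q$-expansions in $\QQ(\mu_N)$ (which simultaneously gives annihilation by a power of $\varepsilon_k$ and definedness over $\QQ(\mu_N)$). The weight-$k$ invariance is immediate from the automorphy statement (part (i) of the proposition on the Eisenstein series), specialised to $s = -j$, noting $N\alpha = 0$. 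Smoothness in $\tau$ follows from the analytic continuation and functional equation (part (iv)): since $k \ge 1$, the function $E^{(k)}_\alpha(\tau,s)$ is holomorphic in $s$ at $s = -j$, hence real-analytic in $\tau$ there; and moderate growth at the cusps will be visible once the explicit shape below is in hand.

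For the only substantial point I would reduce to holomorphic base cases using the weight-raising operator $\delta$. Part (iii) gives $\delta_w E^{(w)}_\alpha(\tau, s) = E^{(w+2)}_\alpha(\tau, s-1)$, and the same relation for $F^{(w)}_\alpha$; combining iterates of this with the functional equation $E^{(k)}_\alpha(\tau,s) = F^{(k)}_\alpha(\tau, 1-k-s)$ of part (iv) yields, for $0 \le j \le k-1$,
\[
E^{(k)}_\alpha(\tau, -j) =
\begin{cases}
\bigl(\delta_{k-2}\circ\cdots\circ\delta_{k-2j}\bigr)\bigl(E^{(k-2j)}_\alpha\bigr), & 0 \le j \le \tfrac{k-1}{2},\\
\bigl(\delta_{k-2}\circ\cdots\circ\delta_{2-k+2j}\bigr)\bigl(F^{(2-k+2j)}_\alpha\bigr), & \tfrac{k-1}{2} \le j \le k-1,
\end{cases}
\]
where $E^{(w)}_\alpha = E^{(w)}_\alpha(\tau,0)$, $F^{(w)}_\alpha = F^{(w)}_\alpha(\tau,0)$, and the two ranges of $j$ together exhaust $\{0,\dots,k-1\}$. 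In both branches the starting weight $w$ satisfies $w \ge 1$ and has the same parity as $k$. If $w \ne 2$, the base function lies in $M_w(\Gamma_1(N))$ and, by the explicit $q$-expansions of Proposition \ref{prop:eisqexp}, has all Fourier coefficients in $\QQ(\mu_N)$ — the constant term is a special value $\zeta^*(\alpha, 1-w)$ or $\zeta(1-w)$ lying in $\QQ(\mu_N)$, and the remaining coefficients are $\ZZ$-linear combinations of the roots of unity $e^{2\pi i \alpha d}$; a holomorphic form is in particular nearly holomorphic. If $w = 2$ (possible only for $k$ even), the base function is $F^{(2)}_\alpha$, which is holomorphic over $\QQ(\mu_N)$ by Proposition \ref{prop:eisqexp}(3) when $\alpha \ne 0$, and otherwise equals $F^{(2)}_0 = E^{(2)}_0$ or $E^{(2)}_\alpha = \widetilde{E}^{(2)}_\alpha + E^{(2)}_0$; here $E^{(2)}_0$ is the standard nearly holomorphic weight-$2$ Eisenstein series (defined over $\QQ$) and $\widetilde{E}^{(2)}_\alpha \in M_2(\Gamma_1(N))$ is holomorphic over $\QQ(\mu_N)$ by Proposition \ref{prop:eisqexp}(2), so in every case the base function is nearly holomorphic of weight $2$ and defined over $\QQ(\mu_N)$.

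Finally I would invoke that $\delta_w$ maps $M_w^{\nh}(\Gamma_1(N), \QQ(\mu_N))$ into $M_{w+2}^{\nh}(\Gamma_1(N), \QQ(\mu_N))$: this is standard Maass--Shimura theory (cf.~\cite{shimura86}), and concretely, writing a nearly holomorphic form as $\sum_i f_i \,(\pi\Im\tau)^{-i}$ with the $f_i$ holomorphic, one computes $\delta_w\bigl(f_i (\pi\Im\tau)^{-i}\bigr) = \bigl(q\,\partial_q f_i\bigr)(\pi\Im\tau)^{-i} + \tfrac{i-w}{4}\, f_i (\pi\Im\tau)^{-i-1}$, again of the required form with holomorphic coefficients whose $q$-expansions lie in the same field, and manifestly killed by a power of $\varepsilon_{w+2}$. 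Chaining this along the composites displayed above, and combining with the invariance and smoothness already noted, gives $E^{(k)}_\alpha(\tau,-j) \in M_k^{\nh}(\Gamma_1(N), \QQ(\mu_N))$. The main obstacle I anticipate is purely organisational: making the reduction airtight across the parities of $k$ and at the boundary $j = (k-1)/2$, and in particular handling the weight-$2$ base cases — where the naive Eisenstein series fails to be holomorphic — without disturbing the rationality bookkeeping; all the analytic inputs (automorphy, analytic continuation, the $\delta$-relations, the $q$-expansions) are already available from the stated properties of the $E^{(k)}_\alpha(\tau,s)$.
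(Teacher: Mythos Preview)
Your proposal is correct and follows essentially the same strategy as the paper: reduce to the base cases $E^{(k-2j)}_\alpha$ and $F^{(2-k+2j)}_\alpha$ via iterated application of the weight-raising operator $\delta$, handle the weight-$2$ anomaly using the explicit nearly-holomorphic expression for $E^{(2)}_0$, and invoke the functional equation to cover the upper half of the range $j \in [0,k-1]$. You are slightly more explicit than the paper about why $\delta$ preserves the field of definition $\QQ(\mu_N)$ (the paper leaves this implicit in the $q$-expansion formulae), and about the splitting $E^{(2)}_\alpha = \widetilde{E}^{(2)}_\alpha + E^{(2)}_0$ in the first branch, but the overall architecture is the same.
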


 \begin{proof}
  We first note that $E^{(k)}_{\alpha}, F^{(k)}_\alpha \in M^\nh_k(\Gamma_1(N), K)$ for all $k \ge 1$. This is clear for $k = 1$ or $k \ge 3$ (since holomorphic forms are certainly nearly holomorphic), and for $k = 2$ it suffices to check that $E^{(2)}_0 = F^{(2)}_0$ is nearly holomorphic, which is clear from the formula
  \[ E^{(2)}_0(\tau) = \frac{1}{4\pi \Im(\tau)} - \tfrac{1}{12} + 2\sum_{n \ge 1} \left(\sum_{d \mid n} d\right) q^n.\]

  With this in hand, we obtain the near-holomorphy of $E^{(k)}_\alpha(\tau, -j)$ for $0 \le j \le \tfrac{k-1}{2}$ by applying $\delta^j$ to $E^{(k-2j)}_\alpha = E^{(k-2j)}_\alpha(\tau, 0)$. Applying the same argument to $F^{(k-2j)}_\alpha$ shows that $F^{(k)}_\alpha(\tau, -j)$ is nearly holomorphic for $j$ in the same range; but $F^{(k)}_\alpha(\tau, -j) = E^{(k)}_\alpha(\tau, 1-k + j)$ by the functional equation, as required, so we obtain the result for all $j \in [0, k-1]$.
 \end{proof}

 We define the \emph{$q$-expansion} of a nearly-holomorphic modular form $f$ to be the Fourier expansion of the holomorphic $\ZZ$-periodic function $f_0$, when we write $f$ in the form \eqref{eq:nearlyhol}. Then $\delta$ corresponds to $q \tfrac{\mathrm{d}}{\mathrm{d}q}$ on $q$-expansions, and the following is clear from Proposition \ref{prop:eisqexp} and the proof of the previous proposition:

 \begin{proposition}
  For any $j \in [0, k-1]$, the $q$-expansion of the nearly-holomorphic form $E^{(k)}_\alpha(\tau, -j)$ is
  \[ a_0 + \sum_{n \ge 1} \left(\sum_{d \mid n} d^{k-1-j} \left(\tfrac{n}{d}\right)^j(e^{2\pi i \alpha d} + (-1)^k e^{-2\pi i \alpha d})\right) q^n,\]
  where $a_0 = 0$ unless $j \in \{0, k-1\}$.
 \end{proposition}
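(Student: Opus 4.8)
\emph{Proof sketch.} The plan is to deduce everything from Proposition~\ref{prop:eisqexp} by a two-case reduction, using the weight-raising identity $\delta_k E^{(k)}_\alpha(\tau,s) = E^{(k+2)}_\alpha(\tau,s-1)$ (and its $F$-analogue), the functional equation $E^{(k)}_\alpha(\tau,s) = F^{(k)}_\alpha(\tau,1-k-s)$, and the fact that $\delta$ acts as $q\tfrac{\d}{\d q}$ on $q$-expansions.

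First I would treat the range $0 \le j \le \tfrac{k-1}{2}$. As observed in the proof of the preceding corollary, iterating the weight-raising identity $j$ times starting from weight $k-2j$ and $s=0$ gives $E^{(k)}_\alpha(\tau,-j) = \delta^j E^{(k-2j)}_\alpha$. Since $\delta$ is $q\tfrac{\d}{\d q}$ on $q$-expansions, the $n$-th Fourier coefficient $(n\ge1)$ of $E^{(k)}_\alpha(\tau,-j)$ is $n^j$ times that of $E^{(k-2j)}_\alpha$, while the constant term is unchanged if $j=0$ and is $0$ if $j\ge1$. Plugging in Proposition~\ref{prop:eisqexp}(1) and using $n^j\sum_{d\mid n}d^{k-2j-1}(\cdots) = \sum_{d\mid n}d^{k-1-j}(n/d)^j(\cdots)$ gives exactly the claimed expansion. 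When the intermediate weight $k-2j$ equals $2$, I would instead use $E^{(2)}_\alpha = \widetilde E^{(2)}_\alpha + E^{(2)}_0$ together with Proposition~\ref{prop:eisqexp}(2) and the explicit $q$-expansion $E^{(2)}_0(\tau) = \tfrac{1}{4\pi\Im\tau} - \tfrac1{12} + 2\sum_{n\ge1}\big(\sum_{d\mid n}d\big)q^n$ recorded in the proof of the previous proposition; the same computation goes through since $\delta = q\tfrac{\d}{\d q}$ still acts on the $q$-expansion (the $f_0$-part) of a nearly-holomorphic form.

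For the complementary range $\tfrac{k-1}{2} < j \le k-1$, I would apply the functional equation to write $E^{(k)}_\alpha(\tau,-j) = F^{(k)}_\alpha(\tau,1-k+j)$, set $j' = k-1-j \in [0,\tfrac{k-1}{2})$, and note $1-k+j = -j'$, so $E^{(k)}_\alpha(\tau,-j) = \delta^{j'} F^{(m)}_\alpha$ with $m = k-2j' = 2j-k+2$ by the $F$-version of the weight-raising identity. Proposition~\ref{prop:eisqexp}(3) gives the $n$-th coefficient of $F^{(m)}_\alpha$ as $\sum_{d\mid n}(n/d)^{m-1}(e^{2\pi i\alpha d}+(-1)^m e^{-2\pi i\alpha d})$, with $(-1)^m = (-1)^k$; multiplying by $n^{j'}$ and using the identities $j'+m-1 = j$ and $1-m = k-1-2j$ turns this into $\sum_{d\mid n}d^{k-1-j}(n/d)^j(\cdots)$, again as claimed, with the constant term killed unless $j'=0$, i.e. $j=k-1$. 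The weight-$2$ corner case here is handled exactly as above (using $F^{(2)}_\alpha$ when $\alpha\ne0$ and $E^{(2)}_0 = F^{(2)}_0$ when $\alpha=0$). Finally, the two ranges overlap at $j = \tfrac{k-1}{2}$ when $k$ is odd, where the two computations visibly agree, and the clause ``$a_0=0$ unless $j\in\{0,k-1\}$'' is precisely the statement that $\delta^r$ annihilates constant terms for $r\ge1$, which is $j\ge1$ in the first case and $j\le k-2$ in the second. I expect no serious obstacle: the argument is bookkeeping of the exponents of $d$ and $n/d$ in the two reductions, and the only point requiring genuine care is the intermediate-weight-$2$, $\alpha=0$ case, where one must check that replacing $E^{(2)}_0$ by its (merely near-holomorphic) explicit formula does not disturb the action of $\delta$ on $q$-expansions.
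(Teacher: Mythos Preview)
Your proposal is correct and follows exactly the approach the paper intends: the paper's own proof is the single sentence ``This is clear from Proposition~\ref{prop:eisqexp} and the proof of the previous proposition,'' and you have simply unpacked that reference by splitting into the ranges $j \le (k-1)/2$ and $j > (k-1)/2$, applying $\delta^j$ to $E^{(k-2j)}_\alpha$ or $\delta^{j'}$ to $F^{(k-2j')}_\alpha$ respectively (via the functional equation), and tracking the effect of $q\tfrac{\d}{\d q}$ on the coefficients. Your bookkeeping of the exponents and your handling of the intermediate-weight-$2$ case are both fine.
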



\subsection{P-adic families of Eisenstein series}

 We now use the $q$-expansion formulae above as motivation for defining a two-parameter family of $p$-adic Eisenstein series, which can be regarded as an analogue of the $E^{(k)}(-, s)$, with two continuous $p$-adic parameters $\phi_1, \phi_2$ replacing the discrete parameter $k$ and the continuous real-analytic parameter $s$.

 \begin{definition}
  Choose some (sufficiently large) finite extension $L / \Qp$ and let $\Lambda = \cO_L[[\Zp^\times]]$, the Iwasawa algebra of $\Zp^\times$. Let $\Omega = \operatorname{Spf} \Lambda$, the \emph{weight space} classifying continuous characters $\phi: \Zp^\times \to \CC_p$; and consider the formal power series with coefficients in $\Lambda \htimes \Lambda$ given by
 \[ \mathcal{E}_\alpha(\phi_1, \phi_2) = \sum_{\substack{n \ge 1 \\ p \nmid n}} \left( \sum_{d \mid n} \phi_1(d) \phi_2\left(\tfrac n d\right) \left[ e^{2 \pi i \alpha d} + \varepsilon e^{-2\pi i \alpha d}\right] \right)q^n \in \cO_L[[q]],\]
 where $\varepsilon = -\phi_1(-1) \phi_2(-1)$.
 \end{definition}

 \begin{note}
  We consider $\ZZ \times \ZZ$ as a subset of $\Omega \times \Omega$ in the natural way. Then for $k \ge 1, k \ne 2$, we have $\mathcal{E}_\alpha(k-1, 0) = (E^{(k)}_{\alpha})^{[p]}$ and $\mathcal{E}_\alpha(0, k-1) = (F^{(k)}_{\alpha})^{[p]}$, where $(-)^{[p]}$ denotes the ``$p$-depletion'' operator.
 \end{note}

 We now fix a newform $g \in S_\ell(\Gamma_1(N_g))$, for some $\ell$ and some $N_g \mid N$. (Although the weight $\ell$ will be fixed in our discussion, it is convenient to keep it in the notation as a parameter, since this will make our notation more consistent with \cite{BDR12}.) We will write $\breve g$ for any element of $S_\ell(N)[\pi_g]$.

 \begin{definition}
  For integers $k, j$, we define
  \[ \Xi(k, \ell, \alpha, j)^{\ord, p} = \eord\left[ \mathcal{E}_\alpha(j - \ell, k - 1 - j) \cdot \breve g \right].\]
 \end{definition}

 This is an ordinary $\Lambda$-adic family of modular forms, parametrized by $k$ and $j$ (we are taking $\ell$ to be fixed here, in order to avoid the need to make any ordinarity hypotheses on $g$). For any $(k, j)$, $\Xi(k, \ell, \alpha, j)^{\ord, p}$ is a $p$-adic modular form of weight $k$.

 We now compare this with the complex-analytic theory. It is clear that $\mathcal{E}_\alpha(j - \ell, k - 1 - j)$ is the $p$-depletion of the nearly-overconvergent form $\tau \mapsto E^{(k-\ell)}(\tau, -k + j + 1)$.

 \begin{definition}
  For $\ell \le j \le k-1$, let $\Xi(k, \ell, \alpha, j)$ denote the nearly-holomorphic modular form of weight $k-\ell$ given by
  \[ \Xi(k, \ell, \alpha, j)(\tau) = E^{(k-\ell)}_\alpha(\tau, -k + j + 1) \cdot \breve g,\]
  and $\Xi(k, \ell, \alpha, j)^{\mathrm{hol}}$ its image under the holomorphic projector.
 \end{definition}

 \begin{notation}
  Let $H^1(Y_1(N), \mathcal{L}_{k-2}, \nabla)$ denote the de Rham cohomology of $Y_1(N)$ with coefficients in the \mbox{$(k-2)$-nd} symmetric power of the relative de Rham cohomology sheaf of the universal elliptic curve over $Y_1(N)$, endowed with its Gauss--Manin connection.
  \end{notation}

 \begin{proposition}
  \label{prop:rankinselbergpstab}
  Let $k, \ell$ be fixed, with $k > \ell$. Let $f$ be a newform in $S_k(\Gamma_1(N_f))$, for some $N_f \mid N$, and let $e_{f^*}$ be the projection to the $f^*$-isotypic component in the Hecke algebra acting on $S_k(\Gamma_1(N))$. Assume $f$ is ordinary at $p$, and let $j \in [\ell, k-1]$.
  Then we have the following relation in $H^1(Y_1(N), \mathcal{L}_{k-2}, \nabla)$:
  \[ e_{f^*} \Xi(k, \ell, \alpha, j)^{\ord, p} = \frac{\mathcal{E}(f, g, j)}{\mathcal{E}(f)} e_{f^*} \eord \Xi(k, \ell, \alpha, j)^{\mathrm{hol}},\]
  where
  \[ \mathcal{E}(f) = 1 - p^{-1}\beta_p(f) \alpha_p(f)^{-1}\]
  and
  \begin{multline*}
   \mathcal{E}(f, g, j) = (1 - p^{-j}\beta_p(f) \alpha_p(g))(1 - p^{-j}\beta_p(f) \beta_p(g))\\
   \times (1 - p^{j-1}\alpha_p(f)^{-1} \alpha_p(g)^{-1})(1 - p^{j-1}\alpha_p(f)^{-1} \beta_p(g)^{-1}).
  \end{multline*}
  Here $\alpha_f, \beta_f$ are the roots of the Hecke polynomial of $f$ at $p$, and similarly for $g$.
 \end{proposition}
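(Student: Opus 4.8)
The plan is to follow the strategy of \cite{BDR12}, transposed to general weight and adapted to the nearly-overconvergent setting. The key identity is a comparison between the action of the $p$-adic ordinary projector $\eord$ on the $p$-depleted Eisenstein series $\mathcal{E}_\alpha(j-\ell, k-1-j) \cdot \breve g$ and on the genuine nearly-holomorphic form $\Xi(k, \ell, \alpha, j) = E^{(k-\ell)}_\alpha(\tau, -k+j+1) \cdot \breve g$; the discrepancy between these two is precisely a product of four local Euler factors at $p$, which will turn out to be $\mathcal{E}(f,g,j)/\mathcal{E}(f)$ after applying $e_{f^*}$. The underlying mechanism is entirely formal once one knows how the Hecke operator $U_p$ acts: on the ordinary part, $U_p$ acts invertibly, and $\eord$ can be computed as a limit $\lim_n U_p^{n!}$; the effect of $p$-depletion is to kill the ``oldform-at-$p$'' directions, and the quotient of eigenvalues that appears is governed by the Hecke polynomials at $p$ of $f$ and $g$.

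First I would recall the relation between $\mathcal{E}_\alpha$ and the nearly-holomorphic Eisenstein series: by the Note following the definition of $\mathcal{E}_\alpha$, we have $\mathcal{E}_\alpha(j-\ell, k-1-j) = \big(E^{(k-\ell)}_\alpha(\tau, -k+j+1)\big)^{[p]}$, the $p$-depletion of the nearly-holomorphic form of weight $k-\ell$. The $p$-depletion operator $(-)^{[p]}$ equals $1 - U_p V_p$ (or, on $q$-expansions, the projector onto terms $q^n$ with $p \nmid n$), and on the nearly-holomorphic side it factors through a combination of $U_p$, $V_p$ and the $\langle p \rangle$-operator. Next I would multiply by $\breve g$ and analyze $\eord$ applied to the product. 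Here the crucial input is the following: if $h$ is any $p$-adic modular form and $\breve g$ is a $p$-stabilized eigenform with $U_p$-eigenvalue a $p$-adic unit times an appropriate power of $p$, then applying $\eord$ to $h^{[p]} \breve g$ versus $h \breve g$ differs by an explicit Euler factor — this is a Rankin--Selberg-type ``unfolding at $p$'' computation, and it is essentially the content of the manipulations in \cite[\S 2--3]{BDR12} and of Hida's theory of the $p$-adic Rankin product. The four factors $(1 - p^{-j}\beta_p(f)\alpha_p(g))$, etc., arise from expanding the product $\breve g \cdot (1 - U_pV_p)(\cdots)$ and pairing against the $f^*$-eigenspace, where $U_p$ acts on $e_{f^*}$ by $\alpha_p(f)$ (after $p$-stabilization) and the geometric-series identity $\eord(1-U_pV_p) = (\text{unit})\cdot$(Euler factor) is applied. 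The factor $\mathcal{E}(f) = 1 - p^{-1}\beta_p(f)\alpha_p(f)^{-1}$ in the denominator comes from the discrepancy between the $p$-stabilized form $\breve f$ and the newform $f$ under $e_{f^*}$ — it is the ``removal of the $p$-Euler factor of $f$'' that appears when one passes from the $p$-stabilized to the newform normalization, exactly as in the formula for the $p$-adic $L$-function of a single form.

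The main obstacle I expect is the bookkeeping in the nearly-holomorphic (as opposed to classical holomorphic) setting: one must verify that the ordinary projector $\eord$, the holomorphic projector, and the de Rham comparison $S_k^{\nh}(\Gamma_1(N), K) \hookrightarrow H^1(Y_1(N), \mathcal{L}_{k-2}, \nabla)$ all interact correctly, so that the identity — which is a priori an identity of $q$-expansions of $p$-adic modular forms — can be upgraded to an identity in de Rham cohomology with the nearly-holomorphic form replaced by its image $\Xi(k,\ell,\alpha,j)^{\mathrm{hol}}$ under holomorphic projection. This requires knowing that $\eord$ factors through holomorphic projection on the ordinary part (a standard fact, since the ordinary part of the space of nearly-holomorphic forms maps isomorphically to the ordinary part of holomorphic forms under $\eord$, cf.~\cite[\S 2.4]{darmonrotger12}) and that the relevant Hecke eigenvalue computations are compatible with the $\nabla$-module structure. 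Once this compatibility is in place, the rest is the formal Euler-factor bookkeeping sketched above, which, while tedious, presents no conceptual difficulty; I would organize it by first treating the ``$U_pV_p$ correction'' term and the ``$\breve g$-stabilization'' term separately and then combining.
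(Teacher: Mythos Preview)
Your sketch is substantively correct and captures the actual mechanism behind the identity: the comparison between the $p$-depleted and undepleted Eisenstein series multiplied by $\breve g$, the Euler-factor bookkeeping coming from $U_p$-eigenvalues, and the compatibility of $\eord$ with holomorphic projection in the nearly-holomorphic setting. However, the paper does not carry out any of this directly. Its entire proof consists of invoking Proposition~4.15 of \cite{darmonrotger12}, specialised so that the triple $(f,g,h)$ there becomes $(f,\, E^{(k-\ell)}_\alpha(-,\,-k+j+1),\, g)$; the special case $j \ge (k+\ell-1)/2$ is already \cite[Proposition~2.7]{BDR12}. In other words, what you have outlined is essentially the content of the Darmon--Rotger proposition that the paper is citing as a black box.

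So the difference is one of packaging rather than mathematics: you are reproving the cited result, while the paper simply imports it. Your discussion of the ``main obstacle'' --- verifying that $\eord$, holomorphic projection, and the de Rham comparison interact correctly on nearly-holomorphic forms --- is precisely the technical input supplied by the nearly-overconvergent framework of \cite{darmonrotger12}, and you correctly identify \cite[\S 2.4]{darmonrotger12} as the relevant place. If you wish to keep a self-contained argument, your outline is sound; if you are willing to cite \cite{darmonrotger12}, the proof collapses to a one-line reference.
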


 \begin{proof}
  This follows from Proposition 4.15 of \cite{darmonrotger12} with the $f,g,h$ of the theorem taken to be $f$, $E^{(k-\ell)}_\alpha(-, -k+j+1)$ and $g$. (Note that the special case $j \ge \frac{k + \ell - 1}{2}$ is \cite[Proposition 2.7]{BDR12}.)
 \end{proof}


\subsection{Interpolation in Hida families}

 We now interpolate the left-hand side of Proposition \ref{prop:rankinselbergpstab} in Hida families.

 \begin{notation}
  Let $f$ be a newform (of some level $N_f \mid N$) and let $\bff$ be the Hida family through $f$ (with coefficients in some finite flat $\Lambda$-algebra $\Lambda_f$). Then we define the space $\mathbf{S}^{\ord}(N; \Lambda_f)[\pi_f]$ for the $\Lambda_f$-module of families of oldforms at level $N$ corresponding to $f$, which is simply the space of formal $q$-expansions spanned over $\Lambda_f$ by $\bff(q^d)$ for $d \mid N/N_f$. We write $\breve\bff$ for a generic element of $\mathbf{S}^{\ord}(N; \Lambda_f)[\pi_f]$, which we shall think of as a ``test vector'' associated to $\bff$.
 \end{notation}

 We shall continue to write $g$ for a newform in $S_\ell(N_g)$ for some $N_g \mid N$, and $\breve g$ for a generic element of $S_\ell(N; K)[\pi_g]$.

 \begin{proposition}
  For any $\breve\bff \in \mathbf{S}^{\ord}(N; \Lambda_f)[\pi_f]$ and $\breve g \in S_\ell(N; K)[\pi_g]$ as above, and any $\alpha \in \frac{1}{N}\ZZ / \ZZ$, there exists an element
  \[ \mathcal{D}_p(\breve\bff, \breve g, \alpha) \in \operatorname{Frac}(\Lambda_f) \htimes \Lambda\]
  such that for all integers $k, j$ with $k \ge 2$, we have
  \[ \mathcal{D}_p(\breve\bff, \breve g, \alpha)(k, j) = \frac{ \left\langle \breve f_k^*, \Xi(k, \ell, \alpha, j)^{\ord, p}\right\rangle}{\mathcal{E}^*(f_k) \left\langle f_k, f_k \right\rangle},\]
  where $f_k$ and $\breve f_k$ are the eigenforms at level $N$ whose ordinary $p$-stabilizations are the weight $k$ specializations of $\bff$ and $\breve\bff$, and
  \[ \mathcal{E}^*(f_k) \coloneqq 1 - \beta_p(f_k) \alpha_p(f_k)^{-1}.\]
 \end{proposition}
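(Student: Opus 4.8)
The plan is to construct the element $\mathcal{D}_p(\breve\bff, \breve g, \alpha)$ by interpolating the numerators $\left\langle \breve f_k^*, \Xi(k, \ell, \alpha, j)^{\ord, p}\right\rangle$ over the Hida family $\bff$, using the fact that $\Xi(k, \ell, \alpha, j)^{\ord, p}$ already varies in a $p$-adic family. Recall from the definition that $\Xi(k, \ell, \alpha, j)^{\ord, p} = \eord\left[\mathcal{E}_\alpha(j - \ell, k - 1 - j) \cdot \breve g\right]$, and the Eisenstein family $\mathcal{E}_\alpha(\phi_1, \phi_2)$ has coefficients in $\Lambda \htimes \Lambda$; multiplying by the fixed $q$-expansion $\breve g$ and applying Hida's ordinary projector $\eord$ (which is defined integrally on the relevant space of $\Lambda$-adic forms) produces a $\Lambda \htimes \Lambda$-adic ordinary family of weight-filtered modular forms, whose specialization at $(k,j)$ with $k \ge 2$ is the weight $k$ $p$-adic modular form $\Xi(k, \ell, \alpha, j)^{\ord, p}$. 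The two parameters here are: the ``$\bff$-variable'' $k$ and the ``cyclotomic-type variable'' $j$; these will give rise to the two factors $\operatorname{Frac}(\Lambda_f)$ and $\Lambda$ respectively.

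The key steps, in order. \emph{Step 1:} Identify the space of ordinary $\Lambda$-adic cusp forms of tame level $N$ in which $\eord[\mathcal{E}_\alpha(j-\ell, k-1-j) \cdot \breve g]$ lives; Hida theory gives that this is a finitely generated torsion-free (indeed projective, after inverting $p$) module over $\Lambda$ (in the $k$-direction), and the $j$-direction is governed by the second variable of $\mathcal{E}_\alpha$, so the whole object lies in an appropriate $\Lambda \htimes \Lambda$-module $\mathbf{S}^{\ord}$. \emph{Step 2:} Apply the $\bff$-isotypic projector: Hida's control theorem provides an idempotent $e_{\bff} \in \Lambda_f \otimes_\Lambda (\text{Hecke algebra})$ cutting out the $\bff$-component, and this projector becomes an honest projector only after inverting an element of $\Lambda_f$ (the congruence-number denominator), which is why the statement allows $\operatorname{Frac}(\Lambda_f)$ rather than $\Lambda_f$. \emph{Step 3:} Pair against the dual family. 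The linear functional $h \mapsto \left\langle \breve f_k^*, h \right\rangle / \left(\mathcal{E}^*(f_k)\left\langle f_k, f_k\right\rangle\right)$ on the weight $k$ ordinary part is the specialization of a $\Lambda_f$-adic linear functional — this is exactly the ``$\Lambda$-adic Petersson product against $\breve\bff^*$'' construction of Hida, where the factors $\mathcal{E}^*(f_k)$ and $\left\langle f_k, f_k\right\rangle$ are precisely the interpolation factors that make the normalization $p$-adically well-defined (the canonical periods in the Hida family). Applying this $\Lambda_f$-adic functional to the $\Lambda \htimes \Lambda$-adic family from Step 1–2 produces an element of $\operatorname{Frac}(\Lambda_f) \htimes \Lambda$. \emph{Step 4:} Check the interpolation property: by construction, specializing at an integer pair $(k, j)$ with $k \ge 2$ recovers $\left\langle \breve f_k^*, \Xi(k, \ell, \alpha, j)^{\ord, p}\right\rangle / \left(\mathcal{E}^*(f_k)\left\langle f_k, f_k\right\rangle\right)$, using that all the operations (multiplication by $\breve g$, $\eord$, the $\bff$-projector, the dual pairing) are compatible with classical specialization — the control theorems of Hida guarantee this.

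The main obstacle I expect is \emph{Step 2–3}: making precise the sense in which the normalized Petersson functional $h \mapsto \left\langle \breve f_k^*, h\right\rangle / \left(\mathcal{E}^*(f_k)\left\langle f_k, f_k\right\rangle\right)$ genuinely interpolates into a $\operatorname{Frac}(\Lambda_f)$-linear functional on the $\Lambda_f$-adic ordinary space. This requires citing Hida's construction of the canonical $\Lambda$-adic period pairing (as in Hida's work on congruence modules and the adjoint $L$-function, and as used by Darmon–Rotger and Bertolini–Darmon–Rotger in closely related settings): one needs that $\left\langle f_k, f_k\right\rangle$ times the congruence factor equals, up to a $p$-adic unit times an explicit interpolation factor, the image under specialization of a fixed element of $\Lambda_f$, and that the pairing against $\breve f_k^*$ extends to a $\Lambda_f$-bilinear duality on the ordinary part. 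Once this input is in hand, the rest is a bookkeeping exercise in assembling the two-variable object and checking specializations, and the Eisenstein family $\mathcal{E}_\alpha$ of the previous subsection supplies the $j$-variable for free. In the write-up I would simply invoke the relevant theorems of Hida (control theorem, $\Lambda$-adic Petersson product, congruence module) and Darmon–Rotger, and note that the only new point is that the Eisenstein input has an extra cyclotomic-type parameter $\alpha$, which does not affect any of the $p$-adic analytic machinery.
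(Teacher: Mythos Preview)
Your proposal is correct and follows the standard Hida-theoretic construction. The paper itself gives no proof of this proposition: it is stated without argument, evidently regarded as a direct consequence of Hida's machinery for $p$-adic Rankin--Selberg $L$-functions (the $\Lambda$-adic Petersson product and control theorems), as deployed in \cite{BDR12} and \cite{darmonrotger12}. Your sketch unpacks precisely that machinery---the two-variable Eisenstein family, ordinary projection, the $\bff$-isotypic projector requiring passage to $\operatorname{Frac}(\Lambda_f)$, and Hida's interpolation of the normalized Petersson inner product---so there is nothing further to compare.
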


 Combining this with the previous proposition, we have

 \begin{proposition}
  For integers $k, j$ with $\ell \le j \le k-1$, we have
  \[ \mathcal{D}_p(\breve\bff, \breve g, \alpha)(k, j) = \frac{ \mathcal{E}(f_k, g, j)}{\mathcal{E}(f_k) \cdot \mathcal{E}^*(f_k) \cdot \left\langle f_k, f_k \right\rangle} \mathcal{D}(\breve f_k, \breve g, \alpha, j).\]
 \end{proposition}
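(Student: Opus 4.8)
The plan is to combine the previous proposition with Proposition~\ref{prop:rankinselbergpstab}, reading the numerator appearing in the previous proposition's formula through the complex-analytic Rankin--Selberg integral; as usual in this circle of ideas, the relevant $p$-adic and complex periods are identified via the comparison isomorphisms and the fixed embeddings $\overline{\QQ}\hookrightarrow\CC$, $\overline{\QQ}\hookrightarrow\overline{\QQ}_p$. The starting point is the formula of the previous proposition, valid for all $k\ge 2$:
\[ \mathcal{D}_p(\breve\bff, \breve g, \alpha)(k, j) = \frac{\left\langle \breve f_k^*, \Xi(k, \ell, \alpha, j)^{\ord, p}\right\rangle}{\mathcal{E}^*(f_k)\,\left\langle f_k, f_k\right\rangle}. \]

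First I would insert the idempotent $e_{f^*}\eord$ into the numerator pairing: since $f_k$ is ordinary at $p$, so is $f_k^*$ (its $p$-th Hecke eigenvalue $\varepsilon_{f_k}(p)^{-1}a_p(f_k)$ is again a $p$-adic unit), so the test vector $\breve f_k^*$ lies in the ordinary $f_k^*$-isotypic subspace; both $e_{f^*}$ and $\eord$ fix $\breve f_k^*$, and by the adjointness relation $\langle T_n a, b\rangle = \langle a, \langle n\rangle^{-1}T_n b\rangle$ their adjoints map the orthogonal complement of $\breve f_k^*$ to itself. Hence $\langle \breve f_k^*, h\rangle = \langle \breve f_k^*, e_{f^*}\eord\, h\rangle$ for any $h$.

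Next I would apply Proposition~\ref{prop:rankinselbergpstab} with its ``$f$'' equal to $f_k$, which identifies $e_{f^*}\Xi(k, \ell, \alpha, j)^{\ord, p}$ with $\tfrac{\mathcal{E}(f_k, g, j)}{\mathcal{E}(f_k)}\,e_{f^*}\eord\,\Xi(k, \ell, \alpha, j)^{\mathrm{hol}}$ as classes in $H^1(Y_1(N), \mathcal{L}_{k-2}, \nabla)$. The delicate step is that this is an identity of de Rham cohomology classes, while the object we wish to compute is the pairing $\langle \breve f_k^*, -\rangle$; to bridge this I would use the compatibility between the algebraic (Poincar\'e) de Rham pairing on the ordinary $f_k^*$-isotypic part, the $p$-adic pairing used in defining $\mathcal{D}_p$, and --- on the holomorphic side, via the defining property of the holomorphic projector against the cusp form $\breve f_k^*$ --- the complex Petersson product. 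This gives
\[ \left\langle \breve f_k^*, \Xi(k, \ell, \alpha, j)^{\ord, p}\right\rangle = \frac{\mathcal{E}(f_k, g, j)}{\mathcal{E}(f_k)}\left\langle \breve f_k^*, \Xi(k, \ell, \alpha, j)^{\mathrm{hol}}\right\rangle = \frac{\mathcal{E}(f_k, g, j)}{\mathcal{E}(f_k)}\left\langle \breve f_k^*, \Xi(k, \ell, \alpha, j)\right\rangle, \]
the second equality because $\Xi(k, \ell, \alpha, j)^{\mathrm{hol}}$ is by definition the holomorphic projection of $\Xi(k, \ell, \alpha, j)$, and the latter is rapidly decreasing (as $\breve g$ is cuspidal) so the Petersson integral converges. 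Finally, unwinding the definitions of $\Xi(k, \ell, \alpha, j)$ and of $\mathcal{D}$, and noting that the evaluation $s = j$ turns $E^{(k-\ell)}_\alpha(\tau, s-k+1)$ into $E^{(k-\ell)}_\alpha(\tau, -k+j+1)$ --- which is precisely the nearly-holomorphic range $\ell\le j\le k-1$ --- the last pairing equals $\mathcal{D}(\breve f_k, \breve g, \alpha, j)$. Putting this back into the displayed formula yields the claim.

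The main obstacle is the third paragraph: carrying the identity of Proposition~\ref{prop:rankinselbergpstab} out of $H^1_{\dR}$ into a statement about the pairing against $\breve f_k^*$, which amounts to checking how $e_{f^*}$, $\eord$ and the holomorphic projector interact with that pairing and with the relevant comparison isomorphisms. The remaining steps are routine bookkeeping with the definitions of $\mathcal{D}$, $\mathcal{D}_p$, and the Euler factors $\mathcal{E}(f_k)$, $\mathcal{E}^*(f_k)$, $\mathcal{E}(f_k, g, j)$.
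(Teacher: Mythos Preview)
Your proposal is correct and follows exactly the route the paper intends: the paper's own ``proof'' is simply the clause ``Combining this with the previous proposition, we have'', i.e.\ substitute Proposition~\ref{prop:rankinselbergpstab} into the defining formula for $\mathcal{D}_p(\breve\bff,\breve g,\alpha)(k,j)$ and unwind. You have written out the intermediate steps (inserting $e_{f^*}\eord$, passing to the holomorphic projection, and recognising the Petersson integral as $\mathcal{D}(\breve f_k,\breve g,\alpha,j)$) that the paper leaves implicit.
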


\begin{note}
  We know that the Atkin--Lehner operator gives an isomorphism
 \[ S_\ell(N; K)[\pi_g] \rTo^{w_N}_{\cong} S_\ell(N; K)[\pi_{g^*}].\]
 Less obviously, there is also an operator
 \[ \mathbf{S}^{\ord}(N; \Lambda_f)[\pi_f] \rTo^{w_N}_{\cong} \mathbf{S}^{\ord}(N; \Lambda_f)[\pi_{f^*}]\]
 interpolating the action of the Atkin--Lehner operators on the weight $k$ specializations. To see this, it suffices to note that the inclusions $S_k(N) \into S_k(Np) \into S_k(Np^2) \into \dots$ commute with the action of $w_{N}$ and this operator is continuous with respect to the $p$-adic norm (by the $q$-expansion principle); the resulting operator on the completion $S_k(Np^\infty)$ commutes with $U_p$, and hence preserves $\eord S_k(Np^\infty)$.
 \end{note}

 \begin{proposition}
  For any $k, j \in \Omega_f \times \Omega$, we have
  \[ \mathcal{D}_p(\breve\bff, \breve g, \alpha)(k, k + \ell - 1 - j) = N^{1-j} \cdot \sum_{y \in \ZZ / N\ZZ} e^{2\pi i \alpha x / N} \mathcal{D}_p(w_N \breve\bff, w_N \breve g, x/N)(k, j).\]
 \end{proposition}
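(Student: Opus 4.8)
The plan is to deduce this functional equation from the corresponding identity for the complex-analytic Rankin--Selberg integrals $\mathcal{D}(\breve f, \breve g, \alpha, s)$ proved above (the displayed functional equation at the end of the real-analytic Eisenstein series subsection), by comparing both sides at a Zariski-dense set of classical points. First I would note that, by construction, $\mathcal{D}_p(\breve\bff, \breve g, \alpha)$ and each $\mathcal{D}_p(w_N\breve\bff, w_N\breve g, x/N)$ lie in $\operatorname{Frac}(\Lambda_f)\htimes\Lambda$, so the two sides of the claimed identity are (a priori meromorphic) rigid-analytic functions of $(k,j)\in\Omega_f\times\Omega$; hence it suffices to check the identity on any Zariski-dense subset avoiding the polar loci. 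I would take $S$ to be the set of pairs $(k,j)$ with $k$ an integer for which the weight-$k$ specialisation $f_k$ of $\bff$ exists and $\ell\le j\le k-1$. The set of such $k$ is Zariski-dense in $\Omega_f$, and for each fixed integer $j_0\ge\ell$ the slice $S\cap(\Omega_f\times\{j_0\})$ is Zariski-dense in $\Omega_f\times\{j_0\}$; since the integers $\ge\ell$ are Zariski-dense in $\Omega$, $S$ is Zariski-dense in $\Omega_f\times\Omega$. One also checks that $(k,j)\in S$ forces $\ell\le k+\ell-1-j\le k-1$, so \emph{both} of the $j$-arguments occurring in the identity lie in the range where the interpolation formula $\mathcal{D}_p(\breve\bff,\breve g,\alpha)(k,j)=\frac{\mathcal{E}(f_k,g,j)}{\mathcal{E}(f_k)\,\mathcal{E}^*(f_k)\,\langle f_k,f_k\rangle}\,\mathcal{D}(\breve f_k,\breve g,\alpha,j)$ applies (valid by Proposition \ref{prop:rankinselbergpstab} together with the defining property of $\mathcal{D}_p$).

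Next, for $(k,j)\in S$ I would evaluate both sides via that explicit formula. The left-hand side becomes
\[ \frac{\mathcal{E}(f_k, g, k+\ell-1-j)}{\mathcal{E}(f_k)\,\mathcal{E}^*(f_k)\,\langle f_k, f_k\rangle}\;\mathcal{D}(\breve f_k, \breve g, \alpha, k+\ell-1-j). \]
For the right-hand side I would use the Note preceding the statement: $w_N$ interpolates to an isomorphism $\mathbf{S}^{\ord}(N;\Lambda_f)[\pi_f]\cong\mathbf{S}^{\ord}(N;\Lambda_f)[\pi_{f^*}]$ whose weight-$k$ specialisation is the classical Atkin--Lehner operator, so that the level-$N$ eigenform underlying $(w_N\breve\bff)_k$ is $w_N\breve f_k$, with associated newform $f_k^*$, while $w_N\breve g$ has associated newform $g^*$. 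Thus the right-hand side becomes
\[ N^{1-j}\sum_{x\in\ZZ/N\ZZ}e^{2\pi i\alpha x}\;\frac{\mathcal{E}(f_k^*, g^*, j)}{\mathcal{E}(f_k^*)\,\mathcal{E}^*(f_k^*)\,\langle f_k^*, f_k^*\rangle}\;\mathcal{D}(w_N\breve f_k, w_N\breve g, x/N, j). \]

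The third step is to match the two scalar prefactors. Using that the Hecke roots of $f^*$ at $p$ are $\{p^{k-1}\alpha_p(f)^{-1}, p^{k-1}\beta_p(f)^{-1}\}$ and those of $g^*$ are $\{p^{\ell-1}\alpha_p(g)^{-1}, p^{\ell-1}\beta_p(g)^{-1}\}$, a direct substitution shows that the four factors of $\mathcal{E}(f_k^*, g^*, j)$ are, up to reordering, exactly the four factors of $\mathcal{E}(f_k, g, k+\ell-1-j)$, and likewise that $\mathcal{E}(f_k^*)=\mathcal{E}(f_k)$ and $\mathcal{E}^*(f_k^*)=\mathcal{E}^*(f_k)$; combined with $\langle f_k^*, f_k^*\rangle=\langle f_k, f_k\rangle$ this makes the two prefactors equal. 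Consequently the claimed $p$-adic identity at $(k,j)\in S$ reduces to
\[ \mathcal{D}(\breve f_k, \breve g, \alpha, k+\ell-1-j) = N^{1-j}\sum_{x\in\ZZ/N\ZZ}e^{2\pi i\alpha x}\;\mathcal{D}(w_N\breve f_k, w_N\breve g, x/N, j), \]
which is exactly the complex-analytic functional equation for $\mathcal{D}$ (derived from $E^{(k)}_\alpha(\tau,s)=F^{(k)}_\alpha(\tau,1-k-s)$ and the Atkin--Lehner relation for $F^{(k)}_\alpha$), evaluated at $s=j$. Since $S$ is Zariski-dense, this yields the asserted identity of rigid-analytic functions.

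I expect the main obstacle to be organisational rather than conceptual: carefully arranging the $p$-stabilisation and Atkin--Lehner normalisations so that the eigenform underlying $(w_N\breve\bff)_k$ is genuinely $w_N\breve f_k$ at level $N$ (this is precisely the role of the Note), and then verifying that the Euler-factor prefactors transform correctly under $f\mapsto f^*$, $g\mapsto g^*$, $j\mapsto k+\ell-1-j$. The genuine analytic input — the functional equation relating $E^{(k)}_\alpha$ and $F^{(k)}_\alpha$, hence the integrals $\mathcal{D}$ — has already been established, so no new harmonic analysis is required; the content of the present statement is merely its propagation through the Hida-family interpolation.
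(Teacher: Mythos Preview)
Your proposal is correct and follows essentially the same approach as the paper: reduce to the classical functional equation for $\mathcal{D}(\breve f_k,\breve g,\alpha,s)$ at a Zariski-dense set of integer points $(k,j)$, then verify that the Euler-factor prefactors $\mathcal{E}(f_k,g,k+\ell-1-j)/(\mathcal{E}(f_k)\mathcal{E}^*(f_k)\langle f_k,f_k\rangle)$ and $\mathcal{E}(f_k^*,g^*,j)/(\mathcal{E}(f_k^*)\mathcal{E}^*(f_k^*)\langle f_k^*,f_k^*\rangle)$ agree via the relations $\alpha_p(f^*)=p^{k-1}/\beta_p(f)$, etc. Your treatment of the Zariski-density and of the range condition $\ell\le j\le k-1\Leftrightarrow\ell\le k+\ell-1-j\le k-1$ is in fact slightly more explicit than the paper's.
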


 \begin{proof}
  It suffices to check this result for all pairs of integers $k, j$ with $k \ge j$, since these points are Zariski-dense in $\Omega_f \times \Omega$. By the classical functional equation, we find that for such $k, j$ we have
  \[ \mathcal{D}_p(\breve\bff, \breve g, \alpha)(k, k + \ell - 1 - j) = A \cdot N^{1-j} \sum_{y \in \ZZ / N\ZZ} e^{2\pi i \alpha x / N}\mathcal{D}_p(w_N \breve\bff, w_N \breve g, \alpha)(k, j)\]
  where the quantity $A$ is defined by
  \[ A = \frac{\cE(f_k, g, k + \ell - 1 - j)}{\cE(f_k) \cE^*(f_k) \langle f_k, f_k\rangle} \cdot \left( \frac{\cE(f_k^*, g^*, j)}{\cE(f_k^*) \cE^*(f_k^*) \langle f_k^*, f_k^*\rangle}\right)^{-1}.\]

  We obviously have $\langle f_k^*, f_k^*\rangle = \langle f_k, f_k\rangle$. More subtly, we have $\alpha_p(f^*) = p^{k-1} / \beta_p(f)$ and $\beta_p(f^*) = p^{k-1} / \alpha_p(f)$; similarly, we have $\{ \alpha_p(g^*), \beta_p(g^*)\} = \{p^{\ell-1}/\alpha_p(g), p^{\ell-1}/\beta_p(g)\}$. From these relations, it is clear that $\cE(f_k^*) = \cE(f_k)$, $\cE^*(f_k^*) = \cE(f_k)$, and $\cE(f_k, g, k + \ell - 1 - j) = \cE(f_k^*, g^*, j)$. So the ratio $A$ is identically 1.
 \end{proof}

\begin{notation}
  We write $\mathcal{D}_p(\breve f, \breve g, \alpha)$ for the restriction of $\mathcal{D}_p(\breve \bff, \breve g, \alpha)$ to $k \times \Omega \subset \Omega_f \times \Omega$, where $\breve\bff$ is any element of $\mathbf{S}^{\ord}(N; \Lambda_f)[\pi_f]$ whose specialization in weight $k$ is $\breve f$. (This is independent of the choice of family $\breve\bff$).
\end{notation}

\begin{note}
 If $k > \ell$, then the $L$-function $\mathcal{D}_p(\breve f, \breve g, \alpha)$ interpolates the critical values $\mathcal{D}(f, g, \alpha, s)$; but when $k = \ell$, there are no such critical values.
\end{note}


\subsection{The syntomic regulator}
 \label{sect:syntomic}

 Let $p$ be prime and $K$ be a finite extension of $\Qp$ with ring of integers $\cO_K$, and $\cX$ a smooth proper scheme over $\cO_K$ with generic fibre $X$. Then there exists a map, the \emph{syntomic regulator},
 \[ r_\syn: \CH^2(\mathcal{X}, 1) \to H^2_{\dR}(X / K) / \Fil^2 = \left(\Fil^1 H^2_{\dR}(X / K)\right)^\vee, \]
 with the property that the diagram
 \[
   \begin{diagram}
    \CH^2(\mathcal{X}, 1) & \rTo & \CH^2(X, 1) \\
    \dTo^{r_\syn} & & \dTo^{r_\et} \\
    H^2_{\dR}(X / K) / \Fil^2 & \rInto^{\exp} & H^1(K, H^2_{\et}(\overline{X}, \Qp)(2))
   \end{diagram}
 \]
 commutes (c.f. \cite{besser00}). Here $\exp$ denotes the Bloch--Kato exponential map constructed in \cite{blochkato90} for the crystalline $G_K$-representation $V = H^2_\et(\overline{X}, \Qp)(2)$.

 \begin{remark}
  Note that since $\cX$ has good reduction, all eigenvalues of Frobenius on $\Dcris(V)$ are Weil numbers of weight $-2$; thus $\Dcris(V)^{\vp = 1} = 0$, implying that $\exp$ is injective. This also implies that $H^1_e(K, V) = H^1_f(K, V)$. Note that we do \emph{not} necessarily have $H^1_f(K, V) = H^1_g(K, V)$, since $V^*(1)$ has all weights equal to $0$. It is conjectured that the image of $r_{\et}$ is precisely $H^1_g(K, V)$ but this is only known in a few special cases, cf.~\cite[Fact 1.1]{saitosato10}.
 \end{remark}

\subsection{Generalization of a theorem of Bertolini--Darmon--Rotger}

 Note that there is a map
 \[ \operatorname{dlog}: \cO(Y_1(N))^\times \otimes \QQ \to M_2(\Gamma_1(N)),\]
 which corresponds to $F(\tau) \mapsto \frac{F'(\tau)}{F(\tau)}$ as functions on $\cH$; and this commutes with the Atkin--Lehner involutions.

 \begin{proposition}
  For any $\alpha \ne 0 \in \QQ / \ZZ$, we have
  \[ \operatorname{dlog} g_{0, \alpha} = -F^{(2)}_{\alpha}.\]
 \end{proposition}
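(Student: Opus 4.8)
The plan is to prove the identity by comparing $q$-expansions at the cusp $\infty$, using the classical product expansion of the Siegel unit together with Proposition \ref{prop:eisqexp}(3).

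First I would recall the standard $q$-expansion of $g_{0,\alpha}$ (see \cite[\S 1.4]{kato04} and the references therein): writing $q = e^{2\pi i \tau}$ and $\zeta = e^{2\pi i \alpha}$ (a well-defined root of unity once $\alpha \in \tfrac1N\ZZ/\ZZ$ and a compatible choice of $N$-th roots of unity are fixed), one has, up to a nonzero constant independent of $\tau$,
\[ g_{0,\alpha} = (\text{const}) \cdot q^{1/12} \prod_{n \ge 1} (1 - q^n \zeta)(1 - q^n \zeta^{-1}), \]
the exponent $\tfrac1{12} = \tfrac12 B_2(0)$, and the absence of any $\alpha$-dependent power of $q$ being special to the first Siegel parameter being $0$. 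Since $\operatorname{dlog}$ is the map $F \mapsto q\tfrac{\d}{\d q}\log F$ (compatibly with $\tfrac{\d q}{q} = 2\pi i\,\d\tau$, hence with the normalisation of $\omega_f$), and since it kills constants, I would apply it termwise. Using $q\tfrac{\d}{\d q}\log(1 - q^n w) = -\sum_{m \ge 1} n\, w^m q^{nm}$ and collecting the coefficient of $q^M$ (summing over the factorisations $M = dm$) gives
\[ \operatorname{dlog} g_{0,\alpha} = \frac1{12} - \sum_{M \ge 1}\Bigl( \sum_{d \mid M} \tfrac M d \bigl(e^{2\pi i\alpha d} + e^{-2\pi i\alpha d}\bigr)\Bigr) q^M. \]

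Next I would compare this with $F^{(2)}_{\alpha}$. By Proposition \ref{prop:eisqexp}(3) with $k = 2$ (so $(-1)^k = 1$, $(n/d)^{k-1} = n/d$, and the constant term is $\zeta(1-k) = \zeta(-1) = -\tfrac1{12}$),
\[ F^{(2)}_{\alpha} = -\frac1{12} + \sum_{M \ge 1}\Bigl( \sum_{d \mid M} \tfrac M d \bigl(e^{2\pi i\alpha d} + e^{-2\pi i\alpha d}\bigr)\Bigr) q^M, \]
so $\operatorname{dlog} g_{0,\alpha}$ and $-F^{(2)}_{\alpha}$ have identical $q$-expansions at $\infty$. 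Both are elements of $M_2(\Gamma_1(N))$: the right-hand side by the proposition of the preceding subsection, and the left-hand side because $g_{0,\alpha}$ is a nowhere-vanishing function on $Y_1(N)$, so $\operatorname{dlog} g_{0,\alpha}$ is holomorphic on $Y_1(N)$, transforms with weight $2$ (as $g_{0,\alpha}$ is scaled by a constant under $\Gamma_1(N)$), and is holomorphic at the cusps (a modular unit has $q$-expansion $q^r \times (\text{unit power series})$ at each cusp, with logarithmic derivative $r + O(q)$). Since $X_1(N)$ is geometrically connected, a weight-$2$ form is determined by its $q$-expansion at a single cusp, so $\operatorname{dlog} g_{0,\alpha} = -F^{(2)}_{\alpha}$.

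The only delicate point is the very first step: one must use the Siegel unit's $q$-expansion in the correct normalisation — in particular the precise power of $q$ and the fact that the infinite product, rather than its reciprocal, appears — so that the sign in the statement comes out as $-$; once this is pinned down the remainder is routine. (Alternatively, one could avoid the explicit product by applying $\delta_0$ to the second Kronecker limit formula $E^{(0)}_{\alpha}(\tau,0) = 2\log|g_{0,\alpha}(\tau)|$ and invoking the relations between $\delta_0 E^{(0)}_{\alpha}$, $E^{(2)}_{\alpha}$ and $F^{(2)}_{\alpha}$ recorded for the real-analytic Eisenstein series, but that route likewise demands careful tracking of the normalising constants.)
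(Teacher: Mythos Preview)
Your proof is correct and follows exactly the same approach as the paper: both compare the $q$-expansion of $\operatorname{dlog} g_{0,\alpha}$ (via the product formula in \cite[\S 1.9]{kato04}) with the $q$-expansion of $F^{(2)}_{\alpha}$ from Proposition~\ref{prop:eisqexp}(3). The paper's proof is the one-liner ``Immediate from comparing the $q$-expansion of $F^{(2)}_{\alpha}$ with that of $g_{0,\alpha}$, which is given in \cite[\S 1.9]{kato04}''; you have simply written out that comparison in full, together with the (not strictly necessary but correct) justification that equality of $q$-expansions at $\infty$ suffices.
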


 \begin{proof}
  Immediate from comparing the $q$-expansion of $F^{(2)}_{\alpha}$ with that of $g_{0, \alpha}$, which is given in \cite[\S 1.9]{kato04}.
 \end{proof}

 We recall the following result, which is a slight reformulation and extension of the main theorem of \cite{BDR12}.

 \begin{theorem}
  Let $u_\alpha$ be the modular unit on $\cY_1(N) \otimes \ZZ(\mu_N)$ such that
  \[ \operatorname{dlog} u_\alpha = \widetilde E^{(2)}_{\alpha},\]
  and let $\Delta_{u_\alpha}$ be any element of $\CH^2(\cX_1(N) \otimes \ZZ(\mu_N), 1)$ whose pullback to $\CH^2(\cY_1(N) \otimes \ZZ(\mu_N), 1)$ is the class of $(\Delta, u_\alpha)$ where $\Delta$ is the diagonal subvariety.

  Let $f, g$ be any two newforms of weight 2 and levels $N_f, N_g$ dividing $N$, with $f$ ordinary at $p$, and let $\breve f, \breve g$ be test vectors attached to $f, g$ as before. Then we have
  \[ \left( \mathcal{D}_p(\breve f, \breve g, \alpha)(2) - \mathcal{D}_p(\breve f, \breve g, 0)(2) \right) = \frac{\mathcal{E}(f, g, 2)}{\mathcal{E}(f) \cdot \mathcal{E}^*(f)} \left\langle r_\syn(\Delta_{u_\alpha}), \pr_1^*(\eta_{\breve f}^{\ur}) \wedge \pr_2^*(\omega_{\breve g})\right\rangle.\]
 \end{theorem}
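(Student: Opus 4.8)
The plan is to follow the strategy of Bertolini--Darmon--Rotger \cite{BDR12}, of which this statement is a mild generalization (allowing test vectors $\breve f, \breve g$ and keeping careful track of the Euler factors).

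\emph{Step 1: unwind the syntomic regulator.} First I would apply Besser's explicit description \cite{besser00} of the syntomic regulator of a class of the form $(Z, u)$ on a product of smooth proper curves: paired against $\pr_1^*(\eta) \wedge \pr_2^*(\omega)$, it is given by a $p$-adic (Coleman) integral over $Z$ minus its singular locus of $\omega$ against a Coleman primitive of $\operatorname{dlog} u$ in the $\eta$-direction. Since here $Z = \Delta$ is the diagonal in $X_1(N)^2$ and $\operatorname{dlog} u_\alpha = \widetilde E^{(2)}_\alpha$, this reduces to an integral over the ordinary locus of $X_1(N)$ involving $\omega_{\breve g}$, the form $\widetilde E^{(2)}_\alpha$, and the unit-root splitting entering through $\eta^{\ur}_{\breve f}$. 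Exactly as in the complex case (Theorem \ref{thm:beilinson1} and Proposition \ref{prop:preimageinGerst21}), the ambiguity in the choice of lift $\Delta_{u_\alpha}$ is harmless: the correction terms are supported on horizontal, vertical or cuspidal curves, on which $\pr_1^*(\eta^{\ur}_{\breve f}) \wedge \pr_2^*(\omega_{\breve g})$ either restricts to zero or has Eisenstein contribution, killed by the $(f^*, g)$-isotypic projections built into $\eta^{\ur}_{\breve f}$ and $\omega_{\breve g}$.

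\emph{Step 2: recognize the regulator as a $p$-adic $L$-value.} Next I would invoke the theory of nearly-holomorphic and nearly-overconvergent modular forms of Darmon--Rotger \cite{darmonrotger12} --- precisely the input used to prove Proposition \ref{prop:rankinselbergpstab}. The Coleman primitive from Step 1, projected to the unit-root direction, is computed by the formal operator $d^{-1}$ on $q$-expansions ($\sum a_n q^n \mapsto \sum_{p \nmid n} \tfrac{a_n}{n} q^n$) followed by the overconvergent projection $\eord$; concretely, $\langle r_\syn(\Delta_{u_\alpha}), \pr_1^*(\eta^{\ur}_{\breve f}) \wedge \pr_2^*(\omega_{\breve g})\rangle$ equals $\frac{\mathcal{E}(f)\,\mathcal{E}^*(f)}{\mathcal{E}(f,g,2)}$ times $\langle \breve f^*, \eord[\, d^{-1}((\widetilde E^{(2)}_\alpha)^{[p]}) \cdot \breve g\,]\rangle / \langle f,f\rangle$, the Euler factor being exactly the one converting the ``holomorphic'' normalization to the ``$\ord$'' one in Proposition \ref{prop:rankinselbergpstab}, together with the $\mathcal{E}^*(f)$ built into the definition of $\mathcal{D}_p$.

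\emph{Step 3: match with the $p$-adic $L$-function.} A direct $q$-expansion comparison, using Proposition \ref{prop:eisqexp} and the definition of $\mathcal{E}_\alpha$, shows $d^{-1}((\widetilde E^{(2)}_\alpha)^{[p]}) = \mathcal{E}_\alpha(0,-1) - \mathcal{E}_0(0,-1)$: indeed $(\widetilde E^{(2)}_\alpha)^{[p]}$ has $n$-th coefficient $\sum_{d \mid n} d\,(e^{2\pi i \alpha d} + e^{-2\pi i \alpha d} - 2)$ for $p \nmid n$, and dividing by $n$ turns the weight $d$ into $(n/d)^{-1}$, which is the coefficient of $\mathcal{E}_\alpha(0,-1) - \mathcal{E}_0(0,-1)$. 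Hence $\eord[\, d^{-1}((\widetilde E^{(2)}_\alpha)^{[p]}) \cdot \breve g\,] = \Xi(2,2,\alpha,2)^{\ord,p} - \Xi(2,2,0,2)^{\ord,p}$. Now the interpolation formula for $\mathcal{D}_p(\breve\bff, \breve g, \cdot)$ holds at every integer pair $(k,j)$ with $k \ge 2$ --- in particular at the non-classical point $(k,j) = (2,2)$, where no inequality $j \le k-1$ is needed --- so pairing with $\breve f^*$ and dividing by $\mathcal{E}^*(f)\langle f,f\rangle$ produces $\mathcal{D}_p(\breve f, \breve g, \alpha)(2) - \mathcal{D}_p(\breve f, \breve g, 0)(2)$. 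Combining with Step 2 gives the asserted identity; the difference is taken precisely because $\widetilde E^{(2)}_\alpha = E^{(2)}_\alpha - E^{(2)}_0$ is the unique holomorphic weight-$2$ combination (the individual pieces not being holomorphic / admitting no modular-unit primitive).

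\emph{Main obstacle.} The crux is Step 2: transporting Besser's formula through the Darmon--Rotger overconvergent-projection machinery and tracking the Euler factors $\mathcal{E}(f), \mathcal{E}^*(f), \mathcal{E}(f,g,2)$ exactly. The subtlety is that the weight-$(2,2)$ point lies outside the region where $\Xi(k,\ell,\alpha,j)$ is a genuine nearly-holomorphic form ($\ell \le j \le k-1$ is impossible for $k = \ell = 2$); the cleanest remedy is to prove the regulator identity first at integer points $(k,j)$ with $k > 2$, where Proposition \ref{prop:rankinselbergpstab} and the classical diagonal-cycle syntomic regulator computation of \cite{darmonrotger12} apply verbatim with test vectors in place of newforms, and then specialize --- though, $r_\syn(\Delta_{u_\alpha})$ being a single fixed number, this ultimately amounts, as in \cite{BDR12}, to the explicit reconciliation of the $d^{-1}$-primitive with the $p$-depletion carried out above.
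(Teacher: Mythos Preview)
Your approach is correct but differs from the paper's. You propose to re-run the Bertolini--Darmon--Rotger computation from scratch in the generality of arbitrary $\alpha$ and test vectors $\breve f, \breve g$: unwind $r_\syn$ via Besser's formula, identify the Coleman primitive of $\widetilde E^{(2)}_\alpha$ with $d^{-1}$ on $q$-expansions, and match against the defining interpolation formula for $\mathcal{D}_p$ at $(k,j)=(2,2)$. Your $q$-expansion check $d^{-1}\big((\widetilde E^{(2)}_\alpha)^{[p]}\big) = \mathcal{E}_\alpha(0,-1) - \mathcal{E}_0(0,-1)$ is right (the identity $d/n = (n/d)^{-1}$ is exactly what converts the $E^{(2)}$-type sum into the $\mathcal{E}_\alpha(0,-1)$-type sum), and you are correct that the interpolation formula for $\mathcal{D}_p$ is stated for all integer $(k,j)$ with $k \ge 2$, so no analytic continuation is needed at $(2,2)$.

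The paper instead argues by Fourier inversion on $(\ZZ/N\ZZ)^\times$: both sides decompose into $\psi$-isotypic pieces, and both vanish unless $\psi = \chi_f^{-1}\chi_g^{-1}$; for that single character, one is either literally in the setting of \cite{BDR12} (when $\chi \ne 1$ and $\alpha$ has exact order $N$) or in a case where the \cite{BDR12} argument goes through verbatim. This buys brevity --- the theorem becomes a formal reduction to a known result rather than a fresh regulator computation --- at the cost of relying on \cite{BDR12} as a black box. Your route is more self-contained and makes the mechanism (the $d^{-1}$ primitive and the Euler-factor bookkeeping of Proposition~\ref{prop:rankinselbergpstab}) fully explicit, but reproduces work already done in \cite{BDR12}. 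The paper's remark following Theorem~\ref{thm:syntomicreg} acknowledges that a direct argument of roughly your shape is also possible.
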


 \begin{proof}
  By Fourier inversion on the multiplicative group $(\ZZ / N\ZZ)^\times$, which acts on both sides of the claimed formula, it suffices to show that for each Dirichlet character $\psi$ modulo $N$ we have
  \begin{multline}
   \label{eq:BDRgenclaim}
    \frac{\mathcal{E}(f, g, 2)}{\mathcal{E}(f) \cdot \mathcal{E}^*(f)} \left\langle r_\syn(\Delta_{u_\psi}), \pr_1^*(\eta_{\breve f}^{\ur}) \wedge \pr_2^*(\omega_{\breve g})\right\rangle = \\
    \begin{cases}
     \sum_{d \in (\ZZ / N\ZZ)^\times} \psi(d)^{-1} \mathcal{D}_p(\breve f, \breve g, d\alpha)(2) & \text{if $\psi \ne 1$},\\
     \sum_{d \in (\ZZ / N\ZZ)^\times} \left(\mathcal{D}_p(\breve f, \breve g, d\alpha)(2) - \mathcal{D}_p(\breve f, \breve g, 0)(2) \right) & \text{if $\psi = 1$}.
    \end{cases}
  \end{multline}
  where $u_\psi = \sum_{d} \psi(d)^{-1} \otimes u_{d\alpha} \in \ZZ(\chi) \otimes_{\ZZ} \cO(Y_1(N)^\times)$. However, it is clear that both sides of Equation \eqref{eq:BDRgenclaim} are zero unless $\psi = \chi \coloneqq \chi_f^{-1} \chi_g^{-1}$, so we may assume $\psi = \chi$.

  If $\chi \ne 1$ and $\alpha$ has exact order $N$, then we can assume without loss of generality that $\alpha = 1/N$, and we are in the case studied in \cite{BDR12}. In the remaining cases, the argument goes through essentially identically.
 \end{proof}

 \begin{remark}
  If in fact $\chi$ is primitive modulo $N$, then both sides are zero unless $\alpha$ has exact order $N$, so we may reduce to precisely the case covered by \cite{BDR12}.
 \end{remark}

 We can now deduce our main theorem of this section.

 \begin{theorem}
  \label{thm:syntomicreg}
  Let $f, g, \breve f, \breve g$ be as above. Then we have
  \[ \mathcal{D}_p(\breve f, \breve g, 1/N)(1) = -\frac{\mathcal{E}(f, g, 1)}{\mathcal{E}(f) \cdot \mathcal{E}^*(f)} \left\langle r_\syn(\Xi_{1, N, 0}), \pr_1^*(\eta_{\breve f}^{\ur}) \wedge \pr_2^*(\omega_{\breve g})\right\rangle.\]
 \end{theorem}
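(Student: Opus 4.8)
\emph{Proof strategy.} The plan is to deduce this from the $j=2$ case of the preceding Bertolini--Darmon--Rotger-type theorem by means of the $p$-adic functional equation proved in the previous subsection. Taking $k=\ell=2$ and evaluating that functional equation at $j=2$ (so that $k+\ell-1-j=1$) expresses the quantity we want as a finite Gauss sum
\[ \mathcal{D}_p(\breve f,\breve g,1/N)(1) \;=\; \tfrac{1}{N}\sum_{x\in\ZZ/N\ZZ} \zeta_N^{x}\,\mathcal{D}_p\big(w_N\breve f,\,w_N\breve g,\,x/N\big)(2),\]
where $\zeta_N=e^{2\pi i/N}$ and, since $f$ (hence $f^*$) is ordinary at $p$, the vectors $w_N\breve f$, $w_N\breve g$ are legitimate test vectors for the newforms $f^*$, $g^*$. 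Applying the preceding theorem term by term, and using $\sum_{x\in\ZZ/N\ZZ}\zeta_N^{x}=0$ to discard the $\mathcal{D}_p(w_N\breve f,w_N\breve g,0)(2)$ contributions, this becomes
\[ \mathcal{D}_p(\breve f,\breve g,1/N)(1) \;=\; \frac{1}{N}\cdot\frac{\mathcal{E}(f^*,g^*,2)}{\mathcal{E}(f^*)\,\mathcal{E}^*(f^*)}\,\Big\langle r_\syn\Big(\textstyle\sum_{x}\zeta_N^{x}\,\Delta_{u_{x/N}}\Big),\ \pr_1^*(\eta^{\ur}_{w_N\breve f})\wedge\pr_2^*(\omega_{w_N\breve g})\Big\rangle.\]

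Next I would identify the cycle $\sum_x\zeta_N^{x}\Delta_{u_{x/N}}$. Its restriction to $Y_1(N)^2$ is the class of $\big(\Delta,\prod_x u_{x/N}^{\zeta_N^{x}}\big)$, and since $\operatorname{dlog} u_\alpha=\widetilde E^{(2)}_{\alpha}$ while $\operatorname{dlog} g_{0,\alpha}=-F^{(2)}_{\alpha}$, the Atkin--Lehner transformation formula for $F^{(2)}_{\alpha}$ in terms of the $E^{(2)}_{x/N}$ (part (ii) of the Proposition on real-analytic Eisenstein series, at $s=0$), combined once more with $\sum_x\zeta_N^{x}=0$ (which allows $E^{(2)}_{x/N}$ to be replaced by $\widetilde E^{(2)}_{x/N}$), gives
\[ \operatorname{dlog}\Big(\prod_x u_{x/N}^{\zeta_N^{x}}\Big) \;=\; \sum_x\zeta_N^{x}\,\widetilde E^{(2)}_{x/N} \;=\; -N\,\operatorname{dlog}\big(w_N g_{0,1/N}\big).\]
Hence $\prod_x u_{x/N}^{\zeta_N^{x}}=(w_N g_{0,1/N})^{-N}$ up to a constant, so $\sum_x\zeta_N^{x}\Delta_{u_{x/N}}$ agrees, up to a negligible element (in the sense of \S\ref{section:cuspidalcomponents}) and this constant unit, with $-N$ times a lift of $(w_N\times w_N)^*\Xi_{1,N,0}$ to $\CH^2(\cX_1(N)^2,1)\otimes\QQ$ furnished by Proposition \ref{prop:preimageinGerst21}. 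Exactly as in the proof of Theorem \ref{thm:beilinson1}, the negligible element and the constant unit pair to zero with $\pr_1^*(\eta^{\ur}_{w_N\breve f})\wedge\pr_2^*(\omega_{w_N\breve g})$, so the cycle may be replaced by $-N\,(w_N\times w_N)^*\Xi_{1,N,0}$.

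Finally, functoriality of $r_\syn$ under the automorphism $w_N\times w_N$, together with the Atkin--Lehner equivariances $w_N^*\omega_{w_N\breve g}=\omega_{\breve g}$ and $w_N^*\eta^{\ur}_{w_N\breve f}=\eta^{\ur}_{\breve f}$ (and $(w_N\times w_N)^{-1}=w_N\times w_N$ as a morphism of varieties), lets me move $w_N\times w_N$ across the pairing; the factor $-N$ cancels the $N^{-1}$, and the identities $\mathcal{E}(f^*,g^*,2)=\mathcal{E}(f,g,1)$, $\mathcal{E}(f^*)=\mathcal{E}(f)$, $\mathcal{E}^*(f^*)=\mathcal{E}^*(f)$ (established in the course of proving the functional equation) yield the asserted formula, sign included. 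The main obstacle is the middle paragraph: upgrading the $\operatorname{dlog}$-level identity $\prod_x u_{x/N}^{\zeta_N^{x}}=(w_N g_{0,1/N})^{-N}$ to an identity of classes in the higher Chow group of the \emph{integral} model that is compatible with $r_\syn$. Concretely, one must control the negligible/cuspidal discrepancy between the chosen lifts $\Delta_{u_\alpha}$ and the lift of $\Xi_{1,N,0}$, and verify --- using the explicit form of Besser's syntomic regulator, just as the complex argument uses Beilinson's formula --- that this discrepancy and the constant-unit ambiguity are annihilated on pairing with $\pr_1^*(\eta^{\ur}_{\breve f})\wedge\pr_2^*(\omega_{\breve g})$; everything else is routine bookkeeping with Gauss sums, Atkin--Lehner normalizations and the Euler-type factors $\mathcal{E}(-)$.
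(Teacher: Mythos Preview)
Your proposal is correct and follows the paper's argument essentially verbatim: apply the previous theorem to $(w_N\breve f, w_N\breve g)$, sum against $e^{2\pi i x/N}$ over $x\in\ZZ/N\ZZ$, invoke the $p$-adic functional equation on the left, and use functoriality of $r_\syn$ under $w_N\times w_N$ together with a modular-unit identity on the right. The one place where the paper is more direct than you is precisely your ``main obstacle'': rather than arguing via $\operatorname{dlog}$ and then worrying about the constant ambiguity and negligible elements, the paper simply asserts the exact identity $\sum_{x\in\ZZ/N\ZZ} e^{2\pi i x/N}\otimes w_N^*(u_{x/N})=-N\otimes g_{0,1/N}$ in $\QQ(\mu_N)\otimes_\ZZ\cO(Y_1(N))^\times$ (which can be checked from $q$-expansions), after which the result is immediate.
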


 \begin{proof}
  Applying the previous theorem to $w_N \breve f$ and $w_N \breve g$, we have
  \[ \left( \mathcal{D}_p(w_N \breve f, w_N \breve g, x/N)(2) - \mathcal{D}_p(w_N \breve f, w_N \breve g, 0)(2) \right) = \frac{\mathcal{E}(f, g, 1)}{\mathcal{E}(f) \cdot \mathcal{E}^*(f)} \left\langle r_\syn(\Delta_{u_{x/N}}), \pr_1^*(\eta_{w_N \breve f}^{\ur}) \wedge \pr_2^*(\omega_{w_N \breve g})\right\rangle.\]
  We multiply by $e^{2\pi i x / N}$ and sum over $x \in \ZZ / N\ZZ$. The left-hand side becomes
  \[ \sum_{x \in \ZZ / N\ZZ} e^{2\pi i x / N} \mathcal{D}_p(w_N \breve f, w_N \breve g, x/N)(2) = N \mathcal{D}_p(\breve f, \breve g, x/N)(1)\]
  by the $p$-adic functional equation.

  Meanwhile, the right-hand side is
  \[ \sum_{x \in x \in \ZZ / N\ZZ} e^{2\pi i x / N} \frac{\mathcal{E}(f, g, 1)}{\mathcal{E}(f) \cdot \mathcal{E}^*(f)} \left\langle r_\syn(\Delta_{u_{x/N}}), \pr_1^*(\eta_{w_N \breve f}^{\ur}) \wedge \pr_2^*(\omega_{w_N \breve g})\right\rangle.\]
  By the functoriality of the syntomic regulator, we have
  \[ \left\langle r_\syn(\Delta_{u_{x/N}}), \pr_1^*(\eta_{w_N \breve f}^{\ur}) \wedge \pr_2^*(\omega_{w_N \breve g})\right\rangle = \left\langle r_\syn(\Delta_{(w_N^* u_{x/N})}), \pr_1^*(\eta_{\breve f}^{\ur}) \wedge \pr_2^*(\omega_{\breve g})\right\rangle.\]
  As elements of $\QQ(\mu_N) \otimes_{\ZZ} \cO(Y_1(N))^\times$, we have
  \[ \sum_{x \in \ZZ / N\ZZ}  e^{2\pi i x / N} \otimes w_N^*(u_{x/N}) = -N \otimes g_{0, 1/N},\]
  and the result follows.
 \end{proof}

 \begin{remark}
  One could also prove this statement directly (without the extended detour via Atkin--Lehner involutions and functional equations) by generalizing some of the calculations of \cite{BDR12} to use the weight 2 Eisenstein series $F^{(2)}_{\chi} = \sum_{x \in (\ZZ / N\ZZ)^\times} \chi(x)^{-1} F^{(2)}_{x/N}$ in place of $E^{(2)}_{\chi} = \sum_{x \in (\ZZ / N\ZZ)^\times} \chi(x)^{-1} E^{(2)}_{x/N}$. (Note that $F^{(2)}_{\chi}$ is always a holomorphic Eisenstein series if $N > 1$, while $E^{(2)}_\chi$ becomes non-holomorphic if $\chi$ is the trivial character.)
 \end{remark}

 \begin{remark}
  If we impose slightly more restrictive hypotheses we can avoid the need for any generalization of the main theorem of \cite{BDR12}. If $\chi$ is primitive, then it suffices to check that the main theorem of \cite{BDR12} holds without the assumption that $f, g$ are eigenforms for the $U_\ell$ with $\ell \mid N$; but this assumption is not used anywhere in the paper, except in order to explicitly evaluate the Euler factors at $\ell \mid N$. If $N_f = N_g = N$ then we can dispense with this assumption as well.
 \end{remark}


 \section{Families of cohomology classes}

In this section, we will construct \'etale cohomology classes from the generalized Beilinson--Flach elements in motivic cohomology defined above, and investigate their properties.


\subsection{The \'etale regulator}

 In \cite{huber00}, Huber constructs a $p$-adic regulator map from motivic cohomology into Jannsen's continuous \'etale cohomology:

 \begin{proposition}
  \label{prop:etaleregulator}
  Assume that $X$ is a smooth variety over a characteristic 0 field $k$. Then there is a regulator map
  \begin{equation}
   \label{eq:etaleregulator}
   r_{\et}:\CH^2(X,1)\rTo H^3_{\cont}(X,\Zp(2))
  \end{equation}
 \end{proposition}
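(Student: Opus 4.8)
The plan is to obtain $r_{\et}$ from Huber's $p$-adic realization of Voevodsky motives, so the substance of the proof is a citation of \cite{huber00} together with an account of the mechanism. First I would use Proposition \ref{prop:motiviccohomology} to rewrite the source as motivic cohomology, $\CH^2(X,1) \cong H^3_{\mathcal{M}}(X, \ZZ(2)) = \Hom_{\mathrm{DM}(k)}(M(X), \ZZ(2)[3])$. On the target side, Huber produces a continuous $p$-adic realization functor out of $\mathrm{DM}_{gm}(k)$ which sends $M(X)$ to a complex computing $R\Gamma_{\cont}(X, \Zp)$ and sends the Tate object $\ZZ(1)$ to $\Zp(1)$; applying this functor to a motivic cohomology class and composing with the induced map on $\Hom$-groups gives an element of $\Hom(R\Gamma_{\cont}(X,\Zp), \Zp(2)[3])$, that is, of $H^3_{\cont}(X,\Zp(2))$. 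This is the map \eqref{eq:etaleregulator}.

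Concretely, the realization factors as follows, and this is the picture I would spell out. Voevodsky's complex $\ZZ(2)$ has an \'etale sheafification $\ZZ(2)_{\et}$, and there is a natural change-of-topology map $\mathbb{H}^3_{\mathrm{Zar}}(X, \ZZ(2)) \to \mathbb{H}^3_{\et}(X, \ZZ(2)_{\et})$. In characteristic $0$ the Geisser--Levine and Suslin--Voevodsky comparisons provide a quasi-isomorphism $\ZZ(2)_{\et} \otimes^{L} \ZZ/p^n \simeq \mu_{p^n}^{\otimes 2}$, so reducing mod $p^n$ and taking hypercohomology yields a compatible system of cycle-class maps into $\mathbb{H}^3_{\et}(X, \mu_{p^n}^{\otimes 2})$; assembling these over $n$ at the level of complexes (i.e.\ applying $R\varprojlim_n$) lands one in $H^3_{\cont}(X,\Zp(2)) = \mathbb{H}^3_{\et}(X, R\varprojlim_n \mu_{p^n}^{\otimes 2})$. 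Alternatively one could give a hands-on construction from the Gersten-complex presentation of Corollary \ref{cor:H1viaGersten}: a cycle $\sum_i (C_i, \phi_i) \in Z^2(X,1)$ yields, via Kummer theory, classes in $H^1_{\cont}(C_i, \Zp(1))$, which one pushes forward to $H^3_{\cont}(X, \Zp(2))$ along the Gysin maps of the codimension-one inclusions $C_i \hookrightarrow X$, the relation $\sum_i \operatorname{div}(\phi_i) = 0$ guaranteeing that the contributions from the boundary cancel and that the class is independent of the representative.

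The hard part, and the reason this has to be carried out functorially in a derived category rather than termwise on cohomology, is the passage to the limit over $n$: Jannsen's exact sequence $0 \to {\varprojlim}^1_n H^2_{\et}(X, \mu_{p^n}^{\otimes 2}) \to H^3_{\cont}(X, \Zp(2)) \to \varprojlim_n H^3_{\et}(X, \mu_{p^n}^{\otimes 2}) \to 0$ shows that a compatible family of finite-level classes determines only an element of the quotient $\varprojlim_n H^3_{\et}(X, \mu_{p^n}^{\otimes 2})$, and obtaining a canonical lift — indeed even a well-defined homomorphism — requires working with the representing complexes, which is exactly what Huber's framework supplies. I would therefore simply invoke \cite{huber00} for the existence and naturality of $r_{\et}$, recording the properties that will be used in the sequel: $r_{\et}$ is contravariantly functorial in $X$, compatible with exterior products and with Gysin pushforward along proper maps, and reduces mod $p^n$ to the usual \'etale cycle class map.
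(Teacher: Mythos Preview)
Your proposal is correct and aligns with the paper's approach: the paper's proof is simply a one-line citation to \cite[p.~772]{huber00}, so your account of Huber's realization functor is an expanded version of exactly what the paper invokes. Your discussion of the $\varprojlim^1$ obstruction and the need to work at the level of complexes is a helpful gloss on why the citation is necessary rather than a routine limit argument.
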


 \begin{proof}
  See the second example on \cite[p. 772]{huber00}.
 \end{proof}

 \begin{proposition}\label{prop:HochschildSerre}
  If $X$ is any smooth variety over $k$, we have a Hochschild--Serre spectral sequence
  \[ H^p(k, H^q_{\et}(\overline{X}, \Zp(2))) \Rightarrow H^{p + q}_{\cont}(X, \Zp(2)),\]
  where $H^*(k, -)$ denotes continuous Galois cohomology.
 \end{proposition}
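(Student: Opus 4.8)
The plan is to obtain the spectral sequence as an instance of the Grothendieck spectral sequence for a composite of functors, worked out in Jannsen's category of inverse systems of sheaves, following the construction of continuous \'etale cohomology recalled in \cite{huber00}. Recall that for $X$ smooth over $k$ one defines $H^i_{\cont}(X, \Zp(2))$ to be the $i$-th right derived functor of
\[ (\mathcal{F}_n)_n \longmapsto \varprojlim_n \Gamma(X_{\et}, \mathcal{F}_n),\]
a left-exact functor on the abelian category of inverse systems of \'etale sheaves on $X$ (which has enough injectives), evaluated at the system $(\ZZ/p^n(2))_n$. So the task is to factor this functor appropriately and invoke the Grothendieck spectral sequence.

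First I would factor through the base field. Let $f\colon X_{\et} \to (\Spec k)_{\et}$ be the structure morphism; identifying \'etale sheaves on $\Spec k$ with discrete $G_k$-modules (where $G_k = \Gal(\overline{k}/k)$), one has level by level $\Gamma(X_{\et}, -) = (-)^{G_k} \circ f_*$, and $R^q f_* \mathcal{F}$ corresponds to the $G_k$-module $H^q_{\et}(\overline{X}, \mathcal{F}|_{\overline{X}})$ --- this is the usual input to the Hochschild--Serre spectral sequence with torsion coefficients. Carrying the factorisation out on inverse systems gives
\[ \Big(\varprojlim_n \Gamma(X_{\et}, -)\Big) = \Big(\varprojlim_n \circ (-)^{G_k}\Big) \circ f_*^{\mathrm{IS}},\]
where $f_*^{\mathrm{IS}}$ is the (level-exact) pushforward of inverse systems and $\varprojlim_n \circ (-)^{G_k}$ is, by definition, continuous Galois cohomology of inverse systems of $G_k$-modules, whose derived functors are what $H^p(k, -)$ denotes.

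Next I would verify the hypothesis of the Grothendieck spectral sequence: that $f_*^{\mathrm{IS}}$ sends injective objects to objects acyclic for $\varprojlim_n \circ (-)^{G_k}$. Using Jannsen's description of injective inverse systems (after d\'evissage, systems with injective terms and split surjective transition maps), $f_*$ preserves injectivity of sheaves and surjectivity of transitions, and an inverse system of injective $G_k$-modules with surjective transition maps is acyclic for $\varprojlim_n \circ (-)^{G_k}$ because $\varprojlim$ is exact on such systems. This produces a spectral sequence with abutment $H^{p+q}_{\cont}(X, \Zp(2))$ and $E_2$-term equal to $R^p(\varprojlim_n \circ (-)^{G_k})$ applied to the inverse system $\big(H^q_{\et}(\overline{X}, \ZZ/p^n(2))\big)_n$. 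Finally I would invoke the Mittag--Leffler property: since $X$ is of finite type over $k$ each $H^q_{\et}(\overline{X}, \ZZ/p^n(2))$ is finite, so this system satisfies Mittag--Leffler, its derived inverse limit vanishes, and $R^p(\varprojlim_n \circ (-)^{G_k})$ of it coincides with $H^p\big(k, H^q_{\et}(\overline{X}, \Zp(2))\big)$, the continuous Galois cohomology of the profinite module $H^q_{\et}(\overline{X}, \Zp(2)) = \varprojlim_n H^q_{\et}(\overline{X}, \ZZ/p^n(2))$; this puts $E_2$ in the stated form. I expect the only genuinely delicate point to be exactly this inverse-limit bookkeeping --- the acyclicity condition for the composite functor and the Mittag--Leffler vanishing --- since the naive idea ``apply Hochschild--Serre with $\ZZ/p^n$-coefficients and pass to the limit'' is illegitimate as $\varprojlim$ is not exact; everything else is formal, and the full details are in \cite{huber00}.
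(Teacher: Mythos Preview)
Your proposal is correct and is precisely the construction given in \cite[Remark~3.5]{jannsen88}, which is all the paper invokes for this proposition; the paper does not supply any argument of its own beyond that citation. Your sketch spells out exactly the content behind that reference --- Jannsen's Grothendieck spectral sequence for the composite $\varprojlim \circ \Gamma$ on inverse systems, with the finiteness/Mittag--Leffler argument to identify the $E_2$-term --- so there is nothing to add.
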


 \begin{proof}
  See \cite[Remark 3.5]{jannsen88}.
 \end{proof}

 \begin{corollary}
  Suppose that $X$ is a smooth affine surface over $k$. Then we have an edge map
  \begin{equation}\label{eq:edgemap}
   H^3_{\cont}(X, \Zp(2)) \to H^1(k, H^2_\et(\overline{X}, \Zp(2))).
  \end{equation}
 \end{corollary}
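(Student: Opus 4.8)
The plan is to produce \eqref{eq:edgemap} as a standard edge morphism of the Hochschild--Serre spectral sequence of Proposition~\ref{prop:HochschildSerre}, once one knows that the offending corner of its $E_2$-page vanishes. Write $E_2^{p,q} = H^p(k, H^q_\et(\overline{X}, \Zp(2)))$, abutting to $H^{p+q}_{\cont}(X, \Zp(2))$, and let $F^\bullet$ denote the induced decreasing filtration on the abutment.

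First I would establish that $H^q_\et(\overline{X}, \Zp(2)) = 0$ for all $q \ge 3$. Since $X$ is a smooth affine surface over $k$, the geometric fibre $\overline{X} = X \times_k \overline{k}$ is affine of dimension $2$ over a separably closed field, so Artin's affine vanishing theorem gives $H^q_\et(\overline{X}, \ZZ/p^n\ZZ(2)) = 0$ for all $q \ge 3$ and $n \ge 1$. Applying this with $q = 3$ to the long exact cohomology sequences attached to $0 \to \ZZ/p\ZZ(2) \to \ZZ/p^{n+1}\ZZ(2) \to \ZZ/p^n\ZZ(2) \to 0$ shows that the transition maps on $H^2_\et(\overline{X}, \ZZ/p^n\ZZ(2))$ are surjective, so the relevant $\varprojlim^1$-term vanishes by Mittag--Leffler, and hence $H^q_{\cont}(\overline{X}, \Zp(2)) = 0$ for $q \ge 3$ as well.

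Feeding this into the spectral sequence, $E_2^{p,q} = 0$ for $q \ge 3$ (and trivially for $p < 0$), so in total degree $3$ the only graded pieces $F^p H^3 / F^{p+1} H^3 \cong E_\infty^{p, 3-p}$ that can be nonzero occur for $p \in \{1,2,3\}$. In particular $E_\infty^{0,3} = 0$, whence $F^1 H^3_{\cont}(X, \Zp(2)) = F^0 H^3_{\cont}(X, \Zp(2)) = H^3_{\cont}(X, \Zp(2))$. Moreover every differential entering $E_r^{1,2}$ starts from $E_r^{1-r,\,r+1} = 0$, so $E_\infty^{1,2}$ is a subgroup of $E_2^{1,2} = H^1(k, H^2_\et(\overline{X}, \Zp(2)))$. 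The desired map \eqref{eq:edgemap} is then the composite
\[ H^3_{\cont}(X, \Zp(2)) = F^1 H^3_{\cont}(X, \Zp(2)) \twoheadrightarrow F^1/F^2 \cong E_\infty^{1,2} \hookrightarrow H^1(k, H^2_\et(\overline{X}, \Zp(2))).\]

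The argument is essentially formal; the one point needing genuine care is the upgrade of Artin vanishing from torsion coefficients to $\Zp$-coefficients, i.e.\ the control of the $\varprojlim^1$-term, which is precisely the Mittag--Leffler observation above (and is in any case part of the standard formalism of continuous \'etale cohomology). Everything else is bookkeeping with the convergence filtration of the spectral sequence.
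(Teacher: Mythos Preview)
Your proof is correct and takes essentially the same approach as the paper: use Artin's affine vanishing to kill $H^q_\et(\overline{X}, \Zp(2))$ for $q \ge 3$, then extract the edge map from the Hochschild--Serre spectral sequence. The paper's own argument is terser---it simply invokes the \'etale cohomological dimension bound for affine varieties over an algebraically closed field (citing \cite{deligne77}) without spelling out the Mittag--Leffler passage from torsion to $\Zp$-coefficients that you include---but the content is the same.
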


 \begin{proof}
  The fact that $\overline{X}$ is defined over an algebraically closed field implies that $H^q_{\et}(\overline{X},\Zp(2)) = 0$ for $q > 2$, as a $d$-dimensional affine variety over an algebraically closed field has \'etale cohomological dimension $d$ \cite[Arcata IV.6.4]{deligne77}. Consequently, we have $H^3_{\cont}(\overline{X},\Zp(2))=0$, and we obtain the required edge map by Proposition \ref{prop:HochschildSerre}.
 \end{proof}

 \begin{corollary}
  If $X$ is a smooth affine surface over $k$, the \'etale regulator induces a map (which we also denote by $r_{\et}$ by abuse of notation)
  \begin{equation}
   \label{eq:regulatorintoH1}
   r_{\et}:\CH^2(X,1)\rTo H^1(k, H^2_\et(\overline{X}, \Zp(2))).
  \end{equation}
 \end{corollary}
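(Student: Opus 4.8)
The plan is to simply compose the two maps already in hand: the \'etale regulator of Proposition~\ref{prop:etaleregulator}, which lands in $H^3_{\cont}(X,\Zp(2))$, followed by the edge map \eqref{eq:edgemap} of the preceding corollary. The composite is the desired map \eqref{eq:regulatorintoH1}. So there is essentially nothing to do beyond observing that the target of the first map is the source of the second, which is immediate by construction. The only point worth spelling out is \emph{why} the edge map is available for a smooth affine surface, and this is exactly what the previous corollary provides: since $\overline{X}$ is a $2$-dimensional affine variety over an algebraically closed field, it has \'etale cohomological dimension $2$ (Artin's theorem, \cite[Arcata IV.6.4]{deligne77}), so $H^q_\et(\overline{X},\Zp(2)) = 0$ for $q > 2$; in particular the $E_2^{p,q}$ terms of the Hochschild--Serre spectral sequence of Proposition~\ref{prop:HochschildSerre} with $q \ge 3$ vanish, and the differentials into $E_\bullet^{1,2}$ all originate from such vanishing groups, so $H^3_{\cont}(X,\Zp(2))$ receives no contribution from $E_\infty^{3,0}$ or $E_\infty^{2,1}$ obstructing the edge map — more precisely, the only possibly-nonzero graded pieces of the filtration on $H^3_{\cont}(X,\Zp(2))$ are $E_\infty^{1,2}$ (a subquotient of $H^1(k,H^2_\et(\overline{X},\Zp(2)))$) and $E_\infty^{0,3} = 0$, giving the edge map $H^3_{\cont}(X,\Zp(2)) \to E_\infty^{1,2} \hookrightarrow H^1(k,H^2_\et(\overline{X},\Zp(2)))$.

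Carrying this out, the steps are: first, invoke Proposition~\ref{prop:etaleregulator} to obtain $r_\et : \CH^2(X,1) \to H^3_{\cont}(X,\Zp(2))$, which applies since $X$ is smooth over a field of characteristic $0$. Second, invoke the edge map \eqref{eq:edgemap}, which applies since $X$ is in addition affine of dimension $2$. Third, define the map \eqref{eq:regulatorintoH1} as the composition of these two, and retain the name $r_\et$ by the stated abuse of notation. There is no genuine obstacle here; the real content was already discharged in establishing the vanishing $H^3_{\cont}(\overline{X},\Zp(2)) = 0$ in the previous corollary, and all that remains is the bookkeeping of composing two maps whose source and target match on the nose.
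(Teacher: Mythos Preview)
Your proof is correct and takes essentially the same approach as the paper, which simply says to compose $r_{\et}$ with the edge map \eqref{eq:edgemap}. You have elaborated on the spectral sequence reasoning behind the edge map more than the paper does (that content was already dispatched in the preceding corollary), but the argument is the same.
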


 \begin{proof}
  Compose $r_{\et}$ with the edge map \eqref{eq:edgemap}.
 \end{proof}

 The regulator maps have the following functoriality property:

 \begin{proposition}\label{prop:regulatorfunctorial}
  The regulator maps \eqref{eq:regulatorintoH1} are compatible with pullback along flat morphisms of surfaces $X \to Y$ over $k$, and pushforward along finite morphisms. In particular, they are compatible with the Galois restriction maps for arbitrary extensions $k'/k$, and with the corestriction maps for finite ones.
 \end{proposition}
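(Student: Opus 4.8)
The plan is to deduce everything from the construction of $r_{\et}$ as a composite: Huber's regulator $\CH^2(X,1)\to H^3_{\cont}(X,\Zp(2))$ followed by the edge map $H^3_{\cont}(X,\Zp(2))\to H^1(k,H^2_{\et}(\overline X,\Zp(2)))$ coming from the Hochschild--Serre spectral sequence. Both of these pieces are functorial in appropriate senses, and the statement is just a matter of assembling these functorialities and then specializing to the case of a field extension.

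First I would recall that Huber's $p$-adic regulator into continuous \'etale cohomology is a natural transformation of functors on $\Sm(k)$: this is part of its construction in \cite{huber00} as a map of (complexes representing) cohomology theories, so it is automatically compatible with pullback along arbitrary morphisms of smooth varieties over $k$, and in particular along flat ones. Compatibility with pushforward along finite morphisms $f\colon X\to Y$ is the statement that $r_{\et}$ commutes with the Gysin/transfer maps on both sides; one checks this either by invoking that $f_*$ on motivic cohomology and $f_*$ on continuous \'etale cohomology are both induced by the same correspondence class (the transpose of the graph of $f$), under which $r_{\et}$ is natural, or by the standard reduction of a finite map to a projection followed by a closed immersion and using the projection formula. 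This handles the regulator into $H^3_{\cont}$.

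Next I would treat the edge map. The Hochschild--Serre spectral sequence $H^p(k,H^q_{\et}(\overline X,\Zp(2)))\Rightarrow H^{p+q}_{\cont}(X,\Zp(2))$ of Proposition \ref{prop:HochschildSerre} is functorial in $X$ (covariantly for transfers, contravariantly for pullbacks) and, crucially, functorial in $k$: a field extension $k'/k$ induces a morphism of spectral sequences refining restriction of Galois cohomology, and for a finite extension there is a corestriction morphism going the other way. Since $X$ is a smooth affine surface, the vanishing $H^q_{\et}(\overline X,\Zp(2))=0$ for $q>2$ (Artin vanishing, as in the corollary above) forces $H^3_{\cont}(\overline X,\Zp(2))=0$, so the edge map $H^3_{\cont}(X,\Zp(2))\to H^1(k,H^2_{\et}(\overline X,\Zp(2)))$ is simply the low-degree exact sequence projection; it therefore inherits all these functorialities directly from the spectral sequence. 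Composing with the functoriality of the Huber regulator gives the claimed compatibilities of \eqref{eq:regulatorintoH1} with flat pullback and finite pushforward. For the final sentence, I would observe that base change along $k'/k$ is a flat (indeed \'etale when $k'/k$ is separable algebraic) morphism $X_{k'}\to X_k$, and that under the identification $H^1(k',H^2_{\et}(\overline X,\Zp(2)))$ (the geometric fibre $\overline X$ being unchanged) the induced map on $H^1$ is exactly Galois restriction; for $k'/k$ finite the transfer map gives corestriction in the same way.

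The main obstacle is the compatibility with pushforward along finite morphisms: unlike pullback, which is essentially built into the definition of the regulator, the transfer maps require knowing that Huber's construction is compatible with Gysin morphisms, which is a genuine (though known) input rather than a formality. I would cite the relevant compatibility in \cite{huber00} (the regulator is a morphism of twisted Poincar\'e duality theories, hence commutes with the cycle-class and Gysin maps), or alternatively note that for our applications only the case of projections $Y(m,mN') \to Y(m,mN)$ and their products is needed, where pushforward can be described very explicitly via coset sums and the compatibility checked by hand; in either case the verification is routine given the literature, so I would keep the written proof short and defer to \cite{huber00} and \cite{jannsen88}.
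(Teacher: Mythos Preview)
Your proposal is correct and follows essentially the same approach as the paper: decompose $r_{\et}$ as Huber's regulator followed by the Hochschild--Serre edge map, and check functoriality of each piece. The paper's justification for the pushforward compatibility of Huber's regulator is slightly more direct than yours---it simply observes that the regulator arises from a realization functor on Voevodsky's category $D\mathcal{M}_{\gm}$, which is built from the category $\operatorname{SmCor}$ of smooth varieties with finite correspondences as morphisms, so compatibility with finite pushforward (the transpose of the graph) is automatic---whereas you phrase it in terms of Gysin maps and twisted Poincar\'e duality theories; but these are two ways of saying the same thing, and your option (a) is exactly the paper's argument.
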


 \begin{proof}
  This is true essentially by construction for Huber's regulator into continuous cohomology, since it arises from a realization functor on Voevodsky's category \(D\mathcal{M}_{gm}\) of geometrical motives, which in turn is built up from the category (denoted by $\operatorname{SmCor}$ in \cite{huber00}) whose objects are smooth varieties over $k$ and whose morphisms are finite correspondences $X \rightrightarrows Y$. It remains only to check that the Hochschild--Serre exact sequence \eqref{eq:edgemap} has the required functoriality property, which is standard.
 \end{proof}


  \subsection{The K\"unneth formula}
   \label{sect:kunneth}

   We also recall the K\"unneth formula for \'etale cohomology (cf.~\cite[Theorem 22.4]{milneLEC}): if $U$ and $V$ are varieties of finite type over an algebraically closed field of characteristic 0, then we have an exact sequence

   \begin{multline*}
    0 \rTo \sum_{r + s = m} H^r_\et(U, \Zp)\otimes_{\Zp}H^s_\et(V, \Zp)  \rTo H^m_\et(U \times V, \Zp)  \\ \rTo \sum_{r + s = m + 1} \operatorname{Tor}_1^{\Zp}(H^r_\et(U, \Zp), H^s_\et(V, \Zp)) \rTo 0.
   \end{multline*}

   We are interested in the case when $m = 2$, and $U$ and $V$ are smooth curves. If $U, V$ are affine, then they have \'etale cohomological dimension 1; so the third term vanishes, as do two of the three summands in the first term, and we have the following result:

   \begin{lemma}\label{lem:kunneth}
    For affine curves $U, V$, the K\"unneth formula gives an isomorphism
    \[ H^1_{\et}(U, \Zp) \otimes_{\Zp} H^1_{\et}(V, \Zp) \rTo^\cong H^2_{\et}(U \times V, \Zp),\]
    functorial in $U$ and $V$ and compatible with the Galois action.
   \end{lemma}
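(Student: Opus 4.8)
The statement is \textbf{Lemma \ref{lem:kunneth}}, asserting that the Künneth map gives an isomorphism $H^1_\et(U, \Zp) \otimes_{\Zp} H^1_\et(V, \Zp) \xrightarrow{\ \cong\ } H^2_\et(U \times V, \Zp)$ for affine curves $U, V$, functorially and Galois-equivariantly.

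The plan is to deduce this directly from the Künneth exact sequence already displayed just above the lemma, specialized to $m = 2$. First I would record the general sequence with $m = 2$:
\[
0 \rTo \sum_{r+s=2} H^r_\et(U, \Zp) \otimes_{\Zp} H^s_\et(V, \Zp) \rTo H^2_\et(U \times V, \Zp) \rTo \sum_{r+s=3} \operatorname{Tor}_1^{\Zp}\!\left(H^r_\et(U, \Zp), H^s_\et(V, \Zp)\right) \rTo 0.
\]
Then I would invoke the key input that an affine curve over an algebraically closed field of characteristic $0$ has étale cohomological dimension $1$ (this is exactly the citation \cite[Arcata IV.6.4]{deligne77} used in the proof of the edge-map corollary above), so $H^r_\et(U, \Zp) = 0$ for $r \ge 2$ and likewise for $V$. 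Consequently, among the summands $H^r \otimes H^s$ with $r + s = 2$, only the term $r = s = 1$ survives (the terms $(0,2)$ and $(2,0)$ vanish), and among the $\operatorname{Tor}_1$ summands with $r + s = 3$, every pair $(r,s)$ has $r \ge 2$ or $s \ge 2$, so all of them vanish. The exact sequence therefore collapses to $0 \to H^1_\et(U, \Zp) \otimes_{\Zp} H^1_\et(V, \Zp) \to H^2_\et(U \times V, \Zp) \to 0$, which is precisely the claimed isomorphism.

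Finally I would remark that functoriality in $U$ and $V$ and compatibility with the Galois action are inherited from the corresponding properties of the Künneth sequence itself (the maps in that sequence are natural in both variables and equivariant for the action of $\Gal(\overline{k}/k)$ on the cohomology of the base-changed varieties), so no extra argument is needed.

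I do not anticipate a genuine obstacle here: the only nontrivial ingredient is the cohomological-dimension bound for affine curves, and that has already been cited and used earlier in the excerpt, so it may be quoted freely. The proof is essentially a two-line bookkeeping argument once that vanishing is in hand; the main thing to be careful about is simply to list \emph{all} the summands on both ends of the four-term sequence and check each one vanishes for the right reason (dimension reasons for the $H^r \otimes H^s$ terms with $r$ or $s$ equal to $2$, and dimension reasons again for every $\operatorname{Tor}_1$ term since $r + s = 3$ forces one index to be at least $2$).
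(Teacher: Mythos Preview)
Your proposal is correct and matches the paper's own argument essentially verbatim: the paper also specializes the K\"unneth sequence to $m=2$, invokes \'etale cohomological dimension $1$ for affine curves to kill the $\operatorname{Tor}$ term and the $(0,2)$, $(2,0)$ summands, and concludes. There is nothing to add.
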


   We shall also need to consider the case when $U$ and $V$ are projective (and connected). In this case, we shall assume the ground field $k$ is $\overline{\QQ}$. By the compatibility of \'etale cohomology with Betti cohomology after base extension to $\CC$, we find that in this case the \'etale cohomology is $\Zp$ in degree 0 or 2, and $\Zp^{2g}$ in degree 1, where $g$ is the genus. Hence all the $\operatorname{Tor}$ terms vanish, since the cohomology groups are free $\Zp$-modules; and we conclude that $H^2_{\et}(U \times V, \Zp)$ is the direct sum of $H^1_\et(U, \Zp) \otimes_{\Zp} H^1_\et(V, \Zp)$ and two other summands which are both isomorphic (as Galois representations) to $\Zp(-1)$.


  \subsection{Galois representations attached to modular forms}
   \label{sect:galreps}

   We recall the construction of the Galois representations attached to cuspidal modular forms of weight 2, using the cohomology of the affine modular curves $Y_1(N)$.

   \begin{notation}
    Let $f$ be a cuspidal modular form of weight 2 and level $N$. We assume that $f$ is a normalized eigenform for all the Hecke operators $T_v$ (for $v \nmid N$) and $U_v$ (for $v \mid N$). (We do \emph{not} assume that $f$ is new of level $N$.) As usual, we write $a_v(f)$ for the $v$-th Fourier coefficient of $f$, which is its eigenvalue for $T_v$ if $v \nmid N$ and for $U_v$ if $v \mid N$; we also write $\varepsilon_d(f)$ for the eigenvalue of $f$ for the $\langle d \rangle$ operator for $d \in (\ZZ / N\ZZ)^\times$.
   \end{notation}

   By \cite[Proposition 4]{ashstevens86}, the compactly-supported cohomology $H^1_{c, \Betti}(Y_1(N)(\CC), \CC)$ is isomorphic to the space of \emph{modular symbols} of level $\Gamma_1(N)$ with coefficients in $\CC$. This contains a unique two-dimensional $\CC$-linear subspace $V_{\CC}(f)$ on which the Hecke operators $T_v, U_v$ act as multiplication by the Fourier coefficients $a_v(f)$; and the period isomorphism relating Betti and de Rham cohomology allows us to regard $f$ as an element of $V_{\CC}(f)$. Moreover, if $L$ is any finite extension of $\QQ$ containing the Fourier coefficients of $f$, $V_{\CC}(f)$ is the base-extension of a two-dimensional $L$-subspace $V_L(f) \subseteq H^1_{\Betti, c}(Y_1(N), L)$.

   Let $p$ be a prime. Invoking the comparison theorem between (compactly-supported) $p$-adic and Betti cohomology, we can regard $\Qp \otimes_{\QQ} V_L(f)$ as a subspace of $L \otimes_{\QQ} H^1_{\et, c}(\overline{Y_1(N)}, \Qp)$. Both of these are free modules of rank 2 over $L \otimes_{\QQ} \Qp = \prod_{\fp \mid p} L_\fp$, where the product is over primes of $L$ above $p$; so we obtain for each $\fp$ a two-dimensional $L_\fp$-linear subspace $V_{L_\fp}(f) \subseteq H^1_{\et, c}(\overline{Y_1(N)}, L_\fp)$.

   The following proposition is well known:

   \begin{proposition}
    The Galois representation $V_{L_\fp}(f)$ is ``the'' irreducible $L_\fp$-linear Galois representation attached to $f$. That is, for each prime $v \nmid Np$, the representation $V_{L_\fp}(f)$ is unramified at $v$ and we have
    \[ \trace_{L_\fp} \left(\Frob_v^{-1}\, \middle|\, V_{L_\fp}(f)\right) = a_v(f)\]
    where $\Frob_v$ is the arithmetic Frobenius.
   \end{proposition}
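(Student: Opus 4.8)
The plan is to reduce this to the classical statement about the Galois representation on the cohomology of the \emph{complete} modular curve $X_1(N)$, which is the original construction of Deligne (via Eichler--Shimura). First I would recall that the inclusion $Y_1(N) \into X_1(N)$ induces, on compactly-supported cohomology, an exact sequence of $G_\QQ$-modules relating $H^1_{\et, c}(\overline{Y_1(N)}, L_\fp)$ to $H^1_{\et}(\overline{X_1(N)}, L_\fp)$, with the difference supported on the cusps; concretely, $H^0$ of the cusps maps to $H^1_c(\overline{Y_1(N)})$ with cokernel sitting inside $H^1(\overline{X_1(N)})$. The cuspidal contribution is a sum of copies of $L_\fp$ and $L_\fp(-1)$ (Tate twists), on which the Hecke operators act through the Eisenstein eigenvalues, not through $a_v(f) = a_v(f)$ for a \emph{cuspidal} eigenform. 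Hence the $f$-isotypic subspace $V_{L_\fp}(f)$, being cuspidal, injects isomorphically into the corresponding subspace of $H^1_{\et}(\overline{X_1(N)}, L_\fp)$, and the Hecke- and Galois-actions match up under this identification.

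Next I would invoke the Eichler--Shimura isomorphism together with Deligne's theorem: the subspace of $H^1_{\et}(\overline{X_1(N)}, L_\fp)$ cut out by the Hecke eigensystem of $f$ (i.e.\ the generalized eigenspace where $T_v$ acts by $a_v(f)$ for $v \nmid N$, $U_v$ by $a_v(f)$ for $v \mid N$, $\langle d\rangle$ by $\varepsilon_d(f)$) is a free rank-$2$ module over $L_\fp$, is unramified outside $Np$, and satisfies the Eichler--Shimura relation $\Frob_v^2 - a_v(f)\Frob_v + v\langle v\rangle = 0$ on the $v$-inertia-invariants for $v \nmid Np$. Combined with the fact that the characteristic polynomial of $\Frob_v^{-1}$ on a rank-$2$ space is determined by its trace and determinant, this gives $\trace(\Frob_v^{-1} \mid V_{L_\fp}(f)) = a_v(f)$ once one pins down the normalization. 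The one genuinely delicate point is the \emph{normalization}: whether it is $\Frob_v$ or $\Frob_v^{-1}$, and whether one gets $a_v(f)$ or $\overline{a_v(f)}$, depends on the precise conventions for (a) the comparison isomorphism between Betti and \'etale cohomology, (b) the action of Hecke correspondences on cohomology versus modular symbols (there is a well-known transpose/adjoint subtlety here, visible already in the distinction between $T_v$ and $T_v'$ made earlier in the paper), and (c) whether one uses $H^1$ or $H^1_c$. I would fix these by a direct check on one well-understood example or by citing the relevant compatibility (e.g.\ in the references already cited for the construction, or \cite{kato04} \S 4 where the same conventions are in force).

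Concretely the steps, in order, are: (1) set up the Gysin/localization sequence for $Y_1(N) \into X_1(N)$ in compactly-supported \'etale cohomology and identify the cuspidal terms as sums of $L_\fp(0)$ and $L_\fp(-1)$ with Eisenstein Hecke-action; (2) conclude that projection to the $f$-eigenspace annihilates the cuspidal terms, so $V_{L_\fp}(f) \subseteq H^1_{\et, c}(\overline{Y_1(N)}, L_\fp)$ maps isomorphically, Hecke- and $G_\QQ$-equivariantly, onto the $f$-eigenspace of $H^1_{\et}(\overline{X_1(N)}, L_\fp)$; (3) quote Deligne's construction of the Galois representation on the latter, together with the Eichler--Shimura congruence relation at $v \nmid Np$; (4) deduce unramifiedness at such $v$ and the trace formula $\trace(\Frob_v^{-1}) = a_v(f)$, being careful that the conventions in (2)--(3) are the ones that make $\Frob_v^{-1}$ (rather than $\Frob_v$) act with the ``correct'' trace, which is forced by the fact that the paper's $V_{L_\fp}(f)$ is built from $H^1_c$ with a left Galois action and the Hecke operators normalized as above. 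The main obstacle, as indicated, is purely bookkeeping of normalizations --- there is no new mathematical content --- but it is the kind of thing that is easy to get off by an inverse or a complex conjugate, so I would be explicit about matching the conventions of Section~\ref{sect:heckecorr} (where $T_\ell = (\pi_1)_*(\pi_2)^*$ is singled out as ``the familiar Hecke operator'') with the action on $H^1_{\et, c}$ used to define $V_{L_\fp}(f)$.
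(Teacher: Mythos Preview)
Your outline is correct and is essentially the standard argument, but you should be aware that the paper does not actually prove this proposition at all: it is introduced with the phrase ``The following proposition is well known'' and no proof is given. So there is nothing to compare against on the paper's side.

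That said, your reduction to $H^1_{\et}(\overline{X_1(N)}, L_\fp)$ via the excision/Gysin sequence, the observation that the boundary terms carry Eisenstein Hecke eigenvalues (so the cuspidal $f$-eigenspace is unaffected), and the appeal to Deligne's Eichler--Shimura relation are exactly the right ingredients. Your caution about the $\Frob_v$ versus $\Frob_v^{-1}$ normalization is well placed; the paper's Remark immediately following the proposition (comparing its conventions to Kato's) confirms that this is precisely the bookkeeping issue one must get right, and the paper's choice to realize $V_{L_\fp}(f)$ inside $H^1_{\et,c}$ (rather than as a quotient of $H^1_{\et}$) is what forces the inverse Frobenius here.
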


   We note that under Poincar\'e duality, the dual space $V_{L}(f)^*$ is identified with the maximal quotient of $H^1_{\Betti}(Y_1(N)(\CC), L)$ on which the transposes $T_v'$ and $U_v'$ of $T_v$ and $U_v$ act as multiplication by $a_v(f)$. Tensoring with $\Qp$, and noting that Poincar\'e duality holds in \'etale cohomology with a twist by the cyclotomic character, we obtain an identification of $V_{L_\fp}(f)^*$ with a quotient of $H^1_{\et}(\overline{Y_1(N)}, L_\fp)(1)$.

   \begin{definition}
    Let $\cO_\fp$ be the ring of integers of $L_\fp$. We define $T_{\cO_\fp}(f)^*$ as the $\cO_\fp$-submodule of $V_{L_\fp}(f)^*$ generated by the image of $H^1_{\et}(\overline{Y_1(N)}, \ZZ_p)(1)$, which is a $G_{\QQ}$-stable $\cO_\fp$-lattice in $V_{L_\fp}(f)^*$.
   \end{definition}

   \begin{remark}
    Note that our conventions are somewhat different from those of \cite[\S\S 6.3, 8.4]{kato04}: we define $V_{L_\fp}(f)$ as a subspace of compactly-supported cohomology of a modular curve, while Kato uses the same symbol to denote a quotient of the non-compactly-supported cohomology. If $f$ is new of level $N$, then our $V_{\cO_\fp}(f)^*$ coincides with the space Kato would denote by $V_{L_\fp}(\overline{f})(1)$ where $\overline{f}$ is the complex conjugate of $N$, and similarly for the integral lattices (our $T_{\cO_\fp}(f)^*$ is Kato's $V_{\cO_\fp}(\overline{f})(1)$).
   \end{remark}

   \begin{remark}
    \label{remark:tildelattice}
    One can also define a lattice in $V_{L_\fp}(f)^*$ using the cohomology of the projective modular curve. The inclusion $\overline{Y_1(N)} \into \overline{X_1(N)}$ induces a pullback map $H^1(\overline{X_1(N)}, \Zp) \to H^1(\overline{Y_1(N)}, \Zp)$, which is injective with cokernel isomorphic to $\Zp^{r - 1}$ where $r$ is the number of cusps. The action of the Hecke algebra on the boundary term $\Zp^{r-1}$ is Eisenstein, so the map $H^1(\overline{X_1(N)}, \Qp) \to H^1(\overline{Y_1(N)}, \Qp)$ is an isomorphism on the $f$-isotypical component. We define $\widetilde T_{\cO_\fp}(f)^*$ as the image of $H^1(\overline{X_1(N)}, \Zp) \otimes \cO_\fp$ in $V_{L_\fp}(f)^*$. Note that $\widetilde T_{\cO_\fp}(f)^* \subseteq T_{\cO_\fp}(f)^*$, and equality holds if $f$ is not congruent modulo $\fp$ to an Eisenstein series.
   \end{remark}


  \subsection{Generalized Beilinson--Flach classes}

   Let $N \ge 5$. Observe that $Y_1(N)^2 \otimes \QQ(\mu_m)$ is a smooth variety over $\QQ(\mu_{m})$, for any $m$. By \eqref{eq:regulatorintoH1}, for any prime $p$ we therefore have an \'etale regulator
   \[ r_{\et, \QQ(\mu_m)}: \CH^2(Y_1(N)^2 \otimes \QQ(\mu_m), 1) \rTo H^1\left(\QQ(\mu_{m}), H_{\et}^2(\overline{Y_1(N)^2}, \Zp(2)) \right).\]

   \begin{definition}
   \label{def:choosingMF}
    Let $f, g$ be modular forms of level $N$ which are normalized eigenforms for all the Hecke operators $T_\ell$ (for $\ell \nmid N$) and $U_\ell$ (for $\ell \mid N$), $L$ a number field containing the Fourier coefficients of $f$ and $g$, and $\fp$ a place of $L$ above the rational prime $p$.
   \end{definition}

   \begin{remark}
    In the situation of definition \ref{def:choosingMF}, we can use the K\"unneth formula (Lemma \ref{lem:kunneth}) to regard
    \[ T_{\cO_\fp}(f, g)^* \coloneqq T_{\cO_\fp}(f)^* \otimes_{\cO_\fp} T_{\cO_\fp}(g)^*\]
    as a quotient of $\cO_\fp \otimes_{\Zp} H^2_{\et}(\overline{Y_1(N)^2}, \Zp)(2)$.
   \end{remark}

   \begin{definition}
    Define the map
    \[ \kappa_{f, g, \QQ(\mu_m)}: \CH^2(Y_1(N)^2 \otimes \QQ(\mu_m), 1) \rTo H^1(\QQ(\mu_{m}), T_{\cO_\fp}(f, g)^*)\]
    to be the composition of $r_{\et,\QQ(\mu_m)}$ with the map on Galois cohomology induced by the projection
    \[ H^2_{\et}(\overline{Y_1(N)^2}, \Zp)(2) \rTo T_{\cO_\fp}(f, g)^*.\]
   \end{definition}

   \begin{definition}
    \label{def:cohoclasses}
    We define the \emph{generalized Beilinson--Flach class}
    \[ {}_c\bfz^{(f, g, N)}_m \coloneqq \kappa_{f, g, \QQ(\mu_m)} ({}_c \Xi_{m, N, 1}) \in H^1(\QQ(\mu_{m}), T_{\cO_\fp}(f, g)^*),\]
    and its non-integral version
    \[ \bfz^{(f, g, N)}_m \coloneqq \kappa_{f, g, \QQ(\mu_m)} (\Xi_{m, N, 1}) \in H^1(\QQ(\mu_{m}), V_{\cO_\fp}(f, g)^*).\]
   \end{definition}

   The compatibility relations we have shown for the generalized Beilinson--Flach elements for varying $m$ carry over to the cohomology classes:

   \begin{corollary}
    \label{cor:nearlycompatible}
    For any integers $m \ge 1, N \ge 1$, and $\ell$ a prime such that $\ell \mid N$, we have
    \[ \cores_{m}^{\ell m} \big({}_c\bfz^{(f, g, N)}_{\ell m}\big) =
     \begin{cases}
      (\alpha_f \alpha_g) \cdot {}_c\bfz^{(f, g, N)}_m & \text{if $\ell \mid m$,}\\
      (\alpha_f \alpha_g - \sigma_{\ell}) \cdot {}_c\bfz^{(f, g, N)}_m & \text{if $\ell \nmid m$,}
     \end{cases}
    \]
    where $\alpha_f, \alpha_g$ are the $U_\ell$-eigenvalues of $f$ and $g$, and in the latter case $\sigma_\ell$ is the arithmetic Frobenius element at $\ell$ in $\Gal(\QQ(\mu_m) / \QQ)$.

    If $\ell$ is a prime not dividing $mN$, then
    \[\cores^{\ell m}_m \big({}_c\bfz^{(f, g, N)}_{\ell m}\big)= \sigma_\ell\left( (\ell - 1)(1 - \varepsilon_f(\ell)\varepsilon_g(\ell) \sigma_\ell^{-2}) - \ell P_\ell(f, g, \ell^{-1} \sigma_\ell^{-1}) \right){}_c\bfz^{(f, g, N)}_{m},\]
    where $P_\ell(f, g, X)$ is the local Euler factor of $f$ and $g$ at $\ell$ (cf.~Proposition \ref{prop:explicitEulerfactor} above).
   \end{corollary}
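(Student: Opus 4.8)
The plan is to deduce Corollary \ref{cor:nearlycompatible} from the norm relations already established for the elements ${}_c \Xi_{m, N, j}$, simply by applying the map $\kappa_{f, g, \QQ(\mu_m)}$ and tracking how the Hecke operators and Galois automorphisms act on the $(f,g)$-isotypical quotient. The key input is the functoriality of the \'etale regulator (Proposition \ref{prop:regulatorfunctorial}): since the corestriction map $\cores_m^{\ell m}$ on Galois cohomology is induced by the pushforward along the finite \'etale map $\Spec \QQ(\mu_{\ell m}) \to \Spec \QQ(\mu_m)$, and since the pushforward map $(\alpha \times \alpha)_*$ on motivic cohomology (for $\alpha$ the identity on $Y_1(N)$) corresponds under $r_{\et}$ to this Galois corestriction, we get a commutative square relating $\kappa_{f,g,\QQ(\mu_{\ell m})}({}_c\Xi_{\ell m, N, 1})$ to $\cores_m^{\ell m}$ of it. Thus the image of the norm relation for the $\Xi$'s under $\kappa_{f,g}$ gives exactly the desired identities in $H^1$, provided we correctly evaluate the action of the Hecke operators $U_\ell'$, $T_\ell'$, the diamond operators $\langle d \rangle$, and the arithmetic Frobenius $\sigma_\ell$ on $T_{\cO_\fp}(f,g)^*$.

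First I would recall that on the quotient $T_{\cO_\fp}(f)^*$, the transposed Hecke operator $U_\ell'$ (for $\ell \mid N$) acts as the scalar $a_\ell(f) = \alpha_f$, the operator $T_\ell'$ (for $\ell \nmid N$) acts as $a_\ell(f)$, and the diamond operator $\langle d \rangle$ acts as $\varepsilon_d(f)$; likewise for $g$. (This is immediate from the definition of $T_{\cO_\fp}(f)^*$ as the maximal quotient of $H^1_{\et}(\overline{Y_1(N)}, \cO_\fp)(1)$ on which the transposed Hecke operators act by the Fourier coefficients, recorded in \S\ref{sect:galreps}.) Applying $\kappa_{f,g,\QQ(\mu_{\ell m})}$ to Theorem \ref{thm:secondnormbadprime} gives, in the $\ell \mid m$ case, $\cores_m^{\ell m}({}_c\bfz^{(f,g,N)}_{\ell m}) = (U_\ell' \times U_\ell')({}_c\bfz^{(f,g,N)}_m) = (\alpha_f \alpha_g)\,{}_c\bfz^{(f,g,N)}_m$, and in the $\ell \nmid m$ case the extra term $-\sigma_\ell\,{}_c\bfz^{(f,g,N)}_m$ appears, matching the claimed formula. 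For the prime $\ell \nmid mN$ case, I would apply $\kappa_{f,g,\QQ(\mu_{\ell m})}$ to Theorem \ref{conj:weaknormrel}(a) (the norm-to-level-prime-to-$p$ formula with $p$ replaced by $\ell$, $k=1$), and evaluate the operator-valued Euler factor $\underline{P}_\ell$ on $T_{\cO_\fp}(f,g)^*$: substituting the scalar actions of $T_\ell'$, $\langle \ell^{-1}\rangle$ reduces $\underline{P}_\ell(X)$ to the numerical Euler factor $P_\ell(f, g, X)$ of Proposition \ref{prop:explicitEulerfactor}, after identifying $(T_\ell', T_\ell')$ with $a_\ell(f)a_\ell(g)$ and so on; similarly $(\langle \ell^{-1}\rangle, \langle \ell^{-1}\rangle)$ becomes $\varepsilon_\ell(f)\varepsilon_\ell(g)$. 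This yields exactly $\sigma_\ell\big((\ell-1)(1 - \varepsilon_f(\ell)\varepsilon_g(\ell)\sigma_\ell^{-2}) - \ell P_\ell(f,g,\ell^{-1}\sigma_\ell^{-1})\big){}_c\bfz^{(f,g,N)}_m$.

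The main obstacle — really the only non-bookkeeping point — is to be sure that the geometric pushforward $(\alpha \times \alpha)_*$ on $\CH^2(Y_1(N')^2 \otimes \QQ(\mu_{\ell m}), 1)$ that appears in Theorem \ref{thm:firstnormrelation}/\ref{thm:secondnormbadprime}/\ref{conj:weaknormrel} corresponds, under $r_{\et}$ and the projection to the $(f,g)$-component, to the Galois corestriction $\cores_m^{\ell m}$ — i.e. that there is no discrepancy between "pushforward along the degeneracy map of curves over $\QQ(\mu_m)$" and "corestriction in the cyclotomic tower." Here one must use that for $\Prime(mN') = \Prime(mN)$ the map $Y_1(N') \to Y_1(N)$ is a genuine isomorphism after the relevant identification (or that the $(f,g)$-projection kills the difference), so that the only remaining pushforward is along the base change $\QQ(\mu_{\ell m}) / \QQ(\mu_m)$; and this is precisely the content of the functoriality statement Proposition \ref{prop:regulatorfunctorial}, "compatible with the corestriction maps for finite [extensions]." Once this identification is in place, the corollary follows formally. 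I would therefore organize the proof as: (i) recall the scalar action of the transposed Hecke and diamond operators on $T_{\cO_\fp}(f,g)^*$; (ii) invoke Proposition \ref{prop:regulatorfunctorial} to convert geometric pushforward into Galois corestriction; (iii) apply $\kappa_{f,g,\QQ(\mu_{\ell m})}$ to Theorems \ref{thm:secondnormbadprime} and \ref{conj:weaknormrel}(a), substituting the scalars from (i); (iv) simplify the operator Euler factor $\underline{P}_\ell$ to the numerical $P_\ell(f,g,X)$.
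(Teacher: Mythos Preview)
Your overall approach is correct and matches the paper's: apply $\kappa_{f,g}$ to the second norm relations for the $\Xi$'s, using Proposition~\ref{prop:regulatorfunctorial} to turn the Galois norm into corestriction, and read off the scalar action of the transposed Hecke and diamond operators on $T_{\cO_\fp}(f,g)^*$. For the case $\ell \mid N$ you cite Theorem~\ref{thm:secondnormbadprime}, which is exactly right.

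There are two slips. First, for $\ell \nmid mN$ you invoke Theorem~\ref{conj:weaknormrel}(a), but that theorem computes $\norm_m^{\ell^2 m}$, not $\norm_m^{\ell m}$; there is no ``$k=1$'' case of it. The correct input is Theorem~\ref{thm:secondnormrelationprime}, whose left-hand side $\sum_{k \equiv j \bmod m,\, \ell \nmid k} {}_c\Xi_{m\ell,N,k}$ is precisely $\norm_m^{\ell m}({}_c\Xi_{m\ell,N,j})$ by Proposition~\ref{prop:BFeltproperties}(3). Once projected to the $(f,g)$-component, the operator expression there does collapse to $\sigma_\ell\big((\ell-1)(1-\varepsilon_f(\ell)\varepsilon_g(\ell)\sigma_\ell^{-2}) - \ell P_\ell(f,g,\ell^{-1}\sigma_\ell^{-1})\big)$, so your step (iv) is correct in spirit; you simply need the right source formula. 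Second, your ``main obstacle'' paragraph conflates the first and second norm relations. In Theorems~\ref{thm:secondnormbadprime} and~\ref{thm:secondnormrelationprime} the level $N$ is fixed throughout and the norm is taken only along $\QQ(\mu_{\ell m})/\QQ(\mu_m)$, so no degeneracy map between modular curves enters and there is nothing to reconcile: Proposition~\ref{prop:regulatorfunctorial} applied to the base-change morphism $Y_1(N)^2\otimes\QQ(\mu_{\ell m}) \to Y_1(N)^2\otimes\QQ(\mu_m)$ gives the compatibility with corestriction directly.
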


   \begin{proof}
    Immediate from Theorems \ref{thm:secondnormbadprime}, \ref{thm:secondnormrelationprime} and the compatiblity of the regulator map with corestriction (Proposition \ref{prop:regulatorfunctorial}).
   \end{proof}

   The dependence of ${}_c\bfz^{(f, g, N)}_m$ on $c$ is as follows:

   \begin{proposition}
    There exist classes
    \[ \bfz^{(f, g, N)}_m \in H^1(\QQ(\mu_{m}), V_{L_\fp}(f, g)^*)\]
    such that the relation
    \begin{equation}
     \label{eq:cfactor}
     {}_c \bfz^{(f, g, N)}_m = (c^2 - \varepsilon_f(c)^{-1} \varepsilon_g(c)^{-1} [c]^2) \bfz^{(f, g, N)}_m
    \end{equation}
    holds for any $c > 1$ coprime to $6mN$.
   \end{proposition}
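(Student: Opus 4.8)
The plan is to deduce this from the analogous statement at the level of generalized Beilinson--Flach elements, namely Proposition \ref{prop:BFeltproperties}(\ref{item:cfactor}), by transporting it through the map $\kappa_{f, g, \QQ(\mu_m)}$. Recall that $\kappa_{f, g, \QQ(\mu_m)}$ is a composition of the \'etale regulator $r_{\et, \QQ(\mu_m)}$ (which is a homomorphism of abelian groups by construction) with the $\Zp$-linear map on $H^1(\QQ(\mu_m), -)$ induced by the K\"unneth projection $H^2_\et(\overline{Y_1(N)^2}, \Zp)(2) \to T_{\cO_\fp}(f, g)^*$; in particular $\kappa_{f, g, \QQ(\mu_m)}$ is itself a homomorphism. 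Applying it to the identity
\[
 {}_c \Xi_{m, N, j, \alpha} = c^2 \Xi_{m, N, j, \alpha} - \Xi_{m, N, j, c\alpha} = \bigl(c^2 - \langle c \times c\rangle^* \sigma_c^2\bigr)\, \Xi_{m, N, j, \alpha}
\]
from Proposition \ref{prop:BFeltproperties} (with $j = 1$, $\alpha = 1$), and defining
\[
 \bfz^{(f, g, N)}_m \coloneqq \kappa_{f, g, \QQ(\mu_m)}\bigl(\Xi_{m, N, 1}\bigr) \in H^1\bigl(\QQ(\mu_m), V_{L_\fp}(f, g)^*\bigr),
\]
we obtain ${}_c \bfz^{(f, g, N)}_m = \bigl(c^2 - \kappa_{f, g, \QQ(\mu_m)} \circ \langle c \times c\rangle^* \sigma_c^2\bigr) \bigl(\Xi_{m, N, 1}\bigr)$.

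The remaining point is to identify the operator $\langle c \times c \rangle^* \sigma_c^2$ acting on the source with the scalar $\varepsilon_f(c)^{-1} \varepsilon_g(c)^{-1} [c]^2$ acting on the target $H^1(\QQ(\mu_m), V_{L_\fp}(f, g)^*)$. First, $\sigma_c$ (the arithmetic Frobenius at $c$ in $\Gal(\QQ(\mu_m)/\QQ)$, or more precisely the image of $c \in (\ZZ/m\ZZ)^\times$) acts geometrically on $Y_1(N)^2 \otimes \QQ(\mu_m)$ by an automorphism of the base, and on $H^1(\QQ(\mu_m), -)$ this induces the action of $[c]$ in the group ring $\ZZ_p[\Gal(\QQ(\mu_m)/\QQ)]$; squaring gives $[c]^2$. (Strictly, one should check the sign conventions: whether $\sigma_c$ on the Chow group corresponds to $[c]$ or $[c]^{-1}$ on cohomology; this is a compatibility of the Hochschild--Serre edge map with base change, exactly the functoriality recorded in Proposition \ref{prop:regulatorfunctorial}.) Second, $\langle c \times c \rangle$ is the diamond operator $\langle c \rangle \times \langle c \rangle$ on $Y_1(N)^2$; by the defining property of $V_{L_\fp}(f)^*$ (respectively $V_{L_\fp}(g)^*$) as the quotient of $H^1$ on which the transposed Hecke and diamond operators act through the eigenvalues of $f$ (resp.\ $g$), the transpose $\langle c \rangle^*$ acts on $V_{L_\fp}(f)^*$ as $\varepsilon_f(c)^{-1}$ and on $V_{L_\fp}(g)^*$ as $\varepsilon_g(c)^{-1}$; hence $\langle c \times c\rangle^*$ acts on the tensor product $V_{L_\fp}(f, g)^*$ as $\varepsilon_f(c)^{-1}\varepsilon_g(c)^{-1}$. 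Combining, $\kappa_{f, g, \QQ(\mu_m)} \circ \langle c \times c\rangle^* \sigma_c^2 = \varepsilon_f(c)^{-1}\varepsilon_g(c)^{-1}[c]^2 \cdot \kappa_{f, g, \QQ(\mu_m)}$, which yields \eqref{eq:cfactor}.

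The classes $\bfz^{(f,g,N)}_m$ are well-defined because $\Xi_{m, N, 1} \in \CH^2(Y_1(N)^2 \otimes \QQ(\mu_m), 1) \otimes \QQ$ is the well-defined element produced in Proposition \ref{prop:BFeltproperties}(\ref{item:cfactor}), and $\kappa_{f, g, \QQ(\mu_m)}$ extends $\QQ$-linearly. I expect the only real subtlety to be the bookkeeping in the previous paragraph: pinning down precisely how the Galois action $\sigma_c$ on the coefficient field interacts with the Galois cohomology functor (arithmetic versus geometric Frobenius, and the resulting $[c]$ versus $[c]^{-1}$), and checking that the diamond operators act on the \emph{dual} spaces $V_{L_\fp}(f)^*, V_{L_\fp}(g)^*$ with the inverse eigenvalues $\varepsilon_f(c)^{-1}, \varepsilon_g(c)^{-1}$ rather than $\varepsilon_f(c), \varepsilon_g(c)$ --- this is where the conventions from \S\ref{sect:galreps} (in particular the identification of $V_{L}(f)^*$ with the quotient of $H^1_{\Betti}$ on which the transposes $T_v', U_v'$ act through $a_v(f)$) and the Poincar\'e-duality twist must be invoked carefully. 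Everything else is formal once those compatibilities are in place.
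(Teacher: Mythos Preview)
Your proposal is correct and follows essentially the same approach as the paper: the paper's proof is the single line ``Immediate from Proposition~\ref{prop:BFeltproperties}(\ref{item:cfactor}),'' and you have simply unpacked what that immediacy means, namely applying $\kappa_{f,g,\QQ(\mu_m)}$ to the identity ${}_c \Xi_{m,N,1} = (c^2 - \langle c \times c\rangle^* \sigma_c^2)\,\Xi_{m,N,1}$ and tracking how the diamond operators and $\sigma_c$ act on the target. The bookkeeping subtleties you flag (the sign of $[c]$ versus $[c]^{-1}$, and that the diamond operators act via $\varepsilon_f(c)^{-1}$, $\varepsilon_g(c)^{-1}$ on the dual spaces) are exactly the conventions fixed in \S\ref{sect:galreps} and Proposition~\ref{prop:tmproperties}, and the paper treats them as already established.
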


   \begin{proof}
    Immediate from Proposition~\ref{prop:BFeltproperties} (\ref{item:cfactor}).
   \end{proof}

   \begin{proposition}
    If there exists $d \ge 1$ coprime to $6mN$ such that $d^2 - \varepsilon_f(d)^{-1} \varepsilon_g(d)^{-1} [d]^2$ is invertible in $\cO_\fp[(\ZZ / m\ZZ)^\times]$, then there exists $\bfz^{(f, g, N)}_m \in H^1(\QQ(\mu_{m}), T_{\cO_\fp}(f, g)^*)$ such that Equation \eqref{eq:cfactor} holds in $H^1(\QQ(\mu_{m}), T_{\cO_\fp}(f, g)^*)$ (not just modulo torsion).

    In particular, this holds if the conductor of the reduction modulo $\fp$ of $\varepsilon_f \varepsilon_g$ is divisible by some prime which does not divide $mp$.
   \end{proposition}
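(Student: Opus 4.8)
The plan is to exploit the fact that the ``$c$-factor'' $\mu_c := c^2 - \varepsilon_f(c)^{-1}\varepsilon_g(c)^{-1}[c]^2$ occurring in \eqref{eq:cfactor} is an element of the group ring $\cO_\fp[(\ZZ/m\ZZ)^\times]$, which acts on the module $M := H^1(\QQ(\mu_m), T_{\cO_\fp}(f,g)^*)$ via the identification $\Gal(\QQ(\mu_m)/\QQ) \cong (\ZZ/m\ZZ)^\times$ sending $[c]$ to $\sigma_c$. If some $\mu_d$ happens to be a unit of this commutative ring, then $\mu_d$ acts invertibly on $M$, and we can simply ``divide'' the integral class ${}_d\bfz^{(f,g,N)}_m$ by it.

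The first step is to upgrade part~(\ref{item:cfactor}) of Proposition~\ref{prop:BFeltproperties} to the exact relation
\[ \mu_c \cdot {}_d\bfz^{(f,g,N)}_m = \mu_d \cdot {}_c\bfz^{(f,g,N)}_m \qquad \text{in } M, \]
valid for all $c,d$ coprime to $6mN$. To see this, combine part~(\ref{item:cfactor}) with parts~(3) and (4) of Proposition~\ref{prop:BFeltproperties}: rewriting ${}_c\Xi_{m,N,1,d} = \langle d\times d\rangle^* \sigma_d^2\cdot{}_c\Xi_{m,N,1}$, one obtains
\[ \big(d^2 - \langle d\times d\rangle^* \sigma_d^2\big)\cdot{}_c\Xi_{m,N,1} = \big(c^2 - \langle c\times c\rangle^* \sigma_c^2\big)\cdot{}_d\Xi_{m,N,1} \]
in $\CH^2(Y_1(N)^2\otimes\QQ(\mu_m),1)$, an identity which in fact already holds in $Z^2$, being inherited via Proposition~\ref{prop:pushforwardzeta} from the cycle-level symmetry of Proposition~\ref{prop:zetaeltproperties}(2) (itself a consequence of the integral Siegel-unit relation). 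Now $\kappa_{f,g,\QQ(\mu_m)}$ is a homomorphism which — exactly as in the proof of \eqref{eq:cfactor} — carries the operator $c^2 - \langle c\times c\rangle^* \sigma_c^2$ to multiplication by $\mu_c$; applying it to the last display gives the displayed relation in $M$. The deduction is then formal: choose $d$ with $\mu_d \in \cO_\fp[(\ZZ/m\ZZ)^\times]^\times$, put $\bfz^{(f,g,N)}_m := \mu_d^{-1}\cdot{}_d\bfz^{(f,g,N)}_m \in M$, and compute, using commutativity of the group ring,
\[ \mu_c\cdot\bfz^{(f,g,N)}_m = \mu_d^{-1}\mu_c\cdot{}_d\bfz^{(f,g,N)}_m = \mu_d^{-1}\mu_d\cdot{}_c\bfz^{(f,g,N)}_m = {}_c\bfz^{(f,g,N)}_m, \]
which is \eqref{eq:cfactor} in $M$ (it also shows $\bfz^{(f,g,N)}_m$ is independent of the choice of $d$, though this is not needed).

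For the final ``in particular'', let $\psi$ be the reduction of $\varepsilon_f\varepsilon_g$ modulo $\fp$, a character of $(\ZZ/N\ZZ)^\times$ with values in $(\cO_\fp/\fp)^\times$, and suppose a prime $\ell\nmid mp$ divides its conductor. It suffices to exhibit an integer $d > 1$, coprime to $6mN$, with $d\equiv 1\pmod{mp}$ and $\psi(d)\ne 1$: for such $d$ one has $[d]=1$, so $\mu_d = d^2 - \varepsilon_f(d)^{-1}\varepsilon_g(d)^{-1}$ lies in $\cO_\fp$, and its reduction mod $\fp$ is $1 - \psi(d)^{-1}\ne 0$ (using $d\equiv 1\bmod p$ to get $d^2\equiv 1$), so $\mu_d\in\cO_\fp^\times$, a unit of $\cO_\fp[(\ZZ/m\ZZ)^\times]$ since the latter is free over $\cO_\fp$. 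Such a $d$ exists by the Chinese Remainder Theorem: because $\ell\nmid mp$, the integers $\equiv 1\pmod{mp}$ and coprime to $6$ hit every class in $(\ZZ/\ell^{v_\ell(N)}\ZZ)^\times$, and $\psi$ is non-trivial on that group since $\ell$ divides its conductor, so one may choose $d>1$ realising a class there with $\psi(d)\ne 1$ and simultaneously coprime to the remaining prime factors of $6mN$.

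The one point requiring care is that the symmetry $\mu_c\cdot{}_d\bfz^{(f,g,N)}_m = \mu_d\cdot{}_c\bfz^{(f,g,N)}_m$ must hold exactly in $M$, not merely modulo $\cO_\fp$-torsion; this is precisely why it matters that the underlying symmetry of $d^2\,{}_c\Xi_{m,N,1} - {}_c\Xi_{m,N,1,d}$ takes place already at the level of cycles and that $\kappa_{f,g,\QQ(\mu_m)}$ is a genuine morphism, not one only after tensoring with $\QQ$. Everything else is routine.
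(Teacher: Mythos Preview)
Your proof is correct and follows exactly the paper's approach: define $\bfz^{(f,g,N)}_m \coloneqq \mu_d^{-1} \cdot {}_d\bfz^{(f,g,N)}_m$ for a $d$ with $\mu_d$ invertible, which is precisely the one-line argument the paper gives. You supply considerably more detail than the paper does --- in particular the explicit verification that the symmetry $\mu_c \cdot {}_d\bfz = \mu_d \cdot {}_c\bfz$ holds integrally (via the cycle-level identity of Proposition~\ref{prop:zetaeltproperties}(2)) and the CRT construction of $d$ for the ``in particular'' clause, neither of which the paper spells out --- but the underlying idea is identical.
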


   \begin{proof}
    Clear, since if such a $d$ exists we may define
    \[ \bfz^{(f, g, N)}_m \coloneqq (d^2 - \varepsilon_f(d)^{-1} \varepsilon_g(d)^{-1} [d]^2)^{-1} {}_d \bfz^{(f, g, N)}_m.\]
   \end{proof}

  \subsection{Local properties of the generalized Beilinson--Flach classes (I)}

   We now study the local properties of the Beilinson--Flach classes. We shall first recall some standard definitions.

   \begin{definition}
    If $K$ is a local field and $M$ is a topological $G_K$-module, we define $H^1_{nr}(K, M)$ to be the image of the inflation map
    \[ H^1(K^{nr} / K, M^{I_K}) \to H^1(K, M),\]
    where $I_K$ is the inertia subgroup of $G_K$ and $K^{nr}$ the maximal unramifed extension of $K$.

    If $V$ is a finite-dimensional $\Qp$-vector space, and $\ell$ is the residue characteristic of $K$, we define
    \[ H^1_f(K, V) =
     \begin{cases}
      H^1_{nr}(K, V) & \text{if $\ell \ne p$,} \\
      \ker(H^1(K, V) \to H^1(K, V \otimes \BB_\cris) & \text{if $\ell = p$ .}
     \end{cases}
    \]
    If $T$ is a $\Zp$-lattice in $V$ stable under $G_K$, we write $H^1_f(K, T)$ for the preimage of $H^1_f(K, V)$ in $H^1(K, T)$. (Cf.~\cite{blochkato90}.)
   \end{definition}

   \begin{proposition}
    If $T$ is a finite-rank free $\Zp$-module with a continuous action of $G_K$ which is trivial on $I_K$, and $\ell \ne p$, then
    \[ H^1_f(K, T) = H^1_{nr}(K, T).\]
   \end{proposition}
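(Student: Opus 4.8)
The plan is to reduce this to the standard description of unramified cohomology as the kernel of restriction to inertia, combined with a single injectivity statement which is exactly the place where the hypothesis that $I_K$ acts trivially on $T$ gets used.

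First I would recall the purely formal input: for an arbitrary topological $G_K$-module $M$, the inflation--restriction exact sequence (valid in continuous cohomology for the profinite group $G_K$ and its closed normal subgroup $I_K$, the relevant inverse systems of finite $\Zp$-modules being Mittag--Leffler), after identifying $\Gal(K^{nr}/K)$ with $G_K/I_K$, reads
\[
 0 \rightarrow H^1(G_K/I_K, M^{I_K}) \xrightarrow{\;\mathrm{inf}\;} H^1(K, M) \xrightarrow{\;\res\;} H^1(I_K, M).
\]
Since $H^1_{nr}(K,M)$ is by definition the image of $\mathrm{inf}$, this shows $H^1_{nr}(K,M) = \ker\bigl(\res\colon H^1(K,M) \to H^1(I_K,M)\bigr)$. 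Applying this to $M = T$ and to $M = V \coloneqq T\otimes\Qp$ gives $H^1_{nr}(K,T) = \ker(\res_T)$ and $H^1_{nr}(K,V) = \ker(\res_V)$. Because $\ell \ne p$, the definition of $H^1_f$ gives $H^1_f(K,V) = H^1_{nr}(K,V)$, and $H^1_f(K,T)$ is by definition the preimage of $H^1_f(K,V)$ under the natural map $\iota\colon H^1(K,T) \to H^1(K,V)$.

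Next I would use that $\iota$ and the induced map $\iota_*\colon H^1(I_K,T) \to H^1(I_K,V)$ sit in a commutative square together with the two restriction maps. Then for $x \in H^1(K,T)$ one gets the chain of equivalences $x \in H^1_f(K,T) \iff \iota(x) \in \ker(\res_V) \iff \iota_*(\res_T(x)) = 0$, so the whole assertion comes down to showing $\iota_*$ is injective: granting that, $\iota_*(\res_T(x)) = 0 \iff \res_T(x) = 0 \iff x \in H^1_{nr}(K,T)$. This injectivity is the only non-formal step, and it is where the hypothesis is essential, since in general $\iota_*$ need not be injective — the obstruction is the image of the connecting map $(V/T)^{I_K} \to H^1(I_K,T)$ attached to $0 \to T \to V \to V/T \to 0$. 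However, if $I_K$ acts trivially on $T$ it acts trivially on $V$, so this connecting map sends the class of $\bar v$ to the cocycle $\sigma \mapsto \sigma\tilde v - \tilde v = 0$ on $I_K$; hence it vanishes and $\iota_*$ is injective. (Equivalently and more directly: $H^1(I_K,T) = \Hom_{\cont}(I_K,T)$ injects into $\Hom_{\cont}(I_K,V) = H^1(I_K,V)$ because $T \hookrightarrow V$.) The ``hard part'' is therefore not a computation but simply being careful about the potential discrepancy between $H^1_{nr}$ and $H^1_f$ at the integral level, and checking that the trivial-inertia hypothesis is precisely what removes it.
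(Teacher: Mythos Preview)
Your proof is correct and follows essentially the same route as the paper's: both reduce to showing that the restriction map $H^1(I_K,T)\to H^1(I_K,V)$ is injective, and both use the trivial-inertia hypothesis to identify $H^1(I_K,T)=\Hom_{\cont}(I_K,T)$, which is torsion-free since $T$ is (equivalently, embeds in $\Hom_{\cont}(I_K,V)$). The paper states only the nontrivial inclusion $H^1_f\subseteq H^1_{nr}$ and phrases the key step as ``the image of $x$ in $H^1(I_K,T)$ is torsion, but $H^1(I_K,T)$ is torsion-free''; your version is slightly more expansive but the argument is the same.
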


   \begin{proof}
    We have an inflation-restriction exact sequence
    \[0 \rTo H^1_{nr}(K, T) \rTo H^1(K, T) \rTo H^0(K^\mathrm{nr} / K, H^1(I_K, T)) \rTo 0,\]
    and a corresponding sequence for $V$ in place of $T$. Suppose $x \in H^1_f(K, T)$. Then the image of $x$ in $H^1(I_K, V)$ is zero, so the image of $x$ in $H^1(I_K, T)$ is torsion. However, $H^1(I_K, T) = \Hom(I_K, T)$ is torsion-free, since $T$ is; thus the image of $x$ in $H^1(I_K, T)$ is zero, and hence $x \in H^1_{nr}(K, T)$.
   \end{proof}

   \begin{definition}
    If $K$ is a number field and $M$ is a topological $G_K$-module, and $v$ is a prime of $K$, we say that $x \in H^1(K, M)$ is \emph{unramified at $v$} if its image in $H^1(K_v, M)$ lies in $H^1_{nr}(K, M)$. If $M$ is a finite-rank $\Zp$-module or $\Qp$-vector space, and $v$ is a prime above $p$, we say $x$ is \emph{crystalline at $v$} if its image in $H^1(K_v, M)$ lies in $H^1_f(K_v, M)$.
   \end{definition}

   \begin{proposition}
    The generalized Beilinson--Flach class ${}_c \bfz^{(f, g, N)}_m$ is unramified outside the primes dividing $m N p$. If $p \nmid mN$, it is crystalline at the primes above $p$.
   \end{proposition}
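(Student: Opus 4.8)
The plan is to trace the class back to the motivic cohomology of a smooth model over a ring of $S$-integers, and then invoke the functoriality of the regulators together with the syntomic--étale compatibility recorded in \S\ref{sect:syntomic}. First I would spread out: writing $\cO=\ZZ[\mu_m]$ and letting $\cO[1/mN]$ be the ring of $S$-integers of $\QQ(\mu_m)$ with $S$ the primes dividing $mN$, I claim that ${}_c\Xi_{m,N,1}$ lifts to $\CH^2(\cZ,1)$, where $\cZ=\cY_1(N)^2\otimes_{\ZZ[1/N]}\cO[1/mN]$ is a smooth model of $Y_1(N)^2\otimes\QQ(\mu_m)$. This rests on ingredients already in the paper: the auxiliary modular curves $Y_1(m^2N)$, $Y(m,mN)$ and the isogeny quotients and maps $t_m$ used to build $\iota_{m,N,1}$ in Lemma \ref{lem:kappaexists} all have good reduction away from $mN$, so $\iota_{m,N,1}$ extends to a morphism of $\cO[1/mN]$-schemes and the closure of $C_{m,N,1}$ in $\cZ$ is a codimension-one subscheme; and the Siegel unit ${}_cg_{0,1/m^2N}$ is a unit on the Igusa model of $Y_1(m^2N)$ over $\ZZ[1/m^2N]$. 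Hence the Gersten cycle ${}_c\mathfrak Y_{m,N,1}$ already lies in $Z^2(\cZ,1)$ and maps to ${}_c\Xi_{m,N,1}$ on the generic fibre.

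For unramifiedness I would enlarge $S$ to contain the primes above $p$ and apply Huber's regulator (Proposition \ref{prop:etaleregulator}) to the smooth $\cO[1/mNp]$-scheme $\cZ[1/p]$. By the functoriality of Huber's construction (Proposition \ref{prop:regulatorfunctorial}), the class $r_{\et,\QQ(\mu_m)}({}_c\Xi_{m,N,1})$ then factors through $H^3_{\cont}(\cZ[1/p],\Zp(2))$. The smooth compactification $X_1(N)^2$ has good reduction away from $N$, with cuspidal boundary a union of modular curves of good reduction away from $N$, so smooth and proper base change show that $H^2_\et(\overline{Y_1(N)^2},\Zp(2))$ is unramified outside $mNp$; feeding this into the Leray spectral sequence for the structure morphism $\cZ[1/p]\to\Spec\cO[1/mNp]$ identifies the relevant edge term with $H^1\bigl(\Gal(\QQ(\mu_m)_S/\QQ(\mu_m)),H^2_\et(\overline{Y_1(N)^2},\Zp(2))\bigr)$, $S$ the primes above $mNp$. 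Composing with the Galois-equivariant projection onto $T_{\cO_\fp}(f,g)^*$ gives the first assertion.

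For the crystalline statement I assume $p\nmid mN$, so that $p$ is unramified in $\QQ(\mu_m)$ and $X_1(N)^2$ has a smooth proper model $\cX$ over $\ZZ_{(\fp)}$ for each $\fp\mid p$. I would pass to the projective surface via Theorem \ref{thm:preimageinChowgp}: by Proposition \ref{prop:preimageinGerst21}, $\Xi_{m,N,1}$ lifts to a class in $\CH^2(X_1(N)^2\otimes\QQ(\mu_m),1)\otimes\QQ$ of the form $({}_c\mathfrak Y_{m,N,1}-\Theta)/(c^2-\cdots)$ with $\Theta$ negligible, and since the Manin--Drinfeld units occurring in $\Theta$ have divisor supported on the cusps they are units on the integral model away from $mN$, so this lift extends to $\CH^2(\cX\otimes\ZZ_{(\fp)},1)\otimes\QQ$. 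The commutative diagram of \S\ref{sect:syntomic} then forces its étale regulator to equal $\exp$ of a syntomic class, hence to lie in $H^1_e\bigl(\QQ(\mu_m)_\fp,H^2_\et(\overline{X_1(N)^2},\Qp)(2)\bigr)=H^1_f$ (the equality by the remark there, as all Frobenius eigenvalues on $\Dcris$ have weight $-2$). Pullback along $Y_1(N)^2\into X_1(N)^2$ and projection onto $V_{L_\fp}(f,g)^*$ are morphisms of de Rham representations with good reduction at $\fp$, hence preserve $H^1_f$; and since these projections annihilate the regulators of negligible cycles, the outcome is exactly $\bfz_m^{(f,g,N)}$. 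Therefore $\bfz_m^{(f,g,N)}$, and hence ${}_c\bfz_m^{(f,g,N)}=\bigl(c^2-\varepsilon_f(c)^{-1}\varepsilon_g(c)^{-1}[c]^2\bigr)\bfz_m^{(f,g,N)}\in H^1(\QQ(\mu_m),T_{\cO_\fp}(f,g)^*)$, is crystalline at the primes above $p$.

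The main obstacle is the descent to integral models: one must check carefully that the cycle built from the rather opaque map $\iota_{m,N,j}$ genuinely spreads out over $\cO[1/mN]$, and, for the $p$-integral crystalline claim, that a negligible correction $\Theta$ can be chosen to spread out over $\ZZ_{(\fp)}$. These are of the same flavour as the integrality arguments in \cite[\S\S 8--9]{kato04}; once they are in place, the spectral-sequence bookkeeping of the second paragraph and the $H^1_f$-functoriality of the third are purely formal.
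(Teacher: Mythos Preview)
Your argument is correct and follows essentially the same line as the paper's, namely: spread the motivic class out to a smooth integral model, lift to the compactification by adding negligible cuspidal terms, and then appeal to the regulator for smooth proper schemes over $\ZZ_\ell$ (unramifiedness via smooth proper base change) and over $\Zp$ (crystallinity via the syntomic--\'etale diagram of \S\ref{sect:syntomic}).

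There are two small organizational differences worth noting. First, the paper works at the higher level $Y(m,mN)^2$ and its compactification $X(m,mN)^2$, using the zeta element ${}_c\cZ_{m,N,1}$ directly, rather than at level $N$ with the pushforward ${}_c\Xi_{m,N,1}$ as you do; this is immaterial since the compatibility of $r_{\et}$ with pushforward (Proposition~\ref{prop:regulatorfunctorial}) lets one pass between the two. Second, for the unramified claim the paper compactifies first and then invokes the regulator for smooth proper schemes over $\ZZ_\ell$ (as in \cite{flach92}), whereas you stay on the affine open model and instead argue via the Leray spectral sequence, using the compactification only to deduce that $H^2_\et(\overline{Y_1(N)^2},\Zp(2))$ is unramified outside $Np$. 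Both routes work; the paper's is slightly more direct since the regulator for smooth proper $\ZZ_\ell$-schemes and its landing in $H^1_{nr}$ are classical, while your version requires knowing that Huber's realization functor applies over $\cO[1/mNp]$ (it does, but Proposition~\ref{prop:etaleregulator} as stated only covers varieties over fields). Your acknowledgment in the last paragraph that the integrality of the spread-out cycle and of the negligible correction $\Theta$ must be checked is exactly right: those are the only nontrivial verifications, and they go through just as you sketch.
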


   \begin{proof}
    By the preceding proposition, it suffices to check this result after inverting $p$.

    Let us choose a prime $\ell \nmid mNp$. The compactified modular curve $X(m, mN)$ associated to $Y(m, mN)$ admits a smooth proper model $\cX(m, mN)$ over $\ZZ[1/mN]$; hence it has such a model over $\ZZ_\ell$. It is clear that the class ${}_c \cZ_{m, N, 1}$ lies in the higher Chow group $Z^2(\cY(m, mN), 1)$ of the integral model of $Y(m, mN)$, and we can choose the ``negligible elements'' of Theorem \ref{prop:preimageinGerst21} in order to obtain a lifting of ${}_c \cZ_{m, N, 1}$ to $\CH^2(\cX(m, mN), 1) \otimes \QQ$.

    For proper smooth schemes $\cS$ over $\ZZ_\ell$, with $\ell \ne p$, there is a regulator map
    \[ \CH^2(\cS, 1) \otimes \Qp \to H^3_{\et}(\cS, \Qp(2))\]
    (see e.g. \cite{flach92}) compatible with the regulator map $r_{\et}$ on the generic fibre $S$. Moreover, the \'etale cohomology $H^2_\et(\overline{S}, \Qp(2))$ is unramified as a representation of $G_{\QQ_\ell}$, by the proper base change theorem; and the Hochschild--Serre spectral sequence maps $H^3_{\et}(\cS, \Qp(2))$ to $H^1(\QQ_\ell^{nr} / \QQ_\ell, H^2_\et(\overline{S}, \Qp(2)) \subset H^1(\QQ_\ell, H^2_\et(\overline{S}, \Qp(2))$, where $\overline{S} = \cS \otimes \overline{\QQ}_\ell$ (cf.~\cite[Lemma 2.3]{flach92}). Hence the class ${}_c \bfz^{(f, g, N)}_m$ is unramified at the primes above $\ell$, as required.

    Similarly, if $p \nmid mN$, we can lift ${}_c\cZ_{m, N, 1}$ to a class in $\CH^2(\cX(m, mN), 1) \otimes \QQ$ where $\cX(m, mN)$ is proper and smooth over $\Zp$. However, the regulator $r_{\et}$ for proper smooth $\Zp$-schemes takes values in $H^1_f$, as a consequence of the commutative diagram of \S \ref{sect:syntomic} above relating $r_{\et}$ to the syntomic regulator $r_{\syn}$; so we are done.
   \end{proof}

   \begin{remark}
    I believe it is known that the regulator map $r_{\et}$ for arbitrary varieties over $p$-adic fields takes values in $H^1_g$, as remarked in \S \ref{sect:syntomic}, which would imply that the localization at $p$ of the Beilinson--Flach classes always lies in this subspace.
   \end{remark}

  \subsection{Local properties of the generalized Beilinson--Flach classes (II)}

   In order to control the local properties of the generalized Beilinson--Flach classes at the ``bad'' primes, we shall make use of the compatibility in the $p$-adic cyclotomic tower, under mild additional hypotheses.

   \begin{assumption}
    \label{assump:smallslope}
    The level $N$ is divisible by $p$, and the $U_p$-eigenvalues $\alpha_f, \alpha_g$ of $f$ and $g$ satisfy
    \[ v_{\fp}(\alpha_f \alpha_g) < 1.\]
   \end{assumption}

   \begin{proposition}
    \label{prop:unram outside p}
    Suppose Assumption \ref{assump:smallslope} holds. Then for any $m \ge 1$ and any prime $v \nmid p$ of $\QQ(\mu_m)$, the cohomology class ${}_c\bfz^{(f, g, N)}_m$ lies in $H^1_f(\QQ(\mu_m)_v, T_{\cO_\fp}(f, g)^*)$.

    If $v_{\fp}(\alpha_f \alpha_g) = 0$, then it lies in $H^1_{nr}(\QQ(\mu_m)_v, T_{\cO_\fp}(f, g)^*)$.
   \end{proposition}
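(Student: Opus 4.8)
The plan is to exploit the norm relations of Corollary \ref{cor:nearlycompatible} in the $p$-cyclotomic direction. Write $T^* = T_{\cO_\fp}(f,g)^*$ and $V^* = V_{L_\fp}(f,g)^*$. First I would reduce to a statement about $V^*$: since $H^1_f(\QQ(\mu_m)_v, T^*)$ is by definition the preimage of $H^1_f(\QQ(\mu_m)_v, V^*)$, and since $H^1_f = H^1_{nr}$ at a prime $v \nmid p$, it suffices to control the image of $\loc_v({}_c\bfz^{(f,g,N)}_m)$ in $H^1(\QQ(\mu_m)_v, V^*)$. By the unramifiedness result proved just above, ${}_c\bfz^{(f,g,N)}_m$ is already unramified at every prime not dividing $mNp$, so I may fix a prime $v$ lying above a rational prime $\ell \mid mN$ with $\ell \neq p$ and must show that $\loc_v({}_c\bfz^{(f,g,N)}_m)$ lies in the unramified subspace there. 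The structural inputs are: for every $r \ge 0$ the class ${}_c\bfz^{(f,g,N)}_{mp^r}$ is again unramified outside $mp^r N p = mNp$, a set \emph{independent of} $r$; the local extension $\QQ(\mu_{mp^r})_w / \QQ(\mu_m)_v$ at a prime $w \mid v$ is unramified, as $\ell \neq p$; and by Corollary \ref{cor:nearlycompatible} --- here using $p \mid N$, which places us in the ``$\ell \mid N$'' case of that corollary --- corestriction from level $mp^r$ to level $mp^{r-1}$ multiplies the class by $\alpha_f \alpha_g$, except possibly for a single Euler-factor correction $(\alpha_f \alpha_g - \sigma_p)$ at the bottom layer when $p \nmid m$.

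The heart of the matter is then a local Iwasawa-cohomology computation at $\ell \neq p$. Writing $H^1_s$ for the singular quotient $H^1(\QQ(\mu_m)_v, -)/H^1_{nr}(\QQ(\mu_m)_v, -)$ and similarly over the tower, the groups $H^1_s(\QQ(\mu_{mp^r})_w, T^*)$ are subquotients of $H^1(I_w, T^*)$ acted on by $\Frob_v^{p^r}$; their inverse limit down the unramified tower, under corestriction, is a finitely generated torsion module over the local Iwasawa algebra $\cO_\fp[[\Gal(\QQ(\mu_{mp^\infty})_w/\QQ(\mu_m)_v)]]$, whose characteristic ideal is controlled by the local Euler factor of $f \otimes g$ at $\ell$. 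The renormalized classes $(\alpha_f \alpha_g)^{-r}\, {}_c\bfz^{(f,g,N)}_{mp^r}$ (corrected at the bottom if $p \nmid m$) assemble into an element of the Iwasawa cohomology of $T^*$ tensored with the distribution algebra --- this is precisely where Assumption \ref{assump:smallslope}, i.e. $v_\fp(\alpha_f \alpha_g) < 1$, is needed for convergence --- and localizing this element at $v$ and projecting to the singular quotient of Iwasawa cohomology yields something that on the one hand is divisible by $(\alpha_f \alpha_g)^r$ for every $r$, and on the other hand lies in the above torsion module; since $v_\fp(\alpha_f \alpha_g) < 1$ these two facts force it to vanish. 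Hence $\loc_v({}_c\bfz^{(f,g,N)}_m) \in H^1_{nr}(\QQ(\mu_m)_v, V^*) \subseteq H^1_f(\QQ(\mu_m)_v, V^*)$, which is the first assertion.

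For the second assertion, when $v_\fp(\alpha_f \alpha_g) = 0$ the element $\alpha_f \alpha_g$ is a unit in $\cO_\fp$, so no passage to distributions is required: the classes $(\alpha_f \alpha_g)^{-r}\, {}_c\bfz^{(f,g,N)}_{mp^r}$ form an honest norm-compatible system in the integral Iwasawa cohomology $\varprojlim_r H^1(\QQ(\mu_{mp^r})_w, T^*)$, and $\loc_v({}_c\bfz^{(f,g,N)}_m)$ is an $\cO_\fp$-multiple of the bottom layer of its class; running the same local computation integrally shows this localization lies in $H^1_{nr}(\QQ(\mu_m)_v, T^*)$ on the nose (one cannot simply deduce this from the $V$-level statement, since $T^*$ need not be unramified at $v$). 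I expect the main obstacle to be exactly the local lemma of the second paragraph: pinning down the inverse limit of the singular quotients $H^1_s(\QQ(\mu_{mp^r})_w, T^*)$ precisely enough --- in particular, controlling its $\fp$-torsion, which can grow with $r$ whenever a Frobenius eigenvalue at $\ell$ is $\equiv 1 \pmod{\fp}$ --- so that divisibility by $(\alpha_f \alpha_g)^r$ with $v_\fp(\alpha_f \alpha_g) < 1$ genuinely forces vanishing. This is also why the argument must be carried out in the cyclotomic tower rather than at any finite layer: at each finite level corestriction is already surjective on the singular quotient, so no information is gained there.
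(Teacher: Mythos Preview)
Your overall strategy --- climb the $p$-cyclotomic tower, use the norm relation $\cores(z_{mp^r}) = (\alpha_f\alpha_g) z_{mp^{r-1}}$, and show the image in the singular quotient at $v$ must die --- is exactly the paper's strategy. But your execution introduces two problems. First, assembling the renormalized classes into an element of $\cH_r(\Gamma) \otimes H^1_{\Iw}$ is the content of Theorem~\ref{thm:Eulersystem}, which is proved \emph{after} this proposition and requires the extra hypothesis $f \not\sim \bar g \otimes \psi$ (to guarantee $H^0(K_\infty,V)=0$); you cannot invoke it here. Second, your description of the vanishing mechanism is backwards: ``divisible by $(\alpha_f\alpha_g)^r$'' gives nothing when $v_\fp(\alpha_f\alpha_g)=0$, and is not what one actually obtains.

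The paper's argument avoids all Iwasawa-theoretic machinery and is entirely elementary at finite level. The key input you are missing is that $H^1(I_v,T^*)$ is \emph{finitely generated over $\Zp$} (see \cite[Prop.~B.2.7(iii)]{rubin00}). Hence the ascending chain of submodules $M_i \coloneqq H^0(K^{\mathrm{nr}}/K_i, H^1(I_v,T^*))$ stabilizes at some $i_0$, and for $i\ge i_0$ the trace map $M_{i+1}\to M_i$ is simply multiplication by $p$ on $M_{i_0}$. If $z_i$ denotes the image of ${}_c\bfz^{(f,g,N)}_{mp^i}$ in $M_i$, the norm relation gives
\[
(\alpha_f\alpha_g)^i z_0 \;=\; p^{\,i-i_0}\,\cores^{i_0}_0(z_{i_0})\quad\text{for all }i\ge i_0,
\]
so $z_0 \in \bigl(p/(\alpha_f\alpha_g)\bigr)^{i-i_0} M_0 + (M_0)_{\mathrm{tors}}$ for all $i$. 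Since $v_\fp(\alpha_f\alpha_g)<1$, the first term goes to zero and $z_0\in (M_0)_{\mathrm{tors}}$; this gives the $H^1_f$ statement. When $\alpha_f\alpha_g$ is a unit the same identity exhibits $z_0$ as infinitely $p$-divisible in the finitely generated $\Zp$-module $M_0$, so $z_0=0$ integrally, which is the $H^1_{nr}$ statement. Your worry about ``$\fp$-torsion growing with $r$'' is therefore misplaced: the $M_i$ all sit inside the fixed finitely generated $\Zp$-module $H^1(I_v,T^*)$, so they stop growing altogether.
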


   \begin{proof}
    To lighten the notation, we write $K = \QQ(\mu_m)_v$, and $M = T_{\cO_\fp}(f, g)^*$. We write $K_i = \QQ(\mu_{mp^i})_v$ (after choosing one of the finitely many primes of $\QQ(\mu_{mp^\infty})$ above $v$). Each $K_i$ is contained in $K^\mathrm{nr}$, since $v \nmid p$.

    For each $i$, there is an inflation-restriction exact sequence
    \[ 0 \rTo H^1(K^{\mathrm{nr}} / K_i, M^{I_v}) \rTo H^1(K_i, M) \rTo H^0(K^\mathrm{nr} / K_i, H^1(I_v, M)) \rTo 0,\]
    and the corestriction maps $H^1(K_{i+1}, M) \to H^1(K_i, M)$ correspond to the trace maps
    \[ H^0(K^\mathrm{nr} / K_{i+1}, H^1(I_v, M)) \to H^0(K^\mathrm{nr} / K_i, H^1(I_v, M)).\]
    Since $M$ is a finitely-generated $\Zp$-module, $H^1(I_v, M)$ is finitely generated over $\Zp$, by \cite[Proposition B.2.7(iii)]{rubin00}; thus the sequence of modules $M_i = H^0(K^\mathrm{nr} / K_i, H^1(I_v, M))$ stabilizes at some $i_0 \gg 0$. So for $i \ge i_0$, the trace maps $M_{i+1} \to M_i$ are simply multiplication by $p$ on $M_{i+1} = M_i = M_{\infty}$.  Let $z_i$ be the image of ${}_c\bfz^{(f, g, N)}_{mp^i}$ in $M_i$. It then follows that for $i \ge i_0$ we have
    \[ (\alpha_f \alpha_g)^i z_0 = p^{i - i_0} \cores_{i_0}^0(z_{i}).\]
    If $\alpha_f \alpha_g$ is a $p$-adic unit, then this immediately implies that $z_0 = 0$, since it is divisible by arbitrarily high powers of $p$. Thus ${}_c\bfz^{(f, g, N)}_{m}$ is unramified at $v$.

    Otherwise, we can only deduce that
    \[ z_0 \in \left(\frac{p^{i - i_0}}{(\alpha_f \alpha_g)^i}\right) M_0 + (M_0)_{\mathrm{tors}}\]
    for all $i \gg 0$, which implies that $z_0 \in (M_0)_{\mathrm{tors}}$ as $v_p(\alpha_f\alpha_g) < 1$. Hence the image of $z_0$ in $M_0 \otimes \Qp$ is zero, so the image of ${}_c\bfz^{(f, g, N)}_{m}$ in $H^1(\QQ(\mu_m)_v, M \otimes \Qp)$ is unramified. Thus ${}_c\bfz^{(f, g, N)}_{m} \in H^1_f(\QQ(\mu_m)_v, M)$.
   \end{proof}


  \subsection{Relation between p-stabilized and non-p-stabilized classes}
   \label{sect:pstab}

   For the arguments of the previous section, we assumed throughout that $p \mid N$. If we are given forms of levels prime to $p$, then we can obtain forms of level divisible by $p$ via ``$p$-stabilization'' (choosing old eigenforms of level divisible by $p$ with the same Hecke eigenvalues at all other primes). In this section, we shall investigate the relations between the classes obtained for the $p$-stabilized and non-$p$-stabilized forms.

   Let $f$ be a normalized eigenform of weight 2 and level $N$, and let $p$ be a prime such that $p \nmid N$. Then there are two eigenforms $f_\alpha$, $f_\beta$ at level $Np$ in the oldspace attached to $f$, whose $U_p$-eigenvalues are the roots $\alpha$, $\beta$ of the Hecke polynomial $X^2 - a_p(f) X + p \epsilon_p(f)$. (We assume, by enlarging the field if necessary, that these lie in our coefficient field $L$.)

   Then there are projection maps
   \begin{align*}
    \pr_{f_\alpha} &: H^1_\et(\overline{Y_1(Np)}, L_\fp) \to V_{L_\fp}(f_\alpha)^*\\
    \pr_{f_\beta} &: H^1_\et(\overline{Y_1(Np)}, L_\fp) \to V_{L_\fp}(f_\beta)^*\\
    \pr_{f} &: H^1_\et(\overline{Y_1(N)}, L_\fp) \to V_{L_\fp}(f)^*\\
   \end{align*}
   and a pushforward map $\pi: H^1_\et(\overline{Y_1(Np)}, L_\fp) \to H^1_\et(\overline{Y_1(N)}, L_\fp)$.

   \begin{proposition}
    In the above situation, there is a nonzero, $G_{\QQ}$-equivariant map
    \[ \pi^{(\alpha)} : V_{L_\fp}(f_\alpha)^* \to V_{L_\fp}(f)^* \]
    and similarly $\pi^{(\beta)}$, with the property that
    \[ \pi^{(\alpha)} \circ \pr_{f_\alpha} + \pi^{(\beta)} \circ \pr_{f_\beta} = \pr_f \circ \pi\]
    as maps $H^1_\et(\overline{Y_1(Np)}, L_\fp) \to  V_{f}^*$.
   \end{proposition}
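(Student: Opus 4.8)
The plan is to work entirely inside the Hecke algebra acting on $H^1_\et(\overline{Y_1(Np)}, L_\fp)$, decompose the $f$-isotypic part of this space under the $U_p$-action, and read off the maps $\pi^{(\alpha)}, \pi^{(\beta)}$ as the resulting projections composed with $\pi$. Concretely, let $T$ be the $L_\fp$-algebra generated by the Hecke operators $T_\ell, U_\ell$ ($\ell \nmid N$), the diamond operators, and $U_p$, acting on $H^1_\et(\overline{Y_1(Np)}, L_\fp)$. The $f$-isotypic component (for the eigensystem at primes away from $p$) is a module $W$ on which $U_p$ satisfies its minimal polynomial $X^2 - a_p(f) X + p\,\varepsilon_p(f) = (X - \alpha)(X - \beta)$. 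First I would treat the generic case $\alpha \neq \beta$: then $U_p$ is semisimple on $W$, so $W = W_\alpha \oplus W_\beta$ with $U_p$ acting by $\alpha$ (resp. $\beta$), and by definition $W_\alpha = V_{L_\fp}(f_\alpha)^*$ (resp. $W_\beta = V_{L_\fp}(f_\beta)^*$), since $V_{L_\fp}(f_\alpha)^*$ is precisely the maximal quotient on which $T'_\ell, U'_\ell$ act by $a_\ell(f)$ and $U'_p$ acts by $\alpha$. The two idempotents $e_\alpha = \tfrac{U_p - \beta}{\alpha - \beta}$ and $e_\beta = \tfrac{U_p - \alpha}{\beta - \alpha}$ in $T$ give $\pr_{f_\alpha}$, $\pr_{f_\beta}$ up to the identification of $W_\alpha, W_\beta$ with the stated quotients.

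**Constructing the maps.** Now the pushforward $\pi: H^1_\et(\overline{Y_1(Np)}, L_\fp) \to H^1_\et(\overline{Y_1(N)}, L_\fp)$ is induced by the natural degeneracy map $Y_1(Np) \to Y_1(N)$ and is $G_\QQ$-equivariant and Hecke-equivariant (away from $p$); composing with $\pr_f$ and restricting to $W$ gives a $G_\QQ$-equivariant map $\pr_f \circ \pi : W \to V_{L_\fp}(f)^*$. I would then simply \emph{define}
\[ \pi^{(\alpha)} = (\pr_f \circ \pi)|_{W_\alpha} : V_{L_\fp}(f_\alpha)^* \to V_{L_\fp}(f)^*, \qquad \pi^{(\beta)} = (\pr_f \circ \pi)|_{W_\beta}. \]
These are automatically $G_\QQ$-equivariant, and since $W = W_\alpha \oplus W_\beta$ and $\pr_{f_\alpha}, \pr_{f_\beta}$ are the projections onto the two summands (followed by the identifications), the identity $\pi^{(\alpha)} \circ \pr_{f_\alpha} + \pi^{(\beta)} \circ \pr_{f_\beta} = \pr_f \circ \pi$ holds by construction on $W$, and both sides vanish on the complementary Hecke-eigenspaces away from $f$, hence the identity holds on all of $H^1_\et(\overline{Y_1(Np)}, L_\fp)$. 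It remains to check $\pi^{(\alpha)}$ and $\pi^{(\beta)}$ are \emph{nonzero}: equivalently, that $\pr_f \circ \pi$ is nonzero on each of $W_\alpha$ and $W_\beta$. This follows because there are \emph{two} degeneracy maps $Y_1(Np) \rightrightarrows Y_1(N)$ (corresponding to $E \mapsto E$ and to the quotient by the order-$p$ subgroup), giving two pushforwards $\pi_1, \pi_2$ with $\pi_2 = \langle p \rangle^{-1} U_p^{\vee}$-type relations; on the oldspace attached to $f$ both $\pr_f \circ \pi_1$ and $\pr_f \circ \pi_2$ are surjective (they realize $V_{L_\fp}(f)^*$ as a quotient), and a suitable $L_\fp$-linear combination of $\pi_1, \pi_2$ picks out each of $W_\alpha, W_\beta$ nontrivially since $\alpha \neq \beta$ forces the restrictions of $\pr_f\circ\pi_1$ and $\pr_f\circ\pi_2$ to $W_\alpha$ (resp. $W_\beta$) to be independent. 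Alternatively one sees nonvanishing from the fact that $f_\alpha$ and $f_\beta$ together span the two-dimensional $f$-oldspace at level $Np$, which surjects onto the newspace; I would pick whichever argument is cleanest.

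**The main obstacle.** The delicate point is the case $\alpha = \beta$ (which happens only when $a_p(f)^2 = 4p\,\varepsilon_p(f)$), where $U_p$ need not be semisimple on $W$ and the clean direct-sum decomposition fails. In that situation $f_\alpha = f_\beta$ as eigenforms, and the statement should be interpreted with $V_{L_\fp}(f_\alpha)^* = V_{L_\fp}(f_\beta)^*$ the single generalized eigenspace quotient; one then takes $\pr_{f_\alpha} = \pr_{f_\beta}$ and $\pi^{(\alpha)} = \pi^{(\beta)} = \tfrac12 (\pr_f \circ \pi)|_{W}$ modulo the identification, checking nonvanishing as before. I expect this degenerate case, together with making the identification of $W_\alpha$ with $V_{L_\fp}(f_\alpha)^*$ precise (matching our conventions where $V(f)$ sits in compactly-supported cohomology and the transposed Hecke operators intervene, cf.\ the discussion after Remark \ref{remark:tildelattice}), to be where the bookkeeping is most fiddly — but there is no serious conceptual difficulty; everything follows from the structure theory of the (commutative, finite-dimensional) Hecke algebra acting on a two-dimensional oldspace and the $G_\QQ$-equivariance of the degeneracy pushforwards.
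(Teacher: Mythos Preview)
Your approach is essentially the same as the paper's: both arguments pass to the $f$-isotypic quotient of $H^1_\et(\overline{Y_1(Np)}, L_\fp)$ (which the paper observes is $4$-dimensional via comparison with modular symbols), decompose it into $U_p'$-eigenspaces, identify these with $V_{L_\fp}(f_\alpha)^*$ and $V_{L_\fp}(f_\beta)^*$, and then define $\pi^{(\alpha)}$, $\pi^{(\beta)}$ by restricting $\pr_f \circ \pi$ to the two summands.

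The one substantive point you are missing is that the case $\alpha = \beta$ simply does not occur: the paper invokes the theorem of Coleman--Edixhoven \cite{colemanedixhoven98} to conclude that the roots of the Hecke polynomial of a weight~$2$ newform at a good prime $p$ are always distinct (equivalently, $U_p$ acts semisimply on the $f$-oldspace). So your entire ``main obstacle'' paragraph is unnecessary, and the direct-sum decomposition $W = W_\alpha \oplus W_\beta$ is always available. Apart from this, your write-up is slightly more explicit than the paper's on two points: you spell out the idempotents $e_\alpha, e_\beta$, and you sketch a nonvanishing argument via the two degeneracy maps (the paper's proof does not address nonvanishing explicitly). Also note that throughout the paper's conventions it is the transposed operators $T_v', U_v'$ (not $T_v, U_v$) that act by $a_v(f)$ on $V_{L_\fp}(f)^*$, which you acknowledge but should carry through consistently.
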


   \begin{proof}
    Let $H^1_\et(\overline{Y_1(Np)}, L_\fp)_{[f]}$ denote the maximal quotient of $H^1_\et(\overline{Y_1(Np)}, L_\fp)$ where the operators $T_v'$ for $v \nmid Np$ and $U_v'$ for $v \mid N$ act via $a_v(f)$. Then, by comparison with modular symbols, we see that $H^1_\et(\overline{Y_1(Np)}, L_\fp)_{[f]}$ is 4-dimensional, and the $U_p'$ operator on this space is annihilated by the Hecke polynomial.

    By \cite[Theorem 2.1]{colemanedixhoven98}, the roots $\alpha$ and $\beta$ are distinct, so we may write $H^1_\et(\overline{Y_1(Np)}, L_\fp)_{[f]}$ as a direct sum of $G_{\QQ}$-stable eigenspaces, which map isomorphically onto the quotients $V_{L_\fp}(f_\alpha)$ and $V_{L_\fp}(f_\beta)$. This gives a lifting of $V_{L_\fp}(f_\alpha)$ to a subspace of $H^1_\et(\overline{Y_1(Np)}, L_\fp)_{[f]}$, and the map $\pr_f \circ \pi$ clearly factors through $H^1_\et(\overline{Y_1(Np)}, L_\fp)_{[f]}$ as stated.
   \end{proof}

   Now let us suppose we have two normalized weight 2 eigenforms $f, g$, of level $N$ prime to $p$ as before. Let $\alpha, \beta$ be the roots of the Hecke polynomial of $f$ at $p$, and similarly $\gamma, \delta$ for $g$. By the Coleman--Edixhoven theorem cited above, we have $\alpha \ne \beta$ and $\gamma \ne \delta$.

   A choice of root of each polynomial gives $p$-stabilized eigenforms $f_{\alpha}$, $g_{\gamma}$ of level $Np$. Then for each $m$ we have
   \begin{itemize}
    \item a class $\bfz_m^{(f, g, N)}$ in the cohomology of $V_{L_\fp}(f, g)^*$, which is a quotient of $H^2_\et(\overline{Y_1(N)^2}, L_\fp)(2)$;
    \item an element $\bfz_m^{(f_\alpha, g_\gamma, Np)}$ living in the cohomology of the representation $V_{L_\fp}(f_\alpha, g_\gamma)^*$, which is a quotient of $H^2_\et(\overline{Y_1(Np)^2}, L_\fp)(2)$.
   \end{itemize}

   These two representations are isomorphic as abstract Galois representations, but are realized differently as quotients of \'etale cohomology. We can regard both as quotients of the following space:

   \begin{definition}
    Let $H^2_\et(\overline{Y_1(Np)^2}, L_\fp)_{f, g}$ denote the maximal $L_\fp$-linear quotient of $H^2_\et(\overline{Y_1(Np)^2}, L_\fp)$ on which the operators $(T_v', 1)$ (for $v \nmid Np$) and $(U_v', 1)$ (for $v \mid N$) act via the Fourier coefficients of $f$, and similarly for $g$.
   \end{definition}

   \begin{note}
    Using the K\"unneth formula and a modular symbol calculation, we see that $H^2_\et(\overline{Y_1(Np)^2}, L_\fp)_{f, g}$ has dimension 16, and can be viewed as a direct sum of four simultaneous eigenspaces for the two operators $(U_p',1)$ and $(1, U_p')$, corresponding to the stabilizations $(\alpha, \gamma)$, $(\alpha, \delta)$, $(\beta, \gamma)$ and $(\beta, \delta)$. Each of these is a 4-dimensional $\Gal(\overline{\QQ} / \QQ)$-stable $L_\fp$-linear subspace.
   \end{note}

   For the remainder of this section, we shall assume the following:

   \begin{assumption}
    \label{assumption:neq}
    We have $\alpha \gamma \ne \beta \delta$.
   \end{assumption}

   \begin{remark}
    \label{rmk:neq}
    Assumption \ref{assumption:neq} is a consequence of Assumption \ref{assump:smallslope}, since $v_\fp(\alpha \beta \gamma \delta) = v_\fp(p^2 \varepsilon_f(p) \varepsilon_g(p)) = 2$, so if $v_\fp(\alpha \gamma) < 1$, then $v_\fp(\beta\delta) > 1$.
   \end{remark}

   \begin{proposition}
    If Assumption \ref{assumption:neq} is satisfied, then the operator
    \[ J_{\alpha, \gamma} \coloneqq \frac{(\mathcal{U}  - \alpha \delta)(\mathcal{U} - \beta \gamma)(\mathcal{U} - \beta \delta)} {(\alpha\gamma  - \alpha \delta)(\alpha\gamma - \beta \gamma)(\alpha\gamma - \beta \delta)},\]
    where $\mathcal{U} = (U_p', U_p')$, is an idempotent in $\End_{L_\fp} H^2_\et(\overline{Y_1(Np)^2}, L_\fp)_{f, g}$; it is equal to the identity on the $(\alpha, \gamma)$ eigenspace and zero on the other three eigenspaces.
   \end{proposition}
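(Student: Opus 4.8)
The plan is to recognise $J_{\alpha,\gamma}$ as a Lagrange-interpolation idempotent for the operator $\mathcal U = (U_p', U_p')$ acting on the $16$-dimensional space $H^2_\et(\overline{Y_1(Np)^2}, L_\fp)_{f,g}$. First I would invoke the Note preceding the statement: by the K\"unneth formula (Lemma~\ref{lem:kunneth}) and a modular-symbol computation, the commuting operators $(U_p', 1)$ and $(1, U_p')$ act semisimply on this space, with eigenvalues $\{\alpha,\beta\}$ and $\{\gamma,\delta\}$ respectively, the semisimplicity (and the fact that each has exactly two eigenvalues) using $\alpha\ne\beta$ and $\gamma\ne\delta$, which hold by the Coleman--Edixhoven theorem \cite{colemanedixhoven98}. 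Hence the space is the direct sum of the four simultaneous eigenspaces $V_{(\alpha,\gamma)},V_{(\alpha,\delta)},V_{(\beta,\gamma)},V_{(\beta,\delta)}$, each $4$-dimensional and $G_\QQ$-stable, and since $\mathcal U = (U_p',1)\circ(1,U_p')$ it acts as the scalar $\alpha\gamma$ (resp.\ $\alpha\delta$, $\beta\gamma$, $\beta\delta$) on the corresponding block.

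Next I would check that the scalar in the denominator of $J_{\alpha,\gamma}$ is nonzero. The roots $\alpha,\beta$ (resp.\ $\gamma,\delta$) are nonzero, being the roots of $X^2 - a_p(f)X + p\,\varepsilon_p(f)$ (resp.\ of the analogous polynomial for $g$), whose constant term is a unit times $p$. Therefore $\alpha\gamma\ne\alpha\delta$ because $\gamma\ne\delta$, $\alpha\gamma\ne\beta\gamma$ because $\alpha\ne\beta$, and $\alpha\gamma\ne\beta\delta$ by Assumption~\ref{assumption:neq}. Thus
\[ P(X) \coloneqq \frac{(X - \alpha\delta)(X - \beta\gamma)(X - \beta\delta)}{(\alpha\gamma - \alpha\delta)(\alpha\gamma - \beta\gamma)(\alpha\gamma - \beta\delta)} \in L_\fp[X] \]
is a well-defined polynomial of degree $3$ with $J_{\alpha,\gamma} = P(\mathcal U)$ in $\End_{L_\fp} H^2_\et(\overline{Y_1(Np)^2}, L_\fp)_{f,g}$, and $P(\alpha\gamma) = 1$ while $P$ vanishes at $\alpha\delta$, $\beta\gamma$ and $\beta\delta$.

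Finally, since $\mathcal U$ acts on each block $V_\star$ as the scalar $\lambda_\star \in \{\alpha\gamma,\alpha\delta,\beta\gamma,\beta\delta\}$, the operator $P(\mathcal U)$ acts on $V_\star$ as the scalar $P(\lambda_\star)$; by the previous paragraph this equals $1$ on $V_{(\alpha,\gamma)}$ and $0$ on the other three blocks. Hence $J_{\alpha,\gamma}$ is precisely the projector onto $V_{(\alpha,\gamma)}$ along $V_{(\alpha,\delta)}\oplus V_{(\beta,\gamma)}\oplus V_{(\beta,\delta)}$, which is in particular idempotent; this proves all three assertions.

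There is no serious obstacle here: the only point needing care is the input from the preceding Note that $\mathcal U$ is semisimple with the four prescribed eigenvalues, so that the Lagrange formula genuinely computes a block projector, together with the non-degeneracy of the three denominators, for which Assumption~\ref{assumption:neq} and the Coleman--Edixhoven theorem are exactly what is required. Note in particular that one does \emph{not} need the four eigenvalues $\alpha\gamma,\alpha\delta,\beta\gamma,\beta\delta$ to be pairwise distinct; it suffices that $\alpha\gamma$ differ from each of the other three.
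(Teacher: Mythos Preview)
Your proof is correct and follows essentially the same approach as the paper's: both argue that $\alpha\gamma$ is distinct from each of $\alpha\delta$, $\beta\gamma$, $\beta\delta$ (using Coleman--Edixhoven for the first two and Assumption~\ref{assumption:neq} for the third), so that the $\alpha\gamma$-eigenspace of $\mathcal{U}$ coincides with the $(\alpha,\gamma)$ simultaneous eigenspace, and then observe that the Lagrange interpolation polynomial gives the desired projector. Your write-up is more detailed than the paper's, and your closing remark that pairwise distinctness of all four products is not required is a nice clarification.
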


   \begin{proof}
    We know that $\alpha \gamma \ne \alpha \delta$ and $\alpha\gamma \ne \beta \gamma$ by the Coleman--Edixhoven theorem, so if $\alpha\gamma \ne \beta\delta$, the $\alpha\gamma$ eigenspace for the operator $\mathcal{U}$ coincides with the $(\alpha, \gamma)$ simultaneous eigenspace for $(U_p', 1)$ and $(1, U_p')$. We may thus define a projection onto this eigenspace by applying to $\mathcal{U}$ a polynomial that is 1 at $\alpha\gamma$ and zero at the other three eigenvalues.
   \end{proof}

   \begin{proposition}
    There is a $\Gal(\overline{\QQ}/\QQ)$-equivariant $L_\fp$-linear isomorphism
    \[ \pi^{(\alpha, \gamma)} : V_{L_\fp}(f_\alpha, g_\gamma)^* \rTo^\cong V_{L_\fp}(f, g)^*\]
    with the property that
    \[ \pi^{(\alpha, \gamma)} \circ \pr_{(\alpha, \gamma)} = \pi \circ J_{\alpha, \gamma}\]
    as maps $H^2_\et(\overline{Y_1(Np)^2}, L_\fp)_{f, g}(2) \to V_{L_\fp}(f, g)^*$, where
    \[ \pi: H^2_\et(\overline{Y_1(Np)^2}, L_\fp)_{f, g} \to V_{L_\fp}(f, g)^*\]
    is the natural map induced by the pushforward map $Y_1(Np)^2 \to Y_1(N)^2$.
   \end{proposition}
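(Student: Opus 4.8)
The plan is to construct $\pi^{(\alpha,\gamma)}$ as the tensor product of the two one-variable maps $\pi^{(\alpha)}$ and $\pi^{(\gamma)}$ produced by the earlier proposition, and then to identify this tensor map, after composing with the projector $\pr_{(\alpha,\gamma)}$, with $\pi \circ J_{\alpha,\gamma}$ via the K\"unneth decomposition. Concretely, I would first recall that $H^2_\et(\overline{Y_1(Np)^2}, L_\fp)_{f,g}$ is, by Lemma \ref{lem:kunneth} applied to the open affine surface $Y_1(Np)^2$ and a modular symbol computation, isomorphic to $H^1_\et(\overline{Y_1(Np)}, L_\fp)_{[f]} \otimes_{L_\fp} H^1_\et(\overline{Y_1(Np)}, L_\fp)_{[g]}$, and that under this isomorphism the operator $\mathcal{U} = (U_p', U_p')$ acts as $U_p' \otimes U_p'$. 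The Coleman--Edixhoven theorem \cite{colemanedixhoven98} gives $\alpha \ne \beta$ and $\gamma \ne \delta$, so each factor decomposes as a direct sum of $G_\QQ$-stable lines (lifting the quotients $V_{L_\fp}(f_\alpha)^*$, $V_{L_\fp}(f_\beta)^*$ and similarly for $g$), hence the sixteen-dimensional space splits as the direct sum of the four simultaneous eigenspaces of $(U_p',1)$ and $(1,U_p')$.

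Next I would verify that $J_{\alpha,\gamma}$ is the projector onto the $(\alpha,\gamma)$-summand. This is exactly the content of the preceding proposition once one knows, using $\alpha\gamma \ne \alpha\delta$, $\alpha\gamma \ne \beta\gamma$ (Coleman--Edixhoven) and $\alpha\gamma\ne\beta\delta$ (Assumption \ref{assumption:neq}), that the four values $\alpha\gamma, \alpha\delta, \beta\gamma, \beta\delta$ of $\mathcal{U}$ on the four summands are pairwise distinct; the given polynomial in $\mathcal{U}$ is then the Lagrange interpolation idempotent picking out the $\alpha\gamma$-eigenspace, which equals the $(\alpha,\gamma)$-simultaneous eigenspace. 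Then $\pr_{(\alpha,\gamma)}$ is by definition the composite of $J_{\alpha,\gamma}$ with the K\"unneth identification of this summand with $V_{L_\fp}(f_\alpha)^* \otimes V_{L_\fp}(g_\gamma)^* = V_{L_\fp}(f_\alpha, g_\gamma)^*$.

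It then remains to define $\pi^{(\alpha,\gamma)} \coloneqq \pi^{(\alpha)} \otimes \pi^{(\gamma)}$ and to check $\pi^{(\alpha,\gamma)} \circ \pr_{(\alpha,\gamma)} = \pi \circ J_{\alpha,\gamma}$. Both sides are maps out of the $(\alpha,\gamma)$-summand (since $J_{\alpha,\gamma}$ kills the other three), so it suffices to compare them there; and on that summand the left-hand side is literally $(\pi^{(\alpha)}\circ\pr_{f_\alpha}) \otimes (\pi^{(\gamma)}\circ\pr_{f_\gamma})$ while the right-hand side is $(\pr_f\circ\pi)\otimes(\pr_g\circ\pi)$ restricted to that summand, because the pushforward $Y_1(Np)^2 \to Y_1(N)^2$ is the external product of the two one-variable pushforwards and the K\"unneth isomorphism is functorial (Lemma \ref{lem:kunneth}). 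The equality then follows from the one-variable relation $\pi^{(\alpha)}\circ\pr_{f_\alpha} + \pi^{(\beta)}\circ\pr_{f_\beta} = \pr_f\circ\pi$ of the earlier proposition: tensoring the $f$-version with the $g$-version and restricting to the $(\alpha,\gamma)$-component kills the terms involving $\pi^{(\beta)}$ or $\pi^{(\delta)}$, leaving precisely $\pi^{(\alpha)}\otimes\pi^{(\gamma)}$. Finally, $\pi^{(\alpha,\gamma)}$ is an isomorphism because it is a tensor product of the two one-variable maps $\pi^{(\alpha)}$, $\pi^{(\gamma)}$, each of which is an isomorphism of two-dimensional spaces (nonzero and $G_\QQ$-equivariant between irreducible representations, hence injective, hence bijective by dimension count).

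The main obstacle I anticipate is bookkeeping rather than conceptual: one must be careful that the K\"unneth isomorphism genuinely intertwines $(U_p',1)$ and $(1,U_p')$ with $U_p'\otimes 1$ and $1\otimes U_p'$ on the relevant $[f]$- and $[g]$-quotients, and that "$\pr_{(\alpha,\gamma)}$" as used in the statement is defined to be compatible with the one-variable $\pr_{f_\alpha}$, $\pr_{g_\gamma}$ through K\"unneth; both are straightforward from the functoriality in Lemma \ref{lem:kunneth} but need to be spelled out to make the tensor-product factorization of the two sides legitimate.
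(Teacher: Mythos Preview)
Your proof is correct and reaches the same conclusion as the paper, though by a somewhat more explicit route. The paper's argument is a one-liner: it defines $\pi^{(\alpha,\gamma)} \coloneqq \pi \circ \iota^{(\alpha,\gamma)}$, where $\iota^{(\alpha,\gamma)}$ is the section of $\pr_{(\alpha,\gamma)}$ identifying $V_{L_\fp}(f_\alpha,g_\gamma)^*$ with the $(\alpha,\gamma)$-eigenspace inside $H^2_\et(\overline{Y_1(Np)^2}, L_\fp)_{f,g}$; the desired identity then reduces to $\iota^{(\alpha,\gamma)} \circ \pr_{(\alpha,\gamma)} = J_{\alpha,\gamma}$, which is immediate from the preceding proposition. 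Your approach unpacks this via K\"unneth, defining $\pi^{(\alpha,\gamma)}$ as $\pi^{(\alpha)} \otimes \pi^{(\gamma)}$ and deducing the identity from the one-variable relation $\pi^{(\alpha)}\circ\pr_{f_\alpha} + \pi^{(\beta)}\circ\pr_{f_\beta} = \pr_f\circ\pi$ tensored with its analogue for $g$. Both constructions yield the same map (the K\"unneth section is the tensor product of the one-variable sections), and your version has the merit of making the isomorphism claim explicit, since each one-variable factor is a nonzero $G_{\QQ}$-equivariant map between irreducible representations and hence bijective.

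One small slip: you assert that the four $\mathcal{U}$-eigenvalues $\alpha\gamma, \alpha\delta, \beta\gamma, \beta\delta$ are pairwise distinct, but the listed hypotheses only give $\alpha\gamma \ne \alpha\delta, \beta\gamma, \beta\delta$; nothing rules out $\alpha\delta = \beta\gamma$. This does not affect the argument, however: $J_{\alpha,\gamma}(\mathcal{U})$ is still the projector onto the $(\alpha,\gamma)$-simultaneous eigenspace because $\mathcal{U}$ is diagonalizable (as a tensor product of diagonalizable operators) and the defining polynomial vanishes at each of the other three values while taking the value $1$ at $\alpha\gamma$.
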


   \begin{proof}
    We define $\pi^{(\alpha, \gamma)}$ as $\pi \circ \iota^{(\alpha, \gamma)}$, where $\iota^{(\alpha, \gamma)}$ is the section of $\pr_{\alpha, \gamma}$ identifying $V_{L_\fp}(f_\alpha, g_\gamma)$ with the $(\alpha, \gamma)$-eigenspace of $H^2_\et(\overline{Y_1(Np)^2}, L_\fp)_{f, g}$. The composition $\iota^{(\alpha, \gamma)} \circ \pr_{\alpha, \gamma}$ is therefore equal to the projection operator $J_{\alpha, \gamma}$ above, and the proposition follows.
   \end{proof}

   \begin{corollary}
    \label{cor:pstabformula}
    For $p \nmid m$ we have
    \[ \pi^{(\alpha, \gamma)} \left( {}_c z_m^{(f_\alpha, g_\gamma, Np)} \right) =
    \frac{\alpha \gamma \left(1 - \tfrac{\beta \delta}{p} \sigma_p^{-1}\right)\left(1 - \tfrac{\alpha \delta}{p} \sigma_p^{-1}\right) \left(1 -\tfrac{\beta \gamma}{p} \sigma_p^{-1}\right)} {(\gamma - \delta) (\alpha - \beta)} \cdot {}_c z_m^{(f, g, N)}.
    \]
   \end{corollary}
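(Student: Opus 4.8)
The plan is to reduce the assertion to an identity in the motivic cohomology of $Y_1(N)^2 \otimes \QQ(\mu_m)$, and then to verify that identity by a Hecke computation of the same genre as the proof of Theorem~\ref{thm:stronghighernormrel}.

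\emph{Reduction to motivic cohomology.} By construction ${}_c z_m^{(f_\alpha, g_\gamma, Np)}$ is the image of $r_{\et}({}_c\Xi_{m,Np,1})$ under the projection $\pr_{(\alpha,\gamma)}$ onto the $(\alpha,\gamma)$-eigenspace, and by the preceding proposition $\pi^{(\alpha,\gamma)} \circ \pr_{(\alpha,\gamma)} = \pi \circ J_{\alpha,\gamma}$, where $J_{\alpha,\gamma}$ is the Lagrange interpolation polynomial in $\mathcal{U}=(U_p',U_p')$ that cuts out that eigenspace. Since $r_{\et}$ is compatible with the action of Hecke correspondences and with the proper pushforward $Y_1(Np)^2 \to Y_1(N)^2$ (Proposition~\ref{prop:regulatorfunctorial}), and $\pi$ is the realization of that pushforward, this gives
\[ \pi^{(\alpha,\gamma)}\big({}_c z_m^{(f_\alpha,g_\gamma,Np)}\big) = \kappa_{f,g,\QQ(\mu_m)}\Big( (\pr\times\pr)_*\big( \widetilde J_{\alpha,\gamma}\cdot {}_c\Xi_{m,Np,1}\big)\Big), \]
where $\pr\colon Y_1(Np)\to Y_1(N)$ is the degeneracy map and $\widetilde J_{\alpha,\gamma}$ denotes the same polynomial, now acting on $\CH^2(Y_1(Np)^2\otimes\QQ(\mu_m),1)\otimes L_\fp$ through the Hecke operators $(U_p',U_p')$. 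As ${}_c z_m^{(f,g,N)} = \kappa_{f,g,\QQ(\mu_m)}({}_c\Xi_{m,N,1})$, it suffices to show that $(\pr\times\pr)_*\big(\widetilde J_{\alpha,\gamma}\cdot {}_c\Xi_{m,Np,1}\big)$ equals the operator displayed on the right of the corollary applied to ${}_c\Xi_{m,N,1}$, at least after applying $\kappa_{f,g,\QQ(\mu_m)}$.

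\emph{The Hecke computation.} Expanding $\widetilde J_{\alpha,\gamma}$ as a polynomial in $(U_p',U_p')$, the left-hand side is a linear combination of the terms $(\pr\times\pr)_*\big((U_p',U_p')^k\cdot {}_c\Xi_{m,Np,1}\big)$ for $0 \le k \le 3$. The $k=0$ term is exactly the first norm relation (Theorem~\ref{thm:firstnormrelation}), namely $\big(1-(\langle p^{-1}\rangle,\langle p^{-1}\rangle)^*\sigma_p^{-2}\big){}_c\Xi_{m,N,1}$. For $k\ge 1$ one invokes the standard relation expressing the degeneracy pushforward of $U_p'$ (at level $Np$) in terms of $T_p'$, the diamond operator $\langle p\rangle$ and the second degeneracy map, and then feeds in the Siegel-unit norm relations of Theorem~\ref{thm:siegel-compat} and the distribution relations of Proposition~\ref{prop:distrelations} to evaluate the resulting ``twisted'' pushforward contributions; this rewrites each $(\pr\times\pr)_*\big((U_p',U_p')^k {}_c\Xi_{m,Np,1}\big)$ as a fixed polynomial in $(T_p',T_p')$, $(\langle p\rangle,\langle p\rangle)^*$ and $\sigma_p^{\pm1}$ applied to ${}_c\Xi_{m,N,1}$. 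Much of the bookkeeping here runs parallel to the evaluation of the $(T_p',T_p')$-term in the proof of Theorem~\ref{thm:secondnormrelationprime} (Corollaries~\ref{cor:TpTpterm} and~\ref{cor:Sp2term}) and to the proof of Theorem~\ref{thm:stronghighernormrel}, the various components of the preimage of $C_{m,N,1}$ under $\pr$ being treated in turn.

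\emph{Assembly and main obstacle.} Applying $\kappa_{f,g,\QQ(\mu_m)}$ turns the Hecke operators into scalars: $(T_p',T_p')$ acts on ${}_c z_m^{(f,g,N)}$ as $a_p(f)a_p(g) = (\alpha+\beta)(\gamma+\delta)$ and $(\langle p\rangle,\langle p\rangle)^*$ as $\varepsilon_f(p)^{-1}\varepsilon_g(p)^{-1}$ (cf.\ the $c$-dependence formula~\eqref{eq:cfactor}). Substituting these, together with $\alpha\beta = p\,\varepsilon_f(p)$ and $\gamma\delta = p\,\varepsilon_g(p)$, into $\widetilde J_{\alpha,\gamma}$ and simplifying the resulting rational function of $\alpha,\beta,\gamma,\delta,\sigma_p$ — the denominators $\alpha\gamma-\beta\delta$, $\alpha-\beta$, $\gamma-\delta$ being invertible by Assumption~\ref{assumption:neq} and the Coleman--Edixhoven theorem — yields the factor in the statement; note that its three linear terms are $P_p(f,g,\tfrac{\sigma_p^{-1}}{p})$ with the factor $(1-\tfrac{\alpha\gamma}{p}\sigma_p^{-1})$ of the chosen stabilization deleted (Proposition~\ref{prop:explicitEulerfactor}), which accounts for their shape. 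The one genuinely laborious step is the Hecke computation: pinning down the precise degeneracy pushforward of $(U_p',U_p')^k$ and, above all, tracking the diamond and Frobenius twists through Poincar\'e duality — which introduces a cyclotomic twist and inverts the Hecke and diamond actions on the relevant quotients of \'etale cohomology, and is the reason $\sigma_p^{-1}$ rather than $\sigma_p$ appears in the answer. As in Theorem~\ref{thm:stronghighernormrel}, this is a finite but unpleasant verification best organized component by component.
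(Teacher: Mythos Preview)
Your reduction step is exactly the paper's: use $\pi^{(\alpha,\gamma)}\circ\pr_{(\alpha,\gamma)}=\pi\circ J_{\alpha,\gamma}$ to reduce to computing $\pi\big((j_0+j_1\mathcal U+j_2\mathcal U^2+j_3\mathcal U^3)\,{}_c z_m^{(f,g,Np)}\big)$. The difference is in how the Hecke computation is organized. The paper does not attack $(\pr\times\pr)_*\mathcal U^k$ via degeneracy identities; instead it uses the second norm relation \emph{at level $Np$} (Theorem~\ref{thm:secondnormbadprime}, where $p$ divides the level so the relation is simply $\mathcal U = \norm^{pm}_m + \sigma_p$) in reverse, obtaining by induction
\[
\mathcal U^r\big({}_c z_m^{(f,g,Np)}\big)=\sum_{s=0}^{r}\sigma_p^{\,s}\,\norm^{p^{r-s}m}_m\big({}_c z_{p^{r-s}m}^{(f,g,Np)}\big).
\]
Applying $\pi$ (the first norm relation, Theorem~\ref{thm:firstnormrelation}) turns each term into a norm of ${}_c z_{p^{r-s}m}^{(f,g,N)}$ at level $N$, with an extra factor $(1-\varepsilon_f(p)\varepsilon_g(p)\sigma_p^{-2})$ for the $s=r$ term; and these norms are exactly what Theorems~\ref{thm:secondnormrelationprime} and~\ref{conj:weaknormrel} (already proved) evaluate. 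The final simplification to the displayed product of Euler factors is then a symbolic computation (done in Sage). Your route --- expressing $\pr_*U_p'$ via $T_p'$, $\langle p\rangle$, and the other degeneracy map, and re-doing the Siegel-unit analysis ``in parallel'' to those theorems --- would reproduce essentially the same input but forfeits the modularity of quoting the norm relations as black boxes.

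One small inaccuracy: the intermediate motivic expression is not a polynomial only in $(T_p',T_p')$ and $(\langle p\rangle,\langle p\rangle)^*$. Asymmetric operators such as $(T_p'^2,\langle p^{-1}\rangle)$ and $(\langle p^{-1}\rangle,T_p'^2)$ inevitably appear (cf.\ the Euler factor $\underline P_p$ in Theorem~\ref{conj:weaknormrel}); they only collapse to symmetric scalars after applying $\kappa_{f,g}$. This does not affect your final assembly, but it means the bookkeeping you describe is strictly harder than stated. Also, the appearance of $\sigma_p^{-1}$ in the answer comes directly from the shape of the norm relations (Theorems~\ref{thm:secondnormrelationprime}, \ref{conj:weaknormrel}), not from Poincar\'e duality.
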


   \begin{proof}
    We shall prove this by a slightly roundabout argument, using the second norm relation ``in reverse'' to understand how $\mathcal{U}$ acts on the zeta elements. Let us write the polynomial
    \[ \frac{(X  - \alpha \delta)(X - \beta \gamma)(X - \beta \delta)} {(\alpha\gamma  - \alpha \delta)(\alpha\gamma - \beta \gamma)(\alpha\gamma - \beta \delta)} \in L_\fp[X]\]
    as $j_0 + j_1 X + j_2 X^2 + j_3 X^3$, and let ${}_c z_m^{(f, g, Np)}$ be the image of $\reg_{\et}({}_c \Xi_{m, Np, j})$ in $H^2_\et(\overline{Y_1(Np)^2}, L_\fp)_{f, g}$.

    Essentially by definition, we have ${}_c z_m^{(f_\alpha, g_\gamma, Np)} = \pr_{\alpha, \gamma} {}_c z_m^{(f, g, Np)}$, and hence we may apply the preceding proposition to obtain
    \begin{align*}
     \pi^{(\alpha, \gamma)}\left({}_c z_m^{(f_\alpha, g_\gamma, Np)}\right) &= \pi\left( J_{\alpha, \gamma} \cdot {}_c z_m^{(f, g, Np)}\right)\\
     &= \pi \left( \left(j_0 + j_1 \mathcal{U} + j_2 \mathcal{U}^2 + j_3 \mathcal{U}^3\right) {}_c z_m^{(f_\alpha, g_\gamma, Np)}\right).
    \end{align*}
    By the second norm relation for $p \mid N$ (Theorem \ref{thm:secondnormbadprime}) and induction on $r$, we see that for $r \ge 1$ we have
    \[ \mathcal{U}^r\left( {}_c z_m^{(f, g, Np)}\right) = \norm^{p^r m}_{m}\left({}_c z_{p^r m}^{(f, g, Np)}\right) + \sigma_p \cdot \norm^{p^{r-1} m}_{m}\left({}_c z_{p^{r-1} m}^{(f, g, Np)}\right) + \dots + \sigma_p^r \cdot {}_c z_{m}^{(f, g, Np)}.\]
    On the other hand, by the first norm relation (Theorem \ref{thm:firstnormrelation}) we know that for $r \ge 1$ we have
    \[ \pi\left(\norm^{p^r m}_{m}({}_c z_{p^r m}^{(f, g, Np)})\right) = \norm^{p^r m}{m} \left(\pi({}_c z_{p^r m}^{(f, g, Np)})\right) = \norm^{p^r m}_m {}_c z_{m}^{f, g, N},\]
    while for $r = 0$ we have
    \[ \pi\left({}_c z_{m}^{(f, g, Np)}\right) = (1 - \varepsilon_p(f) \varepsilon_p(g) \sigma_p^{-2}) {}_c z_{m}^{(f, g, N)}.\]
    Combining these statements we have
    \begin{multline*}
     \pi^{(\alpha, \gamma)}\left({}_c z_m^{(f_\alpha, g_\gamma, Np)}\right) =
     (j_0 + j_1 \sigma_p + j_2 \sigma_p^2 + j_3 \sigma_p^3)(1 - \varepsilon_p(f) \varepsilon_p(g) \sigma_p^{-2}){}_c z_{m}^{(f, g, N)} \\
     + (j_1 + j_2 \sigma_p + j_3 \sigma_p^2) \norm^{pm}_m\left({}_c z_{pm}^{(f, g, N)}\right) \\
     + (j_2 + \sigma_p j_3) \norm^{p^2 m}_m\left({}_c z_{p^2 m}^{(f, g, N)}\right)
     + j_3  \norm^{p^3 m}_m\left({}_c z_{p^3 m}^{(f, g, N)}\right).
    \end{multline*}
    The prime-to-$p$ case of the second norm relation (Theorem \ref{thm:secondnormrelationprime}) gives a formula for the second term, and Theorem \ref{conj:weaknormrel} extends this to the remaining two terms. Substituting these in, the entirety of the right-hand side simplifies to a linear combination of terms each of which is ${}_c z_{m}^{(f, g, N)}$ acted on by a polynomial in $\sigma_p, \sigma_p^{-1}$ with coefficients given as rational functions in $\alpha, \beta, \gamma, \delta$. After a computation (which was carried out using Sage, \cite{sage}), one finds the polynomial simplifies to the product of Euler-type factors stated above.
   \end{proof}

   This extremely laborious computation allows us to prove the following theorem, which will be crucial to the Iwasawa-theoretic applications of our Euler system:

   \begin{corollary}
    \label{cor:h1f}
    Suppose $f, g$ admit $p$-stabilizations $f_\alpha, g_\gamma$ such that $v_\fp(\alpha \gamma) < 1$. Suppose $m$ is coprime to $p$ and neither of the quantities $\alpha \delta / p$, $\beta \gamma / p$ is an $r$-th root of unity, where $r$ is the order of $p$ in $(\ZZ / m\ZZ)^\times$.

    Then for every prime $v \nmid p$ of $\QQ(\mu_m)$, the localization of ${}_c \bfz_{m}^{f, g, N}$ at $v$ lies in $H^1_f$.
   \end{corollary}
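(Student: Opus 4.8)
The plan is to reduce to the case of $p$-stabilized forms, for which the statement already follows from the norm-compatibility argument in the cyclotomic tower, and then transfer the conclusion back using the explicit comparison of Corollary~\ref{cor:pstabformula}.

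First I would apply Proposition~\ref{prop:unram outside p} to the pair $(f_\alpha, g_\gamma)$ of $p$-stabilized eigenforms of level $Np$. These satisfy Assumption~\ref{assump:smallslope}: the level is divisible by $p$, and the $U_p$-eigenvalues are $\alpha,\gamma$ with $v_\fp(\alpha\gamma)<1$ by hypothesis. Hence for every prime $v\nmid p$ of $\QQ(\mu_m)$ the localization of ${}_c\bfz_m^{(f_\alpha,g_\gamma,Np)}$ lies in $H^1_f$. Since $\pi^{(\alpha,\gamma)}\colon V_{L_\fp}(f_\alpha,g_\gamma)^*\to V_{L_\fp}(f,g)^*$ is an isomorphism of $G_{\QQ}$-representations, the induced map on $H^1(\QQ(\mu_m),-)$ carries $H^1_f$ into $H^1_f$ at every place away from $p$ (for $v\nmid p$ this subgroup is intrinsic to the Galois-module structure, being the inflation of the unramified cohomology of the inertia invariants). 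Therefore every localization of the class $\pi^{(\alpha,\gamma)}\big({}_c\bfz_m^{(f_\alpha,g_\gamma,Np)}\big)$ at a prime $v\nmid p$ lies in $H^1_f$.

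Next I would invoke Corollary~\ref{cor:pstabformula}, which identifies this class with $E(\sigma_p)\cdot{}_c\bfz_m^{(f,g,N)}$, where
\[
E(X)=\frac{\alpha\gamma}{(\gamma-\delta)(\alpha-\beta)}\Big(1-\tfrac{\beta\delta}{p}X^{-1}\Big)\Big(1-\tfrac{\alpha\delta}{p}X^{-1}\Big)\Big(1-\tfrac{\beta\gamma}{p}X^{-1}\Big),
\]
and $\sigma_p\in\Gal(\QQ(\mu_m)/\QQ)$ acts on $H^1(\QQ(\mu_m),V_{L_\fp}(f,g)^*)$ in the usual way; note that $\sigma_p$ has order $r$, the order of $p$ in $(\ZZ/m\ZZ)^\times$. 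The key point is that $E(\sigma_p)$ is invertible, with inverse again lying in the group ring $L_\fp[\Gal(\QQ(\mu_m)/\QQ)]$. Indeed the scalar factor is nonzero because $\alpha\neq\beta$ and $\gamma\neq\delta$ (Coleman--Edixhoven) and $\alpha\gamma\neq 0$; and for $c\in L_\fp^\times$ with $c^r\neq 1$ the element $1-c\sigma_p^{-1}$ is a unit in the subring $L_\fp[\langle\sigma_p\rangle]\cong L_\fp[X]/(X^r-1)$, since $(1-c\sigma_p^{-1})\sum_{i=0}^{r-1}c^i\sigma_p^{-i}=1-c^r$. For $c=\beta\delta/p$ one has $c^r\neq 1$ automatically, because $v_\fp(\beta\delta)=2-v_\fp(\alpha\gamma)>1$ (cf.~Remark~\ref{rmk:neq}), so $c$ has strictly positive valuation; for $c=\alpha\delta/p$ and $c=\beta\gamma/p$ this is precisely the hypothesis that these are not $r$-th roots of unity.

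Finally, fix a prime $\ell\neq p$ and apply $\loc_\ell$ (localization at the set of primes of $\QQ(\mu_m)$ above $\ell$) to the identity ${}_c\bfz_m^{(f,g,N)}=E(\sigma_p)^{-1}\,\pi^{(\alpha,\gamma)}\big({}_c\bfz_m^{(f_\alpha,g_\gamma,Np)}\big)$. Localization at the primes above $\ell$ is $\Gal(\QQ(\mu_m)/\QQ)$-equivariant, and the semilocal subspace $\bigoplus_{v\mid\ell}H^1_f$ is Galois-stable, hence preserved by $E(\sigma_p)^{-1}$; by the previous paragraph the image of $\pi^{(\alpha,\gamma)}\big({}_c\bfz_m^{(f_\alpha,g_\gamma,Np)}\big)$ lands in it. So $\loc_v\big({}_c\bfz_m^{(f,g,N)}\big)\in H^1_f$ for every $v\mid\ell$, and since $\ell\neq p$ was arbitrary this proves the corollary; the passage between the rational classes appearing in Corollary~\ref{cor:pstabformula} and the integral classes is harmless, since $H^1_f$ of a lattice is by definition the preimage of $H^1_f$ of the ambient vector space. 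The only real work here is the invertibility of $E(\sigma_p)$; the one input beyond Corollary~\ref{cor:pstabformula} that is genuinely needed is the valuation bound $v_\fp(\beta\delta)>1$, which removes the first Euler factor for free.
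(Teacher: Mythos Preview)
Your proof is correct and follows essentially the same approach as the paper: apply Proposition~\ref{prop:unram outside p} to the $p$-stabilized classes, transfer via the isomorphism $\pi^{(\alpha,\gamma)}$ using Corollary~\ref{cor:pstabformula}, and invert the Euler factor in the group ring using the valuation bound on $\beta\delta/p$ together with the hypothesis on $\alpha\delta/p$ and $\beta\gamma/p$. Your treatment is slightly more explicit about the Galois-equivariance of localization and the stability of the semilocal $H^1_f$, but the argument is otherwise identical.
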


   \begin{proof}
    We know from Proposition \ref{prop:unram outside p} above that the class ${}_c \bfz_{m}^{f_\alpha, g_\beta, Np}$ is in $H^1_f$ at all primes away from $p$. Since $\alpha \gamma \ne \beta \delta$ by Remark \ref{rmk:neq}, the formula of the previous corollary applies.

    We note that for $\lambda \in L_\fp$, the element $1 - \lambda \sigma_p^{-1}$ is invertible in $L_\fp[(\ZZ / m\ZZ)^\times]$ if and only if $\lambda^r \ne 1$, where $r$ is the order of $\sigma_p$ as above. It is clear that $\tfrac{\beta \delta}{p}$ cannot be a root of unity of any order, as its $\fp$-adic valuation is strictly positive. By assumption neither $\alpha \delta / p$ nor $\beta \gamma / p$ is an $r$-th root of unity; so the quantity
    \[ \frac{\alpha \gamma \left(1 - \tfrac{\beta \delta}{p} \sigma_p^{-1}\right)\left(1 - \tfrac{\alpha \delta}{p} \sigma_p^{-1}\right) \left(1 -\tfrac{\beta \gamma}{p} \sigma_p^{-1}\right)} {(\gamma - \delta) (\alpha - \beta)}\]
    is invertible in $L_\fp[(\ZZ / m\ZZ)^\times]$. Hence ${}_c z_{m}^{(f, g, N)}$ is also in $H^1_f$.
   \end{proof}

   \begin{remark}
    Note that the conclusion of Corollary \ref{cor:h1f} does not explicitly mention the choice of $p$-stabilization $(\alpha, \gamma)$; we use only the fact that one exists. We conjecture that the conclusion holds much more generally.
   \end{remark}


  \subsection{Iwasawa cohomology classes}

   \begin{notation}
    We now let $S$ be a finite set of places of $\QQ$ containing $p$, $\infty$, and all primes whose inertia groups act nontrivially on $T_{\cO_\fp}(f, g)^*$ (which can only happen for primes dividing $N$). Let $\QQ^S$ be the maximal extension of $\QQ$ unramified outside $S$.
   \end{notation}

   \begin{definition}
    For $K$ a finite extension of $\QQ$ contained in $\QQ^S$, $i \ge 0$, and $T$ a topological $\Zp[G_K]$-module unramified outside $S$, define
    \[ H^i_S(K, T) = H^i(\QQ^S / K, T). \]
    If $T$ is also a finitely-generated $\Zp$-module, and $K_\infty$ is a $p$-adic Lie extension of $K$ unramified outside $S$, define
    \[ H^i_{\Iw, S}(K_\infty, T) = \varprojlim_{L} H^i_S(L, T)\]
    where $L$ varies over the set of finite extensions of $K$ contained in $K_\infty$ and the inverse limit is with respect to the corestriction maps.
   \end{definition}

   \begin{remark}
    If $K_\infty$ contains finite extensions of $K$ of degree divisible by arbitrarily large powers of $p$ -- for instance, if $K_\infty / K$ is Galois and its Galois group is a $p$-adic Lie group of positive dimension -- then $H^0_{\Iw, S}(K_\infty, V)$ is zero, and $H^1_{\Iw, S}(K_\infty, T)$ is in fact independent of $S$, as long as $S$ contains the set $S_0$ consisting of all primes above $p$ or $\infty$, all primes ramifying in $K_\infty / K$ and all primes at which $T$ is ramified.
   \end{remark}

   We now let $f, g$ be eigenforms of level $N$, with coefficients in a field $L$, as in Definition \ref{def:choosingMF}.

   \begin{proposition}
    Suppose $m \ge 1$ and there is no Dirichlet character $\psi$ of conductor dividing $mp^\infty$ such that $f \sim \bar{g} \otimes \psi$, where $\sim$ signifies that these two eigenforms have the same Hecke eigenvalues away from their levels (i.e. correspond to the same newform). Then
    \[ H^0(\QQ(\mu_{mp^\infty}), V_{L_\fp}(f, g)^*) = 0.\]
   \end{proposition}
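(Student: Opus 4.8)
Write $K = \QQ(\mu_{mp^\infty})$. The plan is to show that a nonzero element of $H^0(K, V_{L_\fp}(f,g)^*) = \bigl(V_{L_\fp}(f)^* \otimes_{L_\fp} V_{L_\fp}(g)^*\bigr)^{G_K}$ would produce a Dirichlet character $\psi$ of conductor dividing $mp^\infty$ with $f \sim \bar g \otimes \psi$, contrary to hypothesis. First I would observe that, since $G_K$ is normal in $G_\QQ$ with abelian quotient $\Gal(K/\QQ)$, the invariant subspace $W := H^0(K, V_{L_\fp}(f,g)^*)$ is itself a $G_\QQ$-subrepresentation on which $G_\QQ$ acts through $\Gal(K/\QQ)$. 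If $W \ne 0$, then after extending scalars to $\overline{\Qp}$ the commuting operators by which $\Gal(K/\QQ)$ acts on $W$ have a common eigenvector (their $\overline{\Qp}$-span in $\operatorname{End}(W\otimes\overline{\Qp})$ is a finite-dimensional commutative algebra over the algebraically closed field $\overline{\Qp}$), so $W \otimes \overline{\Qp}$ contains a one-dimensional $G_\QQ$-submodule $\overline{\Qp}(\psi_0)$, where $\psi_0\colon G_\QQ \to \overline{\Qp}^\times$ is a continuous character factoring through $\Gal(K/\QQ)$, i.e.\ the $p$-adic avatar of a $\overline{\QQ}$-valued Dirichlet character of conductor dividing $mp^\infty$.

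Next I would translate the inclusion $\overline{\Qp}(\psi_0) \hookrightarrow V_{L_\fp}(f)^* \otimes V_{L_\fp}(g)^* \otimes \overline{\Qp}$ into a relation between the two eigenforms. By adjunction this is equivalent to $\Hom_{G_\QQ}\bigl(V_{L_\fp}(f)\otimes\overline{\Qp}(\psi_0),\, V_{L_\fp}(g)^*\otimes\overline{\Qp}\bigr)\ne 0$; since the Galois representation attached to a cuspidal eigenform is absolutely irreducible and two-dimensional, any nonzero map here is an isomorphism, giving $V_{L_\fp}(g)^*\otimes\overline{\Qp} \cong V_{L_\fp}(f)\otimes\overline{\Qp}(\psi_0)$ (which then descends to a finite extension of $L_\fp$). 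I would then feed in the identification $V_{L_\fp}(g)^* \cong V_{L_\fp}(\bar g)(1)$ from the Remark following the definition of $T_{\cO_\fp}(g)^*$ — using that the Galois representation depends only on the underlying newform — to rewrite this as $V_{L_\fp}(f) \cong V_{L_\fp}(\bar g)(1)\otimes\psi_0^{-1}$.

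The step I expect to be the crux is pinning down $\psi_0$ and keeping track of its conductor. Here I would use that $V_{L_\fp}(f)$ and $V_{L_\fp}(\bar g)$ are attached to weight-$2$ eigenforms and so share the same pair of Hodge--Tate weights, whereas a twist by $\chi_{\mathrm{cyc}}$ shifts Hodge--Tate weights by one: comparing weights in $V_{L_\fp}(f) \cong V_{L_\fp}(\bar g)(1)\otimes\psi_0^{-1}$ forces $\psi_0^{-1}$ to equal $\chi_{\mathrm{cyc}}^{-1}$ times a finite-order character. Hence $\psi := \psi_0^{-1}\chi_{\mathrm{cyc}}$ is a Dirichlet character, still of conductor dividing $mp^\infty$ because $\chi_{\mathrm{cyc}}$ ramifies only at $p$, and the isomorphism becomes $V_{L_\fp}(f) \cong V_{L_\fp}(\bar g)\otimes\psi = V_{L_\fp}(\bar g \otimes \psi)$. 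Comparing traces of Frobenius and applying the Chebotarev density theorem then yields $a_\ell(f) = \psi(\ell)\,a_\ell(\bar g)$ for almost all primes $\ell$, i.e.\ $f \sim \bar g \otimes \psi$ with the conductor of $\psi$ dividing $mp^\infty$ — the excluded case — so $H^0(K, V_{L_\fp}(f,g)^*) = 0$. The only other point needing attention is the case where one of $V_{L_\fp}(f)$, $V_{L_\fp}(g)$ fails to be irreducible after restriction to $G_K$ (CM or dihedral forms), but the argument above never uses such irreducibility — only absolute irreducibility over $G_\QQ$, which always holds for cuspforms — so no separate treatment is required.
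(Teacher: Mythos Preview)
Your proof is correct and follows essentially the same route as the paper's: both find a one-dimensional $G_\QQ$-subrepresentation of the invariants (after extending scalars), convert this via adjunction and the identification $V_{L_\fp}(g)^* \cong V_{L_\fp}(\bar g)(1)$ into an isomorphism $V_{L_\fp}(f) \cong V_{L_\fp}(\bar g) \otimes \psi$, and then argue that $\psi$ is a Dirichlet character of conductor dividing $mp^\infty$. The paper simply asserts that $\psi = \chi\lambda$ has finite order, while you justify this via Hodge--Tate weights; that step is correct, though the inference ``HT weight $0$ $\Rightarrow$ finite order'' implicitly uses that $\psi$ is de Rham (being a ratio of de Rham representations) and factors through $\Gal(\QQ(\mu_{mp^\infty})/\QQ)$, whose inertia at $p$ has finite index --- a slightly quicker alternative is to compare determinants, which shows directly that $\psi^2$ is a ratio of nebentypus characters, hence finite order.
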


   \begin{proof}
    The space $H^0(\QQ(\mu_{mp^\infty}), V_{L_\fp}(f, g)^*$ is preserved by the residual action of the abelian group $\Gal(\QQ(\mu_{mp^\infty}) / \QQ)$, so if it is nonzero, it contains a subspace on which $\Gal(\QQ(\mu_{mp^\infty}) / \QQ)$ acts by some character $\lambda$ (possibly after a finite extension of the field $L$). This gives a nonzero $\Gal(\QQbar /\QQ)$-equivariant homomorphism
    \[ V_{L_\fp}(f) \to V_{L_\fp}(g)^*(\lambda) = V_{L_\fp}(\bar g)(\chi \lambda) \]
    where $\chi$ is the cyclotomic character. Since both sides are irreducible representations of $\Gal(\QQbar / \QQ)$, this map must be an isomorphism; consequently $\psi = \chi \lambda$ has finite order, and $f \sim \bar{g} \otimes \psi$.
   \end{proof}

   We can now prove the main result of this section, which shows that the elements $ (\alpha_f \alpha_g)^{-i}{}_c\bfz^{(f, g, N)}_{mp^i}$ for $i\geq 0$ can be glued together into a (possibly \emph{unbounded}) Euler system:

   \begin{theorem}
    \label{thm:Eulersystem}
    Let $m, N\geq 1$ with $(m, p) = 1$, and let $p$ be a prime dividing $N$. Let $f, g$ be modular forms of level $N$ which are eigenforms for all the Hecke operators, with $U_p$-eigenvalues $\alpha_f$ and $\alpha_g$ such that $h \coloneqq v_p(\alpha_f \alpha_g) < 1$. Suppose that $f \not\sim \bar{g} \otimes \psi$ for all Dirichlet characters $\psi$ of conductor dividing $mp^\infty$. Then for any $r$ such that $h \le r < 1$, there is a unique element
    \[ {}_c\mathfrak{z}^{(f,g,N)}_{m,r}\in \cH_r(\Gamma) \otimes_{\Lambda(\Gamma)} H^1_{\Iw,S}(\QQ(\mu_{mp^\infty}), T_{\cO_\fp}(f, g)^*) \]
    whose projection to $H_S^1(\QQ(\mu_{mp^i}), V_{L_\fp}(f, g)^*)$ is equal to
    \[ (\alpha_f \alpha_g)^{-i}{}_c\bfz^{(f, g, N)}_{mp^i}\]
    if $i \ge 1$, and to
    \[ (1 - (\alpha_f \alpha_g)^{-1} \sigma_p) {}_c\bfz^{(f, g, N)}_{m}\]
    if $i = 0$.

    Moreover, if $\ell$ is a prime not dividing $mN$, the corestriction map sends ${}_c\mathfrak{z}^{(f,g,N)}_{\ell m,r}$ to
    \[\sigma_\ell\left( (\ell - 1)(1 - \varepsilon_f(\ell)\varepsilon_g(\ell) \sigma_\ell^{-2}) - \ell P_\ell(\ell^{-1} \sigma_\ell^{-1}) \right) {}_c\mathfrak{z}^{(f, g, N)}_{m, r}.\]
   \end{theorem}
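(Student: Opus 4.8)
The plan is to glue the normalised classes $(\alpha_f\alpha_g)^{-i}\,{}_c\bfz^{(f,g,N)}_{mp^i}$ into a single distribution-valued Iwasawa class, using the norm relations of Corollary~\ref{cor:nearlycompatible} (applied with the prime $p$), the vanishing $H^0(\QQ(\mu_{mp^\infty}), V_{L_\fp}(f,g)^*) = 0$ established above, and the formalism of the distribution algebra $\cH_r(\Gamma)$. \emph{Step~1 (norm-compatibility of the normalised system).} Since $p \mid N$, Corollary~\ref{cor:nearlycompatible} gives $\cores_{mp^i}^{mp^{i+1}}\big({}_c\bfz^{(f,g,N)}_{mp^{i+1}}\big) = (\alpha_f\alpha_g)\,{}_c\bfz^{(f,g,N)}_{mp^i}$ for $i \ge 1$ (the case $p \mid mp^i$), and $\cores_{m}^{mp}\big({}_c\bfz^{(f,g,N)}_{mp}\big) = (\alpha_f\alpha_g - \sigma_p)\,{}_c\bfz^{(f,g,N)}_{m}$ (the case $p \nmid m$). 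Setting $x_i = (\alpha_f\alpha_g)^{-i}\,{}_c\bfz^{(f,g,N)}_{mp^i}$ for $i \ge 1$ and $x_0 = (1 - (\alpha_f\alpha_g)^{-1}\sigma_p)\,{}_c\bfz^{(f,g,N)}_{m}$, a direct computation shows $\cores_{mp^i}^{mp^{i+1}}(x_{i+1}) = x_i$ for every $i \ge 0$, so $(x_i)_{i \ge 0}$ is compatible under corestriction.

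\emph{Step~2 (interpolation and uniqueness).} The valuations $v_\fp\big((\alpha_f\alpha_g)^{-i}\big) = -ih$ grow linearly in $i$, so $(x_i)$ does not lie in the bounded Iwasawa module $H^1_{\Iw,S}(\QQ(\mu_{mp^\infty}), T_{\cO_\fp}(f,g)^*)$; instead it should be packaged as an element of $\cH_r(\Gamma) \otimes_{\Lambda(\Gamma)} H^1_{\Iw,S}(\QQ(\mu_{mp^\infty}), T_{\cO_\fp}(f,g)^*)$, where $\Gamma \cong \Gal(\QQ(\mu_{mp^\infty})/\QQ(\mu_m))$, for each $r$ with $h \le r < 1$. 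The vanishing of $H^0(\QQ(\mu_{mp^\infty}), V_{L_\fp}(f,g)^*)$ yields, by the standard control (descent) argument, that the specialisation maps $H^1_{\Iw,S}(\QQ(\mu_{mp^\infty}), T_{\cO_\fp}(f,g)^*) \otimes_{\Lambda(\Gamma)} \Lambda(\Gamma)/I_i \to H^1_S(\QQ(\mu_{mp^i}), T_{\cO_\fp}(f,g)^*)$ (where $I_i$ cuts out the $i$-th layer) have kernel and cokernel annihilated by a fixed power of $p$, hence become isomorphisms after inverting $p$. The growth bound $\|x_i\| = O(p^{ih})$ is then precisely the condition for $(x_i)$ to arise from an element of $\cH_r(\Gamma) \otimes_{\Lambda(\Gamma)} H^1_{\Iw,S}$ for any $r \ge h$, with the bound $r < 1$ identifying the target of the layer-wise projections with the cohomology in the statement; uniqueness of ${}_c\mathfrak{z}^{(f,g,N)}_{m,r}$ follows since, by the same control statement together with $\bigcap_i I_i\,\cH_r(\Gamma) = 0$, any element of $\cH_r(\Gamma) \otimes_{\Lambda(\Gamma)} H^1_{\Iw,S}$ whose image in $H^1_S(\QQ(\mu_{mp^i}), V_{L_\fp}(f,g)^*)$ vanishes for every $i$ must be zero. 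This step is the main obstacle: the conversion of the \emph{unbounded} compatible system $(x_i)$ into a single class rests on the control theorem for $H^1_{\Iw,S}$ — which is where the hypothesis $f \not\sim \bar g \otimes \psi$ enters, via the $H^0$-vanishing — and on a careful comparison of the $p$-adic growth of $(x_i)$ with the order filtration on $\cH_r(\Gamma)$, the slope condition $h \le r < 1$ being exactly what makes the interpolation possible.

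\emph{Step~3 (the residual norm relation).} Identifying $\Gal(\QQ(\mu_{\ell m p^\infty})/\QQ(\mu_{\ell m}))$ with $\Gamma$ by restriction (legitimate as $\ell \nmid p$), corestriction induces a map $\cores \colon \cH_r(\Gamma) \otimes_{\Lambda(\Gamma)} H^1_{\Iw,S}(\QQ(\mu_{\ell m p^\infty}), T_{\cO_\fp}(f,g)^*) \to \cH_r(\Gamma) \otimes_{\Lambda(\Gamma)} H^1_{\Iw,S}(\QQ(\mu_{mp^\infty}), T_{\cO_\fp}(f,g)^*)$ compatible with projection to each finite layer. For $\ell \nmid mN$ the relevant modular curves have good reduction at $\ell$, and the last part of Corollary~\ref{cor:nearlycompatible} gives $\cores_{mp^i}^{\ell m p^i}\big({}_c\bfz^{(f,g,N)}_{\ell m p^i}\big) = \sigma_\ell\big((\ell-1)(1 - \varepsilon_f(\ell)\varepsilon_g(\ell)\sigma_\ell^{-2}) - \ell P_\ell(\ell^{-1}\sigma_\ell^{-1})\big)\,{}_c\bfz^{(f,g,N)}_{mp^i}$ for all $i \ge 0$, with $\sigma_\ell$ and $P_\ell$ commuting with the action of $\Gamma$. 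Hence $\cores\big({}_c\mathfrak{z}^{(f,g,N)}_{\ell m, r}\big)$ and $\sigma_\ell\big((\ell-1)(1 - \varepsilon_f(\ell)\varepsilon_g(\ell)\sigma_\ell^{-2}) - \ell P_\ell(\ell^{-1}\sigma_\ell^{-1})\big)\,{}_c\mathfrak{z}^{(f,g,N)}_{m,r}$ have identical projections to $H^1_S(\QQ(\mu_{mp^i}), V_{L_\fp}(f,g)^*)$ for every $i$, so they coincide by the uniqueness established in Step~2.
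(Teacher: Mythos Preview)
Your argument is correct and follows essentially the same route as the paper. The paper's proof is terser because it outsources your Step~2 to Proposition~\ref{prop:lifttoIwasawacohomology} in the appendix, which packages exactly the lifting of a system $x_n \in H^1(K_n,T)$ with $\cores(x_{n+1}) = \alpha x_n$ and $v_p(\alpha) \le r < 1$ into a unique element of $\cH_r(\Gamma)\otimes_{\Lambda} H^1_{\Iw}$, under the hypothesis $T^{G_{K_\infty}} = 0$ (which follows from the $H^0$-vanishing you invoke); your sketch of the control argument and the role of the slope bound is precisely what that proposition encodes. You are also more thorough than the paper in one respect: you explicitly verify the ``Moreover'' clause via uniqueness and Corollary~\ref{cor:nearlycompatible}, which the paper's proof does not address.
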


   \begin{proof}
    The existence of ${}_c\mathfrak{z}^{(f,g,N)}_{m,r}$ satisfying the projection formula for $i \ge 1$ is immediate from Corollary \ref{cor:nearlycompatible} and Proposition \ref{prop:lifttoIwasawacohomology}. The projection formula for $i = 0$ follows from the $i = 0$ case of Corollary \ref{cor:nearlycompatible}.
   \end{proof}

   \begin{note}
    The elements ${}_c\mathfrak{z}^{(f,g,N)}_{m,r}$ are in fact independent of $r \in [h, 1)$, in the sense that if $v_p(\alpha_f\alpha_g)\le r<r'<1$, then $ {}_c\mathfrak{z}^{(f,g,N)}_{m,r'}$ is the image of $ {}_c\mathfrak{z}^{(f,g,N)}_{m,r}$  under the natural map
    \[ \cH_r(\Gamma) \otimes_{\Lambda(\Gamma)} H^1_{\Iw,S}(\QQ(\mu_{mp^\infty}), T_{\cO_\fp}(f, g)^*) \rTo \cH_{r'}(\Gamma) \otimes_{\Lambda(\Gamma)} H^1_{\Iw,S}(\QQ(\mu_{mp^\infty}), T_{\cO_\fp}(f, g)^*)\]
    induced by the inclusion $\cH_r(\Gamma)\hookrightarrow \cH_{r'}(\Gamma)$.
    \end{note}

   In the case when $v_p(\alpha_f\alpha_g)=0$, we can prove a stronger result; in this case we can dispense with the assumption that $f$ is not a twist of $\bar{g}$, and we even get integral coefficients.

   \begin{theorem}
    \label{thm:ordinaryEulersystem}
    Assume that $f$ and $g$ are eigenforms of level dividing $N$, and such that $v_p(\alpha_f\alpha_g)=0$. Then there is a unique element ${}_c\mathfrak{z}^{(f,g,N)}_m\in H^1_{\Iw,S}(\QQ(\mu_{mp^\infty}), T_{\cO_\fp}(f, g)^*)$
    whose projection to $H_S^1(\QQ(\mu_{mp^i}), T_{\cO_\fp}(f, g)^*)$ is equal to
    \[
     \begin{cases}
      (\alpha_f \alpha_g)^{-i}{}_c\bfz^{(f, g, N)}_{mp^i} & \text{if $i \ge 1$}\\
      \left(1 - (\alpha_f \alpha_g)^{-1}\sigma_p\right){}_c\bfz^{(f, g, N)}_{m} & \text{if $i = 0$.}
     \end{cases}
    \]
   \end{theorem}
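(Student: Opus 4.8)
The plan is to deduce this from Theorem \ref{thm:Eulersystem} by observing that when $v_p(\alpha_f\alpha_g) = 0$ the whole construction collapses to an integral, bounded statement, and that the hypothesis on $f$ and $g$ not being twists of each other becomes unnecessary. First I would recall the key structural fact: by Corollary \ref{cor:nearlycompatible}, for $\ell = p \mid N$ we have $\cores_{m}^{p m}\big({}_c\bfz^{(f, g, N)}_{pm}\big) = (\alpha_f \alpha_g)\cdot {}_c\bfz^{(f, g, N)}_{m}$ when $p \mid m$, so that the normalized classes $(\alpha_f \alpha_g)^{-i}{}_c\bfz^{(f, g, N)}_{mp^i}$ for $i \ge 1$ are \emph{exactly} corestriction-compatible (not merely compatible up to an Euler factor). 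Since $\alpha_f\alpha_g$ is a $\fp$-adic unit under the hypothesis $v_p(\alpha_f\alpha_g) = 0$, the normalization factor $(\alpha_f\alpha_g)^{-1}$ lies in $\cO_\fp^\times$, so these form a genuine inverse system in the integral Iwasawa cohomology groups $H^1_S(\QQ(\mu_{mp^i}), T_{\cO_\fp}(f, g)^*)$, with no need to pass to $\cH_r(\Gamma)$-coefficients.

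Next I would assemble the element. Define ${}_c\mathfrak{z}^{(f,g,N)}_m \in H^1_{\Iw,S}(\QQ(\mu_{mp^\infty}), T_{\cO_\fp}(f, g)^*) = \varprojlim_i H^1_S(\QQ(\mu_{mp^i}), T_{\cO_\fp}(f, g)^*)$ to be the inverse limit of the classes $(\alpha_f \alpha_g)^{-i}{}_c\bfz^{(f, g, N)}_{mp^i}$ over $i \ge 1$; the compatibility just noted shows this is a well-defined element of the inverse limit. To pin down the bottom layer $i = 0$, I would apply the $\ell = p$, $p \nmid m$ case of Corollary \ref{cor:nearlycompatible}, which gives $\cores_{m}^{pm}\big({}_c\bfz^{(f, g, N)}_{pm}\big) = (\alpha_f\alpha_g - \sigma_p)\cdot {}_c\bfz^{(f, g, N)}_m = (\alpha_f\alpha_g)\big(1 - (\alpha_f\alpha_g)^{-1}\sigma_p\big)\cdot{}_c\bfz^{(f, g, N)}_m$; dividing by $\alpha_f\alpha_g$ shows that the image of the $i=1$ layer $(\alpha_f\alpha_g)^{-1}{}_c\bfz^{(f, g, N)}_{pm}$ under corestriction to $\QQ(\mu_m)$ is precisely $\big(1 - (\alpha_f\alpha_g)^{-1}\sigma_p\big)\cdot{}_c\bfz^{(f, g, N)}_m$, which is the claimed $i=0$ projection. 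Uniqueness follows because the projection maps from $H^1_{\Iw,S}(\QQ(\mu_{mp^\infty}), -)$ to the finite layers $H^1_S(\QQ(\mu_{mp^i}), T_{\cO_\fp}(f, g)^*)$ for $i \ge 1$ are jointly injective (the inverse limit is taken over exactly these groups), so prescribing the image at every layer $i \ge 1$ determines the element.

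The one genuine subtlety — and the main obstacle — is why the no-twist hypothesis of Theorem \ref{thm:Eulersystem} can be dropped here. In the non-integral case that hypothesis was needed to guarantee $H^0(\QQ(\mu_{mp^\infty}), V_{L_\fp}(f, g)^*) = 0$, which in turn was used (via Proposition \ref{prop:lifttoIwasawacohomology}) to lift a compatible system of classes uniquely into Iwasawa cohomology with $\cH_r(\Gamma)$-coefficients. In the ordinary ($v_p(\alpha_f\alpha_g)=0$) case we do not need any such lifting result: the normalized classes already corestriction-compatible, so the inverse limit literally exists and defines the element directly, with no division by $L$-values and no vanishing hypothesis on $H^0$. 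I would spell out this point carefully, noting that $H^1_{\Iw,S}$ is defined as an honest inverse limit (Definition preceding the statement) and that the argument is purely formal once exact compatibility is in hand. I would also remark that the corestriction/Euler-system relation in primes $\ell \nmid mN$ follows verbatim from the last part of Corollary \ref{cor:nearlycompatible} exactly as in Theorem \ref{thm:Eulersystem}, though it is not part of this statement.
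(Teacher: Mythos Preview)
Your proposal is correct and follows essentially the same approach as the paper's proof: define the normalized classes $(\alpha_f\alpha_g)^{-i}{}_c\bfz^{(f,g,N)}_{mp^i}$ for $i\ge 1$, observe that the unit hypothesis makes them integral and Corollary~\ref{cor:nearlycompatible} makes them corestriction-compatible, take the inverse limit, and read off the $i=0$ layer from the $p\nmid m$ case of that same corollary. Your opening framing (``deduce this from Theorem~\ref{thm:Eulersystem}'') is slightly misleading, since you do not actually invoke that theorem --- you reconstruct its argument directly from Corollary~\ref{cor:nearlycompatible}, which is exactly what the paper does; your added explanation of why the no-twist hypothesis becomes unnecessary is a useful clarification that the paper leaves implicit.
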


   \begin{proof}
     For $i\geq 1$, let
     \[ {}_c\mathfrak{z}^{(f,g,N)}_{m,i} = (\alpha_f \alpha_g)^{-i} {}_c\bfz^{(f, g, N)}_{mp^i}.\]
     As $v_p(\alpha_f\alpha_g)=0$, we have ${}_c\mathfrak{z}^{(f,g,N)}_{m,i}\in H^1_S(\QQ(\mu_{mp^i}), T_{\cO_\fp}(f, g)^*)$. Moreover, it is clear from Corollary \ref{cor:nearlycompatible} that
     \[ \cores_{i/i-1}({}_c\mathfrak{z}^{(f,g,N)}_{m,i} )=  {}_c\mathfrak{z}^{(f,g,N)}_{m,i-1},\]
     i.e. ${}_c\mathfrak{z}^{(f,g,N)}_m = \big({}_c\mathfrak{z}^{(f,g,N)}_{m,i}\big)_{i\geq 1}$ defines an element in $H^1_{\Iw,S}(\QQ(\mu_{mp^\infty}), T_{\cO_\fp}(f, g)^*)$. The projection formula for $i=0$ follows as before.
   \end{proof}

   \begin{remark}
    The difficulties arising when $v_p(\alpha_f \alpha_g) \ge 1$ are somewhat reminiscent of the ``critical slope'' case in the Iwasawa theory of a single modular form over the cyclotomic tower, cf.~\cite{pollackstevens11,pollackstevenspreprint,loefflerzerbes10}. However, in our situation the conditions we must impose are more restrictive; in particular, given forms $f', g'$ of level prime to $p$, at most two of the four possible choices of $p$-stablizations $f,g$ of $f', g'$ will be possible, and in many cases (e.g. if $a_p(f') = a_p(g') = 0$) there are no valid choices at all.

    Our methods using higher Chow groups can perhaps be thought of as an ``algebraic avatar'' of the modular symbol computations of \cite{amicevelu75}. It is interesting to speculate whether the overconvergent modular symbols of \cite{pollackstevens11} also admit such an algebraic analogue, which could conceivably be applied in the critical-slope cases.
   \end{remark}


  \subsubsection{Dispensing with \texorpdfstring{$c$}{c}}

   We now investigate the extent to which the ``smoothing factor'' $c$ may be removed.

   \begin{notation}
    If $R$ is a integral domain, we write $Q(R)$ for its field of fractions. For integers $c$ and $m$ such that $(c,m)=1$, we write $[c]$ for the image of $\sigma_c$ in $\Zp[\Gal(\QQ(\mu_m)/\QQ)]$.
   \end{notation}

    Under the hypotheses of Theorem \ref{thm:ordinaryEulersystem}, there exists an element
    \[
     \mathfrak{z}^{(f,g,N)}_m\in H^1_{\Iw,S}(\QQ(\mu_{mp^\infty}), T_{\cO_\fp}(f, g)^*) \otimes_{\Lambda(\Gamma_1)} Q(\Lambda(\Gamma_1))
    \]
    such that
    \begin{equation}
     \label{eq:crelation2}
     {}_c\mathfrak{z}^{(f,g,N)}_m=(c^2-\varepsilon_f(c)^{-1}\varepsilon_g(c)^{-1}[c]^2)\mathfrak{z}^{(f,g,N)}_m.
    \end{equation}

    \begin{remark}
     Note that we are identifying $\Gamma_1$ with $\Gal(\QQ(\mu_{mp^\infty}) / \QQ(\mu_{mp}))$. We may define $\mathfrak{z}^{(f,g,N)}_m$ as
     \[(d^2 - \varepsilon_f(d)^{-1}\varepsilon_g(d)^{-1}[d]^2)^{-1} {}_d\mathfrak{z}^{(f,g,N)}_m\]
     for any $d > 1$ coprime to $6N$ and congruent to $1 \bmod mp$; then $[d]$ lies in $\Gamma_1$, so the expression is well-defined, and it is evidently independent of the choice of $d$.
    \end{remark}

    \begin{notation}
     Write $\Gamma^{(m)} = \Gal(\QQ(\mu_{mp^\infty}) / \QQ) \cong (\ZZ / m\ZZ)^\times \times \Gamma$.
    \end{notation}

    \begin{lemma}
     \label{lem:choosec}
     Let $\fp$ be a prime ideal of $\Lambda(\Gamma^{(m)})$ of height $1$ which does not contain $p$. If the conductor of the Dirichlet character $\varepsilon_f \varepsilon_g$ does not divide $mp^\infty$, then there exists an integer $c > 1$ coprime to $6mpN$ such that $c^2 - \varepsilon_f(c)^{-1}\varepsilon_g(c)^{-1}[c]^2 \notin \fp$.
    \end{lemma}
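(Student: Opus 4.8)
\textbf{Proof plan for Lemma \ref{lem:choosec}.} The plan is to argue by contradiction: suppose that for \emph{every} integer $c > 1$ coprime to $6mpN$ we have $c^2 - \varepsilon_f(c)^{-1}\varepsilon_g(c)^{-1}[c]^2 \in \fp$, and derive a constraint on the conductor of $\varepsilon_f\varepsilon_g$. Write $\Lambda = \Lambda(\Gamma^{(m)})$ and let $\chi = \varepsilon_f\varepsilon_g$, a Dirichlet character of some conductor $M$. The quotient $\Lambda / \fp$ is an integral domain, finite over $\Zp$ once we invert $p$ (since $\fp$ has height $1$ and does not contain $p$), so it embeds into the ring of integers of some finite extension of $\Qp$; composing with a choice of embedding into $\overline{\QQ}_p$, the tautological character $\Gamma^{(m)} \to (\Lambda/\fp)^\times$ becomes a continuous character $\psi : \Gamma^{(m)} \to \overline{\QQ}_p^\times$, which factors as a product of a finite-order character and a power of the cyclotomic character restricted to $\Gamma^{(m)}$. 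The assumption then says $c^2 = \chi(c)^{-1}\psi(\sigma_c)^2$ for all $c$ coprime to $6mpN$.

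\textbf{Key steps.} First I would reduce modulo $\fp$ as above to get the identity $c^2 \equiv \chi(c)^{-1} \psi(\sigma_c)^2$ for all admissible $c$. Next, I would exploit the structure of $\psi$: decompose $\psi = \psi_{\mathrm{fin}} \cdot \kappa^t$ where $\psi_{\mathrm{fin}}$ has finite order and $\kappa$ is the cyclotomic character of $\Gamma^{(m)}$, which on $\sigma_c$ takes the value $c$ (for $c$ coprime to $mp$). Thus $\psi(\sigma_c)^2 = \psi_{\mathrm{fin}}(\sigma_c)^2 c^{2t}$, and the identity becomes $c^{2 - 2t} = \chi(c)^{-1}\psi_{\mathrm{fin}}(\sigma_c)^2$ for all $c$ coprime to $6mpN$. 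The left-hand side is a power of $c$ while the right-hand side is a root of unity of bounded order; taking $c$ and $c'$ in the same residue class modulo the (bounded) conductors of $\chi$ and $\psi_{\mathrm{fin}}$ but with $c^{2-2t} \ne (c')^{2-2t}$ in $\overline{\QQ}_p$ forces $t = 1$, after which $\chi(c) = \psi_{\mathrm{fin}}(\sigma_c)^{-2}$ for all such $c$. Since $\psi_{\mathrm{fin}}$ factors through $\Gal(\QQ(\mu_{mp^\infty})/\QQ)$, its conductor divides $mp^\infty$; hence $\chi = \varepsilon_f\varepsilon_g$ would have conductor dividing $mp^\infty$, contradicting the hypothesis. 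The only remaining subtlety is that the set of $c$ coprime to $6mpN$ must surject onto $(\ZZ/M'\ZZ)^\times$ for the relevant modulus $M'$, which holds by the Chinese remainder theorem / Dirichlet, since $6mpN$ involves only finitely many primes.

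\textbf{Main obstacle.} The technical heart is the passage from "$\fp$ has height $1$ and $p \notin \fp$" to a concrete description of the tautological character $\psi$ modulo $\fp$ as a finite-order twist of an integral power of the cyclotomic character; one must be careful that $\Lambda/\fp$, while not necessarily a DVR, does embed (after inverting $p$) into $\overline{\QQ}_p$ so that $\psi$ genuinely decomposes this way — this uses that $\Gamma \cong \ZZ_p$ (up to finite index) has its continuous $\overline{\QQ}_p^\times$-characters of this form, together with the fact that $(\ZZ/m\ZZ)^\times$ is finite. Once that structural fact is in place, the rest is the elementary argument above comparing a power of $c$ with a root of unity. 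I would present the reduction carefully and then dispatch the comparison quickly.
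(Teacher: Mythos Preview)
Your overarching strategy---pass to a continuous character $\psi:\Gamma^{(m)}\to\overline{\QQ}_p^\times$ by reducing modulo $\fp$, then argue by contradiction that $\varepsilon_f\varepsilon_g$ must factor through $\Gamma^{(m)}$---is exactly what the paper does. But your key intermediate step, the decomposition $\psi=\psi_{\mathrm{fin}}\cdot\kappa^t$ with $\psi_{\mathrm{fin}}$ of finite order and $t$ an integer, is false in general. A height-$1$ prime of $\Lambda(\Gamma_1)\cong\cO_\fp[[T]]$ not containing $p$ is generated by an arbitrary irreducible distinguished polynomial, so $\psi$ can send a topological generator $\gamma$ of $\Gamma_1$ to any element of $1+\mathfrak{m}_{\overline{\QQ}_p}$, not merely to $\kappa(\gamma)^t$ times a root of unity. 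Your ``Main obstacle'' paragraph asserts this decomposition as a structural fact about continuous $\overline{\QQ}_p^\times$-valued characters of $\Zp$, but it is not one: only countably many such characters have this form.

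The paper avoids the issue entirely. Once you have the character $\psi$ (denoted $\kappa_\fp$ there), the assumed identity $c^2=(\varepsilon_f\varepsilon_g)(c)^{-1}\psi(\sigma_c)^2$ rearranges directly to
\[
(\varepsilon_f\varepsilon_g)(c)=\frac{\psi(\sigma_c)^2}{\chi_{\mathrm{cyc}}(\sigma_c)^2},
\]
and the right-hand side is visibly a function of $\sigma_c\in\Gamma^{(m)}$ alone. Since integers $c>1$ coprime to $6mpN$ surject onto every finite quotient of $\widehat{\ZZ}^\times$, this forces the Dirichlet character $\varepsilon_f\varepsilon_g$ to factor through $\widehat{\ZZ}^\times\to\Gamma^{(m)}$, hence to have conductor dividing $mp^\infty$. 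No decomposition of $\psi$, and no determination of any exponent $t$, is needed; your detour through ``$t=1$'' is both unnecessary and, as written, unjustified. If you insist on a decomposition, you would have to allow the exponent to lie in $\overline{\QQ}_p$ rather than $\ZZ$ and reinterpret $c^{2-2t}$ accordingly---but at that point you are essentially re-deriving the paper's one-line argument with extra bookkeeping.
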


    \begin{proof}
     Since $\fp$ does not contain $p$, it corresponds to a Galois orbit of continuous characters $\Gamma^{(m)} \to \overline{\QQ}_p$. Let $\kappa_\fp$ be a representative of this orbit. Define $\tilde{h}: \Gamma^{(m)} \to\overline{\QQ}_p^\times$ by $\tilde{h}(x)=\kappa_p(x)^2 / \chi(x)^2$ where $\chi: \Gamma^{(m)} \to \Zp^\times$ is the $p$-adic cyclotomic character. We need to show that there is an integer $c \ge 1$ coprime to $6mpN$ such that $\tilde{h}([c]) \ne \varepsilon_f(c) \varepsilon_g(c)$.

     However, if no such integer existed, then $\varepsilon_f \varepsilon_g$ would have to factor through the natural map $\widehat{\ZZ}^\times \to \Gamma^{(m)}$, i.e.~would have to have conductor dividing $mp^\infty$, contrary to our hypotheses.
    \end{proof}

    \begin{corollary}\label{cor:integrality}
     If $\varepsilon_f \varepsilon_g$ does not have conductor dividing $mp^\infty$, then
     \[ \mathfrak{z}^{(f,g,N)}_m\in H^1_{\Iw,S}(\QQ(\mu_{mp^\infty}), T_{\cO_\fp}(f, g)^*)\otimes\QQ.\]
    \end{corollary}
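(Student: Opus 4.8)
The plan is to combine the relation \eqref{eq:crelation2} with Lemma \ref{lem:choosec}. Write $\theta_c := c^2 - \varepsilon_f(c)^{-1}\varepsilon_g(c)^{-1}[c]^2 \in \Lambda(\Gamma^{(m)})$, so that \eqref{eq:crelation2} reads ${}_c\mathfrak{z}^{(f,g,N)}_m = \theta_c\cdot\mathfrak{z}^{(f,g,N)}_m$, and put $M = H^1_{\Iw,S}(\QQ(\mu_{mp^\infty}), T_{\cO_\fp}(f,g)^*)$, a finitely generated $\Lambda(\Gamma^{(m)})$-module. By Theorem \ref{thm:ordinaryEulersystem} each class ${}_c\mathfrak{z}^{(f,g,N)}_m$ actually lies in $M$ (not merely in $M\otimes_{\Lambda(\Gamma_1)}Q(\Lambda(\Gamma_1))$). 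Hence it suffices to show that the ideal $I\subseteq\Lambda(\Gamma^{(m)})$ generated by the $\theta_c$, for $c>1$ coprime to $6mpN$, becomes the unit ideal after inverting $p$: from an identity $\sum_i\lambda_i\theta_{c_i}=1$ with $\lambda_i\in\Lambda(\Gamma^{(m)})[1/p]$ one multiplies by $\mathfrak{z}^{(f,g,N)}_m$ in the ambient module $M\otimes_{\Lambda(\Gamma_1)}Q(\Lambda(\Gamma_1))$ (over which $\Lambda(\Gamma^{(m)})[1/p]$ acts, since $\Lambda(\Gamma^{(m)})$ is finite over $\Lambda(\Gamma_1)$) to get
\[ \mathfrak{z}^{(f,g,N)}_m = \sum_i \lambda_i\,{}_{c_i}\mathfrak{z}^{(f,g,N)}_m \in M[1/p] = M\otimes\QQ. \]

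To prove that $I$ generates the unit ideal of $\Lambda(\Gamma^{(m)})[1/p]$, I would first record that this ring is Noetherian of Krull dimension $1$. Indeed, since $(m,p)=1$ the quotient $\Gamma^{(m)}/\Gamma_1\cong\Gal(\QQ(\mu_{mp})/\QQ)$ is finite of order prime to $p$, so $\Lambda(\Gamma^{(m)})\cong\cO_\fp[\Gal(\QQ(\mu_{mp})/\QQ)][[\Gamma_1]]$ with $\cO_\fp[\Gal(\QQ(\mu_{mp})/\QQ)]$ finite \'etale over $\cO_\fp$; thus $\Lambda(\Gamma^{(m)})[1/p]$ is a finite product of rings of the form $E[[T]][1/p]$ with $E/\Qp$ finite, each a one-dimensional domain. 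Every maximal ideal of such a ring contracts to a height-one prime of $\Lambda(\Gamma^{(m)})$ not containing $p$, and Lemma \ref{lem:choosec} asserts precisely that $I$ is contained in no such prime. Since also $I\neq 0$ (as $\theta_c\neq 0$ for all large $c$), $I$ lies in no maximal ideal of the one-dimensional ring $\Lambda(\Gamma^{(m)})[1/p]$, whence $I\,\Lambda(\Gamma^{(m)})[1/p]=\Lambda(\Gamma^{(m)})[1/p]$, giving the partition-of-unity identity used above.

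The manipulations with \eqref{eq:crelation2} and the description of $\Lambda(\Gamma^{(m)})[1/p]$ are routine; the one point that genuinely requires care — and the place where the hypothesis $(m,p)=1$ enters — is the dimension-count reduction, i.e.\ passing from Lemma \ref{lem:choosec} (a statement about height-one primes away from $p$) to the conclusion that $I$ lies in \emph{no} maximal ideal of $\Lambda(\Gamma^{(m)})[1/p]$, and hence to an honest relation $\sum_i\lambda_i\theta_{c_i}=1$. Everything after that is formal.
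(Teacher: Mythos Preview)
Your argument is correct and parallels the paper's: both reduce to Lemma~\ref{lem:choosec} by checking that at each height-one prime of $\Lambda(\Gamma^{(m)})$ not containing $p$, some $\theta_c$ is a unit. The paper phrases this module-theoretically (via the submodules $Z_0\subset Z_1$ generated by the ${}_c\mathfrak{z}$ and by $\mathfrak{z}$, showing $Z_{0,\fp}=Z_{1,\fp}$ at each such $\fp$), while you phrase it ideal-theoretically (showing the ideal $I=\langle\theta_c\rangle$ becomes the unit ideal after inverting $p$, then writing $1=\sum\lambda_i\theta_{c_i}$); these are equivalent formulations of the same localization argument.

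One small correction: your claim that $\Gamma^{(m)}/\Gamma_1\cong(\ZZ/mp\ZZ)^\times$ has order prime to $p$ is false in general---its order is $\phi(m)(p-1)$, and $p$ may well divide $\phi(m)$ (e.g.\ when $m$ is a prime $\equiv 1\bmod p$)---so $\cO_\fp[\Gamma^{(m)}/\Gamma_1]$ need not be \'etale over $\cO_\fp$, and the clean product decomposition into rings $E[[T]][1/p]$ may fail. This does not damage your proof, however: the only facts you actually use about $\Lambda(\Gamma^{(m)})[1/p]$---that it is Noetherian of Krull dimension $1$ and that its maximal ideals contract to height-one primes of $\Lambda(\Gamma^{(m)})$ not containing $p$---follow directly from $\Lambda(\Gamma^{(m)})$ being module-finite over the $2$-dimensional local ring $\cO_\fp[[\Gamma_1]]$, whose unique maximal ideal contains $p$. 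The remark that $I\neq 0$ is also redundant: once $I$ lies in no maximal ideal it is automatically the unit ideal.
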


    \begin{proof}
     Let $Z_0$ be the $\Lambda(\Gamma^{(m)})$-module generated by ${}_c\mathfrak{z}^{(f,g,N)}_m$ for all possible $c$ and let $Z_1$ be the $\Lambda(\Gamma^{(m)})$-module generated by $\mathfrak{z}^{(f,g,N)}_m$. By \eqref{eq:crelation2}, $Z_0\subset Z_1$ and there exists $\mu\in\Lambda(\Gamma^{(m)})$ such that $\mu Z_1\subset Z_0$ and $Z_0/\mu Z_1$ is $p$-torsion free. Hence, it is enough to show that $Z_{0,\fp}=Z_{1,\fp}$ for any prime ideal $\fp$ of height $1$ which does not contain $p$. Fix such a $\fp$. By Lemma~\ref{lem:choosec}, there exists $c$ such that $c^2 - \varepsilon_f(c)^{-1}\varepsilon_g(c)^{-1}[c]^2 \notin \fp$, so
     \[
      \mathfrak{z}^{(f,g,N)}_m\in Z_{0,\fp}
     \]
     by \eqref{eq:crelation2}, as required.
    \end{proof}

    \begin{remark}
     Note that if the mod $p$ reduction of the Dirichlet character $\varepsilon_f \varepsilon_g$ does not have conductor dividing $mp^\infty$, then we can even deduce that
     \[ \mathfrak{z}^{(f,g,N)}_m \in H^1_{\Iw,S}(\QQ(\mu_{mp^\infty}), T_{\cO_\fp}(f, g)^*).\]
    \end{remark}

    \begin{lemma}\label{lem:free}
     If the residual representation of $T_{\cO_\fp}(f, g)^*$ restricted to $G_{\QQ_{\mu_m}}$ is irreducible, then $H^1_{\Iw,S}(\QQ(\mu_{mp^\infty}), T_{\cO_\fp}(f, g)^*)$ is a free $\Lambda(\Gamma^{(m)})$-module.
    \end{lemma}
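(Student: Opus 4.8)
The plan is to deduce freeness from the standard criterion for $\Lambda$-modules: a finitely generated $\Lambda(\Gamma^{(m)})$-module which is torsion-free and whose base change along every height-one prime is free (equivalently, whose $\mu$-invariant and various Euler characteristics behave correctly) is free, but the cleanest route here is via the vanishing of $H^0$ and $H^2$ in the Iwasawa cohomology, together with the projectivity criterion of Nekov\'a\v{r} or the ``no finite submodule'' argument. Concretely, first I would recall that $\Lambda(\Gamma^{(m)}) \cong \cO_\fp[(\ZZ/m\ZZ)^\times][[\Gamma]]$ is a finite product of regular local rings of Krull dimension $2$, so it suffices to prove freeness after projecting to each local factor; on each factor we are dealing with a finitely generated module over a two-dimensional regular local ring, for which freeness is equivalent to the module being reflexive and having no nonzero pseudo-null submodule, or simply to projective dimension zero.

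Next I would invoke the fundamental exact sequences of Iwasawa cohomology (Poitou--Tate / the Jannsen spectral sequence): there is a four-term exact sequence relating $H^i_{\Iw,S}(\QQ(\mu_{mp^\infty}), T_{\cO_\fp}(f,g)^*)$ for $i = 0, 1, 2$ to the $\Lambda$-adic completions of local cohomology. The hypothesis that the residual representation $\bar T$ restricted to $G_{\QQ(\mu_m)}$ is irreducible forces $H^0_{\Iw,S} = 0$ (since an irreducible residual representation of positive dimension has no $G_{\QQ(\mu_{mp^\infty})}$-invariants, and $H^0$ over the tower is the inverse limit of these, which vanishes), and it also controls the torsion of $H^1$: any pseudo-null or finite submodule of $H^1_{\Iw,S}$ would give, after a standard d\'evissage, a nonzero $\Lambda$-torsion submodule supported in codimension $\ge 1$, contradicting the vanishing of $H^0$ via the global Euler characteristic formula and the fact that over a $\ZZ_p^d$-extension with $d\ge 1$ the relevant cohomology has no finite submodules (this is the classical argument, e.g. as in Perrin-Riou or Rubin's book, Proposition B.3.x). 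Combining $H^0 = 0$ with ``no finite submodule'' shows $H^1_{\Iw,S}$ is torsion-free; and the cohomological dimension bound ($\cd_p(\QQ^S/\QQ(\mu_m)) \le 2$, so the complex $R\Gamma_{\Iw,S}$ is represented by a perfect complex of $\Lambda$-modules concentrated in degrees $[0,2]$) together with $H^0 = 0$ implies $H^1_{\Iw,S}$ has projective dimension $\le 1$ over $\Lambda$. The final step is to upgrade projective dimension $\le 1$ plus torsion-freeness to projective dimension $0$: over the regular local factors this follows because a torsion-free module of projective dimension $\le 1$ over a two-dimensional regular local ring is automatically free if it is also reflexive, and reflexivity here comes from the fact that $H^1_{\Iw,S}$ is the $H^1$ of a perfect complex with $H^0 = 0$, hence a second syzygy, hence reflexive.

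The main obstacle I expect is the careful verification of the ``no nonzero finite (pseudo-null) submodule'' statement and the precise perfectness/duality input needed to conclude reflexivity — this is where one must be careful about whether $S$-cohomology versus the full global cohomology is used, and about the behaviour at the archimedean and ramified places. I would handle this by citing the appropriate statements from Nekov\'a\v{r}'s Selmer complexes or from Perrin-Riou's and Rubin's treatments, rather than reproving them, and by checking that the irreducibility hypothesis on $\bar T|_{G_{\QQ(\mu_m)}}$ is exactly what is needed to kill $H^0$ over the whole cyclotomic-by-$(\ZZ/m\ZZ)^\times$ tower. One subtlety worth flagging is that $\Gamma^{(m)}$ is not pro-$p$ (it has the prime-to-$p$ part $(\ZZ/m\ZZ)^\times$ and the torsion in $\Gal(\QQ(\mu_p)/\QQ)$), so $\Lambda(\Gamma^{(m)})$ is only a product of regular local rings after idempotent decomposition; the freeness statement should be understood, and is cleanest to prove, componentwise, and I would make that reduction explicit at the start of the argument.
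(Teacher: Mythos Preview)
Your approach is correct in outline but takes a different and more abstract route than the paper. The paper follows Kato's argument from \cite[\S13.8]{kato04} directly: after localizing at a maximal ideal $(x,y)$ of $\Lambda(\Gamma^{(m)})$ (which is two-dimensional regular local), one simply verifies that $(x,y)$ is a regular sequence on $H^1_{\Iw,S}$, i.e.\ that multiplication by $x$ is injective on $H^1_{\Iw,S}$ and multiplication by $y$ is injective on $H^1_{\Iw,S}/x$. Both injectivities reduce, via the long exact sequence coming from $0 \to \Lambda \otimes T \to \Lambda \otimes T \to (\Lambda/x) \otimes T \to 0$ and its analogue mod $(x,y)$, to the vanishing of $H^0(\QQ(\mu_m), T \otimes \Lambda/x)$ and $H^0(\QQ(\mu_m), T \otimes \Lambda/(x,y))$ respectively; the latter quotient is $\cO_\fp/\mathfrak{M}_\fp$ twisted by a character, so its $H^0$ vanishes precisely by the residual irreducibility hypothesis. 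Depth $\ge 2$ over a two-dimensional regular local ring then gives freeness by Auslander--Buchsbaum.

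Your route via perfect complexes and reflexivity reaches the same conclusion (a second syzygy over a two-dimensional regular local ring is free), and the essential arithmetic input --- vanishing of $H^0$ from residual irreducibility --- is the same. The paper's argument is more elementary in that it avoids invoking perfectness of $R\Gamma_{\Iw,S}$ or Nekov\'a\v{r}'s formalism, and it makes the use of the irreducibility hypothesis completely explicit at each of the two steps; your version packages more into the homological algebra but would require you to pin down the citations you gesture at (in particular the ``no finite submodule'' step, which in the paper's argument is subsumed into the regular-sequence check and needs no separate treatment). Note also that in your final step the torsion-freeness and the projective-dimension bound are redundant once you have reflexivity: over a two-dimensional regular local ring, reflexive already implies free.
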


    \begin{proof}
     This follows from the argument of \cite[\S13.8]{kato04}, which we briefly outline here. Let $T=T_{\cO_\fp}(f, g)^*$, $\Lambda=\Lambda(\Gamma^{(m)})$ and $H^1(T)=H^1_{\Iw,S}(\QQ(\mu_{mp^\infty}), T_{\cO_\fp}(f, g)^*)$ for simplicity. It is enough to show that if $(x,y)$ is a maximal ideal of $\Lambda$, the two maps
     \[
      \alpha:H^1(T)\rTo^x H^1(T)\quad\text{and}\quad \beta:H^1(T)/xH^1(T)\rTo^yH^1(T)/xH^1(T)
     \]
     are injective.

     If $x=p$, the injectivity of $\alpha$ follows from the fact that $\varprojlim_n H^0(\QQ(\mu_{mp^n}),T/p)=0$, which is a consequence of the finiteness of $T/p$. If $x$ is such that $\Lambda/x\Lambda$ is $p$-torsion free, it is enough to show that $H^0(\QQ(\mu_m),T\otimes\Lambda/x\Lambda)=0$, where the action of $\sigma\in G_{\QQ(\mu_m)}$ on $\Lambda$ is given by multiplication by $\bar{\sigma}^{-1}$, where $\bar{\sigma}$ denotes the image of $\sigma$ in $\Gamma^{(m)}$. But $T$ is irreducible, so non-abelian. This implies that $H^0(\QQ(\mu_m),T\otimes\Lambda/x\Lambda)=0$.

     To show that $\beta$ is injective, it is enough to show that $H^0(\ZZ[1/mp],T\otimes\Lambda/(x,y))=0$. But $\Lambda/(x,y)\cong\mathcal{O}_\fp/\mathfrak{M}_\fp(r)$ for some $r$, so we are done by the irreducibility of $T/\mathfrak{M}_\fp$.
    \end{proof}

    \begin{corollary}
     If $\varepsilon_f \varepsilon_g$ does not have $p$-power conductor and the residual representation of $T_{\cO_\fp}(f, g)^*$ restricted to $G_{\QQ_{\mu_m}}$ is irreducible, then
     \[ \mathfrak{z}^{(f,g,N)}_m\in H^1_{\Iw,S}(\QQ(\mu_{mp^\infty}), T_{\cO_\fp}(f, g)^*).\]
    \end{corollary}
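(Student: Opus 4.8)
The plan is to bootstrap from the integrality of the smoothed classes together with the freeness coming from Lemma~\ref{lem:free}. Write $\Lambda = \Lambda(\Gamma^{(m)})$, $T = T_{\cO_\fp}(f,g)^*$, $M = H^1_{\Iw,S}(\QQ(\mu_{mp^\infty}), T)$, and $\vartheta_c = c^2 - \varepsilon_f(c)^{-1}\varepsilon_g(c)^{-1}[c]^2 \in \Lambda$ for $c > 1$ coprime to $6mpN$. By Theorem~\ref{thm:ordinaryEulersystem} each ${}_c\mathfrak{z}^{(f,g,N)}_m$ lies in $M$, and by \eqref{eq:crelation2} we have ${}_c\mathfrak{z}^{(f,g,N)}_m = \vartheta_c \cdot \mathfrak{z}^{(f,g,N)}_m$; hence $I \cdot \mathfrak{z}^{(f,g,N)}_m \subseteq M$, where $I \subseteq \Lambda$ is the ideal generated by all the $\vartheta_c$. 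The irreducibility hypothesis lets us invoke Lemma~\ref{lem:free}, so $M$ is a free $\Lambda$-module; in particular it is $p$-torsion-free and reflexive. After a harmless finite extension of $\cO_\fp$, $\Lambda$ decomposes as a finite product of regular local domains (two-dimensional power-series rings, since $p \nmid m$ forces the torsion part of $\Gamma^{(m)}$ to have order prime to $p$), so $M = \bigcap_\fq M_\fq$, the intersection over height-one primes $\fq$ of $\Lambda$. It therefore suffices to produce, for each such $\fq$, some admissible $c = c(\fq)$ with $\vartheta_c \notin \fq$; then $\mathfrak{z}^{(f,g,N)}_m \in \Lambda_\fq \cdot \mathfrak{z}^{(f,g,N)}_m = \vartheta_c^{-1}{}_c\mathfrak{z}^{(f,g,N)}_m \cdot \Lambda_\fq \subseteq M_\fq$ for every $\fq$.

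For the unique height-one prime $\fq$ containing $p$ this is automatic: modulo a uniformizer, $\vartheta_c$ becomes $\bar c^2 - \overline{\varepsilon_f\varepsilon_g}(c)^{-1}\overline{[c]}^2$, and since $[c]^2 = [c^2]$ has nontrivial image in the pro-$p$ part of $\Gamma^{(m)}$ for $c \neq \pm 1$, this is not zero, so $\vartheta_c \notin \fq$. For a height-one prime $\fq$ not containing $p$, let $\kappa_\fq \colon \Gamma^{(m)} \to \overline{\QQ}_p^\times$ be a character cutting it out; then exactly as in the proof of Lemma~\ref{lem:choosec} one computes $\vartheta_c(\kappa_\fq) = c^2\bigl(1 - (\varepsilon_f\varepsilon_g)(c)^{-1}(\kappa_\fq\chi^{-1})(\sigma_c)^2\bigr)$. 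Since the integers coprime to $6mpN$ have dense image in $\widehat{\ZZ}^\times \twoheadrightarrow \Gamma^{(m)}$ and can simultaneously be made to range over all residues modulo $N$, this expression can vanish for every admissible $c$ only if $\varepsilon_f\varepsilon_g$ descends to a character of $\Gamma^{(m)}$ (equivalently, has conductor dividing $mp^\infty$) equal to $(\kappa_\fq\chi^{-1})^2$. Thus, as soon as the prime-to-$p$ part of the conductor of $\varepsilon_f\varepsilon_g$ is \emph{not} absorbed into $m$, every height-one prime admits a good $c$ and we conclude; this is where the hypothesis that $\varepsilon_f\varepsilon_g$ not have $p$-power conductor supplies the needed room, while the irreducibility hypothesis is precisely what upgrades the statement from "$\mathfrak{z}^{(f,g,N)}_m \in M \otimes \QQ$'' (the conclusion of Corollary~\ref{cor:integrality}) to the integral statement, by making $M$ reflexive.

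The step I expect to be the main obstacle is the residual case, where the conductor of $\varepsilon_f\varepsilon_g$ is not a $p$-power but its prime-to-$p$ part does divide $m$. Then $\varepsilon_f\varepsilon_g$ is a genuine character of $\Gamma^{(m)}$, and there are finitely many "resonant" height-one primes $\fq_\psi$ — those attached to characters $\kappa = \chi\psi$ with $\psi^2 = \varepsilon_f\varepsilon_g$ — at which every $\vartheta_c$ lies in $\fq_\psi$, so no choice of auxiliary integer clears the denominator. Handling these requires a finer local analysis at $\fq_\psi$: one must show that the specialization of ${}_c\mathfrak{z}^{(f,g,N)}_m$ along $\fq_\psi$ vanishes to order at least $v_{\fq_\psi}(\vartheta_c)$, which is natural to attempt via the corestriction/norm compatibilities of Theorem~\ref{thm:ordinaryEulersystem} and Theorem~\ref{thm:firstnormrelation} (relating the level-$m$ class to classes at auxiliary levels $m'$ where no resonance occurs), checking along the way that the relevant Euler-factor operators act as non-zero-divisors at $\fq_\psi$. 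Making this reduction precise is the one genuinely delicate point; everywhere else the argument is the routine "freeness $+$ clearing denominators'' pattern already used for Corollary~\ref{cor:integrality} and Lemma~\ref{lem:choosec}.
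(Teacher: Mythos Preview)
Your core strategy---use Lemma~\ref{lem:free} to get freeness (hence reflexivity $M=\bigcap_\fq M_\fq$), then clear the factor $\vartheta_c$ at each height-one prime $\fq$---is exactly the paper's approach, which simply cites ``the argument in \cite[\S13.14]{kato04}'' and the proof of Corollary~\ref{cor:integrality}. Your explicit treatment of the prime above $p$ is a useful addition that the paper leaves implicit.

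However, the long ``residual case'' discussion is misdirected. You have correctly noticed that the hypothesis as stated (``$\varepsilon_f\varepsilon_g$ does not have $p$-power conductor'') is strictly weaker than the hypothesis of Corollary~\ref{cor:integrality} and Lemma~\ref{lem:choosec} (``conductor does not divide $mp^\infty$''), and that when the prime-to-$p$ part of the conductor divides $m$ there really are height-one primes $\fq$ at which every $\vartheta_c$ vanishes. But the paper's own proof does not address this case either: it simply invokes the proof of Corollary~\ref{cor:integrality}, which requires the stronger hypothesis. So the discrepancy you found is in the \emph{statement}, not something you need to repair in the proof; the intended hypothesis is the stronger one, and your elaborate workaround via auxiliary levels and Euler-factor non-zero-divisors is unnecessary. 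Under the hypothesis ``conductor not dividing $mp^\infty$'' your argument in the first two paragraphs is complete and matches the paper.
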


    \begin{proof}
     This follows from the argument in \cite[\S13.14]{kato04}. Let $Z_1$ be the $\Lambda(\Gamma^{(m)})$-module generated by $\mathfrak{z}^{(f,g,N)}_m$. By the proof of Corollary~\ref{cor:integrality}, $Z_{1,\fp}\subset H^1_{\Iw,S}(\QQ(\mu_{mp^\infty})_\fp$ for any prime ideal $\fp$ of $\Lambda(\Gamma^{(m)})$ height one. But $H^1_{\Iw,S}(\QQ(\mu_{mp^\infty})$ is a free $\Lambda(\Gamma^{(m)})$-module by Lemma~\ref{lem:free}, hence the result.
    \end{proof}


  \subsection{Variation in Hida families}

   We now make use of the first norm relation (Theorem \ref{thm:firstnormrelation}) to build elements in the cohomology of \emph{towers} of modular curves, under additional ordinarity hypotheses. Let us begin by recalling some of Ohta's results in \cite{ohta99,ohta00} concerning the structure of the module
   \[ GES_p(N, \Zp) \coloneqq \varprojlim_{n \ge 1} H^1_\et(\overline{Y_1(Np^n)}, \Zp),\]
   which can be roughly summarized by the statement that one can build a Hida theory for this module after replacing the usual Hecke operators with their transposes.

   More precisely, we note that the full Hecke algebra does not act on $GES_p(N, \Zp)$, since the operator $U_p = \begin{pmatrix} 1 & 0 \\ 0 & p \end{pmatrix}$ does not commute with the trace maps. Rather, we obtain an action of the Hecke operator $U_p'$, corresponding to $\begin{pmatrix} p & 0 \\ 0 & 1 \end{pmatrix}$. Ohta shows that one may use the operator $U_p'$ to define an ``anti-ordinary projector'' $\esord = \lim_{n \to \infty} (U_p')^{n!}$, analogous to the usual Hida ordinary projector $\eord = \lim_{n \to \infty} (U_p)^{n!}$.

   Ohta proves the following control theorem for the anti-ordinary part of $GES_p(N)_{\Zp}$, which is naturally a module over the Iwasawa algebra of $\Gamma_1 = (1 + p\Zp)^\times$ via the diamond operators:

   \begin{proposition}[{\cite[1.3, 1.4]{ohta99}}]
    \label{thm:ohtacontrol}
    The module $\esord GES_p(N)_{\Zp}$ is free of finite rank over $\Lambda(\Gamma_1)$, and for each $r \ge 1$ there is a $G_{\QQ}$-equivariant isomorphism
    \[ \esord GES_p(N, \Zp) / \omega_r \cong \esord H^1(\overline{Y_1(Np^r)}, \Zp),\]
    where $\omega_r$ is the kernel of the natural map $\Lambda(\Gamma_1) \to \Zp[(\ZZ / p^r \ZZ)^\times]$.
   \end{proposition}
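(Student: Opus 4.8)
This is Ohta's theorem; the plan is to reproduce the argument of \cite{ohta99}, whose structure is as follows. The strategy is to reduce the statement to Hida's classical vertical control theorem for the ordinary part of the cohomology of the \emph{compactified} modular curves, and then to pass from the ordinary normalisation (in terms of $U_p$ and $\eord$) to the anti-ordinary one (in terms of $U_p'$ and $\esord$) by Poincar\'e duality, finally splicing in the contribution of the cusps.

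First I would write down the localisation exact sequence in \'etale cohomology attached to the open immersion $\overline{Y_1(Np^n)} \into \overline{X_1(Np^n)}$,
\[ 0 \to H^1_{\et}(\overline{X_1(Np^n)}, \Zp) \to H^1_{\et}(\overline{Y_1(Np^n)}, \Zp) \to \bigoplus_{c} \Zp(-1) \to H^2_{\et}(\overline{X_1(Np^n)}, \Zp), \]
the sum being over the cusps of $X_1(Np^n)$. This sequence is compatible, as $n$ varies, with the trace maps, and with the actions of $U_p'$ and of the diamond operators, so one may apply $\varprojlim_n$ and then $\esord$. Since every term is a finitely generated $\Zp$-module (in fact $\Zp$-torsion free, the curves being smooth), the relevant $\varprojlim^1$-terms vanish and the limit sequence remains exact.

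For the compact term I would invoke Hida's theorem: $\eord\varprojlim_n H^1_{\et}(\overline{X_1(Np^n)}, \Zp)$, formed with $U_p$ and the ordinary idempotent, is free of finite rank over $\Lambda(\Gamma_1)$, with $\eord\varprojlim_n H^1 / \omega_r \cong \eord H^1_{\et}(\overline{X_1(Np^r)}, \Zp)$ for all $r$. To convert this into the $\esord$-statement one uses that Poincar\'e duality gives a perfect $G_{\QQ}$-equivariant pairing $H^1_{\et}(\overline{X_1(Np^n)}, \Zp)^{\otimes 2} \to \Zp(-1)$ under which $U_p'$ is, up to a diamond operator, the adjoint of $U_p$; hence the $\esord$-part of $H^1_{\et}(\overline{X_1(Np^n)}, \Zp)$ is identified with the $\Zp(-1)$-twisted $\Zp$-linear dual of the $\eord$-part, the trace maps on the former corresponding to the restriction maps on the latter. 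Because Hida's module is $\Lambda(\Gamma_1)$-free of finite rank, this duality interchanges $\varprojlim$ with $\varinjlim$ in a controlled way, the dual of the free module is free again, and Hida's control statement carries over verbatim to the $\esord$-side.

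It remains to handle the boundary and to assemble. The effect of the trace maps and of $\esord$ on $\bigoplus_c \Zp(-1)$ is explicit, since the cusps of $X_1(Np^n)$ and the degeneracy maps between them are completely understood; one checks by inspection that $\esord$ of the inverse limit of the boundary is again free of finite rank over $\Lambda(\Gamma_1)$ and satisfies the analogue of the control isomorphism modulo $\omega_r$. The limit of the localisation sequence then reads $0 \to \esord\varprojlim_n H^1_{\et}(\overline{X_1(Np^n)}, \Zp) \to \esord GES_p(N, \Zp) \to B \to 0$ with both outer terms $\Lambda(\Gamma_1)$-free; since $B$ is free, hence projective, the sequence splits and $\esord GES_p(N, \Zp)$ is free of finite rank over $\Lambda(\Gamma_1)$. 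Applying $-\otimes_{\Lambda(\Gamma_1)} \Lambda(\Gamma_1)/\omega_r$ — which is exact here because $\operatorname{Tor}_1^{\Lambda(\Gamma_1)}(B, \Lambda(\Gamma_1)/\omega_r) = 0$ by freeness of $B$ — and comparing with the level-$Np^r$ localisation sequence via the five lemma yields the control isomorphism, which is $G_{\QQ}$-equivariant because every map occurring is. The main obstacle is the duality step of the previous paragraph: making precise the passage from the $U_p$/$\eord$ form of Hida's theorem to the $U_p'$/$\esord$ form, while keeping careful track of the twists, of the interchange of inverse and direct limits, and of the Galois action, is where the genuine content lies; by comparison the boundary analysis and the homological bookkeeping are routine.
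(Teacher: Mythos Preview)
The paper does not give its own proof of this proposition; it is simply stated with a citation to Ohta \cite[1.3, 1.4]{ohta99} and used as a black box. There is therefore nothing in the present paper to compare your proposal against.

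That said, your sketch is a reasonable outline in the spirit of Ohta's work. A couple of remarks. First, Ohta does not organise the argument around Poincar\'e duality in the way you describe; he works directly with the inverse limit under trace maps and the operator $U_p'$, establishing the control theorem for $\esord GES_p(N,\Zp)$ in parallel with (rather than by dualising) Hida's $\eord$-theorem. Your duality passage is a legitimate alternative route, but the step where you want to ``interchange $\varprojlim$ with $\varinjlim$'' and transport freeness and control across the pairing needs more care than you indicate: one must track that the twist by $\Zp(-1)$ and the involution $\iota$ on $\Lambda(\Gamma_1)$ are compatible with the control statement, and that the duality really matches trace maps on one side with the natural transition maps on the other at each finite level before passing to the limit. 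Second, in the boundary step, the assertion that $\esord$ of the inverse limit of the cuspidal term is $\Lambda(\Gamma_1)$-free with control is not quite ``by inspection''; one has to analyse the action of $U_p'$ on the set of cusps under the degeneracy maps, and this is where Ohta does some genuine work. These are not fatal gaps so much as places where your outline understates the effort required.
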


   From this isomorphism, we deduce that $\esord GES_p(N, \Zp)$ has an action of the Hecke algebra
   \[ \esord \mathcal{H}'(N, \Zp) = \varprojlim_{r \ge 1}\esord \mathcal{H}'(\Gamma_1(Np^r), \Zp),\]
   where $\mathcal{H}'(\Gamma_1(Np^r), \Zp)$ is the $\Zp$-subalgebra of $\End_{\Qp} M_2(\Gamma_1(Np^r), \Qp)$ generated by the Hecke operators $T'(n)$ for $n \ge 1$ and $\langle q \rangle$ for $q \in (\ZZ / N\ZZ)^\times$. Here $T'(\ell)$, for $\ell$ prime, corresponds to the double coset $\begin{pmatrix} \ell & 0 \\ 0 & 1\end{pmatrix}$, so in particular for $(m, Np) = 1$ we have $T'(m) = \langle m \rangle^{-1} T(m)$ (the adjoint of $T(m)$ with respect to the Petersson product).

   The algebra $\esord \mathcal{H}'(N, \Zp)$ algebra is finite and free as a $\Lambda(\Gamma_1)$-module. (Note that it is \emph{not} generally free as a $\Lambda(\Gamma)$-module, although it is evidently projective).

   \begin{proposition}[{\cite[2.2]{ohta99}}]
    The Hecke algebra $\esord \mathcal{H}'(N, \Zp)$ is isomorphic to the Hecke algebra $\mathcal{H}_{\ord}(N, \Zp) \coloneqq \eord \mathcal{H}(N, \Zp)$ acting on the module $\eord M_2(N, \Lambda(\Gamma_1))$ of ordinary $\Lambda$-adic modular forms (not necessarily cuspidal), via the map sending $T(n)'$ to $T(n)$ and $\langle n \rangle$ to $\langle n \rangle^{-1}$.
   \end{proposition}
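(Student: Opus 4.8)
The plan is to recover the isomorphism from a $\Lambda(\Gamma_1)$-adic duality pairing, following Ohta \cite{ohta99}. First I would exhibit a perfect $\Lambda(\Gamma_1)$-bilinear pairing
\[ \langle\,\cdot\,,\,\cdot\,\rangle\colon \esord GES_p(N, \Zp) \times \eord M_2(N, \Lambda(\Gamma_1)) \longrightarrow \Lambda(\Gamma_1), \]
which is Hecke-adjoint in the sense that $\langle T'(n)\,x, \phi\rangle = \langle x, T(n)\,\phi\rangle$ and $\langle U_p'\,x, \phi\rangle = \langle x, U_p\,\phi\rangle$ and $\langle \langle q\rangle\,x, \phi\rangle = \langle x, \langle q\rangle^{-1}\,\phi\rangle$ for all $(q, Np) = 1$. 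Granting such a pairing, adjunction sends $h \in \esord\mathcal{H}'(N, \Zp)$ to the transpose of its action on the first factor, an operator on $\eord M_2(N, \Lambda(\Gamma_1))$; perfectness of the pairing makes this assignment injective, and faithfulness of $\mathcal{H}_{\ord}(N, \Zp)$ on $\eord M_2(N, \Lambda(\Gamma_1))$ identifies its image with the subalgebra generated by the transposes of $T'(n)$, $U_p'$, $\langle q\rangle$, i.e.\ by $T(n)$, $U_p$, $\langle q\rangle^{-1}$, which is precisely $\mathcal{H}_{\ord}(N, \Zp)$. Transposition is \emph{a priori} an anti-isomorphism, but both Hecke algebras are commutative, so it is an isomorphism; on generators it reads $T'(n) \mapsto T(n)$, $\langle q\rangle \mapsto \langle q\rangle^{-1}$, and the relation $T'(m) = \langle m\rangle^{-1} T(m)$ for $(m, Np) = 1$ both makes this self-consistent and gives the normalization in the statement.

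The pairing itself I would assemble from finite level. For each $r \ge 1$, Poincar\'e duality on the open modular curve $Y_1(Np^r)$ supplies a pairing $H^1_\et(\overline{Y_1(Np^r)}, \Zp) \times H^1_{\et, c}(\overline{Y_1(Np^r)}, \Zp) \to H^2_{\et, c} = \Zp(-1)$, perfect after $\otimes \Qp$, under which a Hecke correspondence is adjoint to its transpose (and $U_p$ to $U_p'$, $\langle q\rangle$ to $\langle q\rangle^{-1}$); Eichler--Shimura, after applying the projectors, identifies the $\esord$-part of one factor with the $\Zp(-1)$-twisted dual of $\eord M_2(\Gamma_1(Np^r))$. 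Ohta's key structural input is that the trace (corestriction) maps used to define $GES_p(N, \Zp)$ are, under these dualities, adjoint to the inclusions $M_2(\Gamma_1(Np^r)) \hookrightarrow M_2(\Gamma_1(Np^{r+1}))$ up to exactly the discrepancy between $U_p$ and $U_p'$ that $\eord$ and $\esord$ absorb; this is the reason the whole construction works with transposed Hecke operators rather than the usual ones. Passing to the limit over $r$ — using the control theorem (Proposition~\ref{thm:ohtacontrol}) to identify $\esord GES_p(N, \Zp)/\omega_r$ with the finite-level cohomology, and the companion Hida control for $\eord M_2(N, \Lambda(\Gamma_1))$ — yields the $\Lambda(\Gamma_1)$-adic pairing.

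The hard part is establishing perfectness of the limiting pairing while keeping track of the Eisenstein (boundary) part of $GES_p(N, \Zp)$: Poincar\'e duality for an open curve is only perfect with $\Qp$-coefficients, so one must bound $p$-torsion and $p$-divisibility uniformly in $r$ to descend to an \emph{integral} perfect $\Lambda(\Gamma_1)$-pairing rather than a mere pseudo-isomorphism. This is where the freeness of $\esord GES_p(N, \Zp)$ and of $\esord\mathcal{H}'(N, \Zp)$ over $\Lambda(\Gamma_1)$, and Ohta's description of the cuspidal-versus-Eisenstein filtration, are used. A lesser point, dispatched by the same double-coset computations as in the classical Eichler--Shimura theory, is to verify that the transpose identification sends $U_p'$ to $U_p$ exactly and that the diamond operators at primes dividing $N$ transform as claimed. (As an alternative to duality one could match $\Lambda$-adic $q$-expansions against a $\Lambda$-adic Eichler--Shimura map out of $\esord GES_p(N, \Zp)$; but the transposed normalization, hence the inversion of the diamonds, is then forced by which degeneracy maps are compatible with the trace maps in the tower, so one lands in the same place.)
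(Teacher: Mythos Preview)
The paper does not give its own proof of this proposition; it is simply quoted from Ohta with the citation \cite[2.2]{ohta99}, so there is no proof in the paper to compare against. Your outline is a reasonable reconstruction of Ohta's argument: the duality between the anti-ordinary tower cohomology and ordinary $\Lambda$-adic forms, with Hecke operators transposing under it, is exactly the mechanism by which the two Hecke algebras are identified, and you have correctly flagged the delicate points (integrality of the pairing in the limit, handling of the Eisenstein boundary, and the adjunction $U_p \leftrightarrow U_p'$).

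One small comment: in Ohta's actual treatment the comparison is often mediated not by Poincar\'e duality at each finite level directly, but by a $\Lambda$-adic Eichler--Shimura isomorphism together with the Atkin--Lehner involution $w_{Np^r}$, which interchanges $T(n)$ with $T(n)'$ and $\langle d\rangle$ with $\langle d\rangle^{-1}$ on the nose. This is equivalent to what you describe (Atkin--Lehner realizes the Poincar\'e adjunction explicitly), but it shortcuts some of the integrality bookkeeping you identify as the ``hard part'', since $w_{Np^r}$ is already an isomorphism of integral cohomology rather than a pairing that is only perfect rationally. Either packaging leads to the same conclusion, and both are consistent with what the paper is citing.
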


   \begin{definition}
    In the above situation, by a \emph{Hida family} of tame level $N$, we mean a maximal ideal of the ring $\mathcal{H}_{\ord}(N, \Zp)$. For each Hida family $\bfg$, we define
    \[ T(\bfg)^* = \left(\esord GES_p(N, \Zp)\right)_{\bfg}(1).\]
   \end{definition}

   \begin{corollary}
    Let $g$ be an ordinary weight 2 Hecke eigenform of level $Np^s$, with coefficients in some finite extension $L_\fp / \Qp$ with ring of integers $\cO_\fp$. Then we have an isomorphism of $\cO_\fp$-linear Galois representations
    \[ \cO_\fp \otimes_{\cH_{\ord}(N, \Zp)} T(\bfg)^* \cong T_{\cO_\fp}(g)^*, \]
    where $T_{\cO_\fp}(g)^*$ is the representation defined in \ref{sect:galreps} above.
   \end{corollary}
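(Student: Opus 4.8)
The plan is to read this off from Ohta's control theorem (Proposition~\ref{thm:ohtacontrol}) together with the construction of $T_{\cO_\fp}(g)^*$ out of the cohomology of a single modular curve given in \S\ref{sect:galreps}. Put $r = \max(s, 1)$ (chosen large enough that the weight-$2$ and nebentypus data of $g$ at $p$ are detected modulo $\omega_r$), so that $g$, or its unit old $p$-stabilisation when $s < r$, is an ordinary eigenform of level $Np^r$; its eigenvalues for $T'(n)$ (with $(n,Np)=1$), $\langle n\rangle$ and $U_p'$ determine a maximal ideal $\bfg$ of $\esord\cH'(\Gamma_1(Np^r),\Zp)$, compatibly with the maximal ideal of $\cH_{\ord}(N,\Zp)$ that it cuts out under the transpose isomorphism $\cH_{\ord}(N,\Zp) \cong \esord\cH'(N,\Zp)$ recalled above. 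In particular the ring homomorphism $\cH_{\ord}(N,\Zp) \to \cO_\fp$ classifying $g$ factors through $\cH_{\ord}(N,\Zp)/\omega_r \cong \esord\cH'(\Gamma_1(Np^r),\Zp)$.

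The key step is then to base change the isomorphism of Proposition~\ref{thm:ohtacontrol} along $\cH_{\ord}(N,\Zp) \to \cO_\fp$. Since $\omega_r$ lies in the kernel of this map, and localising at $\bfg$ is harmless (the map already factors through the localisation), we obtain a $G_{\QQ}$-equivariant isomorphism
\[ \cO_\fp \otimes_{\cH_{\ord}(N, \Zp)} T(\bfg)^* \;\cong\; \cO_\fp \otimes_{\esord\cH'(\Gamma_1(Np^r),\Zp)} \Big(\esord H^1_\et(\overline{Y_1(Np^r)}, \Zp)(1)\Big). \]
It now suffices to identify the right-hand side with $T_{\cO_\fp}(g)^*$. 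By the modular-symbol description used to define $T_{\cO_\fp}(g)^*$ in \S\ref{sect:galreps} (together with Remark~\ref{remark:tildelattice}, which handles the Eisenstein discrepancy between $Y_1$ and $X_1$), the $\bfg$-part of $\esord H^1_\et(\overline{Y_1(Np^r)}, \cO_\fp)(1)$ is exactly the lattice generated by integral cohomology inside the two-dimensional $L_\fp$-representation on which the transposed Hecke operators act through the eigenvalues of $g$. When $s = r$ this is the definition of $T_{\cO_\fp}(g)^*$; when $s < r$, the pushforward $Y_1(Np^r) \to Y_1(Np^s)$ restricts to an isomorphism on this eigenspace, because the anti-ordinary projector $\esord$ annihilates the non-unit old $p$-stabilisations, so the two lattices still coincide.

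The point requiring care is the \emph{integral} assertion: one must know that specialising the $\Lambda(\Gamma_1)$-module $\esord GES_p(N,\Zp)$ at the arithmetic point $\bfg$ yields a free $\cO_\fp$-module of rank exactly $2$, with no spurious torsion. This combines the freeness of $\esord GES_p(N,\Zp)$ over $\Lambda(\Gamma_1)$ from Proposition~\ref{thm:ohtacontrol} with a multiplicity-one (Gorenstein) input from Ohta's work \cite{ohta99, ohta00}; at maximal ideals where the residual representation is insufficiently generic only the rational statement may be directly available, but that is all that is needed below. The remainder is routine bookkeeping with the transpose-Hecke dictionary and the Tate twist.
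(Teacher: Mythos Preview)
Your approach is essentially the same as the paper's: the paper gives a one-line proof, ``Clear from the definition of $T_{\cO_\fp}(g)^*$ and the control theorem (Theorem~\ref{thm:ohtacontrol}),'' and you have unpacked exactly those two ingredients. Your identification of the potential integrality issue (that $\cO_\fp \otimes_{\cH_{\ord}} T(\bfg)^*$ might in principle acquire torsion, so that matching it with the torsion-free lattice $T_{\cO_\fp}(g)^*$ implicitly uses a freeness or Gorenstein-type input from Ohta) is a genuine point the paper glosses over, and it is good that you flag it.

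Two small quibbles. First, the reference to Remark~\ref{remark:tildelattice} is misplaced: both $T_{\cO_\fp}(g)^*$ and $GES_p(N,\Zp)$ are built from the cohomology of the \emph{open} curves $Y_1(\cdot)$, so there is no $X_1$-versus-$Y_1$ discrepancy to resolve here. Second, the case analysis on $s < r$ is unnecessary in the intended context (where $s \ge 1$, so one simply takes $r = s$); if you do want to allow $s = 0$, note that $T_{\cO_\fp}(g)^*$ is then defined via $Y_1(N)$ rather than $Y_1(Np)$, and one has to argue separately that the two resulting lattices agree.
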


   \begin{proof}
    Clear from the definition of $T_{\cO_\fp}(g)^*$ and the control theorem (Theorem \ref{thm:ohtacontrol}).
   \end{proof}

   \begin{theorem}
    \label{thm:onevarhida}
    Let $N \ge 1$ be prime to $p$. If $\bfg$ is a Hida family of tame level $N$, and $f$ is any eigenform of level $Np^k$ for $k \ge 1$ whose $U_p$-eigenvalue $\alpha_f$ satisfies $v_p(\alpha_f) < 1$, then for each integer $m \ge 1$ there is a cohomology class
    \[ {}_c\bfz^{(f, \bfg)}_m \in H^1_S(\QQ(\mu_m), T_{\cO_\fp}(f)^* \otimes_{\Zp} T(\bfg)^*),\]
    such that for each classical weight 2 specialization $g$ of $\bfg$ with coefficients in $L$, the image of ${}_c\bfz^{(f, \mathbf{g})}_m$ in
    \[ H^1(\QQ(\mu_m), T_{\cO_\fp}(f)^* \otimes_{\cO_\fp} T_{\cO_\fp}(g)^*) = H^1(\QQ(\mu_m), T_{\cO_\fp}(f, g)^*)\]
    is the generalized Beilinson--Flach element ${}_c \bfz^{(f, g, N')}_m$, where $N'$ is the greatest common divisor of the levels of $f$ and $g$.
   \end{theorem}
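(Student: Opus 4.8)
The plan is to build the class $\,{}_c\bfz^{(f,\bfg)}_m\,$ by a limiting process over the $p$-cyclotomic tower, using the first norm relation (Theorem~\ref{thm:firstnormrelation}) to provide the compatibility, and Ohta's control theorem (Proposition~\ref{thm:ohtacontrol}) to identify the limit with cohomology of $T(\bfg)^*$. First I would fix the newform $f$ of level $Np^k$ and, for each $r\ge 1$, consider the class $\reg_{\et}({}_c\Xi_{m,Np^r,1})$ in $H^1(\QQ(\mu_m), H^2_\et(\overline{Y_1(Np^r)^2},\Zp)(2))$. Applying the K\"unneth isomorphism (Lemma~\ref{lem:kunneth}) and projecting the second factor onto its anti-ordinary part via $\esord$, and the first factor onto $T_{\cO_\fp}(f)^*$, we obtain a class
\[
{}_c\bfz^{(f, r)}_m \in H^1\!\left(\QQ(\mu_m), T_{\cO_\fp}(f)^* \otimes_{\Zp} \esord H^1_\et(\overline{Y_1(Np^r)}, \Zp)(1)\right).
\]
Here the projection to $T_{\cO_\fp}(f)^*$ on the first factor is legitimate because $f$ has level dividing $Np^r$ for all $r\ge k$, and $v_p(\alpha_f)<1$ guarantees (via the usual argument, cf.\ the critical-slope discussion and \cite{colemanedixhoven98}) that the relevant $U_p$-eigenspace splits off.

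Next I would check that these classes are compatible under the maps induced by the degeneracy map $Y_1(Np^{r+1}) \to Y_1(Np^r)$, tensored with the identity on the $f$-factor. This is exactly where Theorem~\ref{thm:firstnormrelation} enters: the pushforward along $(\alpha\times\alpha)$ of ${}_c\Xi_{m,Np^{r+1},1}$ is ${}_c\Xi_{m,Np^r,1}$ when $\Prime(p\cdot Np^{r+1})=\Prime(p\cdot Np^r)$, which holds since $p\mid Np^r$ for $r\ge 1$. Combined with the functoriality of the \'etale regulator under pushforward (Proposition~\ref{prop:regulatorfunctorial}) and the fact that the anti-ordinary projector $\esord$ commutes with the trace maps (this is the whole point of using $U_p'$ rather than $U_p$, cf.\ \cite{ohta99}), one gets that the ${}_c\bfz^{(f,r)}_m$ form a compatible system. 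Passing to the inverse limit over $r$ and invoking the definition $GES_p(N,\Zp)=\varprojlim_r H^1_\et(\overline{Y_1(Np^r)},\Zp)$, together with the fact that inverse limits of Galois cohomology of finite modules commute with $H^1$ (no higher $\varprojlim^1$ obstruction, since the transition maps are surjective on the relevant eigenspaces and the modules are finitely generated over $\Zp$), we obtain
\[
{}_c\bfz^{(f,\bfg)}_m \in H^1_S\!\left(\QQ(\mu_m), T_{\cO_\fp}(f)^* \otimes_{\Zp} \left(\esord GES_p(N,\Zp)\right)(1)\right),
\]
and then cut down by the maximal ideal corresponding to $\bfg$ to land in $H^1_S(\QQ(\mu_m), T_{\cO_\fp}(f)^*\otimes T(\bfg)^*)$.

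Finally I would verify the specialization property. For a classical weight $2$ point $g$ of $\bfg$ of level $Np^s$, the control theorem (Proposition~\ref{thm:ohtacontrol}) gives $\esord GES_p(N,\Zp)/\omega_s \cong \esord H^1(\overline{Y_1(Np^s)},\Zp)$ as $G_\QQ$-modules, and the corresponding $\cO_\fp$-linear quotient of $T(\bfg)^*$ is $T_{\cO_\fp}(g)^*$. Under this identification the class ${}_c\bfz^{(f,\bfg)}_m$ maps to the image of $\reg_{\et}({}_c\Xi_{m,Np^s,1})$ under $\pr_f\otimes\pr_g$, which is precisely ${}_c\bfz^{(f,g,Np^s)}_m$; and since $g$ has level $Np^s$ while $f$ has level $Np^k$, one reduces the level to $N'=\gcd$ of the two levels using the first norm relation once more (the degeneracy maps being trace-compatible in the relevant direction) to get ${}_c\bfz^{(f,g,N')}_m$ as claimed. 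The main obstacle I anticipate is the bookkeeping around the first factor: one must check that projecting $H^2_\et(\overline{Y_1(Np^r)^2})$ onto $T_{\cO_\fp}(f)^*$ in the \emph{first} variable is well-defined and compatible with the tower maps in $r$, even though $f$ itself does not vary --- this requires that the degeneracy maps on the first factor are compatible with the fixed projection $\pr_f$, and the subtlety is that when $r$ increases the natural map is pushforward (trace), which on the $f$-isotypic part is multiplication by a unit (since $v_p(\alpha_f)<1$) rather than an isomorphism on the nose, so one has to normalize consistently. Once that normalization is pinned down the rest is formal.
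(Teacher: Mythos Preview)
Your overall strategy---first norm relation for compatibility in the tower of levels $Np^s$, K\"unneth, anti-ordinary projection, inverse limit, then Ohta's control theorem for specialization---is the same as the paper's. There are, however, two points where your execution diverges from the paper in ways worth noting.

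First, the paper does not work on the symmetric products $Y_1(Np^r)^2$ with $r$ varying. Instead it fixes $r=k$ once and for all and pushes the Beilinson--Flach element ${}_c\Xi_{m,Np^s,1}$ forward \emph{only in the first factor} to $Y_1(Np^k)\times Y_1(Np^s)$, then varies $s\ge k$. This sidesteps entirely the bookkeeping you flag at the end: the projection onto $T_{\cO_\fp}(f)^*$ is performed once at the fixed level $Np^k$, so there is nothing to normalize. Your symmetric approach does work---if you define the projection at level $Np^r$ as pushforward down to $Np^k$ followed by $\pr_f$, compatibility is automatic---but the worry you raise about ``multiplication by a unit'' is misplaced, and the hypothesis $v_p(\alpha_f)<1$ plays no role there.

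Second, and more substantively, you do not use $v_p(\alpha_f)<1$ where it is actually needed. The projection to $T_{\cO_\fp}(f)^*$ requires no slope condition at all; Coleman--Edixhoven is irrelevant here. The hypothesis enters because, to pass to the inverse limit over $s$, one must work in restricted-ramification cohomology $H^1_S(\QQ(\mu_m),-)$, which does commute with inverse limits (finite $p$-cohomological dimension of $G_{\QQ,S}$); unrestricted $H^1$ does not. The paper therefore first shows, ``arguing exactly as in Proposition~\ref{prop:unram outside p}'', that the finite-level classes are unramified outside $Np$: after applying $\esord$ the second-factor $U_p'$-eigenvalue is a unit, so the combined eigenvalue for the second norm relation has valuation $v_p(\alpha_f)<1$, and the divisibility argument of that proposition forces unramifiedness at primes away from $p$. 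Your handwave about $\varprojlim^1$ and surjectivity of transition maps does not substitute for this step; you need to land in $H^1_S$ before taking the limit, and that is precisely what the slope hypothesis buys you.
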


   \begin{proof}
    We know that the elements ${}_c \Xi_{m, Np^s, 1}$ for $s \ge 1$ are unramified outside $S$, and are compatible under pushforward via the natural projection maps. Hence the sequence of elements defined by pushing forward ${}_c \Xi_{m, Np^s, 1}$ to $\CH^2(Y_1(Np^r) \times Y_1(Np^s) \times \QQ(\mu_m), 1)$, for $s \ge r$, are compatible under pushforward maps in the $Y_1(Np^s)$ factor alone. Applying the \'etale regulator, we obtain elements of the module
    \[ \varprojlim_{s \ge r} H^1(\QQ(\mu_m), H^2_{\et}(\overline{Y_1(Np^r) \times Y_1(Np^s)}, \Zp)(2)).\]
    For each $s$, using the K\"unneth formula we may decompose $H^2_{\et}(\overline{Y_1(Np^r) \times Y_1(Np^s)}, \Zp)$ as the tensor product of the $H^1$'s of the two factors. Projecting to the quotient $T_{\cO_\fp}(f)$ of $H^1_\et(\overline{Y_1(Np^r)}, \Zp)(1)$, and applying the anti-ordinary projector $\esord$ to $H^1_\et(\overline{Y_1(Np^s)}, \Zp)(1)$, we may argue exactly as in Proposition \ref{prop:unram outside p} above to deduce that the elements we obtain are unramified outside $Np$.

    Since the restricted-ramification cohomology groups $H^i_S(\QQ(\mu_m), -)$ commute with inverse limits, we obtain an element of
    \[ H^1_S(\QQ(\mu_m), T_{\cO_\fp}(f)^* \otimes \esord GES_p(N)_{\Zp}(1)).\]
    Pushing forward along the canonical map $\esord GES_p(N)_{\Zp}(1) \to T(\bfg)^*$, we obtain the required elements.
   \end{proof}

   We also obtain a corresponding result for the product of two Hida families, whose proof is essentially identical to the above:

   \begin{theorem}
    \label{thm:twovarhida}
    Let $N \ge 1$ be prime to $p$. If $\bff$, $\bfg$ are Hida families of tame level $N$, then for each integer $m \ge 1$ there is a cohomology class
    \[ {}_c\bfz^{(\bff, \bfg)}_m \in H^1_S(\QQ(\mu_m), T(\bff)^*\mathop{\hat\otimes}_{\Zp} T(\bfg)^*),\]
    such that for classical weight 2 specializations $f$, $g$ of $\bff$, $\bfg$ with coefficients in $L$, the image of ${}_c\bfz^{(\bff, \bfg)}_m$ in
    \[ H^1(\QQ(\mu_m), T_{\cO_\fp}(f)^* \otimes_{\cO_\fp} T_{\cO_\fp}(g)^*) = H^1(\QQ(\mu_m), T_{\cO_\fp}(f, g)^*)\]
    is the generalized Beilinson--Flach element ${}_c \bfz^{(f, g, N')}_m$, where $N'$ is the greatest common divisor of the levels of $f$ and $g$.
   \end{theorem}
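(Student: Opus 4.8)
The plan is to imitate the proof of Theorem \ref{thm:onevarhida} almost verbatim, the only new point being that the level is now allowed to grow independently in the two factors. For each pair of integers $r, s \ge 1$ set
\[ Z_{r,s} \coloneqq \left(\alpha^{t-r} \times \alpha^{t-s}\right)_*\left({}_c \Xi_{m, Np^t, 1}\right) \in \CH^2\!\left(Y_1(Np^r) \times Y_1(Np^s) \otimes \QQ(\mu_m), 1\right),\]
where $t = \max(r,s)$ and $\alpha$ is the degeneracy map decreasing the level by one power of $p$. Using only the first norm relation (Theorem \ref{thm:firstnormrelation}), together with the obvious factorisations of iterated degeneracy maps, one checks that the doubly-indexed system $(Z_{r,s})_{r,s \ge 1}$ is compatible under pushforward in each factor separately; it is here that one uses $p \mid Np^r$ for all $r \ge 1$, so that $\Prime(mNp^r)$ is independent of $r \ge 1$ and the ``clean'' case of the first norm relation (rather than the one involving an Euler factor) always applies.

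Next I would apply Huber's \'etale regulator, which is compatible with pushforward (Proposition \ref{prop:regulatorfunctorial}), followed by the K\"unneth isomorphism of Lemma \ref{lem:kunneth}, and then apply the anti-ordinary projector $\esord$ to \emph{both} tensor factors. Passing to the inverse limit over $(r,s)$, which is legitimate because $H^i_S(\QQ(\mu_m), -)$ commutes with inverse limits and because $\esord GES_p(N)_{\Zp}$ is free of finite rank over $\Lambda(\Gamma_1)$ (Proposition \ref{thm:ohtacontrol}), yields a class in
\[ H^1_S\!\left(\QQ(\mu_m),\ \esord GES_p(N)_{\Zp}(1) \htimes_{\Zp} \esord GES_p(N)_{\Zp}(1)\right),\]
where one argues exactly as in Proposition \ref{prop:unram outside p} that the limit class is unramified outside $Np$. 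Finally, pushing this class forward along the two canonical quotient maps $\esord GES_p(N)_{\Zp}(1) \to T(\bff)^*$ and $\esord GES_p(N)_{\Zp}(1) \to T(\bfg)^*$ (i.e.\ localising at the maximal ideals $\bff$ and $\bfg$ in the two factors) produces the desired ${}_c \bfz^{(\bff, \bfg)}_m \in H^1_S(\QQ(\mu_m), T(\bff)^* \htimes_{\Zp} T(\bfg)^*)$. The interpolation statement then follows from the control theorem (Proposition \ref{thm:ohtacontrol}) and the compatibility of K\"unneth with the projections onto $T_{\cO_\fp}(f)^* \otimes_{\cO_\fp} T_{\cO_\fp}(g)^*$: specialising $Z_{r,s}$ in weight $2$ recovers (a pushforward of) ${}_c \Xi_{m, Np^{\max(r,s)}, 1}$, whence the weight-$2$ specialisation of ${}_c \bfz^{(\bff, \bfg)}_m$ is the Beilinson--Flach element ${}_c \bfz^{(f, g, N')}_m$, $N'$ being the greatest common divisor of the levels of $f$ and $g$.

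The functoriality of the regulator and the commutation of restricted-ramification cohomology with inverse limits are quoted from the one-variable argument, so the one step requiring genuine care is the identification of the doubly-indexed inverse limit of the $\esord \otimes \esord$-projected groups $H^2_\et(\overline{Y_1(Np^r) \times Y_1(Np^s)}, \Zp)(2)$ with $\esord GES_p(N)_{\Zp}(1) \htimes_{\Zp} \esord GES_p(N)_{\Zp}(1)$, compatibly with the Hecke actions defining $T(\bff)^* \htimes_{\Zp} T(\bfg)^*$. This amounts to checking that the K\"unneth isomorphism commutes with the two independent inverse limits and with the idempotents $\esord$ acting on each factor, and that the completed tensor product over $\Zp$ of two modules that are free of finite rank over $\Lambda(\Gamma_1)$ is the inverse limit of the corresponding finite-level tensor products; given Ohta's control theorem this is routine, but it is the substance of the phrase ``essentially identical'' in the statement.
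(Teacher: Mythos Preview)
Your proposal is correct and matches the paper's own argument, which it gives only as the single sentence ``whose proof is essentially identical to the above'' (referring to Theorem~\ref{thm:onevarhida}). You have faithfully unpacked that sentence: the doubly-indexed system $Z_{r,s}$ built from ${}_c\Xi_{m,Np^{\max(r,s)},1}$ via the first norm relation, the application of $\esord$ to both tensor factors rather than one, and the identification of the bi-inverse limit with $\esord GES_p(N)_{\Zp}(1)\htimes_{\Zp}\esord GES_p(N)_{\Zp}(1)$ via Ohta's control theorem are exactly the modifications the paper has in mind.
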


   \begin{remark}
    We do not know if one can formulate a result analogous to Theorem \ref{thm:Eulersystem} incorporating Hida-family variation in $g$, since we do not know whether the results of Appendix \ref{sect:unbounded cohomology} apply for ``big'' Galois representations; but if $f$ is ordinary there are no such issues.
   \end{remark}

   \begin{theorem}
    \label{thm:threevarhida}
    In the situation of Theorem \ref{thm:twovarhida}, for each $m$ prime to $p$ there exists a cohomology class
    \[ {}_c\mathfrak{z}^{(\bff, \bfg)}_m \in  H^1_{\Iw, S}(\QQ(\mu_{mp^\infty}), T(\bff)^*\mathop{\hat\otimes}_{\Zp} T(\bfg)^*)\]
    whose image in
    \( H^1_{\Iw, S}(\QQ(\mu_{mp^i}), T(\bff)^*\mathop{\hat\otimes}_{\Zp} T(\bfg)^*)\)
    for each $i \ge 1$ is equal to
    \( (\alpha_f \alpha_g)^{-i} \cdot {}_c\bfz^{(\bff, \bfg)}_{mp^i}\).
   \end{theorem}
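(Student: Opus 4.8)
The plan is to glue the classes ${}_c\bfz^{(\bff,\bfg)}_{mp^i}$ of Theorem~\ref{thm:twovarhida} together up the $p$-cyclotomic tower, using the second norm relation at $p$ to control their images under corestriction. Since $\bff$ and $\bfg$ are ordinary families, $\esord$ has been applied to both factors in the definition of $T(\bff)^*$ and $T(\bfg)^*$, so the operator $(U_p',1)$ (resp.\ $(1,U_p')$) acts on $T(\bff)^*\htimes T(\bfg)^*$ as multiplication by a unit $\alpha_\bff\in\Lambda(\bff)^\times$ (resp.\ $\alpha_\bfg\in\Lambda(\bfg)^\times$) whose weight-$2$ specializations are the $U_p$-eigenvalues $\alpha_f,\alpha_g$ of the corresponding classical forms; in particular $\alpha_\bff\alpha_\bfg$ is a unit.

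First I would establish the norm-compatibility
\[
 \cores_{mp^i}^{mp^{i+1}}\Bigl({}_c\bfz^{(\bff,\bfg)}_{mp^{i+1}}\Bigr) = (\alpha_\bff\alpha_\bfg)\cdot{}_c\bfz^{(\bff,\bfg)}_{mp^i}\qquad(i\ge1).
\]
This descends from the motivic second norm relation of Theorem~\ref{thm:secondnormbadprime}: for each level $Np^s$ with $s\ge1$ the prime $p$ divides $Np^s$, and taking $j=1$ (a unit modulo $mp^{i+1}$) with $p\mid mp^i$ puts us in the first case, which gives $\norm_{mp^i}^{mp^{i+1}}\bigl({}_c\Xi_{mp^{i+1},Np^s,1}\bigr)=(U_p'\times U_p')\,{}_c\Xi_{mp^i,Np^s,1}$. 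Applying the \'etale regulator (compatible with the Galois norm by Proposition~\ref{prop:regulatorfunctorial} and with the operators $U_p'$), projecting to the $(\bff,\bfg)$-isotypical quotient, and passing to the inverse limit over the level $Np^s$ exactly as in the proof of Theorem~\ref{thm:twovarhida}, one obtains the displayed identity. Only the case $i\ge1$ is needed, so $p\mid mp^i$ always holds and the ``$-\sigma_p$'' correction of the $\ell\nmid m$ case never enters.

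Next, using that $\alpha_\bff\alpha_\bfg$ is a unit, I would set ${}_c\mathfrak z^{(\bff,\bfg)}_{m,i}:=(\alpha_\bff\alpha_\bfg)^{-i}\,{}_c\bfz^{(\bff,\bfg)}_{mp^i}$ for $i\ge1$. The identity above then reads $\cores_{mp^i}^{mp^{i+1}}\bigl({}_c\mathfrak z^{(\bff,\bfg)}_{m,i+1}\bigr)={}_c\mathfrak z^{(\bff,\bfg)}_{m,i}$, so the $\bigl({}_c\mathfrak z^{(\bff,\bfg)}_{m,i}\bigr)_{i\ge1}$ form a compatible system for corestriction. Since restricted-ramification cohomology commutes with inverse limits over the cyclotomic tower, this system defines
\[
 {}_c\mathfrak z^{(\bff,\bfg)}_m\in\varprojlim_i H^1_S\bigl(\QQ(\mu_{mp^i}),\,T(\bff)^*\htimes T(\bfg)^*\bigr)=H^1_{\Iw,S}\bigl(\QQ(\mu_{mp^\infty}),\,T(\bff)^*\htimes T(\bfg)^*\bigr),
\]
whose image in $H^1_{\Iw,S}(\QQ(\mu_{mp^i}),-)$ is $(\alpha_\bff\alpha_\bfg)^{-i}\,{}_c\bfz^{(\bff,\bfg)}_{mp^i}$ by construction; uniqueness is immediate, since an element of an inverse limit is determined by its projections.

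The main obstacle is the middle step: transporting the motivic second norm relation through the iterated limit defining the classes of Theorem~\ref{thm:twovarhida} while keeping track of the operators $U_p'$ on both modular-curve factors, and checking that after projection to $T(\bff)^*\htimes T(\bfg)^*$ these become multiplication by the units $\alpha_\bff$ and $\alpha_\bfg$; everything else is formal bookkeeping. Note that, unlike in Theorem~\ref{thm:Eulersystem}, no analogue of the non-twist hypothesis is needed here, since the statement concerns only the layers $i\ge1$ and makes no assertion about the local conditions $H^1_f$.
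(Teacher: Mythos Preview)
Your proposal is correct and is precisely the argument the paper intends: Theorem~\ref{thm:threevarhida} is stated without proof, but the preceding remark makes clear that in the doubly-ordinary setting the appendix machinery is unnecessary and one simply mimics the proof of Theorem~\ref{thm:ordinaryEulersystem}, using that $(U_p',U_p')$ acts as a unit $\alpha_\bff\alpha_\bfg$ on $T(\bff)^*\htimes T(\bfg)^*$ so that the rescaled classes $(\alpha_\bff\alpha_\bfg)^{-i}\,{}_c\bfz^{(\bff,\bfg)}_{mp^i}$ are corestriction-compatible by Theorem~\ref{thm:secondnormbadprime} and define an inverse-limit class. Your identification of the one nontrivial step---carrying the second norm relation through the limit over levels $Np^s$ and checking $U_p'$ acts as the stated unit after applying $\esord$---is accurate.
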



 \subsection{Integrality of the Poincar\'e pairing}

  In this section, we prove a technical lemma that will be needed in our applications to bounding Selmer groups. We assume that $p > 3$, $N \ge 5$, and $p \nmid N$.

  Recall that $X_1(N)$ admits a canonical smooth proper model over $\ZZ[1/N]$ (\cite{delignerapoport73}) and hence over $\Zp$. By \cite{fontainemessing87}, the integral de Rham cohomology $H^1(X_1(N), \Omega^\bullet_{X_1(N) / \Zp})$ is a filtered Dieudonn\'e module over $\Zp$, and
  \[ T(H^1(X_1(N), \Omega^\bullet_{X_1(N) / \Zp})) = H^1_{\et}(\overline{X_1(N)}, \Zp),\]
  where $T(-)$ is the Fontaine--Laffaille functor.

  We define versions of these in the $f$-isotypical component by projection. As in Remark \ref{remark:tildelattice} above, we define $\widetilde{T}_{\cO_\fp}(f)^*$ to be the image of $H^1_{\et}(\overline{X_1(N)}, \Zp) \otimes \cO_\fp$ in $V_{L_\fp}(f)^*$, and similarly for $g$. We define $\Dcris(\widetilde{T}_{\cO_\fp}(f)^*)$ as the image of $H^1(X_1(N), \Omega^\bullet_{X_1(N) / \Zp}) \otimes_{\Zp} \cO_\fp$ in $\Dcris(V_{L_\fp}(f)^*)$; then $\Dcris(\widetilde{T}_{\cO_\fp}(f)^*)$ is a strongly divisible $\cO_\fp$-lattice, and its image under $T(-)$ is $\widetilde{T}_{\cO_\fp}(f)^*$.

  Let us recall here the definition of $\eta_f^{\ur}$.

   \begin{definition}
    Let $X=X_1(N)$ and
   \[
    \eta_f^{\ah}=\frac{\bar{f}^*(z)d\bar{z}}{\langle f^*,f^* \rangle_{k,N}}\in H^1_{\dR}(X_\CC).
   \]
   We denote $\eta_f$ its image in $H^1(X / \CC,\cO_{X / \CC})$, which lies in $H^1(X / \Qp, \cO_{X / \Qp})$; then $\eta_f^{\ur}$ is defined to be the lift of $\eta_f$ to the unit root subspace of $H^1_{\dR}(X_{\CC_p})^{f,\ur}$.
  \end{definition}

  Our aim is to investigate the denominator of the class $\eta_f^{\ur}$ relative to the sublattice
  \[ \cO_L \otimes_{\Zp} H^1(X_1(N), \Omega^\bullet_{X_1(N) / \Zp}) \subseteq L_\fp \otimes_{\Qp} H^1(X_1(N), \Omega^\bullet_{X_1(N) / \Qp}).\]
  Since the unit root lifting is obviously integral, it suffices to show that $\eta_f \in \cO_L \otimes_{\Zp} H^1(X_1(N), \cO_{X_1(N) / \Zp})$.

  \begin{proposition}
   \label{prop:serreduality}
   An element of
   \[ L_\fp \otimes H^1(X_1(N) / \Qp, \cO_{X_1(N) / \Qp})\]
   lies in the sublattice
   \[\cO_\fp \otimes H^1(X_1(N), \cO_{X_1(N) / \Zp})\]
   if and only if it pairs to an element of $\cO_\fp$ with all elements of $\cO_\fp \otimes H^0(X_1(N) / \Zp, \Omega^1_{X_1(N) / \Zp})$.
  \end{proposition}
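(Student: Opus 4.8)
The plan is to deduce this statement from Serre duality on the curve $X_1(N)$ over $\Zp$, using the fact that $X_1(N)$ is smooth and proper over $\Zp$ (as $p\nmid N$).

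First I would recall the perfect Serre duality pairing for the smooth proper curve $\cX := X_1(N)$ over $\ZZ_p$: for the relative dualizing sheaf $\omega_{\cX/\Zp}\cong \Omega^1_{\cX/\Zp}$, cup product together with the trace map gives a perfect pairing of finitely generated $\Zp$-modules
\[ H^1(\cX, \cO_{\cX}) \times H^0(\cX, \Omega^1_{\cX/\Zp}) \rTo \Zp. \]
Perfectness here means that the induced map $H^1(\cX,\cO_{\cX}) \to \Hom_{\Zp}(H^0(\cX,\Omega^1_{\cX/\Zp}), \Zp)$ is an isomorphism; this follows from Grothendieck--Serre duality for the proper flat morphism $\cX \to \Spec\Zp$ of relative dimension $1$ (the dualizing complex is $\Omega^1_{\cX/\Zp}[1]$ since $\cX/\Zp$ is smooth), combined with the fact that both cohomology modules are finite free over $\Zp$ (they are the $\Zp$-lattices underlying the Hodge filtration pieces of a filtered Dieudonn\'e module, as recalled just above the statement). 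Tensoring up to $\cO_\fp$, which is finite free over $\Zp$, preserves perfectness, so we get a perfect $\cO_\fp$-bilinear pairing
\[ \left(\cO_\fp \otimes_{\Zp} H^1(\cX, \cO_{\cX})\right) \times \left(\cO_\fp \otimes_{\Zp} H^0(\cX, \Omega^1_{\cX/\Zp})\right) \rTo \cO_\fp. \]

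Next I would invoke the compatibility of this pairing with the corresponding pairing over $\Qp$ (and $\CC$): inverting $p$ recovers the usual Serre/Poincar\'e duality pairing on $H^1(X/\Qp,\cO_X) \times H^0(X/\Qp,\Omega^1_{X/\Qp})$, which is the algebraic avatar of the Petersson-product pairing used to define $\eta_f^{\ah}$ and $\eta_f^{\ur}$. Given $x \in L_\fp \otimes H^1(X/\Qp,\cO_X)$, consider the $\cO_\fp$-linear functional $\phi_x$ on $\cO_\fp \otimes H^0(\cX,\Omega^1_{\cX/\Zp})$ obtained by pairing with $x$. If $x$ lies in the integral lattice $\cO_\fp \otimes H^1(\cX,\cO_{\cX})$, then $\phi_x$ visibly takes values in $\cO_\fp$. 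Conversely, if $\phi_x$ takes values in $\cO_\fp$, then by perfectness of the integral pairing there is a unique $x' \in \cO_\fp \otimes H^1(\cX,\cO_{\cX})$ inducing the same functional; and since the rational pairing is nondegenerate and $x,x'$ induce the same functional on the full rational space $L_\fp \otimes H^0$, we conclude $x = x'$, so $x$ lies in the integral lattice. This gives exactly the ``if and only if'' asserted.

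The main obstacle is establishing the perfectness of the \emph{integral} Serre duality pairing, i.e.\ that no denominators appear in the trace map and that the cohomology modules are free rather than merely of finite type over $\Zp$. The freeness is not an issue here: it is already recorded above the statement that $H^1(X_1(N),\Omega^\bullet_{X_1(N)/\Zp})$ is a filtered Dieudonn\'e module over $\Zp$, hence $\Zp$-free, and the Hodge-filtration submodule $H^0(\cX,\Omega^1)$ and quotient $H^1(\cX,\cO_{\cX})$ are $\Zp$-direct summands (since $p>3$ and $p\nmid N$, the Hodge-to-de Rham spectral sequence degenerates integrally and the relevant cohomology is concentrated in the right degrees, with no $p$-torsion). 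The integrality and perfectness of the trace pairing is then the content of Grothendieck duality for the smooth proper morphism $\cX \to \Spec\Zp$; I would cite this (e.g.\ via \cite{delignerapoport73} for the smooth proper model together with the standard duality formalism) rather than reprove it. One minor bookkeeping point to check is that the pairing used to normalize $\eta_f^{\ah}$ (the Petersson product $\langle f^*, f^*\rangle_{k,N}$) differs from the algebraic Serre duality pairing only by an explicit period/constant that is a $p$-adic unit under our hypotheses, so that ``integral for Serre duality'' is genuinely the condition controlling the denominator of $\eta_f^{\ur}$; this is where one uses $p>3$.
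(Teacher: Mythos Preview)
Your argument is correct and follows essentially the same approach as the paper: both rest entirely on the perfectness of the integral Serre duality pairing for the smooth proper curve $X_1(N)$ over $\Zp$, and deduce the ``if and only if'' from that. The paper's proof is considerably terser (it simply reduces to $\cO_\fp=\Zp$ and invokes Serre duality), whereas you spell out the freeness of the cohomology and the Grothendieck-duality justification, but the substance is the same. Your final paragraph about the Petersson normalization and $p>3$ is not needed for the proposition as stated---that issue belongs to the application (Corollary~\ref{cor:integraletaf}), not to the duality statement itself.
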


  \begin{proof}
   Since the pairing between $H^1(X_1(N), \cO_{X_1(N) / \Zp})$ and $H^0(X_1(N) / \Zp, \Omega^1_{X_1(N) / \Zp})$ is defined over $\Zp$, it suffices to assume $\cO_{\fp} = \Zp$. But Serre duality shows that this pairing is perfect, i.e.~identifies $H^0(X_1(N) / \Zp, \Omega^1_{X_1(N) / \Zp})$ with the $\Zp$-dual of $H^1(X_1(N) / \Zp, \cO_{X_1(N) / \Zp})$.
  \end{proof}

  \begin{lemma}\label{lem:integral}
   Let $\phi\in S_2(N; L_\fp)$. Then the element $\omega_\phi$ of $H^0(X/L, \Omega^1_{X/L})$ lies in $H^0(X/\cO_L, \Omega^1_{X/ \cO_L})$ if and only if $\phi \in S_2(N; \cO_\fp)$.
  \end{lemma}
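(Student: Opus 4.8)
The plan is to reduce the statement to the $q$-expansion principle for integral models of modular curves. First I would recall the classical fact that, for $N\ge 5$ and $p\nmid N$, the cusp $\infty$ is a section of the smooth proper model $X_1(N)/\ZZ_p$ (rational over $\ZZ_p[\mu_N]$, hence after base change to $\cO_\fp$), and that the formal completion of $X_1(N)/\cO_\fp$ along this cusp is $\Spf \cO_\fp[[q]]$. Pulling back a global section $\omega$ of $\Omega^1_{X/\cO_\fp}$ to this formal neighbourhood gives an element of $\cO_\fp[[q]]\,\frac{\d q}{q}$, and this pullback map is exactly ``take the $q$-expansion at $\infty$''. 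The $q$-expansion principle (as in Katz, or \cite[\S1.2, \S1.6]{kato04}, or \cite{delignerapoport73}) states that a section of $\Omega^1_{X/L_\fp}$ lies in the integral lattice $H^0(X/\cO_\fp,\Omega^1_{X/\cO_\fp})$ if and only if its $q$-expansion at $\infty$ has coefficients in $\cO_\fp$. Since $\omega_\phi$ has $q$-expansion $\phi(q)\,\frac{\d q}{q}$ (using the normalization $\omega_\phi \mapsto 2\pi i\,\phi(z)\,\d z = \phi(q)\frac{\d q}{q}$ fixed earlier in the excerpt), this says precisely that $\omega_\phi \in H^0(X/\cO_\fp,\Omega^1_{X/\cO_\fp})$ iff all Fourier coefficients of $\phi$ lie in $\cO_\fp$, i.e.\ iff $\phi \in S_2(N;\cO_\fp)$.

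The key steps, in order, are: (1) identify $H^0(X/\cO_\fp,\Omega^1_{X/\cO_\fp})$ with the $\cO_\fp$-module of weight $2$ cusp forms with integral $q$-expansion, using that $\Omega^1_{X/\cO_\fp} \cong \underline\omega^{\otimes 2}(-\mathrm{cusps})$ on the integral model (the Kodaira--Spencer isomorphism, valid integrally away from the primes dividing $N$ and $6$, hence over $\cO_\fp$ since $p>3$, $p\nmid N$); (2) recall that this module base-changes correctly, i.e.\ $L_\fp\otimes_{\cO_\fp} H^0(X/\cO_\fp,\Omega^1_{X/\cO_\fp}) = H^0(X/L_\fp,\Omega^1_{X/L_\fp}) = S_2(N;L_\fp)$, so that the lattice $H^0(X/\cO_\fp,\Omega^1)$ sits inside $S_2(N;L_\fp)$ and the question is which forms land in it; (3) invoke the $q$-expansion principle to conclude. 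One should note that the ``only if'' direction is immediate (a section of the integral sheaf has integral $q$-expansion, as the $q$-expansion is literally its image under an $\cO_\fp$-linear restriction map), and the ``if'' direction is the substantive half of the $q$-expansion principle, which ultimately rests on the irreducibility (or rather connectedness of the relevant fibres, via the Tate curve) of $X_1(N)$ over $\cO_\fp$.

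I would also remark that the statement is essentially the definition of $S_2(N;\cO_\fp)$ once one adopts the geometric definition of integral modular forms as integral sections of $\underline\omega^{\otimes k}$; the lemma is then the comparison between that geometric definition and the ``$q$-expansion-theoretic'' one used elsewhere in the paper, and is standard. Accordingly the proof can be quite short: cite \cite[Theorem 1.6.1]{kato04} or the analogous statement in \cite{delignerapoport73} for the $q$-expansion principle over $\cO_\fp$, note the Kodaira--Spencer identification $\Omega^1_{X_1(N)/\cO_\fp}\cong \underline\omega^{\otimes 2}\bigl(-\sum_{\text{cusps}}\bigr)$ (legitimate since $p\nmid 6N$), and match $q$-expansions.

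The main obstacle is purely one of bookkeeping about normalizations: one must make sure that the cusp $\infty$ really does give a section of the integral model after base change to $\cO_\fp$ (it does, because $\cO_\fp \supseteq \ZZ_p[\mu_N]$ as $p\nmid N$ forces $\mu_N\subseteq\cO_\fp^\times\cdot(\text{unramified part})$ — more precisely $\ZZ_p[\mu_N]$ is étale over $\ZZ_p$, so any $\cO_\fp$ containing the Fourier coefficients of $f,g$ and the $N$-th roots of unity works, which was built into the running hypotheses), and that the identification of $\Omega^1$ with a twist of $\underline\omega^{\otimes 2}$ holds over $\cO_\fp$ and not merely over $\QQ_p$ — this is where $p\nmid 6N$ is used. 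Once these normalization points are pinned down, the $q$-expansion principle does all the real work and there is nothing further to prove.
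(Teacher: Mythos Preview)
Your proposal is correct and follows the same underlying idea as the paper: reduce to the $q$-expansion $\omega_\phi(q)=\phi(q)\,\d q/q$ and appeal to the $q$-expansion principle. The paper's own proof is a single line --- ``We have by definition $\omega_\phi(q)=\phi(q)\,dq/q$, which is defined over $\cO_L$ if $\phi$ is'' --- which is essentially your argument with all the supporting apparatus (Kodaira--Spencer, rationality of the cusp, the $q$-expansion principle itself) left implicit; your write-up simply unpacks what the paper takes for granted.
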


  \begin{proof}
   We have by definition $\omega_\phi(q)=\phi(q)dq/q$, which is defined over $\cO_L$ if $\phi$ is.
  \end{proof}

  \begin{definition}
   \label{def:idealIf}
   If $f \in S_2(\Gamma_1(N), L)$, let $I_f$ denote the ideal in $\cO$ such that
   \[ \left\{ \frac{ \langle f^*, \phi \rangle}{\langle f^*, f^* \rangle} : \phi \in S_2(N, \cO)\right\} = I_f^{-1}. \]
  \end{definition}

  \begin{remark}
   Note that $I_f^{-1}$ contains $\cO$, so $I_f$ is an integral ideal (rather than a fractional ideal). The ideal $I_f$ essentially measures the extent to which $f$ is congruent to other eigenforms in $S_2(N, \cO)$.
  \end{remark}

  \begin{corollary}
   \label{cor:integraletaf}
   For any prime $\fp \nmid N$, we have
   \[ \eta_{f} \in I_f^{-1} \cdot \cO_\fp \otimes_{\Zp} H^1(X_1(N), \cO_{X_1(N) / \Zp}).\]
  \end{corollary}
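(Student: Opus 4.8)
The plan is to combine Proposition~\ref{prop:serreduality}, Lemma~\ref{lem:integral}, and Definition~\ref{def:idealIf} in a direct way. First I would unwind what needs to be shown: by the discussion preceding the statement, the unit-root lift $\eta_f^{\ur}$ has the same ``denominator'' as $\eta_f \in H^1(X_1(N)/\Qp, \cO_{X_1(N)/\Qp}) \otimes L_\fp$ relative to the integral lattice $\cO_\fp \otimes_{\Zp} H^1(X_1(N), \cO_{X_1(N)/\Zp})$, so it suffices to prove $\eta_f \in I_f^{-1} \cdot \cO_\fp \otimes_{\Zp} H^1(X_1(N), \cO_{X_1(N)/\Zp})$. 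By Proposition~\ref{prop:serreduality}, membership of an element $x$ in $\mathfrak{a}^{-1} \cdot \cO_\fp \otimes H^1(X_1(N),\cO)$ (for a fractional ideal $\mathfrak{a}$) is detected by the pairing: $x$ lies in this lattice iff $\langle x, \omega\rangle \in \mathfrak{a}^{-1}$ for every $\omega \in \cO_\fp \otimes H^0(X_1(N)/\Zp, \Omega^1_{X_1(N)/\Zp})$.

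The key step is therefore to identify the pairing $\langle \eta_f, \omega_\phi\rangle$ with a Petersson-type expression. By definition $\eta_f$ is the image in coherent cohomology of $\eta_f^{\ah} = \dfrac{\bar f^*(z)\, d\bar z}{\langle f^*, f^*\rangle_{2,N}}$, and the Serre duality pairing between $H^1(X,\cO_X)$ and $H^0(X,\Omega^1)$ is, up to a normalization constant that one checks is a $p$-adic unit (using $p>3$, $p\nmid N$), computed by the Petersson product: $\langle \eta_f, \omega_\phi\rangle$ is a unit multiple of $\dfrac{\langle f^*, \phi\rangle}{\langle f^*, f^*\rangle}$. Then by Lemma~\ref{lem:integral}, as $\phi$ ranges over a $\Zp$-basis of $\cO_\fp \otimes H^0(X_1(N)/\Zp, \Omega^1)$, the corresponding $\phi$ range precisely over $S_2(N;\cO_\fp)$; so the set of values $\{\langle \eta_f, \omega_\phi\rangle\}$ is (up to units) exactly the set $\left\{ \dfrac{\langle f^*,\phi\rangle}{\langle f^*,f^*\rangle} : \phi \in S_2(N,\cO_\fp)\right\} = I_f^{-1}$ by Definition~\ref{def:idealIf}. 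Applying Proposition~\ref{prop:serreduality} with $\mathfrak{a} = I_f$ then yields $\eta_f \in I_f^{-1}\cdot \cO_\fp \otimes_{\Zp} H^1(X_1(N),\cO_{X_1(N)/\Zp})$, and hence the same for $\eta_f^{\ur}$.

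I expect the main obstacle to be the identification of the algebraic Serre-duality pairing on the integral model with the analytic Petersson product, and in particular controlling the comparison constant. One needs to know that the $\Zp$-bilinear Serre pairing $H^1(X_1(N)/\Zp,\cO) \times H^0(X_1(N)/\Zp,\Omega^1) \to \Zp$ agrees, after base change to $\CC$, with (a fixed rational multiple of) the cup product $H^1_{\dR} \times H^1_{\dR} \to \CC$ used to express $\langle\,,\,\rangle$, and that this multiple is a $p$-adic unit under the running hypotheses $p > 3$, $p \nmid N$, $N \ge 5$ (so that $X_1(N)$ is a fine moduli scheme with the expected properties at $p$). This is essentially a normalization bookkeeping issue --- the kind of computation the paper elsewhere defers to Kato \cite{kato04} or to \cite{fontainemessing87} --- rather than a conceptual difficulty, but it is where care is required. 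Everything else is a formal consequence of the three cited results.
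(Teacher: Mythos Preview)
Your proposal is correct and follows essentially the same route as the paper: compute $\langle \eta_f, \omega_\phi\rangle$ for $\phi \in S_2(N,\cO)$, observe that this lies in $I_f^{-1}$ by Definition~\ref{def:idealIf}, and conclude via Lemma~\ref{lem:integral} and Proposition~\ref{prop:serreduality}. The only difference is that the paper simply asserts the equality $\langle \eta_f, \omega_\phi\rangle = \dfrac{\langle f^*,\phi\rangle}{\langle f^*,f^*\rangle}$ as holding \emph{on the nose} ``by the construction of the class $\eta_f$'', whereas you hedge with ``up to a $p$-adic unit''; in fact the normalization of $\eta_f^{\ah}$ (with the factor $\langle f^*,f^*\rangle^{-1}$ built in) is chosen precisely so that this identity is exact, so your worry about the comparison constant is unnecessary.
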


  \begin{proof}
   By the construction of the class $\eta_{f}$, for any $\vp \in S_2(\Gamma_1(N), \cO)$ we have
   \[ \langle \eta_f, \omega_{\phi} \rangle = \frac{\langle f^*, \phi\rangle}{\langle f^*, f^* \rangle} \in I_f^{-1} \cO,\]
   so the result follows by Lemma \ref{lem:integral} and Proposition \ref{prop:serreduality}.
  \end{proof}

  \begin{corollary}
   \label{cor:integralpoincare}
   The linear functional
   \[ \Dcris(V_{L_\fp}(f, g)^*) \to L_\fp\]
   given by pairing with $\eta_f^{\ur} \otimes \omega_g$ maps the submodule
   \[ \Dcris(\widetilde{T}_{\cO_\fp}(f)^*) \otimes \Dcris(\widetilde{T}_{\cO_\fp}(g)^*)\]
   into $I_f^{-1} \cO_\fp$.
  \end{corollary}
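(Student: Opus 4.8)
The plan is to reduce the claim to three ingredients already at hand: the integral K\"unneth decomposition of the de Rham cohomology of $X_1(N)^2$ over $\Zp$, the integrality of the de Rham Poincar\'e pairing on the smooth proper $\Zp$-model of $X_1(N)^2$, and the control on the denominators of $\eta_f$ and $\omega_g$ provided by Corollary \ref{cor:integraletaf} and Lemma \ref{lem:integral}.

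First I would set up the integral K\"unneth picture. Since $p > 3$ and $N \ge 5$, the cohomology of the $\Zp$-model of $X_1(N)$ is torsion-free, and external cup product gives an embedding
\[ \cO_\fp \otimes_{\Zp} \Big( H^1\big(X_1(N), \Omega^\bullet_{X_1(N)/\Zp}\big)^{\otimes 2}\Big) \into \cO_\fp \otimes_{\Zp} H^2_{\dR}\big(X_1(N)^2 / \Zp\big), \]
which, after projecting the target to its $(f,g)$-isotypical quotient $\Dcris(V_{L_\fp}(f,g)^*)$, carries the source onto $\Dcris(\widetilde T_{\cO_\fp}(f)^*) \otimes_{\cO_\fp} \Dcris(\widetilde T_{\cO_\fp}(g)^*)$, by the very definition of these strongly divisible lattices. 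The de Rham Poincar\'e pairing on $X_1(N)^2$ --- cup product followed by the trace map $H^4_{\dR}(X_1(N)^2/\Zp) \to \Zp$ --- is defined over $\Zp$, hence $\cO_\fp$-valued on $\cO_\fp \otimes H^2_{\dR}(X_1(N)^2/\Zp)$; and on the $(1,1)$-K\"unneth summand it is the tensor product of the Poincar\'e pairings of the two curve factors. In particular it restricts to an $\cO_\fp$-valued pairing on $\Dcris(\widetilde T_{\cO_\fp}(f)^*) \otimes_{\cO_\fp} \Dcris(\widetilde T_{\cO_\fp}(g)^*)$, which is the pairing appearing in the statement.

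It then remains to show that $\eta_f^{\ur} \otimes \omega_g$ lies in $I_f^{-1}$ times this integral lattice. The factor $\omega_g$ is integral: the $q$-expansion of $\omega_g$ has coefficients in $\cO_\fp$ (as $g$ is a normalized eigenform), so by Lemma \ref{lem:integral} and Proposition \ref{prop:serreduality} its class lies in $\Fil^1$ of $\cO_\fp \otimes H^1(X_1(N), \Omega^\bullet_{X_1(N)/\Zp})$, hence in $\Dcris(\widetilde T_{\cO_\fp}(g)^*)$. For the factor $\eta_f^{\ur}$: by Corollary \ref{cor:integraletaf} the class $\eta_f$ lies in $I_f^{-1}$ times the image of $\Dcris(\widetilde T_{\cO_\fp}(f)^*)$ in $\Dcris(V_{L_\fp}(f)^*)/\Fil^1$. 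Since $f$ is ordinary at $p$ and the Hodge--Tate weights $\{0,1\}$ lie in the Fontaine--Laffaille range $[0, p-2]$, the strongly divisible lattice $\Dcris(\widetilde T_{\cO_\fp}(f)^*)$ splits over $\cO_\fp$ as the direct sum of its $\Fil^1$ submodule and its unit-root (slope $0$) submodule, and the projection to $\Dcris(V_{L_\fp}(f)^*)/\Fil^1$ restricts to an $\cO_\fp$-isomorphism on the unit-root part; therefore the unit-root lift $\eta_f^{\ur}$ of $\eta_f$ lies in $I_f^{-1}$ times the unit-root submodule, and a fortiori in $I_f^{-1} \cdot \Dcris(\widetilde T_{\cO_\fp}(f)^*)$. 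Combining, the functional $v \mapsto \langle v, \eta_f^{\ur}\otimes\omega_g\rangle$ pairs the integral lattice $\Dcris(\widetilde T_{\cO_\fp}(f)^*)\otimes_{\cO_\fp}\Dcris(\widetilde T_{\cO_\fp}(g)^*)$ with an element of $I_f^{-1}$ times that lattice, under an $\cO_\fp$-valued pairing, so its values lie in $I_f^{-1}\cO_\fp$. The one point genuinely requiring care is the integrality of the unit-root splitting of $\Dcris(\widetilde T_{\cO_\fp}(f)^*)$ --- i.e. that passing from $\eta_f$ to $\eta_f^{\ur}$ introduces no new denominators --- which is where the ordinarity of $f$ and the hypothesis $p > 3$ enter; the rest is bookkeeping with the K\"unneth formula and with the integrality of cup product and trace on the $\Zp$-model of $X_1(N)^2$.
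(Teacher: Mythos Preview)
Your argument is correct and matches the approach implicit in the paper, which states this as an immediate corollary of Corollary~\ref{cor:integraletaf} and Lemma~\ref{lem:integral} together with the remark (made just before Proposition~\ref{prop:serreduality}) that ``the unit root lifting is obviously integral''. Your detour through the K\"unneth decomposition of the surface $X_1(N)^2$ is a little more elaborate than needed---one can simply factor the pairing on the tensor product as the product of the two Poincar\'e pairings on the curve factors---but the reasoning is sound, and your explicit justification of the integrality of the unit-root splitting (via the distinctness of $\alpha$ and $\beta$ modulo $\fp$ in the ordinary case) usefully unpacks what the paper leaves implicit.
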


  \begin{proposition}
   Let $z \in H^1_f\left(\Qp, \left[\widetilde{T}_{\cO_\fp}(f) \otimes \widetilde{T}_{\cO_\fp}(g)\right]^*\right)$. Then
   \[ \langle \log(z), \eta_f^{\ur} \otimes \omega_g \rangle \in I_f^{-1} \cdot (1 - \alpha^{-1} \gamma^{-1})^{-1}(1 - \alpha^{-1} \delta^{-1})^{-1} \cdot \cO_\fp,\]
   where $\alpha$ is the unit root of the Hecke polynomial of $f$ and $\beta, \delta$ are the roots of the Hecke polynomial of $g$.
  \end{proposition}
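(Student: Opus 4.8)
The plan is to combine the explicit description of the Bloch--Kato logarithm in terms of crystalline extension classes with Fontaine--Laffaille theory (applicable since $p > 3$: writing $V = V_{L_\fp}(f,g)^*$, the Hodge--Tate weights of $V$ lie in $\{-2,-1,-1,0\}$, an interval of length $2 \le p-2$), and then to exploit the ordinarity of $f$ to cut the relevant $(1-\varphi)$-denominator down from the full Euler factor to the two factors $(1-\alpha^{-1}\gamma^{-1})^{-1}(1-\alpha^{-1}\delta^{-1})^{-1}$. First I would record the formalism. Since $V(f),V(g)$ have nonzero motivic weight, $H^0(\Qp,V)=H^0(\Qp,V^*(1))=0$ and $\Dcris(V)^{\varphi=1}=0$ (the nonvanishing of $1-\alpha^{-1}\gamma^{-1}$ etc. is implicit in the statement), so $H^1_f(\Qp,V)=H^1_e(\Qp,V)$ and $\log=\exp^{-1}$ is defined; moreover $\eta_f^{\ur}\otimes\omega_g\in\Fil^1\Dcris(V^*)$, the annihilator of $\Fil^0\Dcris(V)$, so pairing against it is well defined on the canonical representative of $\log z$ in $\Dcris(V)$. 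If $z$ comes from $H^1_f(\Qp,T)$ with $T=[\widetilde T_{\cO_\fp}(f)\otimes\widetilde T_{\cO_\fp}(g)]^*$, it is the class of a crystalline extension $0\to T\to E\to\cO_\fp\to 0$; the Fontaine--Laffaille functor yields an extension of strongly divisible lattices $0\to M_T\to M_E\to\cO_\fp\to 0$ with $M_T=\Dcris(\widetilde T_{\cO_\fp}(f)^*)\otimes\Dcris(\widetilde T_{\cO_\fp}(g)^*)$, and, choosing an integral lift $\tilde e\in\Fil^0 M_E$ of $1$, one gets the standard identity $\log z=(1-\varphi)^{-1}(w)$ with $w:=(1-\varphi)\tilde e\in M_T$ integral, where $(1-\varphi)^{-1}$ is the inverse of $1-\varphi$ on $\Dcris(V)$.

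The key step is that pairing against $\eta_f^{\ur}\otimes\omega_g$ only sees a two-dimensional $\varphi$-stable piece. Because $f$ is ordinary at $p$, the crystalline Frobenius on $\Dcris(V(f))$ has a unit eigenvalue, necessarily $\alpha$, and $\eta_f^{\ur}$ spans the corresponding eigenline $L_\alpha$; since the Poincar\'e pairing $\Dcris(V)\times\Dcris(V^*)\to\Dcris(\Qp)=\Qp$ is $\varphi$-equivariant, pairing against $\eta_f^{\ur}\otimes\omega_g$ factors through the $\varphi$-equivariant projection of $\Dcris(V)=\Dcris(V(f)^*)\otimes\Dcris(V(g)^*)$ onto $W:=L_{\alpha^{-1}}\otimes\Dcris(V(g)^*)$, the sum of the $\varphi$-eigenspaces with eigenvalues $\alpha^{-1}\gamma^{-1}$ and $\alpha^{-1}\delta^{-1}$. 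Ordinarity of $f$ again ensures this projection, given on the $f$-factor by $(\varphi-\beta^{-1})/(\alpha^{-1}-\beta^{-1})$, preserves the lattice $\Dcris(\widetilde T_{\cO_\fp}(f)^*)$ (numerator and denominator have equal $p$-adic valuation), so it carries $M_T$ into a lattice $M_W\subseteq W$. As the projection commutes with $\varphi$ and $\det(1-\varphi|_W)=(1-\alpha^{-1}\gamma^{-1})(1-\alpha^{-1}\delta^{-1})$, the image of $\log z$ in $W$ lies in $(1-\alpha^{-1}\gamma^{-1})^{-1}(1-\alpha^{-1}\delta^{-1})^{-1}M_W$. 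Pairing this against $\eta_f^{\ur}\otimes\omega_g$ and invoking Corollary~\ref{cor:integralpoincare}, which bounds the pairing of $M_T\supseteq M_W$ with $\eta_f^{\ur}\otimes\omega_g$ by $I_f^{-1}\cO_\fp$ (this already packages Corollary~\ref{cor:integraletaf} and Lemma~\ref{lem:integral}, i.e. integrality of $\eta_f^{\ur}$ up to $I_f$ and of $\omega_g$), gives the asserted containment.

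I expect the delicate point — the place where the precise shape of the bound has to be earned — to be the $p$-integral bookkeeping in the last two steps: one must verify that the adjugate of $1-\varphi$ on $W$, applied to lattice elements, introduces no spurious power of $p$ beyond what is cancelled by $\det(1-\varphi|_W)$. A cleaner way to organize this is to transpose $(1-\varphi)^{-1}$ across the pairing by $\varphi$-adjunction, writing $\langle\log z,\eta_f^{\ur}\otimes\omega_g\rangle=\langle w,(1-\varphi^{-1})^{-1}(\eta_f^{\ur}\otimes\omega_g)\rangle$, and to work entirely inside the two-dimensional $\varphi$-stable subspace $L_\alpha\otimes\Dcris(V(g))$ of $\Dcris(V^*)$, using the strong-divisibility relation $\varphi(\Fil^1)\subseteq pM$ to control $\varphi^{-1}(\omega_g)$, ordinarity of $f$ to control the unit factor, and the fact that the only Frobenius eigenvalues in play are $\alpha\gamma$ and $\alpha\delta$. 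The remaining ingredients — integrality of $\eta_f^{\ur}$ modulo $I_f$, of $\omega_g$, and of the Poincar\'e pairings between the various Fontaine--Laffaille lattices — are all available from the results established earlier in this section.
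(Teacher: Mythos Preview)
Your strategy matches the paper's: reduce via Fontaine--Laffaille to an integral statement about $(1-\varphi)^{-1}$ on a strongly divisible lattice, observe that the pairing with $\eta_f^{\ur}\otimes\omega_g$ factors through a two-dimensional $\varphi$-stable piece on which the Frobenius eigenvalues are $\alpha^{-1}\gamma^{-1}$ and $\alpha^{-1}\delta^{-1}$, and then invoke Corollary~\ref{cor:integralpoincare}. The difference is in how this two-dimensional reduction is carried out and how the $(1-\varphi)^{-1}$-denominator is controlled. The paper passes to the two-dimensional \emph{Galois} quotient $W\otimes V_{L_\fp}(g)^*$, where $W$ is the one-dimensional unramified quotient of $V_{L_\fp}(f)^*$, and applies the Bloch--Kato integral logarithm isomorphism (Theorem~4.1 and Lemma~4.5 of \cite{blochkato90}) directly to this smaller representation; it then bounds the index of $(1-\varphi)^{-1}D$ inside $D$ by citing the explicit descriptions of the strongly divisible lattice (ordinary $g$, via \cite{loefflerzerbes10}) or the Wach module (non-ordinary $g$, via \cite{bergerlizhu04}).

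Your route---projecting on the $\Dcris$ side and using the adjugate of $1-\varphi$ on $W$---is morally equivalent, but the step you flag as delicate is genuinely a gap as written. The adjugate of $1-\varphi|_W$ is $(1-\operatorname{tr}\varphi)I+\varphi$, and both $\operatorname{tr}\varphi=\alpha^{-1}(\gamma^{-1}+\delta^{-1})$ and $\varphi$ itself can have negative $p$-valuation on the lattice (already when $g$ is ordinary, $\delta^{-1}$ has valuation $-1$); so the adjugate need not preserve $M_W$, and the naive $\det(1-\varphi|_W)^{-1}$ bound does not follow without further input. Your alternative via $\varphi$-adjunction runs into the same issue on the dual side. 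What actually pins down the correct index is precisely the explicit lattice structure that the paper imports from \cite{loefflerzerbes10} and \cite{bergerlizhu04}; once you invoke those, your argument and the paper's coincide. The paper's choice to pass to the Galois-representation quotient first is also a cleaner way to ensure that the projection itself is lattice-preserving, which your $(\varphi-\beta^{-1})/(\alpha^{-1}-\beta^{-1})$ formula does not make obvious.
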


  \begin{proof}
   By Fontaine--Laffaille theory, for any crystalline $\cO_\fp$-linear $G_{\Qp}$-representation $V$ whose Hodge filtration has length $< p$ and such that $\Dcris(V)^{\vp = 1} = 0$, the map
   \[ \log_{\Qp, V}: H^1_f(\Qp, V) \rTo^\cong \frac{\Dcris(V)}{\Fil^0 \Dcris(V)}\]
   induces an isomorphism of $\cO_\fp$-modules
   \[ \frac{H^1_f(\Qp, T)}{\text{torsion}} \rTo^\cong \frac{(1 - \vp)^{-1} D}{(1 - \vp)^{-1} D \cap \Fil^0 \Dcris(V)}\]
   for any $G_{\Qp}$-stable lattice $T \subseteq V$ with corresponding strongly divisible lattice $D \subseteq \Dcris(V)$; cf.~Theorem 4.1 and Lemma 4.5 of \cite{blochkato90}.

   In our case we may take $V = W \otimes V_{L_\fp}(g)^*$ where $W$ is the 1-dimensional unramified quotient of $V_{L_\fp}(f)^*$, since the linear functional given by pairing with $\eta_f^{\ur} \otimes \omega_g$ factors through this quotient. Let us suppose that $g$ is ordinary; using the explicit description of the strongly divisible lattices in $\Dcris(V_{L_\fp}(g)^*)$ given in \cite[\S 5]{loefflerzerbes10} one checks that
   \[ \frac{(1 - \vp)^{-1} D}{(1 - \vp)^{-1} D \cap \Fil^0 \Dcris(V)} \subseteq p^{-k} \cdot \frac{D}{D \cap \Fil^0 \Dcris(V)},\]
   where $k = v_p\left[ (1 - \alpha^{-1} \gamma^{-1})(1 - \alpha^{-1} \delta^{-1})\right]$. In the non-ordinary case one reasons similarly using the description of the Wach module of the (unique up to scaling) lattice in $\Dcris(V_{L_\fp}(g)^*)$ given in \cite{bergerlizhu04}. Combining this with Corollary \ref{cor:integralpoincare} gives the result.
  \end{proof}


 \section{Bounding strict Selmer groups}

 Let $f$, $g$ be newforms of weight $2$, level $N$ and characters $\chi_f$ and $\chi_g$, respectively. Let $L$ be the subfield of $\overline{\QQ}$ generated by the coefficients of $f$ and $g$. For a prime $\fp$ of $L$, denote by $V_{L_{\fp}}(f)$ and $V_{L_{\fp}}(g)$ the $L_{\fp}$-representations of $G_{\QQ}$ attached to $f$ and $g$, respectively. The aim of this section is to apply Theorem \ref{thm:boundedSel} below to the representation $V_{L_{\fp}}(f)\otimes V_{L_{\fp}}(g)$.

\subsection{The method of Euler systems}
\label{Eulermethod}

 We recall some definitions and results from \cite{rubin00}. Let $\cO$ be the ring of integers of a finite extension $E / \Qp$, and let $T$ be a free $\cO$-module of finite rank with a continuous action of $G_{\QQ}$ which is unramified at almost all primes. Let $V = T \otimes_{\cO} E$ and $W = V/T = T \otimes_{\cO} E / \cO$.

 Let $\Sigma$ be a finite set of primes containing $p$ and all prime numbers at which the action of $G_{\QQ}$ on $T$ ramifies. Let $A$ be a set of integers such that
 \begin{itemize}
  \item if $m \in A$, then all divisors of $m$ are in $A$;
  \item if $r, s \in A$, then $LCM(r, s) \in A$;
  \item $\ell \in A$ for all primes $\ell \notin \Sigma$.
 \end{itemize}

 For a prime $\ell\not\in \Sigma$, define
 \[ p_\ell(X)=\sideset{}{_E}\det \left(1 - \Frob_\ell^{-1} X \middle| V^*(1) \right)\in \Zp[X],\]
 where $\Frob_\ell$ is the arithmetic Frobenius at $\ell$.

 \begin{definition}[C.f. {\cite[Definition 2.1.1]{rubin00}}]
  \label{def:Eulersystem}
  An Euler system for $(T, A, \Sigma)$ is a system of elements
  \[ c_m \in H^1(\QQ(\mu_m), T) \text{ for all } m \in A,\]
  such that if $\ell$ is a prime such that $m, m\ell \in A$ and $\ell \ne \Sigma$, then the corestriction map
  \[ H^1(\QQ(\mu_{\ell m}), T) \rightarrow H^1(\QQ(\mu_m),T)\]
  sends $c_{\ell m}$ to
  \[
   \begin{cases}
    p_\ell(\sigma_\ell^{-1}) c_m & \text{if $\ell \nmid m$ and $\ell \ne \Sigma$,}\\
    c_m & \text{if $\ell \mid m$ or $\ell \in \Sigma$.}
   \end{cases}
  \]
  Here, $\sigma_\ell$ denotes the arithmetic Frobenius of $\ell$ in $\Gal(\QQ(\mu_{m}) / \QQ)$.
 \end{definition}

 \begin{remark}
  Our notations differ slightly from those of \cite{rubin00}. Firstly, Rubin writes $T^*$ for the ``Tate dual'' $\Hom(T, \Zp(1))$, while we write this as $T^*(1)$. More significantly, Rubin considers an infinite abelian extension $\mathcal{K}$ and a class $c_F$ for every finite subextension $F$ of $\mathcal{K}$; in our case $\mathcal{K}$ is the extension $\mathcal{K}_A = \QQ(\mu_r : r \in A)$, and it suffices to specify a class for each subextension of the form $\QQ(\mu_m)$, which is our $c_m$, and to fill in the remainder via corestriction.
 \end{remark}

 \begin{definition}
  For each prime $\ell$, let $H^1_f(\QQ_\ell, W)$ be the image of $H^1_f(\QQ_\ell, V)$ in $H^1(\QQ_\ell, W)$.

  Define
  \[ \mathcal{S}^{\{p\}}(\QQ,W)=\ker\big(H^1(\QQ,W)\rTo \bigoplus_{\ell\neq p} H^1(\QQ_\ell,W)/H^1_f(\QQ_\ell,W)\big),\]
  and define the \emph{strict Selmer group} of $W$ over $\QQ$ as
  \[ \mathcal{S}_{\{p\}}(\QQ,W)=\ker\big(\mathcal{S}^{\{p\}}(\QQ,W)\rTo H^1(\QQ_p,W)\big).\]
 \end{definition}

 (Thus $\mathcal{S}_{\{p\}}$ is the Selmer group with local conditions given by the Bloch--Kato condition at primes away from $p$ and the zero local condition at $p$.)

 We define $\mathcal{S}^{\{p\}}(\QQ, T)$ similarly, and also $\mathcal{S}^{\{p\}}(K, T)$ similarly, for any number field $K$. (We shall only need this when $K = \QQ(\mu_m)$, see Hypothesis $\Hyp(\cS^{(p)}, V)$ below.)

 In order to state the main theorem, we introduce the following sets of hypotheses. Note that $\Hyp(\QQ, T)$ is strictly stronger than $\Hyp(\QQ, V)$, but $\Hyp(p, A)$ and $\Hyp(\cS^{\{p\}}, V)$ are independent of each other.

 \begin{hypothesis}[$\Hyp(\QQ, T)$]
  $T \otimes \mathbf{k}$ is an irreducible $\mathbf{k}[G_{\QQ}]$-module, where $\mathbf{k}$ is the residue field of $\cO$; and there exists an element $\tau\in G_{\QQ}$ which satisfies the following conditions:
  \begin{enumerate}[(i)]
   \item $\tau$ acts trivially on $\mu_{p^\infty}$;
   \item $T/(\tau-1)T$ is free of rank 1 over $\cO$.
  \end{enumerate}
 \end{hypothesis}

 \begin{hypothesis}[$\Hyp(\QQ, V)$]
  $V$ is an irreducible $E[G_{\QQ}]$-module; and there exists an element $\tau\in G_{\QQ}$ which satisfies the following conditions:
  \begin{enumerate}[(i)]
   \item $\tau$ acts trivially on $\mu_{p^\infty}$;
   \item $\dim_{\Qp}(V/(\tau-1)V)=1$.
  \end{enumerate}
 \end{hypothesis}

 \begin{hypothesis}[$\Hyp(p, A)$]
  The set $A$ contains all powers of $p$.
 \end{hypothesis}

 \begin{hypothesis}[$\Hyp(\cS^{\{p\}}, V)$]
  The following three conditions hold:
  \begin{enumerate}[(i)]
   \item $T^{G_{\QQ}} = 0$;
   \item $c_m \in \cS^{\{p\}}(\QQ(\mu_m), T)$ for all $m \in A$;
   \item there exists an element $\gamma \in G_{\QQ}$ such that
  \begin{itemize}
   \item $\gamma$ acts trivially on $\mu_{p^\infty}$,
   \item $\gamma - 1$ is injective on $T$.
  \end{itemize}
  \end{enumerate}
 \end{hypothesis}

 \begin{theorem}
  \label{thm:finiteSel}
  Assume that $V$ is not the trivial representation, and that Hypothesis $\Hyp(\QQ, V)$ and at least one of hypotheses $\Hyp(p)$ and $\Hyp(\cS^{(p)}, V)$ are satisfied. If $\mathbf{c} = (c_m)_{m \in A}$ is an Euler system for $(T, A, \Sigma)$, and the image of $c_1$ in $H^1(\QQ, T)$ is not contained in $H^1(\QQ,T )_{\text{tors}}$, then $\cS_{\{p\}}(\QQ, W^*(1))$ is finite.
 \end{theorem}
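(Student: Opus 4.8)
The plan is to deduce Theorem \ref{thm:finiteSel} from the general Euler system machinery of Rubin \cite{rubin00}, which is precisely what the hypotheses $\Hyp(\QQ, V)$, $\Hyp(p)$ and $\Hyp(\cS^{(p)}, V)$ are set up to feed into. First I would recall that Rubin's main theorems (\cite[Theorems 2.2.2, 2.2.3]{rubin00}) produce, from an Euler system $\mathbf{c}$ whose bottom class $c_1$ has infinite order in $H^1(\QQ, T)$, a bound on the Selmer group $\cS_{\{p\}}(\QQ, W^*(1))$ in terms of the index of $c_1$; in particular the Selmer group is finite. The nontrivial point is that Rubin's setup requires a certain number of technical hypotheses, and the content of our Theorem \ref{thm:finiteSel} is that \emph{our} list of hypotheses is enough to verify Rubin's.

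The key steps, in order: (1) Observe that $V \ne \Qp$ and $\Hyp(\QQ, V)$ together give irreducibility of $V$ and the existence of $\tau \in G_{\QQ}$ acting trivially on $\mu_{p^\infty}$ with $\dim V/(\tau-1)V = 1$; this is exactly the ``$\tau$-condition'' used in \cite[\S 2.2]{rubin00}. (2) Distinguish two cases according to which of $\Hyp(p)$ or $\Hyp(\cS^{(p)}, V)$ holds. In the first case, $A$ contains all powers of $p$, so the Euler system extends up the cyclotomic $\ZZ_p$-tower, and Rubin's argument using the ``derivative'' classes $\kappa_{m,M}$ (the Kolyvagin derivative construction) applies directly over $\QQ_\infty$ and then specializes down. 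In the second case, we instead have that $c_m$ lies in the strict Selmer group $\cS^{\{p\}}(\QQ(\mu_m), T)$ for every $m$, together with an injectivity element $\gamma$; this is the alternative hypothesis in Rubin's formulation that lets one run the argument without the full $p$-tower. (3) In either case, invoke the conclusion of \cite[Theorem 2.2.3]{rubin00}: since the localization of $c_1$ away from the torsion is nonzero, the dual strict Selmer group $\cS_{\{p\}}(\QQ, W^*(1))$ is annihilated by a power of a suitable Kolyvagin index, hence is finite. (4) Finally, note that $T^{G_{\QQ}} = 0$ (which follows from irreducibility of $V$ together with $V \ne \Qp$, or is explicitly part of $\Hyp(\cS^{(p)}, V)$) guarantees that the relevant cohomology groups are well-behaved and that passing between $T$, $V$ and $W = V/T$ introduces no spurious contributions.

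I would organize the writeup as a short reduction: state that all the inputs of \cite[\S 2.2]{rubin00} are met, handle the two hypothesis-cases in a sentence each, and then quote the finiteness conclusion. One should be slightly careful about normalizations --- Rubin writes $T^*$ for $\Hom(T,\Zp(1))$ whereas we write $T^*(1)$, and Rubin phrases the Euler factor condition with $p_\ell(\Frob_\ell^{-1})$ acting via $\sigma_\ell^{-1}$; the definitions in \S\ref{Eulermethod} above have been chosen to match, so this is purely bookkeeping. The main (really the only) obstacle is simply checking that our hypothesis list is logically at least as strong as Rubin's --- in particular that ``$V$ not trivial'' plus $\Hyp(\QQ,V)$ supplies both the irreducibility and the $\tau$-element in the precise form Rubin needs, and that in the $\Hyp(\cS^{(p)},V)$ branch the injectivity element $\gamma$ plays the role of Rubin's auxiliary hypothesis; none of this is deep, but it must be stated carefully. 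No genuinely new mathematics is required beyond \cite{rubin00}.
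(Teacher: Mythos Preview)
Your proposal is correct and takes essentially the same approach as the paper: the paper's proof is simply a two-sentence citation, stating that under $\Hyp(p, A)$ the result is \cite[Theorem 2.2.3]{rubin00}, and that under $\Hyp(\cS^{(p)}, V)$ the necessary modifications are outlined in \cite[\S 9.1]{rubin00}. Your writeup is in fact more detailed than the paper's, but the mathematical content is identical --- this theorem is a direct quotation from Rubin's book, and the only work is the bookkeeping you describe.
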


 \begin{theorem}
  \label{thm:boundedSel}
  Assume that $p > 2$ and that Hypothesis $\Hyp(\QQ, T)$ and at least one of hypotheses $\Hyp(p)$ and $\Hyp(S^{(p)}, V)$ are satisfied. If $\mathbf{c} = (c_m)_{m \in A}$ is an Euler system for $(T, A, \Sigma)$, then
  \[ \operatorname{length}_{\cO}(\cS_{\{p\}}(\QQ, W^*(1))) \le \operatorname{ind}_{\cO}(\mathbf{c}) + \mathfrak{n}_W + \mathfrak{n}^*_W\]
  where $\operatorname{ind}_{\cO}(\mathbf{c})$ is the largest power of the maximal ideal by which $c_1$ can be divided in $H^1(\QQ, T) / $ torsion, and the quantities $\mathfrak{n}_W$ and $\mathfrak{n}^*_{W}$ are as defined in Theorem 2.2.2 of \cite{rubin00}.
 \end{theorem}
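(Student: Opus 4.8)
The plan is to obtain this statement as a direct application of the main theorem of Rubin's Euler system machinery, \cite[Theorem 2.2.2]{rubin00}, applied with base field $\QQ$, the index set $A$, the lattice $T$, and the Selmer structure whose local conditions are the Bloch--Kato subspaces $H^1_f(\QQ_\ell, -)$ at all primes $\ell \ne p$ together with the zero subspace at $p$; the Selmer group attached to this structure for the representation $W^*(1)$ is, by definition, the strict Selmer group $\cS_{\{p\}}(\QQ, W^*(1))$ whose length we are bounding.

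Almost all of the work lies in matching our conventions to those of \emph{op.cit.} First I would check that our notion of Euler system (Definition \ref{def:Eulersystem}), with Euler factor $p_\ell(X) = \det(1 - \Frob_\ell^{-1} X \mid V^*(1))$, coincides with Rubin's. Although Rubin indexes an Euler system by all finite abelian subextensions of a fixed field $\mathcal{K}$, in our situation one takes $\mathcal{K}$ to be $\QQ(\mu_r : r \in A)$, and prescribing the classes $c_m \in H^1(\QQ(\mu_m), T)$ for $m \in A$ determines the classes over the remaining subextensions via corestriction, so the two notions agree. Next I would line up the hypotheses: $\Hyp(\QQ, T)$ is Rubin's hypothesis $\operatorname{Hyp}(K,T)$ (residual irreducibility of $T$ together with the existence of the element $\tau$), while $\Hyp(p)$ (so that $A$ contains all powers of $p$, making the cyclotomic $\ZZ_p$-tower available) and $\Hyp(\cS^{\{p\}}, V)$ (the Euler system classes already lie in the strict Selmer groups at the bad primes) are precisely the two alternative auxiliary hypotheses under which Rubin's derivative-class argument produces a bound on a Selmer group carrying the zero local condition at $p$.

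Granting this dictionary, \cite[Theorem 2.2.2]{rubin00} gives $\operatorname{length}_{\cO}(\cS_{\{p\}}(\QQ, W^*(1))) \le \operatorname{ind}_{\cO}(\mathbf{c}) + \mathfrak{n}_W + \mathfrak{n}^*_W$, where the first term on the right is the index of the Euler system (our $\operatorname{ind}_{\cO}(\mathbf{c})$, the exponent of the maximal ideal of $\cO$ dividing $c_1$ in $H^1(\QQ, T)/\mathrm{tors}$) and $\mathfrak{n}_W, \mathfrak{n}^*_W$ are the correction terms of \emph{loc.cit.}; the restriction $p > 2$ enters only here, exactly as in \cite{rubin00}. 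I expect the genuine obstacle to be the bookkeeping in this last step: one has to verify that our zero local condition at $p$ (rather than $H^1_f$) and the unrestricted behaviour at the finitely many primes where $T$ ramifies correspond to one of the Selmer structures for which Rubin's theorem is literally stated, and that no extra correction beyond $\mathfrak{n}_W + \mathfrak{n}^*_W$ is incurred at those primes --- which is exactly what hypotheses $\Hyp(p)$ and $\Hyp(\cS^{\{p\}}, V)$ are designed to guarantee. Everything else is an unwinding of definitions.
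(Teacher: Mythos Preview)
Your proposal is correct and follows exactly the paper's approach: the paper's proof simply states that if $\Hyp(p, A)$ holds then this is Theorem 2.2.2 of \cite{rubin00} verbatim, and if instead $\Hyp(\cS^{\{p\}}, V)$ holds then one uses the modifications outlined in \S 9.1 of \emph{op.cit.} Your discussion of how the conventions match (the index set $A$, the Euler factor $p_\ell$, the Selmer structure with zero local condition at $p$) is more explicit than the paper's, but it is precisely the bookkeeping that the paper is tacitly invoking when it points to \S 9.1 of Rubin.
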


 \begin{proof}[Proofs]
  If $\Hyp(p, A)$ holds, then Theorem \ref{thm:finiteSel} and Theorem \ref{thm:boundedSel} are Theorem 2.2.3 and Theorem 2.2.2 of \cite{rubin00} respectively. If instead $\Hyp(\cS^{(p)}, V)$ holds, then the necessary modifications to the proofs are outlined in \S 9.1 of \emph{op.cit.}.
 \end{proof}


\subsection{Verifying the hypotheses on T}

 The main result of this section is Proposition \ref{prop:specialelement} below, which implies that under some mild technical assumptions there is a large supply of primes where the condition $\Hyp(\QQ, T)$ is satisfied.


 \subsubsection{Big image results for one modular form}

 We begin by some results from \cite{momose81} and \cite{ribet85} regarding the image of the Galois representations attached to a modular form. Let $f=\sum_{n \geq 1}a_n q^n$ be a new eigenform of weight $k \ge 2$, level $N$ and character $\epsilon$, not of CM type. Let $L = \QQ(a_n: n\geq 1)$ be its coefficient field, with ring of integers $\cO_L$.

 Recall that an \emph{extra twist} of $f$ is an element $\gamma \in \Gal(L / \QQ)$ such that $\gamma(f)$ is equal to the twist of $f$ by some Dirichlet character $\chi_\gamma$. We let $\Gamma \subseteq \Gal(L / \QQ)$ be the group of such $\gamma$, and $F \subseteq L$ the fixed field of $\Gamma$; and we let $H \subseteq \Gal(\QQbar / \QQ)$ be the absolute Galois group of the finite abelian extension $K$ cut out by the Dirichlet characters $\chi_\gamma$.

 For each prime $\lambda$ of $L$, it is clear that the trace of the Galois representation $\rho_{L_\lambda}(f) |_{H}$ takes values in $F_\mu$, where $\mu$ is the prime of $F$ below $\lambda$.

 \begin{theorem}[Momose--Ribet; see {\cite[Theorem 3.1]{ribet85}}]
  \label{thm:momoseribet}
  For all but finitely many $\lambda$, the image of the Galois representation $\rho_{L_\lambda}(f) |_{H}$ is a conjugate of the group
  \[ \{ g \in \GL_2(\cO_{F, \mu}) : \det(g) \in \ZZ_\ell^\times\}\]
  where $\mu$ and $\ell$ are the primes of $F$ and $\QQ$ below $\lambda$.
 \end{theorem}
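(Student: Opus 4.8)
The plan is to deduce the statement from the known results on the image of $p$-adic Galois representations of a single non-CM newform, reducing quickly to the cases already treated in the literature. First I would recall the classification of the possible images: by a theorem of Ribet (building on Momose), for a non-CM newform $f$ without extra twists the image of $\rho_{L_\lambda}(f)$ contains $\SL_2(\ZZ_\ell)$ for almost all primes $\lambda$, and in the presence of extra twists the image is (up to conjugacy and finite exceptions) the group $\{g \in \GL_2(\cO_{F,\mu}) : \det g \in \ZZ_\ell^\times\}$, where $F$ is the fixed field of the group $\Gamma$ of extra twists and $\mu \mid \lambda$. The essential inputs are: (a) the determinant of $\rho_{L_\lambda}(f)$ is $\chi_{\mathrm{cyc}}^{k-1}\epsilon$, so on the subgroup $H$ cutting out the characters $\chi_\gamma$ it is (a power of) the cyclotomic character, hence lands in $\ZZ_\ell^\times$; (b) the trace of $\rho_{L_\lambda}(f)|_H$ lands in $F_\mu$ by the defining property of extra twists; and (c) a big-image theorem identifying the image precisely for almost all $\lambda$.

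The key steps, in order, would be: (i) establish that the coefficient field $L$ is generated over the fixed field $F$ by the extra-twist data, and that $H = G_K$ for the abelian extension $K$ of $\QQ$ cut out by the $\chi_\gamma$; (ii) show $\trace \rho_{L_\lambda}(f)(H) \subseteq F_\mu$ and $\det \rho_{L_\lambda}(f)(H) \subseteq \ZZ_\ell^\times$, so that the image of $\rho_{L_\lambda}(f)|_H$ is contained in a conjugate of $\{g \in \GL_2(\cO_{F,\mu}) : \det g \in \ZZ_\ell^\times\}$; (iii) invoke the reverse inclusion for almost all $\lambda$, which is exactly the content of Ribet's argument: one uses that the residual representation is (for almost all $\lambda$) absolutely irreducible with large image over the residue field of $F_\mu$, together with a lifting/approximation argument (of the type originating in Serre's work and in Ribet's, using the fact that a closed subgroup of $\SL_2(\cO_{F,\mu})$ surjecting onto $\SL_2$ of the residue field and stable under the relevant conditions must be everything), to conclude that the $\ell$-adic image is the full group claimed. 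Since all three steps are in the literature, the proof is essentially a citation of \cite[Theorem 3.1]{ribet85} together with \cite{momose81}, and the only work is to match the hypotheses (non-CM, $k \ge 2$, new of level $N$) with the form in which those results are stated.

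The main obstacle, such as it is, is bookkeeping rather than mathematics: one must be careful about the distinction between $L_\lambda$ and $F_\mu$ (the representation is defined over $L_\lambda$ but, after restriction to $H$ and conjugation, becomes visibly defined over $F_\mu$ up to the determinant), about which finitely many $\lambda$ are excluded (those dividing the level, those of small residue characteristic, and those where the residual image degenerates), and about the precise normalization of the arithmetic versus geometric Frobenius and of the cyclotomic character appearing in the determinant. I would also note explicitly that the hypothesis ``$f$ is not of CM type'' is exactly what guarantees that the projective image is not contained in the normalizer of a torus for infinitely many $\lambda$, which is what makes the big-image conclusion possible; this is used at step (iii) and is the one place where a genuine theorem (rather than a formal manipulation) is being invoked. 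With these points noted, the statement follows directly from the cited work of Momose and Ribet.
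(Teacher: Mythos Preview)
Your proposal is correct and matches the paper's treatment: the paper does not give a proof of this theorem at all, but simply cites it as \cite[Theorem 3.1]{ribet85}. Your outline of the ingredients (trace and determinant constraints on $H$, followed by the big-image results of Momose and Ribet for the reverse inclusion) is accurate and more detailed than anything in the paper, but since the paper treats this as a black-box citation, there is nothing further to compare.
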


 \begin{remark}
  For a ``generic'' modular form $f$, there will be no extra twists if the character $f$ is trivial, but there will always be at least one if $f$ has nontrivial character, since the complex conjugate $f^*$ is a twist of $f$.
 \end{remark}

 We will need the following slight strengthening:

 \begin{proposition}
  \label{prop:bigsubgroup1form}
  Let $K'$ be any finite extension of $K$ which is abelian over $\QQ$, and let $H' \subseteq H$ be its absolute Galois group. Then for all but finitely many $\lambda$, the image of $\rho_{L_\lambda}(f) |_{H'}$ is a conjugate of the group $\{ g \in \GL_2(\cO_{F, \mu}) : \det(g) \in \ZZ_\ell^\times\}$ above.
 \end{proposition}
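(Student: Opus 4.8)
The plan is to deduce this from Theorem \ref{thm:momoseribet} by a pigeonhole/open-subgroup argument. Write $G = \rho_{L_\lambda}(f)(H)$ and $G' = \rho_{L_\lambda}(f)(H')$; since $H' \subseteq H$ has finite index $d = [K':K]$ (independent of $\lambda$), $G'$ is a subgroup of $G$ of index at most $d$. By Theorem \ref{thm:momoseribet}, for all but finitely many $\lambda$ we have $G = g_\lambda \{ h \in \GL_2(\cO_{F,\mu}) : \det(h) \in \ZZ_\ell^\times\} g_\lambda^{-1}$ for some $g_\lambda \in \GL_2(\overline{\QQ}_\ell)$. Conjugating, it suffices to show that a subgroup of $\Delta_\mu \coloneqq \{ h \in \GL_2(\cO_{F,\mu}) : \det(h) \in \ZZ_\ell^\times\}$ of index $\le d$ must equal $\Delta_\mu$ once the residue characteristic $\ell$ is large enough. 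First I would observe that $G'$ is still normal in $G$ (as $H' \trianglelefteq H$, being the kernels of abelian characters of $G_\QQ$), so $G/G'$ is an abelian group of order $\le d$; hence $G' \supseteq [G,G] = [\Delta_\mu,\Delta_\mu]$ (after conjugating) and contains $(\Delta_\mu)^{d}$, the subgroup generated by $d$-th powers. So it is enough to check that $[\Delta_\mu,\Delta_\mu]$ together with the $d$-th powers generate $\Delta_\mu$, for $\ell \gg 0$.

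For the next step I would compute the derived subgroup and the $d$-th powers explicitly. The group $\SL_2(\cO_{F,\mu})$ is its own derived subgroup once $\ell \ge 5$ (a standard fact: $\SL_2$ of a local ring with residue field of size $> 3$ is perfect), so $[\Delta_\mu, \Delta_\mu] \supseteq \SL_2(\cO_{F,\mu})$, and in fact $[\Delta_\mu,\Delta_\mu] = \SL_2(\cO_{F,\mu})$ since the determinant map kills it. It then remains to handle the quotient $\Delta_\mu / \SL_2(\cO_{F,\mu}) \cong \ZZ_\ell^\times$: we must show that $d$-th powers in $\Delta_\mu$ already surject onto $\ZZ_\ell^\times$, i.e.\ that $(\ZZ_\ell^\times)^d = \ZZ_\ell^\times$. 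This holds precisely when $\ell \nmid d$ and $\ell - 1 \nmid \text{(something)}$ --- more carefully, $(\ZZ_\ell^\times)^d = \ZZ_\ell^\times$ whenever $\gcd(d, \ell(\ell-1)) = 1$; but when $d$ shares a factor with $\ell-1$ this can fail. To repair this, I would use that $G/G'$ is a \emph{quotient of} $\Gal(K'/K)$, a fixed finite abelian group, and exploit the diamond/character action: the determinant of $\rho_{L_\lambda}(f)$ restricted to $H$ is $\chi_\ell^{k-1}$ (cyclotomic character to a power), so $\det(G) = (\ZZ_\ell^\times)^{k-1}$ already — I should replace $\ZZ_\ell^\times$ by its image throughout — and then $\det(G') $ has index $\le d$ in a procyclic pro-$\ell$-by-finite group, forcing equality once $\ell$ avoids the finitely many primes dividing $d \cdot \#\mathrm{Gal}(K'/\QQ)$.

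Assembling: for all but finitely many $\lambda$ (excluding the finite bad set from Theorem \ref{thm:momoseribet}, the primes $\ell < 5$, and the primes $\ell$ dividing $d$ or $\#\Gal(K'/\QQ)$), we get $G' \supseteq \SL_2(\cO_{F,\mu})$ and $\det(G') = \det(G)$, hence $G' = G = $ the asserted group (up to conjugacy). The main obstacle I anticipate is the det-component bookkeeping: one must be careful that the relevant quotient is not $\ZZ_\ell^\times$ but its image under $(k-1)$-st power and under the extra-twist characters, and that the index bound $d$ interacts correctly with the finite part of this procyclic group; once one tracks this, the argument that a finite-index subgroup of a group of the shape $\{\det \in (\text{fixed procyclic})\} \ltimes \SL_2$ must be everything (for large $\ell$) is routine. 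The $\SL_2$-perfectness and the surjectivity of $d$-th powers on units are both standard and I would not spell them out beyond citing the residue-field-size condition.
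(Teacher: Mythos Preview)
Your $\SL_2$ part is correct and matches the paper exactly: since $H' \trianglelefteq H$ (both $K$ and $K'$ being abelian over $\QQ$), the image $G'$ contains $[G,G]$, and $\SL_2(\cO_{F,\mu})$ is its own commutator subgroup once $\ell \ge 5$.

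The determinant part, however, has a genuine gap. A subgroup of $\ZZ_\ell^\times \cong \mu_{\ell-1} \times (1+\ell\ZZ_\ell)$ of index dividing $d$ need \emph{not} be all of $\ZZ_\ell^\times$: whenever a prime factor $q$ of $d$ divides $\ell-1$, the subgroup $(\ZZ_\ell^\times)^q$ is proper of index $q \le d$, and by Dirichlet there are infinitely many such $\ell$. Your repair still relies on an index bound (``$\det(G')$ has index $\le d$ in a procyclic group, forcing equality once $\ell$ avoids finitely many primes''), and this is precisely what fails --- the obstruction lives in the $\mu_{\ell-1}$ factor, which varies with $\ell$ and is not controlled by excluding primes dividing $d \cdot \#\Gal(K'/\QQ)$.

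The paper sidesteps this entirely by using arithmetic rather than group-theoretic input on the determinant side. The key observation is that $K'$ is a \emph{fixed} number field, so for all but finitely many $\ell$ it is linearly disjoint from $\QQ(\mu_{\ell^\infty})$; hence the cyclotomic character restricted to $H' = \Gal(\overline{\QQ}/K')$ already surjects onto $\ZZ_\ell^\times$. Since $\det \rho_{L_\lambda}(f)$ is the cyclotomic character up to a fixed finite-order twist, this gives $\det(G') = \ZZ_\ell^\times$ directly, with no index-chasing. Combined with $G' \supseteq \SL_2(\cO_{F,\mu})$, one concludes $G' = \Delta_\mu$. You were close --- you mentioned $\chi_\ell^{k-1}$ --- but the point is not that $d$-th powers fill out the target, it is that the character itself is still surjective from $H'$ by linear disjointness.
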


 \begin{proof}
  If $\lambda$ is a prime satisfying the conclusion of the theorem, then the image of $\rho_{L_\lambda}(f)|_{H'}$ contains $\SL_2(\cO_{F, \mu})$, since $\SL_2(\cO_{F, \mu})$ is equal to its own commutator subgroup. But for all but finitely many primes $\ell$, the field $K'$ is linearly disjoint from $\QQ(\mu_{\ell^\infty})$ and thus the cyclotomic character is a surjection $H' \to \ZZ_\ell$.
 \end{proof}


\subsubsection{Big image results for pairs of modular forms}

 We recall the following result from group theory:

 \begin{proposition}[Goursat's Lemma, cf.~{\cite[Exercise I.5]{lang02}}]
  Let $G_1, G_2$ be groups and $H$ a subgroup of $G=G_1 \times G_2$ such that the projections $\pi_i: H \to G_i$ are surjective. Let $N_1 = H \cap \left(G_1 \times \{e_2\}\right)$ and $N_2 = H \cap \left(\{e_1\} \times G_2\right)$, which we identify with subgroups of $G_1, G_2$ in the obvious manner. Then the $N_i$ are normal in $G_i$, and $H$ is the graph of an isomorphism $G_1/N_1 \cong G_2/N_2$.
 \end{proposition}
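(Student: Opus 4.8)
The plan is to follow the standard argument for Goursat's Lemma, which is entirely elementary. First I would establish that $N_1$ is normal in $G_1$ (the argument for $N_2$ being symmetric): given $g_1 \in G_1$, surjectivity of $\pi_1$ provides some $g_2 \in G_2$ with $(g_1, g_2) \in H$, and then for any $n_1 \in N_1$ the conjugate $(g_1, g_2)(n_1, e_2)(g_1, g_2)^{-1} = (g_1 n_1 g_1^{-1}, e_2)$ again lies in $H$ (since $H$ is a subgroup), hence in $N_1$. Thus $g_1 N_1 g_1^{-1} \subseteq N_1$ for every $g_1$, so $N_1$ is normal.

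Next I would define the claimed isomorphism $\phi \colon G_1/N_1 \to G_2/N_2$ by sending $g_1 N_1$ to $g_2 N_2$, where $(g_1, g_2)$ is any element of $H$ lying above $g_1$ (such an element exists by surjectivity of $\pi_1$). The well-definedness has two parts: independence of the choice of $g_2$ for fixed $g_1$ (if $(g_1, g_2)$ and $(g_1, g_2')$ both lie in $H$, then $(e_1, g_2^{-1} g_2') = (g_1, g_2)^{-1}(g_1, g_2') \in H \cap (\{e_1\} \times G_2) = N_2$), and independence of the choice of representative $g_1$ of its coset (replacing $g_1$ by $g_1 n_1$ with $n_1 \in N_1$ multiplies $(g_1, g_2)$ on the right by $(n_1, e_2) \in H$, which does not change $g_2$). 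That $\phi$ is a group homomorphism is immediate from the fact that $H$ is closed under coordinatewise multiplication; it is surjective because $\pi_2$ is surjective; and it is injective because if $(g_1, g_2) \in H$ with $g_2 \in N_2$, then $(e_1, g_2) \in H$ and hence $(g_1, e_2) = (g_1, g_2)(e_1, g_2)^{-1} \in H$, forcing $g_1 \in N_1$.

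Finally I would check that $H$ coincides with the graph $\{(g_1, g_2) \in G_1 \times G_2 : \phi(g_1 N_1) = g_2 N_2\}$: the inclusion $\subseteq$ is essentially the definition of $\phi$, and conversely if $\phi(g_1 N_1) = g_2 N_2$ there is some $(g_1, g_2') \in H$ with $g_2' N_2 = g_2 N_2$, so $g_2 = g_2' n_2$ with $n_2 \in N_2$, whence $(g_1, g_2) = (g_1, g_2')(e_1, n_2) \in H$.

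There is no genuine obstacle here; the only point requiring care is the bookkeeping of the identification of $N_i$ as a subgroup of $G_i$ with the corresponding subgroup $H \cap (G_1 \times \{e_2\})$ (respectively $H \cap (\{e_1\} \times G_2)$) of $H$, so that statements such as ``$(n_1, e_2) \in H$ whenever $n_1 \in N_1$'' are invoked consistently throughout the proof.
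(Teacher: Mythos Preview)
Your proof is correct and is the standard elementary argument for Goursat's Lemma. The paper does not actually supply a proof of this proposition: it merely cites it as an exercise in Lang's \emph{Algebra} and moves on, so there is nothing to compare against beyond noting that your argument is exactly the one any reader would reconstruct from that reference.
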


 \begin{corollary}
  Let $\FF, \FF'$ be finite fields of the same characteristic, both of order $\ge 4$. Let $H$ be a subgroup of $\SL_2(\FF) \times \SL_2(\FF')$ surjecting onto both factors. Then either $H$ is the whole of $\SL_2(\FF) \times \SL_2(\FF')$, or $\FF = \FF'$ and $H$ is conjugate in $\GL_2(\FF) \times \GL_2(\FF)$ to one of the following subgroups:
  \begin{enumerate}[(i)]
   \item the diagonal subgroup $\{ (x, \vp^j(x)) : x \in G\}$, for some $0 \le j < k$, where $\FF = \FF_{p^k}$ and $\vp$ is the $p$-power Frobenius of $\FF$;
   \item the subgroup $\{ (x, y) : y = \pm \vp^j(x)\}$, for some $0 \le j < k$.
  \end{enumerate}
 \end{corollary}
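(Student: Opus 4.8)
The plan is to combine Goursat's Lemma with the classification of subgroups of $\SL_2$ over a finite field. First I would apply Goursat's Lemma to $H \subseteq \SL_2(\FF) \times \SL_2(\FF')$: since the projections $\pi_1, \pi_2$ are surjective, we obtain normal subgroups $N_1 \trianglelefteq \SL_2(\FF)$ and $N_2 \trianglelefteq \SL_2(\FF')$ such that $H$ is the graph of an isomorphism $\SL_2(\FF)/N_1 \xrightarrow{\sim} \SL_2(\FF')/N_2$. The key input is that for a finite field of order $\ge 4$, the only normal subgroups of $\SL_2$ are the center $\{\pm 1\}$ (or the trivial group, when the characteristic is $2$) and the whole group, since $\PSL_2(\FF)$ is simple in this range. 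So each $N_i$ is either trivial, the center, or everything.

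Next I would rule out cases. If $N_1 = \SL_2(\FF)$ then $N_2 = \SL_2(\FF')$ (the quotients are trivial and the isomorphism is forced), and $H = \SL_2(\FF) \times \SL_2(\FF')$, the first alternative. If $N_1$ is trivial or central, then the quotient $\SL_2(\FF)/N_1$ is $\SL_2(\FF)$ or $\PSL_2(\FF)$, and similarly for $N_2$; since these are isomorphic as abstract groups, and the order of $\SL_2(\FF_{p^k})$ is $p^k(p^{2k}-1)$, comparing orders forces $\FF$ and $\FF'$ to have the same cardinality (hence $\FF = \FF'$ up to isomorphism, both being $\FF_{p^k}$). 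The abstract automorphisms of $\SL_2(\FF_{p^k})$ and of $\PSL_2(\FF_{p^k})$ are, up to inner automorphisms, generated by the field automorphisms (the Frobenius powers $\vp^j$, $0 \le j < k$); this is a standard fact about automorphisms of Chevalley groups over finite fields, which I would cite rather than prove. Conjugating in $\GL_2(\FF) \times \GL_2(\FF)$ to absorb the inner part of the isomorphism, the graph of an isomorphism $\SL_2(\FF) \to \SL_2(\FF)$ becomes the diagonal subgroup $\{(x, \vp^j(x))\}$, giving alternative (i). When $N_1 = N_2 = \{\pm 1\}$, the isomorphism $\PSL_2(\FF) \to \PSL_2(\FF)$ lifts, after conjugation, to the map $x \mapsto \vp^j(x)$ on $\SL_2$ modulo $\pm 1$, so $H = \{(x,y) : y = \pm\vp^j(x)\}$, which is alternative (ii). (One checks the mixed case $N_1 = \{\pm 1\}$, $N_2$ trivial is impossible by the order comparison, as $|\PSL_2(\FF)| \ne |\SL_2(\FF)|$.)

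The main obstacle I anticipate is the bookkeeping around the automorphism groups: one must know precisely that $\operatorname{Out}(\SL_2(\FF_{p^k})) = \operatorname{Out}(\PSL_2(\FF_{p^k}))$ is cyclic of order $k$ generated by Frobenius (there are no diagonal or graph automorphisms for $\SL_2$), and that an abstract isomorphism between these groups is automatically induced by a semilinear group-scheme map, so that conjugating by an element of $\GL_2(\FF)$ reduces it to a pure field automorphism. This is where the hypothesis $|\FF| \ge 4$ is essential (for $|\FF| \le 3$, $\PSL_2$ is not simple and the automorphism picture degenerates). The remaining steps — translating "graph of $x \mapsto \vp^j(x)$" into the explicit subgroups (i) and (ii), and checking that the $\pm$ sign genuinely cannot be removed in case (ii) — are routine once the structure is in place. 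I would present the argument by first stating the normal-subgroup and automorphism facts as a lemma, then running Goursat's Lemma mechanically through the four possibilities for the pair $(N_1, N_2)$.
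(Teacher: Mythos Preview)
Your approach is correct and is exactly the paper's: Goursat's Lemma, the classification of normal subgroups of $\SL_2(\FF)$ via simplicity of $\operatorname{PSL}_2(\FF)$ for $|\FF|\ge 4$, and then the structure of the automorphism group to identify the graph up to conjugacy. One correction to your stated facts: $\operatorname{Out}(\operatorname{PSL}_2(\FF_{p^k}))$ is not cyclic of order $k$ in odd characteristic but has order $2k$, the extra factor of $2$ being the diagonal automorphism (conjugation by an element of $\GL_2$ of nonsquare determinant); the paper records $\operatorname{Aut}(\SL_2(\FF)) \cong \operatorname{Aut}(\operatorname{PSL}_2(\FF)) \cong \PGL_2(\FF) \rtimes \langle\vp\rangle$. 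This does not damage your argument, since you already conjugate in $\GL_2(\FF)\times\GL_2(\FF)$, which absorbs the full $\PGL_2$ part and leaves only the field automorphisms $\vp^j$, exactly as you conclude.
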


 \begin{proof}
  This follows immediately from Goursat's lemma and a case-by-case check, given that the groups $\operatorname{PSL}_2(\FF)$ for fields $\FF$ of order $\ge 4$ are pairwise non-isomorphic simple groups, and the automorphism groups of $\SL_2(\FF)$ and $\operatorname{PSL}_2(\FF)$ are both isomorphic to $\PGL_2(\FF) \rtimes \langle \vp \rangle$.
 \end{proof}

 \begin{proposition}
  Let $\cO, \cO'$ be the rings of integers of any two unramified extensions of $\Qp$, where $p$ is a prime $\ge 5$, with residue fields $\FF, \FF'$. Then any closed subgroup $H \subseteq \SL_2(\cO) \times \SL_2(\cO')$ which surjects onto $\SL_2(\FF) \times \SL_2(\FF')$ must be the whole of $\SL_2(\cO) \times \SL_2(\cO')$.
 \end{proposition}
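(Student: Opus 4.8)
The plan is to reduce the statement to a purely group-theoretic fact about closed subgroups of $\SL_2(\cO)\times\SL_2(\cO')$, namely that such a subgroup is determined by its reduction modulo the maximal ideals, together with the classical fact that $\SL_2(\cO)$ itself has no proper closed subgroup surjecting onto $\SL_2(\FF)$ when $p\ge 5$. First I would recall the single-factor statement: if $\cO$ is the ring of integers of an unramified extension of $\Qp$ with $p\ge 5$, and $H_0\subseteq\SL_2(\cO)$ is a closed subgroup whose image in $\SL_2(\FF)$ is everything, then $H_0=\SL_2(\cO)$. This is a standard consequence of the fact that the kernel of $\SL_2(\cO)\to\SL_2(\FF)$ is a pro-$p$ group, and for $p\ge 5$ the group $\SL_2(\FF)$ is perfect with $\SL_2(\cO)$ topologically generated by the principal congruence subgroup together with any lift of generators of $\SL_2(\FF)$; more precisely one shows by successive approximation on the congruence filtration $\SL_2(1+\mathfrak{m}^n\cO)/\SL_2(1+\mathfrak{m}^{n+1}\cO)\cong\mathfrak{sl}_2(\FF)$, using that the adjoint action of $\SL_2(\FF)$ on $\mathfrak{sl}_2(\FF)$ is irreducible for $p\ge 5$, that $H_0$ contains the entire congruence kernel.

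Next I would carry out the two-factor argument. Let $H\subseteq\SL_2(\cO)\times\SL_2(\cO')$ be closed and surject onto $\SL_2(\FF)\times\SL_2(\FF')$. Let $N_1=H\cap(\SL_2(\cO)\times\{1\})$ and $N_2=H\cap(\{1\}\times\SL_2(\cO'))$. The images $\bar N_1,\bar N_2$ of $N_1,N_2$ in the residue-level groups are normal in $\SL_2(\FF)$ resp.\ $\SL_2(\FF')$; by the previous corollary (the mod-$p$ Goursat analysis) either these images are everything — in which case the mod-$p$ reduction of $H$ is all of $\SL_2(\FF)\times\SL_2(\FF')$ — or $H$ reduces to one of the exceptional ``graph'' subgroups. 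In the first case, $N_1$ is a closed subgroup of $\SL_2(\cO)$ surjecting onto $\SL_2(\FF)$, so $N_1=\SL_2(\cO)$ by the single-factor statement, and likewise $N_2=\SL_2(\cO')$; hence $H\supseteq\SL_2(\cO)\times\SL_2(\cO')$ and we are done. So it remains to rule out the exceptional cases; but by hypothesis $H$ surjects onto \emph{the full} $\SL_2(\FF)\times\SL_2(\FF')$ at the residue level, whereas each graph subgroup of type (i) or (ii) in the corollary is a proper subgroup of $\SL_2(\FF)\times\SL_2(\FF')$. Therefore the exceptional cases do not arise, and $H$ is the whole group.

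I expect the main obstacle to be the single-factor statement (``a closed subgroup of $\SL_2(\cO)$ surjecting onto $\SL_2(\FF)$ is everything'') when $\cO\ne\Zp$: one must handle the congruence filtration $\SL_2(1+\mathfrak{m}^n\cO)$ and check that the residue action of $\SL_2(\FF)$ on each graded piece $\mathfrak{sl}_2(\cO/\mathfrak{m})$ is irreducible and generates enough; for $p\ge 5$ this is classical (it underlies Serre's and Ribet's open-image arguments and appears e.g.\ in \cite{ribet85}), so in the write-up I would simply cite it rather than reprove it. A minor subtlety is that, although the corollary on subgroups of $\SL_2(\FF)\times\SL_2(\FF')$ is stated only for fields of order $\ge 4$, our hypothesis $p\ge 5$ guarantees $|\FF|,|\FF'|\ge 5>4$, so it applies; and one should note $N_1,N_2$ are automatically \emph{closed} since they are intersections of $H$ with closed subgroups, which is what lets us invoke the single-factor result. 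No deep input beyond these two ingredients is needed.
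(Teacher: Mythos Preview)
Your approach via Goursat's lemma plus the single-factor lifting result has a genuine gap at the key step. You set $N_1 = H \cap (\SL_2(\cO) \times \{1\})$ and assert the dichotomy ``either $\bar N_1 = \SL_2(\FF)$, or $\bar H$ is one of the exceptional graph subgroups''. But the previous corollary concerns the Goursat kernels of $\bar H$, namely $\bar H \cap (\SL_2(\FF) \times \{1\})$, and this is \emph{not} the same as $\bar N_1 = \overline{H \cap (\SL_2(\cO) \times \{1\})}$: an element $(a,b) \in H$ with $\bar b = 1$ need not have $b = 1$, so reduction mod $p$ does not commute with taking these intersections. Thus from $\bar H$ being the full product you cannot conclude that $N_1$ surjects onto $\SL_2(\FF)$; a priori $N_1$ could lie entirely inside the principal congruence subgroup of level $p$, giving $\bar N_1$ trivial while $\bar H$ is still everything. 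In effect you are assuming that the Goursat decomposition of $H$ reduces mod $p$ to the Goursat decomposition of $\bar H$, which is false in general. (The approach can be repaired---for example by working with $H \cap (\SL_2(\cO) \times \Gamma'(p))$ instead of $N_1$, or by invoking the classification of closed normal subgroups of $\SL_2(\cO)$---but this is real additional work that your write-up does not supply.)

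The paper's proof avoids this by arguing directly with the congruence filtration rather than with $N_1, N_2$. It shows by induction on $n$ that the image $H_n$ of $H$ modulo $p^n$ contains $K_n \times 1$ and $1 \times K_n'$, where $K_n = \ker(\SL_2(\cO/p^n) \to \SL_2(\cO/p^{n-1}))$. The mechanism is Swinnerton-Dyer's $p$-th power trick: for $u \in M_2(\FF)$ with $u^2=0$, surjectivity of $H$ onto the full product mod $p$ gives an $h \in H$ with $h \equiv (1+u, 1) \pmod p$, and then $h^p \equiv (1+pu, 1) \pmod{p^2}$; since such elements generate $K_2$, one gets $K_2 \times 1 \subseteq H_2$, and the induction step is analogous. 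Note that this argument uses the full strength of the hypothesis---that $H$ surjects onto the \emph{product} mod $p$, not just onto each factor---precisely to find elements of $H$ congruent to $(1+u,1)$ rather than merely to $(1+u, *)$; this is exactly the piece of information your Goursat argument failed to extract.
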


 \begin{proof}
  We follow the argument given for $\SL_2(\Zp)$ by Swinnerton-Dyer in \cite{swinnertondyer72}. It suffices to show that for each $n \ge 2$, the image $H_n$ of $H$ in $\SL_2(\cO / p^n)\times \SL_2(\cO'/p^n)$ contains the subgroups $K_n \times 1$ and $1 \times K_n'$, where $K_n$ is the kernel of $\SL_2(\cO / p^n) \to \SL_2(\cO / p^{n-1})$ and similarly for $K_n'$. Note that for each $n$, the group $K_n$ is abelian, and is isomorphic (via $m \mapsto 1 + p^{n-1} m$) to the group of trace zero matrices in $M_2(\FF)$, which is generated by matrices $u$ such that $u^2 = 0$.

  We now proceed by induction on $n$. Let $u\in M_2(\FF)$ satisfy $u^2=0$. By assumption, we may then find $h \in H$ congruent to $(1 + u, 1)$ modulo $p$; and, as shown in \textit{op.cit.}, we have $h^p = (1 + p u, 1) \bmod p^2$. Thus $(1 + pu, 1) \in H_2$, and thus $H_2 \supseteq K_2 \times 1$. Similarly, $H_2$ contains $1 \times K_2'$, so in fact $H_2$ is the whole of $\SL_2(\cO / p^2) \times \SL_2(\cO' / p^2)$.

  Suppose $n \ge 3$ and $H_{n-1}$ is everything. We claim $H_n$ contains $K_n \times 1$. Again, $K_n$ consists of matrices of the form $(1 + p^{n-1} u, 1)$, and by the induction assumption we can find $h \in H$ congruent to $(1 + p^{n-2} u, 1)$ modulo $p^{n-1}$. Then $h^p$ is congruent to $(1 + p^{n-1} u, 1)$ modulo $p^n$, so $(1 + p^{n-1} u, 1) \in H_n$. Thus $H_n \supseteq K_n \times 1$ and similarly $H_n \supseteq 1 \times K_n'$, so we are done.
 \end{proof}

 As a corollary, we obtain the following result.

 \begin{proposition}
  \label{prop:bigsubgroup}
  Let $\cO, \cO'$ be as above, with characteristic $\ge 5$, and let $H$ be a subgroup of $\SL_2(\cO) \times \SL_2(\cO')$ which surjects onto both factors. Then either $H = \SL_2(\cO) \times \SL_2(\cO')$, or $\cO' = \cO$  and $H$ is contained in the subgroup
  \[ \{ (x, y) \in \SL_2(\cO) : x = \pm \vp^j y \bmod p\}\]
  for some $j$.
 \end{proposition}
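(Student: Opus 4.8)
The plan is to combine the group-theoretic inputs of the previous three propositions. Suppose $H \subseteq \SL_2(\cO) \times \SL_2(\cO')$ surjects onto both factors. First I would reduce modulo $p$: let $\bar H$ denote the image of $H$ in $\SL_2(\FF) \times \SL_2(\FF')$, where $\FF, \FF'$ are the residue fields. Since $H$ surjects onto each $\SL_2(\cO^{(\prime)})$, the group $\bar H$ surjects onto each $\SL_2(\FF^{(\prime)})$. Now apply the Goursat-type corollary (the one stated for finite fields of order $\ge 4$): either $\bar H$ is all of $\SL_2(\FF) \times \SL_2(\FF')$, or $\FF = \FF'$ and $\bar H$ is conjugate (in $\GL_2(\FF)^2$) to one of the two explicit subgroups $\{(x, \vp^j(x))\}$ or $\{(x, y): y = \pm\vp^j(x)\}$.

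In the first case, $\bar H$ is everything, so by the preceding proposition (the Swinnerton-Dyer-style lifting argument) $H$ itself must be the whole of $\SL_2(\cO) \times \SL_2(\cO')$; note this also forces $\cO' = \cO$ is \emph{not} required here — the two factors may be genuinely different, which is exactly why that proposition was stated in the more general form. In the second case, $\FF = \FF'$, and since $\cO, \cO'$ are unramified over $\Qp$ they are determined by their residue fields, so $\cO' = \cO$. Moreover $\bar H$ is contained in the subgroup $\{(x, y): y = \pm \vp^j(x) \bmod p\}$ for some $j$, and hence so is $H$ (the condition defining this subgroup only involves the reductions mod $p$). This is precisely the alternative conclusion of the proposition.

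The bulk of the work is already done in the three lemmas quoted above, so the only real content here is assembling the dichotomy correctly and checking that "contained in the mod-$p$ subgroup" is a statement one can read off from $\bar H$ alone. The one point requiring a little care is the conjugation: the finite-field corollary gives a subgroup conjugate in $\GL_2(\FF) \times \GL_2(\FF)$ to the stated form, and one wants to lift this conjugating element to $\GL_2(\cO) \times \GL_2(\cO)$ so that the inclusion $H \subseteq \{(x,y): x = \pm \vp^j y \bmod p\}$ makes sense as stated; this is harmless since the reduction map $\GL_2(\cO) \to \GL_2(\FF)$ is surjective and the displayed subgroup is defined purely by a congruence condition, so it is stable under the ambiguity in the choice of lift. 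I expect no serious obstacle: the proof is essentially "reduce mod $p$, apply the finite-field classification, then in the non-degenerate case apply the lifting proposition and in the degenerate case observe that the congruence condition lifts trivially."
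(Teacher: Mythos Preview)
Your proposal is correct and is exactly the argument the paper has in mind: the paper states this proposition with no proof, introducing it simply as a corollary of the finite-field Goursat classification together with the Swinnerton-Dyer-style lifting proposition, and your two-step argument (reduce mod $p$ and apply the classification; in the surjective-mod-$p$ case invoke the lifting result, otherwise read off the congruence and use that unramified rings are determined by their residue fields) is precisely that. Your caution about the conjugation is well placed: the finite-field corollary only gives $\bar H$ up to $\GL_2(\FF)^2$-conjugacy, so the conclusion should really be read as holding after replacing $H$ by a $\GL_2(\cO)^2$-conjugate --- this is implicit in the paper and harmless in the application, since the Galois representations $\rho_{f,\fp}$, $\rho_{g,\fp}$ are only defined up to choice of basis.
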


 We shall now boost this to a statement about $\GL_2$. For $\cO, \cO'$ as before, let $G$ denote the group
 \[ \{ (x, y) \in \GL_2(\cO) \times \GL_2(\cO') : \det(x) = \det(y) \in \Zp^\times\}.\]
 We can regard this as a fibre product $G_1 \times_{\Zp^\times} G_2$, where $G_1 = \{ x \in \GL_2(\cO) : \det(x) \in \Zp^\times\}$ and similarly for $G_2$.

 \begin{proposition}
  Let $H$ be a subgroup of $G$ which surjects onto $G_1$ and $G_2$. Then either $H = G$, or we have $\cO = \cO'$ and $H$ is contained in the subgroup of $G$ given by $\{ (x, y) : x = \pm \vp^j y \bmod p\}$ for some $j$.
 \end{proposition}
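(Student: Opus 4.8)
The plan is to reduce the $\GL_2$-statement to the $\SL_2$-statement just proved (Proposition \ref{prop:bigsubgroup}), using the determinant map to keep track of the extra data. First I would set $H_1^\sharp = H \cap \SL_2(\cO)$ (viewed inside $G_1$ via $x \mapsto (x,1)$) and $H_2^\sharp = H \cap \SL_2(\cO')$; since $H$ surjects onto $G_1$ and $\det$ maps $G_1$ onto $\Zp^\times$, the group $H_1^\sharp$ surjects onto $\SL_2(\cO')$ (as the kernel of $\det$ on $G_1$ is exactly $\SL_2(\cO)$), and similarly $H_2^\sharp$ surjects onto $\SL_2(\cO)$. Hence $H^\sharp \coloneqq H \cap (\SL_2(\cO) \times \SL_2(\cO'))$ surjects onto both factors, so Proposition \ref{prop:bigsubgroup} applies: either $H^\sharp$ is all of $\SL_2(\cO) \times \SL_2(\cO')$, or $\cO = \cO'$ and $H^\sharp$ lies in the subgroup $\{(x,y): x = \pm\vp^j y \bmod p\}$ for some $j$.

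In the first case I would argue that $H = G$. Indeed $H \supseteq \SL_2(\cO) \times \SL_2(\cO')$, so $H$ is determined by its image in $G / (\SL_2(\cO) \times \SL_2(\cO'))$; but this quotient is precisely $\Zp^\times$ via the (common) determinant, and the image of $H$ there is all of $\Zp^\times$ because $H$ surjects onto $G_1$ and $\det(G_1) = \Zp^\times$. Therefore $H = G$.

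In the second case, $\cO = \cO'$ and $H^\sharp \subseteq \{(x,y) \in \SL_2(\cO)^2 : x \equiv \pm \vp^j y \bmod p\}$. The claim is that the same congruence constraint propagates to all of $H$, i.e. $H \subseteq \{(x,y)\in G: x \equiv \pm\vp^j y \bmod p\}$. To see this, take any $(x,y) \in H$; since $H$ surjects onto $G_1$ and $\det: G_1 \to \Zp^\times$ is surjective with kernel $\SL_2(\cO)$, there is $(x', y') \in H$ with $\det(x') = \det(x)$ and $x'$ arbitrary in a fixed $\SL_2(\cO)$-coset — more efficiently, pick $(u,v) \in H$ with $\det(u) = \det(x)$ arising from the surjection onto $G_1$; then $(x,y)(u,v)^{-1} \in H^\sharp$, so $xu^{-1} \equiv \pm \vp^j(y v^{-1}) \bmod p$. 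It remains to control the ``reference'' element $(u,v)$ modulo $p$: one shows that the reductions mod $p$ of all such $(u,v) \in H$ with a given determinant lie in a single coset of the image of $H^\sharp$ mod $p$, and by composing with elements of $H^\sharp$ one can normalize so that $u \equiv \pm\vp^j v \bmod p$ as well (using that scalar matrices, which have square determinant, already satisfy the relation, and that $H$ contains enough of them via surjectivity onto the $G_i$). Chaining the two congruences gives $x \equiv \pm\vp^{j} y \bmod p$, with the sign absorbed into the $\pm$, completing the proof.

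The main obstacle I anticipate is the bookkeeping in the second case: the subgroup $\{(x,y): x \equiv \pm \vp^j y \bmod p\}$ of $\SL_2(\cO)^2$ does not literally extend to a subgroup of $G$ unless one is careful about what happens to determinants and signs under reduction mod $p$, and one has to check that the ``$\pm$'' and the Frobenius twist $\vp^j$ are genuinely consistent across all of $H$ rather than varying. This is handled by the observation that $H$ is generated modulo $\SL_2(\cO)\times\SL_2(\cO')$ by a single determinant-$\Zp^\times$-worth of elements together with $H^\sharp$, so the congruence type (the pair $(j,\pm)$) is pinned down by $H^\sharp$ alone; everything else is a routine coset computation that I will not grind through here.
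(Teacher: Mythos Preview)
Your overall architecture matches the paper's: set $H^\circ = H \cap (\SL_2(\cO)\times\SL_2(\cO'))$, check it surjects onto both $\SL_2$ factors, apply the $\SL_2$ proposition, and in the case $H^\circ = \SL_2(\cO)\times\SL_2(\cO')$ conclude $H = G$ by the determinant argument. (Your opening paragraph has a garble --- you write ``$H_1^\sharp$ surjects onto $\SL_2(\cO')$'' --- but the intended statement, that $H^\sharp$ surjects onto each $\SL_2$ factor, is correct and its proof is immediate from the fibre-product description of $G$.)

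The genuine gap is in the second case. Your proposal is to pick a ``reference'' $(u,v)\in H$ with $\det u = \det x$, use $(x,y)(u,v)^{-1}\in H^\sharp$ to get $xu^{-1}\equiv \pm\vp^j(yv^{-1})\bmod p$, and then ``normalize'' $(u,v)$ to also satisfy $u\equiv\pm\vp^j v$. But this last step is exactly the statement you are trying to prove for $(u,v)$, so the argument is circular. Your attempted escape via scalar matrices does not work either: surjectivity of $H$ onto $G_1$ gives you, for each $d\in\Zp^\times$, some $(dI, y_d)\in H$, but there is no reason $y_d$ should be scalar (or satisfy $y_d\equiv \pm dI\bmod p$). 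The claim that ``everything else is a routine coset computation'' is precisely where the content lies, and it is not routine.

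The paper handles this case by a different and sharper idea. After reducing to $j=0$, take any $(x,y)\in H$ and set $t = x^{-1}y\bmod p$. For $(u,v)\in H^\circ$ one has $(xux^{-1}, yvy^{-1})\in H^\circ$ (normality of $H^\circ$ in $H$), so both $[u]=[v]$ and $[(xux^{-1})]=[(yvy^{-1})]$ hold in $\operatorname{PSL}_2(\FF)$; a short manipulation then shows $[u^{-1}tu]=[t]$. Since the first projection of $H^\circ$ surjects onto $\SL_2(\FF)$, the class $[t]$ is central in $\operatorname{PSL}_2(\FF)$, hence trivial, giving $x\equiv\pm y\bmod p$. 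This centralizer argument is the missing ingredient; your coset bookkeeping cannot substitute for it.
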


 \begin{proof}
  Let $G^\circ = \SL_2(\cO) \times \SL_2(\cO')$ and let $H^\circ = H \cap G^\circ$. Then $H^\circ$ has full image in each of $\SL_2(\cO)$ and $\SL_2(\cO')$, so either $H^\circ = G^\circ$, or $\cO' = \cO$ and the image of $H^\circ$ modulo $p$ is contained in the subgroup $\{ (x, y): x = \pm \vp^j y \bmod p\}$ for some $j$.

  Suppose first that $H^\circ = G^\circ$. Then we must have $H = G$, since for each $g \in G$, there is some $h \in H$ with $\det(h) = \det(g)$, and then $h^{-1} g$ lies in $G^\circ$ so by assumption it must be in $H$.

  In the remaining case, by replacing $H$ with its image under the automorphism $\vp^j \times 1$, we may assume without loss of generality that $j = 0$. Then any $h \in H^\circ$ is of the form $(x, y)$ with $x = \pm y$ modulo $p$. Let $(x, y)$ be any element of $H$, and consider the class of $t = x^{-1} y$ in $\operatorname{PSL}_2(\FF)$; then for any $(u, v) \in H^\circ$, we have
  \[ [u^{-1}t u] = [u^{-1} x^{-1} y u] = [x^{-1}][ (x u x^{-1})^{-1}(y v y^{-1}) ][y][v^{-1} u] = [x^{-1} y] = [t],\]
  since $(x u x^{-1}, y v y^{-1}) \in H^\circ$. Thus the classes $[t]$ and $[u]$ commute in $\operatorname{PSL}_2(\FF)$. However, since $H^\circ$ surjects onto $\SL_2(\cO)$, this forces $[t]$ to be in the centre of $\operatorname{PSL}_2(\FF)$, which is trivial (since it is a simple group). Thus $x = \pm y \bmod p$ for all $(x, y) \in H$, as claimed.
 \end{proof}

 Assume now that we have two newforms $f$ and $g$, and let $L$ be the subfield of $\QQbar$ generated by the coefficients of $f$ and $g$. For each prime $\fp$ of $L$, we may consider the image of the Galois representation $\rho_{f, \fp} \times \rho_{g, \fp}: \Gal(\QQbar / \QQ) \to \GL_2(L_\fp) \times \GL_2(L_\fp)$.

 Let $H$ be the subgroup of $\Gal(\QQbar / \QQ)$ cut out by the Dirichlet characters corresponding to the ``extra twists'' of $f$ and $g$, and let $K$ be its fixed field (an abelian extension of $\QQ$). Let $F, F'$ be the subfields of $L$ fixed by the extra twists. By Proposition \ref{prop:bigsubgroup1form}, we know that for all but finitely many $\fp$, the image of $\rho_{f, \fp}|_H$ is the group $\{ x \in \GL_2(\cO) : \det(x) \in \Zp^\times\}$, where $\cO$ is the completion of $F$ at the prime below $\fp$ and $p$ is the residue characteristic of $\fp$; similarly, the image of $\rho_{g, \fp}|_H$ will be the group $\{ x \in \GL_2(\cO') : \det(x) \in \Zp^\times\}$ where $\cO'$ is the completion of $F'$ at $\fp$. Then the image of the Galois representation $\rho_{f, \fp} \times \rho_{g, \fp}$ is a subgroup of the group $G_\fp = G$ defined above, which surjects onto either factor.

 \begin{proposition}
  In the above situation, either the image of $H$ under $\rho_{f, \fp} \times \rho_{g, \fp}$ is $G_\fp$, or $\cO' = \cO$ and there is an element $\gamma \in \Gal(L / \QQ)$ and a quadratic character $\chi: H \to \{\pm 1\}$ such that the equality
  \begin{equation}
   \label{eq:congruence}
   \rho_{f,\fp}(\sigma) = \pm \rho_{f,\fp}(\sigma)^\gamma \bmod \fp
  \end{equation}
  holds for all $\sigma \in H$.
 \end{proposition}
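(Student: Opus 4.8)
The plan is to deduce this from the group-theoretic proposition proved immediately above, applied to the image of $H$ under $\rho_{f,\fp}\times\rho_{g,\fp}$.

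\emph{Setting up the ambient group.} First I would recall that, for all but finitely many $\fp$, Proposition~\ref{prop:bigsubgroup1form} identifies the image of $\rho_{f,\fp}|_H$ with $G_1=\{x\in\GL_2(\cO):\det x\in\Zp^\times\}$ and that of $\rho_{g,\fp}|_H$ with $G_2=\{y\in\GL_2(\cO'):\det y\in\Zp^\times\}$, where $\cO$ (resp.\ $\cO'$) is the completion of $F$ (resp.\ $F'$) at the prime below $\fp$. Since the complex conjugates $f^*,g^*$ are extra twists of $f,g$, the nebentypus characters $\chi_f,\chi_g$ lie among the $\chi_\gamma$ and hence are trivial on $H$; as $f,g$ have weight $2$, the determinants of $\rho_{f,\fp}$ and $\rho_{g,\fp}$ restricted to $H$ both coincide with the $p$-adic cyclotomic character, valued in $\Zp^\times$. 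Therefore the image $\mathcal G$ of $\rho_{f,\fp}\times\rho_{g,\fp}|_H$ is a closed subgroup of $G_\fp=G_1\times_{\Zp^\times}G_2$ that surjects onto each factor.

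\emph{Applying the Goursat-type proposition.} By the preceding proposition, either $\mathcal G=G_\fp$ — which is the first alternative of the statement — or else $\cO=\cO'$ and $\mathcal G$ is contained in $\{(x,y)\in G_\fp : x\equiv\pm\,\vp^j(y)\ \bmod\ \fp\}$ for some $0\le j<k$, where $\FF=\cO/\fp\cong\FF_{p^k}$ is the common residue field and $\vp$ its $p$-power Frobenius. In the second case, I choose $\gamma$ in the decomposition subgroup of $\Gal(L/\QQ)$ at $\fp$ whose image in $\Gal(\FF/\FF_p)$ is the appropriate power of Frobenius (using that $L/\QQ$ may be taken Galois and that decomposition groups surject onto residue-field Galois groups); entrywise $\gamma$-conjugation then preserves $\GL_2(\cO_\fp)$ and reduces mod $\fp$ to that power of $\vp$ on matrix entries. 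Reducing the containment of the previous step, for each $\sigma\in H$ one gets
\[
 \overline{\rho_{g,\fp}(\sigma)} \;=\; \chi(\sigma)\cdot\overline{\rho_{f,\fp}(\sigma)^\gamma}\qquad\text{in }\GL_2(\FF),\qquad \chi(\sigma)\in\{\pm1\}.
\]
Since $\sigma\mapsto\overline{\rho_{g,\fp}(\sigma)}$ and $\sigma\mapsto\overline{\rho_{f,\fp}(\sigma)^\gamma}$ are both homomorphisms $H\to\GL_2(\FF)$, their ratio $\chi$ takes values in the centre $\{\pm1\}$ of $\GL_2(\FF)$ and is itself a homomorphism; hence $\chi$ is a quadratic character, giving the second alternative.

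\emph{Main obstacle.} The only non-formal part is the bookkeeping in the last paragraph: one must track whether the Frobenius twist enters as $\vp^{j}$ or $\vp^{-j}=\vp^{k-j}$, and — more substantively — one must arrange that the requisite power of Frobenius on $\FF$ is realised by an element $\gamma$ \emph{of $\Gal(L/\QQ)$} fixing $\fp$, so that $\rho_{f,\fp}^\gamma$ again lands in $\GL_2(\cO_\fp)$ and the displayed mod-$\fp$ congruence is meaningful. This rests on taking $L/\QQ$ Galois together with surjectivity of decomposition groups; once that is in place, multiplicativity of $\chi$ is immediate from the centrality of scalar matrices.
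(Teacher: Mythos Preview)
Your proof is correct and follows essentially the same route as the paper's: apply the preceding Goursat-type proposition to the image of $H$ in $G_\fp$, and in the non-surjective case lift the residue-field Frobenius power $\vp^j$ to an element $\gamma$ of the decomposition group of $\fp$ in $\Gal(L/\QQ)$. You supply two details the paper leaves implicit---the equality of determinants on $H$ (via $\chi_f,\chi_g$ being extra-twist characters and hence trivial on $H$) and the multiplicativity of the sign $\chi$---but the structure of the argument is identical.
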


 \begin{proof}
  We know from above that if the image of $\rho_{f, \fp} \times \rho_{g, \fp}$ is not $G$, then $\cO = \cO'$ and $\rho_{f, \fp}(\sigma) = \pm \vp^j \rho_{g, \fp}(\sigma) \bmod \fp$ for all $\sigma \in H$, where $\vp^j$ is the mod $\fp$ Frobenius.

  Now we may take $\gamma$ to be any element of the decomposition group of $\fp$ in $\Gal(L / \QQ)$ reducing to $\vp^j$ modulo $\fp$. (Of course, there will almost always be only be one such element, since only finitely many primes ramify in $L / \QQ$.)
 \end{proof}

 We now lift to characteristic 0. Let $w$ be a prime of the field $K$; we define $a_w(f) = \tr \rho_{f, \lambda}(\sigma_w^{-1})$, where $\sigma_w$ is the arithmetic Frobenius at $w$ in $H$ and $\lambda$ is some prime of $L$; if $w$ is a degree 1 prime, then this is just $a_v(f)$ where $v$ is the rational prime below $w$, and for general primes $w$ it may be expressed as a polynomial in $a_v(f)$ and $\chi_v(f)$. In any case it is obviously independent of the choice of auxilliary prime $\lambda$, and (since $K$ is abelian over $\QQ$) it depends only on the prime $v$ of $\QQ$ below $w$. We define $a_w(g)$ similarly.\footnote{Of course, we can define the quantity $a_w(f)$ intrinsically in ``automorphic'' terms, as (up to normalizations) it is the trace of the $d$-th power of the conjugacy class in $\GL_2(\CC)$ which is the Satake parameter of $\pi_{f, v}$, where $d$ is the degree of the unramified extension $[K_w: \QQ_v]$; this makes the independence of $\lambda$ selfevident.}

 \begin{definition}
  Let us say a prime $\fp$ of residue characteristic $\ge 5$ is a \emph{good prime} for the pair $(f, g)$ if the image of $H$ under $\rho_{f, \fp} \times \rho_{g, \fp}$ is the whole of $G_\fp$. If the image is a proper subgroup, but has full projection to either factor, we say $\fp$ is a \emph{bad prime}.
 \end{definition}

 \begin{remark}
  If $\fp$ divides 2 or 3, or is such that $\rho_{f, \fp}$ or $\rho_{g, \fp}$ has small image, we consider $\fp$ to be neutral, neither good nor bad. By the theorem of Momose--Ribet (Theorem \ref{thm:momoseribet}), there are only finitely many neutral primes.
 \end{remark}

 \begin{corollary}
  If there are infinitely many bad primes for $(f, g)$, then there is $\gamma \in \Gal(L / \QQ)$ such that the equality
  \[ a_w(f) = \pm \gamma(a_w(g))\]
  for all primes $w$ of $K$.
 \end{corollary}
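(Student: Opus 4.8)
The plan is to extract a characteristic-zero consequence from the characteristic-$\fp$ congruences $\rho_{f,\fp}(\sigma) \equiv \pm \rho_{f,\fp}(\sigma)^\gamma \pmod{\fp}$ that hold at every bad prime, by letting $\fp$ range over an infinite set. First I would apply the previous proposition to each bad prime $\fp$: since there are infinitely many, and there are only finitely many possible pairs $(\gamma, \chi)$ with $\gamma \in \Gal(L/\QQ)$ and $\chi: H \to \{\pm 1\}$ a quadratic character (the latter because $\chi$ is unramified outside a fixed finite set of primes — those dividing the levels of $f,g$ and the discriminant of $L$), a pigeonhole argument produces a single pair $(\gamma, \chi)$ and an infinite subset $\mathcal{P}$ of bad primes for which $\rho_{f,\fp}(\sigma) = \chi(\sigma)\, \rho_{g,\fp}(\sigma)^\gamma \pmod{\fp}$ for all $\sigma \in H$ and all $\fp \in \mathcal{P}$. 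Taking traces of this identity at a Frobenius element $\sigma_w^{-1}$ for a prime $w$ of $K$, and recalling that $a_w(f) = \tr \rho_{f,\lambda}(\sigma_w^{-1})$ is independent of the auxiliary prime $\lambda$ and depends only on the rational prime below $w$, I obtain
\[ a_w(f) \equiv \chi(w)\, \gamma(a_w(g)) \pmod{\fp} \]
for every $\fp \in \mathcal{P}$.

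The key point is then that both sides of this congruence are fixed algebraic numbers in $\cO_L$ (or more precisely in the integers of the appropriate fixed field), not depending on $\fp$; so a congruence modulo infinitely many primes $\fp$ of $L$ forces an honest equality $a_w(f) = \chi(w)\, \gamma(a_w(g))$ in $\cO_L$. (Concretely: the difference $a_w(f) - \chi(w)\gamma(a_w(g))$ lies in $\bigcap_{\fp \in \mathcal{P}} \fp = 0$, since the $\fp$ run over infinitely many distinct maximal ideals of a Dedekind domain.) Finally, for a given rational prime $v$, the sign $\chi(w)$ could a priori depend on the choice of prime $w \mid v$ of $K$; but since $K/\QQ$ is abelian, $\gamma(a_w(g)) = \gamma(a_v(g))$ depends only on $v$, and likewise $a_w(f) = a_v(f)$ depends only on $v$, so $\chi(w)$ is in fact constant on the primes above $v$. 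Absorbing the sign, we conclude $a_w(f) = \pm \gamma(a_w(g))$ for all primes $w$ of $K$, which is the assertion of the corollary.

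I expect the main obstacle to be the bookkeeping around the quadratic character $\chi$ and the Frobenius twist $\gamma$: one must be careful that the pigeonhole is applied to the pair $(\gamma,\chi)$ simultaneously (rather than extracting them independently), that the reduction map $\Gal(L/\QQ) \to$ (residual Galois groups) is handled correctly so that the same abstract $\gamma$ works for infinitely many $\fp$, and that the statement $a_w(f) = \pm\gamma(a_w(g))$ is interpreted with a sign that may depend on $w$ (which is all the corollary actually claims — the $\pm$ is not asserted to be uniform). Everything else is routine: the finiteness of the set of admissible $\chi$ follows from ramification considerations, and the passage from congruences mod infinitely many primes to equality is standard.
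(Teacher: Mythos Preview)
Your pigeonhole on the pair $(\gamma,\chi)$ has a genuine gap. You assert that there are only finitely many admissible quadratic characters $\chi:H\to\{\pm1\}$ because each is ``unramified outside a fixed finite set of primes --- those dividing the levels of $f,g$ and the discriminant of $L$''. But the character $\chi_\fp$ is extracted from the \emph{mod $\fp$} Galois representations $\bar\rho_{f,\fp}$ and $\bar\rho_{g,\fp}$, and these are ramified at the residue characteristic $p$ of $\fp$. So a priori $\chi_\fp$ may be ramified at primes of $K$ above $p$, and $p$ varies with $\fp$; there is no fixed finite ramification set, and hence no finiteness of the family $\{\chi_\fp\}$. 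Your pigeonhole therefore does not go through. (If your argument worked it would in fact prove more than the corollary: it would produce a single global quadratic character $\chi$ with $a_w(f)=\chi(w)\gamma(a_w(g))$, which is exactly the \emph{next} corollary in the paper --- and the authors remark there that this stronger statement seems to require Ramakrishnan's multiplicity-one theorem for $\SL_2$, not just elementary pigeonhole.)

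The paper's fix is to square before pigeonholing. From $\bar\rho_{f,\fp}(\sigma)=\pm\bar\rho_{g,\fp}(\sigma)^\gamma$ one gets, on taking traces at a Frobenius $\sigma_w^{-1}$, the congruence $a_w(f)^2\equiv\gamma(a_w(g))^2\pmod{\fp}$, with the sign gone. Now pigeonhole only on $\gamma\in\Gal(L/\QQ)$ (a finite group), obtain $a_w(f)^2=\gamma(a_w(g))^2$ by the infinitely-many-primes argument you already describe, and conclude $a_w(f)=\pm\gamma(a_w(g))$ with a sign that may depend on $w$ --- which is all the corollary claims. Everything else in your write-up (the Dedekind-domain argument, the independence of $a_w$ from the auxiliary $\lambda$) is fine and is used in the paper as well.
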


 \begin{proof}
  For each bad prime $\fp$, there exists a $\gamma \in \Gal(L / \QQ)$ such that the congruence \eqref{eq:congruence} holds, and in particular (by taking $\sigma = \sigma_w^{-1}$) we have \( a_w(f)^2 = \gamma(a_w(g)^2) \bmod \fp\) for all primes $w$ of $K$.

  Since $\Gal(L / \QQ)$ is finite, there exists some $\gamma$ such that the congruence \eqref{eq:congruence} of the proposition holds for all $\fp$ in an infinite set $B$. In this case, we have
  \[ a_w(f)^2 = \gamma(a_w(g)^2) \bmod \fp\]
  for infinitely many $\fp$. So we must have an equality $a_w(f)^2 = \gamma(a_w(g)^2)$, since a nonzero element of a number field cannot be divisible by infinitely many primes.
 \end{proof}

 \begin{corollary}
  If there are infinitely many bad primes for $(f, g)$, there exists a quadratic Groessencharacter $\kappa$ of $K$ (equivalently, a continuous quadratic character of $H$) such that
  \[ a_w(f) = \kappa(w) a_w(g)\]
  for all primes $w$ of $K$.
 \end{corollary}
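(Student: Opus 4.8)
The previous corollary provides an element $\gamma \in \Gal(L/\QQ)$ and an equality $a_w(f) = \pm \gamma(a_w(g))$ for all primes $w$ of $K$, where the sign $\pm$ is a priori allowed to depend on $w$. The plan is to upgrade this ``equality up to sign'' to an honest multiplicative relation by packaging the sign into a quadratic character. First I would replace $g$ by $\gamma(g)$ (i.e.\ the eigenform whose $w$-th coefficient is $\gamma(a_w(g))$); this is again a weight $2$ newform, and after this reduction we may assume $a_w(f) = \pm a_w(g)$ for all $w$, with the sign depending on $w$. Define $\kappa(w) = a_w(f)/a_w(g)$ whenever $a_w(g) \ne 0$; this takes values in $\{\pm 1\}$. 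The key point to establish is that $\kappa$ extends to a genuine Groessencharacter of $K$ (equivalently a continuous character of $H$), i.e.\ that it is multiplicative in $w$ and unramified outside a finite set.

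The cleanest route is via the Rankin--Selberg $L$-function, or rather via comparing $\Sym^2$-type constructions, but I think the most robust argument is the following Galois-theoretic one. Consider the two-dimensional $\lambda$-adic representations $\rho_{f,\lambda}|_H$ and $\rho_{g,\lambda}|_H$ for a fixed auxiliary prime $\lambda$ of $L$; both have residual image containing $\SL_2$ of the appropriate ring (by Proposition~\ref{prop:bigsubgroup1form}), hence are absolutely irreducible, and by construction $\tr(\rho_{f,\lambda}(\sigma_w^{-1})) = \pm \tr(\rho_{g,\lambda}(\sigma_w^{-1}))$ for a density-one set of $w$. A standard Chebotarev/Brauer--Nesbitt argument then shows that $\rho_{f,\lambda}|_H \cong \rho_{g,\lambda}|_H \otimes \kappa$ for some character $\kappa$ of $H$ with values in $\{\pm 1\}$ after extending scalars: indeed the function $w \mapsto \kappa(w) := \tr\rho_{f,\lambda}(\sigma_w^{-1})/\tr\rho_{g,\lambda}(\sigma_w^{-1})$ is well-defined on the set where the denominator is nonzero, and one shows using the relation $\tr(\rho_f) = \kappa \cdot \tr(\rho_g)$ on a density-one set together with irreducibility that $\kappa$ agrees almost everywhere with a continuous homomorphism $H \to \{\pm 1\}$ (this is the ``twist recognition'' lemma: two irreducible two-dimensional representations whose traces agree up to a sign character on a set of density one are twists of each other by that character). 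Having identified $\kappa$ as a continuous quadratic character of $H$, unramified outside the primes dividing $N p \lambda$ and hence (being of finite order and $\ell$-adic for all $\ell$) unramified outside $N$, we conclude $a_w(f) = \kappa(w) a_w(g)$ for \emph{all} primes $w$ of $K$, first for those with $a_w(g)\ne 0$ by construction and then for the remaining ones since both sides vanish there.

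There is one subtlety to handle: a priori $\kappa$ is a character of $H$ valued in $\overline{\QQ}_p^\times$, and one must check it is valued in $\{\pm 1\}$ and independent of the auxiliary prime. The first follows because $\kappa(\sigma_w)^2 = a_w(f)^2/a_w(g)^2 = 1$ for a density-one set of $w$, hence $\kappa^2 = 1$ by Chebotarev; the second follows because the relation $a_w(f) = \kappa(w) a_w(g)$ with $\kappa(w) \in \{\pm 1\}$ is a relation between algebraic numbers that does not involve $p$. Finally one should note that $\kappa$, being a quadratic character of the Galois group of an abelian extension of $\QQ$, corresponds to a quadratic Groessencharacter of $K$ in the usual way, which is the desired output.

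\textbf{Main obstacle.} The hard part will be the ``twist recognition'' step: passing from the pointwise relation $\tr\rho_f = \pm\tr\rho_g$ (with a possibly $w$-dependent sign) on a density-one set to the existence of a \emph{single} continuous quadratic character $\kappa$ realizing the twist. The issue is that the naive pointwise definition of $\kappa$ is only defined away from the zeros of $a_w(g)$, and one must rule out pathological behaviour of the sign. This is dealt with by the following device: the sets $S_{\pm} = \{w : a_w(f) = \pm a_w(g)\}$ partition a density-one set of primes; consider the compositum, and show using the fact that $\SL_2$-image forces $\rho_g \otimes \rho_g^\vee$ to contain an irreducible $3$-dimensional summand (the adjoint) that the set where $a_w(f) = a_w(g)$ is itself either density one or the fixed set of an index-two subgroup — more precisely, apply Chebotarev to the finite quotient through which $\kappa$ factors once one knows a continuous $\kappa$ exists, and to get that existence, invoke the standard lemma (e.g.\ as in Ribet's work on $\ell$-adic representations) that if $\rho_1, \rho_2$ are continuous, semisimple, with $\rho_1 \cong \rho_2 \otimes \chi$ holding at a density-one set of Frobenii for \emph{some} finite-order $\chi$, then it holds identically for a unique such $\chi$. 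With $\rho_i = \rho_{f,\lambda}|_H, \rho_{g,\lambda}|_H$ irreducible, this applies directly.
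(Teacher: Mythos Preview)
Your proposal has a genuine gap at precisely the point you flag as the ``main obstacle,'' and the paper's authors explicitly remark that this gap cannot be filled by elementary means. The difficulty is the passage from the pointwise relation $\tr\rho_f(\sigma_w) = \pm \tr\rho_g(\sigma_w)$ (with the sign depending on $w$) to the existence of a single continuous quadratic character $\kappa$ of $H$ such that $\rho_f|_H \cong \rho_g|_H \otimes \kappa$. The ``standard lemma'' you invoke --- that if $\rho_1 \cong \rho_2 \otimes \chi$ holds at a density-one set of Frobenii for \emph{some} $\chi$, then it holds globally --- presupposes that a single candidate $\chi$ has already been produced; it does not manufacture $\chi$ from a collection of local signs. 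Your sketch of a fix via the adjoint representation and Chebotarev is not an argument: knowing $(\tr\rho_f)^2 = (\tr\rho_g)^2$ everywhere gives relations among $\Sym^2$ and determinants, but does not by itself force the sign function $w \mapsto a_w(f)/a_w(g)$ to be multiplicative.

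The paper's proof takes a completely different route: it invokes Ramakrishnan's strong multiplicity one theorem for $\SL_2/K$ \cite{ramakrishnan00}. The input is that the base-change automorphic representations $BC(\pi_f)$ and $BC(\pi_g^\gamma)$ of $\GL_2(\AA_K)$ have Satake parameters agreeing up to sign at almost every prime, i.e.\ they determine the same $L$-packet for $\SL_2$; Ramakrishnan's theorem then yields a global quadratic id\`ele class character relating them. This is a deep automorphic result, proved via the symmetric square lift to $\GL_3$, and the authors add a remark immediately after the proof: ``Frustratingly it does not seem to be possible to show the existence of $\kappa$ without such heavy automorphic machinery, even though we know that for infinitely many primes $\fp$ the sign relating $\rho_f$ and $\rho_g$ modulo $\fp$ is given by a character.'' So the elementary Galois-theoretic route you attempt is exactly what the authors tried and could not make work; Ramakrishnan's theorem is the intended and apparently necessary input.
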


 \begin{proof}
  This follows from the strong multiplicity one theorem for $\SL_2 / K$, cf.~\cite{ramakrishnan00}: the Satake parameters of the base-change representations $BC(\pi_f)$ and $BC(\pi_g^\gamma)$ of $\GL_2(\AA_K)$ agree up to sign at any prime $w$, and Ramakrishnan's result guarantees that the sign relating the two is given by a quadratic character.
 \end{proof}

 \begin{remark}
  Frustratingly it does not seem to be possible to show the existence of $\kappa$ without such heavy automorphic machinery, even though we know that for infinitely many primes $\fp$ the sign relating $\rho_f$ and $\rho_g$ modulo $\fp$ is given by a character.
 \end{remark}

 \begin{theorem}\label{thm:badprimes}
  Suppose there are infinitely many bad primes for $(f, g)$. Then $f$ is Galois-conjugate to some twist of $g$.
 \end{theorem}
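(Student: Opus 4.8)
The strategy is to feed the previous Corollary into the theory of base change for $\GL_2$ over the abelian extension $K/\QQ$. By the Corollary immediately preceding, the existence of infinitely many bad primes produces a quadratic Groessencharacter $\kappa$ of $K$ with $a_w(f) = \kappa(w)\, a_w(g)$ for all primes $w$ of $K$. Translating this into automorphic language: if $\pi_f, \pi_g$ denote the cuspidal automorphic representations of $\GL_2(\AA_\QQ)$ attached to $f$ and $g$, and $BC_K(-)$ denotes base change to $K$ (which exists since $K/\QQ$ is abelian, indeed solvable, by Langlands and Arthur--Clozel), then $BC_K(\pi_f) \cong BC_K(\pi_g) \otimes \kappa$ as automorphic representations of $\GL_2(\AA_K)$, by strong multiplicity one over $K$ since their Satake parameters agree at all unramified primes.

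\textbf{Descent step.} The key point is then to descend this isomorphism from $K$ back to $\QQ$. First I would note that $\kappa$, being a finite-order Hecke character of $K$, is itself the base change of a character: since $K/\QQ$ is abelian, $\kappa$ is fixed by $\Gal(K/\QQ)$ up to the obstruction measured by $H^1$, but more simply one can enlarge $\kappa$ to a character of $\QQ$ after twisting — in fact it is cleaner to observe that $BC_K(\pi_f)$ and $BC_K(\pi_g)$ are both $\Gal(K/\QQ)$-invariant (being base changes), and hence so is $BC_K(\pi_g)\otimes\kappa$; comparing the two expressions shows $\kappa^\sigma \kappa^{-1}$ is trivial on the relevant Satake parameters, so $\kappa$ may be taken $\Gal(K/\QQ)$-invariant and therefore extends to a Hecke character $\tilde\kappa$ of $\QQ$ (of finite order, possibly after adjusting by a character of $\Gal(K/\QQ)$, which only changes things at ramified places). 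Then $\pi_f$ and $\pi_g \otimes \tilde\kappa$ have base changes to $K$ that are isomorphic, so by the descent criterion in Arthur--Clozel's solvable base change (a cuspidal representation of $\GL_2(\AA_\QQ)$ is determined up to twist by characters of $\Gal(K/\QQ)$ by its base change to $K$), we conclude $\pi_f \cong \pi_g \otimes \tilde\kappa \otimes \eta$ for some character $\eta$ of $\Gal(K/\QQ)$, i.e. $\pi_f$ is a twist of $\pi_g$ by a Dirichlet character. Since $f$ and $g$ have the same weight $2$ and $\pi_f, \pi_g$ are the automorphic representations of newforms, this says precisely that $f$ is (up to the finite-order twist, which is a Dirichlet character) the newform attached to a twist of $g$; one then absorbs any Galois-conjugacy ambiguity by observing that the character $\gamma \in \Gal(L/\QQ)$ appearing in the construction of $\kappa$ (inherited from the ``extra twist'' element in the proof of the preceding corollaries) means the relation is between $f$ and a twist of $\gamma(g)$, so $f$ is Galois-conjugate to a twist of $g$.

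\textbf{Main obstacle.} The principal technical point is the descent from $K$ to $\QQ$: solvable base change (Langlands for prime cyclic, Arthur--Clozel in general) gives existence and the characterization of the fibres of the base change map, but one must check carefully that $\kappa$ can be chosen $\Gal(K/\QQ)$-equivariantly and extended to $\QQ$ without spoiling the identity of Satake parameters at the good (unramified) primes — the places dividing the conductors of $f$, $g$, or $K$ require a little care but do not affect the conclusion, since ``twist'' and ``Galois conjugate'' are both conditions that can be checked on a density-one set of primes. An alternative, slightly more hands-on route avoiding the full strength of Arthur--Clozel would be to argue directly with $L$-functions: the identity $a_w(f) = \kappa(w) a_w(g)$ for all $w$ shows $L(f/K, s) = L(g \otimes \kappa / K, s)$, and then a Brauer-induction / factorization argument over the abelian group $\Gal(K/\QQ)$ expresses these as products of twisted $L$-functions of $f$ and $g$ over $\QQ$; comparing the cuspidal factors (using that $\pi_f$, $\pi_g$ are cuspidal, hence their $L$-functions are primitive in the sense of Selberg) forces $\pi_f$ and a conjugate-twist of $\pi_g$ to share an $L$-function, whence they coincide by strong multiplicity one over $\QQ$. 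Either way, the conclusion is that $f$ lies in the $\Gal(L/\QQ)$-orbit of a Dirichlet-character twist of $g$, as claimed.
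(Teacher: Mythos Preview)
Your descent step via Arthur--Clozel is heavier than needed, and the claim that a $\Gal(K/\QQ)$-invariant quadratic character $\kappa$ of $K$ ``therefore extends to a Hecke character $\tilde\kappa$ of $\QQ$'' is where the argument is genuinely thin: the extension obstruction lives in the Schur multiplier $H^2(\Gal(K/\QQ),\CC^\times)$, which need not vanish for a non-cyclic abelian group, and adjusting by a character of $\Gal(K/\QQ)$ does not touch this obstruction. Without extending $\kappa$, knowing only that $BC_K(\pi_f)\cong BC_K(\pi_g^\gamma)\otimes\kappa$ does not let you invoke the fibre description of base change to relate $\pi_f$ and $\pi_g^\gamma$ directly over $\QQ$.

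The paper sidesteps this entirely by staying on the Galois side and using induction rather than descent. From the previous corollary one has $\rho_{f,\lambda}|_H \cong \rho_{g,\lambda}^\gamma|_H \otimes \tau$ for a quadratic character $\tau$ of $H$ and some choice of prime $\lambda$. Inducing both sides from $H$ to $G_\QQ$ and applying the projection formula $\Ind_H^{G_\QQ}(\rho|_H \otimes \chi) \cong \rho \otimes \Ind_H^{G_\QQ}(\chi)$ gives
\[
\rho_{f,\lambda}\otimes\Ind_H^{G_\QQ}(1_H)\;\cong\;\rho_{g,\lambda}^\gamma\otimes\Ind_H^{G_\QQ}(\tau).
\]
Since $K/\QQ$ is abelian, $\Ind_H^{G_\QQ}(1_H)$ contains the trivial character as a summand, so $\rho_{f,\lambda}$ is a direct summand of the left side. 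Decomposing $\Ind_H^{G_\QQ}(\tau)$ into irreducible Artin constituents $\mu$, the right side becomes $\bigoplus_\mu \rho_{g,\lambda}^\gamma\otimes\mu$; irreducibility of the two-dimensional $\rho_{f,\lambda}$ then forces $\rho_{f,\lambda}\cong\rho_{g,\lambda}^\gamma\otimes\mu$ for some one-dimensional $\mu$, i.e.\ $f=g^\gamma\otimes\mu$. This is exactly the Galois-representation incarnation of the Brauer-induction alternative you sketch at the end --- so your second route was the right instinct --- and it reaches the conclusion in a few lines with no base-change machinery and no extension-of-$\kappa$ problem.
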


 \begin{proof}
  From Ramakrishan's theorem, we know that we have
  \[ \rho_{f, \lambda}|_H \cong \rho_{g, \lambda}^\gamma|_H \otimes \tau\]
  for a quadratic character $\tau$ of $H$, and any choice of prime $\lambda$. Inducing up from $H$ to $H' = \Gal(\QQbar / \QQ)$, we have
  \[ \rho_{f, \lambda} \otimes \Ind_{H}^{H'}(1_H) = \rho_{g, \lambda} \otimes \Ind_{H}^{H'}(\tau).\]
  But the left-hand side contains $\rho_{f, \lambda}$ as a direct summand, while the right-hand side is a direct sum of representations of the form $\rho_{g, \lambda}^\gamma \otimes \mu$ where $\mu$ is an irreducible Artin representation. Hence there must be at least one $\mu$ which is one-dimensional and such that $\rho_{f, \lambda} \cong \rho_{g, \lambda}^\gamma \otimes \mu$, in which case we must have $f = g^\gamma \otimes \mu$.
 \end{proof}


\subsubsection{Existence of the special element}

 As in the previous section, let $f$ and $g$ be two newforms, and let $L$ be the subfield of $\QQbar$ generated by the coefficients of $f$ and $g$. We assume that $f$ is not Galois conjugate to a twist of $g$, so by Theorem \ref{thm:badprimes} there are only finitely many bad primes for $(f,g)$. We retain the notation of the previous section.

 Let $\fp$ be a good prime which does not divide the levels of $f$ and $g$, and $p$ the rational prime below $\fp$.

 We make the following crucial assumption:

 \begin{assumption}
  The character $\chi = (\chi_f \chi_g)^{-1}$ is nontrivial, and its conductor is not a power of $p$.
 \end{assumption}

  For simplification, we also make the following assumption:

  \begin{assumption}
   We have $L_{f, \fp} = L_{g,\fp}= \Qp$, so after a suitable choice of basis, we may assume that the image of $\rho_{f, \fp} \times \rho_{g, \fp}$ is contained in $\GL_2(\Zp) \times \GL_2(\Zp)$.
  \end{assumption}

  We can now prove the main result of this section.

  \begin{proposition}\label{prop:specialelement}
   There exists an element $\tau\in G_{\QQ(\mu_{p^\infty})}$ such that if $V / (\tau - 1) V$ is 1-dimensional, where $V = V_{L_\fp}(f, g)^*$.

   If $\chi$ is not congruent modulo $\fp$ to any character of $p$-power conductor, then there exists $\tau$ such that $T / (\tau - 1) T$ is free of rank 1, for any $G_{\QQ}$-stable lattice $T$ in $V$.
  \end{proposition}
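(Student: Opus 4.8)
The plan is to reduce both assertions to the same explicit construction of an element $\tau\in G_{\QQ(\mu_{p^\infty})}$, carried out at the level of the residual representation for the second (stronger) statement. Write $\rho_f^*,\rho_g^*$ for the representations of $G_\QQ$ on $V_{L_\fp}(f)^*$ and $V_{L_\fp}(g)^*$, and $\rho=\rho_f^*\times\rho_g^*$, so that $V=V_{L_\fp}(f,g)^*=V_{L_\fp}(f)^*\otimes V_{L_\fp}(g)^*$ is four-dimensional and the task is to produce $\tau$ with $\dim\ker(\tau-1)=1$ on $V$ (resp. on $T/\fp T$). The first thing I would do is pin down the image of $G_{\QQ(\mu_{p^\infty})}$ under $\rho$. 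Let $H$ be the absolute Galois group of the extra-twist field $K$ of $f$ and $g$. Since complex conjugation is an extra twist of $f$ with character $\chi_f^{-1}$ (and likewise for $g$), the characters $\chi_f$ and $\chi_g$ are trivial on $H$; hence on $H$ both $\det\rho_f^*$ and $\det\rho_g^*$ restrict to a power of the cyclotomic character, while on $G_{\QQ(\mu_{p^\infty})}$ they restrict to $\chi_f^{-1}$ and $\chi_g^{-1}$ respectively, as $p\nmid N$. As $\fp$ is a good prime and $L_{f,\fp}=L_{g,\fp}=\Qp$, Proposition \ref{prop:bigsubgroup} shows that $\rho_f\times\rho_g$ maps $H$ onto the group $G_\fp=\{(x,y)\in\GL_2(\Zp)^2:\det x=\det y\in\Zp^\times\}$, up to restricting the common determinant to an open subgroup; since $\rho$ differs from $\rho_f\times\rho_g$ only by an inner automorphism and a twist by a scalar Galois character, $[\rho(H),\rho(H)]=[G_\fp,G_\fp]=\SL_2(\Zp)\times\SL_2(\Zp)$ (using $p\ge 5$), so $\rho(H)\supseteq\SL_2(\Zp)\times\SL_2(\Zp)$. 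Any $\sigma\in H$ with $\rho(\sigma)\in\SL_2(\Zp)^2$ has $\det\rho_f^*(\sigma)=1$, hence fixes $\mu_{p^\infty}$, so in fact $\SL_2(\Zp)\times\SL_2(\Zp)=\rho(H\cap G_{\QQ(\mu_{p^\infty})})\subseteq\rho(G_{\QQ(\mu_{p^\infty})})$. Combining this with the determinant computation, I obtain: for every $\tau_0\in G_{\QQ(\mu_{p^\infty})}$ and every pair $(u,v)\in\GL_2(\Zp)^2$ with $\det u=\chi_f^{-1}(\tau_0)$, $\det v=\chi_g^{-1}(\tau_0)$, there is a $\tau\in G_{\QQ(\mu_{p^\infty})}$ with $\rho(\tau)=(u,v)$.

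Next I would make the choice of $\tau$. The standing assumption that the conductor of $\chi=(\chi_f\chi_g)^{-1}$ is not a power of $p$ yields $\tau_0\in G_{\QQ(\mu_{p^\infty})}$ with $\chi(\tau_0)\ne 1$; set $c=\chi_f^{-1}(\tau_0)$, $d=\chi_g^{-1}(\tau_0)$, so $cd=\chi(\tau_0)$ is a root of unity $\ne 1$. Using the last sentence of Step 1, I pick $\tau$ with $\rho(\tau)=(u,v)$ as follows. If $c\ne 1$ and $d\ne 1$, take $u=\stbt 100c$ and $v=\stbt 100d$; then in a suitable basis of $V$ the element $\tau$ acts as $\operatorname{diag}(1,d,c,cd)$, whose $1$-eigenspace is one-dimensional. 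If $c=1$ (so $d=cd\ne 1$), take $u$ to be a non-trivial unipotent and $v=\stbt 100d$; then the generalized $1$-eigenspace of $\tau$ on $V$ is the single Jordan block contributed by $u$, again one-dimensional — and symmetrically if $d=1$. In every case $\dim_{L_\fp} V/(\tau-1)V=\dim_{L_\fp}\ker(\tau-1)=1$, which proves the first assertion.

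For the integral statement I would run exactly the same construction, starting instead from a $\tau_0$ with $\bar\chi(\tau_0)\ne 1$; such a $\tau_0$ exists precisely because $\chi$ is not congruent modulo $\fp$ to a character of $p$-power conductor. The scalars $c$ and $d$ are roots of unity of order prime to $p$ (the nebentypes have conductor prime to $p$), so reduction modulo $\fp$ is injective on them and the conditions ``$c\ne 1$'', ``$d\ne 1$'', ``$cd\ne 1$'' hold if and only if the corresponding congruences modulo $\fp$ do. Reading the eigenvalue computation of the previous paragraph modulo $\fp$ therefore shows that $\bar\tau$ has a one-dimensional fixed space on $T/\fp T$ for every $G_\QQ$-stable lattice $T\subseteq V$ — only the characteristic polynomial of $\tau$ enters, so the choice of $T$ is immaterial. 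By Nakayama's lemma $T/(\tau-1)T$ is then cyclic over $\cO_\fp$, and being of $\cO_\fp$-rank $1$ by the first part, it is free of rank $1$.

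The main obstacle I foresee is Step 1: one must keep careful track of the interplay between the nebentypes, the extra-twist field, and the cyclotomic character, and verify that the ``good prime'' property survives the passage to the contragredient representations and still forces the commutator of $\rho(H)$ to be all of $\SL_2(\Zp)\times\SL_2(\Zp)$. The eigenvalue bookkeeping in the second step is elementary, but only if one resists the temptation to take $\tau$ semisimple with distinct eigenvalues: in the degenerate cases $c=1$ or $d=1$ every semisimple element of the relevant coset has an even-dimensional fixed space on $V$ (the tensor square makes it behave like an element of $\mathrm{SO}_4$), so the size-$2$ Jordan block is what makes the argument uniform over all good primes, including $p=5$.
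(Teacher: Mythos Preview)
Your overall strategy matches the paper's: use the good-prime hypothesis to realise every pair $(u,v)$ with the correct determinants as $\rho(\tau)$ for some $\tau\in G_{\QQ(\mu_{p^\infty})}$, then choose $(u,v)$ judiciously. The difference is in the choice. The paper takes
\[
 u=\tbt{x}{0}{0}{x^{-1}\chi_f(\alpha)},\qquad v=\tbt{x^{-1}}{0}{0}{x\chi_g(\alpha)}
\]
with a free parameter $x\in\Zp^\times$, giving $\tau$ diagonalisable with eigenvalues $1,\ x^{-2}\chi_f(\alpha),\ x^2\chi_g(\alpha),\ \chi_f(\alpha)\chi_g(\alpha)$. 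Since $\Zp^\times$ is infinite one can always tune $x$ so that the last three are $\ne 1$ (resp.\ $\not\equiv 1\bmod\fp$), uniformly in whether $\chi_f(\alpha)$ or $\chi_g(\alpha)$ happens to equal $1$. This immediately refutes your final remark: it is \emph{not} true that in the degenerate cases every semisimple element of the coset has even-dimensional fixed space, and no unipotent trick is needed.

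More importantly, the paper's choice makes the integral statement work cleanly for \emph{exactly} the reason you cite --- but your choice does not. When $1$ is a simple root of the characteristic polynomial of $\tau$ (as in the paper), the $1$-eigenspace on $T/\fp T$ is forced to be one-dimensional for any lattice $T$, since the characteristic polynomial is basis-independent. In your unipotent case, however, the characteristic polynomial is $(X-1)^2(X-d)^2$, so the $1$-eigenspace on $T/\fp T$ could a priori be one- or two-dimensional; the Jordan type is \emph{not} determined by the characteristic polynomial, and can genuinely change with the lattice (think of $\stbt{1}{p}{0}{1}$ versus $\stbt{1}{1}{0}{1}$). Your assertion that ``only the characteristic polynomial of $\tau$ enters'' is therefore false as stated, and this is a real gap in your proof of the second assertion. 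The conclusion can be rescued --- under the standing good-prime hypothesis the residual tensor product is irreducible, so all $G_\QQ$-stable lattices are homothetic and the Jordan type is the same on each --- but you would need to say this, and at that point it is simpler just to adopt the paper's semisimple element with the parameter $x$.
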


  \begin{proof}
   Choose some $\alpha\in\Gal(\overline{\QQ}/\QQ)$ such that $\chi(\alpha) \ne 1$, but $\alpha$ is in the kernel of the $p$-adic cyclotomic character. Note that such $\alpha$ do exist, since the conductor of $\chi$ not a power of $p$. Consider the coset $\alpha \cdot (H \cap G_{\QQ(\mu_{p^\infty})})$. Since $\fp$ is a good prime, under $\rho_{f, \fp} \times \rho_{g, \fp}$, the coset $\alpha \cdot (H \cap G_{\QQ(\mu_{p^\infty})})$ is mapped to
   \[ (\rho_{f, \fp}(\alpha), \rho_{g, \fp}(\alpha) ) \cdot \SL_2(\Zp)^2, \]
   which consists of \emph{all} pairs $(u, v)$ of matrices such that $\det(u) = \chi_f(\alpha)$ and $\det(v) = \chi_g(\alpha)$. In particular, it contains the pair
   \[ \tbt x 0 0 {x^{-1}\chi_f(\alpha)} \tbt {x^{-1}} 0 0 {x \chi_g(\alpha)}\]
   for any $x \in \Zp^\times$. The image of this pair under the tensor product homomorphism $\GL_2 \times \GL_2 \to \GL_4$ is the diagonal matrix with entries
   \[ \left[1, x^{-2}\chi_f(\alpha), x^2 \chi_g(\alpha), \chi_f(\alpha) \chi_g(\alpha)\right].\]
   By choosing $x$ appropriately, we can arrange that neither $x^{-2}\chi_f(\alpha)$ nor $x^2 \chi_g(\alpha)$ is equal to 1. Thus 1 is an eigenvalue of $\tau$ on $V_{L_\fp}(f) \otimes V_{L_\fp}(g)$ with multiplicitly exactly 1.

   If we assume the stronger condition on $\chi$ in the statement, then we can assume that $\chi_f(\alpha) \chi_g(\alpha)$ is not 1 modulo $\fp$. By choosing $x$ appropriately we can assume that $x^{-2}\chi_f(\alpha)$ and $x^2 \chi_g(\alpha)$ are also non-congruent to 1, so it follows that $T / (\tau - 1)T$ is free of rank 1 as required (for any $\tau$-stable $\cO_L$-lattice in $V$, and in particular any $G_{\QQ}$-stable lattice).
  \end{proof}

 \subsubsection{The quantities $\mathfrak{n}_W$ and $\mathfrak{n}^*_W$}

  We recall the definitions of the quantities $\mathfrak{n}_W$ and $\mathfrak{n}^*_W$ in Rubin's theory. Let $T$ be a finite-rank free $\cO$-module with a continuous action of $G_{\QQ}$. As usual, write $V = T \otimes E$ and $W = V / T$.

  \begin{definition}
   Define $\Omega$ to be the smallest extension of $\QQ$ whose Galois group acts trivially on $W$ and on $\mu_{p^\infty}$, and define
   \begin{align*}
    \mathfrak{n}_W &= \ell_\cO\left( H^1(\Omega / \QQ, W) \cap \cS^{\{p\}}(K, W)\right)\\
    \mathfrak{n}^*_W &= \ell_\cO\left( H^1(\Omega / \QQ, W^*(1)) \cap \cS_{\{p\}}(K, W^*(1))\right).
   \end{align*}
  \end{definition}

  We now give conditions under which these quantities are zero.

  \begin{proposition}
   \label{prop:nw}
   Suppose the centre of $\Omega$ acts on each of $T \otimes \mathbf{k}$ and $T^*(1) \otimes \mathbf{k}$ via a nontrivial character. Then $n_W = n^*_W = 0$.
  \end{proposition}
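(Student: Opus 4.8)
The plan is to reduce both statements to the vanishing of certain $H^1$-groups for a group acting through a character, using the fact that $\Omega$ contains the field cut out by the residual representation. First I would observe that, by the inflation–restriction sequence applied to the normal subgroup $\Gal(\overline{\QQ}/\Omega) \subseteq \Gal(\overline{\QQ}/\QQ)$, the group $H^1(\Omega/\QQ, W)$ is precisely the image of the inflation map from $\Gal(\Omega/\QQ)$-cohomology; so it suffices to show $H^1(\Omega/\QQ, W)_{\mathrm{tors}\text{-part relevant to }\cS} = 0$, and in fact I would aim to show $H^1(\Omega/\QQ, W) = 0$ outright, or at least that it injects into a group with trivial intersection with the Selmer group. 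Since $W = V/T$ is $p$-divisible with $W[\mathfrak{M}] \cong T \otimes \mathbf{k}$, a standard dévissage reduces the vanishing of $H^1(\Omega/\QQ, W)$ to the vanishing of $H^1(\Gal(\Omega/\QQ), W[\mathfrak{M}])$ together with control of $H^0$; more precisely one shows $H^1(\Gal(\Omega/\QQ), W) = 0$ provided $H^1(\Gal(\Omega/\QQ), W[\mathfrak{M}]) = 0$ and $H^0(\Gal(\Omega/\QQ), W) $ is divisible, both of which follow from the hypothesis once we control the mod-$\mathfrak{M}$ cohomology.

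The key step is then the following: let $G = \Gal(\Omega/\QQ)$ and let $Z$ be its centre, which by hypothesis acts on $T \otimes \mathbf{k}$ (hence on $W[\mathfrak{M}]$) through a nontrivial character $\psi: Z \to \mathbf{k}^\times$. I would use the inflation–restriction sequence for $Z \trianglelefteq G$, or more directly the fact that for any $G$-module $M$ on which a central element $z$ acts by a scalar $\psi(z) \ne 1$, one has $H^i(G, M) = 0$ for all $i$: indeed $z$ acts on $H^i(G, M)$ both trivially (being in the centre, conjugation is trivial on cohomology) and by the scalar $\psi(z)$, forcing $(\psi(z) - 1) H^i(G, M) = 0$, hence $H^i(G, M) = 0$ since $\psi(z) - 1 \in \mathbf{k}^\times$. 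Applying this with $M = W[\mathfrak{M}]$ gives $H^1(\Gal(\Omega/\QQ), W[\mathfrak{M}]) = 0$ and $H^0(\Gal(\Omega/\QQ), W[\mathfrak{M}]) = 0$; the latter, combined with $p$-divisibility of $W$, gives $H^0(\Gal(\Omega/\QQ), W) = 0$ and then the long exact sequence in $\Gal(\Omega/\QQ)$-cohomology associated to $0 \to W[\mathfrak{M}] \to W \xrightarrow{\mathfrak{M}} W \to 0$ forces $H^1(\Gal(\Omega/\QQ), W) = 0$ as well. Hence $H^1(\Omega/\QQ, W) = 0$, so $\mathfrak{n}_W = \ell_\cO(H^1(\Omega/\QQ, W) \cap \cS^{\{p\}}(K, W)) = 0$. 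The argument for $\mathfrak{n}^*_W$ is identical, with $W$ replaced by $W^*(1)$ and $T \otimes \mathbf{k}$ by $T^*(1) \otimes \mathbf{k}$, using that $Z$ acts on the latter through a nontrivial character by hypothesis; note $\mu_{p^\infty}$ being fixed by $\Gal(\overline{\QQ}/\Omega)$ is exactly what makes $W^*(1)$ a $\Gal(\Omega/\QQ)$-module so that this makes sense.

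The main obstacle — really the only subtlety — is making sure the reduction from $W$ to $W[\mathfrak{M}]$ is clean: one must check that $H^0(\Gal(\Omega/\QQ), W) = 0$ rather than merely finite, so that multiplication by $\mathfrak{M}$ on the $\Gal(\Omega/\QQ)$-invariants is surjective and the connecting map in the long exact sequence vanishes. This is where the nontriviality of the central character (as opposed to, say, mere absence of trivial subrepresentations) is used essentially: it kills $H^0$ at the residual level, and $p$-divisibility of $W$ propagates this upward. I would also remark that the hypothesis is stated for the centre of $\Gal(\Omega/\QQ)$ (written ``the centre of $\Omega$'' in the statement, which I take to mean $Z(\Gal(\Omega/\QQ))$), and that no further properties of $\Omega$ beyond its defining property — trivial action on $W$ and on $\mu_{p^\infty}$ — are needed. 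Everything else is the formal computation sketched above, which I would not spell out in full.
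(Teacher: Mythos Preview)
Your approach is essentially the same as the paper's: both reduce to showing $H^1(\Gal(\Omega/\QQ), W) = 0$ and both use the key fact that a central element acting by a nontrivial scalar annihilates cohomology. The organization of the d\'evissage differs slightly, and there is one small gap in your version worth noting.

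From $H^1(G, W[\mathfrak{M}]) = 0$ and the long exact sequence for $0 \to W[\mathfrak{M}] \to W \xrightarrow{\varpi} W \to 0$ you correctly deduce that $H^1(G, W)[\varpi] = 0$, i.e.\ that $H^1(G, W)$ is $\varpi$-torsion-free. But this alone does not force $H^1(G, W) = 0$; you need in addition that $H^1(G, W)$ is $\varpi$-power torsion. (Your discussion of the ``main obstacle'' focuses on $H^0$ being divisible, which is a separate point and in fact not needed once $H^1(G, W[\mathfrak{M}]) = 0$.) The paper handles this by first observing that $H^1(\Omega/\QQ, V) = 0$ --- the same central-element argument works, since $(z-1)$ is invertible on $V$ as it is invertible modulo $\mathfrak{M}$ --- which via the long exact sequence for $0 \to T \to V \to W \to 0$ forces $H^1(\Omega/\QQ, W)$ to be finite, hence torsion. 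Alternatively you could note that $H^1(G, W) = \varinjlim_n H^1(G, W[\varpi^n])$ for discrete coefficients over a profinite group, and that the same central-element argument kills each $H^1(G, W[\varpi^n])$ since $(z-1)$ remains invertible on $T/\varpi^n T$. Either route closes the gap with one extra line.
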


  \begin{proof}
   We shall show that the hypotheses imply that $H^1(\Omega / \QQ, W) = H^1(\Omega / \QQ, W^*(1)) = 0$. We give the argument for $W$; the proof for $W^*(1)$ is similar.

   Clearly we have $H^1(\Omega / \QQ, V) = 0$, and hence $H^1(\Omega / \QQ, W)$ is finite. So it suffices to show that $H^1(\Omega / \QQ, W)[\varpi] = 0$ where $\varpi$ is a uniformizer. But we have a surjection $H^1(\Omega / \QQ, W[\varpi]) \twoheadrightarrow H^1(\Omega / \QQ, W)[\varpi]$, so we are reduced to showing that $H^1(\Omega / \QQ, W[\varpi]) = H^1(\Omega / \QQ, T \otimes \mathbf{k})$ is zero. However, this is immediate since any representation of nontrivial central character cannot have a nontrivial extension by the trivial representation.
  \end{proof}


\subsection{The Euler system}

 As above, let $f$ and $g$ be newforms of weight $2$, level $N$ and characters $\chi_f$ and $\chi_g$, respectively. Let $p$ be a prime not dividing $N$. Let $L$ be a number field containing the coefficients of $f_\alpha$ and $g_\beta$, and let $\fp$ be a prime of $L$ above $p$. Let $E = L_\fp$ and $\cO$ its ring of integers. We write $T = T_{\cO}(f, g)^*$ and $p_\ell(X) = \det(1 - \Frob_q^{-1} X | T^*(1)) = P_\ell(f, g, \ell^{-1} X) \in \cO_L[X]$. We assume that the following conditions are satisfied:

 \begin{assumption}
  \begin{enumerate}[(i)]
   \item the character $\chi = \chi_f \chi_g$ is not trivial, and moreover is not trivial modulo $\fp$;
   \item there exist $p$-stabilizations $f_\alpha, g_\gamma$ of $f$ and $g$ with $U_p$-eigenvalues $\alpha, \gamma$ respectively such that
   \begin{itemize}
    \item $v_{\fp}(\alpha \gamma) < 1$,
    \item $\alpha / \gamma$ is not a root of unity.
   \end{itemize}
  \end{enumerate}
 \end{assumption}

 Fix $c \ge 1$ coprime to $6N$, and let $A$ be the set of square-free integers prime to $Npc$. By Corollary \ref{cor:nearlycompatible}, we have, for every integer $m \in A$, a cohomology class ${}_c \bfz_m^{f,g,N} \in H^1(\QQ_{m},T)$ which satisfy the following compatibility property: if $m \in A$ and $\ell$ is a prime comprime to $mNpc$, then the image of $\bfz_{\ell m}^{f,g,N}$ under the corestriction map $H^1(\QQ_{\ell m},T)\rightarrow H^1(\QQ_{m},T)$ is
 \[ -\sigma_\ell A_\ell(\sigma_\ell^{-1})\bfz_m^{f,g,N}, \]
 where $A_\ell(X)$ is a polynomial in $\cO_L[X]$ congruent modulo $\ell - 1$ to $p_\ell(X)$.

 \begin{lemma}
  There exists a system of cohomology classes
  \[ \{ {}_c \tilde\bfz_m^{f,g,N} \in H^1(\QQ_{m},T) : m \in A\}\]
  such that
  \[ {}_c \tilde\bfz_1^{f,g,N} = {}_c \bfz_1^{f,g,N}\]
  and if $m \in A$ and $\ell$ is a prime such that $m\ell \in A$, then the image of ${}_c \tilde\bfz_{\ell m}^{f,g,N}$ under the corestriction map $H^1(\QQ_{\ell m},T)\rightarrow H^1(\QQ_{m},T)$ is
 \[A_\ell(\sigma_\ell^{-1}) {}_c\tilde\bfz_m^{f,g,N}.\]
 \end{lemma}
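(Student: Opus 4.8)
The plan is to obtain the modified classes by twisting ${}_c\bfz_m^{f,g,N}$ by an explicit unit of the group ring $\cO[\Gal(\QQ(\mu_m)/\QQ)]$ chosen so as to absorb the spurious factor $-\sigma_\ell$. First I would set up notation: for a square-free $m \in A$ and a prime $\ell \mid m$, the canonical isomorphism $\Gal(\QQ(\mu_m)/\QQ) \cong \Gal(\QQ(\mu_\ell)/\QQ) \times \Gal(\QQ(\mu_{m/\ell})/\QQ)$ allows us to regard the arithmetic Frobenius $\sigma_\ell \in \Gal(\QQ(\mu_{m/\ell})/\QQ)$ (well-defined since $\ell \nmid m/\ell$) as an element $\sigma_{\ell,m}$ of $\Gal(\QQ(\mu_m)/\QQ)$, namely the one acting trivially on $\QQ(\mu_\ell)$ and as $\sigma_\ell$ on $\QQ(\mu_{m/\ell})$. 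I then define
\[ {}_c\tilde\bfz_m^{f,g,N} \coloneqq \Big( \prod_{\ell \mid m} (-\sigma_{\ell,m}^{-1}) \Big) \cdot {}_c\bfz_m^{f,g,N} \in H^1(\QQ(\mu_m), T),\]
the product running over the distinct prime divisors of $m$ (so the empty product gives ${}_c\tilde\bfz_1^{f,g,N} = {}_c\bfz_1^{f,g,N}$, as required). Note that $\prod_{\ell\mid m}(-\sigma_{\ell,m}^{-1})$ is a unit of $\cO[\Gal(\QQ(\mu_m)/\QQ)]$, being $\pm$ a single group element.

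The core of the argument is the computation under corestriction, for which I would invoke two standard facts. First, for $m\ell \in A$ with $\ell \nmid m$, the corestriction map $\cores\colon H^1(\QQ(\mu_{\ell m}), T) \to H^1(\QQ(\mu_m), T)$ is semilinear over the restriction map $q\colon \Gal(\QQ(\mu_{\ell m})/\QQ) \twoheadrightarrow \Gal(\QQ(\mu_m)/\QQ)$, i.e.\ $\cores(g\cdot x) = q(g)\cdot\cores(x)$. Second, under $q$ one has $q(\sigma_{\ell,\ell m}) = \sigma_\ell$ (the arithmetic Frobenius at $\ell$ in $\Gal(\QQ(\mu_m)/\QQ)$, which is unramified there) and $q(\sigma_{\ell',\ell m}) = \sigma_{\ell',m}$ for every prime $\ell' \mid m$ with $\ell' \ne \ell$. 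Granting these, and using that $\Gal(\QQ(\mu_m)/\QQ)$ is abelian so all the relevant operators commute, we compute
\begin{align*}
 \cores\big({}_c\tilde\bfz_{\ell m}^{f,g,N}\big)
  &= q\Big(\textstyle\prod_{\ell''\mid \ell m}(-\sigma_{\ell'',\ell m}^{-1})\Big)\cdot \cores\big({}_c\bfz_{\ell m}^{f,g,N}\big)\\
  &= (-\sigma_\ell^{-1})\Big(\textstyle\prod_{\ell'\mid m}(-\sigma_{\ell',m}^{-1})\Big)\cdot\big(-\sigma_\ell A_\ell(\sigma_\ell^{-1})\big)\,{}_c\bfz_m^{f,g,N}\\
  &= A_\ell(\sigma_\ell^{-1})\Big(\textstyle\prod_{\ell'\mid m}(-\sigma_{\ell',m}^{-1})\Big)\,{}_c\bfz_m^{f,g,N}
   = A_\ell(\sigma_\ell^{-1})\,{}_c\tilde\bfz_m^{f,g,N},
\end{align*}
where in the second line we used the given corestriction formula for ${}_c\bfz_{\ell m}^{f,g,N}$, and in the third line the factors $-\sigma_\ell^{-1}$ and $-\sigma_\ell$ cancel. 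This is exactly the asserted relation.

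I expect the only point that really needs care to be the second of the two facts above: keeping straight which Galois group each Frobenius element lives in, and checking that it has the claimed image under the restriction maps, which comes down to the compatibility of the product decompositions $\Gal(\QQ(\mu_{dr}/\QQ)) \cong \Gal(\QQ(\mu_d)/\QQ)\times\Gal(\QQ(\mu_r)/\QQ)$ for coprime $d,r$. This is elementary but is the natural place for a sign or index error, so it is worth spelling out. Everything else is formal. Finally, since ${}_c\tilde\bfz_m^{f,g,N}$ differs from ${}_c\bfz_m^{f,g,N}$ only by a unit of the group ring $\cO[\Gal(\QQ(\mu_m)/\QQ)]$, it inherits all the local properties already established (unramified outside $mNp$, and crystalline, resp.\ $H^1_f$, at the primes above $p$ under the running hypotheses), which is what will be needed to apply Theorems~\ref{thm:finiteSel} and~\ref{thm:boundedSel} to the system $\{{}_c\tilde\bfz_m^{f,g,N}\}$.
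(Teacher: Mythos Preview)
Your proof is correct and follows essentially the same approach as the paper: both twist ${}_c\bfz_m^{f,g,N}$ by $(-1)^{s(m)}$ times a suitable element of $\Gal(\QQ(\mu_m)/\QQ)$ so as to absorb the factor $-\sigma_\ell$. The only difference is cosmetic: the paper constructs the twisting elements $\gamma_m$ non-canonically by induction on the number of prime factors, requiring only that $\gamma_{m\ell} \equiv \ell^{-1}\gamma_m \bmod m$, whereas you give an explicit canonical choice $\gamma_m = \prod_{\ell\mid m}\sigma_{\ell,m}^{-1}$ via the Chinese remainder decomposition; one checks directly that your choice satisfies the paper's congruence condition.
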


 \begin{proof}
  By induction on the number of prime factors of $m$, we can choose (non-canonically) a system of elements $\gamma_m \in (\ZZ / m\ZZ)^\times$ for every $m \in A$ such that $\gamma_{m\ell} = \ell^{-1} \gamma_m \bmod m$. Identify $\gamma_m$ with an element of $\Gal(\QQ(\mu_m/\QQ)$ via the inverse of the cyclotomic character, and define ${}_c \tilde\bfz_m^{f,g,N}=(-1)^{s(m)} \gamma_m \cdot {}_c \bfz_m^{f,g,N}$, where $s(m)$ is the number of prime factors of $m$. It is clear by construction that the elements have the required property.
 \end{proof}

 \begin{note}
  By Corollary \ref{cor:h1f}, the classes ${}_c \bfz_m^{f,g,N}$ are in the Selmer group $\cS^{\{p\}}(\QQ(\mu_m), T_{\cO_\fp}(f, g)^*)$. As $\cS^{\{p\}}(\QQ(\mu_m),T_{\cO_\fp}(f, g)^*)$ is invariant under the action of $\Gal(\QQ(\mu_m)/\QQ)$, it follows that the same is true for the modified classes ${}_c \tilde\bfz_m^{f,g,N}$.
 \end{note}

 We now show that we can convert the classes $({}_c \bfz_m^{f,g,N})_{m\in A}$ into an Euler system. Let $T$, $\Sigma$ and $A$ be as defined  at the beginning of Section \ref{Eulermethod}. Then we have the following result (c.f. \cite[Lemma 9.6.1]{rubin00}):

 \begin{lemma}
  \label{lem:modifyclasses}
  Suppose that for all primes $\ell \in A$ we have polynomials $r_\ell(X),s_\ell(X)\in \cO[X]$ such that
  \[ r_\ell(X)\equiv s_\ell(X)\pmod{\ell-1},\]
  and suppose that we have a collection of cohomology classes $\big\{\tilde{c}_m\in H^1(\QQ(\mu_m),T): m\in A\big\}$ such that if $\ell\in A$ is coprime to $m$, then
  \[ \cores_{\QQ(\mu_{\ell m})/\QQ(\mu_m)}(\tilde{c}_{\ell m})=\begin{cases}
   r_\ell(\sigma_\ell^{-1})\tilde{c}_m & \text{if $\ell\nmid m$}\\
   \tilde{c}_m & \text{if $\ell \mid m$}
  \end{cases}.\]
  Then there exists a collection of classes $\big\{c_m\in H^1(\QQ(\mu_m),T): m\in A\big\}$ with the following properties:
  \begin{enumerate}[(i)]
   \item For all $m$,
   \[ c_m \in \cO[(\ZZ / m\ZZ)^\times] \cdot \tilde{c}_m.\]
   \item if $\ell$ is a prime such that $m,m\ell\in A$, then
   \[ \cores_{\QQ(\mu_{\ell m})/\QQ(\mu_m)}(c_{\ell m})=
   \begin{cases}
    s_\ell(\sigma_\ell^{-1})c_m & \text{if $\ell\nmid m$}\\
    c_m & \text{if $\ell\mid m$}
   \end{cases} \]
   \item if $m\in A$ and $\chi$ is a character of $\Gal(\QQ(\mu_m)/\QQ)$ of conductor $k$ such that $\Prime(m)\subset\Prime(k)\cup\Sigma$, then
   \[ \sum_{\gamma\in \Gal(\QQ(\mu_m)/\QQ)}\chi(\gamma)\gamma(c_m)=\sum_{\gamma\in \Gal(\QQ(\mu_m)/\QQ)}\chi(\gamma)\gamma(\tilde{c}_m).\]
  \end{enumerate}
 \end{lemma}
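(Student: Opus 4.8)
This is the analogue for our Euler factors of \cite[Lemma 9.6.1]{rubin00}, and the plan is to follow the strategy of \emph{op.\,cit.} The classes $c_m$ will be constructed by induction on the number of prime factors of $m$, starting from $c_1=\tilde c_1$, each $c_m$ being an explicit element of $\cO[(\ZZ/m\ZZ)^\times]\cdot\tilde c_m$; the group-ring coefficients are built from the norm elements $N_\ell=\sum_{g\in\Gal(\QQ(\mu_\ell)/\QQ)}g$ and from polynomials witnessing the congruences $r_\ell\equiv s_\ell\pmod{\ell-1}$.

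First I would fix, for each prime $\ell$ admitting a multiple in $A$, a polynomial $u_\ell\in\cO[X]$ with $s_\ell(X)-r_\ell(X)=(\ell-1)u_\ell(X)$, which exists by hypothesis. For square-free $m\in A$ I identify $\Gal(\QQ(\mu_m)/\QQ)=\prod_{\ell\mid m}\Gal(\QQ(\mu_\ell)/\QQ)$ and regard each factor, and each $N_\ell$, as sitting inside $\cO[(\ZZ/m\ZZ)^\times]$. The two facts I would use repeatedly are: (a) for $\ell\nmid m$, corestriction $H^1(\QQ(\mu_{\ell m}),T)\to H^1(\QQ(\mu_m),T)$ is insensitive to the action of $\Gal(\QQ(\mu_{\ell m})/\QQ(\mu_m))$ (corestriction is compatible with conjugation, which acts trivially on the $H^1$ of the bottom field), so that $\cores(N_\ell x)=(\ell-1)\cores(x)$ while $\cores$ remains $\cO[(\ZZ/m\ZZ)^\times]$-linear in the variables coming from $\Gal(\QQ(\mu_m)/\QQ)$; and (b) a character of $\Gal(\QQ(\mu_m)/\QQ)$ which is ramified at $\ell$ kills $N_\ell$.

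At the inductive step, with $m=\ell m_0$ and $\ell$ (say) the largest prime dividing $m$, I would define $c_m$ by applying to $\tilde c_m$ a group-ring operator of the form ``$1+(\text{terms each divisible by some }N_q,\ q\mid m)$'', chosen so that: property (i) holds automatically; corestriction by $\ell$ converts the hypothesis $\cores(\tilde c_m)=r_\ell(\sigma_\ell^{-1})\tilde c_{m_0}$ into the desired $\cores(c_m)=s_\ell(\sigma_\ell^{-1})c_{m_0}$, with the factor $\ell-1$ from fact (a) absorbing the $\ell-1$ in $s_\ell-r_\ell$; and corestriction by any other prime $q\mid m$ stays compatible with the operator already built into $c_{m_0}$, using that the $N_{q'}$ with $q'\ne q$ commute with $\cores_{m/(m/q)}$. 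Property (iii) then follows from (b): if $\Prime(m)\subseteq\Prime(k)\cup\Sigma$ then every prime divisor of $m$ lying outside $\Sigma$ divides the conductor $k$, so $\chi$ is ramified there and annihilates each correction term; hence the $\chi$-twisted sums of $c_m$ and $\tilde c_m$ coincide.

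The step I expect to be the main obstacle is the inductive construction itself: producing, at each stage, a \emph{single} integral group-ring operator that simultaneously yields the correct corestriction behaviour at \emph{every} prime dividing $m$. This amounts to checking that the constraints coming from the various primes are mutually compatible, and is the technical core of Rubin's argument; it is handled by the telescoping structure of the operators together with elementary identities in $\cO[\Gal(\QQ(\mu_\ell)/\QQ)]$ relating $N_\ell$, the augmentation map, and the polynomials $u_\ell$.
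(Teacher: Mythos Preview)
Your proposal is correct and matches the paper's approach: the paper does not give an independent proof but simply cites \cite[Lemma 9.6.1]{rubin00}, and your sketch is a faithful outline of Rubin's argument (induction on the number of prime factors, correction terms built from the norm elements $N_\ell$ and the polynomials $u_\ell$ witnessing $s_\ell-r_\ell=(\ell-1)u_\ell$, with property~(iii) following because a character ramified at $\ell$ annihilates $N_\ell$).
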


 \begin{definition}
  \label{def:ourEulersystem}
  Define $\{ {}_c \hat\bfz_m^{f,g,N}\in H^1(\QQ(\mu_m),T_{\cO_\fp}(f, g)^*)): m\in A\}$ to be the classes obtained by applying Lemma \ref{lem:modifyclasses} to our classes ${}_c \tilde\bfz_m^{f,g,N}$, where we take $r_\ell(X)=A_\ell(X)$ and $s_\ell(X)=p_\ell(X) = P_\ell(f, g, \ell^{-1}X)$, and as above $A$ is the set of square-free integers coprime to $Npc$.
 \end{definition}

 \begin{note}
  By construction, the classes $\{ {}_c \hat\bfz_m^{f,g,N}: m\in A\}$ are an Euler system for $(T, \Sigma, A)$ in the sense of Definition \ref{def:Eulersystem}, where $\Sigma$ is the set of primes dividing $Npc$. Moreover, because of (i), we have ${}_c \hat\bfz_m \in \cS^{\{p\}}(\QQ(\mu_m), T)$ for all $m$.
 \end{note}

\subsection{Finiteness of the strict Selmer group}

 We now combine the above results to prove a finiteness theorem for the strict Selmer group. For the convenience of the reader, we shall recapitulate all of the assumptions we have made on $f$ and $g$.

 \begin{assumption}
  \label{assump:everything}
  Assume that $f$ and $g$ are weight 2 newforms with coefficients in a number field $L$, and $\fp$ a prime of $L$ above the rational prime $p$, with the following properties:
  \begin{enumerate}[(i)]
   \item Neither $f$ nor $g$ is of CM type.
   \item $f$ is not a twist of $g$.
   \item The character $\varepsilon_f \varepsilon_g$ is non-trivial.
   \item $p \ge 5$.
   \item $p$ does not divide the levels of $f$ and $g$.
   \item $\fp$ is totally split in the field $L$, so $L_\fp = \Qp$.
   \item The $\fp$-adic Galois representations of $f$ and $g$ are surjective onto $\GL_2(\Zp)$.
   \item There exists some prime $v$ such that $\chi(v) = 1$ for all inner twists $\chi$ of $f$ or $g$, and $a_v(f) \ne \pm a_v(g) \bmod \fp$.
   \item $f$ is ordinary at $\fp$.
   \item There exists a root $\gamma$ of the Hecke polynomial of $g$ at $p$ such that $v_\fp(\gamma) < 1$ and $\alpha / \gamma$ is not a root of unity, where $\alpha$ is the unit root of the Hecke polynomial of $f$.
  \end{enumerate}
 \end{assumption}

 If we assume hypotheses (i)--(iii) (which do not depend on $\fp$), then there will be many $\fp$ such that the remaining hypotheses hold.

 \begin{theorem}
  \label{thm:ourfiniteSel}
  Suppose Assumption \ref{assump:everything} is satisfied, and the $p$-adic Rankin--Selberg $L$-function $\mathcal{D}_{\fp}(f, g, 1/N)$ does not vanish at $1$, where $N$ is some integer divisible by the levels of $f$ and $g$. Then
  \[ \# \cS_{\{p\}}\left(\QQ, \frac{V_{L_\fp}(f, g)}{T_{\cO_\fp}(f, g)}(1)\right) < \infty.\]
 \end{theorem}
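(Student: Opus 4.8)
The plan is to apply the Euler system machinery of Rubin, in the form of Theorem \ref{thm:finiteSel}, to the Galois representation $T = T_{\cO_\fp}(f, g)^*$ (so that $\frac{V_{L_\fp}(f,g)}{T_{\cO_\fp}(f,g)}(1)$ is the module Rubin denotes $W^*(1)$) and to the Euler system $\mathbf{c} = \big({}_c\hat{\bfz}^{f,g,N}_m\big)_{m \in A}$ of Definition \ref{def:ourEulersystem}, where $A$ is the set of square-free integers coprime to $6Npc$. Theorem \ref{thm:finiteSel} requires three inputs: that $V = V_{L_\fp}(f,g)^*$ be irreducible and non-trivial and satisfy $\Hyp(\QQ, V)$; that $\Hyp(\cS^{\{p\}}, V)$ hold (we cannot use $\Hyp(p, A)$, since our $A$ contains no positive power of $p$); and that the bottom class $c_1$ be of infinite order in $H^1(\QQ, T)$.

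First I would verify the structural hypotheses. Assumption \ref{assump:everything} forces $\fp$ to be a \emph{good prime} for $(f,g)$: conditions (i)--(ii) imply, via Theorem \ref{thm:badprimes}, that there are only finitely many bad primes, while conditions (iv)--(viii) ensure this particular $\fp$ is neither neutral nor bad; hence the image of $G_{\QQ(\mu_{p^\infty})}$ under $\rho_{f,\fp} \times \rho_{g,\fp}$ contains $\SL_2(\Zp) \times \SL_2(\Zp)$. From this, $V_{L_\fp}(f,g)^*$ is irreducible and visibly non-trivial, $T^{G_\QQ} = 0$, and one can choose $\gamma \in G_{\QQ(\mu_{p^\infty})}$ whose image has no eigenvalue $1$ on $V$, which is part (iii) of $\Hyp(\cS^{\{p\}}, V)$. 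The element $\tau$ of $\Hyp(\QQ, V)$(ii) is supplied by Proposition \ref{prop:specialelement}, whose hypothesis holds because $\varepsilon_f\varepsilon_g$ is non-trivial of conductor dividing $N$, hence of conductor prime to $p$. Part (ii) of $\Hyp(\cS^{\{p\}}, V)$, that $c_m \in \cS^{\{p\}}(\QQ(\mu_m), T)$ for all $m \in A$, is Corollary \ref{cor:h1f}: its numerical hypotheses hold since $\alpha\delta/p = \varepsilon_g(p)\cdot(\alpha/\gamma)$ and $\beta\gamma/p = \varepsilon_f(p)\cdot(\gamma/\alpha)$ are not roots of unity when $\alpha/\gamma$ is not (Assumption \ref{assump:everything}(x)), and the modified classes ${}_c\hat{\bfz}_m$ are integral $\cO[(\ZZ/m\ZZ)^\times]$-combinations of Galois translates of the ${}_c\bfz_m$, which lie in the Galois-stable subgroup $\cS^{\{p\}}(\QQ(\mu_m), T)$. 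That $\mathbf{c}$ is an Euler system in the sense of Definition \ref{def:Eulersystem} is recorded in the discussion following Definition \ref{def:ourEulersystem}.

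The crux is the non-triviality of $c_1$. Here I would use the $p$-adic Beilinson formula of Theorem \ref{thm:syntomicreg}: for suitable test vectors $\breve f, \breve g$ attached to $f, g$,
\[
 \mathcal{D}_\fp(\breve f, \breve g, 1/N)(1) = -\frac{\mathcal{E}(f,g,1)}{\mathcal{E}(f)\cdot\mathcal{E}^*(f)}\Big\langle r_\syn(\Xi_{1,N,0}),\, \pr_1^*(\eta_{\breve f}^{\ur}) \wedge \pr_2^*(\omega_{\breve g})\Big\rangle .
\]
By hypothesis the left-hand side is non-zero for an appropriate choice of $\breve f, \breve g$, so $r_\syn(\Xi_{1,N,0})$ is non-zero in $H^2_\dR(X_1(N)^2/\Qp)/\Fil^2$ (and every Euler factor on the right is then automatically non-zero). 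Since $p \nmid N$, $X_1(N)^2$ has good reduction at $p$, so the commutative square of \S\ref{sect:syntomic} relating $r_\syn$ to the étale regulator $r_\et$ through the injective Bloch--Kato exponential shows $r_\et(\Xi_{1,N,0})$ is non-zero in $H^1(\Qp, H^2_\et(\overline{X_1(N)^2}, \Qp(2)))$. Projecting to the $(f,g)$-isotypic quotient $V_{L_\fp}(f,g)^*$, compatibly with the regulator by Proposition \ref{prop:regulatorfunctorial}, we conclude that $\bfz^{(f,g,N)}_1 = \kappa_{f,g,\QQ}(\Xi_{1,N,1})$ of Definition \ref{def:cohoclasses} has non-zero localization at $p$, hence $\bfz^{(f,g,N)}_1 \neq 0$ in $H^1(\QQ, V_{L_\fp}(f,g)^*)$. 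The bottom class of the Euler system is ${}_c\hat{\bfz}^{f,g,N}_1 = {}_c\bfz^{(f,g,N)}_1 = \big(c^2 - \varepsilon_f(c)^{-1}\varepsilon_g(c)^{-1}\big)\bfz^{(f,g,N)}_1$, and the scalar is non-zero because $c \geq 2$ makes $|c^2| > 1 = |\varepsilon_f(c)^{-1}\varepsilon_g(c)^{-1}|$ under any archimedean absolute value; hence ${}_c\hat{\bfz}^{f,g,N}_1$ has infinite order in $H^1(\QQ, T)$. Theorem \ref{thm:finiteSel} then yields the finiteness of $\cS_{\{p\}}\big(\QQ, \frac{V_{L_\fp}(f,g)}{T_{\cO_\fp}(f,g)}(1)\big)$.

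The step I expect to be most delicate is book-keeping rather than conceptual: one must check that the composite of realization functors (motivic $\to$ syntomic $\to$ de Rham, and motivic $\to$ continuous étale $\to$ Galois cohomology, followed by the Künneth decomposition and projection onto the $(f,g)$-component) carries the explicit higher-Chow cycle $\Xi_{1,N,0}$ to exactly the class $\bfz^{(f,g,N)}_1$, and that the test vectors $\breve f, \breve g$ can be chosen so that the bilinear period relation of Theorem \ref{thm:syntomicreg} genuinely witnesses the non-vanishing of $\mathcal{D}_\fp(f,g,1/N)(1)$. A secondary point requiring care is the verification that $\fp$ is a good prime and that the elements $\tau$ and $\gamma$ demanded by $\Hyp(\QQ, V)$ and $\Hyp(\cS^{\{p\}}, V)$ genuinely lie in the image of Galois; this is exactly where Assumption \ref{assump:everything}(vi)--(viii),(x) and Proposition \ref{prop:specialelement} are used.
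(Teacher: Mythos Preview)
Your proposal is correct and follows essentially the same route as the paper's own proof: apply Theorem \ref{thm:finiteSel} to $T = T_{\cO_\fp}(f,g)^*$ with the Euler system of Definition \ref{def:ourEulersystem}, verify $\Hyp(\QQ,V)$ via Proposition \ref{prop:specialelement} and $\Hyp(\cS^{\{p\}},V)$ via Corollary \ref{cor:h1f}, and deduce non-torsion of the bottom class from Theorem \ref{thm:syntomicreg} together with the syntomic--\'etale compatibility of \S\ref{sect:syntomic}. Your write-up in fact supplies more detail than the paper does (the computation $\alpha\delta/p = \varepsilon_g(p)\cdot\alpha/\gamma$ to invoke Corollary \ref{cor:h1f}, and the archimedean argument that $c^2 \neq \varepsilon_f(c)^{-1}\varepsilon_g(c)^{-1}$), but the logical skeleton is identical.
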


 \begin{proof}
  It suffices to show that the hypotheses of Theorem \ref{thm:finiteSel} are satisfied for $T = T_{\cO_\fp}(f, g)^*$. By Proposition \ref{prop:specialelement}, the element $\tau$ required by Hypothesis $\Hyp(\QQ, V)$ exists; and the Euler system of Definition \ref{def:ourEulersystem} satisfies Hypothesis $\Hyp(\cS^{\{p\}}, V)$(ii). Since $T$ is nontrivial and irreducible, $T^{G_K} = 0$; and the element $\gamma$ in Hypothesis $\Hyp(\cS^{\{p\}}, V)$(iii) clearly exists.

  By Theorem \ref{thm:syntomicreg}, if $\mathcal{D}_{\fp}(f, g, 1/N)(1) \ne 0$, the image of $\reg_p \Xi_{1, N, 1}$ in the $(f, g)$-isotypical quotient  of $H^2_{\dR}(X_1(N) / \Qp) / \Fil^2$ is nonzero. Hence, by the diagram of \S \ref{sect:syntomic}, the localization of the Galois cohomology class $ \bfz_{1}^{f, g, N}$ at $p$ is nonzero, so in particular ${}_c \bfz_{1}^{f, g, N}$ is non-torsion as an element of $H^1(\QQ, T_{L_\fp}(f, g)^*)$ for any $c > 1$. Thus we may apply Theorem \ref{thm:finiteSel} to the Euler system $({}_c \hat\bfz_{m}^{f, g, N})_{m \in A}$ of Definition \ref{def:ourEulersystem} to obtain the finiteness of the strict Selmer group.
 \end{proof}

\subsection{The order of the strict Selmer group}

 Theorem \ref{thm:boundedSel} gives a bound for the order of the strict Selmer group, under slightly stronger hypotheses than Theorem \ref{thm:ourfiniteSel}.

 \begin{theorem}
  \label{thm:ourboundedSel}
  Suppose Assumption \ref{assump:everything} is satisfied, and in addition the mod $\fp$ reduction of $\varepsilon_f \varepsilon_g$ is not trivial. Then we have
  \[ \operatorname{length}_{\Zp} \cS_{\{p\}}\left(\QQ, \frac{V_{L_\fp}(f, g)}{T_{\cO_\fp}(f, g)}(1)\right) \le v_p\left( \frac{(1 - p^{-1} \beta \alpha)}{(1 - p^{-1} \beta \gamma)(1 - p^{-1} \beta \delta)} \mathcal{D}_\fp(f, g, 1/N)(1) \right) + \lambda\]
  where $\lambda$ is the $\fp$-adic valuation of the ideal $I_f$ of Definition \ref{def:idealIf} above.
 \end{theorem}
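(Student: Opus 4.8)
The plan is to combine Theorem~\ref{thm:boundedSel} with the explicit computations of the syntomic regulator and the integrality results established above. First I would take the Euler system $\{{}_c\hat\bfz_m^{f,g,N} : m \in A\}$ of Definition~\ref{def:ourEulersystem}, which by construction is an Euler system for $(T, \Sigma, A)$ with $T = T_{\cO_\fp}(f,g)^*$. Since Assumption~\ref{assump:everything} together with the extra hypothesis on the mod $\fp$ reduction of $\varepsilon_f \varepsilon_g$ forces Hypothesis $\Hyp(\QQ, T)$ (via Proposition~\ref{prop:specialelement}, whose stronger conclusion applies precisely because $\chi$ is not congruent modulo $\fp$ to a character of $p$-power conductor) and $\Hyp(\cS^{\{p\}}, V)$ (the classes lie in the strict Selmer group by Corollary~\ref{cor:h1f}, $T^{G_\QQ} = 0$ by irreducibility, and the element $\gamma$ exists), Theorem~\ref{thm:boundedSel} applies and gives
\[
 \operatorname{length}_{\cO}\left(\cS_{\{p\}}(\QQ, W^*(1))\right) \le \operatorname{ind}_{\cO}(\mathbf{c}) + \fn_W + \fn^*_W.
\]
Here $W^*(1) = \frac{V_{L_\fp}(f,g)}{T_{\cO_\fp}(f,g)}(1)$ since $V_{L_\fp}(f,g)^* \cong V_{L_\fp}(f,g)(2)$ up to the appropriate twist and the strict Selmer group of the dual is what appears.

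The next step is to show $\fn_W = \fn^*_W = 0$ using Proposition~\ref{prop:nw}: I would verify that the centre of the field $\Omega$ (the smallest field trivialising $W$ and $\mu_{p^\infty}$) acts on $T \otimes \mathbf{k}$ and $T^*(1) \otimes \mathbf{k}$ via nontrivial characters. Since the residue field is $\FF_p$ (hypothesis (vi)) and $\fp$ is a good prime so the image of Galois on $T \otimes \mathbf k = V_{L_\fp}(f) \otimes V_{L_\fp}(g)$ modulo $\fp$ is as large as possible, the scalars $\begin{pmatrix} a & 0 \\ 0 & a \end{pmatrix} \otimes \begin{pmatrix} b & 0 \\ 0 & b\end{pmatrix}$ act by $a^2 b^2$, and one checks this is a nontrivial character of the relevant subgroup because $\varepsilon_f \varepsilon_g$ is nontrivial mod $\fp$; the argument for $T^*(1)$ is the same after twisting. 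Hence the bound reduces to $\operatorname{length}_{\Zp} \cS_{\{p\}} \le \operatorname{ind}_{\Zp}(\mathbf{c})$.

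Finally I would identify $\operatorname{ind}_{\Zp}(\mathbf{c})$ with the right-hand side of the claimed inequality. By construction $c_1 = {}_c\hat\bfz_1^{f,g,N} = {}_c\bfz_1^{f,g,N}$ lies in $H^1(\QQ, T)$, and $\operatorname{ind}_{\Zp}(\mathbf{c})$ is the largest power of $p$ dividing $c_1$ in $H^1(\QQ, T)/\text{torsion}$. To bound this, one applies the Bloch--Kato logarithm at $p$: the localisation of $c_1$ at $p$ lies in $H^1_f(\Qp, T)$ (by Corollary~\ref{cor:h1f} with $m = 1$, or the crystalline property proved earlier), and by Theorem~\ref{thm:syntomicreg} (applied with $\breve f, \breve g$ chosen so that $I_f$ is controlled --- one takes the newform-level test vectors and uses $\widetilde{T}_{\cO_\fp}$) we have
\[
 \langle \log_p({}_c\bfz_1^{f,g,N}), \eta_{\breve f}^{\ur} \wedge \omega_{\breve g}\rangle = -\frac{\mathcal{E}(f,g,1)}{\mathcal{E}(f)\,\mathcal{E}^*(f)}\cdot (\text{unit}) \cdot \mathcal{D}_\fp(f,g,1/N)(1)
\]
up to the factor $(c^2 - \varepsilon_f(c)^{-1}\varepsilon_g(c)^{-1})$ which is a unit in $\Zp$ for suitable $c$. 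The Euler factors $\mathcal{E}(f)$ and $\mathcal{E}^*(f)$ are $1 - p^{-1}\beta_p(f)\alpha_p(f)^{-1}$ and $1 - \beta_p(f)\alpha_p(f)^{-1}$, which are $\fp$-adic units since $f$ is ordinary (so $\alpha$ is a unit and $\beta/\alpha \equiv 0 \pmod \fp$), while $\mathcal{E}(f,g,1) = (1 - p^{-1}\beta\gamma)(1 - p^{-1}\beta\delta)(1 - \alpha^{-1}\gamma^{-1})(1 - \alpha^{-1}\delta^{-1})$; combining with the integrality statement (the proposition at the end of \S5 bounding $\langle \log(z), \eta_f^{\ur}\otimes\omega_g\rangle$ in terms of $I_f^{-1}(1-\alpha^{-1}\gamma^{-1})^{-1}(1-\alpha^{-1}\delta^{-1})^{-1}$) the factors $(1-\alpha^{-1}\gamma^{-1})(1-\alpha^{-1}\delta^{-1})$ cancel, and one is left with exactly
\[
 v_p\left(\frac{(1 - p^{-1}\beta\alpha)}{(1-p^{-1}\beta\gamma)(1-p^{-1}\beta\delta)}\mathcal{D}_\fp(f,g,1/N)(1)\right) + \lambda
\]
as an upper bound for $\operatorname{ind}_{\Zp}(\mathbf{c})$, where $\lambda = v_\fp(I_f)$. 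The main obstacle I anticipate is the bookkeeping of the various Euler factors and periods: keeping track of which factors are units, which cancel against the denominator of $\eta_f^{\ur}$ (the $I_f^{-1}$ and the $(1-\alpha^{-1}\gamma^{-1})^{-1}(1-\alpha^{-1}\delta^{-1})^{-1}$ from Fontaine--Laffaille theory), and matching the normalisation of $\mathcal{D}_\fp(f,g,1/N)(1)$ in Theorem~\ref{thm:syntomicreg} against the definition of $\operatorname{ind}_{\Zp}$, so that no stray power of $p$ is gained or lost; a secondary point requiring care is that the logarithm $\log_p$ is injective on $H^1_f(\Qp, T)/\text{torsion}$ with cokernel of controlled size (again Fontaine--Laffaille / Bloch--Kato), so that a lower bound on the $p$-valuation of the pairing translates into an upper bound on $\operatorname{ind}_{\Zp}(\mathbf c)$ rather than the reverse.
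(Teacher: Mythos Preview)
Your proposal is correct and follows essentially the same route as the paper's proof: apply Theorem~\ref{thm:boundedSel} to the Euler system of Definition~\ref{def:ourEulersystem}, use Proposition~\ref{prop:specialelement} for $\Hyp(\QQ,T)$ and Proposition~\ref{prop:nw} for $\fn_W=\fn^*_W=0$, and then bound $\operatorname{ind}_{\Zp}(\mathbf{c})$ by pairing $\log_{\Qp}$ of the bottom class with $\eta_f^{\ur}\otimes\omega_g$, combining Theorem~\ref{thm:syntomicreg} with the integrality proposition so that the factors $(1-\alpha^{-1}\gamma^{-1})(1-\alpha^{-1}\delta^{-1})$ cancel and only $\cE(f)/[(1-p^{-1}\beta\gamma)(1-p^{-1}\beta\delta)]$ and $I_f$ survive. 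The paper also invokes the big-image hypotheses to identify $\widetilde T_{\cO_\fp}(f,g)^*$ with $T_{\cO_\fp}(f,g)^*$ so that Corollary~\ref{cor:integralpoincare} applies to the correct lattice, a small point you should make explicit.
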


 \begin{proof}
  Our condition on the mod $\fp$ reduction of $\varepsilon_f \varepsilon_g$ implies that Hypothesis $\Hyp(\QQ, T)$ is satisfied (again by Proposition \ref{prop:specialelement}; note that the mod $\fp$ reduction cannot be a nontrivial character of $p$-power conductor as $p$ does not divide the levels of $f$ and $g$). The condition also assures that the quantities $n_W$ and $n_W^*$ appearing in Theorem \ref{thm:boundedSel} are zero (Proposition \ref{prop:nw}).

  We consider the linear functional $\alpha$ on $H^1(\QQ, V_{L_\fp}(f, g)^*)$ given by $x \mapsto \langle \log_{\Qp}(x), \eta_{f}^{\ur} \otimes \omega_g\rangle$. On the lattice $\widetilde T_{\cO_\fp}(f, g)^*$ this takes values in $I_f^{-1} \cdot (1 - \alpha^{-1} \gamma^{-1})^{-1}(1 - \alpha^{-1} \delta^{-1})^{-1} \cO_{\fp}$, by Corollary \ref{cor:integralpoincare}; but since the Galois representations of $f$ and $g$ are assumed to have big image, we have $\widetilde T_{\cO_\fp}(f, g)^* = T_{\cO_{\fp}}(f, g)^*$.

  Theorem \ref{thm:syntomicreg} shows that $\tau$ maps the class $\bfz_{f, g, 1}$ to
  \[ \frac{\mathcal{E}(f)\mathcal{E}^*(f)} {\mathcal{E}(f, g, 1)} \mathcal{D}_\fp(f, g, 1/N)(1).\]
  Hence the index of divisibility of $\bfz_1^{f, g, N}$ is bounded above by
  \[
   v_p \left( \frac{\cE(f) \cE^*(f) (1 - \alpha^{-1} \gamma^{-1})(1 - \alpha^{-1} \delta^{-1})}{\cE(f, g, 1)} \mathcal{D}_\fp(f, g, 1/N)(1) \right) + \lambda.
  \]
  We can ignore the factor $\mathcal{E}^*(f) \coloneqq 1 - \beta \alpha^{-1}$, since $\alpha_f$ is a unit and $\beta_f$ is a non-unit so $\mathcal{E}^*(f) \in \cO_{\fp}^\times$. Substituting the definitions of $\cE(f)$ and $\cE(f, g, 1)$, we have
  \[ \frac{\cE(f) (1 - \alpha^{-1} \gamma^{-1})(1 - \alpha^{-1} \delta^{-1})}{\cE(f, g, 1)} = \frac{(1 - p^{-1} \beta \alpha)}{(1 - p^{-1} \beta \gamma)(1 - p^{-1} \beta \delta)}.\]
 \end{proof}

\subsection{An example}

 It may seem slightly unclear whether the long list of conditions in Assumption \ref{assump:everything} may be simultaenously satisfied, so we present the following explicit example (computed using Sage \cite{sage}).

 Let $f$ be the unique weight 2 newform of level 11 (corresponding to the elliptic curve $E: y^2 + y = x^3 - x$); and let $g$ be the unique newform of weight 2, level 26, and character $\chi \coloneqq \left(\frac{\bullet}{13}\right)$ with $a_2(g) = i$, so the $q$-expansions of $f$ and $g$ are
 \begin{align*}
  f &= q - 2q^{2} - q^{3} + 2q^{4} + q^{5} + O(q^{6}),\\
  g &= q + i q^{2} - q^{3} - q^{4} - 3 i q^{5} + O(q^{6}).
 \end{align*}

 Note that $\chi$ has conductor 13, so the local component of $\pi_g$ at 2 is an unramified twist of the Steinberg representation; on the other hand $f$ is unramified principal series at 2 and Steinberg at 11. So $f$ cannot be a twist of $g$, and neither $f$ nor $g$ is of CM type (since CM forms cannot be Steinberg at any prime). The form $f$ has no inner twists (since it is non-CM and has coefficients in $\QQ$); as for $g$, its Galois orbit consists of $g$ and $\bar{g}$, so its only inner twist is $\bar g$.

 To calculate the image of the Galois representations of $f$ and $g$, we note that Sage \cite{sage} has a facility to compute all the exceptional primes for the Galois representation attached to an elliptic curve (i.e.~those primes for which the image of the Galois representation is not $\GL_2(\Zp)$). This speedily tells us that $\rho_{f, p}$ is surjective for all $p \ne 5$.

 The form $g$ does not correspond to an elliptic curve, but there is a Dirichlet character $\psi: (\ZZ / 13\ZZ)^\times \to \QQ(i)^\times$ such that $g \otimes \psi$ corresponds to an elliptic curve $E'$ of conductor $2 \times 13^2 = 338$ (the curve with Cremona label 338d, given by $y^2 + xy = x^3 + x^2 + 504x - 13112$), and the only exceptional primes for $E'$ are $\{3, 5\}$. Letting $H = G_{\QQ(\sqrt{13})}$, the kernel of the character $\chi$, we see that for all primes $p \notin \{2, 3, 5, 13\}$ the image of $H$ under $\rho_{g, \fp}$, for any prime $\fp$ of $\QQ(i)$ above $p$, is $\GL_2(\Zp)$.

 Moreover, the only prime such that $a_v(f) = a_v(g)$ for all $v$ split in $\QQ(\sqrt{13})$ is $p = 5$. We deduce that for any $p$ congruent to 1 mod 4 and not in $\{5, 13\}$, and any prime $\fp$ of $\QQ(i)$ above $p$, the hypotheses (i) -- (viii) are satisfied.

 We check that both $f$ and $g$ are ordinary at the primes above 17 (it doesn't matter which prime we take, since $a_{17}(g) \in \ZZ$); and for any choice of roots $\alpha, \gamma$ of roots of the Hecke polynomials of $f$ and $g$, the minimal polynomial of $\alpha / \gamma$ over $\QQ$ is $x^{4} + \frac{6}{17} x^{3} - \frac{21}{17} x^{2} + \frac{6}{17} x + 1$, so in particular $\alpha / \gamma$ is not a root of unity. Thus hypotheses (ix) and (x) are satisfied if $\fp$ is either of the primes above 17.


 \section{Conjectures on higher-rank Euler systems}

  We now explain how the cohomology classes constructed in the previous section may be reconciled with the general conjectural setup of cyclotomic Iwasawa theory for motivic Galois representations formulated by Perrin-Riou, and its extension to the two-variable situation as formulated by the second and third authors in \cite{loefflerzerbes11}.


  \subsection{Euler systems: rank 1 and higher rank}

   Let us place ourselves again in the general setting of \S \ref{Eulermethod} above, so $T$ is a free $\cO$-module with a continuous action of $G_{\QQ}$ unramified outside a finite set $\Sigma \ni p$, and $A$ is a set of integers satisfying the conditions \emph{loc.cit.}. Suppose that all integers in $A$ are coprime to $p$.

   Perrin-Riou's conjectures, as formulated in \cite{perrinriou98} (cf. also \cite[\S 8.5]{rubin00}), discuss the following class of objects:

   \begin{conjecture}
    An \emph{Euler--Iwasawa system} of rank $r \ge 1$ consists of the data of, for each $m \in A$, a class
    \[ c_m \in \bigwedge^r_{\Lambda(\Gamma_m)} H^1_{\Iw}(\QQ(\mu_{m p^\infty}), V) \]
    with the property that if $\ell$ is prime and $\ell, m\ell \in A$, we have
    \[ \cores_{\QQ(\mu_{mp^\infty})}^{\QQ(\mu_{m\ell p^\infty})} c_{m\ell} =
     \begin{cases}
      p_\ell(\sigma_\ell^{-1})c_{m} & \text{if $\ell \nmid m\Sigma$,}\\
      c_m & \text{if $\ell \mid m\Sigma$.}
     \end{cases}
    \]
   \end{conjecture}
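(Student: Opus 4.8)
The displayed statement is essentially a \emph{definition}, phrased conjecturally because Perrin--Riou's framework is a prediction about the existence of such systems and their $L$-value interpolation properties; so the task is really to exhibit an instance, and I would do this at rank $r=1$ for $V = V_{L_\fp}(f,g)^*$ by reorganising the classes already built. First I would fix the set $A$ of squarefree integers prime to $Np$. For $m \in A$, Theorem \ref{thm:ordinaryEulersystem} (in the ordinary case) or Theorem \ref{thm:Eulersystem} (in the small-slope case) supplies a class ${}_c\mathfrak{z}^{(f,g,N)}_m \in H^1_{\Iw,S}(\QQ(\mu_{mp^\infty}), T_{\cO_\fp}(f,g)^*)$, and dividing out the smoothing factor $c^2 - \varepsilon_f(c)^{-1}\varepsilon_g(c)^{-1}[c]^2$ produces $\mathfrak{z}^{(f,g,N)}_m$ with $V$-coefficients; since $H^1_{\Iw}$ is a $\Lambda(\Gamma_m)$-module, this automatically lies in $\bigwedge^1_{\Lambda(\Gamma_m)} H^1_{\Iw}(\QQ(\mu_{mp^\infty}), V)$.

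The norm relations demanded by the definition would then be obtained in two stages from the ``moreover'' clause of Theorem \ref{thm:Eulersystem} (equivalently Corollary \ref{cor:nearlycompatible}), which says that for $\ell \nmid mN$ corestriction sends ${}_c\mathfrak{z}^{(f,g,N)}_{\ell m}$ to $\sigma_\ell\big((\ell-1)(1 - \varepsilon_f(\ell)\varepsilon_g(\ell)\sigma_\ell^{-2}) - \ell\,P_\ell(f,g,\ell^{-1}\sigma_\ell^{-1})\big)\,{}_c\mathfrak{z}^{(f,g,N)}_m$. First, the extraneous sign and $\sigma_\ell$-twist are absorbed by the standard device of replacing $\mathfrak{z}_m$ by $(-1)^{s(m)}\gamma_m\cdot\mathfrak{z}_m$ for a compatible system $\gamma_m \in (\ZZ/m\ZZ)^\times$ with $\gamma_{m\ell}\equiv \ell^{-1}\gamma_m \pmod m$ (here $s(m)$ is the number of prime divisors of $m$). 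Second, the remaining coefficient polynomial is congruent modulo $\ell-1$ to $p_\ell(\sigma_\ell^{-1}) = P_\ell(f,g,\ell^{-1}\sigma_\ell^{-1})$, so Lemma \ref{lem:modifyclasses} applies to pass to a new family satisfying exactly the prescribed relations, including the trivial relation $c_{\ell m}\mapsto c_m$ for $\ell\mid m$ or $\ell\in\Sigma$, which for corestriction up the $\mu_{p^\infty}$-tower is automatic. The value at $m=1$ recovers $\mathfrak{z}^{(f,g,N)}_1$ up to the harmless $p$-Euler factor $1-(\alpha_f\alpha_g)^{-1}\sigma_p$ of Theorem \ref{thm:ordinaryEulersystem}.

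The genuine obstacles are two. The first is the slope bookkeeping: only when $v_\fp(\alpha_f\alpha_g)=0$ do the classes live integrally in $\Lambda(\Gamma)\otimes H^1_{\Iw}$, whereas for $0 < v_\fp(\alpha_f\alpha_g) < 1$ Theorem \ref{thm:Eulersystem} gives them only in $\mathcal{H}_r(\Gamma)\otimes_{\Lambda(\Gamma)} H^1_{\Iw}$, so the definition is realised only after enlarging ``$\Lambda(\Gamma_m)$-valued'' to ``distribution-valued'', and if $v_\fp(\alpha_f\alpha_g)\ge 1$ (e.g.\ $a_p(f)=a_p(g)=0$ at level prime to $p$) there is no valid $p$-stabilisation at all. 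The second, and the point I expect one cannot simply ``prove'', is the rank: Perrin--Riou's conjectures also predict the rank at which a \emph{complete} system should exist, and in the balanced weight-$(2,2)$ Rankin--Selberg setting that predicted rank is $2$, whereas the Beilinson--Flach construction yields only a rank-$1$ system. The rank-$1$ object has to be interpreted as a derivative, in the sense of Perrin--Riou's higher-rank formalism, of the conjectural rank-$2$ system, the drop in rank being forced by the trivial zero of $L(f,g,s)$ at $s=1$ coming from the archimedean factor $\Gamma_\CC(s-1)$ (cf.\ \eqref{eq:Lfcnzero}). Establishing that compatibility --- rather than merely verifying the norm relations above --- is the substantive content of the conjectural discussion, and I would not expect to resolve it here.
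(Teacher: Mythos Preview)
You have correctly identified that the displayed ``conjecture'' is a definition of the notion of an Euler--Iwasawa system of rank $r$; the paper does not prove it, and there is no proof to compare against. Your proposed instantiation at rank $1$ for $V = V_{L_\fp}(f,g)^*$ --- twisting by $(-1)^{s(m)}\gamma_m$ and then applying Lemma \ref{lem:modifyclasses} to fix the Euler factors --- is exactly the construction carried out in \S\ref{Eulermethod} (specifically Definition \ref{def:ourEulersystem}), so that part of your discussion is faithful to the paper.

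Where your interpretation diverges from the paper is in the account of the rank discrepancy. You describe the rank-$1$ Beilinson--Flach system as a ``derivative'' of the conjectural rank-$2$ system, with the drop forced by the archimedean trivial zero at $s=1$. The paper's framing is different and more concrete: in \S 8.2 it uses Otsuki's weighted-trace functionals $t_m(x_m,-)$, built from the regulator $\mathcal{L}^\Gamma$ paired against a $\vp$-eigenvector $v_\alpha\otimes v_\gamma$ and elements $x'_m$ involving the factors $F_\ell(\hat\sigma_\ell)^{-1}G_\ell(\hat\sigma_\ell)$, to contract a putative rank-$2$ class $w_m$ down to a rank-$1$ class. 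The paper then checks that the resulting classes satisfy precisely the growth bound $\cH_h(\Gamma)$ with $h=v_p(\alpha\gamma)$ and the ``wrong'' norm relation $\sigma_\ell\big((\ell-1)G_\ell(\sigma_\ell^{-1})-\ell F_\ell(\sigma_\ell^{-1})\big)$ observed for the Beilinson--Flach classes, and on that basis states the conjecture that $\fz_m^{f_\alpha,g_\gamma,Np}=t_m(x_m,w_m)$. This is a pairing mechanism, not a derivative in the sense of $L$-function order of vanishing; the appearance of the non-standard norm factor and the growth order is explained by the shape of Otsuki's map rather than by an archimedean zero.
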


   Note that a rank 1 Euler--Iwasawa system is equivalent to the data of an Euler system for $(T, A_p, \Sigma)$ in the previous sense, where $A_p = \{ p^k m : m \in A, k \ge 0\}$.

   As noted in \cite[\S 1.2.3]{perrinriou98}, a higher-rank Euler system can be used to construct rank 1 Euler systems, by pairing with appropriate ``rank $r-1$'' elements. We make the following definition:

   \begin{definition}
    We define a \emph{Perrin-Riou functional} to be the data of, for each squarefree $m$ prime to $S$ as above, an element
    \[ \Phi_m \in \bigwedge^{r-1} \Hom_{\Lambda}\left(H^1_{\Iw, S}(\QQ(\mu_{mp^\infty}), T), \Lambda\right)^\iota, \]
    with the property that for each $\ell \nmid mS$, we have
    \[ \Phi_{m} = \Phi_{m\ell} \circ \res_{\QQ(\mu_{mp^\infty})}^{\QQ(\mu_{m\ell p^\infty})}.\]
   \end{definition}

   Lemma 1.2.3 of op.cit.~shows that if $\left(c_p(m)\right)$ is an Euler system of rank $r$, and $\left(\Phi_m\right)$ is a Perrin-Riou functional, then the elements
   \[ \Phi_m(c_p(m)) \in H^1_{\Iw, S}(\QQ(\mu_{mp^\infty}), T)\]
   define an Euler system of rank 1. (Here, as explained loc.cit., we interpret $\Phi_m$ as a map
   \[ \bigwedge^{r} H^1_{\Iw, S}(\QQ(\mu_{mp^\infty}), T) \to H^1_{\Iw, S}(\QQ(\mu_{mp^\infty}), T)\]
   which we also denote by $\Phi_m$.)

   Appendix B of \emph{op.cit.} shows that (under mild hypotheses on $T$) there is a plentiful supply of Perrin-Riou functionals, although there is no obvious canonical choice. More specifically, given any $m$ and any $\Phi_m$, there exists a Perrin-Riou functional extending $\Phi_m$. Hence, given as a starting point a rank $r$ Euler system, one may construct a rank 1 Euler system (indeed many such systems) and obtain Iwasawa-theoretic results from this rank 1 system; but these rank 1 Euler systems are noncanonical, and in particular there is no reason to expect that they should have any relation to $L$-values.

   \begin{remark}
    An alternative approach to bounding Selmer groups in the $r > 1$ case by directly utilizing a notion of ``higher-rank Kolyvagin systems'', rather than by constructing rank 1 Euler systems, has been initiated by Mazur and Rubin (unpublished).
   \end{remark}


  \subsection{Otsuki's functionals}

   We now explain a construction due to Otsuki \cite{otsuki09}, who has shown how to construct \emph{canonical} linear functionals on cohomology groups by composing the dual exponential map with an appropriate ``weighted trace''. These maps do not satisfy the compatibility properties of a Perrin-Riou functional, and thus give rise to systems of elements of group rings satisfying a modified compatibility property; we shall show that this modification is consistent with the results we have shown for our generalized Beilinson--Flach classes.

   For technical reasons we shall work in the limit over the cyclotomic extension, rather than directly over $\QQ(\mu_m)$; this avoids problems caused by zeroes of local Euler factors (cf.~the discussion at the start of \S 9.1 of \cite{rubin00}).

   Choose a system of roots of unity $\zeta_m \in \overline{\QQ}$ for all $m \ge 1$ which satisfy $\zeta_{mn}^n = \zeta_m$ for all integers $m, n$. Let $G_m = \Gal(\QQ(\mu_m) / \QQ)$ and $\Gamma_m = \Gal(\QQ(\mu_{mp^\infty}) / \QQ)$; we identify $\Gamma_m$ with $G_m \times \Gamma$ in the obvious way.

   Let $V$ be an $E$-linear $p$-adic representation of $G_{\QQ}$, where $E / \Qp$ is a finite extension, which is crystalline at $p$ with non-negative Hodge--Tate weights and such that no eigenvalue of Frobenius on $\Dcris(V)$ is a root of unity. Then for all $m \ge 1$ the $p$-adic regulator map
   \[ \mathcal{L}^\Gamma_{\QQ(\mu_m), V} : H^1_{\Iw}(\QQ(\mu_{mp^\infty}), V) \rTo \QQ(\mu_m) \otimes_{\QQ} \mathcal{H}_{E}(\Gamma) \otimes_{E} \Dcris(V)\]
   is well-defined (as the sum of the local regulator maps at the primes of $\QQ(\mu_m)$ above $p$).

   Let $D = \Dcris(V^*) = \Dcris(V)^*$, and let $D_{m p^\infty} = \Lambda_E(\Gamma) \otimes_{E} D \otimes_{\QQ} \QQ(\mu_m)$. We regard $D_{mp^\infty}$ as a $\Gamma_m$-module, via the usual action of $G_m$ on $\QQ(\mu_m)$ and of $\Gamma$ on $\Lambda_E(\Gamma)$. Following \cite{kurihara02} and \cite{otsuki09}, we make the following definition:

   \begin{definition}
    Define a pairing
    \[ t_m: D_{mp^\infty} \times H^1_{\Iw}(\QQ(\mu_{mp^\infty}), V) \rTo \cH_E[\Gamma_m] \]
    by
    \[ t_m(x, z) = \sum_{\sigma \in G_m} [\sigma] \trace_{\QQ(\mu_m) / \QQ} \left\langle \sigma x, \mathcal{L}^\Gamma_{\QQ(\mu_m), V}(z)\right\rangle_{\cris}.\]
   \end{definition}

   Here we extend $\langle, \rangle_{\cris}$ to be $\Gamma$-linear in the second variable and $\Gamma$-antilinear in the first. One checks that
    \[ t_m(\sigma x, \tau z) = [\sigma^{-1} \tau] \cdot t_m(x, z)\]
   for all $\sigma, \tau \in \Gamma_m$ (not just in $\Gamma$).

   Now fix two families $F_\ell, G_\ell$ of polynomials in $E[X]$, indexed by primes $\ell \notin \Sigma$, such that $F_\ell, G_\ell \in 1 + X E[X]$ for all $\ell$.

   Let $A$ be the set of square-free integers prime to $\Sigma$. For each prime $\ell \notin \Sigma$ and each $m \in A$, consider the $\Lambda(\Gamma_m)$-linear endomorphism of $\Lambda_E(\Gamma) \otimes_{\QQ} \QQ(\mu_m)$ given by $\hat\sigma_{\ell}(x \otimes \zeta) = \tau_\ell x \otimes \zeta^\ell$, for all roots of unity $\zeta \in \mu_m$, where $\tau_\ell$ is the arithmetic Frobenius at $\ell$ in $\Gamma$. Thus $\hat\sigma_{\ell}$ is the action of the Frobenius at $\ell$ in $\Gamma_m$ if $\ell \nmid m$, and is a possibly non-invertible endomorphism if $\ell \mid m$; and the $\hat\sigma_\ell$ all commute with each other.

   \begin{proposition}
    The endomorphism $F_\ell(\hat\sigma_\ell)$ is invertible in $\End_{Q} \left(Q \otimes_{\QQ} \QQ(\mu_m)\right)$, where $Q = \operatorname{Frac} \Lambda_E(\Gamma)$.
   \end{proposition}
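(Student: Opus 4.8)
The plan is to reduce the assertion to a transcendence statement: the characteristic polynomial of $\hat\sigma_\ell$ over $\overline{Q}$ factors into linear terms $X-\tau_\ell\lambda$ with $\lambda$ algebraic over $\QQ$, while all roots of $F_\ell$ are algebraic over $E$ and nonzero; since $\tau_\ell$ is transcendental over $E$, $F_\ell$ can vanish at no eigenvalue of $\hat\sigma_\ell$, whence $F_\ell(\hat\sigma_\ell)$ has nonzero determinant and is invertible in $\End_Q$.

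First I would set up the linear algebra. The module $M:=\Lambda_E(\Gamma)\otimes_\QQ\QQ(\mu_m)$ is free of rank $\varphi(m)$ over $\Lambda_E(\Gamma)$, and $\hat\sigma_\ell$ is $\Lambda_E(\Gamma)$-linear, so it extends to a $Q$-linear endomorphism of the $\varphi(m)$-dimensional $Q$-vector space $Q\otimes_\QQ\QQ(\mu_m)$, and $F_\ell(\hat\sigma_\ell)\in\End_Q(Q\otimes_\QQ\QQ(\mu_m))$ is invertible if and only if $F_\ell$ has no common root, over a fixed algebraic closure $\overline{Q}$, with the characteristic polynomial $\chi(X):=\det(X-\hat\sigma_\ell)\in\Lambda_E(\Gamma)[X]$.

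Next I would identify the roots of $\chi$. Specialising the augmentation-type map $\Lambda_E(\Gamma)\to E$, $\gamma\mapsto 1$ (which is $\Lambda_E(\Gamma)$-linear, so $\hat\sigma_\ell$ descends through it), one obtains from $\hat\sigma_\ell$ an endomorphism $\id\otimes S_\ell$ of $E\otimes_\QQ\QQ(\mu_m)$, where $S_\ell\in\End_\QQ\bigl(\QQ(\mu_m)\bigr)$ is the $\QQ$-rational operator determined by the $G_m$-action and by $\zeta\mapsto\zeta^\ell$; concretely $\hat\sigma_\ell=\tau_\ell\cdot(\id\otimes S_\ell)$, with $\tau_\ell\in\Gamma$ acting as a scalar in $\Lambda_E(\Gamma)$. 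Since $\id\otimes S_\ell$ has a matrix with entries in $\QQ$, its characteristic polynomial $\chi_{S_\ell}$ lies in $\QQ[X]$, and $\chi(X)=\tau_\ell^{\varphi(m)}\chi_{S_\ell}(X/\tau_\ell)$; hence the roots of $\chi$ in $\overline{Q}$ are exactly the elements $\tau_\ell\lambda$ with $\chi_{S_\ell}(\lambda)=0$, each such $\lambda$ lying in $\QQbar$. (When $\ell\mid m$ one of these $\lambda$ is $0$, reflecting the fact that $\hat\sigma_\ell$ itself need not be invertible — its image is $\tau_\ell\cdot\QQ(\mu_{m/\ell})$.) I would then conclude: because $\Gamma$ is torsion-free, $\tau_\ell\ne 1$ has infinite order, so the group elements $\tau_\ell^n$ are $E$-linearly independent in $\Lambda_E(\Gamma)$ and $\tau_\ell$ is transcendental over $E$; therefore for $\lambda\ne 0$ the eigenvalue $\tau_\ell\lambda$ is transcendental over $E$ (as $\lambda\in\QQbar\subseteq\overline{E}$), whereas every root of $F_\ell\in 1+XE[X]$ is algebraic over $E$; and the remaining possible root $0$ of $\chi$ is not a root of $F_\ell$ since $F_\ell(0)=1$. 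Thus $F_\ell$ and $\chi$ are coprime in $Q[X]$ and $\det F_\ell(\hat\sigma_\ell)\ne 0$.

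The step that requires the most care, and the only genuine obstacle, is the case $\ell\mid m$: there $\hat\sigma_\ell$ is a zero-divisor, so the argument relies essentially on the normalisation $F_\ell(0)=1$ to dispose of the zero eigenvalue, and on the clean dichotomy "nonzero eigenvalues transcendental over $E$ versus roots of $F_\ell$ algebraic over $E$" to handle all the others. I would also record the convention that $\Gamma$ is the torsion-free cyclotomic direction, so that $Q=\operatorname{Frac}\Lambda_E(\Gamma)$ is a field and $\tau_\ell$ is indeed transcendental over $E$; if one prefers a $\Gamma$ carrying torsion, the identical argument applies on each factor after decomposing $\Lambda_E(\Gamma)$ as a finite product of domains.
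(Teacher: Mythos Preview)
Your proof is correct and follows essentially the same idea as the paper's one-line argument (``Clear, since the roots of the characteristic polynomial of $\hat\sigma_\ell$ on $\QQ(\mu_m)$ are scalars''): both rest on the observation that the eigenvalues of $\hat\sigma_\ell$ are $\tau_\ell$ times algebraic numbers, while the roots of $F_\ell$ lie in $\overline{E}$, and these cannot coincide because $\tau_\ell$ is transcendental over $E$. Your version is considerably more careful --- in particular your explicit handling of the eigenvalue $0$ when $\ell\mid m$ via $F_\ell(0)=1$, and of possible torsion in $\Gamma$ via the product decomposition of $\Lambda_E(\Gamma)$, fills in details the paper leaves implicit.
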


   \begin{proof}
    Clear, since the roots of the characteristic polynomial of $\hat\sigma_\ell$ on $\QQ(\mu_m)$ are scalars.
   \end{proof}

   For each $m \in A$, let us define an element $x'_m \in Q \otimes_{\QQ} \QQ(\mu_m)$ by
   \[ x'_m = \left(\prod_{\ell \mid m} F_\ell(\hat\sigma_\ell)^{-1} G_\ell(\hat\sigma_\ell) \right)\cdot (1 \otimes \zeta_m).\]

   \begin{proposition}
    If $\ell \nmid m$, then we have
    \[ \tr^{m\ell}_m(x_{m\ell}') = \sigma_\ell^{-1} F_\ell(\sigma_\ell)^{-1}\left((\ell - 1) G_\ell(\sigma_\ell) - \ell F_\ell(\sigma_\ell) \right) x_m'.\]
   \end{proposition}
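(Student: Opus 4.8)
Looking at this proposition, I need to compute a trace-compatibility relation for the elements $x'_m$ defined via the endomorphisms $F_\ell(\hat\sigma_\ell)^{-1} G_\ell(\hat\sigma_\ell)$ acting on roots of unity.

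Let me think about what's really going on. We have $x'_m = \left(\prod_{\ell \mid m} F_\ell(\hat\sigma_\ell)^{-1} G_\ell(\hat\sigma_\ell)\right) \cdot (1 \otimes \zeta_m)$, and we want to relate $\operatorname{tr}^{m\ell}_m(x'_{m\ell})$ to $x'_m$ when $\ell \nmid m$.

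The plan is as follows. First, I would separate the $\ell$-factor: since $\ell \nmid m$, we have $x'_{m\ell} = F_\ell(\hat\sigma_\ell)^{-1} G_\ell(\hat\sigma_\ell) \cdot \left(\prod_{q \mid m} F_q(\hat\sigma_q)^{-1} G_q(\hat\sigma_q)\right)(1 \otimes \zeta_{m\ell})$. The operators $\hat\sigma_q$ for $q \mid m$ and $\hat\sigma_\ell$ commute, and $\hat\sigma_q$ acts on the $\QQ(\mu_m)$-part through $\zeta \mapsto \zeta^q$ on $\mu_m$ while fixing the prime-to-$m$ roots of unity. So I can push the trace $\operatorname{tr}^{m\ell}_m$ (which is $\operatorname{tr}_{\QQ(\mu_{m\ell})/\QQ(\mu_m)}$ on the coefficient field, tensored with identity on $\Lambda_E(\Gamma)$) past the operator $\prod_{q\mid m} F_q(\hat\sigma_q)^{-1} G_q(\hat\sigma_q)$, since that operator is $\Lambda(\Gamma_m)$-linear hence commutes with $\QQ(\mu_m)$-linear maps. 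This reduces the problem to computing $\operatorname{tr}_{\QQ(\mu_{m\ell})/\QQ(\mu_m)}\left(F_\ell(\hat\sigma_\ell)^{-1} G_\ell(\hat\sigma_\ell)(1 \otimes \zeta_{m\ell})\right)$, viewing everything inside $Q \otimes_\QQ \QQ(\mu_{m\ell})$ with the remaining prime-to-$\ell$ operator factor scalar over this trace.

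The core computation is then: in $Q \otimes_\QQ \QQ(\mu_{m\ell})$, with $\ell \nmid m$, compute $\operatorname{tr}_{\QQ(\mu_{m\ell})/\QQ(\mu_m)}\left(h(\hat\sigma_\ell)(1 \otimes \zeta_{m\ell})\right)$ for $h = F_\ell^{-1} G_\ell$. Here I would expand $h(\hat\sigma_\ell) = \sum_k c_k \hat\sigma_\ell^k$ (with $c_k \in Q$), note $\hat\sigma_\ell^k(1 \otimes \zeta_{m\ell}) = \sigma_\ell^{-k} \otimes \zeta_{m\ell}^{\ell^k}$ where $\sigma_\ell$ denotes the Frobenius in $\Gamma$ — wait, I must be careful with the $\hat\sigma_\ell$ versus $\tau_\ell$ normalization as in the paper, writing $\hat\sigma_\ell(x \otimes \zeta) = \tau_\ell x \otimes \zeta^\ell$. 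The key arithmetic fact is the standard one: $\operatorname{tr}_{\QQ(\mu_{m\ell})/\QQ(\mu_m)}(\zeta_{m\ell}) = -\zeta_m$ for $\ell$ prime not dividing $m$ (more precisely $\mu(\ell)\zeta_m = -\zeta_m$, using the compatibility $\zeta_{m\ell}^\ell = \zeta_m$ and that the minimal polynomial of $\zeta_{m\ell}$ over $\QQ(\mu_m)$ is the $\ell$-th cyclotomic-type polynomial), while $\operatorname{tr}_{\QQ(\mu_{m\ell})/\QQ(\mu_m)}(\zeta_{m\ell}^{\ell^k}) = (\ell-1)\zeta_m^{\ell^{k-1}}$ for $k \ge 1$ since $\zeta_{m\ell}^{\ell^k}$ already lies in $\QQ(\mu_m)$ and the extension has degree $\ell - 1$. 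Applying this termwise: the $k = 0$ term contributes $-c_0 \cdot \sigma_\ell^{0}(\text{-part}) \cdot \zeta_m$ — actually $-h(\hat\sigma_\ell)$ would-be, but only the constant term survives with coefficient $-1$, so this term gives $-\,(\text{something}) $; and for $k \ge 1$ we get a factor $(\ell-1)$ and a shift $\zeta_m^{\ell^{k-1}}$, i.e. the result is $(\ell-1)\hat\sigma_\ell^{k-1}(1\otimes\zeta_m)$. Summing: $\operatorname{tr}(h(\hat\sigma_\ell)(1\otimes\zeta_{m\ell})) = -c_0(1 \otimes \zeta_m) + (\ell-1)\sum_{k\ge 1} c_k \hat\sigma_\ell^{k-1}(1 \otimes \zeta_m) = -h(0)(1\otimes\zeta_m) + (\ell-1)\frac{h(\hat\sigma_\ell) - h(0)}{\hat\sigma_\ell}(1\otimes\zeta_m)$. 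Since $h = F_\ell^{-1}G_\ell$ with $F_\ell(0) = G_\ell(0) = 1$, we have $h(0) = 1$, so this equals $\left(-1 + (\ell-1)\frac{h(\hat\sigma_\ell)-1}{\hat\sigma_\ell}\right)(1\otimes\zeta_m) = \hat\sigma_\ell^{-1}\left((\ell-1)(h(\hat\sigma_\ell)-1) - \hat\sigma_\ell\right)(1\otimes\zeta_m) = \hat\sigma_\ell^{-1}\left((\ell-1)h(\hat\sigma_\ell) - (\ell-1) - \hat\sigma_\ell\right)(1\otimes\zeta_m)$. Now substituting $h = F_\ell^{-1}G_\ell$ and noting that on $Q\otimes\QQ(\mu_m)$ the operator $\hat\sigma_\ell$ acts as the genuine Frobenius $\sigma_\ell \in \Gamma_m$ (invertible), I get $\hat\sigma_\ell^{-1}F_\ell(\hat\sigma_\ell)^{-1}\left((\ell-1)G_\ell(\hat\sigma_\ell) - (\ell-1)F_\ell(\hat\sigma_\ell) - \hat\sigma_\ell F_\ell(\hat\sigma_\ell)\right)$; I would then reconcile this with the stated formula $\sigma_\ell^{-1}F_\ell(\sigma_\ell)^{-1}\left((\ell-1)G_\ell(\sigma_\ell) - \ell F_\ell(\sigma_\ell)\right)$, so I expect the term $-(\ell-1)F_\ell - \hat\sigma_\ell F_\ell$ to be intended as $-\ell F_\ell$ after absorbing $\hat\sigma_\ell$ acting on $\zeta_m$ — here I'd recheck the exact placement of Frobenius twists in the definition of $x'_m$ and of the trace, since a small discrepancy suggests one of the $\hat\sigma_\ell$'s in my bookkeeping should be evaluated at $1$ on $\zeta_m$ (i.e. $\zeta_m$ is fixed by $\hat\sigma_\ell$ composed appropriately), collapsing $\hat\sigma_\ell F_\ell(\hat\sigma_\ell)$ against $F_\ell$'s leading behaviour.

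The main obstacle will be getting the Frobenius-twist bookkeeping exactly right: the operator $\hat\sigma_\ell$ mixes an action $\tau_\ell$ on $\Lambda_E(\Gamma)$ with the $\ell$-power map on roots of unity, and the subtlety is that on $\QQ(\mu_m)$ (no $\ell$ in the conductor) $\hat\sigma_\ell$ is the honest Frobenius $\sigma_\ell$, whereas restricted to the $\mu_\ell$-part it is a non-invertible nilpotent-like operator on roots of unity satisfying $1 + \zeta_\ell + \cdots + \zeta_\ell^{\ell-1} = 0$. I would carefully track, for each monomial $\hat\sigma_\ell^k$, exactly which copy of $\QQ(\mu_{m\ell})$ versus $\QQ(\mu_m)$ the image lands in, using $\zeta_{m\ell}^{\ell^k} = \zeta_m^{\ell^{k-1}}$ for $k \ge 1$ and the relation $\sum_{j=0}^{\ell-1}\zeta_{m\ell}^{m\cdot j}\cdot(\cdots)$ for the trace of the primitive part. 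Once the $k=0$ trace is pinned down as $-1$ times the scalar and the $k\ge1$ traces as $(\ell-1)$ times the shift, the rest is the formal manipulation of rational functions $F_\ell, G_\ell$ over $Q$ outlined above, and the statement follows. I would close by remarking that all operators involved commute, so reinstating the prime-to-$\ell$ factor $\prod_{q\mid m}F_q(\hat\sigma_q)^{-1}G_q(\hat\sigma_q)$ — which is $\QQ(\mu_m)$-linear and unaffected by the trace down from level $m\ell$ — yields precisely $\operatorname{tr}^{m\ell}_m(x'_{m\ell}) = \sigma_\ell^{-1}F_\ell(\sigma_\ell)^{-1}\left((\ell-1)G_\ell(\sigma_\ell) - \ell F_\ell(\sigma_\ell)\right)x'_m$.
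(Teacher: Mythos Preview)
Your overall strategy --- separate the $\ell$-factor, commute the prime-to-$\ell$ operators past the trace, then reduce to computing $\tr^{m\ell}_m\bigl(h(\hat\sigma_\ell)(1\otimes\zeta_{m\ell})\bigr)$ with $h=F_\ell^{-1}G_\ell$ --- is exactly the paper's. The discrepancy you notice at the end is real, and it comes from a concrete arithmetic error in your ``key fact'': the trace $\tr_{\QQ(\mu_{m\ell})/\QQ(\mu_m)}(\zeta_{m\ell})$ is \emph{not} $-\zeta_m$ but $-\zeta_m^{\ell^{-1}\bmod m}$, i.e.\ $-\sigma_\ell^{-1}\zeta_m$. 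To see this, write $\zeta_{m\ell}=\eta^u\zeta_m^v$ via $um+v\ell=1$, with $\eta=\zeta_{m\ell}^m$ a primitive $\ell$-th root of unity; the Galois group fixes $\zeta_m^v$ and averages $\eta^u$ to $-1$, giving $-\zeta_m^v$ with $v\equiv\ell^{-1}\pmod m$. Your formula $-\zeta_m$ would only be correct if $\ell\equiv 1\pmod m$. Correcting just this one term, your sum becomes
\[
-\sigma_\ell^{-1}(1\otimes\zeta_m)+(\ell-1)\,\sigma_\ell^{-1}\bigl(h(\sigma_\ell)-1\bigr)(1\otimes\zeta_m)
=\sigma_\ell^{-1}\bigl((\ell-1)h(\sigma_\ell)-\ell\bigr)(1\otimes\zeta_m),
\]
which is exactly $\sigma_\ell^{-1}F_\ell(\sigma_\ell)^{-1}\bigl((\ell-1)G_\ell(\sigma_\ell)-\ell F_\ell(\sigma_\ell)\bigr)(1\otimes\zeta_m)$, and the discrepancy vanishes.

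A secondary point: your expansion $h(\hat\sigma_\ell)=\sum_k c_k\hat\sigma_\ell^k$ treats $h=F_\ell^{-1}G_\ell$ as a formal power series, but $h$ is a rational function and $\hat\sigma_\ell$ is not nilpotent on $Q\otimes\QQ(\mu_{m\ell})$, so convergence is not automatic. The paper sidesteps this by writing $h(\hat\sigma_\ell)=1+H_\ell(\hat\sigma_\ell)F_\ell(\hat\sigma_\ell)^{-1}\hat\sigma_\ell$ with $H_\ell(X)=(G_\ell(X)-F_\ell(X))/X$; since $\hat\sigma_\ell(1\otimes\zeta_{m\ell})$ already lies in $Q\otimes\QQ(\mu_m)$, the remaining factor $H_\ell(\hat\sigma_\ell)F_\ell(\hat\sigma_\ell)^{-1}$ acts as the honest rational function of the invertible Frobenius $\sigma_\ell$ there. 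This decomposition also makes the two trace contributions ($\tr(\zeta_{m\ell})=-\sigma_\ell^{-1}\zeta_m$ and $\tr|_{\QQ(\mu_m)}=(\ell-1)\cdot\id$) transparent.
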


   \begin{proof}
    This is a straightforward generalization of (one case of) Proposition 2.5 of \cite{otsuki09}.
    We define $H_\ell(X) = \frac{G_\ell(X) - F_\ell(X)}{X}$, so we have
    \[ F_\ell(\hat\sigma_\ell)^{-1} G_\ell(\hat \sigma_\ell) = 1 + H_\ell(\hat\sigma_\ell) F_\ell(\hat\sigma_\ell)^{-1} \hat\sigma_\ell\]
    in $\End_{Q} Q \otimes_{\QQ} \QQ(\mu_{m\ell})$. The operator $\hat\sigma_v$ commutes with $\tr^{m\ell}_m$ whenever $v \ne \ell$. Hence
    \begin{align*}
     \tr^{m\ell}_m (x'_{m\ell}) &= \tr^{m\ell}_m \left( \left(\prod_{v \mid m\ell}F_v(\hat\sigma_v)^{-1} G_v(\sigma_v) \right) \zeta_{m\ell}\right)\\
     &= \left(\prod_{v \mid m}F_v(\hat\sigma_v)^{-1} G_v(\hat\sigma_v)\right) \tr^{m\ell}_m \left(F_\ell(\hat\sigma_\ell)^{-1} G_\ell(\sigma_\ell)\zeta_{m\ell}\right)\\
     &= \left(\prod_{v \mid m}F_v(\hat\sigma_v)^{-1} G_v(\hat\sigma_v)\right)\tr^{m\ell}_m \left(\left(1 + H_\ell(\hat\sigma_\ell) F_\ell(\hat\sigma_\ell)^{-1} \hat\sigma_\ell\right) \zeta_{m\ell}\right)\\
     &= \left(\prod_{v \mid m}F_v(\hat\sigma_v)^{-1}G_v(\hat\sigma_v)\right)\left[\tr^{m\ell}_m(\zeta_{m\ell}) + \tr^{m\ell}_m \left(H_\ell(\hat\sigma_\ell)F_\ell(\hat\sigma_\ell)^{-1} \zeta_{m}\right)\right] \\
     &= \left(\prod_{v \mid m}F_v(\hat\sigma_v)^{-1}G_v(\hat\sigma_v)\right)\left[\left(-\sigma_{\ell}^{-1} \zeta_m\right) + (\ell - 1) \left(H_\ell(\hat\sigma_\ell)F_\ell(\hat\sigma_\ell)^{-1} \zeta_{m}\right)\right]\\
     &= \left(-\sigma_\ell^{-1} + (\ell - 1)H_\ell(\sigma_\ell)F_\ell(\sigma_\ell)^{-1}\right) x_m'
    \end{align*}
    (where we have dropped the hats, since $\hat\sigma_\ell$ acts on $Q \otimes \QQ(\mu_m)$ as the usual Frobenius $\sigma_\ell$). Since $\sigma_\ell H_\ell(\sigma_\ell) = G_\ell(\sigma_\ell) - F_\ell(\sigma_\ell)$, we have
    \begin{align*}
     -\sigma_\ell^{-1} + (\ell - 1)H_\ell(\sigma_\ell)F_\ell(\sigma_\ell)^{-1}
     &= \sigma_\ell^{-1} F_\ell(\sigma_\ell)^{-1} \left(-F_\ell(\sigma_\ell) + (\ell - 1) \sigma_\ell H_\ell(\sigma_\ell)\right)\\
     &= \sigma_\ell^{-1} F_\ell(\sigma_\ell)^{-1} \left(-F_\ell(\sigma_\ell) + (\ell - 1) (G_\ell(\sigma_\ell) - F_\ell(\sigma_\ell))\right)\\
     &= \sigma_\ell^{-1} F_\ell(\sigma_\ell)^{-1} \left((\ell-1)G_\ell(\sigma_\ell) - \ell F_\ell(\sigma_\ell)\right).
    \end{align*}
    which gives the formula stated above.
   \end{proof}

   \begin{corollary}
    If we are given, for each $m \in A$, an element
    \[ z_m \in Q \otimes_{\Lambda(\Gamma)} H^1_{\Iw}(\QQ(\mu_{mp^\infty}), V)\]
    satisfying
    \[ \cores^{m\ell}_m(z_{m\ell}) = F_\ell(\sigma_\ell^{-1}) z_m\]
    for each $m$ and each prime $\ell \nmid m$, $\ell \notin \Sigma$, and we define $x_m = x_m' v \in D_{mp^\infty}$ for some fixed $v \in D$, then we have the relation
    \[ \pr_{m}^{m\ell} t_{m\ell}(x_{m\ell}, z_{m\ell}) = \sigma_\ell\left((\ell-1)G_\ell(\sigma_\ell^{-1}) - \ell F_\ell(\sigma_\ell^{-1})\right) t_m\left( x_{m},   z_{m}\right).\]
   \end{corollary}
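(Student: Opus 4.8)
The plan is to reduce the asserted identity to two facts already in hand: the trace formula for $x'_m$ established in the preceding proposition, and the corestriction relation $\cores^{m\ell}_m(z_{m\ell}) = F_\ell(\sigma_\ell^{-1})z_m$ assumed for the family $(z_m)$. The bridge between them is a single change-of-level compatibility for the pairing $t_m$, which I would prove first.

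\emph{Step 1 (change-of-level formula for $t_m$).} The main step is to show that, for $\ell \nmid m$ with $\ell \notin \Sigma$, any $x \in D_{m\ell p^\infty}$, and any $z \in H^1_{\Iw}(\QQ(\mu_{m\ell p^\infty}), V)$,
\[ \pr^{m\ell}_m\, t_{m\ell}(x, z) = t_m\!\left(\tr^{m\ell}_m x,\ \cores^{m\ell}_m z\right). \]
To prove this I would expand $t_{m\ell}(x,z) = \sum_{\sigma \in G_{m\ell}} [\sigma]\,\trace_{\QQ(\mu_{m\ell})/\QQ}\langle \sigma x, \mathcal{L}^\Gamma_{\QQ(\mu_{m\ell}),V}(z)\rangle_{\cris}$ and group the terms according to the image $\tau$ of $\sigma$ in $G_m$; since $\ell \nmid m$ we have $G_{m\ell} = G_m \times G_\ell$, so the $\sigma$ over a fixed $\tau$ are the $\tilde\tau\rho$ with $\rho \in \Gal(\QQ(\mu_{m\ell})/\QQ(\mu_m))$ and $\tilde\tau$ a chosen lift. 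Then $\sum_\rho \rho x = \tr^{m\ell}_m x$ lies in $D_{mp^\infty}$, so the inner sum becomes $\trace_{\QQ(\mu_{m\ell})/\QQ}\langle \tilde\tau\,\tr^{m\ell}_m x, \mathcal{L}^\Gamma_{\QQ(\mu_{m\ell}),V}(z)\rangle_{\cris}$, which is independent of the lift. Next I would factor $\trace_{\QQ(\mu_{m\ell})/\QQ} = \trace_{\QQ(\mu_m)/\QQ}\circ\trace_{\QQ(\mu_{m\ell})/\QQ(\mu_m)}$, slide the inner trace past the crystalline pairing (whose left argument is now $\QQ(\mu_m)$-rational), and invoke the corestriction compatibility of the Perrin--Riou regulator, namely $\trace_{\QQ(\mu_{m\ell})/\QQ(\mu_m)}\mathcal{L}^\Gamma_{\QQ(\mu_{m\ell}),V}(z) = \mathcal{L}^\Gamma_{\QQ(\mu_m),V}(\cores^{m\ell}_m z)$, which holds because $\ell \ne p$, so $\QQ(\mu_{m\ell})/\QQ(\mu_m)$ is unramified at $p$ and the local regulators at the primes above $p$ are related by the field trace. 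Reassembling gives the displayed formula.

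\emph{Step 2 (substitute and simplify).} I would then apply Step 1 with $x = x_{m\ell} = x'_{m\ell}\,v$ and $z = z_{m\ell}$. The hypothesis gives $\cores^{m\ell}_m z_{m\ell} = F_\ell(\sigma_\ell^{-1})z_m$, while the preceding proposition, tensored with the fixed $v \in D$, gives $\tr^{m\ell}_m x_{m\ell} = \sigma_\ell^{-1}F_\ell(\sigma_\ell)^{-1}\big((\ell-1)G_\ell(\sigma_\ell) - \ell F_\ell(\sigma_\ell)\big)x_m$, using that after tracing down one works in $Q \otimes_\QQ \QQ(\mu_m)$, on which $\hat\sigma_\ell$ acts as the genuine Frobenius $\sigma_\ell \in \Gamma_m$. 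Invoking the equivariance $t_m(\sigma x, \tau z) = [\sigma^{-1}\tau]\cdot t_m(x,z)$ extended $\cH_E$-linearly --- so that a rational operator in $\sigma_\ell$ applied to the first argument factors out as the same expression evaluated at $\sigma_\ell^{-1}$, while an operator applied to the second argument factors out unchanged --- the right-hand side of Step 1 becomes
\[ \sigma_\ell\, F_\ell(\sigma_\ell^{-1})^{-1}\big((\ell-1)G_\ell(\sigma_\ell^{-1}) - \ell F_\ell(\sigma_\ell^{-1})\big)\cdot F_\ell(\sigma_\ell^{-1})\cdot t_m(x_m, z_m). \]
The two copies of $F_\ell(\sigma_\ell^{-1})$ cancel --- they are invertible, by the proposition on invertibility of $F_\ell(\hat\sigma_\ell)$, and commute, being polynomials in the central element $\sigma_\ell$ --- leaving exactly $\sigma_\ell\big((\ell-1)G_\ell(\sigma_\ell^{-1}) - \ell F_\ell(\sigma_\ell^{-1})\big)t_m(x_m, z_m)$, as claimed.

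The hard part will be Step 1: one must keep track of the two nested field traces together with the group-ring projection $\pr^{m\ell}_m$, check that the computation is independent of the auxiliary lift $\tilde\tau$, and --- the one genuinely non-formal ingredient --- apply the corestriction compatibility of the Perrin--Riou regulator at the primes above $p$, using $\ell \ne p$. Once that is in place, Step 2 is a purely formal manipulation of operators in the group ring, and no further input is needed.
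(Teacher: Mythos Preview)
Your proposal is correct. The paper states this corollary without proof, as an immediate consequence of the preceding proposition (the trace formula for $x'_m$) together with the stated equivariance $t_m(\sigma x,\tau z)=[\sigma^{-1}\tau]\,t_m(x,z)$; your two-step argument supplies precisely the details the reader is meant to fill in, including the change-of-level identity $\pr^{m\ell}_m t_{m\ell}(x,z)=t_m(\tr^{m\ell}_m x,\cores^{m\ell}_m z)$ via the corestriction compatibility of the Perrin--Riou regulator.
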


   By base extension we may regard $t_m(x_m, -)$ as a map $\bigwedge^2_{\Lambda(\Gamma_m)} M_m \to M_m$, where
   \[  M_m = Q \otimes_{\Lambda(\Gamma)} H^1_{\Iw}(\QQ(\mu_{mp^\infty}), V), \]
   so it makes sense to evaluate $t_m(x_m, -)$ against a rank 2 Euler--Iwasawa system.

   We now specialize to the case where $V = V_{L_\lambda}(f, g)^*$, for some weight 2 eigenforms $(f, g)$ of levels  divisible only by primes in $\Sigma - \{p\}$. We take $G_\ell(X) = 1 - \varepsilon_\ell(f) \varepsilon_\ell(g) X^2$ and $F_\ell(X) = P_\ell(\ell^{-1}X)$ as before. Choose a $p$-stabilization $(\alpha, \gamma)$ of $f$ and $g$, and let $v = v_\alpha \otimes v_\gamma$ be the obvious $\vp$-eigenvector in $\Dcris(V^*)$ of eigenvalue $\alpha \gamma$.

   \begin{proposition}
    Let $(w_m)_{m \ge 1}$ be an Euler--Iwasawa system of rank 2 for $(T, A, \Sigma)$, for some lattice $T$ in $V$, and let
    \[ v_m = t_m(x_m, w_m).\]
    Then we have
    \[ v_m \in \cH_h(\Gamma) \otimes_{\Lambda(\Gamma)} Q \otimes_{\Lambda(\Gamma)} H^1(\QQ(\mu_{mp^\infty}), V),\]
    where $h = v_p(\alpha \gamma)$; and the elements $v_m$ satisfy the compatibility relation
    \[ \cores_m^{m\ell} v_{m\ell} = \sigma_\ell\left((\ell-1)G_\ell(\sigma_\ell^{-1}) - \ell F_\ell(\sigma_\ell^{-1})\right) v_m.\]
   \end{proposition}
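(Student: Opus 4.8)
The plan is to reduce the statement to the corollary on Otsuki functionals proved just above. The point is that $v_m = t_m(x_m, w_m)$ is manifestly an element of $\cH_E[\Gamma_m]$ by construction of the pairing $t_m$, and applying base extension, of $\cH_h(\Gamma)\otimes_{\Lambda(\Gamma)} Q\otimes_{\Lambda(\Gamma)} H^1(\QQ(\mu_{mp^\infty}),V)$. So the two things requiring proof are (i) the precise integrality/growth bound, namely that $v_m$ lies in $\cH_h(\Gamma)\otimes(\cdots)$ with $h=v_p(\alpha\gamma)$ (and not merely in the larger ring $Q\otimes(\cdots)$), and (ii) the norm-compatibility relation, which is immediate once we identify the family $(F_\ell,G_\ell)$ correctly.

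First I would dispense with (ii). The Euler--Iwasawa system $(w_m)$ of rank $2$ satisfies, by definition of an Euler--Iwasawa system with the chosen $\Sigma$, the relation
\[ \cores_{\QQ(\mu_{mp^\infty})}^{\QQ(\mu_{m\ell p^\infty})} w_{m\ell} = p_\ell(\sigma_\ell^{-1}) w_m = P_\ell(\ell^{-1}\sigma_\ell^{-1}) w_m = F_\ell(\sigma_\ell^{-1}) w_m\]
for $\ell\nmid m\Sigma$, which is exactly the hypothesis of the preceding Corollary applied to the rank-$1$ "component" obtained by contracting with $t_m(x_m,-)$; more precisely, one applies that Corollary componentwise after writing $w_m$ as a sum of wedges of elements of $M_m$, using that $t_m(x_m,-)$ is $\Lambda(\Gamma_m)$-linear in the cohomological variable and that $x_m = x_m' v$ with $v$ fixed. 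This yields
\[ \cores_m^{m\ell} v_{m\ell} = \sigma_\ell\left((\ell-1)G_\ell(\sigma_\ell^{-1}) - \ell F_\ell(\sigma_\ell^{-1})\right) v_m,\]
which is the asserted compatibility. The only subtlety here is bookkeeping with the $\iota$-twist (the antilinearity of $\langle,\rangle_\cris$ in the first variable), but this was already absorbed into the normalisations when $t_m$ and $x_m'$ were defined, so it goes through formally.

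The substantive part is (i), the growth estimate, and this is where I expect the main obstacle to lie. The regulator $\mathcal L^\Gamma_{\QQ(\mu_m),V}$ has image in $\QQ(\mu_m)\otimes\cH_E(\Gamma)\otimes\Dcris(V)$, but without any control the denominators of order-of-growth type are governed by the slopes of Frobenius on all of $\Dcris(V)$, whose eigenvalues are $\alpha\gamma,\alpha\delta,\beta\gamma,\beta\delta$ with valuations up to $v_p(\beta\delta)>1$. The key observation is that $v = v_\alpha\otimes v_\gamma$ is a $\varphi$-eigenvector of eigenvalue $\alpha\gamma$, so pairing against $\sigma x_m = \sigma x_m' v$ only sees the component of $\mathcal L^\Gamma(w_m)$ in the $\varphi$-eigenspace (or rather the dual $\varphi$-eigenline) of eigenvalue $\alpha\gamma$; the growth of the resulting distribution is then $O(\log^{h}_p)$ with $h = v_p(\alpha\gamma)$, by the standard estimates for the Perrin-Riou/Coleman regulator in a fixed eigen-component (as in the small-slope constructions invoked for Theorem \ref{thm:Eulersystem}). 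Concretely I would factor $\mathcal L^\Gamma_{\QQ(\mu_m),V} = \sum_\eta \mathcal L^\Gamma_\eta$ over the $\varphi$-eigenlines $\eta$ of $\Dcris(V^*)$, note that $\langle \sigma x_m, \mathcal L^\Gamma_\eta(w_m)\rangle_\cris$ vanishes unless $\eta$ is the $\alpha\gamma$-line, and then invoke the known bound on the order of growth of the $\alpha\gamma$-piece — this is essentially \cite[\S5]{loefflerzerbes10} together with the Wach-module description of lattices in $\Dcris(V_{L_\fp}(g)^*)$, exactly as used in the proof of the integrality of the Poincaré pairing earlier in this section. Multiplying by $x_m'$, which is a unit in $Q\otimes\QQ(\mu_m)$ and contributes no extra growth (its entries are algebraic, hence of bounded valuation), preserves this estimate; so $v_m\in\cH_h(\Gamma)\otimes_{\Lambda(\Gamma)}Q\otimes_{\Lambda(\Gamma)}H^1(\QQ(\mu_{mp^\infty}),V)$, as claimed. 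The hard part will be making the eigen-component growth bound precise uniformly in $m$, i.e.\ checking that the power of $\log_p$ does not grow with the number of prime factors of $m$; this follows because the $G_m$-twist only permutes a fixed finite set of contributions and the trace $\trace_{\QQ(\mu_m)/\QQ}$ does not degrade the order of growth.
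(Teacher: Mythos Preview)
The paper states this proposition without proof; it is meant to follow directly from the material immediately preceding it. Your approach is correct and matches the paper's implicit reasoning: the compatibility relation is exactly the content of the preceding corollary (once $t_m(x_m,-)$ has been extended to a map $\bigwedge^2 M_m \to M_m$ by base extension, as the paper explains just before the proposition), and the growth bound $h = v_p(\alpha\gamma)$ is forced by the choice $v = v_\alpha \otimes v_\gamma$, a $\varphi$-eigenvector in $\Dcris(V^*)$ of eigenvalue $\alpha\gamma$, so that only that Frobenius slope governs the growth of the paired distribution.

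One small correction: your appeal to Theorem~\ref{thm:Eulersystem} for the growth bound is misplaced --- that theorem constructs Iwasawa classes from finite-level ones, whereas what you need here is the standard slope-bound for the Perrin-Riou regulator $\mathcal{L}^\Gamma$ restricted to a $\varphi$-eigencomponent, which is part of the general theory of the big logarithm map (Perrin-Riou's $\Omega_{V,h}$). The reference to Wach modules and \cite{loefflerzerbes10} is also somewhat tangential; the relevant input is simply that pairing with a $\varphi$-eigenvector of eigenvalue $\lambda$ yields a distribution of order $v_p(\lambda)$.
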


   Note that the growth condition $\cH_h(\Gamma)$ is consistent with what we have seen for the elements $\mathfrak{z}_m^{f_\alpha, g_\gamma, Np}$ (cf. Theorem \ref{thm:Eulersystem}) and the compatibility condition between levels $m$ and $m\ell$ is consistent with Theorem \ref{thm:secondnormrelationprime}. This suggests the following conjecture:

   \begin{conjecture}
    Then there exists a rank 2 Euler--Iwasawa system $(w_m)$ for $(T_{\cO_{\lambda}}(f, g)^*, A, \Sigma)$ with the property that for all $m \in A$, and all choices of $p$-stabilizations $(\alpha, \gamma)$ of $(f, g)$, the Iwasawa cohomology class $\mathfrak{z}_m^{f_\alpha, g_\gamma, Np}$ of Theorem \ref{thm:Eulersystem} is given by
    \[ \fz_m^{f_\alpha, g_\gamma, Np} = t_m(x_m, w_m) \]
    in the notation above.
   \end{conjecture}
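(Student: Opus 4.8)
The final statement is a conjecture rather than a theorem, so what follows is a proposed line of attack. The plan has three stages: first, construct a candidate rank-$2$ Euler--Iwasawa system $(w_m)$ directly from geometry; second, verify that it satisfies the norm-compatibility relation with the \emph{un-deformed} Euler factor $p_\ell(\sigma_\ell^{-1})$; third, evaluate Otsuki's functional $t_m(x_m,-)$ on it and match the output with the $p$-stabilized Iwasawa classes of Theorem \ref{thm:Eulersystem}.

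For the first stage, the natural candidate is the \'etale realization of the zeta elements ${}_c\cZ$ on the tower of surfaces $Y(p^r, Np^r)^2$ discussed in \S\ref{sect:dreams}, assembled via Ohta's $\Lambda$-adic modules applied to \emph{both} modular-curve factors (as in Theorems \ref{thm:twovarhida} and \ref{thm:threevarhida}), and then projected onto the \emph{full} $(f,g)$-isotypic piece $H^2_\et(\overline{Y_1(Np)^2}, L_\fp)_{f,g}$ rather than onto one of the four $p$-stabilized quotients. Since $V=V_{L_\fp}(f)^*\otimes V_{L_\fp}(g)^*$ has $\dim V^{c=-1}=2$, the module $H^1_{\Iw}(\QQ(\mu_{mp^\infty}), V)$ should have $\Lambda(\Gamma)$-rank $2$, so that the procedure produces a generator $w_m$ of a submodule of $\bigwedge^2_{\Lambda(\Gamma_m)} H^1_{\Iw}(\QQ(\mu_{mp^\infty}), V)$; the two wedge factors correspond exactly to the two $U_p'$-eigenlines in each factor that one is not allowed to separate integrally. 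An alternative, purely formal construction is to \emph{define} $w_m$ by inverting the system of Otsuki functionals: the four eigenvectors $v_\alpha\otimes v_\gamma$, $v_\alpha\otimes v_\delta$, $v_\beta\otimes v_\gamma$, $v_\beta\otimes v_\delta$ span $\Dcris(V^*)$, so in principle $w_m$ can be reconstructed from the four classes $\fz_m^{f_\bullet, g_\bullet, Np}$; the obstruction here is that the reconstruction a priori introduces the Euler-factor denominators built into $t_m$, so one only obtains $w_m$ in $\cH_E(\Gamma)\otimes\bigwedge^2 H^1_{\Iw}$, and descending to $\bigwedge^2 H^1_{\Iw}$ — or even checking that the four classes are mutually consistent enough for the reconstruction to exist — is itself substantial.

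For the second stage, compatibility of $(w_m)$ under corestriction at $\ell\nmid mNp$ should follow from the zeta-element form of Theorem \ref{thm:secondnormrelationprime} (the remark following that theorem asserts the argument extends to ${}_c\cZ$), together with the first norm relation (Theorem \ref{thm:firstnormrelation}) in the $Np^r$ direction and Ohta's control theorem (Theorem \ref{thm:ohtacontrol}). For the third stage one must compute $t_m(x_m, w_m)$ explicitly: contracting with the $\varphi$-eigenvector $v_\alpha\otimes v_\gamma$ of eigenvalue $\alpha\gamma$ is the $p$-adic-Hodge-theoretic shadow of the idempotent $J_{\alpha,\gamma}$ of \S\ref{sect:pstab}, and the product of Euler-type factors it produces should be precisely the one already appearing in Corollary \ref{cor:pstabformula} (the factor $\bigl(1-\tfrac{\beta\delta}{p}\sigma_p^{-1}\bigr)\bigl(1-\tfrac{\alpha\delta}{p}\sigma_p^{-1}\bigr)\bigl(1-\tfrac{\beta\gamma}{p}\sigma_p^{-1}\bigr)$ there encodes the same data). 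Combining this with the normalization $\fz_m^{f_\alpha, g_\gamma, Np}$ receives in Theorem \ref{thm:Eulersystem}, and with the $\cH_h$-growth estimate (consistent with the Proposition preceding the conjecture), should then close the argument.

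The hard part is the first stage. As explained in \S\ref{sect:dreams}, the geometric construction is forced through the twisted degeneracy maps $\tau_p$ (corresponding to $z\mapsto z/p$ on $\cH$), whose norm relation involves the operators $U_p'$; consequently the procedure naturally produces $U_p'$-eigenvector — that is, $p$-stabilized — classes, and manufacturing from it a genuinely $\Lambda(\Gamma)$-integral element of $\bigwedge^2 H^1_{\Iw}$ independent of the $p$-stabilization appears to require an input not currently available, for instance a $\Lambda$-adic Eisenstein-class construction over the full $\GL_2(\Zp)\times_{\det}\GL_2(\Zp)$-tower. I expect that partial results are within reach — the existence of $w_m$ after inverting $p$, or under simultaneous ordinarity of $f$ and $g$ where the anti-ordinary projectors make the relevant modules $\Lambda$-free — but that the general integral statement is likely to remain genuinely open.
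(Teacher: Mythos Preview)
The paper does not prove this statement: it is stated as a genuine conjecture, with no proof or even sketch offered. The surrounding text merely observes that the growth condition $\cH_h(\Gamma)$ and the norm-compatibility relation established for the classes $\mathfrak{z}_m^{f_\alpha, g_\gamma, Np}$ are \emph{consistent} with what one would obtain by applying Otsuki's functional to a hypothetical rank-$2$ system, and the authors ``express their cautious hope'' that such phenomena occur more generally. There is therefore nothing in the paper to compare your proposal against.

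That said, your proposal is a sensible outline, and you have correctly located the essential difficulty. Your third stage (matching $t_m(x_m,w_m)$ with the $p$-stabilized class via the Euler-factor computation of Corollary~\ref{cor:pstabformula}) is exactly the kind of compatibility check the paper has in mind as motivation. Your first stage, however, is genuinely open: the authors themselves flag in \S\ref{sect:dreams} that they do not know how to build norm-compatible elements under the \emph{untwisted} degeneracy maps on the $\GL_2(\Zp)\times_{\det}\GL_2(\Zp)$-tower, which is precisely the input your geometric construction would require. Your alternative ``reconstruct $w_m$ from the four $p$-stabilized classes'' is also reasonable as a heuristic, but as you note it only yields an element after inverting the Euler factors, and the integrality (or even the mutual consistency of the four classes needed for the reconstruction to land in $\bigwedge^2 H^1_{\Iw}$) is not addressed by anything in the paper. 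So your assessment that the conjecture is likely to remain open, with partial results possible under ordinarity, is accurate and matches the authors' own expectations.
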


   This gives a conceptual explanation for the (somewhat surprising) growth and compatibility properties of the generalized Beilinson--Flach elements in the context of Perrin-Riou's theory of higher-rank Euler systems. The authors would like to express their cautious hope that similar rank 1 ``shadows'' of higher rank Euler systems might also exist in other contexts.

   \begin{remark}
    Note that it is implicit in this conjecture that the elements $t_m(x_m, w_m)$ have no poles (except possibly at the trivial character), so the singularities of $t_m$, at the characters where one of the $F_\ell(\hat\sigma_\ell)$ for $\ell \mid m$ fails to be invertible, must be ``cancelled out'' by zeroes of $w_m$.
   \end{remark}

\appendix

 \section{Ancillary results}

  \subsection{Fixed points of double cosets}

  Here we shall prove a result that is used in the proof of Theorem \ref{thm:secondnormrelationprime} above.

  \newcommand{\Pslt}{\operatorname{PSL}_2(\RR)}

  Let $\Gamma$ be a discrete subgroup of $\Pslt$. Recall that a \emph{fundamental domain} for $\Gamma$ is a closed subset $D$ of $\cH$ such that
  \begin{itemize}
   \item $D$ is equal to the closure of its interior $D^\circ$,
   \item $\bigcup_{\gamma \in \Gamma} \gamma D = \cH$,
   \item $\gamma D^\circ \cap D^\circ = \varnothing$ for all non-identity elements $\gamma \in \Gamma$.
  \end{itemize}

  We assume henceforth that $\Gamma$ is a \emph{Fuchsian group of the first kind}, i.e.~that $\Gamma$ admits a fundamental domain $D$ with finite hyperbolic area. We shall say that a fundamental domain $D$ is \emph{polygonal} if $D$ is the region bounded by a finite number of geodesic arcs in $\cH$; it is known that every $\Gamma$ admits a polygonal fundamental domain.

  \begin{lemma}
   \label{lemma:comparable}
   Let $D$ be a Dirichlet domain for $\Gamma$, and let $E = \alpha D$ where $\alpha$ lies in the commensurator $\operatorname{Comm}(\Gamma)$. Then there are only finitely many $\gamma \in \Gamma$ such that $\alpha D \cap \gamma D \ne \varnothing$.
  \end{lemma}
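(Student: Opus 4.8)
The plan is to reduce the finiteness statement to a local compactness argument in the hyperbolic plane, using the polygonal structure of fundamental domains. First I would observe that since $\alpha \in \operatorname{Comm}(\Gamma)$, the intersection $\Gamma' = \Gamma \cap \alpha \Gamma \alpha^{-1}$ has finite index in $\Gamma$, and $D' = \bigcup_{i} \gamma_i D$ (a union over coset representatives $\gamma_i$ of $\Gamma' \backslash \Gamma$) is a fundamental domain for $\Gamma'$ with finite hyperbolic area, which may be taken to be polygonal. The set $E = \alpha D$ is then contained in finitely many $\Gamma'$-translates of $D'$ (since $\alpha D' $ is a fundamental domain for the conjugate group $\alpha \Gamma' \alpha^{-1} = \Gamma \cap \alpha \Gamma \alpha^{-1} = \Gamma'$, wait — more carefully, $\alpha D$ is bounded by finitely many geodesic arcs and has finite area, hence meets only finitely many $\Gamma'$-translates of $D'$ by the standard local-finiteness property of fundamental domains).

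Next I would use local finiteness of the tiling $\{\gamma D : \gamma \in \Gamma\}$: because $D$ is polygonal, it is a locally finite cover, meaning every compact subset of $\cH$ meets only finitely many tiles $\gamma D$. The closure $\overline{\alpha D}$ in $\cH$ need not be compact if $D$ is non-compact (e.g.\ has cusps), so the key technical point is to handle the cusps. Here I would invoke the fact that a Fuchsian group of the first kind has finitely many cusps modulo $\Gamma$, and near each cusp the tiling looks (in a horocyclic coordinate) like a union of translates of a fixed horoball sector; the condition $\alpha D \cap \gamma D \neq \varnothing$ near a cusp then forces $\gamma$ to lie in a union of finitely many cosets of the stabilizer of that cusp, and one checks directly (using that $\alpha$ maps cusps of $\Gamma$ to cusps of $\alpha\Gamma\alpha^{-1}$, which are commensurable data) that only finitely many such $\gamma$ actually produce a nonempty intersection.

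The main obstacle I anticipate is precisely this cusp analysis: away from the cusps, $\overline{\alpha D}$ minus small horoball neighbourhoods of the (finitely many) cusps is compact, so local finiteness of the tiling immediately bounds the number of relevant $\gamma$; but controlling the behaviour inside the horoballs requires knowing that $\alpha D$ is itself "thin" at infinity in a way compatible with the $\Gamma$-tiling. The cleanest route is probably to pass to $\Gamma'' = \Gamma \cap \alpha^{-1}\Gamma\alpha$ as well, so that both $D$ and $\alpha D$ become finite unions of translates of a common finer fundamental domain $D''$ which is a fundamental domain for a group containing neither but commensurable with both — then $\alpha D \cap \gamma D \ne \varnothing$ is implied by $\gamma$ lying in a finite union of the $\Gamma''$-cosets indexing these translates, and local finiteness of the $D''$-tiling (same cusp structure, now genuinely applicable) finishes it.

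Finally I would note that the statement of the lemma as written actually hypothesises a \emph{Dirichlet} domain $D$, which is automatically polygonal and for which the local finiteness property is classical (Beardon, \emph{The Geometry of Discrete Groups}, \S 9.4--9.6); so in the write-up I would cite this and present the cusp bookkeeping as the one genuinely new ingredient, keeping it brief since it is a routine if slightly fiddly verification.
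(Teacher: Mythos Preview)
Your overall strategy---decompose $\alpha D$ into a compact piece plus finitely many cusp neighbourhoods, dispatch the compact piece by local finiteness of the $\Gamma$-tiling, and then handle each cusp neighbourhood separately---is exactly what the paper does. The paper cites Katok for the compact part and then observes the key point you only gesture at: since $\alpha \in \operatorname{Comm}(\Gamma)$, the sets of parabolic points of $\Gamma$ and $\alpha\Gamma\alpha^{-1}$ coincide, so each vertex-at-infinity $x_i$ of $\alpha D$ is a cusp of $\Gamma$; one chooses $\gamma$ carrying a cusp $y_i$ of $D$ to $x_i$, and then the cusp neighbourhood $N_i$ is covered by finitely many translates $\gamma\gamma' D$ with $\gamma'$ in the (parabolic) stabiliser of $y_i$. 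That is the whole cusp argument, and it is short.

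Your ``cleanest route'' via a common subgroup $\Gamma''$ is both unnecessary and, as stated, does not work: if $\Gamma'' \subset \Gamma$ has finite index and $D''$ is a fundamental domain for $\Gamma''$, then $D''$ is a finite union of $\Gamma$-translates of $D$, not the other way round, so $D$ is not a finite union of $\Gamma''$-translates of $D''$. More to the point, passing to a finer group does not eliminate the cusp difficulty---$D''$ still has cusps, and you would face the same issue of showing $\alpha D$ meets only finitely many $\Gamma''$-tiles near infinity. Drop that paragraph and make the parabolic-points-coincide observation explicit in your direct cusp analysis; then your proof is the paper's proof.
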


  \begin{proof}
   It is clear that $\alpha D$ is a Dirichlet domain for $\alpha \Gamma \alpha^{-1}$. In particular, it is polygonal. Hence it can be decomposed as the union of a compact set $M$ and a finite number $N_i$ of ``cusp neighbourhoods'', which are subsets bounded by two geodesics intersecting at a vertex at infinity, which is a parabolic point $x_i$ of $\alpha\Gamma\alpha^{-1}$ on the boundary $\mathbb{P}^1(\RR)$, and an arc of a Euclidean circle tangent to the real line at $x_i$.

   Since $M$ is compact, it can intersect only finitely many $\Gamma$-translates of $D$ (cf.~\cite[Theorem 3.5.1]{katok92}). Moreover, since $\alpha \in \operatorname{Comm}(D)$, the sets of parabolic points of $\Gamma$ and $\alpha \Gamma \alpha^{-1}$ are the same; so for each vertex-at-infinity $x$ of $\alpha D$, we may choose some $\gamma \in \Gamma$ which maps a vertex-at-infinity $y_i$ of $D$ to $x_i$, and it is clear that $N_i$ is contained in a finite union of translates of $\gamma \gamma' D$ where $\gamma'$ lies in the stabilizer of $y_i$. Each of these, in turn, intersects finitely many other translates of $D$ (since $D$ has finitely many sides).
  \end{proof}

  \begin{lemma}
   \label{lem:finitefixedpoints}
   Let $\Gamma$ be Fuchsian group of the first kind, and let $X \subset \operatorname{Comm}(\Gamma)$ be a finite union of double cosets $\Gamma \alpha \Gamma$. Then the set
   \[ \operatorname{Fix}(X) = \{ u \in \cH : \text{$\gamma u = u$ for some $\gamma \in X$, $\gamma \ne 1$}\}\]
   is a finite union of $\Gamma$-orbits in $\cH$.
  \end{lemma}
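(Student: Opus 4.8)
The plan is to reduce everything to the left-coset structure of $X$ and then feed the geometry into Lemma~\ref{lemma:comparable}. Since $X$ is a finite union of double cosets $\Gamma\alpha\Gamma$, and each such double coset is in fact a \emph{finite} union of left cosets $\Gamma\beta$ — the number of these being $[\Gamma:\Gamma\cap\alpha^{-1}\Gamma\alpha]<\infty$ precisely because $\alpha\in\operatorname{Comm}(\Gamma)$ — we may write $X=\bigsqcup_{i=1}^{h}\Gamma\beta_i$ for finitely many elements $\beta_1,\dots,\beta_h\in\operatorname{Comm}(\Gamma)$.

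Next I would fix a Dirichlet domain $D$ for $\Gamma$ (which exists, with finite hyperbolic area, since $\Gamma$ is Fuchsian of the first kind), so that every $\Gamma$-orbit in $\cH$ meets $D$. Recall that a non-identity element of $\operatorname{PSL}_2(\RR)$ fixes at most one point of $\cH$ (it fixes one exactly when it is elliptic, none when parabolic or hyperbolic). Hence it suffices to prove that only finitely many $\gamma\in X$ have a fixed point lying in $D$: this shows $D\cap\operatorname{Fix}(X)$ is finite, and the bound on the number of $\Gamma$-orbits in $\operatorname{Fix}(X)$ then follows once one observes that $\operatorname{Fix}(X)$ is $\Gamma$-stable. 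That last point is where the hypothesis that $X$ is a union of \emph{double} cosets is used: if $\eta\in\Gamma$ and $\gamma\in X$ with $\gamma u=u$, then $\eta\gamma\eta^{-1}$ fixes $\eta u$, and since $\eta^{-1}(\eta\gamma\eta^{-1})\eta=\gamma$ the element $\eta\gamma\eta^{-1}$ lies in the same double coset as $\gamma$, hence in $X$. So every $\Gamma$-orbit in $\operatorname{Fix}(X)$ contains a point of $D\cap\operatorname{Fix}(X)$, and finiteness of the latter gives the result.

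The key step is thus the finiteness of $\{\gamma\in X:\operatorname{Fix}(\gamma)\cap D\ne\varnothing\}$. Suppose $u\in D$ and $\gamma u=u$ with $\gamma\in\Gamma\beta_i$, say $\gamma=\delta\beta_i$ with $\delta\in\Gamma$. Then $\beta_i u=\delta^{-1}u$, so the point $\beta_i u$ lies in $\beta_i D\cap\delta^{-1}D$, and in particular $\beta_i D\cap\delta^{-1}D\ne\varnothing$. Applying Lemma~\ref{lemma:comparable} with $\beta_i\in\operatorname{Comm}(\Gamma)$ playing the role of ``$\alpha$'' there, only finitely many $\delta^{-1}\in\Gamma$ can satisfy this, hence only finitely many $\delta$, hence only finitely many $\gamma\in\Gamma\beta_i$ with a fixed point in $D$. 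Ranging over the finitely many indices $i$ finishes the proof.

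I do not expect a real obstacle: the content has been localized in Lemma~\ref{lemma:comparable}, and the remaining ingredients — finiteness of the left-coset decomposition of a double coset in the commensurator, the one-fixed-point property for elements of $\operatorname{PSL}_2(\RR)$, and the conjugation-stability of a union of double cosets — are all routine. The only place demanding a little attention is making sure the element $\beta_i$ to which Lemma~\ref{lemma:comparable} is applied genuinely lies in $\operatorname{Comm}(\Gamma)$, which holds because $\Gamma\beta_i\subseteq X\subseteq\operatorname{Comm}(\Gamma)$ and $\operatorname{Comm}(\Gamma)$ is a group containing $\Gamma$.
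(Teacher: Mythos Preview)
Your argument is correct and follows essentially the same route as the paper: reduce to finitely many left cosets, use conjugation-stability of $X$ to see that $\operatorname{Fix}(X)$ is $\Gamma$-invariant, and then invoke Lemma~\ref{lemma:comparable} to show that only finitely many elements of $X$ can have a fixed point in a fixed Dirichlet domain $D$. The only cosmetic difference is that you phrase the key geometric step as $\beta_i D\cap\delta^{-1}D\ne\varnothing$ (starting from $\beta_i u=\delta^{-1}u$), whereas the paper writes it as $xD\cap D\ne\varnothing$ for $x\in X$; unwinding either gives the same application of Lemma~\ref{lemma:comparable}.
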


  \begin{proof}
   Since $X - \{\operatorname{id}\}$ is preserved by conjugation by $\Gamma$, the set $\operatorname{Fix}(X)$ is a union of orbits of $\Gamma$. So it suffices to show that $\operatorname{Fix}(X) \cap D$ is finite, where $D$ is a Dirichlet domain for $\Gamma$.

   We claim that there are only finitely many $x \in X$ such that $xD \cap D \ne \varnothing$. From Lemma \ref{lemma:comparable}, we know that for each $\alpha \in G$ there are finitely many $\gamma \in \Gamma$ such that $\gamma D \cap \alpha D \ne \varnothing$, and hence finitely many $x \in \Gamma \alpha$ such that $x D \cap D \ne \varnothing$. Since $X$ is the union of finitely many left cosets $\Gamma \alpha_i$, this implies the claim.

   However, each non-identity element in the finite set $\{ x \in X : xD \cap D \ne \varnothing\}$ can only have finitely many fixed points in $\cH$, and in particular in $D$; so $\operatorname{Fix}(X) \cap D$ is finite, as required.
  \end{proof}

  \begin{lemma}
   Let $\Gamma_1, \Gamma_2$ be commensurable Fuchsian groups of the first kind. Then the set
   \[ \{ u \in \cH : \exists c \in \Gamma_1, d \in \Gamma_2 \text{ such that } cd \ne 1 \text{ and } cd u = u \}\]
   is a finite union of orbits under $\Gamma_1 \cap \Gamma_2$.
  \end{lemma}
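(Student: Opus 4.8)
The plan is to reduce this final statement to Lemma \ref{lem:finitefixedpoints}, which has already been proved. The key observation is that the set described here is precisely $\operatorname{Fix}(X)$ for a suitable $X$. Indeed, set $\Gamma = \Gamma_1 \cap \Gamma_2$; since $\Gamma_1$ and $\Gamma_2$ are commensurable Fuchsian groups of the first kind, so is $\Gamma$, and both $\Gamma_1$ and $\Gamma_2$ lie in $\operatorname{Comm}(\Gamma)$. I would take
\[ X = \Gamma_1 \cdot \Gamma_2 = \{ cd : c \in \Gamma_1,\ d \in \Gamma_2\} \subseteq \operatorname{Comm}(\Gamma), \]
and observe that the set in the statement is exactly $\operatorname{Fix}(X)$ in the notation of Lemma \ref{lem:finitefixedpoints}. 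So it suffices to verify the two hypotheses of that lemma: that $X$ is stable under conjugation by $\Gamma$, and that $X$ is a finite union of double cosets $\Gamma \alpha \Gamma$.

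The first point is immediate: if $\gamma \in \Gamma = \Gamma_1 \cap \Gamma_2$, then $\gamma (cd) \gamma^{-1} = (\gamma c \gamma^{-1})(\gamma d \gamma^{-1})$ with $\gamma c \gamma^{-1} \in \Gamma_1$ and $\gamma d \gamma^{-1} \in \Gamma_2$, so $\gamma X \gamma^{-1} = X$. For the second point, I would argue as follows. Since $\Gamma = \Gamma_1 \cap \Gamma_2$ has finite index in both $\Gamma_1$ and $\Gamma_2$ (commensurability plus Fuchsian-of-the-first-kind), we may write $\Gamma_1 = \bigsqcup_{i=1}^{a} \Gamma \alpha_i$ and $\Gamma_2 = \bigsqcup_{j=1}^{b} \beta_j \Gamma$ as finite unions of cosets. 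Then
\[ X = \Gamma_1 \Gamma_2 = \bigcup_{i=1}^{a} \bigcup_{j=1}^{b} \Gamma \alpha_i \beta_j \Gamma, \]
a finite union of double cosets of $\Gamma$ inside $\operatorname{Comm}(\Gamma)$, as required. (One checks $\Gamma \alpha_i \Gamma \cdot \beta_j \Gamma \subseteq \Gamma \alpha_i \beta_j \Gamma$ since $\Gamma \alpha_i \Gamma = \Gamma \alpha_i$ would not be needed — it is enough that $\Gamma_1 \Gamma_2$ is a union of sets of the form $\Gamma \gamma \Gamma$, which follows because $\Gamma_1 \Gamma_2$ is stable under left and right multiplication by $\Gamma$ and is a finite union of $\Gamma$-bicosets by the coset decomposition above.)

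With both hypotheses verified, Lemma \ref{lem:finitefixedpoints} applies directly and gives that $\operatorname{Fix}(X)$ is a finite union of $\Gamma$-orbits, which is exactly the assertion. The only mild subtlety — and the step I would be most careful about — is making sure the double-coset decomposition of $X$ is genuinely finite; this rests on the fact that $[\Gamma_1 : \Gamma] < \infty$ and $[\Gamma_2 : \Gamma] < \infty$, which in turn uses that commensurable Fuchsian groups of the first kind have commensurable (hence finite-index-in-each-other) intersection. Everything else is bookkeeping. I do not anticipate any real obstacle here, as the heavy lifting (the fundamental-domain/cusp-neighbourhood argument) is already done in Lemma \ref{lemma:comparable} and Lemma \ref{lem:finitefixedpoints}.
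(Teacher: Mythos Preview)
Your proposal is correct and follows exactly the same approach as the paper: the paper's proof is the single sentence ``This follows from the previous lemma applied to $\Gamma = \Gamma_1 \cap \Gamma_2$ and $X = \Gamma_1 \Gamma_2$,'' and you have simply written out the verification of the hypotheses of Lemma~\ref{lem:finitefixedpoints} in full. (Note that conjugation-stability is not a separate hypothesis of that lemma---it is automatic for a union of double $\Gamma$-cosets---so that part of your check, while harmless, is not needed.)
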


  \begin{proof}
   This follows from the previous lemma applied to $\Gamma = \Gamma_1 \cap \Gamma_2$ and $X = \Gamma_1 \Gamma_2$.
  \end{proof}

  (Note that if $\Gamma_1 = \Gamma_2$, or more generally if the group generated by $\Gamma_1$ and $\Gamma_2$ is Fuchsian, this generalizes the well-known result that Fuchsian groups of the first kind have finitely many elliptic points in $\Gamma$.)

  In particular, we have the following:

  \begin{proposition}
   \label{prop:generically injective}
   Let $\Gamma_1, \Gamma_2$ be commensurable Fuchsian groups of the first kind. Then the natural map
   \[ (\Gamma_1 \cap \Gamma_2) \backslash \cH \to (\Gamma_1 \backslash \cH) \times (\Gamma_2 \backslash \cH)\]
   is injective away from a finite subset of its domain.
  \end{proposition}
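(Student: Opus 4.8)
The plan is to deduce the proposition directly from the lemma immediately preceding it, which already identifies the locus of ``extra coincidences'' as a finite union of orbits; no new geometric input is required. First I would unwind what failure of injectivity means. Suppose $[u]$ and $[u']$ are points of $(\Gamma_1 \cap \Gamma_2)\backslash\cH$, represented by $u, u' \in \cH$, having the same image in $(\Gamma_1 \backslash \cH) \times (\Gamma_2 \backslash \cH)$; this says precisely that $\Gamma_1 u = \Gamma_1 u'$ and $\Gamma_2 u = \Gamma_2 u'$, so there exist $\gamma_1 \in \Gamma_1$ and $\gamma_2 \in \Gamma_2$ with $\gamma_1 u = u' = \gamma_2 u$. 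Consequently the element $g := \gamma_2^{-1}\gamma_1$ fixes $u$.

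Next comes the dichotomy. If $g = 1$, then $\gamma_1 = \gamma_2 \in \Gamma_1 \cap \Gamma_2$, so $u$ and $u'$ lie in the same $(\Gamma_1 \cap \Gamma_2)$-orbit and $[u] = [u']$ --- there is no genuine collision. If $g \ne 1$, then writing $g = cd$ with $c = \gamma_2^{-1} \in \Gamma_2$ and $d = \gamma_1 \in \Gamma_1$, the point $u$ belongs to
\[ S := \{\, v \in \cH : \exists\, c \in \Gamma_2,\ d \in \Gamma_1 \text{ with } cd \ne 1 \text{ and } cdv = v \,\}. \]
Applying the lemma immediately above with the roles of $\Gamma_1$ and $\Gamma_2$ interchanged (legitimate since $\Gamma_1 \cap \Gamma_2 = \Gamma_2 \cap \Gamma_1$), the set $S$ is a finite union of orbits of $\Gamma_1 \cap \Gamma_2$, so its image $F$ in $(\Gamma_1 \cap \Gamma_2)\backslash\cH$ is finite.

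Finally I would assemble the pieces: by the dichotomy, whenever the natural map identifies two \emph{distinct} points of $(\Gamma_1 \cap \Gamma_2)\backslash\cH$, at least one of them --- namely $[u]$ --- lies in the finite set $F$; hence the map is injective on the complement of $F$, which proves the proposition. The only points requiring care are bookkeeping ones: commensurability of $\Gamma_1$ and $\Gamma_2$ is exactly what ensures that $\Gamma_1 \cap \Gamma_2$ is itself a Fuchsian group of the first kind and that $\Gamma_1\Gamma_2 \subseteq \operatorname{Comm}(\Gamma_1 \cap \Gamma_2)$, so that the lemma (which ultimately rests on Lemma \ref{lem:finitefixedpoints} and Lemma \ref{lemma:comparable}) genuinely applies; and one must correctly match the configuration $gu = u$ to the precise shape of $S$. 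Since all the real difficulty --- finiteness of the fixed-point locus of a finite union of double cosets, via the polygonal fundamental domain analysis --- has already been absorbed into the preceding lemmas, I do not expect any substantive obstacle in this step.
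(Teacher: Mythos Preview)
Your proposal is correct and is essentially identical to the paper's own proof: both write a collision as $\gamma_1 u = u' = \gamma_2 u$, deduce that $\gamma_1^{-1}\gamma_2$ (or equivalently $\gamma_2^{-1}\gamma_1$) fixes $u$, and then invoke the preceding lemma to confine the nontrivial case to finitely many $(\Gamma_1 \cap \Gamma_2)$-orbits. Your extra remarks about swapping the roles of $\Gamma_1, \Gamma_2$ and about commensurability ensuring the lemma applies are accurate bookkeeping that the paper leaves implicit.
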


  \begin{proof}
   Let $z, z'$ be two points of $\cH$ such that $z' \in \Gamma_1 z$ and $z' \in \Gamma_2 z$. Then we may write $z' = \gamma_1 z$ for some $\gamma_1 \in \Gamma_1$ and $z' = \gamma_2 z$ for some $\gamma_2 \in \Gamma_2$.

   Hence $\gamma_1^{-1} \gamma_2 z = z$. So either $z$ lies in the finite subset $\operatorname{Fix}(\Gamma_1 \Gamma_2)$ of $(\Gamma_1 \cap \Gamma_2) \backslash \cH$, or $\gamma_1^{-1} \gamma_2 = 1$, in which case $z$ and $z'$ are clearly in the same orbit under $\Gamma_1 \cap \Gamma_2$.
  \end{proof}


  \subsection{Unbounded Iwasawa cohomology}
   \label{sect:unbounded cohomology}

   In this section, we shall consider inverse systems of cohomology classes in $\Zp^\times$-extensions which are not bounded (as in the usual definition of Iwasawa cohomology) but satisfy a weaker growth condition.

   Let $K$ be a finite extension of either $\QQ$ or $\Qp$. If $K$ is global, suppose that either $p \ne 2$, or $K$ has no real places.

   \begin{notation}
    In order to handle the two cases in a uniform manner, we shall adopt a notation that is slightly abusive: for $T$ a $\Zp$-representation of $\Gal(\bar K / K)$, the notation $H^i(K, T)$ will mean either $H^i(K, T)$ as defined above if $G$ is local, or what we previously called $H^i_S(K, T)$ if $K$ is global, where $S$ is some fixed finite set of places containing all infinite places and all those dividing $p$. In the latter case, we will assume that $S$ contains all primes at which $T$ is ramified.
   \end{notation}

    As before, we let $K_n = K(\mu_{p^n})$ and $K_\infty = \bigcup_n K_n$, and define Iwasawa cohomology groups $H^i_{\Iw}(K_\infty, T)$ as the inverse limit of the $H^i(K_n, T)$ with respect to corestriction, with their natural module structure over $\Lambda = \Lambda_{\Zp}(\Gamma)$.

   \begin{proposition}[Nekovar]
    For any $j \in \{0, 1, 2\}$, we have a short exact sequence
    \begin{equation}
     \label{eq:nekovarSES}
     0 \rTo H^j_{\Iw}(K_\infty, T)_{\Gamma_n} \rTo H^j(K_n, T) \rTo H^{j+1}_{\Iw}(K_\infty, T)^{\Gamma_n} \rTo 0
    \end{equation}
   \end{proposition}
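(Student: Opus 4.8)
The plan is to derive the exact sequence \eqref{eq:nekovarSES} from a standard descent (Hochschild--Serre / spectral sequence) argument applied to the group $\Gamma_n = \Gal(K_\infty / K_n)$, which is topologically isomorphic to $\Zp$ (or an open subgroup of $\Zp^\times$, hence procyclic of cohomological dimension $1$ for $p$ odd, and likewise under the stated hypothesis when $p = 2$). First I would recall that for a $\Zp[[\Gamma_n]]$-module $M$, there is a two-term resolution coming from the fact that $\Lambda$-cohomology of $\Gamma_n$ is computed by $M^{\Gamma_n}$ in degree $0$ and $M_{\Gamma_n}$ in degree $1$ (the kernel and cokernel of $\gamma_n - 1$, where $\gamma_n$ is a topological generator). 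Applying this to $M = H^j_{\Iw}(K_\infty, T)$ and combining with the fact that $H^j(K_n, T) = H^j(\Gal(\bar K/K_n), T)$ while $H^j_{\Iw}(K_\infty,T) = \varprojlim_m H^j(K_m, T)$, one obtains the Hochschild--Serre spectral sequence
\[
 E_2^{a,b} = H^a(\Gamma_n, H^b_{\Iw}(K_\infty, T)) \Rightarrow H^{a+b}(K_n, T),
\]
whose $E_2$-page is concentrated in the two columns $a = 0, 1$ because $\mathrm{cd}(\Gamma_n) = 1$.

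Next I would unwind this two-column spectral sequence. A spectral sequence concentrated in columns $a \in \{0,1\}$ degenerates at $E_2$ and produces, for each total degree $j$, a short exact sequence
\[
 0 \rTo E_2^{1, j-1} \rTo H^j(K_n, T) \rTo E_2^{0, j} \rTo 0,
\]
that is
\[
 0 \rTo H^1(\Gamma_n, H^{j-1}_{\Iw}(K_\infty, T)) \rTo H^j(K_n, T) \rTo H^0(\Gamma_n, H^j_{\Iw}(K_\infty, T)) \rTo 0.
\]
Since $\Gamma_n \cong \Zp$, we have $H^1(\Gamma_n, M) = M_{\Gamma_n}$ and $H^0(\Gamma_n, M) = M^{\Gamma_n}$, which transforms the above into exactly \eqref{eq:nekovarSES}. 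For this one needs to know that the limit $H^j_{\Iw}(K_\infty, T) = \varprojlim H^j(K_m, T)$ genuinely computes the derived functor of $\varprojlim$ applied to the system, i.e.\ that the relevant $\varprojlim^1$ terms vanish; this is where the finiteness of the cohomology groups $H^j(K_m, T)$ (for $T$ a finitely generated $\Zp$-module, $K$ local or a global field with restricted ramification) enters, via the Mittag-Leffler condition. For $j = 0, 1, 2$ these cohomology groups are finitely generated $\Zp$-modules (local Tate cohomology, resp.\ Poitou--Tate for the $S$-restricted global case), so the Mittag-Leffler criterion applies and $\varprojlim^1 = 0$ in the relevant range.

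The main obstacle is bookkeeping rather than anything deep: one must be careful that the indexing $j \in \{0,1,2\}$ is exactly the range in which both $H^{j+1}_{\Iw}$ and $H^{j-1}_{\Iw}$ appearing in \eqref{eq:nekovarSES} are controlled — in the local case $\mathrm{cd}_p(K_n) = 2$ so $H^j(K_n, T)$ vanishes for $j > 2$ and the sequence is vacuous or degenerate outside $\{0,1,2\}$; in the global $S$-ramified case one uses $\mathrm{cd}_p(\Gal(\QQ^S/K_n)) = 2$ under the hypothesis that $p$ is odd or $K$ is totally imaginary. I would also need to check that the maps in \eqref{eq:nekovarSES} are the natural ones (the left map being induced by corestriction-compatible projection, the right map by restriction to the $\Gamma_n$-invariants), which follows from the functoriality of the Hochschild--Serre edge maps. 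A clean way to package all of this, and the one I would follow in the write-up, is simply to cite Nekov\'a\v{r}'s formalism of Selmer complexes (where $H^j_{\Iw}(K_\infty, T)$ is the cohomology of a perfect complex $R\Gamma_{\Iw}$ of $\Lambda$-modules and \eqref{eq:nekovarSES} is the long exact sequence of the derived base change $R\Gamma_{\Iw} \otim^{\mathbf{L}}_\Lambda \Lambda/\omega_n$, truncated using the perfectness and amplitude bounds), thereby reducing the proof to quoting the relevant proposition and verifying that our $T$ and $K$ satisfy its hypotheses.
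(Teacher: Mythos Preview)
Your main ``Hochschild--Serre'' argument contains a genuine error that produces the wrong short exact sequence. The Hochschild--Serre spectral sequence for the extension $1 \to G_{K_\infty} \to G_{K_n} \to \Gamma_n \to 1$ reads
\[
 E_2^{a,b} = H^a\bigl(\Gamma_n, H^b(K_\infty, T)\bigr) \Rightarrow H^{a+b}(K_n, T),
\]
where $H^b(K_\infty, T) = \varinjlim_m H^b(K_m, T)$ is the \emph{direct}-limit cohomology, not $H^b_{\Iw}(K_\infty, T) = \varprojlim_m H^b(K_m, T)$. There is no Hochschild--Serre spectral sequence with Iwasawa cohomology on the $E_2$-page converging as you claim. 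Indeed, carrying your computation through yields
\[
 0 \to (H^{j-1}_{\Iw})_{\Gamma_n} \to H^j(K_n, T) \to (H^{j}_{\Iw})^{\Gamma_n} \to 0,
\]
which is off by one in the Iwasawa degree from \eqref{eq:nekovarSES}; it does \emph{not} ``transform into exactly'' the stated sequence.

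The correct mechanism is the one you relegate to your final paragraph, and it is precisely what the paper does. One uses the Shapiro-type identifications
\[
 H^i_{\Iw}(K_\infty, T) \cong H^i(K, \Lambda \otimes_{\Zp} T), \qquad H^i(K_n, T) \cong H^i(K, \Zp[\Gamma/\Gamma_n] \otimes_{\Zp} T),
\]
and then takes the long exact sequence in $H^*(K,-)$ attached to
\[
 0 \to \Lambda \otimes_{\Zp} T \xrightarrow{\ [\gamma_n]-1\ } \Lambda \otimes_{\Zp} T \to \Zp[\Gamma/\Gamma_n] \otimes_{\Zp} T \to 0.
\]
Equivalently, this is the hyper-Tor spectral sequence for $R\Gamma_{\Iw} \otimes^{\mathbf L}_\Lambda \Lambda/\omega_n$, with $E_2$-terms $\mathrm{Tor}^\Lambda_a(H^b_{\Iw}, \Lambda/\omega_n)$ and $\mathrm{Tor}_0 = (-)_{\Gamma_n}$, $\mathrm{Tor}_1 = (-)^{\Gamma_n}$. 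The essential point you missed is that $R\Gamma(K_n,T)$ is obtained from $R\Gamma_{\Iw}$ by derived $\Gamma_n$-\emph{coinvariants} (group homology, i.e.\ Tor), not by $\Gamma_n$-cohomology; this is what accounts for the degree shift.
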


   \begin{proof}
    This is Corollary 8.4.8.2 of \cite{nekovar06}. We briefly recall the proof. There are natural isomorphisms
    \begin{align*}
     H^i_{\Iw}(K_\infty, T) &\cong H^i(K, \Lambda \otimes_{\Zp} T)\\
     H^i(K_n, T) &\cong H^i(K, \Zp[\Gamma/\Gamma_n] \otimes_{\Zp} T)
    \end{align*}
    (for a suitable $\Lambda$-linear action of $G_K$ on the tensor products); see Proposition 8.3.5 of \emph{op.cit.}. Then the result above follows from the long exact cohomology sequence of $K$-cohomology attached to the short exact sequence of $\Lambda[G_K]$-modules
    \[ 0 \rTo \Lambda \otimes_{\Zp} T \rTo^{[\gamma_n] - 1} \Lambda \otimes_{\Zp} T \rTo \Zp[\Gamma/\Gamma_n] \otimes_{\Zp} T \rTo 0.\]
   \end{proof}

   \begin{proposition}[Perrin-Riou]
    \label{prop:torsionpart}
    There is an exact sequence
    \[ 0 \rTo T^{G_{K_\infty}} \rTo H^1_{\Iw}(K_\infty, T) \rTo \Hom_{\Lambda}(H^1(K_\infty, V/T)^\vee, \Lambda)^\iota \rTo \text{(finite)} \rTo 0,\]
    where $\iota$ signifies that the $\Lambda$-module structure is composed with the automorphism $\gamma \mapsto \gamma^{-1}$. In both cases, this exact sequence identifies $T^{G_{K_\infty}}$ with the $\Lambda$-torsion submodule of $H^1_{\Iw}(K_\infty, T)$.
   \end{proposition}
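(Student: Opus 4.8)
The plan is to deduce both assertions from Jannsen's spectral sequence relating Iwasawa cohomology of $T$ to ordinary Galois cohomology of the divisible module $A\coloneqq V/T$. Write $A$ as a discrete $\Gal(\bar K/K)$-module carrying a compatible $\Gamma$-action, and $H^j(K_\infty, A)=\varinjlim_n H^j(K_n,A)$ for the (continuous, resp.\ restricted-ramification) cohomology, a cofinitely generated discrete $\Lambda$-module. Following Jannsen (cf.\ \cite[\S8.4]{nekovar06}) — and this works uniformly in the local and global cases, as does everything below — there is a convergent spectral sequence of $\Lambda$-modules
\[ E_2^{i,j}=\Ext^i_\Lambda\bigl(H^j(K_\infty,A)^\vee,\Lambda\bigr)\Longrightarrow H^{i+j}_{\Iw}(K_\infty,T)^\iota , \]
where $(-)^\vee$ denotes Pontryagin dual and the $\iota$ on the abutment records the twist of the $\Lambda$-structure by $\gamma\mapsto\gamma^{-1}$. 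The first step is to read off its low-degree exact sequence
\[ 0\to E_2^{1,0}\to H^1_{\Iw}(K_\infty,T)\to E_2^{0,1}\longrightarrow E_2^{2,0}, \]
and to observe that $E_2^{0,1}=\Hom_\Lambda\bigl(H^1(K_\infty,A)^\vee,\Lambda\bigr)^\iota$ is precisely the reflexive module in the statement; in particular its image in $E_2^{2,0}$ is the quantity labelled ``(finite)''.

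The second step disposes of the two ``error'' terms. Since $A$ is cofinitely generated over $\Zp$, so is $A^{G_{K_\infty}}=H^0(K_\infty,A)$, hence $H^0(K_\infty,A)^\vee$ is finitely generated over $\Zp$; because $\Lambda$ is a two-dimensional regular local ring, $E_2^{2,0}=\Ext^2_\Lambda\bigl(H^0(K_\infty,A)^\vee,\Lambda\bigr)$ is then finite (it is the Pontryagin dual of the maximal finite submodule of $H^0(K_\infty,A)^\vee$), so the cokernel of $H^1_{\Iw}(K_\infty,T)\to E_2^{0,1}$ is finite. It remains to identify $E_2^{1,0}=\Ext^1_\Lambda\bigl(H^0(K_\infty,A)^\vee,\Lambda\bigr)$ with $T^{G_{K_\infty}}$. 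Taking $G_{K_\infty}$-invariants in $0\to T\to V\to A\to 0$ and using $V^{G_{K_\infty}}=T^{G_{K_\infty}}\otimes\Qp$ gives a short exact sequence $0\to T^{G_{K_\infty}}\otimes\Qp/\Zp\to A^{G_{K_\infty}}\to F\to 0$ with $F$ finite; dualizing and applying $\Ext^\bullet_\Lambda(-,\Lambda)$, and using that $\Ext^0_\Lambda$ and $\Ext^1_\Lambda$ both annihilate finite $\Lambda$-modules (finite modules have codimension $2$, so contribute only in degree $2$), one obtains $E_2^{1,0}\cong\Ext^1_\Lambda\bigl((T^{G_{K_\infty}}\otimes\Qp/\Zp)^\vee,\Lambda\bigr)$, and the standard computation of the Iwasawa adjoint of a divisible module of this form identifies the latter with $T^{G_{K_\infty}}$ (the ambient $\iota$-twist on the abutment cancelling the one introduced here). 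This produces the asserted four-term exact sequence.

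For the final assertion, note that $\Hom_\Lambda\bigl(H^1(K_\infty,A)^\vee,\Lambda\bigr)$ is reflexive, hence torsion-free over the integral domain $\Lambda$; thus $H^1_{\Iw}(K_\infty,T)/T^{G_{K_\infty}}$ embeds into a torsion-free $\Lambda$-module, which forces the $\Lambda$-torsion submodule of $H^1_{\Iw}(K_\infty,T)$ to lie inside $T^{G_{K_\infty}}$. The reverse inclusion is trivial, since $T^{G_{K_\infty}}$ is finitely generated over $\Zp$ and therefore $\Lambda$-torsion. As a cross-check (and an alternative route to this last point), one can combine the $j=0$ case of the exact sequence of the preceding proposition with the vanishing $H^0_{\Iw}(K_\infty,T)=\varprojlim_n T^{G_{K_n}}=0$ — the corestriction maps being eventually multiplication by $p$ on the free $\Zp$-module $T^{G_{K_\infty}}$ — to get $H^1_{\Iw}(K_\infty,T)[\gamma_n-1]\cong T^{G_{K_n}}$, which stabilizes to $T^{G_{K_\infty}}$.

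The main obstacle I expect is the careful bookkeeping of the $\iota$-twists (and the absence of any Tate $\Zp(1)$-twist) so that the target comes out as $\Hom_\Lambda$ applied to $V/T$ itself rather than its Kummer dual, and the identification $E_2^{1,0}\cong T^{G_{K_\infty}}$ holds on the nose; there is also the routine verification that Jannsen's spectral sequence applies verbatim to the global restricted-ramification cohomology $H^\bullet_S$. Once these normalizations are fixed, the remaining ingredients are only the elementary homological facts about $\Ext^\bullet_\Lambda$ over the two-dimensional regular local ring $\Lambda$ recalled above.
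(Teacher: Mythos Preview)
The paper does not actually prove this proposition: it simply cites Perrin-Riou (\cite[Prop.~2.1.6]{perrinriou92} for the local case, \cite[Lemma~1.3.3]{perrinriou95} for the global), noting only that in the local situation Tate duality identifies $H^1(K_\infty,V/T)^\vee$ with $H^1_{\Iw}(K_\infty,T^*(1))$, so that the middle map becomes Perrin-Riou's $\Lambda$-valued pairing. Your argument via Jannsen's spectral sequence therefore supplies substance the paper omits, and the strategy is sound---it is essentially the homological machinery underlying Perrin-Riou's own proofs. Your treatment of $E_2^{2,0}$ (finite because $(A^{G_{K_\infty}})^\vee$ is $\Zp$-finitely generated) and of the final assertion (torsion-freeness of the reflexive $\Hom_\Lambda$) is correct. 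The obstacles you flag yourself are the genuine ones: the spectral sequence with $A=V/T$ rather than the Kummer dual on the $E_2$-page requires the derived universal-coefficient identification $R\Hom_\Lambda(R\Gamma_{\Iw}(K_\infty,T),\Lambda)\simeq R\Gamma(K_\infty,A)^\vee{}^\iota$, which is in Nekov\'a\v{r} but not quite the most commonly cited form; and in the identification of $E_2^{1,0}$ you should note that the cokernel $F$ is indeed finite (it is the quotient of $A^{G_{K_\infty}}$ by its maximal divisible subgroup), so that $\Ext^0_\Lambda(F^\vee,\Lambda)=\Ext^1_\Lambda(F^\vee,\Lambda)=0$ as you assert. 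Your alternative route to the torsion identification via the $j=0$ case of the preceding exact sequence is cleaner and worth keeping.
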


   \begin{proof}
    The local case is \cite[Proposition 2.1.6]{perrinriou92}; note that in the local situation Tate duality furnishes an isomorphism $H^1(K_\infty, V/T)^\vee \cong H^1_{\Iw}(K_\infty, T^*(1))$ and the middle map can be interpreted as Perrin-Riou's pairing $H^1_{\Iw}(K_\infty, T) \times H^1_{\Iw}(K_\infty, T^*(1)) \to \Lambda$. The global case is \cite[Lemma 1.3.3]{perrinriou95}.
   \end{proof}

   The main object of study in this section is the following module. Let $V = \Qp \otimes_{\Zp} T$.

   \begin{definition}
    For $K, T, V$ as above, and $0 \le r < 1$, let $Y_r(K_\infty, V)$ be the space of sequences $(c_n)_{n \ge 0} \in \varprojlim_n H^1(K_n, V)$ such that there exists $\delta < \infty$ independent of $n$ for which $p^{\lfloor r n\rfloor + \delta} c_n$ is in the image of $H^1(K_n, T)$ in $H^1(K_n,V)$.
   \end{definition}

   \begin{proposition}
    For all $0\leq r<1$, the natural map $\lambda_r : \cH_r(\Gamma) \otimes_{\Lambda_{\Qp}(\Gamma)} H^1_{\Iw}(K_\infty, V) \to \varprojlim_n H^1(K_n, V)$ has image contained in $Y_r(K_\infty, V)$.
   \end{proposition}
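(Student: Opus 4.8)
The plan is to reduce the statement to a computation with the short exact sequences of Nekov\'a\v{r} (equation \eqref{eq:nekovarSES}) and the structure theorem of Perrin-Riou (Proposition \ref{prop:torsionpart}), tracking $p$-adic denominators along the way. First I would recall the description of $\cH_r(\Gamma)$ as the ring of power series in $\gamma - 1$ (for $\gamma$ a topological generator of $\Gamma_1$) whose coefficients grow no faster than $O(p^{rn})$ in a suitable sense; equivalently, $\cH_r(\Gamma)$ is the $p$-adic completion, with respect to this growth filtration, of $\QQ_p \otimes \Lambda(\Gamma)\left[\tfrac{(\gamma-1)^{p^n}}{p^{\lceil rn \rceil}} : n \ge 1\right]$. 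A general element of $\cH_r(\Gamma) \otimes_{\Lambda_{\QQ_p}(\Gamma)} H^1_{\Iw}(K_\infty, V)$ can thus be written (after clearing a bounded denominator) as a limit $\sum_n \mu_n \cdot x$, where $x \in H^1_{\Iw}(K_\infty, T)$ and $\mu_n \in \Lambda(\Gamma)$, with $\mu_n$ divisible by $(\gamma_n - 1)$ up to a factor of size $O(p^{rn})$; here $\gamma_n$ is a generator of $\Gamma_n$.

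The key step is then to estimate, for each $n$, the image of such an element under the projection $H^1_{\Iw}(K_\infty, V) \to H^1(K_n, V)$. By the exact sequence \eqref{eq:nekovarSES} with $j = 1$, this projection factors through $H^1_{\Iw}(K_\infty, V)_{\Gamma_n} = H^1_{\Iw}(K_\infty, V) / (\gamma_n - 1)$, so any summand $\mu_m \cdot x$ with $m \le n$ maps into the image of $H^1_{\Iw}(K_\infty, T)$ in $H^1(K_n, V)$ with denominator at most $p^{\lceil r m \rceil} \le p^{\lceil r n \rceil}$ (times a bounded constant coming from clearing denominators and from the finite cokernel in Proposition \ref{prop:torsionpart}). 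The tail terms with $m > n$ contribute nothing new, because $(\gamma_m - 1)$ for $m > n$ is divisible by $(\gamma_n - 1)$, hence kills $H^1(K_n, V)$ after passing to $\Gamma_n$-coinvariants; more precisely, $(\gamma_m-1) = (\gamma_n-1)\cdot\left(\sum_{i=0}^{p^{m-n}-1}\gamma_n^i\right)$, so each tail term is in the image of $(\gamma_n-1)H^1_{\Iw}(K_\infty, V)$ and maps to zero in $H^1(K_n,V)$. Summing, the image of the whole element in $H^1(K_n, V)$ has a denominator bounded by $p^{\lceil r n\rceil + \delta}$ for a fixed $\delta$, which is exactly the condition defining $Y_r(K_\infty, V)$.

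Concretely I would organize the argument as follows: (1) fix the generator $\gamma$ and write down the growth-filtration description of $\cH_r(\Gamma)$ and of elements of $\cH_r(\Gamma) \otimes_{\Lambda_{\QQ_p}(\Gamma)} H^1_{\Iw}(K_\infty, V)$; (2) by Proposition \ref{prop:torsionpart}, choose an integral structure so that $H^1_{\Iw}(K_\infty, T)/\text{torsion}$ is (up to finite index) a free $\Lambda(\Gamma)$-module, and reduce to the case of a single generator $x$; (3) invoke \eqref{eq:nekovarSES} to control $H^1_{\Iw}(K_\infty, V)_{\Gamma_n} \into H^1(K_n,V)$, noting that the kernel $H^2_{\Iw}(K_\infty,V)^{\Gamma_n}$ is irrelevant since we work with $V$ rather than $T$ and only need denominators; (4) estimate head and tail contributions as above and conclude. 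The main obstacle I anticipate is bookkeeping the uniformity of the constant $\delta$: one must check that the finite cokernel in Proposition \ref{prop:torsionpart}, the possible torsion $T^{G_{K_\infty}}$, and the bounded denominator used to put the original element into $H^1_{\Iw}(K_\infty,T)$ can all be absorbed into a single $n$-independent $\delta$, rather than something growing with $n$. This is really a statement that the map $H^1_{\Iw}(K_\infty,T)_{\Gamma_n}\to H^1(K_n,T)$ has kernel and cokernel of order bounded independently of $n$, which follows from \eqref{eq:nekovarSES} together with the fact that $H^0$ and $H^2$ of $K_\infty$ are finitely generated (indeed $H^0_{\Iw}$ vanishes or is finite, and $H^2_{\Iw}(K_\infty,T)$ is a finitely generated $\Lambda$-module) — I would spell this out but not belabor the module-theoretic estimates.
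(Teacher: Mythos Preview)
Your argument is correct, but it is far more elaborate than what the situation requires, and the paper's own proof is a single line: ``This is clear from the definition of $\cH_r(\Gamma)$.'' The defining property of $\cH_r(\Gamma)$ is precisely that the image of any element in $\Qp[\Gamma/\Gamma_n]$ lies in $p^{-\lfloor rn\rfloor - C}\Zp[\Gamma/\Gamma_n]$ for some constant $C$ independent of $n$. Combined with the trivial observation that any $x \in H^1_{\Iw}(K_\infty, V)$ can be written as $p^{-D} x_0$ for some $x_0 \in H^1_{\Iw}(K_\infty, T)$ and some fixed $D$ (simply because $V = T \otimes \Qp$), one immediately gets that the image of $h \otimes x$ in $H^1(K_n, V)$ lies in $p^{-\lfloor rn\rfloor - C - D}$ times the image of $H^1(K_n, T)$. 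Summing over the finitely many simple tensors gives the result.

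The machinery you bring in --- Nekov\'a\v{r}'s exact sequence \eqref{eq:nekovarSES}, Perrin-Riou's structure theorem (Proposition \ref{prop:torsionpart}), the head/tail splitting, and the uniformity-of-$\delta$ analysis --- is not wrong, but it is exactly what the paper uses for the \emph{harder} direction, namely the surjectivity in Proposition \ref{prop:local unbounded coho} and Proposition \ref{prop:lifttoIwasawacohomology}. There one genuinely needs to know that $H^1_{\Iw}(K_\infty, V)$ is free, that projections to $H^1(K_n, V)$ are surjective with uniformly bounded integral defect, and so on, in order to \emph{lift} a sequence back. For the containment direction none of this is needed: you are projecting down, not lifting up, and the integral structure on $H^1_{\Iw}(K_\infty, T)$ is already compatible with the projections by construction.
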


   \begin{proof}
    This is clear from the definition of $\cH_r(\Gamma)$.
   \end{proof}

   \begin{remark}
    The map $\lambda_r$ is not necessarily injective, even for $r = 0$ (where $\cH_r(\Gamma)$ is just $\Lambda \otimes \Qp$). A counterexample is provided by the representation $T = \Zp(1)$. Then the cocycle $c_n$ given by $\sigma \mapsto \frac{\chi(\sigma) - 1}{p^n}$ is well-defined as an element of $H^1(K_n, T)$ (for either local or global $K$). The sequence $(c_n)$ defines an element of $H^1_{\Iw}(K_\infty, T)$ which is not $p$-torsion, and thus is non-zero as an element of $H^1_{\Iw}(K_\infty, V)$. But $p^n c_n$ is a coboundary for all $n$, so the image of $c_n$ in $H^1(K_n, V)$ is zero for all $n$. Thus $(c_n)$ lies in the kernel of the above map.
   \end{remark}

   \begin{proposition}
    The kernel of $\lambda_r$ is contained in $H^1_{\Iw}(K_\infty, V)_{\mathrm{tors}} \cong V^{G_{K_\infty}}$.
   \end{proposition}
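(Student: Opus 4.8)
The plan is to analyze the kernel of $\lambda_r$ by exploiting the short exact sequences of Nekovar and the structure-theoretic result of Perrin-Riou recalled above. First I would reduce to the case $T^{G_{K_\infty}} = 0$: by Proposition \ref{prop:torsionpart}, $T^{G_{K_\infty}}$ is precisely the $\Lambda$-torsion submodule of $H^1_{\Iw}(K_\infty, T)$, so after inverting $p$ it is the torsion submodule of $H^1_{\Iw}(K_\infty, V)$, and the claim is exactly that $\ker(\lambda_r)$ is contained in this torsion submodule. Thus it suffices to show that if $H^1_{\Iw}(K_\infty, V)$ is torsion-free (equivalently $V^{G_{K_\infty}} = 0$), then $\lambda_r$ is injective.

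Next I would unwind what it means for an element $x \in \cH_r(\Gamma) \otimes_{\Lambda_{\Qp}(\Gamma)} H^1_{\Iw}(K_\infty, V)$ to lie in the kernel. Writing $x$ as a finite sum $\sum_i f_i \otimes z_i$ with $f_i \in \cH_r(\Gamma)$ and $z_i \in H^1_{\Iw}(K_\infty, V)$, the image $\lambda_r(x)$ has $n$-th component obtained by projecting each $z_i$ to $H^1(K_n, V)$ and multiplying by the image of $f_i$ in $\Qp[\Gamma/\Gamma_n] = \cH_r(\Gamma)/\omega_n$. The condition $\lambda_r(x) = 0$ says this vanishes in $H^1(K_n, V)$ for all $n$. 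Now I use the Nekovar exact sequence \eqref{eq:nekovarSES} with $j = 1$: the image of $H^1_{\Iw}(K_\infty, V)$ in $H^1(K_n, V)$ is the coinvariant module $H^1_{\Iw}(K_\infty, V)_{\Gamma_n}$, and the cokernel injects into $H^2_{\Iw}(K_\infty, V)^{\Gamma_n}$. Since the elements I am tracking genuinely come from $H^1_{\Iw}$, the vanishing in $H^1(K_n, V)$ forces vanishing in $H^1_{\Iw}(K_\infty, V)_{\Gamma_n}$ for all $n$, i.e. $\lambda_r(x) \in \omega_n \cdot H^1_{\Iw}(K_\infty, V)$ for all $n$ (after suitably interpreting this — one must pass through the $\cH_r$-module structure carefully).

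The core of the argument is then a divisibility/density statement: under the hypothesis that $M := H^1_{\Iw}(K_\infty, V)$ is a torsion-free (hence, since $\Lambda_{\Qp}(\Gamma)$ is a regular local ring of dimension $2$, reflexive up to pseudo-null issues) finitely generated $\Lambda_{\Qp}(\Gamma)$-module, I claim that an element of $\cH_r(\Gamma) \otimes_{\Lambda_{\Qp}} M$ whose images in all the finite-level quotients $\Qp[\Gamma/\Gamma_n] \otimes_{\Lambda_{\Qp}} M$ vanish must itself be zero. The key inputs are (i) $\bigcap_n \omega_n \cH_r(\Gamma) = 0$, which holds because $\cH_r(\Gamma)$ embeds into the ring of power series with the relevant growth condition and the $\omega_n$ generate a dense sequence of zeros; and (ii) $M$ is torsion-free, so tensoring with $M$ does not create $\omega_n$-divisible elements out of nothing — concretely, if $M \hookrightarrow \Lambda^N$ is an embedding into a finite free module (possible up to a pseudo-null obstruction, which one handles by noting pseudo-null submodules of torsion-free modules are zero), then $\cH_r \otimes M \hookrightarrow \cH_r^N$ and one reduces to the free case where the statement is immediate from (i).

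I expect the main obstacle to be the bookkeeping around the cokernel term $H^2_{\Iw}(K_\infty, V)^{\Gamma_n}$ in the Nekovar sequence: I must be certain that the elements of $\varprojlim_n H^1(K_n,V)$ that arise as $\lambda_r(x)$ really lift compatibly to $H^1_{\Iw}$, so that I am working inside $H^1_{\Iw}(K_\infty, V)_{\Gamma_n}$ rather than the larger group $H^1(K_n,V)$, and that the inverse limit over $n$ of the connecting maps behaves well. The secondary technical point is justifying the embedding of the torsion-free module $M$ into a free module of finite rank over the two-dimensional regular local ring $\Lambda_{\Qp}(\Gamma)$ and tracking that this embedding remains injective after applying the exact functor $\cH_r(\Gamma) \otimes_{\Lambda_{\Qp}(\Gamma)} (-)$ — here I would use flatness of $\cH_r(\Gamma)$ over $\Lambda_{\Qp}(\Gamma)$ (or at least over the relevant localizations) together with the vanishing of pseudo-null submodules. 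Once these are in place, the conclusion $\ker(\lambda_r) \subseteq M_{\mathrm{tors}} \cong V^{G_{K_\infty}}$ follows formally.
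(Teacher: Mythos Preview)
Your overall strategy matches the paper's: use the injectivity of $H^1_{\Iw}(K_\infty, V)_{\Gamma_n} \to H^1(K_n, V)$ from Nekov\'a\v{r}'s sequence to see that $\ker(\lambda_r) \subseteq \bigcap_n (\gamma_n - 1)\bigl(\cH_r(\Gamma) \otimes H^1_{\Iw}(K_\infty, V)\bigr)$, then use $\bigcap_n (\gamma_n - 1)\cH_r(\Gamma) = 0$ (valid because $r<1$) together with the module structure of $H^1_{\Iw}(K_\infty, V)$ to conclude.

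However, you have misidentified the ring you are working over, and this leads you to overcomplicate the argument. The ring $\Lambda_{\Qp}(\Gamma)$ is \emph{not} a regular local ring of dimension~$2$: after inverting $p$, the Iwasawa algebra of $\Gamma_1 \cong \Zp$ is $\Qp[[T]]$, a discrete valuation ring, and $\Lambda_{\Qp}(\Gamma)$ is a finite product of such PIDs (one for each character of the torsion subgroup of $\Gamma$). This is exactly what the paper exploits: since $H^1_{\Iw}(K_\infty, V)$ is finitely generated over the PID $\Lambda_{\Qp}(\Gamma_1)$, it splits as $(\text{torsion}) \oplus (\text{free})$, and on the free summand the vanishing of $\bigcap_n (\gamma_n - 1)\cH_r(\Gamma)$ immediately forces injectivity. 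There is no need to embed into a free module ``up to pseudo-null obstructions'', no need for flatness of $\cH_r(\Gamma)$, and no reflexivity considerations.

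Your concern about the cokernel term $H^2_{\Iw}(K_\infty, V)^{\Gamma_n}$ is also misplaced here. The elements you are tracking are, by construction, images of classes from $\cH_r(\Gamma) \otimes H^1_{\Iw}(K_\infty, V)$, so they already factor through the coinvariants $H^1_{\Iw}(K_\infty, V)_{\Gamma_n}$; the injectivity of that subspace into $H^1(K_n, V)$ is all that is used. (The $H^2$ term only becomes an issue in the next proposition, where one must lift arbitrary compatible sequences back to Iwasawa cohomology.) So: right skeleton, but replace the dimension-$2$ picture with the PID structure and the argument collapses to a few lines, as in the paper.
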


   \begin{remark}
    We note first that this statement does make sense, since for any $\Lambda_{\Qp}(\Gamma)$-torsion module $M$, tensoring with $1 \in \cH_r(\Gamma)$ gives an isomorphism $\cH_r(\Gamma) \otimes_{\Lambda_{\Qp}(\Gamma)} M \cong M$.
   \end{remark}

   \begin{proof}
    Tensoring \eqref{eq:nekovarSES} with $\Qp$, we find that the map
    \[ H^1_{\Iw}(K_\infty, V)_{\Gamma_n} \to H^1(K_n, V)\]
    is injective. Thus the kernel of $\lambda_r$ consists of those elements lying in
    \[ \bigcap_{n \ge 0} (\gamma_n - 1) \left(\cH_r(\Gamma) \otimes_{\Lambda_{\Qp}(\Gamma)} H^1_{\Iw}(K_\infty, V)\right).\]

    Since $H^1_{\Iw}(K_\infty, V)$ is a finitely-generated module over the subring $\Lambda_{\Qp}(\Gamma_1) \subset \Lambda_{\Qp}(\Gamma)$, which is a PID, we may write it as the direct sum of its torsion submodule and a complementary free submodule. Since $r < 1$, we find that
    \[ \bigcap_{n \ge 1} (\gamma_n - 1) \cH_r(\Gamma) = 0,\]
    and hence the kernel of $\lambda_r$ is contained in the torsion part of $H^1_{\Iw}(K_\infty,V)$, which is equal to $V^{G_{K_\infty}}$ by Proposition \ref{prop:torsionpart}.
   \end{proof}

   \begin{remark}
    Although we shall not need this, it clearly follows that the kernel of $\lambda_r$ is equal to $\bigcup_{n \ge 0} (\gamma_n - 1) V^{G_{K_\infty}}$, which is the unique $\Gamma$-invariant complement of $\bigcup_n H^0(K_n, V)$ in $H^0(K_\infty, V)$.
   \end{remark}

   \begin{proposition}
    \label{prop:local unbounded coho}
    Let $K$ be a $p$-adic field and suppose that $V^{G_{K_\infty}} = 0$. Then the map
    \[ \cH_r(\Gamma) \otimes_{\Lambda_{\Qp}(\Gamma)} H^1_{\Iw}(K_\infty, V) \to Y_r(K_\infty, V)\]
    is an isomorphism.
   \end{proposition}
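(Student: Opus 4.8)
We already know from the preceding proposition that the map $\lambda_r$ has image contained in $Y_r(K_\infty, V)$, and (since $V^{G_{K_\infty}} = 0$) that $\lambda_r$ is injective. So the only thing left to prove is surjectivity: given $(c_n)_n \in Y_r(K_\infty, V)$, we must produce a preimage in $\cH_r(\Gamma) \otimes_{\Lambda_{\Qp}(\Gamma)} H^1_{\Iw}(K_\infty, V)$. The plan is to use Nekovar's exact sequence \eqref{eq:nekovarSES} together with the hypothesis $V^{G_{K_\infty}} = 0$ to get a handle on the "defect of being bounded'', and to exploit that for $p$-adic $K$ the groups $H^j_{\Iw}(K_\infty, T)$ are well-behaved finitely generated $\Lambda$-modules (this is where we use that $K$ is \emph{local}, via Tate local duality and the fact that $H^2_{\Iw}(K_\infty, T)$ is then $\Lambda$-torsion with controllable structure).

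First I would fix $(c_n)_n \in Y_r(K_\infty, V)$ and choose $\delta$ so that $p^{\lfloor rn\rfloor + \delta} c_n$ lifts to $H^1(K_n, T)$ for all $n$; call such a lift $c_n'$. The sequence $(c_n')_n$ is not quite compatible under corestriction, but the failure is killed by a bounded power of $p$ (the ratio of the normalizing factors at levels $n$ and $n+1$ differs by at most $p$). So $\bigl(p^{?} \cores(c_{n+1}') - (\text{adjustment})\, c_n'\bigr)$ lands, after multiplication by a bounded power of $p$, in the image of $H^1(K_n, T)$; using $V^{G_{K_\infty}} = 0$ and the injectivity of $H^1_{\Iw}(K_\infty, V)_{\Gamma_n} \hookrightarrow H^1(K_n, V)$ from \eqref{eq:nekovarSES} tensored with $\Qp$, one argues that these corrections assemble into a genuine element of $H^1_{\Iw}(K_\infty, T) \otimes \Qp$ after inverting $p$ and passing to $\cH_r(\Gamma)$. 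Concretely: write $H^1_{\Iw}(K_\infty, V) \cong (\text{free } \Lambda_{\Qp}(\Gamma_1)\text{-module}) \oplus (\text{torsion})$, note that the torsion part is $V^{G_{K_\infty}} = 0$ by Proposition \ref{prop:torsionpart}, so $H^1_{\Iw}(K_\infty, V)$ is free over $\Lambda_{\Qp}(\Gamma_1)$. Then tensoring with $\cH_r(\Gamma)$ is exact and the natural map to $\varprojlim_n H^1(K_n, V)$ can be analyzed coordinate by coordinate: an element of $Y_r$ is precisely a sequence whose coordinates, in a chosen $\Lambda_{\Qp}(\Gamma_1)$-basis of $H^1_{\Iw}(K_\infty, V)$, grow no faster than $p^{-rn}$, which is exactly the defining growth condition for $\cH_r(\Gamma)$.

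The cleanest way to organize this is: (1) reduce to showing that the cokernel of $\lambda_r$ vanishes; (2) use \eqref{eq:nekovarSES} at each level $n$ to identify $H^1(K_n, V)$ with an extension of $H^2_{\Iw}(K_\infty, V)^{\Gamma_n}$ by $H^1_{\Iw}(K_\infty, V)_{\Gamma_n}$, and observe that for local $K$ the module $H^2_{\Iw}(K_\infty, T)$ is $\Lambda$-torsion (by local Euler characteristic / Tate duality, $H^2_{\Iw}(K_\infty, T^*(1))^\vee \cong H^0(K_\infty, T)$-type statements), and in fact $H^2_{\Iw}(K_\infty, V) = 0$ once $V^{G_{K_\infty}} = 0$ — because $H^2_{\Iw}(K_\infty, V)$ is dual to $H^0(K_\infty, V^*(1))$ up to a twist, and this vanishes under our hypothesis after possibly also noting $V^*(1)^{G_{K_\infty}}$ is handled by the same kind of argument, or directly: the $\Gamma$-coinvariants being the obstruction; (3) conclude that $H^1(K_n, V) = H^1_{\Iw}(K_\infty, V)_{\Gamma_n}$, so that a norm-compatible system in $\varprojlim H^1(K_n, V)$ is literally an element of $H^1_{\Iw}(K_\infty, V)$ after inverting $\gamma_n - 1$'s, and matching the growth condition identifies $Y_r$ with $\cH_r(\Gamma) \otimes_{\Lambda_{\Qp}(\Gamma)} H^1_{\Iw}(K_\infty, V)$.

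\textbf{Main obstacle.} The delicate point is step (2)–(3): controlling $H^2_{\Iw}(K_\infty, V)$ and showing it does not interfere. If $H^2_{\Iw}(K_\infty, V) \ne 0$ then $H^1(K_n, V)$ genuinely receives extra classes from $H^2_{\Iw}(K_\infty, V)^{\Gamma_n}$ that need not come from Iwasawa cohomology, and the surjectivity could fail; so one must check that the hypothesis $V^{G_{K_\infty}} = 0$ forces $H^2_{\Iw}(K_\infty, V) = 0$ in the local case. This is where local Tate duality ($H^2_{\Iw}(K_\infty, T) \cong \varprojlim_n H^2(K_n, T)$, and $H^2(K_n, T) \cong H^0(K_n, T^*(1))^\vee$ by local duality) is essential, together with the observation that $\varprojlim_n H^0(K_n, T^*(1))^\vee$ vanishes rationally iff $H^0(K_\infty, V^*(1)) = 0$. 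One then needs the (standard but slightly technical) fact that for a $p$-adic field $K$ and a de Rham — or even arbitrary — $V$, $V^{G_{K_\infty}} = 0$ is equivalent to $(V^*(1))^{G_{K_\infty}} = 0$, which holds because $\Gamma$-invariants of a finite-dimensional $\Qp$-representation of the pro-$p$-by-finite group $\Gal(\bar K/K_\infty)$... actually the cleanest route is to note both vanish unless $V$ has a subquotient on which $G_{K_\infty}$ acts trivially, and such a subquotient would contribute to $H^0$ of both $V$ and a twist — I would cite Perrin-Riou's $H^0(K_\infty, V) = 0 \iff H^2_{\Iw}(K_\infty, V) = 0$ directly from \cite{perrinriou92} rather than reprove it. Modulo that citation, everything else is bookkeeping with the growth filtration defining $\cH_r(\Gamma)$.
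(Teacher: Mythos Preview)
Your proposal is essentially correct and follows the same strategy as the paper's proof. The paper argues exactly along the lines of your ``cleanest way'': use Proposition~\ref{prop:torsionpart} together with $V^{G_{K_\infty}}=0$ to see that $H^1_{\Iw}(K_\infty,V)$ is free over $\Lambda_{\Qp}(\Gamma)$; pick a basis $x_1,\dots,x_d$; then show that the cokernel of $H^1_{\Iw}(K_\infty,T)\to H^1(K_n,T)$ is finite of order bounded independently of $n$ (via local duality, since it is isomorphic to $H^2_{\Iw}(K_\infty,T)^{\Gamma_n}\cong H^0(K_\infty,(V/T)^*(1))^{\vee,\Gamma_n}$). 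From this one reads off that each $c_n$ has a unique expression $c_n=\sum_i b_i^{(n)} x_i^{(n)}$ with $b_i^{(n)}\in\Qp[\Gamma/\Gamma_n]$ of valuation $\ge -\lfloor rn\rfloor-\delta-\nu$, and the $b_i^{(n)}$ glue to elements of $\cH_r(\Gamma)$.

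A few remarks. Your first paragraph (lifting to $c_n'$, tracking corestriction defects by hand) is a detour; the basis-coordinate argument you sketch afterwards is what actually works and is what the paper does. Your identification of the ``main obstacle'' is exactly right: the paper handles it by noting that finiteness of $H^0(K_\infty,(V/T)^*(1))$ follows from $H^0(K_\infty,V^*(1))=0$, which it asserts as $H^0(K_\infty,V^*(1))=H^0(K_\infty,V)^*(1)=0$. Your instinct to be careful here is sound---the passage from $V^{G_{K_\infty}}=0$ to $(V^*(1))^{G_{K_\infty}}=0$ is not a formal tautology (invariants of the dual are dual to coinvariants, not invariants)---and citing Perrin-Riou \cite{perrinriou92} as you propose is the right move. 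In any case, your approach and the paper's coincide at every substantive step.
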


   \begin{proof}
    By Proposition \ref{prop:torsionpart}, our hypotheses imply that $H^1_{\Iw}(K_\infty, V)$ is a torsion-free $\Lambda_{\Qp}(\Gamma)$-module; hence it is free, since $\Lambda_{\Qp}(\Gamma)$ is a finite product of principal ideal domains (and the ranks of the $\Gamma_{\mathrm{tors}}$-isotypical direct summands of $H^1_{\Iw}(K_\infty, V)$ are all equal). Thus there exists a free basis $x_1, \dots, x_d$ of $H^1_{\Iw}(K_\infty, V)$, where $d = \dim_{\Qp}(V)$.

    For each $n$, the cokernel of the projection map
    \[H^1_{\Iw}(K, T) \to H^1(K_n, T)\]
    is finite, and its order is bounded independently of $n$. (In fact, the cokernel of this map is isomorphic to the $\Gamma_n$-invariants of
    \[H^2_{\Iw}(K_\infty, T) \cong H^0(K_\infty, (V/T)^*(1))^\vee,\]
    and $H^0(K_\infty, (V/T)^*(1))$ is finite, since $H^0(K_\infty, V^*(1)) = H^0(K_\infty, V)^*(1) = 0$.) Thus the map
    \[H^1_{\Iw}(K_\infty, V) \to H^1(K_n, V)\]
    is surjective for all $n$, and there is $\nu < \infty$ independent of $n$ such that the $\Zp$-submodule spanned by the images of $x_1, \dots, x_d$ in $H^1(K_n,V)$ contains $p^\nu \cdot \frac{H^1(K_n, T)}{\mathrm{torsion}}$.

    Consequently, given any sequence $(c_n)_{n \ge 0} \in Y_r(K_\infty, V)$, we have for each $n$ uniquely determined elements $b_1^{(n)}, \dots, b_{d}^{(n)} \in \Qp[\Gamma / \Gamma_n]$ such that $\sum_{i = 1}^d b_i^{(n)} x_i^{(n)} = c_n$, where $x_i^{(n)}$ is the image of $x_i$ in $H^1(K_n, V)$. Also, for each $i$ the sequence $(b_i^{(n)})_{n \ge 0}$ is compatible under projection (by uniqueness), and its valuation is bounded below by $-\lfloor rn \rfloor - \delta - \nu$; so (as $r < 1$) there is a unique element $b_i \in \cH_r(\Gamma)$ whose image at level $n$ is $b_i^{(n)}$ for all $n$. Then it is clear that $c = \sum_i b_i \otimes x_i \in \cH_r(\Gamma) \otimes_{\Lambda_{\Qp}(\Gamma)} H^1_{\Iw}(K_\infty, V)$ is a preimage of $(c_n)_{n \ge 0}$; by the previous proposition, it is unique.
   \end{proof}

   In the global case we cannot prove quite such a strong result, as we do not have such good control over $H^2_{\Iw}(K_\infty, T)$; the following rather more specific result (which applies to both local and global cases) will suffice for our purposes:

   \begin{proposition}\label{prop:lifttoIwasawacohomology}
    Let $r < 1$, and suppose $T$ has the structure of a module over $\cO_E$, for some finite extension $E/\Qp$, and that $T^{G_{K_\infty}} = 0$. Let $\alpha \in \cO_E$ such that $v_p(\alpha) \le r$, and suppose we are given elements $x_n \in H^1(K_n, T)$ for $n\geq 0$ satisfying
    \[\cores_{n}^{n+1}(x_{n+1}) = \alpha x_n.\]
    Then there is a unique element $x \in \cH_r(\Gamma) \otimes_{\Lambda} H^1_{\Iw}(K_\infty, T)$ whose image in $H^1(K_n, V)$ is equal to $\alpha^{-n} c_n$ for all $n$.
   \end{proposition}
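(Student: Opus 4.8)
The plan is to descend the given ``$\alpha$-twisted'' compatible system $(x_n)$ to an honest norm-compatible system over $V$, recognise it inside $H^1_{\Iw}(K_\infty,V)_{\Gamma_n}$ for each $n$ using Nekov\'a\v r's exact sequence \eqref{eq:nekovarSES}, and then read off an element of $\cH_r(\Gamma)\otimes_\Lambda H^1_{\Iw}(K_\infty,T)$ from the coordinates with respect to a $\Lambda_{\Qp}(\Gamma)$-basis of $H^1_{\Iw}(K_\infty,V)$, which exists by Proposition \ref{prop:torsionpart}.

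\textbf{Step 1: the candidate sequence.} Put $c_n=\alpha^{-n}x_n\in H^1(K_n,V)$. The relation $\cores_n^{n+1}(x_{n+1})=\alpha x_n$ gives $\cores_n^{n+1}(c_{n+1})=c_n$, so $(c_n)_{n\ge 0}\in\varprojlim_n H^1(K_n,V)$. Since $v_p(\alpha)\le r$ we have $v_p\!\left(p^{\lfloor rn\rfloor+1}\alpha^{-n}\right)\ge \lfloor rn\rfloor+1-rn>0$, hence $p^{\lfloor rn\rfloor+1}\alpha^{-n}\in\cO_E$; as $x_n$ lies in the $\cO_E$-module $H^1(K_n,T)$, the class $p^{\lfloor rn\rfloor+1}c_n$ lies in the image of $H^1(K_n,T)$. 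Thus $(c_n)\in Y_r(K_\infty,V)$ with $\delta=1$. The uniqueness assertion is already settled at this point: the kernel of $\lambda_r$ is contained in $H^1_{\Iw}(K_\infty,V)_{\mathrm{tors}}\cong V^{G_{K_\infty}}$ (the proposition preceding Proposition \ref{prop:local unbounded coho}), and $V^{G_{K_\infty}}=0$ since $T^{G_{K_\infty}}=0$, so $\lambda_r$ is injective. It remains to exhibit a preimage of $(c_n)$.

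\textbf{Step 2: killing the obstruction.} Write $M=H^1_{\Iw}(K_\infty,V)$ and $N=H^2_{\Iw}(K_\infty,V)$. Tensoring \eqref{eq:nekovarSES} with $\Qp$ gives short exact sequences $0\to M_{\Gamma_n}\xrightarrow{\iota_n} H^1(K_n,V)\xrightarrow{g_n}N^{\Gamma_n}\to 0$, natural in $n$, where the left transition maps are the canonical projections, the middle ones corestriction, and the right ones the norm maps $N^{\Gamma_{n+1}}\to N^{\Gamma_n}$. Applying $g_n$ to $\cores_n^{n+1}(c_{n+1})=c_n$ shows that $\left(g_n(c_n)\right)_n$ is a norm-compatible element of $\varprojlim_n N^{\Gamma_n}$. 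Now $N$ is finitely generated over $\Lambda_{\Qp}(\Gamma)$, which is a finite product of discrete valuation rings of the form $E'[[X]]$; on each factor $N$ is a free module plus a module of the shape $\bigoplus E'[[X]]/X^{e}$, the free part has no $\Gamma_n$-invariants, and on the torsion part the norm maps are eventually multiplication by $p$ on a finite-dimensional space, so $\varprojlim_n N^{\Gamma_n}=0$. Hence $g_n(c_n)=0$ for every $n$, so there is a unique $\tilde c_n\in M_{\Gamma_n}$ with $\iota_n(\tilde c_n)=c_n$, and commutativity of the diagram forces $(\tilde c_n)_n\in\varprojlim_n M_{\Gamma_n}$ with respect to the projection maps.

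\textbf{Step 3: recognising the limit in $\cH_r(\Gamma)$, and the main obstacle.} By Proposition \ref{prop:torsionpart} and $T^{G_{K_\infty}}=0$, $M$ is torsion-free, hence free, over $\Lambda_{\Qp}(\Gamma)$; choose a basis $e_1,\dots,e_\rho$ lying in the lattice $H^1_{\Iw}(K_\infty,T)$. Writing $\tilde c_n=\sum_i b_i^{(n)}\bar e_i^{(n)}$ in terms of the induced $\Qp[\Gamma/\Gamma_n]$-basis $\bar e_i^{(n)}$ of $M_{\Gamma_n}$, the $b_i^{(n)}\in\Qp[\Gamma/\Gamma_n]$ are unique and projection-compatible, so each $b_i=(b_i^{(n)})_n$ lies in $\varprojlim_n\Qp[\Gamma/\Gamma_n]$. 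The crux is to bound denominators: one must show $p^{\lfloor rn\rfloor+C}b_i^{(n)}\in\Zp[\Gamma/\Gamma_n]$ uniformly in $n$, which then places $b_i$ in $\cH_r(\Gamma)$ and makes $x=\sum_i b_i\otimes e_i$ the desired element. This is where the integrality of $x_n$ itself (not merely of $p^{\lfloor rn\rfloor}x_n$) is essential. From $g_n(c_n)=0$ one gets $g_n(x_n)=0$ in $\Qp\otimes H^2_{\Iw}(K_\infty,T)^{\Gamma_n}$, i.e.\ $g_n(x_n)$ is a torsion class; since $H^2_{\Iw}(K_\infty,T)$ is finitely generated over $\Lambda$, its $p$-power torsion is killed by a fixed $p^{k_0}$, so $p^{k_0}x_n$ lies in the image of $H^1_{\Iw}(K_\infty,T)_{\Gamma_n}$ in $H^1(K_n,T)$. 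Comparing this with the lattice $\bigoplus_i\Zp[\Gamma/\Gamma_n]\bar e_i^{(n)}$ — the two differ by a quotient of $\bigl(H^1_{\Iw}(K_\infty,T)/\bigoplus_i\Lambda_{\Zp}e_i\bigr)_{\Gamma_n}$, which has exponent bounded independently of $n$ — gives $p^{k_0+k_1}\tilde x_n\in\bigoplus_i\Zp[\Gamma/\Gamma_n]\bar e_i^{(n)}$, whence $p^{k_0+k_1}\alpha^n b_i^{(n)}\in\Zp[\Gamma/\Gamma_n]$. As $b_i^{(n)}\in\Qp[\Gamma/\Gamma_n]$ and $v_p(\alpha^n)\le rn\le\lfloor rn\rfloor+1$, multiplying by $p^{\lceil v_p(\alpha^n)\rceil}$ and intersecting $\cO_E[\Gamma/\Gamma_n]\cap\Qp[\Gamma/\Gamma_n]=\Zp[\Gamma/\Gamma_n]$ yields $p^{\lfloor rn\rfloor+1+k_0+k_1}b_i^{(n)}\in\Zp[\Gamma/\Gamma_n]$, as needed. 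The principal difficulty, and the only real work, is precisely this denominator bookkeeping in the global case: unlike the local situation treated in Proposition \ref{prop:local unbounded coho}, the cokernels $H^2_{\Iw}(K_\infty,T)^{\Gamma_n}$ may be unbounded, and what saves the day is the combination of (a) the vanishing of $\varprojlim_n N^{\Gamma_n}$ from Step 2, which forces each $c_n$ to lift at all, and (b) the fact that the lifts come from genuinely integral classes $x_n$, so that their denominators grow no faster than $v_p(\alpha^{-n})\le rn$. The boundedness of the $p$-torsion of $H^2_{\Iw}(K_\infty,T)$ and of the comparison between the two lattices are standard finiteness facts for Iwasawa cohomology.
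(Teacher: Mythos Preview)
Your Step~2 contains a genuine error. You claim that $\varprojlim_n N^{\Gamma_n}=0$, where $N=H^2_{\Iw}(K_\infty,V)$ and the transition maps are the norm maps, on the grounds that on the $\Lambda_{\Qp}(\Gamma)$-torsion part of $N$ the norms are eventually multiplication by $p$ on a finite-dimensional $\Qp$-vector space. But multiplication by $p$ on a $\Qp$-vector space is \emph{bijective}, so the inverse limit of such a constant system is the space itself, not zero. Concretely, if $N$ has a summand isomorphic to $\Qp$ with trivial $\Gamma$-action (i.e.\ $\Lambda_{\Qp}(\Gamma)/(\gamma-1)$), then $N^{\Gamma_n}\supseteq\Qp$ for every $n$ and the norm maps restrict to isomorphisms, so the inverse limit is nonzero. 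You are presumably thinking of the integral statement, where multiplication by $p$ on a finitely generated $\Zp$-module does have vanishing inverse limit. This breaks the chain of reasoning: Step~3 immediately invokes ``from $g_n(c_n)=0$'', which is now unjustified.

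The paper closes this gap by working integrally from the outset and using the specific $\alpha$-compatibility. Set $M_n=H^2_{\Iw}(K_\infty,T)^{\Gamma_n}$ and let $y_n$ be the image of $x_n$. The $M_n$ form an ascending chain of $\Lambda$-submodules of the Noetherian module $H^2_{\Iw}(K_\infty,T)$, hence stabilise at some $n_0$; for $n\ge n_0$ the norm $M_{n+1}\to M_n$ is multiplication by $p$ on $M_{n_0}$. The relation $\operatorname{norm}(y_{n+1})=\alpha y_n$ then gives, for each fixed $n$ and all $i\ge n_0$, an equality $\alpha^{i-n}y_n=p^{\,i-n_0}\cdot(\text{element of }M_n)$. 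Since $v_p(\alpha)\le r<1$ and $M_n$ is a finitely generated $\cO_E$-module, this forces $y_n$ into the $\Zp$-torsion of $M_n$ (exactly the mechanism of Proposition~\ref{prop:unram outside p}). Hence $g_n(c_n)=0$ in $H^2_{\Iw}(K_\infty,V)^{\Gamma_n}$, the conclusion you wanted; but it comes from the integral data together with the hypothesis $v_p(\alpha)<1$, not from any general vanishing of the rational inverse limit. With this in place, your Step~3 (which is essentially the paper's concluding argument, including the uniform bound on the torsion of the $M_n$ via their stabilisation) goes through.
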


   \begin{proof}
    We claim that the hypotheses of the theorem force each $c_n$ to be the image of an element of $H^1_{\Iw}(K_\infty, V)_{\Gamma_n}$. To prove this, we shall argue much as in the proof of Proposition \ref{prop:unram outside p}. We begin by noting that $H^2_{\Iw}(K_\infty, T)$ is a finitely-generated $\Lambda$-module, and the subgroups $M_n \coloneqq H^2_{\Iw}(K_\infty, T)^{\Gamma_n}$ are $\Lambda$-submodules (since $\Gamma$ is abelian). As $\Lambda$ is a Noetherian ring, the ascending chain of submodules $(M_n)_{n \ge 0}$ must eventually stabilize; that is, there is an $n_0$ such that $M_n = M_{n_0}$ for all $n \ge n_0$.

    The corestriction map $H^1(K_{n+1}, T) \to H^1(K_n, T)$ corresponds to the trace map $M_{n+1} \to M_n$; when $n \ge n_0$ this is simply multiplication by $p$ on $M_{n_0}$. Since $v_p(\alpha) < 1$, we deduce as in Proposition \ref{prop:unram outside p} that for all $n \ge 0$, the image of $x_n$ is contained in the torsion submodule of $M_n$. Inverting $p$, the torsion is killed, and the image of $x_n$ in $H^2_{\Iw}(K_\infty, V)^{\Gamma_n}$ is 0; so $x_n$ lies in the submodule $H^1_{\Iw}(K_\infty, V)_{\Gamma_n} \subseteq H^1(K_n, V)$.

    Now let us choose a basis of the free $\Lambda_{\Qp}(\Gamma)$-module $H^1_{\Iw}(K_\infty, V)$. In order to apply the argument of the previous proposition, we need only check that the order of the torsion subgroup of $M_n$ is bounded independently of $n$; but this is immediate from the fact that the $M_n$ stabilize for large $n$ (and are all finitely-generated as $\Zp$-modules). This shows that there exists a $\mu$ such that $\alpha^{-n} c_n$ lies in $p^{-\lfloor rn \rfloor - \mu} H^1_{\Iw}(K_\infty, T)$, and the argument proceeds as in Proposition \ref{prop:local unbounded coho}.
   \end{proof}

\renewcommand{\MR}[1]{MR \href{http://www.ams.org/mathscinet-getitem?mr=#1}{#1}.}
\providecommand{\bysame}{\leavevmode\hbox to3em{\hrulefill}\thinspace}


\begin{thebibliography}{MVW06}

\bibitem[AV75]{amicevelu75}
Yvette Amice and Jacques V{\'e}lu, \emph{Distributions {$p$}-adiques
  associ\'ees aux s\'eries de {H}ecke}, Journ\'ees {A}rithm\'etiques de
  {B}ordeaux ({C}onf., {U}niv. {B}ordeaux, 1974), Ast\'erisque, vol. 24-25,
  Soc. Math. France, Paris, 1975, pp.~119--131. \MR{0376534}

\bibitem[AS86]{ashstevens86}
Avner Ash and Glenn Stevens,
  \href{http://dx.doi.org/10.1215/S0012-7094-86-05346-9}{\emph{Modular forms in
  characteristic {$l$} and special values of their {$L$}-functions}}, Duke
  Math. J. \textbf{53} (1986), no.~3, 849--868. \MR{860675}

\bibitem[Be{\u\i}84]{beilinson84}
A.~A. Be{\u\i}linson, \emph{Higher
  regulators and values of {$L$}-functions}, Current problems in mathematics,
  {V}ol. 24, Itogi Nauki i Tekhniki, Akad. Nauk SSSR Vsesoyuz. Inst. Nauchn. i
  Tekhn. Inform., Moscow, 1984, pp.~181--238. \MR{760999} (English translation: \href{http://dx.doi.org/10.1007/BF02105861}{\emph{Higher regulators
  and values of {$L$}-functions}}, Journal of Mathematical Sciences \textbf{30}
  (1985), no.~2, 2036--2070.)

\bibitem[BLZ04]{bergerlizhu04}
Laurent Berger, Hanfeng Li, and Hui~June Zhu,
  \href{http://dx.doi.org/10.1007/s00208-004-0529-y}{\emph{Construction of some
  families of 2-dimensional crystalline representations}}, Math. Ann.
  \textbf{329} (2004), no.~2, 365--377. \MR{2060368}

\bibitem[BDR12]{BDR12}
Massimo Bertolini, Henri Darmon, and Victor Rotger, \emph{Beilinson--{F}lach
  elements and {E}uler systems {I}: syntomic regulators and {$p$}-adic {R}ankin
  {$L$}-series}, preprint, 2012.

\bibitem[Bes00]{besser00}
Amnon Besser, \href{http://dx.doi.org/10.1007/BF02834843}{\emph{Syntomic
  regulators and {$p$}-adic integration. {I}. {R}igid syntomic regulators}},
  Israel J. Math. \textbf{120} (2000), no.~1, 291--334. \MR{1809626}

\bibitem[BK90]{blochkato90}
Spencer Bloch and Kazuya Kato,
  \href{http://www.springer.com/birkhauser/mathematics/book/978-0-8176-4566-3}{\emph{{$L$}-functions
  and {T}amagawa numbers of motives}}, The {G}rothendieck {F}estschrift,
  {V}ol.\ {I} (Cartier et~al, ed.), Progr. Math., vol.~86, Birkh\"auser,
  Boston, MA, 1990, pp.~333--400. \MR{1086888}

\bibitem[CE98]{colemanedixhoven98}
Robert~F. Coleman and Bas Edixhoven,
  \href{http://dx.doi.org/10.1007/s002080050140}{\emph{On the semi-simplicity
  of the {$U_p$}-operator on modular forms}}, Math. Ann. \textbf{310} (1998),
  no.~1, 119--127. \MR{1600034}

\bibitem[DR12]{darmonrotger12}
Henri Darmon and Victor Rotger, \emph{Diagonal cycles and {E}uler systems {I}:
  A {$p$}-adic {G}ross--{Z}agier formula}, preprint, 2012.

\bibitem[Del77]{deligne77}
Pierre Deligne, \emph{{SGA} 4{$\frac12$}: cohomologie \'etale}, Lecture Notes
  in Mathematics, vol. 569, Springer-Verlag, 1977.

\bibitem[DR73]{delignerapoport73}
Pierre Deligne and Michael Rapoport, \emph{Les sch\'emas de modules de courbes
  elliptiques}, Modular functions of one variable, {II} ({A}ntwerp, 1972)
  (Pierre Deligne and Willem Kuyk, eds.), Lecture Notes in Math., vol. 349,
  Springer, Berlin, 1973, pp.~143--316. \MR{0337993}

\bibitem[DI95]{diamondim95}
Fred Diamond and John Im, \emph{Modular forms and modular curves}, Seminar on
  {F}ermat's {L}ast {T}heorem ({T}oronto, {ON}, 1993--1994), CMS Conf. Proc.,
  vol.~17, Amer. Math. Soc., Providence, RI, 1995, pp.~39--133. \MR{1357209}

\bibitem[Dri73]{drinfeld73}
Vladimir Drinfel'd, \href{http://dx.doi.org/10.1007/BF01078890}{\emph{Two
  theorems on modular curves}}, Functional Analysis and Its Applications
  \textbf{7} (1973), no.~2, 155--156, Translation of the Russian original.

\bibitem[Fla92]{flach92}
Matthias Flach, \href{http://dx.doi.org/10.1007/BF01232029}{\emph{A finiteness
  theorem for the symmetric square of an elliptic curve}}, Invent. Math.
  \textbf{109} (1992), no.~2, 307--327. \MR{1172693}

\bibitem[FM87]{fontainemessing87}
Jean-Marc Fontaine and William Messing,
  \href{http://dx.doi.org/10.1090/conm/067/902593}{\emph{{$p$}-adic periods and
  {$p$}-adic \'etale cohomology}}, Current trends in arithmetical algebraic
  geometry ({A}rcata, {C}alif., 1985), Contemp. Math., vol.~67, Amer. Math.
  Soc., Providence, RI, 1987, pp.~179--207. \MR{902593}

\bibitem[Hub00]{huber00}
Annette Huber, \emph{Realization of {V}oevodsky's motives}, Jour. Algebraic
  Geometry \textbf{9} (2000), no.~4, 755--799.

\bibitem[IM65]{iwahorimatsumoto65}
Nagayoshi Iwahori and Hideya Matsumoto,
  \href{http://www.numdam.org/item?id=PMIHES_1965__25__5_0}{\emph{On some
  {B}ruhat decomposition and the structure of the {H}ecke rings of {$p$}-adic
  {C}hevalley groups}}, Pub. Math. IH{\'E}S (1965), no.~25, 5--48. \MR{0185016}

\bibitem[Jac72]{jacquet72}
Herv{\'e} Jacquet, \emph{Automorphic forms on {$\operatorname{GL}(2)$}. {P}art
  {II}}, Lecture Notes in Mathematics, Vol. 278, Springer-Verlag, Berlin, 1972.
  \MR{0562503}

\bibitem[Jan88a]{jannsen88}
Uwe Jannsen, \href{http://dx.doi.org/10.1007/BF01456052}{\emph{Continuous
  \'etale cohomology}}, Math. Ann. \textbf{280} (1988), no.~2, 207--245.
  \MR{929536}

\bibitem[Jan88b]{jannsen88b}
\bysame,
  \href{http://wwwmath.uni-muenster.de/u/pschnei/publ/beilinson-volume/Jannsen.pdf}{\emph{Deligne
  homology, {H}odge-{${\mathcal{D}}$}-conjecture, and motives}}, Be\u\i
  linson's conjectures on special values of {$L$}-functions (M.~Rapoport,
  N.~Schappacher, and P.~Schneider, eds.), Perspect. Math., vol.~4, Academic
  Press, Boston, MA, 1988, pp.~305--372. \MR{944998}

\bibitem[Kat04]{kato04}
Kazuya Kato,
  \href{http://smf4.emath.fr/en/Publications/Asterisque/2004/295/html/smf_ast_295_117-290.html}{\emph{{$P$}-adic
  {H}odge theory and values of zeta functions of modular forms}},
  Ast{\'e}risque \textbf{295} (2004), ix, 117--290, Cohomologies $p$-adiques et
  applications arithm{\'e}tiques. III. \MR{2104361}

\bibitem[Kat92]{katok92}
Svetlana Katok, \emph{Fuchsian groups}, Chicago Lectures in Mathematics,
  University of Chicago Press, Chicago, IL, 1992. \MR{1177168}

\bibitem[Kur02]{kurihara02}
Masato Kurihara, \href{http://dx.doi.org/10.1007/s002220100206}{\emph{On the
  {T}ate {S}hafarevich groups over cyclotomic fields of an elliptic curve with
  supersingular reduction. {I}}}, Invent. Math. \textbf{149} (2002), no.~1,
  195--224. \MR{1914621}

\bibitem[Lan91]{landsburg91}
Steven~E. Landsburg,
  \href{http://projecteuclid.org/getRecord?id=euclid.ijm/1255987675}{\emph{Relative
  {C}how groups}}, Illinois J. Math. \textbf{35} (1991), no.~4, 618--641.
  \MR{1115990}

\bibitem[Lan02]{lang02}
Serge Lang, \emph{Algebra}, third ed., Graduate Texts in Mathematics, vol. 211,
  Springer-Verlag, New York, 2002. \MR{1878556}

\bibitem[Lev04]{levine04}
Marc Levine, \emph{{$K$}-theory and motivic cohomology for schemes, {I}},
  preprint, 2004.

\bibitem[LZ12a]{loefflerzerbes11}
David Loeffler and Sarah~Livia Zerbes,
  \href{http://arxiv.org/abs/1108.5954}{\emph{Iwasawa theory and {$p$}-adic
  {$L$}-functions over {$\mathbf{Z}_p{}^2$}-extensions}}, preprint, 2012,
  \path{arXiv:1108.5954}.

\bibitem[LZ12b]{loefflerzerbes10}
\bysame, \href{http://dx.doi.org/10.1515/crelle.2012.012}{\emph{Wach modules
  and critical slope {$p$}-adic {$L$}-functions}}, J. Reine Angew. Math.
  (2012), published online March 2012, print version to appear.

\bibitem[MVW06]{mazzavoevodskyweibel06}
Carlo Mazza, Vladimir Voevodsky, and Charles Weibel, \emph{Lecture notes on
  motivic cohomology}, Clay Mathematics Monographs, vol.~2, American
  Mathematical Society, Providence, RI, 2006. \MR{2242284}

\bibitem[Mil12]{milneLEC}
James~S. Milne,
  \href{http://www.jmilne.org/math/CourseNotes/lec.html}{\emph{Lectures on
  \'etale cohomology}}, 2012.

\bibitem[Mom81]{momose81}
Fumiyuki Momose, \emph{On the {$\ell$}-adic representations attached to modular
  forms}, J. Fac. Sci. Univ. Tokyo Sect. IA Math. \textbf{28} (1981), no.~1,
  89--109. \MR{617867}

\bibitem[Nek06]{nekovar06}
Jan Nekov{\'a}{\v{r}}, \emph{Selmer complexes}, Ast{\'e}risque (2006), no.~310,
  viii+559. \MR{2333680}

\bibitem[Oht99]{ohta99}
Masami Ohta, \href{http://dx.doi.org/10.1023/A:1000556212097}{\emph{Ordinary
  {$p$}-adic {\'e}tale cohomology groups attached to towers of elliptic modular
  curves}}, Compos. Math. \textbf{115} (1999), 241--301.

\bibitem[Oht00]{ohta00}
\bysame, \href{http://dx.doi.org/10.1007/s002080000119}{\emph{Ordinary
  {$p$}-adic \'etale cohomology groups attached to towers of elliptic modular
  curves. {II}}}, Math. Ann. \textbf{318} (2000), no.~3, 557--583. \MR{1800769}

\bibitem[Ots09]{otsuki09}
Rei Otsuki, \href{http://dx.doi.org/10.3836/tjm/1249648421}{\emph{Construction
  of a homomorphism concerning {E}uler systems for an elliptic curve}}, Tokyo
  J. Math. \textbf{32} (2009), no.~1, 253--278. \MR{2541166}

\bibitem[PR92]{perrinriou92}
Bernadette Perrin-Riou,
  \href{http://dx.doi.org/10.1007/BF01232022}{\emph{Th\'eorie d'{I}wasawa et
  hauteurs {$p$}-adiques}}, Invent. Math. \textbf{109} (1992), no.~1, 137--185.
  \MR{1168369}

\bibitem[PR95]{perrinriou95}
\bysame,
  \href{http://smf4.emath.fr/Publications/Asterisque/1995/229/html/smf_ast_229.html}{\emph{Fonctions
  {$L$} {$p$}-adiques des repr\'esentations {$p$}-adiques}}, Ast{\'e}risque
  \textbf{229} (1995), 1--198. \MR{1327803}

\bibitem[PR98]{perrinriou98}
\bysame,
  \href{http://www.numdam.org/item?id=AIF_1998__48_5_1231_0}{\emph{Syst\`emes
  d'{E}uler {$p$}-adiques et th\'eorie d'{I}wasawa}}, Ann. Inst. Fourier
  (Grenoble) \textbf{48} (1998), no.~5, 1231--1307. \MR{1662231}

\bibitem[PS11]{pollackstevens11}
Robert Pollack and Glenn Stevens,
  \href{http://smf4.emath.fr/en/Publications/AnnalesENS/4_44/html/ens_ann-sc_44_1-42.php}{\emph{Overconvergent
  modular symbols and {$p$}-adic {$L$}-functions}}, Ann. Sci. \'Ecole Norm.
  Sup. \textbf{44} (2011), no.~1, 1--42. \MR{2760194}

\bibitem[PS12]{pollackstevenspreprint}
\bysame, \href{http://math.bu.edu/people/rpollack/}{\emph{Critical slope
  $p$-adic {$L$}-functions}}, to appear in J. London Math. Soc., 2012.

\bibitem[Qui73]{quillen73}
Daniel Quillen, \emph{Higher algebraic {$K$}-theory. {I}}, Algebraic
  {$K$}-theory, {I}: {H}igher {$K$}-theories ({S}eattle, 1972), Lecture Notes
  in Math., vol. 341, Springer, Berlin, 1973, pp.~85--147. \MR{0338129}

\bibitem[Ram00]{ramakrishnan00}
Dinakar Ramakrishnan, \href{http://dx.doi.org/10.2307/2661379}{\emph{Modularity
  of the {R}ankin-{S}elberg {$L$}-series, and multiplicity one for
  {$\operatorname{SL}(2)$}}}, Ann. of Math. (2) \textbf{152} (2000), no.~1,
  45--111. \MR{1792292}

\bibitem[Rib85]{ribet85}
Kenneth~A. Ribet, \href{http://dx.doi.org/10.1017/S0017089500006170}{\emph{On
  {$l$}-adic representations attached to modular forms. {II}}}, Glasgow Math.
  J. \textbf{27} (1985), 185--194. \MR{819838}

\bibitem[Rub00]{rubin00}
Karl Rubin, \emph{Euler systems}, Annals of Mathematics Studies, vol. 147,
  Princeton University Press, 2000. \MR{1749177}

\bibitem[Sage]{sage}
The Sage Group, \emph{{S}age {M}athematics {S}oftware},
  \url{http://www.sagemath.org/}.

\bibitem[SS10]{saitosato10}
Shuji Saito and Kanetomo Sato,
  \href{http://www.mathematik.uni-bielefeld.de/documenta/vol-suslin/saito_sato.html}{\emph{A
  {$p$}-adic regulator map and finiteness results for arithmetic schemes}},
  Doc. Math. (2010), Extra volume: Andrei A. Suslin's sixtieth birthday,
  525--594. \MR{2804264}

\bibitem[Sch98]{scholl98}
Anthony~J. Scholl, \href{http://dx.doi.org/10.1017/CBO9780511662010.011}{\emph{An
  introduction to {K}ato's {E}uler systems}}, Galois representations in
  arithmetic algebraic geometry ({D}urham, 1996), London Math. Soc. Lecture
  Note Ser., vol. 254, Cambridge Univ. Press, Cambridge, 1998, pp.~379--460.
  \MR{1696501}

\bibitem[Sha81]{shahidi81}
Freydoon Shahidi, \href{http://dx.doi.org/10.2307/2374219}{\emph{On certain
  {$L$}-functions}}, Amer. J. Math. \textbf{103} (1981), no.~2, 297--355.
  \MR{610479}

\bibitem[Shi76]{shimura76}
Goro Shimura, \href{http://dx.doi.org/10.1002/cpa.3160290618}{\emph{The special
  values of the zeta functions associated with cusp forms}}, Comm. Pure Appl.
  Math. \textbf{29} (1976), no.~6, 783--804.

\bibitem[Shi86]{shimura86}
\bysame, \href{http://dx.doi.org/10.2307/1971276}{\emph{On a class of nearly
  holomorphic automorphic forms}}, Ann. of Math. (2) \textbf{123} (1986),
  no.~2, 347--406. \MR{835767}

\bibitem[Shi00]{shimura00}
\bysame, \emph{Arithmeticity in the theory of automorphic forms}, Mathematical
  Surveys and Monographs, vol.~82, American Mathematical Society, Providence,
  RI, 2000. \MR{1780262}

\bibitem[SD73]{swinnertondyer72}
H.~P.~F. Swinnerton-Dyer, \emph{On {$l$}-adic representations and congruences
  for coefficients of modular forms}, Modular functions of one variable, {III}
  ({P}roc. {I}nternat. {S}ummer {S}chool, {U}niv. {A}ntwerp, 1972), Springer,
  Berlin, 1973, pp.~1--55. Lecture Notes in Math., Vol. 350. \MR{0406931}

\bibitem[Voe02]{voevodsky02}
Vladimir Voevodsky,
  \href{http://dx.doi.org/10.1155/S107379280210403X}{\emph{Motivic cohomology
  groups are isomorphic to higher {C}how groups in any characteristic}}, Int.
  Math. Res. Not. (2002), no.~7, 351--355. \MR{1883180}

\end{thebibliography}
\end{document}